\setlist[itemize]{leftmargin=1.5em}
\setlist[enumerate]{leftmargin=1.5em}
\newcommand{\argmax}{\mathop{\mathrm{argmax}}}
\DeclareMathOperator{\ind}{\mathds{1}}  
\numberwithin{equation}{section}
\definecolor{yxc}{RGB}{255,0,0}
\definecolor{yjc}{RGB}{125,0,0}
\definecolor{cm}{RGB}{0,0,200}
\definecolor{kzw}{RGB}{0,150,0}
\definecolor{byw}{RGB}{0,0,200}
\definecolor{cxc}{RGB}{255,150,0}
\definecolor{gen}{RGB}{180,0,180}
\newcommand{\yxc}[1]{\textcolor{yxc}{[YXC: #1]}}
\begin{document}
\theoremstyle{plain} \newtheorem{lemma}{\textbf{Lemma}} \newtheorem{proposition}{\textbf{Proposition}}\newtheorem{theorem}{\textbf{Theorem}}\setcounter{theorem}{0}
\newtheorem{corollary}{\textbf{Corollary}} \newtheorem{assumption}{\textbf{Assumption}}
\newtheorem{example}{\textbf{Example}} \newtheorem{definition}{\textbf{Definition}}
\newtheorem{fact}{\textbf{Fact}}\newtheorem{property}{Property}
\theoremstyle{definition}

\theoremstyle{remark}\newtheorem{remark}{\textbf{Remark}}\newtheorem{condition}{Condition}\newtheorem{claim}{Claim}\newtheorem{conjecture}{Conjecture}
\title{Minimax Estimation of Linear Functions of Eigenvectors \\ in the
Face of Small Eigen-Gaps}
\author{Gen Li\thanks{Department of Statistics and Data Science, The Wharton School, University of Pennsylvania, Philadelphia, PA 19104, USA; email: \texttt{\{ligen,yuxinc\}@wharton.upenn.edu.}}
\and 
Changxiao Cai\thanks{Department of Biostatistics, University of Pennsylvania, Philadelphia, PA 19104, USA; email: \texttt{changxiao.cai@pennmedicine.upenn.edu}.}
\and  
H.~Vincent Poor\thanks{Department of Electrical and Computer Engineering, Princeton University,
Princeton, NJ 08544, USA; email: \texttt{poor@princeton.edu}.}
\and 
Yuxin Chen\footnotemark[1]}

\maketitle
Eigenvector perturbation analysis plays a vital role in various 
data science applications. A large body of prior works, however, focused
on establishing $\ell_{2}$ eigenvector perturbation bounds, which
are often highly inadequate in addressing tasks that rely on fine-grained
behavior of an eigenvector. This paper makes progress on this by studying
the perturbation of linear functions of an unknown eigenvector. Focusing
on two fundamental problems --- matrix denoising and principal component
analysis --- in the presence of Gaussian noise, we develop a suite
of statistical theory that characterizes the perturbation of arbitrary
linear functions of an unknown eigenvector. In order to mitigate a
non-negligible bias issue inherent to the natural ``plug-in'' estimator,
we develop de-biased estimators that (1) achieve minimax lower bounds
for a family of scenarios (modulo some logarithmic factor), and (2)
can be computed in a data-driven manner without sample splitting.
Noteworthily, the proposed estimators are nearly minimax optimal even
when the associated eigen-gap is {\em substantially smaller} than what is
required in prior statistical theory.

\medskip

\noindent\textbf{Keywords:} linear forms of eigenvectors, matrix denoising, principal component analysis, bias correction, small eigen-gap

\tableofcontents{}

\section{Introduction}

A variety of large-scale data science applications involve extracting
actionable knowledge from the eigenvectors of a certain low-rank matrix.
Representative examples include principal component analysis (PCA)
\citep{johnstone2001distribution}, phase synchronization \citep{singer2011angular},
clustering in mixture models \citep{loffler2019optimality}, community
recovery \citep{abbe2017entrywise, lei2015consistency}, to name just a few. In reality,
it is often the case that one only observes a randomly corrupted version
of the low-rank matrix of interest, and has to retrieve information from the
``empirical'' eigenvectors (i.e., the eigenvectors of the observed
noisy matrix). This motivates the studies of eigenvector perturbation
theory from statistical viewpoints, with particular emphasis on high-dimensional
scenarios \citep{chen2020spectral}. In the current paper, we seek
to further expand such a statistical theory, focusing on the following
two concrete models. 
\begin{itemize}
\item \emph{Matrix denoising under i.i.d.~Gaussian noise}. Let $\bm{M}^{\star}\in\mathbb{R}^{n\times n}$
be an unknown rank-$r$ symmetric matrix whose $l$-th eigenvector
(resp.~eigenvalue) is $\bm{u}_{l}^{\star}$ (resp.~$\lambda_{l}^{\star}$).
What we have observed is a corrupted version $\bm{M}=\bm{M}^{\star}+\bm{H}$
of $\bm{M}^{\star}$, where $\bm{H}=[H_{i,j}]_{1\leq i,j\leq n}$
represents a symmetric Gaussian random matrix with $H_{i,j}\overset{\mathsf{i.i.d.}}{\sim}\mathcal{N}(0,\sigma^{2}),i > j$ and $H_{i,i}\overset{\mathsf{i.i.d.}}{\sim}\mathcal{N}(0,2\sigma^{2})$.
The aim is to estimate $\bm{u}_{l}^{\star}$ based on the $l$-th
eigenvector of the data matrix $\bm{M}$. 
\item \emph{Principal component analysis (PCA) and covariance estimation}.
Imagine that we have collected $n$ independent $p$-dimensional sample vectors $\bm{s}_{i}\overset{\mathsf{ind.}}{\sim}\mathcal{N}(\bm{0},\bm{\Sigma})$,
$1\leq i\leq n$. Suppose that the underlying covariance matrix enjoys
a ``spiked'' structure $\bm{\Sigma}=\bm{\Sigma}^{\star}+\sigma^{2}\bm{I}_{p}$,
where $\bm{\Sigma}^{\star}\succeq\bm{0}$ is an unknown rank-$r$
matrix whose $l$-th eigenvector (resp.~eigenvalue) is given by $\bm{u}_{l}^{\star}$
(resp.~$\lambda_{l}^{\star}$). We seek to estimate $\bm{u}_{l}^{\star}$
by examining the $l$-th eigenvector of the sample covariance matrix
$\frac{1}{n}\sum_{i=1}^{n}\bm{s}_{i}\bm{s}_{i}^{\top}$. 
\end{itemize}

While a large body of prior literature has investigated eigenvector
perturbation theory for the aforementioned two models, the majority
of past works focused on $\ell_{2}$ statistical analysis, namely,
quantifying the $\ell_{2}$ estimation error of $\bm{u}_{l}$ when
it is employed to estimate $\bm{u}_{l}^{\star}$. Such $\ell_{2}$
perturbation theory, however, is often too coarse if the ultimate
goal is to retrieve fine-grained information from the eigenvector
of interest, say, some linear function of the eigenvector $\bm{u}_{l}^{\star}$
(e.g., the Fourier transform of or any given entry of $\bm{u}_{l}^{\star}$).
Motivated by the inadequacy of existing $\ell_{2}$ theory, we seek
to investigate how to faithfully estimate linear functionals of the
eigenvectors --- that is, $\bm{a}^{\top}\bm{u}_{l}^{\star}$ for
some vector $\bm{a}\in\mathbb{R}^{n}$ given \emph{a priori}. Towards
achieving this goal, two challenges stand out, which merit careful
thinking. 
\begin{itemize}
\item \emph{The need of bias correction}. A natural strategy towards estimating
the linear form $\bm{a}^{\top}\bm{u}_{l}^{\star}$ is to invoke the
naive ``plug-in'' estimator $\bm{a}^{\top}\bm{u}_{l}$. However,
it has already been pointed out in the literature (e.g., \citet{koltchinskii2016asymptotics,koltchinskii2016perturbation})
that the plug-in estimator might suffer from a non-negligible bias.
This calls for careful designs of algorithms that allow for proper
bias correction in a data-driven yet efficient manner. 
\item \emph{How to cope with small eigen-gaps}.
When estimating the eigenvector $\bm{u}_{l}^{\star}$, most prior
works require the associated eigen-gap $\min_{i\neq l}|\lambda_{i}^{\star}-\lambda_{l}^{\star}|$
to exceed the spectral norm of the perturbation matrix (i.e., $\bm{H}$
in the matrix denoising case and $\frac{1}{n}\sum_{i}\bm{s}_{i}\bm{s}_{i}^{\top}-\bm{\Sigma}$
		in the PCA setting) \citep{davis1970rotation,chen2020spectral}. However, there is no lower bound in the literature
that precludes us from achieving faithful estimation when the eigen-gap
violates such requirements. It would thus be of great interest to
understand the statistical limits when the eigen-gap of interest is
particularly small. 
\end{itemize}

\paragraph{Main contributions. }

This paper investigates estimating the linear form $\bm{a}^{\top}\bm{u}_{i}^{\star}$
for the aforementioned two statistical models under Gaussian noise,
with particular emphasis on those scenarios with {\em small eigen-gaps}.
Our main contributions are summarized below.
\begin{enumerate}
\item We develop fine-grained perturbation analysis for linear forms of
eigenvectors, which is valid even when the eigen-gap $\min_{i\neq l}|\lambda_{i}^{\star}-\lambda_{l}^{\star}|$
is substantially smaller than the spectral norm of the perturbation
matrix. This eigen-gap condition significantly improves upon what
is required in prior theory. 
\item The natural ``plug-in'' estimator suffers from a non-negligible
bias issue, which is particularly severe when the associated eigen-gap
is small. To address this issue, we put forward a de-biased estimator
for $\bm{a}^{\top}\bm{u}_{l}^{\star}$ by multiplying the plug-in
estimator by a correction factor, which can be computed in a data-driven
manner without the need of sample splitting. The proposed estimator
provably achieves enhanced estimation accuracy compared to the plug-in
estimator, and is shown to be minimax optimal (up to some logarithmic
factor) for a broad class of scenarios. 
\end{enumerate}

\paragraph{Organization.}

The rest of this paper is organized as follows. In Section~\ref{sec:Problem-formulation},
we formulate the problem precisely and introduce basic definitions.
Section~\ref{sec:Main-results} presents our main theoretical findings,
whereas Section~\ref{sec:Related-work} provides a non-exhaustive
overview of prior works. The analysis strategy of our main theorems
is outlined in Section~\ref{sec:Analysis}. The detailed proofs and
auxiliary lemmas are postponed to the appendix. We conclude this paper
with a discussion of future directions in Section~\ref{sec:Discussion}. 

\paragraph{Notation.}

For any vector $\bm{v}$, we denote by $\|\bm{v}\|_{2}$ and $\|\bm{v}\|_{\infty}$
its $\ell_{2}$ norm and $\ell_{\infty}$ norm, respectively; for
any vectors $\bm{v}$ and $\bm{u}$, we use $\langle\bm{v},\,\bm{u}\rangle$
to represent their inner product. For any matrix $\bm{M}$, we let
$\left\Vert \bm{M}\right\Vert $ and $\left\Vert \bm{M}\right\Vert _{\mathrm{F}}$
denote the spectral norm and the Frobenius norm of $\bm{M}$, respectively.
For any matrix $\bm{U}$ whose columns are orthonormal, we use $\bm{U}^{\perp}$
to denote a matrix whose columns form an orthonormal basis of the
orthogonal complement of the column space of $\bm{U}$, and let $\mathcal{P}_{\bm{U}}(\bm{M})=\bm{U}\bm{U}^{\top}\bm{M}$
be the Euclidean projection of a matrix $\bm{M}$ onto the column
space of $\bm{U}$. For any two random matrices $\bm{Z}$ and $\bm{X}$,
the notation $\bm{Z}\overset{\mathrm{d}}{=}\bm{X}$ means $\bm{Z}$
and $\bm{X}$ are identical in distribution. For notational simplicity,
we write $[n]$ for the set $\{1,\cdots,n\}$. For any $a,b\in\mathbb{R}$,
we introduce the notation $a\land b=\min\{a,b\}$, $a\lor b=\max\{a,b\}$,
and $\min|a\pm b|=\min\{|a-b|,|a+b|\}$. We denote by $\mathcal{B}_{r}(\bm{z})\coloneqq\{\bm{x}\,\mid\,\|\bm{x}-\bm{z}\|_{2}\leq r\}$
the ball of radius $r$ centered at $\bm{z}$. Throughout the paper,
we denote by $f(n)\lesssim g(n)$ or $f(n)=O(g(n))$ the condition
$|f(n)|\leq Cg(n)$ for some universal constant $C>0$ when $n$ is
sufficiently large; we use $f(n)\gtrsim g(n)$ or $f(n)=\Omega(g(n))$
to indicate that $f(n)\geq C|g(n)|$ for some universal constant $C>0$
when $n$ is sufficiently large; and we also use $f(n)\asymp g(n)$
or $f(n)=\Theta(g(n))$ to indicate that $f(n)\lesssim g(n)$ and
$f(n)\gtrsim g(n)$ hold simultaneously. In addition, the standard notation $\widetilde{O}(g(n))$
(resp.~$\widetilde{\Omega}(g(n))$) is similar to $O(g(n))$ (resp.~$\Omega(g(n))$)
except that it hides the logarithmic dependency. The notation $f(n)=o(g(n))$
means that $\lim_{n\rightarrow\infty}f(n)/g(n)=0$, and $f(n)\gg g(n)$
(resp.~$f(n)\ll g(n)$) means that there exists some large
(resp.~small) constant $c_{1}>0$ (resp.~$c_{2}>0$) such that $f(n)\geq c_{1}g(n)$
(resp.~$f(n)\leq c_{2}g(n)$). Finally, for any $1\leq l\leq r$,
we set the expression $\sum_{k\neq l,1\leq k\leq r}g(k)$ to be zero
for any $g(\cdot)$ if $r=1$ (that is, the case where no $k$ satisfies
the requirement in the summation).

\section{Problem formulation}

\label{sec:Problem-formulation}

\subsection{Matrix denoising \label{subsec:Matrix-denoising-model}}

Suppose that we are interested in a symmetric matrix $\bm{M}^{\star}=[M_{i,j}^{\star}]_{1\leq i,j\leq n}\in\mathbb{R}^{n\times n}$
with eigen-decomposition 
\begin{equation}
\bm{M}^{\star}=\sum_{i=1}^{r}\lambda_{i}^{\star}\bm{u}_{i}^{\star}\bm{u}_{i}^{\star\top}=:\bm{U}^{\star}\bm{\Lambda}^{\star}\bm{U}^{\star\top},\label{eq:defn-true-matrix}
\end{equation}
where the $\bm{u}_i^{\star}$'s are orthonormal. 
Here, $\{\lambda_{i}^{\star}\}$ denotes the set of non-zero eigenvalues
of $\bm{M}^{\star}$, and $\bm{u}_{i}^{\star}$ indicates the (normalized)
eigenvector associated with $\lambda_{i}^{\star}$. It is assumed
throughout that
\begin{equation}
\lambda_{\min}^{\star}=|\lambda_{r}^{\star}|\leq\cdots\leq|\lambda_{1}^{\star}|=\lambda_{\max}^{\star},\label{eq:defn-lambda-min-max-kappa}
\end{equation}
and the condition number of $\bm{M}^{\star}$ is defined as
\begin{equation}
	\kappa := \frac{\lambda_{\max}^{\star}}{\lambda_{\min}^{\star}}.
\end{equation}
In addition, for any $1\leq l\leq r$, we introduce an eigen-gap (or eigenvalue separation)
metric that quantifies the distance between the eigenvalue $\lambda_{l}^{\star}$
and the remaining spectrum:
\begin{equation}
\Delta_{l}^{\star}\coloneqq\begin{cases}
\min_{k:\,k\neq l,\,1\leq k\leq r}\big|\lambda_{l}^{\star}-\lambda_{k}^{\star}\big|, & \text{if}\quad r>1,\\
\lambda_{\max}^{\star}, & \text{if}\quad r=1,
\end{cases}\label{eq:eigen-separation-Delta-l-denoising}
\end{equation}
which plays a crucial role in our perturbation theory. 

What we have observed is a randomly corrupted data matrix $\bm{M}=[M_{i,j}]_{1\leq i,j\leq n}$
as follows
\begin{equation}
\bm{M}=\bm{M}^{\star}+\bm{H},\label{eq:observed-matrix}
\end{equation}
where $\bm{H}=[H_{i,j}]_{1\leq i,j\leq n}$ represents a symmetric
noise matrix with independent random entries
\begin{equation}
H_{i,j}\overset{\mathrm{ind.}}{\sim}
\begin{cases}
\mathcal{N}(0,2\sigma^{2}), &i = j, \\
\mathcal{N}(0,\sigma^{2}), &i > j.
\end{cases}
\label{eq:Gaussian-noise-model}
\end{equation}
Throughout this paper, we denote by $\lambda_{l}$ the $l$-th largest
eigenvalue (in magnitude) of $\bm{M}$, and let $\bm{u}_{l}$ represent
the associated eigenvector of $\bm{M}$. Our goal is to estimate linear
functionals of an eigenvector $\bm{u}_{l}^{\star}$ --- that is,
$\bm{a}^{\top}\bm{u}_{l}^{\star}\,(1\leq l\leq r)$ for some fixed
vector $\bm{a}\in\mathbb{R}^{n}$ --- based on the observed noisy
data $\bm{M}$.

\subsection{Principal component analysis and covariance estimation\label{subsec:Principal-component-analysis-model}}

Turning to principal component analysis (PCA) or covariance estimation,
we concentrate on the following spiked covariance model. Imagine that
we have collected a sequence of $n$ i.i.d.~zero-mean Gaussian sample
vectors in $\mathbb{R}^{p}$ as follows
\[
\bm{s}_{i}\overset{\mathrm{ind.}}{\sim}\mathcal{N}\left(\bm{0},\bm{\Sigma}\right),\qquad1\leq i\leq n,
\]
where
\[
\bm{\Sigma}=\bm{\Sigma}^{\star}+\sigma^{2}\bm{I}_{p}\in\mathbb{R}^{p\times p}
\]
denotes the covariance matrix. Here and throughout, we assume that
the ``spiked component'' $\bm{\Sigma}^{\star}$ of $\bm{\Sigma}$
is an unknown rank-$r$ matrix with eigen-decomposition 
\begin{align*}
\bm{\Sigma}^{\star} & =\bm{U}^{\star}\bm{\Lambda}^{\star}\bm{U}^{\star\top}=\sum_{i=1}^{r}\lambda_{i}^{\star}\bm{u}_{i}^{\star}\bm{u}_{i}^{\star\top}\succeq\bm{0},
\end{align*}
where $\lambda_{i}^{\star}$ denotes the $i$-th largest eigenvalue
of $\bm{\Sigma}^{\star}$, with $\bm{u}_{i}^{\star}$ representing
the associated eigenvector. Akin to the matrix denoising case, we
assume
\[
0<\lambda_{\min}^{\star}=\lambda_{r}^{\star}\leq\cdots\leq\lambda_{1}^{\star}=\lambda_{\max}^{\star},
\]
and introduce the condition number $\kappa:=\lambda_{\max}^{\star}/\lambda_{\min}^{\star}$
and the eigen-separation metric
\begin{equation}
\Delta_{l}^{\star}\coloneqq\begin{cases}
\min_{k:\,k\neq l,\,1\leq k\leq r}\big|\lambda_{l}^{\star}-\lambda_{k}^{\star}\big|, & \text{if}\quad r>1,\\
\lambda_{\max}^{\star}, & \text{if}\quad r=1.
\end{cases}\label{eq:defn-kappa-Delta-PCA}
\end{equation}
Given a fixed vector $\bm{a}\in\mathbb{R}^{p}$, our aim is to develop
a reliable estimate of the linear functional $\bm{a}^{\top}\bm{u}_{l}^{\star}$
of an eigenvector $\bm{u}_{l}^{\star}$ ($1\leq l\leq r$), on the
basis of the sample vectors $\{\bm{s}_{i}\}_{1\leq i\leq n}$ (or
the sample covariance matrix $\frac{1}{n}\sum_{i=1}^{n}\bm{s}_{i}\bm{s}_{i}^{\top}$).

\section{Main results}

\label{sec:Main-results}

With the above description of the problem settings in place, we are
ready to present our findings concerning eigenvector perturbation.
Given that we cannot distinguish $\bm{u}_{l}^{\star}$ and $-\bm{u}_{l}^{\star}$
based on the observed matrix, the error of an estimator $u_{\bm{a}}$
for estimating $\bm{a}^{\top}\bm{u}_{l}^{\star}$ shall be measured
via the following metric that accounts for such a global ambiguity
issue:
\begin{equation}
\mathsf{dist}\left(u_{\bm{a}},\bm{a}^{\top}\bm{u}_{l}^{\star}\right)\coloneqq\min\left\{ \big|u_{\bm{a}}\pm\bm{a}^{\top}\bm{u}_{l}^{\star}\big|\right\} =\min\left\{ \big|u_{\bm{a}}-\bm{a}^{\top}\bm{u}_{l}^{\star}\big|,\big|u_{\bm{a}}+\bm{a}^{\top}\bm{u}_{l}^{\star}\big|\right\} .\label{eq:distance-metric}
\end{equation}

\subsection{Matrix denoising}

We begin with the matrix denoising problem introduced in Section~\ref{subsec:Matrix-denoising-model}.
Recalling that $\bm{u}_{l}$ is the eigenvector of $\bm{M}$ associated
with $\lambda_{l}$ ($1\leq l\leq n$), we investigate the following
two estimators when estimating the linear function $\bm{a}^{\top}\bm{u}_{l}^{\star}$. 

\begin{subequations}\label{eq:defn-estimators-denoising}
\begin{itemize}
\item A plug-in estimator: 
\begin{equation}
u_{\bm{a}}^{\mathsf{plugin}}\coloneqq\bm{a}^{\top}\bm{u}_{l};\label{eq:def-plug-in-matrix-denoising}
\end{equation}
\item A modified estimator that we propose (which we shall refer to as a
\emph{de-biased estimator} from now on):
\begin{align}
u_{\bm{a}}^{\mathsf{debiased}}\coloneqq\sqrt{1+b_{l}}\,\bm{a}^{\top}\bm{u}_{l}\qquad\text{with}\quad b_{l} & \coloneqq\sum_{i:\,r<i\le n}\frac{\sigma^{2}}{(\lambda_{l}-\lambda_{i})^{2}},\label{eq:def:bl-debiased-matrix-denoising}
\end{align}
where $b_{l}$ can be computed directly using the eigenvalues of $\bm{M}$
without the need of sample splitting.
As we shall see shortly, this
new estimator is put forward in order to remedy a non-negligible bias
issue underlying the naive plug-in estimator. 
\end{itemize}
\end{subequations}The following theorem quantifies the estimation
errors for both of these estimators. 

\begin{theorem}[Eigenvector perturbation]\label{thm:evector-pertur-sym-iid}Consider
any $1\leq l\leq r$, and suppose that 
\begin{equation}
\sigma\sqrt{n}\leq c_{0}\lambda_{\min}^{\star},\qquad r\leq c_{1}n/\log^{2}n\qquad\text{and}\qquad\Delta_{l}^{\star}>C_{0}\sigma\sqrt{r}\log n\label{eq:eigengap-condition-iid}
\end{equation}
for some sufficiently small (resp.~large) constants $c_{0},c_{1}>0$
(resp.~$C_{0}>0$). Let $\bm{a}\in\mathbb{R}^{n}$ be any fixed vector
with $\|\bm{a}\|_{2}=1$. With probability at least $1-O(n^{-10}),$
the estimators in (\ref{eq:defn-estimators-denoising}) satisfy\begin{subequations}\label{eq:eigenvec-perturbation-bound-iid}
\begin{align}
\mathsf{dist}\left(u_{\bm{a}}^{\mathsf{plugin}},\bm{a}^{\top}\bm{u}_{l}^{\star}\right) & \lesssim E_{\mathsf{md},l}+\frac{\sigma^{2}n}{\lambda_{l}^{\star2}}\left|\bm{a}^{\top}\bm{u}_{l}^{\star}\right|,\label{eq:eigenvec-perturbation-bound-iid-plugin}\\
\mathsf{dist}\left(u_{\bm{a}}^{\mathsf{debiased}},\bm{a}^{\top}\bm{u}_{l}^{\star}\right) & \lesssim E_{\mathsf{md},l},\label{eq:eigenvec-perturbation-bound-iid-debias}
\end{align}
\end{subequations}where $E_{\mathsf{md},l}$ is defined as
\begin{align}
E_{\mathsf{md},l} & \coloneqq\frac{\sigma^{2}r\log n}{(\Delta_{l}^{\star})^{2}}\left|\bm{a}^{\top}\bm{u}_{l}^{\star}\right|+\sigma\sqrt{r\log\Big(\frac{n\kappa\lambda_{\max}}{\Delta_{l}^{\star}}\Big)}\sum_{k:\,k\neq l,1\leq k\leq r}\frac{\left|\bm{a}^{\top}\bm{u}_{k}^{\star}\right|}{\left|\lambda_{l}^{\star}-\lambda_{k}^{\star}\right|}+\frac{\sigma\sqrt{\log\big(\frac{n\kappa\lambda_{\max}}{\Delta_{l}^{\star}}\big)}}{\left|\lambda_{l}^{\star}\right|}.\label{eq:def:E-md}
\end{align}
\end{theorem}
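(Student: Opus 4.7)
The plan is to combine an exact eigendecomposition identity for $\bm{u}_l$ with Gaussian concentration, reducing the analysis to bounds on a few explicit noise contractions. Write
\[
\bm{u}_l = \alpha_l \bm{u}_l^\star + \sum_{k\neq l,\,1\leq k\leq r}\beta_{k,l}\bm{u}_k^\star + \bm{U}^{\star\perp}\bm{v}_l,
\]
where $\alpha_l\coloneqq \bm{u}_l^{\star\top}\bm{u}_l$, $\beta_{k,l}\coloneqq \bm{u}_k^{\star\top}\bm{u}_l$, $\bm{v}_l\coloneqq \bm{U}^{\star\perp\top}\bm{u}_l$, and $\bm{U}^{\star\perp}\in\mathbb{R}^{n\times(n-r)}$ spans the null space of $\bm{M}^\star$. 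Projecting the identity $(\bm{M}^\star + \bm{H})\bm{u}_l = \lambda_l\bm{u}_l$ onto each $\bm{u}_k^\star$ ($k\neq l$, $k\leq r$) and onto $\bm{U}^{\star\perp}$ yields
\[
\beta_{k,l} = \frac{\bm{u}_k^{\star\top}\bm{H}\bm{u}_l}{\lambda_l - \lambda_k^\star},\qquad \bm{v}_l = \frac{\bm{U}^{\star\perp\top}\bm{H}\bm{u}_l}{\lambda_l},\qquad \alpha_l^2 = 1 - \sum_{k\neq l,\,k\leq r}\beta_{k,l}^2 - \|\bm{v}_l\|_2^2.
\]
Together with $\bm{a}^\top\bm{u}_l = \alpha_l(\bm{a}^\top\bm{u}_l^\star) + \sum_{k\neq l,\,k\leq r}\beta_{k,l}(\bm{a}^\top\bm{u}_k^\star) + (\bm{U}^{\star\perp\top}\bm{a})^\top\bm{v}_l$, these identities transform the problem into one about noise contractions against (partly unknown) directions.

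The core step is to replace $\bm{u}_l$ by $\bm{u}_l^\star$ inside every such contraction. Once substituted, $\bm{u}_k^{\star\top}\bm{H}\bm{u}_l^\star\sim\mathcal{N}(0,\sigma^2)$ and $\bm{U}^{\star\perp\top}\bm{H}\bm{u}_l^\star$ is isotropic Gaussian of dimension $n-r$, so standard Gaussian tail bounds pin $\|\bm{v}_l\|_2^2\approx (n-r)\sigma^2/\lambda_l^2$ and the per-coordinate fluctuations at order $\sigma\sqrt{\log n}$. The substitution error $\bm{H}(\bm{u}_l - \alpha_l\bm{u}_l^\star)$ will be handled via a leave-one-out/decoupling construction: introduce an auxiliary matrix $\bm{M}^{(m)}$ whose $l$-th eigenvector is statistically independent of the particular contraction in question, use Davis--Kahan-type $\ell_2$ stability to show its $l$-th eigenvector is close to $\bm{u}_l$, and control the mismatch with operator-norm concentration for $\bm{H}$.

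The de-biasing factor is engineered so that $(1+b_l)\alpha_l^2\approx 1$: since the ``noise eigenvalues'' $\{\lambda_i\}_{i>r}$ cluster near $0$ with spread $O(\sigma\sqrt{n})$, Weyl's inequality gives $b_l\approx (n-r)\sigma^2/\lambda_l^{\star 2}$, which matches $1-\alpha_l^2$ at leading order and therefore cancels the dominant bias $(n\sigma^2/\lambda_l^{\star 2})|\bm{a}^\top\bm{u}_l^\star|$ of the plug-in estimator. The residual error $E_{\mathsf{md},l}$ then assembles from (i) the Gaussian cross-subspace contribution $\sigma\sqrt{r\log(\cdot)}\sum_{k\neq l}|\bm{a}^\top\bm{u}_k^\star|/|\lambda_l^\star-\lambda_k^\star|$, (ii) the perpendicular contribution of size $\sigma\sqrt{\log(\cdot)}/|\lambda_l^\star|$, and (iii) the residual within-subspace bias $\sum_{k\neq l,\,k\leq r}\beta_{k,l}^2\cdot|\bm{a}^\top\bm{u}_l^\star|\lesssim (\sigma^2 r\log n/(\Delta_l^\star)^2)|\bm{a}^\top\bm{u}_l^\star|$.

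The hard part will be executing this in the small-eigen-gap regime $\Delta_l^\star\ll \|\bm{H}\|\asymp\sigma\sqrt{n}$, where $\bm{u}_l$ can be far from $\bm{u}_l^\star$ in $\ell_2$ (individual $\beta_{k,l}$ are possibly of constant size), so neither Davis--Kahan nor a Neumann expansion of $(z\bm{I}-\bm{M})^{-1}$ around $\lambda_l^\star$ is directly available. The enabling observation is that only the aggregate $\sum_k \beta_{k,l}^2$ and the direction of $\bm{v}_l$ enter the master identities; I plan to decouple the ``within-rank-$r$ rotation'' (controlled by the milder condition $\Delta_l^\star > C_0\sigma\sqrt{r}\log n$ applied to the \emph{subspace} $\bm{U}^\star$ rather than to individual eigenvectors) from the ``out-of-subspace'' component $\bm{v}_l$ (whose direction is stable because $\lambda_{\min}^\star\gg \sigma\sqrt{n}$). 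A self-improving bootstrap that starts from crude a priori bounds on $\|\bm{v}_l\|_2$ and $\beta_{k,l}$ and refines them by feeding the exact identities above back into themselves is where the technical bookkeeping concentrates, and is the piece that will require the most care.
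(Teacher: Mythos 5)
Your exact identities are correct, but the plan for using them has a genuine gap precisely in the small-eigen-gap regime that the theorem is about. First, the denominators in your identities are the wrong quantities: writing $\beta_{k,l}=\bm{u}_k^{\star\top}\bm{H}\bm{u}_l/(\lambda_l-\lambda_k^{\star})$ mixes the \emph{empirical} eigenvalue $\lambda_l$ with the \emph{true} eigenvalue $\lambda_k^{\star}$. Under the assumptions of the theorem, $\lambda_l$ carries a systematic bias $\gamma(\lambda_l)\approx -\sigma^{2}n/\lambda_l^{\star}$, whose magnitude can be as large as $c_0\sigma\sqrt{n}$ and hence vastly exceeds $\Delta_l^{\star}\asymp\sigma\sqrt{r}\log n$; consequently $|\lambda_l-\lambda_k^{\star}|$ need not be $\gtrsim|\lambda_l^{\star}-\lambda_k^{\star}|$, and the claimed bound $\sum_{k\neq l}\beta_{k,l}^{2}\lesssim\sigma^{2}r\log n/(\Delta_l^{\star})^{2}$ does not follow from these identities. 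Second, the numerators $\bm{u}_k^{\star\top}\bm{H}\bm{u}_l$ are fully entangled with $\bm{H}$, and your proposed fix --- substitute $\bm{u}_l^{\star}$ for $\bm{u}_l$ and control the error by a leave-one-out construction plus Davis--Kahan $\ell_2$ stability --- is exactly what is unavailable here: when $\Delta_l^{\star}\ll\|\bm{H}\|\asymp\sigma\sqrt{n}$, $\bm{u}_l$ is not close to $\bm{u}_l^{\star}$ in $\ell_2$, so the substitution error is not small and no Davis--Kahan-type stability of the $l$-th eigenvector holds for the auxiliary matrix either. The "bootstrap" you gesture at in the final paragraph is where the actual proof has to live, and it is missing its enabling mechanism. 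What the paper does instead is pass to the deflated matrix $\bm{M}^{(l)}=(\bm{u}_l^{\star\perp})^{\top}\bm{M}\bm{u}_l^{\star\perp}$: the master identity expresses $\tan^{2}\theta$ and $\bm{u}_k^{\star\top}\bm{u}_{l,\perp}$ through the resolvent $(\lambda_l\bm{I}-\bm{M}^{(l)})^{-1}(\bm{u}_l^{\star\perp})^{\top}\bm{H}\bm{u}_l^{\star}$, where (i) $\bm{M}^{(l)}$ is statistically independent of the Gaussian vector $(\bm{u}_l^{\star\perp})^{\top}\bm{H}\bm{u}_l^{\star}$, (ii) the residual dependence through $\lambda_l$ is removed by a uniform (epsilon-net in $\lambda$) argument, and (iii) both $\lambda_l$ and the eigenvalues of $\bm{M}^{(l)}$ carry the \emph{same} bias $\gamma(\cdot)$, so their differences recover the true gaps $\gtrsim\Delta_l^{\star}$ (resp.\ $\gtrsim|\lambda_l^{\star}|$) even though each eigenvalue individually is shifted by far more than $\Delta_l^{\star}$. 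None of these three ingredients appears in your outline.

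The de-biasing step has a related quantitative gap. You argue that Weyl's inequality gives $b_l\approx(n-r)\sigma^{2}/\lambda_l^{\star2}$, matching $1-\alpha_l^{2}$ "at leading order." But the debiased bound requires the cancellation to hold up to $O\big(\sigma^{2}r\log n/(\Delta_l^{\star})^{2}+\sigma^{2}\sqrt{n\log n}/\lambda_l^{\star2}\big)$, i.e.\ to relative accuracy $\sqrt{\log n/n}$. Approximating both $b_l$ and $1-\alpha_l^{2}$ by the deterministic value $(n-r)\sigma^{2}/\lambda_l^{\star2}$ incurs an error of order $(\sigma^{2}n/\lambda_l^{\star2})\cdot(\sigma\sqrt{n}/\lambda_l^{\star})$ coming from the $O(\sigma\sqrt{n})$ spread of the bulk eigenvalues, and since $\sigma\sqrt{n}$ may be a constant fraction of $\lambda_{\min}^{\star}$, this error is comparable to the bias you are trying to remove. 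The correct argument must compare $b_l=\sum_{i>r}\sigma^{2}/(\lambda_l-\lambda_i)^{2}$ (built from the spectrum of $\bm{M}$) with the conditional mean $\sum_{i\geq r}\sigma^{2}/(\lambda_l-\lambda_i^{(l)})^{2}$ of $\tan^{2}\theta$ (built from the spectrum of the deflated $\bm{M}^{(l)}$), using Cauchy eigenvalue interlacing between $\bm{M}$ and $\bm{M}^{(l)}$ together with Bernstein-type concentration of the resolvent-weighted chi-square --- again uniformly in $\lambda$ to break the dependence through $\lambda_l$. Without this pairing, the claimed cancellation of the $\sigma^{2}n/\lambda_l^{\star2}|\bm{a}^{\top}\bm{u}_l^{\star}|$ term is not established.
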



\begin{remark}
While the rank $r$ of the true matrix $\bm{M}^\star$ might be unknown \emph{a prior} in practice, it can often be estimated accurately in a data-driven manner. For instance, under the model assumed herein, 
	one might simply choose $r$ by identifying the smallest (in magnitude) eigenvalue $\lambda_l$ that is larger than $\lambda_{l+1}$ by an order of $\sigma\sqrt{n}$; see also 
	\citet{han2019universal} for a different approach. 
\end{remark}
\begin{remark}
While the quantity $b_l$ is provided in a data-driven manner (cf.~\eqref{eq:def:bl-debiased-matrix-denoising}),   
we find it helpful to also make note of another expression derived from its asymptotic limit. 
Specifically, the random matrix theory tells us that the eigenvalues $\{\lambda_i\}_{r<i\le n}$ of $\bm{M}$ obey the celebrated semi-circular law asymptotically (see, e.g., \citet{knowles2013isotropic}), and therefore the de-biased term $b_l$ satisfies (as $n$ grows):
\begin{align}
b_{l}=\sum_{i:\,r<i\le n}\frac{\sigma^{2}}{(\lambda_{l}-\lambda_{i})^{2}} & \,\approx\,\int_{-2}^{2}\frac{\sqrt{4-\lambda^{2}}}{2\pi\Big(\frac{\lambda_{l}}{\sigma\sqrt{n}}-\lambda\Big)^{2}}\mathrm{d}\lambda.
\end{align}
\end{remark}

\paragraph{Implications.}

Theorem \ref{thm:evector-pertur-sym-iid} develops statistical performance
guarantees for the aforementioned two estimators when estimating the
linear form $\bm{a}^{\top}\bm{u}_{l}^{\star}$ for a prescribed vector
$\bm{a}\in\mathbb{R}^{n}$. We now single out several main implications
of our results. 
\begin{itemize}
\item \emph{Estimation guarantees in the face of a small eigen-gap}. In
view of (\ref{eq:eigengap-condition-iid}), the eigen-gap $\Delta_{l}^{\star}$
is allowed to be substantially smaller than the spectral norm $\|\bm{H}\|$
of the perturbation matrix. This stands in stark contrast to, and significantly
improves upon, the celebrated Davis-Kahan $\sin\bm{\Theta}$ theorem
that requires $\Delta_{l}^{\star}\gtrsim\|\bm{H}\|$ \citep{davis1970rotation,chen2020spectral}.
To be more precise, recalling from standard random matrix theory \citep{Tao2012RMT}
that $\|\bm{H}\|\asymp\sigma\sqrt{n}$ with high probability, one
can compare our result with classical matrix perturbation theory as
follows\begin{subequations}\label{eq:eigen-gap-comparison-denoising}
\begin{align*}
\text{our eigen-gap requirement:}\quad & \Delta_{l}^{\star}=\widetilde{\Omega}\left(\sigma\sqrt{r}\right);\\
\text{eigen-gap requirement in classical theory:}\quad & \Delta_{l}^{\star}=\widetilde{\Omega}\left(\sigma\sqrt{n}\right).
\end{align*}
\end{subequations}
As a comparison, the prior work \citet{bao2021singular} studied the distributions of the singular vectors under the matrix denoising setting with $\sigma \asymp n^{-1/2}$, provided that the eigen-gap exceeds $\Omega(1)$;
our theory improves their eigen-gap condition by a factor on the order of $\sqrt{n/r}$.

\item \emph{Near minimaxity}. In order to assess the effectiveness of our
proposed estimator, it is helpful to compare the statistical guarantees
in Theorem \ref{thm:evector-pertur-sym-iid} with minimax lower bounds.
Consider, for simplicity, the scenario where $r=O(1)$ and $|\bm{a}^{\top}\bm{u}_{l}^{\star}|\le(1-\epsilon)\|\bm{a}\|_{2}$
for any small non-zero constant $\epsilon>0$ (so that $\bm{a}$ is
not perfectly aligned with $\bm{u}_{l}^{\star}$), and 
an instance-dependent minimax lower bound has been established in \citet[Theorem 3]{cheng2020tackling} for this scenario.  
Specifically, if we define the following two sets
\begin{align*}
\mathcal{M}_{0}(\bm{M}^{\star}) & :=\Big\{\bm{A}\,\mid\,\mathsf{rank}(\bm{A})=r,\,\lambda_{i}(\bm{A})=\lambda_{i}^{\star}\,(1\leq i\leq r),\,\|\bm{A}-\bm{M}^{\star}\|_{\mathrm{F}}\leq\frac{\sigma}{2}\Big\},\\
\mathcal{M}_{1}(\bm{M}^{\star}) & :=\Big\{\bm{A}\,\mid\,\mathsf{rank}(\bm{A})=r,\,\lambda_{i}(\bm{A})=\lambda_{i}^{\star}\,(1\leq i\leq r),\,\|\bm{u}_{l}(\bm{A})-\bm{u}_{l}^{\star}\|_{\mathrm{2}}\leq\frac{\sigma}{4\,|\lambda_{l}^{\star}|}\Big\},
\end{align*}
then we necessarily have
\begin{subequations}
\label{eq:minimax-lower-bound-denoising}
\begin{align}
\label{eq:minimax-lower-bound-denoising-E}	
\inf_{u_{\bm{a},l}}\sup_{\bm{A}\in\mathcal{M}_{0}(\bm{M}^{\star}) \cup \mathcal{M}_{1}(\bm{M}^{\star})}\mathbb{E}\Big[\mathsf{dist}\left(u_{\bm{a},l},\,\bm{a}^{\top}\bm{u}_{l}(\bm{A})\right)\Big] \gtrsim\frac{\sigma^{2}}{(\Delta_{l}^{\star})^{2}}\left|\bm{a}^{\top}\bm{u}_{l}^{\star}\right|+\sigma\max_{k:\,k\neq l}\frac{\left|\bm{a}^{\top}\bm{u}_{k}^{\star}\right|}{\left|\lambda_{l}^{\star}-\lambda_{k}^{\star}\right|} + \frac{\sigma}{|\lambda_{l}^{\star}|},
\end{align}
where the infimum is over all estimators $u_{\bm{a},l}$ 
based on the observed matrix $\bm{M}=\bm{A}+\bm{H}$, and  $\bm{u}_{l}(\bm{A})$
denotes the $l$-th eigenvector of the matrix $\bm{A}$. 
In addition, the analysis for \citet[Theorem 3]{cheng2020tackling} directly implies that 
\begin{align}\label{eq:minimax-lower-bound-denoising-prob}
\inf_{u_{\bm{a},l}}\sup_{\bm{A}\in\mathcal{M}_{0}(\bm{M}^{\star}) \cup \mathcal{M}_{1}(\bm{M}^{\star})}\mathbb{P}\bigg\{ \mathsf{dist}\left(u_{\bm{a},l},\,\bm{a}^{\top}\bm{u}_{l}(\bm{A})\right) \gtrsim\frac{\sigma^{2}}{(\Delta_{l}^{\star})^{2}}\left|\bm{a}^{\top}\bm{u}_{l}^{\star}\right|+\sigma\max_{k:\,k\neq l}\frac{\left|\bm{a}^{\top}\bm{u}_{k}^{\star}\right|}{\left|\lambda_{l}^{\star}-\lambda_{k}^{\star}\right|} + \frac{\sigma}{|\lambda_{l}^{\star}|}\bigg \} \ge \frac{1}{5}.
\end{align}
\end{subequations}
In comparison, our statistical guarantee (\ref{eq:eigenvec-perturbation-bound-iid-debias})
for the proposed de-biased estimator obeys
\[
\mathsf{dist}\left(u_{\bm{a}}^{\mathsf{debiased}},\bm{a}^{\top}\bm{u}_{l}^{\star}\right)\leq\widetilde{O}\Bigg(\frac{\sigma^{2}}{(\Delta_{l}^{\star})^{2}}\left|\bm{a}^{\top}\bm{u}_{l}^{\star}\right|+\sigma\sum_{k:\,k\neq l}\frac{\left|\bm{a}^{\top}\bm{u}_{k}^{\star}\right|}{\left|\lambda_{l}^{\star}-\lambda_{k}^{\star}\right|}+\frac{\sigma}{\left|\lambda_{l}^{\star}\right|}\Bigg)
\]
with high probability in this scenario, thereby matching the minimax lower bound (\ref{eq:minimax-lower-bound-denoising}) (modulo some logarithmic factor). 
This confirms the near optimality of our de-biased estimator when $r \asymp 1$. 

\end{itemize}

\paragraph{Sub-optimality of the vanilla plug-in estimator.}

Furthermore, Theorem~\ref{thm:evector-pertur-sym-iid} suggests that the statistical error (\ref{eq:eigenvec-perturbation-bound-iid-plugin})
of the vanilla plug-in estimator $\bm{a}^{\top}\bm{u}_{l}$ might contain an additional ``bias'' term 
\begin{equation}
E_{\mathsf{md},l}^{\mathsf{bias}}\coloneqq\frac{\sigma^{2}n}{\lambda_{l}^{\star2}}\left|\bm{a}^{\top}\bm{u}_{l}^{\star}\right|\label{eq:defn-E-extra-denoising}
\end{equation}
when compared to that of the de-biased estimator (cf.~(\ref{eq:eigenvec-perturbation-bound-iid-debias})). It is natural to wonder if the theoretical guarantee of the plug-in estimator in \eqref{eq:eigenvec-perturbation-bound-iid-plugin} is tight or not. To answer the question, we develop the following lower bound on the estimation error of the plug-in estimator $u_{\bm{a}}^{\mathsf{plugin}}$; the proof is deferred to Appendix~\ref{sec:Proof-for-lower-bound-plugin}.

\begin{theorem}
\label{thm:lower-bound-plugin}
Instate the assumptions of Theorem~\ref{thm:evector-pertur-sym-iid}. Let $\bm{a}\in\mathbb{R}^{n}$ be any fixed vector
with $\|\bm{a}\|_{2}=1$. With probability at least $1/3$,
the plug-in estimator in (\ref{eq:defn-estimators-denoising}) satisfies
\begin{align}
\mathsf{dist}\left(u_{\bm{a}}^{\mathsf{plugin}},\bm{a}^{\top}\bm{u}_{l}^{\star}\right) \gtrsim \frac{\sigma^{2}n}{\lambda_{l}^{\star2}}\left|\bm{a}^{\top}\bm{u}_{l}^{\star}\right|.
\end{align}
\end{theorem}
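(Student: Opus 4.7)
The plan is to exploit the algebraic identity $u_{\bm{a}}^{\mathsf{plugin}} = u_{\bm{a}}^{\mathsf{debiased}}/\sqrt{1+b_l}$, together with the near-unbiasedness of the de-biased estimator already furnished by Theorem~\ref{thm:evector-pertur-sym-iid}, to extract a deterministic multiplicative bias in the plug-in estimator, and then to lower bound $b_l$ directly from the eigenvalues of $\bm{M}$.

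First, by choosing the sign of $\bm{u}_l$ so that $\bm{a}^\top \bm{u}_l$ aligns with $\bm{a}^\top \bm{u}_l^\star$, the SNR assumption $\sigma\sqrt{n}\le c_0\lambda_{\min}^\star$ in~\eqref{eq:eigengap-condition-iid} (which forces $\sigma^2 n/\lambda_l^{\star 2} \ll 1$) guarantees that $\mathsf{dist}(u_{\bm{a}}^{\mathsf{plugin}},\bm{a}^\top \bm{u}_l^\star) = |u_{\bm{a}}^{\mathsf{plugin}} - \bm{a}^\top \bm{u}_l^\star|$, since the other branch $|u_{\bm{a}}^{\mathsf{plugin}} + \bm{a}^\top \bm{u}_l^\star|$ is essentially $2|\bm{a}^\top \bm{u}_l^\star|$, far exceeding the target. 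With this sign convention I can decompose
\[
u_{\bm{a}}^{\mathsf{plugin}} - \bm{a}^\top \bm{u}_l^\star \;=\; \frac{\xi}{\sqrt{1+b_l}} \;-\; \frac{\sqrt{1+b_l}-1}{\sqrt{1+b_l}}\,\bm{a}^\top \bm{u}_l^\star,
\qquad \xi \coloneqq u_{\bm{a}}^{\mathsf{debiased}} - \bm{a}^\top \bm{u}_l^\star,
\]
where the second term is a deterministic multiplicative shrinkage of magnitude at least $\tfrac{b_l}{4}|\bm{a}^\top \bm{u}_l^\star|$ (using $\sqrt{1+b_l}-1 \ge b_l/3$ for $b_l\in(0,1]$).

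The next step is to lower bound $b_l = \sum_{r<i\le n}\sigma^2/(\lambda_l-\lambda_i)^2$ by $\Omega(\sigma^2 n/\lambda_l^{\star 2})$. This reduces to an upper bound on $|\lambda_l-\lambda_i|$ for all $i>r$: Weyl's inequality together with the standard concentration $\|\bm{H}\|\le 3\sigma\sqrt{n}$ (with probability $1-O(n^{-10})$) gives $|\lambda_l-\lambda_i|\le |\lambda_l^\star|+2\|\bm{H}\|\lesssim|\lambda_l^\star|$, since $\lambda_i^\star=0$ for $i>r$ and $\sigma\sqrt{n}\ll|\lambda_l^\star|$. Summing over the $n-r$ terms yields $b_l\gtrsim \sigma^2 n/\lambda_l^{\star 2}$, and $b_l\le 1$ is automatic under the same SNR condition. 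Consequently the deterministic bias in the decomposition above has magnitude at least $c_1\,\sigma^2 n|\bm{a}^\top \bm{u}_l^\star|/\lambda_l^{\star 2}$ with high probability.

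It remains to show that the noise term $\xi/\sqrt{1+b_l}$ does not obliterate this bias with constant probability. If $|\xi|\le \tfrac{b_l}{8}|\bm{a}^\top \bm{u}_l^\star|$ (which, by Theorem~\ref{thm:evector-pertur-sym-iid}, holds w.h.p.\ whenever $E_{\mathsf{md},l}$ is small relative to $\sigma^2 n|\bm{a}^\top \bm{u}_l^\star|/\lambda_l^{\star 2}$), a reverse triangle inequality immediately yields the claim. Otherwise $|\xi|$ itself is already of order at least the target, and here I plan to invoke an approximate symmetry argument: the leading-order part of $\xi$ is a zero-mean Gaussian linear functional of $\bm{H}$ arising from the first-order eigenvector expansion used to prove Theorem~\ref{thm:evector-pertur-sym-iid}, so by its continuous symmetric distribution, $\xi$ shares the sign of the bias with probability at least $1/2 - o(1)$; on that event the two terms reinforce rather than cancel, yielding $|u_{\bm{a}}^{\mathsf{plugin}}-\bm{a}^\top \bm{u}_l^\star|\ge |\xi|/\sqrt{1+b_l}\gtrsim \sigma^2 n|\bm{a}^\top \bm{u}_l^\star|/\lambda_l^{\star 2}$. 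Combining both cases delivers the claim with probability at least $1/3$. The main obstacle is precisely this symmetry step in the noise-dominant regime: one must quantitatively show that the non-linear remainder in the expansion of $u_{\bm{a}}^{\mathsf{debiased}}$ around $\bm{u}_l^\star$ is dominated by the leading Gaussian component on an event of constant probability. An attractive alternative is a pairing argument exploiting $\bm{H}\overset{\mathrm{d}}{=}-\bm{H}$: for any realization, either $\bm{H}$ or $-\bm{H}$ makes the noise sign agree with the bias sign, and a quantitative version of this observation suffices to furnish the required constant-probability lower bound.
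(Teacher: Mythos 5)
Your decomposition $u_{\bm{a}}^{\mathsf{plugin}}-\bm{a}^{\top}\bm{u}_{l}^{\star}=\xi/\sqrt{1+b_{l}}-\frac{\sqrt{1+b_{l}}-1}{\sqrt{1+b_{l}}}\,\bm{a}^{\top}\bm{u}_{l}^{\star}$, the bound $b_{l}\gtrsim\sigma^{2}n/\lambda_{l}^{\star2}$ (via $|\lambda_{l}-\lambda_{i}|\lesssim|\lambda_{l}^{\star}|$ for $i>r$ and $n-r\asymp n$), and the bias-dominant case $|\xi|\le\tfrac{b_{l}}{8}|\bm{a}^{\top}\bm{u}_{l}^{\star}|$ are all sound. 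The proof, however, is not complete: the entire difficulty of this theorem sits in the noise-dominant case, where you must rule out cancellation between the shrinkage bias and the fluctuation $\xi$, and there your argument is only a sketch. The claim that ``the leading-order part of $\xi$ is a zero-mean Gaussian linear functional of $\bm{H}$, so $\xi$ shares the sign of the bias with probability $1/2-o(1)$'' requires showing that the nonlinear remainder of the eigenvector expansion is dominated by that Gaussian term \emph{conditionally on} $|\xi|$ exceeding your threshold, on an event of constant probability; Theorem~\ref{thm:evector-pertur-sym-iid} gives no such distributional or conditional-sign information, and you do not supply it. The alternative pairing argument via $\bm{H}\overset{\mathrm{d}}{=}-\bm{H}$ is not an exact symmetry either: the eigenvector of $\bm{M}^{\star}-\bm{H}$ is not the reflection of that of $\bm{M}^{\star}+\bm{H}$ beyond first order, and $b_{l}$, $\theta$ and the bias term all change under $\bm{H}\mapsto-\bm{H}$, so ``either $\bm{H}$ or $-\bm{H}$ aligns the noise with the bias'' is not a valid statement without quantifying exactly the remainder you were trying to avoid.

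The paper closes this gap with an \emph{exact} symmetry rather than an approximate one. Writing (after reduction to the rank-one case) $\bm{u}=\bm{u}^{\star}\cos\theta+\bm{u}_{\perp}\sin\theta$, it observes that conditionally on $\theta$ the vector $\bm{u}_{\perp}$ is uniformly distributed on the unit sphere of $\mathrm{span}(\bm{u}^{\star})^{\perp}$ (the same rotational-invariance fact underlying Lemma~\ref{lemma:a-top-P-U-perp-u-perp}), so $s=\mathrm{sign}(\bm{a}^{\top}\bm{u}_{\perp})$ is an exact fair coin independent of $|\bm{a}^{\top}\bm{u}_{\perp}|$ and $\theta$. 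Hence with probability $1/2$ the realized sign is the maximizing one, and the distance equals $\max_{s'=\pm1}\big||\bm{a}^{\top}\bm{u}^{\star}|-|\bm{a}^{\top}\bm{u}^{\star}\cos\theta+s'|\bm{a}^{\top}\bm{u}_{\perp}|\sin\theta|\big|$, which a short deterministic two-case analysis bounds below by $(1-\cos^{2}\theta)|\bm{a}^{\top}\bm{u}^{\star}|/2$ or $|\bm{a}^{\top}\bm{u}^{\star}|\cos\theta$; the two-sided estimate $1-\cos^{2}\theta\asymp\sigma^{2}n/\lambda_{l}^{\star2}$ from \eqref{eq:cos-bound} (Lemma~\ref{lemma:lambda-M-l-inv-u-perp-H-u-l2-norm}) then yields the claim. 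To repair your proof you would need either this exact conditional sign-symmetry of the fluctuation or a genuine quantitative control of the nonlinear remainder; as written, the noise-dominant case is a gap.
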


 In short, Theorem~\ref{thm:lower-bound-plugin} demonstrates that it is impossible for the plug-in estimator to get rid of this ``bias'' term \eqref{eq:defn-E-extra-denoising}. 
The influence of this extra term becomes increasingly large and non-negligible
as the correlation of $\bm{a}$ and $\bm{u}_{l}^{\star}$ increases.
To demonstrate the possibly severe impact incurred by this additional
term, let us examine a simple case as follows.
\begin{itemize}
\item \emph{Example}. Suppose that $r=2$, $\lambda_{1}^{\star}=2\lambda_{2}^{\star}$
(so that $\lambda_{1}^{\star}-\lambda_{2}^{\star}\asymp\lambda_{1}^{\star}$),
$|\bm{a}^{\top}\bm{u}_{l}^{\star}|\asymp1$ and $\sigma\sqrt{n}\asymp|\lambda_{1}^{\star}|$.
As can be straightforwardly verified, the main term (\ref{eq:def:E-md})
and the addition term (\ref{eq:defn-E-extra-denoising}) in this example
satisfy
\begin{align*}
E_{\mathsf{md},1} & =\widetilde{O}\bigg(\frac{\sigma^{2}}{\lambda_{1}^{\star2}}\left|\bm{a}^{\top}\bm{u}_{1}^{\star}\right|+\frac{\sigma}{|\lambda_{1}^{\star}|}\left|\bm{a}^{\top}\bm{u}_{2}^{\star}\right|+\frac{\sigma}{|\lambda_{1}^{\star}|}\bigg)=\widetilde{O}\bigg(\frac{\sigma^{2}}{\lambda_{1}^{\star2}}+\frac{\sigma}{|\lambda_{1}^{\star}|}\bigg)=\widetilde{O}\bigg(\frac{1}{\sqrt{n}}\bigg);\\
E_{\mathsf{md},1}^{\mathsf{bias}} & \asymp\frac{\sigma^{2}n}{\lambda_{1}^{\star2}}\left|\bm{a}^{\top}\bm{u}_{1}^{\star}\right|\asymp1.
\end{align*}
In other words, the additional bias term $E_{\mathsf{md},1}^{\mathsf{bias}}$
could be a factor of $\widetilde{O}(\sqrt{n})$ times larger than the main term $E_{\mathsf{md},1}$ in this case, and cannot be neglected.  
\end{itemize}
The above discussion reveals the necessity of proper bias correction
in order to mitigate the undesired effect of the bias term $E_{\mathsf{md},l}^{\mathsf{bias}}$.
Aimed at addressing this issue, our de-biased estimator $u_{\bm{a}}^{\mathsf{debiased}}$
compensates for the bias term $E_{\mathsf{md},l}^{\mathsf{bias}}$
by properly rescaling the plug-in estimator by a data-driven correction
factor $\sqrt{1+b_{l}}$. 
Note that when the signal-to-noise ratio is sufficiently large such that $|\lambda_l^{\star}| \gtrsim \sigma n$, then $E_{\mathsf{md},l}$ becomes the dominant term in the error bound; in such a case, there is no need for bias correction.

\paragraph{Comparisons with prior works. }

While estimation of linear forms of eigenvectors remains largely under-explored
in the literature, a small number of prior works have studied this
problem or its variants. Among them, perhaps the one that is the closest
to the current paper is \citet{koltchinskii2016perturbation}, which
considered estimating linear forms of singular vectors under i.i.d.~Gaussian
noise. In what follows, we briefly compare our result with \citet{koltchinskii2016perturbation},
focusing on the setting where the ground-truth matrix is symmetric
(so that the eigenvectors and the singular vectors become identical
up to global signs). 
\begin{itemize}
\item To begin with, the theory in \citet[Theorem~1.3]{koltchinskii2016perturbation}
operates under the assumption 
\[
\Delta_{l}^{\star}=\Omega\big(\mathbb{E}[\|\bm{H}\|]\big)=\Omega(\sigma\sqrt{n}),
\]
which is $\widetilde{O}\big(\sqrt{n/r}\big)$ times more stringent
than the eigen-gap condition imposed in our theory (see (\ref{eq:eigen-gap-comparison-denoising})). 
\item The estimation bias of the plug-in estimator was already pointed out
in \citet{koltchinskii2016perturbation}. However, the approach proposed
in \citet{koltchinskii2016perturbation} required additional independent
copies of $\bm{M}$ in order to estimate --- and hence correct ---
the bias effect (see \citet[Section 1]{koltchinskii2016perturbation}).
By contrast, our de-biased estimator does not require an additional
set of data samples and allows one to use all available information
fully. 
\item Next, we compare our theoretical guarantee with the one developed
for the de-biased estimator $u_{\bm{a}}^{\mathsf{debiased,KD}}$ proposed
in \citet{koltchinskii2016perturbation}. When $r\asymp1$, \citet[Theorem~1.3]{koltchinskii2016perturbation}
asserts that
\[
\mathsf{dist}\left(u_{\bm{a}}^{\mathsf{debiased,KD}},\bm{a}^{\top}\bm{u}_{l}^{\star}\right)\leq\widetilde{O}\bigg(\frac{\sigma}{\Delta_{l}^{\star}}\bigg)\eqqcolon E_{\mathsf{md},l}^{\mathsf{KD}},
\]
provided that $\Delta_{l}^{\star}\gtrsim\sigma\sqrt{n}$. This result,
however, might fall short of attaining minimax optimality. More specifically,
comparing our error bound $E_{\mathsf{md},l}$ (cf.~(\ref{eq:def:E-md}))
with $E_{\mathsf{md},l}^{\mathsf{KD}}$ makes clear that the theoretical
gain is on the order of
\[
\frac{E_{\mathsf{md},l}^{\mathsf{KD}}}{E_{\mathsf{md},l}}=\widetilde{O}\left(\frac{\Delta_{l}^{\star}}{\sigma\left|\bm{a}^{\top}\bm{u}_{l}^{\star}\right|}\,\land\,\frac{1}{\sum_{k:k\neq l}\left|\bm{a}^{\top}\bm{u}_{k}^{\star}\right|}\,\land\,\frac{\left|\lambda_{l}^{\star}\right|}{\Delta_{l}^{\star}}\right).
\]
For concreteness, consider the case with $r\asymp1$, $|\bm{a}^{\top}\bm{u}_{k}^{\star}|\asymp1/\sqrt{n}$
for all $k\neq l$, $\Delta_{l}^{\star}\asymp\left|\lambda_{l}^{\star}\right|/\sqrt{n}$,
and $\Delta_{l}^{\star}\asymp\sigma\sqrt{n}$, thus leading to the
gain
\[
\frac{E_{\mathsf{md},l}^{\mathsf{KD}}}{E_{\mathsf{md},l}}=\widetilde{O}\left(\sqrt{n}\right).
\]
In other words, our results might lead to considerable theoretical
improvement over \citet{koltchinskii2016perturbation} in the presence
of a small eigen-gap. 
\begin{remark}
	Note that the case $|\bm{a}^{\top}\bm{u}_{k}^{\star}|\asymp1/\sqrt{n}$ is of particular interest if one studies entrywise statistical performance; namely, when $\bm{a}$ is taken to be the standard basis (i.e.~$\bm{a} = \bm{e}_i$ for some $i \in [n]$) and when the energy of $\bm{u}_{k}^{\star}$ is more or less spread out across all entries (i.e.~$\|\bm{u}_{k}^{\star}\|_\infty \asymp \| \bm{u}_{k}^{\star} \|_2 / \sqrt{n}$).
\end{remark}
\end{itemize}

\subsection{Principal component analysis}

Next, we turn attention to the problem of principal component analysis
as formulated in Section~\ref{subsec:Principal-component-analysis-model}.
Denote by 
\[
\bm{S}=[\bm{s}_{1},\cdots,\bm{s}_{n}]\in\mathbb{R}^{p\times n}
\]
 the data matrix whose columns consist of i.i.d.~samples $\bm{s}_{i}\overset{\mathrm{i.i.d.}}{\sim}\mathcal{N}(\bm{0},\bm{\Sigma})$,
and let $\lambda_{l}$ represent the $l$-th largest eigenvalue of
$\frac{1}{n}\bm{S}\bm{S}^{\top}$ with associated eigenvector $\bm{u}_{l}$.
Our focus is the following two estimators aimed at estimating the
linear form $\bm{a}^{\top}\bm{u}_{l}^{\star}$ $(1\leq l\leq r)$. 
\begin{itemize}
\item A plug-in estimator: \begin{subequations}\label{subsec:estimators-PCA}
\begin{equation}
u_{\bm{a}}^{\mathsf{plugin}}\coloneqq\bm{a}^{\top}\bm{u}_{l};\label{eq:def-plug-in-PCA}
\end{equation}
\item A ``de-biased'' estimator:
\begin{equation}
u_{\bm{a}}^{\mathsf{debiased}}\coloneqq\sqrt{1+c_{l}}\,\bm{a}^{\top}\bm{u}_{l}.\label{eq:def-debiased-PCA}
\end{equation}
\end{subequations}Here, $c_{l}$ is a quantity that can be directly
computed using the spectrum of $\frac{1}{n}\bm{S}\bm{S}^{\top}$ as
follows:
\begin{equation}
c_{l}\coloneqq\begin{dcases}
\frac{\lambda_{l}}{n+\sum_{i:\,r<i\le n}\frac{\lambda_{i}}{\lambda_{l}-\lambda_{i}}}\sum\limits _{i:\,r<i\le n}\frac{\lambda_{i}}{(\lambda_{l}-\lambda_{i})^{2}}, & \text{if }n\geq p,\\
\frac{\frac{\sigma^{2}p}{n}}{\lambda_{l}-\frac{\sigma^{2}p}{n}}+\frac{\lambda_{l}}{\lambda_{l}-\frac{\sigma^{2}p}{n}}\frac{\lambda_{l}}{n+\sum_{i:\,r<i\le n}\frac{\lambda_{i}}{\lambda_{l}-\lambda_{i}}}\sum\limits _{i:\,r<i\le n}\frac{\lambda_{i}-\frac{\sigma^{2}p}{n}}{(\lambda_{l}-\lambda_{i})^{2}},\qquad & \text{if }n<p,
\end{dcases}\label{eq:def:bl-pca}
\end{equation}
without any need of using sample splitting. 
\end{itemize}
Akin to the matrix denoising counterpart, the plug-in estimator (\ref{eq:def-plug-in-PCA})
often incurs some non-negligible estimation bias, which motivates
the design of the adjusted estimator (\ref{eq:def-debiased-PCA})
to compensate for the bias. 

We are now ready to present our statistical guarantees for the two
estimators introduced in (\ref{subsec:estimators-PCA}). 

\begin{theorem}\label{thm:evector-pertur-sym-iid-pca}Consider any
$1\leq l\leq r$, and assume that \begin{subequations}
\begin{equation}
\lambda_{\max}^{\star}\sqrt{\frac{r}{n}}+\sqrt{\lambda_{\max}^{\star}\sigma^{2}\frac{p}{n}}+\sigma^{2}\bigg(\frac{p}{n}+\sqrt{\frac{p}{n}}\bigg)\le C_{0}\frac{\lambda_{\min}^{\star}}{\log^{2}n}\label{eq:noise-condition-iid-pca}
\end{equation}
\begin{equation}
\text{and}\qquad\Delta_{l}^{\star}>C_{1}(\lambda_{\max}^{\star}+\sigma^{2})\sqrt{\frac{r}{n}}\log n\label{eq:eigengap-condition-iid-pca}
\end{equation}
\end{subequations} hold for some sufficiently small (resp.~large)
constant $C_{0}>0$ (resp.~$C_{1}>0$). Consider any fixed vector
$\bm{a}\in\mathbb{R}^{p}$ with $\|\bm{a}\|_{2}=1$. Then with probability
at least $1-O(n^{-10})$, the estimators in (\ref{subsec:estimators-PCA})
satisfy \begin{subequations}\label{eq:eigenvec-perturbation-bound-pca}
\begin{align}
\mathsf{dist}\left(u_{\bm{a}}^{\mathsf{plugin}},\bm{a}^{\top}\bm{u}_{l}^{\star}\right) & \lesssim E_{\mathsf{PCA},l}+\frac{(\lambda_{l}^{\star}+\sigma^{2})\sigma^{2}p}{\lambda_{l}^{\star2}n}\left|\bm{a}^{\top}\bm{u}_{l}^{\star}\right|,\label{eq:eigenvec-perturbation-bound-pca-plugin}\\
\mathsf{dist}\left(u_{\bm{a}}^{\mathsf{debiased}},\bm{a}^{\top}\bm{u}_{l}^{\star}\right) & \lesssim E_{\mathsf{PCA},l},\label{eq:eigenvec-perturbation-bound-pca-debias}
\end{align}
\end{subequations} where the quantity $E_{\mathsf{PCA},l}$ is defined
as
\begin{align}
E_{\mathsf{PCA},l} & \coloneqq\frac{(\lambda_{\max}^{\star}+\sigma^{2})(\lambda_{l}^{\star}+\sigma^{2})\,r\log n}{(\Delta_{l}^{\star})^{2}n}\left|\bm{a}^{\top}\bm{u}_{l}^{\star}\right|+\sqrt{\frac{(\lambda_{\max}^{\star}+\sigma^{2})\,\sigma^{2}\kappa^2r}{\lambda_{l}^{\star2}n}} \log^2 n\nonumber \\
 & \qquad+\sum_{k:k\ne l}\frac{\left|\bm{a}^{\top}\bm{u}_{k}^{\star}\right|}{\left|\lambda_{l}^{\star}-\lambda_{k}^{\star}\right|\sqrt{n}}\sqrt{(\lambda_{l}^{\star}+\sigma^{2})(\lambda_{\max}^{\star}+\sigma^{2})(\kappa^{2}+r)\log\bigg(\frac{n\kappa\lambda_{\max}}{\Delta_{l}^{\star}}\bigg)} .\label{eq:def:E-pca}
\end{align}
\end{theorem}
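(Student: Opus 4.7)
The plan is to leverage the strategy developed for Theorem~\ref{thm:evector-pertur-sym-iid} by first recasting the sample covariance matrix as a noisy perturbation of $\bm{\Sigma}^{\star}$. Decompose each sample as $\bm{s}_{i}=\bm{U}^{\star}(\bm{\Lambda}^{\star})^{1/2}\bm{x}_{i}+\sigma\bm{y}_{i}$ with independent standard Gaussians $\bm{x}_{i}\in\mathbb{R}^{r}$ and $\bm{y}_{i}\in\mathbb{R}^{p}$, so that
\begin{align*}
\tfrac{1}{n}\bm{S}\bm{S}^{\top}=\bm{\Sigma}^{\star}+\sigma^{2}\bm{I}_{p}+\bm{N},
\end{align*}
where $\bm{N}$ collects the centered Wishart-type fluctuations $\sigma^{2}(\tfrac{1}{n}\bm{Y}\bm{Y}^{\top}-\bm{I}_{p})$, the signal-only centered term $\bm{U}^{\star}(\bm{\Lambda}^{\star})^{1/2}(\tfrac{1}{n}\bm{X}\bm{X}^{\top}-\bm{I}_{r})(\bm{\Lambda}^{\star})^{1/2}\bm{U}^{\star\top}$, and two signal--noise cross terms. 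Since adding the multiple $\sigma^{2}\bm{I}_{p}$ of the identity does not alter eigenvectors, $\bm{u}_{l}$ coincides with the $l$-th eigenvector of the ``denoising-style'' matrix $\bm{\Sigma}^{\star}+\bm{N}$; the new challenge relative to Theorem~\ref{thm:evector-pertur-sym-iid} is that $\bm{N}$ has a structured (Wishart) distribution rather than i.i.d.\ Gaussian entries.

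Second, I would carry out a second-order eigenvector perturbation expansion. Using a Neumann/resolvent expansion adapted to the small-eigen-gap regime (the core of Section~\ref{sec:Analysis}), we obtain schematically
\begin{align*}
\bm{a}^{\top}\bm{u}_{l}\approx(1-\tfrac{1}{2}\beta_{l})\,\bm{a}^{\top}\bm{u}_{l}^{\star}+\sum_{k\neq l}\frac{\bm{u}_{k}^{\star\top}\bm{N}\bm{u}_{l}^{\star}}{\lambda_{l}^{\star}-\lambda_{k}^{\star}}\,\bm{a}^{\top}\bm{u}_{k}^{\star}+(\bm{a}^{\top}\bm{U}^{\star\perp})\,\bm{w}+\text{higher order},
\end{align*}
where $\beta_{l}$ is a second-order self-interaction of the form $\sum_{k\neq l}(\bm{u}_{k}^{\star\top}\bm{N}\bm{u}_{l}^{\star})^{2}/(\lambda_{l}^{\star}-\lambda_{k}^{\star})^{2}$ plus a $\bm{U}^{\star\perp}$-contribution. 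The crucial observation is that the expectation of $\beta_{l}$, once evaluated under the Wishart structure of $\bm{N}$ and expressed in terms of the sample eigenvalues of $\tfrac{1}{n}\bm{S}\bm{S}^{\top}$, matches precisely the quantity $c_{l}$ defined in~\eqref{eq:def:bl-pca}; the denominator $n+\sum_{i>r}\lambda_{i}/(\lambda_{l}-\lambda_{i})$ and the separate treatment of $n\ge p$ versus $n<p$ both arise from a Marchenko--Pastur-type fixed-point relation between the bulk sample eigenvalues and $\sigma^{2}$. Multiplying the plug-in estimator by $\sqrt{1+c_{l}}$ therefore cancels the leading multiplicative bias, yielding~\eqref{eq:eigenvec-perturbation-bound-pca-debias}, while leaving this correction out produces the residual $(\lambda_{l}^{\star}+\sigma^{2})\sigma^{2}p/(\lambda_{l}^{\star2}n)$ term in~\eqref{eq:eigenvec-perturbation-bound-pca-plugin}.

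Third, I would control the stochastic fluctuations to match $E_{\mathsf{PCA},l}$. Each quadratic form $\bm{u}_{k}^{\star\top}\bm{N}\bm{u}_{l}^{\star}$ is a centered sum of independent rank-one Gaussian pieces and, via Hanson--Wright, concentrates at the scale $\sqrt{(\lambda_{l}^{\star}+\sigma^{2})(\lambda_{\max}^{\star}+\sigma^{2})\log n/n}$, giving the $k\neq l$ sum in~\eqref{eq:def:E-pca}. The orthogonal-projection term $\bm{a}^{\top}\bm{U}^{\star\perp}\bm{w}$ couples $\bm{a}$ to the perturbed direction through $\bm{N}$; to decouple $\bm{u}_{l}$ from the individual samples and avoid a loose union bound, I would use a leave-one-out construction (as in the analysis outlined in Section~\ref{sec:Analysis}), which yields the $\sqrt{r}$ enhancement in the second summand of $E_{\mathsf{PCA},l}$. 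Eigenvalue concentration for Wishart matrices, applied to $\lambda_{l}$ and to the bulk eigenvalues $\{\lambda_{i}\}_{i>r}$, ensures that the data-driven $c_{l}$ is close to its population analogue and that $|\lambda_{l}-\lambda_{l}^{\star}-\sigma^{2}|\ll\Delta_{l}^{\star}$ under~\eqref{eq:eigengap-condition-iid-pca}, enabling the resolvent expansion around $\lambda_{l}^{\star}$ to be valid.

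The main obstacle, as in the denoising case, is that the operator-norm of $\bm{N}$ can substantially exceed $\Delta_{l}^{\star}$, so the Neumann series does not converge globally and the classical Davis--Kahan control fails. Handling this requires splitting the spectrum into a ``near $\lambda_{l}^{\star}$'' block (the other signal eigenvalues $\{\lambda_{k}^{\star}\}_{k\neq l}$ that may be dangerously close) and a ``far'' block (the noise bulk), treating the near block by an exact local analysis and the far block via smooth resolvent bounds and contour integration along a narrow contour enclosing only $\lambda_{l}^{\star}$. A related subtlety is the self-consistency of $c_{l}$: because it is built from the perturbed eigenvalues, one must show its stability against both the signal-subspace shifts and, in the regime $n<p$, the $\sigma^{2}p/n$ Marchenko--Pastur displacement of the bulk, which is where the piecewise definition of $c_{l}$ in~\eqref{eq:def:bl-pca} becomes essential.
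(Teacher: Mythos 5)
Your proposal is not a faithful reconstruction of the paper's argument, and its central step has a genuine gap. The schematic second-order expansion
\begin{align*}
\bm{a}^{\top}\bm{u}_{l}\approx\big(1-\tfrac{1}{2}\beta_{l}\big)\,\bm{a}^{\top}\bm{u}_{l}^{\star}+\sum_{k\neq l}\frac{\bm{u}_{k}^{\star\top}\bm{N}\bm{u}_{l}^{\star}}{\lambda_{l}^{\star}-\lambda_{k}^{\star}}\,\bm{a}^{\top}\bm{u}_{k}^{\star}+\cdots
\end{align*}
is precisely the object that breaks down in the regime this theorem is designed for: under \eqref{eq:eigengap-condition-iid-pca} the gap $\Delta_{l}^{\star}$ may be of the same order as the fluctuations $\bm{u}_{k}^{\star\top}\bm{N}\bm{u}_{l}^{\star}$ and far below $\|\frac{1}{n}\bm{S}\bm{S}^{\top}-\bm{\Sigma}\|$, so the expansion parameter is $O(1)$ and neither the Neumann series nor a contour of radius $\asymp\Delta_{l}^{\star}$ around $\lambda_{l}^{\star}$ gives controllable bounds (the resolvent norm $\asymp1/\Delta_{l}^{\star}$ times the perturbation is not small). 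You acknowledge this and defer to an ``exact local analysis'' of the near block, but that unspecified piece is exactly where all the work lies. The paper avoids any expansion: Theorem~\ref{thm:master-thm-vector} gives \emph{exact} identities \eqref{eq:cos-eig-linear-form-pca}--\eqref{eq:a-u-eig-linear-form-pca} for $\cos^{2}\theta$ and $\bm{u}_{l,\perp}$ in terms of the resolvent of the minor $\frac{1}{n}\bm{S}_{l,\perp}\bm{S}_{l,\perp}^{\top}$, which is statistically independent of the Gaussian vector $\bm{s}_{l,\parallel}$ it acts on; combining this independence with the spectral control of the minor (Lemma~\ref{lemma:lambda-S-minus-spectrum-1-1}, via the eigenvalue theory of Theorem~\ref{thm:eigval-pertur-sym-iid-pca} and interlacing), an epsilon-net in $\lambda$ (Lemma~\ref{lemma:eps-net}), and rotational invariance for the $\bm{P}_{\bm{U}^{\star\perp}}$ term (Lemma~\ref{lemma:a-top-P-U-perp-u-perp-pca}) is what makes the small-gap regime tractable.

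A second, related gap is your assertion that $\mathbb{E}[\beta_{l}]$ ``matches precisely'' the data-driven $c_{l}$ of \eqref{eq:def:bl-pca}. Establishing this to the accuracy needed in \eqref{eq:eigenvec-perturbation-bound-pca-debias} is essentially the content of Lemma~\ref{lemma:lambda-M-l-inv-u-perp-M-u-l2-norm-1-pca} together with Lemmas~\ref{lemma:lambda-M-r-n-pca}--\ref{lemma:lambda-M-r-n-pca-precise}: there the self-normalizing denominator $n+\sum_{i>r}\lambda_{i}/(\lambda_{l}-\lambda_{i})$ and the separate $n<p$ branch emerge from the exact eigenvalue relation $n=\sum_{i}(\bm{v}_{i}^{(l)\top}\bm{s}_{l,\parallel}^{\top})^{2}/(\lambda_{l}-\gamma_{i}^{(l)})$ (a consequence of \eqref{eq:lambda-rank1}) and from interlacing between $\{\gamma_{i}^{(l)}\}$ and $\{\lambda_{i}\}$, not from a Marchenko--Pastur heuristic; a Wishart-expectation computation of $\beta_{l}$ would produce a population quantity, and you would still have to prove it agrees with the empirical $c_{l}$ at the rate $o(E_{\mathsf{PCA},l})$. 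Finally, two smaller inaccuracies: the $\bm{P}_{\bm{U}^{\star\perp}}$ term is handled in the paper by rotational invariance (uniformity of the perturbed direction on the sphere in the complement subspace), not by leave-one-out, and the $\kappa\sqrt{r}$ factor in the second summand of \eqref{eq:def:E-pca} comes from the de-biasing error estimates in Lemma~\ref{lemma:lambda-M-l-inv-u-perp-M-u-l2-norm-1-pca}, not from such a decoupling device.
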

\begin{remark}
Akin to the matrix denoising case, 
while the expression of the de-biasing term $c_l$ (cf.~\eqref{eq:def:bl-pca}) is fully data-driven and preferable in practice, 
we remark that the Marchenko-Pastur law allows us to approximate  the de-biasing term $c_l$ as follows (as $n$ grows)
\begin{equation}
c_{l}~\approx~\begin{dcases}
\frac{\lambda_{l}}{1+\int\frac{\lambda}{\lambda_{l}-\lambda}\mu(\mathrm{d}\lambda)}\int\frac{\lambda}{(\lambda_{l}-\lambda)^{2}}\mu(\mathrm{d}\lambda), & \text{if }n\geq p,\\
\frac{\frac{\sigma^{2}p}{n}}{\lambda_{l}-\frac{\sigma^{2}p}{n}}+\frac{\lambda_{l}}{\lambda_{l}-\frac{\sigma^{2}p}{n}}\frac{\lambda_{l}}{1+\int \frac{\lambda}{\lambda_{l}-\lambda}\mu(\mathrm{d}\lambda)}\int\frac{\lambda-\frac{\sigma^{2}p}{n}}{(\lambda_{l}-\lambda)^{2}}\mu(\mathrm{d}\lambda),\qquad & \text{if }n<p,
\end{dcases}
\end{equation}
where 
\begin{align}
\mu(\mathrm{d}\lambda) = \frac{n\sqrt{(\lambda_+-\lambda)(\lambda-\lambda_-)}}{2\pi\sigma^2p\lambda}\mathds{1}\{\lambda_-\le \lambda\le \lambda_+ \}\mathrm{d} \lambda\quad \text{with} \quad \lambda_{\pm} = \sigma^2(1\pm\sqrt{p/n})^2.
\end{align}
\end{remark}

\paragraph{Implications.}

In short, Theorem~\ref{thm:evector-pertur-sym-iid-pca} characterizes
the statistical accuracy of both the plug-in estimator and the modified
de-biased estimator, the latter of which enjoys improved statistical
guarantees. In the sequel, we single out a few implications of this
result.
\begin{itemize}
\item \emph{Estimation guarantees.} Let us first assess the statistical
error bound of the de-biased estimator (namely, $E_{\mathsf{PCA},l}$
in (\ref{eq:def:E-pca})). For simplicity of presentation, we shall
focus on the case with $r,\kappa\asymp1$, where the error term $E_{\mathsf{PCA},l}$
admits the following simpler expression
\begin{align}
E_{\mathsf{PCA},l} & =\widetilde{\Theta}\bigg(\frac{(\lambda_{l}^{\star}+\sigma^{2})^{2}}{(\Delta_{l}^{\star})^{2}n}\big|\bm{a}^{\top}\bm{u}_{l}^{\star}\big|+(\lambda_{l}^{\star}+\sigma^{2})\max_{k:k\ne l}\frac{\left|\bm{a}^{\top}\bm{u}_{k}^{\star}\right|}{\left|\lambda_{l}^{\star}-\lambda_{k}^{\star}\right|\sqrt{n}}+\frac{\sigma}{\lambda_{l}^{\star}\sqrt{n}}\sqrt{\lambda_{l}^{\star}+\sigma^{2}}\bigg).\label{eq:defn-E-pca-simpler}
\end{align}
In particular, the first term on the right-hand side of (\ref{eq:defn-E-pca-simpler})
quantifies the role of the ground truth $\bm{a}^{\top}\bm{u}_{l}^{\star}$
on the estimation error, which scales inverse quadratically in the
eigen-gap $\Delta_{l}^{\star}$; the second term on the right-hand
side of (\ref{eq:defn-E-pca-simpler}) can be understood as the additional
interference resulting from the linear form of other eigenvectors
(namely, $\bm{a}^{\top}\bm{u}_{k}^{\star}$ for $k\neq l$), which
is inversely proportional to the corresponding eigen-gap $\left|\lambda_{l}^{\star}-\lambda_{k}^{\star}\right|$. 
\item \emph{Relaxed eigen-gap condition}. To simplify discussions, let us
again focus on the case with $r,\kappa\asymp1$ and omit logarithmic
factors. Classical matrix perturbation theory (e.g., the Davis-Kahan
$\sin\bm{\Theta}$ theorem \citep{davis1970rotation}) requires the
eigen-gap to exceed the size of perturbation, namely, 
\[
\Delta_{l}^{\star}\gtrsim\Big\|\frac{1}{n}\bm{S}\bm{S}^{\top}-\bm{\Sigma}\Big\|.
\]
As it turns out, the eigen-gap requirement above leads to the following
condition (by invoking the high-probability bound to be presented
shortly in Lemma~\ref{lemma:spectral-M-Sigma})
\[
\Delta_{l}^{\star}\gtrsim\frac{\lambda_{l}^{\star}}{\sqrt{n}}+\sqrt{\frac{\lambda_{l}^{\star}\sigma^{2}p}{n}}+\sigma^{2}\bigg(\sqrt{\frac{p}{n}}+\frac{p}{n}\bigg)=:\mathsf{gap}_{\mathsf{DK}}.
\]
In comparison, the eigen-gap condition (\ref{eq:eigengap-condition-iid-pca})
in Theorem \ref{thm:evector-pertur-sym-iid-pca} reads
\[
\Delta_{l}^{\star}\gtrsim\frac{\lambda_{l}^{\star}+\sigma^{2}}{\sqrt{n}}=:\mathsf{gap}.
\]
To better understand and compare these two eigen-gap requirements,
we shall discuss them for a couple of distinct scenarios.
\begin{itemize}
\item If $\sigma^{2}\big(\sqrt{\frac{p}{n}}+\frac{p}{n}\big)\lesssim\lambda_{l}^{\star}\lesssim\sigma^{2}$
(the sample size needs to satisfy $n\geq p$ by the assumption (\ref{eq:noise-condition-iid-pca})),
the eigen-gap conditions above simplify to
\[
\mathsf{gap}\asymp\frac{\sigma^{2}}{\sqrt{n}}\qquad\text{and}\qquad\mathsf{gap}_{\mathsf{DK}}\asymp\sigma^{2}\sqrt{\frac{p}{n}}.
\]
\[
\Longrightarrow\qquad\frac{\mathsf{gap}_{\mathsf{DK}}}{\mathsf{gap}}\asymp\sqrt{p}.
\]
\item If $\sigma^{2}\lesssim\lambda_{l}^{\star}\lesssim\sigma^{2}p$, then
one has
\[
\mathsf{gap}\asymp\frac{\lambda_{l}^{\star}}{\sqrt{n}}\qquad\text{and}\qquad\mathsf{gap}_{\mathsf{DK}}\asymp\sqrt{\frac{\lambda_{l}^{\star}\sigma^{2}p}{n}}+\frac{\sigma^{2}p}{n}.
\]
Comparing these two terms reveals that
\[
\frac{\mathsf{gap}_{\mathsf{DK}}}{\mathsf{gap}}\asymp\sqrt{\frac{\sigma^{2}p}{\lambda_{l}^{\star}}}\bigg(1+\sqrt{\frac{\sigma^{2}p}{\lambda_{l}^{\star}n}}\bigg)\overset{(\mathrm{i})}{\asymp}\sqrt{\frac{\sigma^{2}p}{\lambda_{l}^{\star}}}\overset{(\mathrm{ii})}{\gtrsim}1,
\]
where (i) holds due to the assumption (\ref{eq:noise-condition-iid-pca})
and (ii) follows from the condition $\lambda_{l}^{\star}\lesssim\sigma^{2}p$.
\item If $\lambda_{l}^{\star}\gtrsim\sigma^{2}p$, then it is straightforward
to see that 
\[
\mathsf{gap}\asymp\mathsf{gap}_{\mathsf{DK}}\asymp\frac{\lambda_{l}^{\star}}{\sqrt{n}}.
\]
\end{itemize}
To sum up, our eigen-gap requirement (\ref{eq:eigengap-condition-iid-pca})
is 
\[
\Omega\left(\sqrt{p\Big(1\wedge\frac{\sigma^{2}}{\lambda_{l}^{\star}}\Big)}\vee1\right)
\]
 times less stringent than the one demanded in classical matrix perturbation
theory, thereby justifying the improvement of our results upon prior
art.
In addition, we note that \citep{bao2022statistical} also considered statistical inference for principal components of spike covariance matrices; when $\sigma = 1$, the eigen-gap therein needs to satisfy $\Delta_l^\star \gtrsim n^{-1/2 + \epsilon}$ for an arbitrary small fixed constant $\epsilon > 0$, thereby leading to a more stringent condition than ours. 

\item \emph{Bias reduction}. Similar to the matrix denoising case, the plug-in
estimator $\bm{a}^{\top}\bm{u}_{l}$ suffers from the following extra
``bias'' term in comparison to the de-biased estimator (\ref{eq:eigenvec-perturbation-bound-pca-debias}):
\begin{equation}
E_{\mathsf{pca},l}^{\mathsf{bias}}:=\frac{(\lambda_{l}^{\star}+\sigma^{2})\sigma^{2}p}{\lambda_{l}^{\star2}n}\left|\bm{a}^{\top}\bm{u}_{l}^{\star}\right|.\label{eq:defn-E-extra-pca}
\end{equation}
If $\bm{a}$ and $\bm{u}_{l}^{\star}$ are fairly correlated, then
this additional term becomes non-negligible and might affect the estimation
accuracy negatively. To see this, let us consider the following simple
case.
\begin{itemize}
\item \emph{Example}. Assume that $r=2$, $\lambda_{1}^{\star}=2\lambda_{2}^{\star}>0$,
$|\bm{a}^{\top}\bm{u}_{1}^{\star}|\asymp1$, $\sigma^{2}\asymp\lambda_{1}^{\star}$
and $p\asymp n$. As can be straightforwardly verified, the error
terms (\ref{eq:def:E-pca}) and (\ref{eq:defn-E-extra-pca}) in this
case become
\begin{align*}
E_{\mathsf{pca},1} & =\widetilde{O}\bigg(\frac{1}{n}\left|\bm{a}^{\top}\bm{u}_{1}^{\star}\right|+\frac{1}{\sqrt{n}}\left|\bm{a}^{\top}\bm{u}_{2}^{\star}\right|+\frac{1}{\sqrt{n}}\bigg)=\widetilde{O}\Big(\frac{1}{\sqrt{n}}\Big);\\
E_{\mathsf{pca},1}^{\mathsf{bias}} & \asymp\frac{p}{n}\left|\bm{a}^{\top}\bm{u}_{1}^{\star}\right|\asymp1.
\end{align*}
In other words, the bias term $E_{\mathsf{pca},1}^{\mathsf{bias}}$
could be $\sqrt{n}$ times larger than the error term $E_{\mathsf{pca},1}^{\mathsf{bias}}$
(up to some logarithmic factor). 
\end{itemize}
As a takeaway message from the above example, it is crucial to reduce
the bias incurred by $E_{\mathsf{pca},1}^{\mathsf{bias}}$. The proposed
de-biased estimator $u_{\bm{a}}^{\mathsf{debiased}}$ achieves bias
reduction by enlarging the plug-in estimator by a factor of $\sqrt{1+c_{l}}$,
where $c_{l}$ is computable in a data-driven manner. It is worth
noting that the factor $c_{l}$ (cf.~(\ref{eq:def:bl-pca})) takes
two different forms, depending on the relative ratio between the sample
size $n$ and the dimension $p$.

\end{itemize}

\paragraph{Minimax lower bounds and optimality. }

In order to evaluate the tightness of our statistical guarantees,
we develop minimax lower bounds for PCA. Here and below, we denote
by $\bm{u}_{l}(\bm{\Sigma})\in\mathbb{R}^{p}$ the eigenvector associated
with the $l$-th largest eigenvalue of a matrix $\bm{\Sigma}$, and
we define two sets of covariance matrices as follows:
\begin{align*}
\mathcal{M}_{1}(\bm{\Sigma}^{\star}) & :=\left\{ \bm{\Sigma}\in\mathbb{R}^{p\times p}\,\colon\,\mathsf{rank}(\bm{\Sigma})=r,\,\lambda_{i}(\bm{\Sigma})=\lambda_{i}^{\star}\,(1\leq i\leq r),\,\|\bm{\Sigma}-\bm{\Sigma}^{\star}\|_{\mathrm{F}}\leq\max_{k:\,k\neq l}\sqrt{\frac{(\lambda_{l}^{\star}+\sigma^{2})(\lambda_{k}^{\star}+\sigma^{2})}{n}}\right\} .\\
\mathcal{M}_{2}(\bm{\Sigma}^{\star}) & :=\left\{ \bm{\Sigma}\in\mathbb{R}^{p\times p}\,\colon\,\mathsf{rank}(\bm{\Sigma})=r,\,\lambda_{i}(\bm{\Sigma})=\lambda_{i}^{\star}\,(1\leq i\leq r),\,\|\bm{u}_{l}(\bm{\Sigma})-\bm{u}_{l}^{\star}\|_{\mathrm{2}}\leq\sqrt{\frac{(\lambda_{l}^{\star}+\sigma^{2})\sigma^{2}}{\lambda_{l}^{\star2}n}}\right\} .
\end{align*}
\begin{theorem}\label{thm:minimax-evector-pertur-sym-iid-pca}Consider
any fixed vector $\bm{a}\in\mathbb{R}^{p}$. For any given $\bm{\Sigma}$,
let $\{\bm{s}_{i}\}_{i=1}^{n}$ be independent samples satisfying
$\bm{s}_{i}\overset{\mathrm{i.i.d.}}{\sim}\mathcal{N}(\bm{0},\bm{\Sigma}+\sigma^{2}\bm{I}_{p})$.
Assume that the sample size obeys
\begin{equation}
n\geq\left\{ \max_{k:\,k\neq l}\frac{(\lambda_{k}^{\star}+\sigma^{2})(\lambda_{l}^{\star}+\sigma^{2})}{|\lambda_{l}^{\star}-\lambda_{k}^{\star}|^{2}}\right\} \vee\frac{(\lambda_{l}^{\star}+\sigma^{2})\sigma^{2}}{\lambda_{l}^{\star2}}.\label{eq:minimax-sample-size}
\end{equation}
Then one has\begin{subequations}\label{eq:minimax-lb}
\begin{align*}
 & \inf_{u_{\bm{a},l}}\sup_{\bm{\Sigma}\in\mathcal{M}_{1}(\bm{\Sigma}^{\star})}\mathbb{E}\Big[\min\big|u_{\bm{a},l}\pm\bm{a}^{\top}\bm{u}_{l}(\bm{\Sigma})\big|\Big]\\
 & \qquad\gtrsim\max_{k:\,k\neq l,\,1\leq k\leq r}\frac{(\lambda_{k}^{\star}+\sigma^{2})(\lambda_{l}^{\star}+\sigma^{2})}{|\lambda_{l}^{\star}-\lambda_{k}^{\star}|^{2}\,n}\big|\bm{a}^{\top}\bm{u}_{l}^{\star}\big|+\max_{k:\,k\neq l,\,1\leq k\leq r}\frac{\sqrt{(\lambda_{k}^{\star}+\sigma^{2})(\lambda_{l}^{\star}+\sigma^{2})}}{|\lambda_{l}^{\star}-\lambda_{k}^{\star}|\sqrt{n}}\big|\bm{a}^{\top}\bm{u}_{k}^{\star}\big|\eqqcolon E_{\mathsf{lb}1,l};\\
 & \inf_{u_{\bm{a},l}}\sup_{\bm{\Sigma}\in\mathcal{M}_{2}(\bm{\Sigma}^{\star})}\mathbb{E}\Big[\min\big|u_{\bm{a},l}\pm\bm{a}^{\top}\bm{u}_{l}(\bm{\Sigma})\big|\Big]\\
 & \qquad\gtrsim\sqrt{\frac{(\lambda_{l}^{\star}+\sigma^{2})\sigma^{2}}{\lambda_{l}^{\star2}n}}\|\bm{P}_{\bm{U}^{\star\perp}}\bm{a}\|_{2}\eqqcolon E_{\mathsf{lb}2,l}.
\end{align*}
\end{subequations}Here, the infimum is taken over all estimator $u_{\bm{a},l}$
for the linear form of the $l$-th eigenvector.\end{theorem}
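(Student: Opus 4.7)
The plan is to establish both lower bounds through Le~Cam's two-point and three-point methods, constructing covariance matrices obtained by small orthogonal rotations of $\bm{\Sigma}^{\star}$ that preserve the entire spectrum while shifting $\bm{a}^{\top}\bm{u}_{l}$ by a controllable amount.

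For the lower bound over $\mathcal{M}_{1}(\bm{\Sigma}^{\star})$, fix any $k\neq l$ and let $\bm{R}_{\theta}$ denote the orthogonal transformation that rotates by angle $\theta$ in the two-dimensional plane $\mathrm{span}\{\bm{u}_{l}^{\star},\bm{u}_{k}^{\star}\}$ and acts as the identity on its complement. Setting $\bm{\Sigma}^{(\theta)} = \bm{R}_{\theta}\bm{\Sigma}^{\star}\bm{R}_{\theta}^{\top}$ preserves all eigenvalues of $\bm{\Sigma}^{\star}$ and yields $\bm{u}_{l}(\bm{\Sigma}^{(\theta)}) = \cos\theta\,\bm{u}_{l}^{\star}+\sin\theta\,\bm{u}_{k}^{\star}$. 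A short $2\times 2$ trace computation---using that $\bm{\Sigma}^{(\theta)}$ and $\bm{\Sigma}^{\star}$ differ only in this plane and share the same determinant---gives
\begin{align*}
D_{\mathrm{KL}}\bigl(\mathcal{N}(\bm{0},\bm{\Sigma}^{\star}+\sigma^{2}\bm{I}_{p})^{\otimes n}\,\big\|\,\mathcal{N}(\bm{0},\bm{\Sigma}^{(\theta)}+\sigma^{2}\bm{I}_{p})^{\otimes n}\bigr) \,\asymp\, \frac{n\sin^{2}\theta\,(\lambda_{l}^{\star}-\lambda_{k}^{\star})^{2}}{(\lambda_{l}^{\star}+\sigma^{2})(\lambda_{k}^{\star}+\sigma^{2})}.
\end{align*}
Choosing $\sin\theta_{k}\asymp\sqrt{(\lambda_{l}^{\star}+\sigma^{2})(\lambda_{k}^{\star}+\sigma^{2})/n}\,/|\lambda_{l}^{\star}-\lambda_{k}^{\star}|$ keeps this KL bounded by a small constant, and the resulting Frobenius deviation $\|\bm{\Sigma}^{(\theta_{k})}-\bm{\Sigma}^{\star}\|_{\mathrm{F}}\asymp|\lambda_{l}^{\star}-\lambda_{k}^{\star}|\sin\theta_{k}$ fits inside $\mathcal{M}_{1}(\bm{\Sigma}^{\star})$; the sample-size assumption~(\ref{eq:minimax-sample-size}) guarantees $\sin\theta_{k}=O(1)$ so the Taylor expansions below are valid. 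I would then apply Le~Cam to the three-point family $\{\bm{\Sigma}^{(0)},\bm{\Sigma}^{(+\theta_{k})},\bm{\Sigma}^{(-\theta_{k})}\}$: the pair $(\bm{\Sigma}^{(+\theta_{k})},\bm{\Sigma}^{(-\theta_{k})})$ produces a linear-form gap of $2\sin\theta_{k}|\bm{a}^{\top}\bm{u}_{k}^{\star}|$, yielding the second term of $E_{\mathsf{lb}1,l}$ after maximizing over $k$. Using $\max(|a+b|,|a-b|)=|a|+|b|$, the better of the two pairs $(\bm{\Sigma}^{(0)},\bm{\Sigma}^{(\pm\theta_{k})})$ produces a gap at least $(1-\cos\theta_{k})|\bm{a}^{\top}\bm{u}_{l}^{\star}|+\sin\theta_{k}|\bm{a}^{\top}\bm{u}_{k}^{\star}|$, whose quadratic part $(1-\cos\theta_{k})|\bm{a}^{\top}\bm{u}_{l}^{\star}|\asymp\theta_{k}^{2}|\bm{a}^{\top}\bm{u}_{l}^{\star}|$ recovers the first term of $E_{\mathsf{lb}1,l}$ upon maximizing over $k$.

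For the lower bound over $\mathcal{M}_{2}(\bm{\Sigma}^{\star})$, I would rotate $\bm{u}_{l}^{\star}$ into the direction $\bm{v}=\bm{P}_{\bm{U}^{\star\perp}}\bm{a}/\|\bm{P}_{\bm{U}^{\star\perp}}\bm{a}\|_{2}$ by defining
\begin{align*}
\bm{\Sigma}^{(\theta)} \,=\, \bm{\Sigma}^{\star} - \lambda_{l}^{\star}\bm{u}_{l}^{\star}\bm{u}_{l}^{\star\top} + \lambda_{l}^{\star}\bigl(\cos\theta\,\bm{u}_{l}^{\star}+\sin\theta\,\bm{v}\bigr)\bigl(\cos\theta\,\bm{u}_{l}^{\star}+\sin\theta\,\bm{v}\bigr)^{\top}.
\end{align*}
This keeps the rank and the spiked eigenvalues unchanged (the companion direction $-\sin\theta\,\bm{u}_{l}^{\star}+\cos\theta\,\bm{v}$ still carries spiked eigenvalue $0$) while replacing $\bm{u}_{l}^{\star}$ by $\bm{u}_{l}^{(\theta)}=\cos\theta\,\bm{u}_{l}^{\star}+\sin\theta\,\bm{v}$. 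The same $2\times 2$ KL calculation---now with in-plane eigenvalues $(\lambda_{l}^{\star}+\sigma^{2},\sigma^{2})$---yields
\begin{align*}
D_{\mathrm{KL}}\,\asymp\,\frac{n\sin^{2}\theta\,\lambda_{l}^{\star 2}}{(\lambda_{l}^{\star}+\sigma^{2})\,\sigma^{2}}.
\end{align*}
Taking $\sin\theta\asymp\sqrt{(\lambda_{l}^{\star}+\sigma^{2})\sigma^{2}/(\lambda_{l}^{\star 2}n)}$ keeps this KL bounded by a constant and, importantly, gives $\|\bm{u}_{l}^{(\theta)}-\bm{u}_{l}^{\star}\|_{2}\asymp\sin\theta$ within the set $\mathcal{M}_{2}(\bm{\Sigma}^{\star})$. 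Applying Le~Cam to $(\bm{\Sigma}^{(+\theta)},\bm{\Sigma}^{(-\theta)})$ produces a linear-form gap of $2\sin\theta\,\|\bm{P}_{\bm{U}^{\star\perp}}\bm{a}\|_{2}$, which is exactly of the order $E_{\mathsf{lb}2,l}$.

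The main obstacle is the bookkeeping in Part~1 that ensures \emph{both} terms of $E_{\mathsf{lb}1,l}$ survive simultaneously. In the regime $|\bm{a}^{\top}\bm{u}_{k}^{\star}|\lesssim\theta_{k}|\bm{a}^{\top}\bm{u}_{l}^{\star}|$, the Le~Cam gap from $(\bm{\Sigma}^{(0)},\bm{\Sigma}^{(+\theta_{k})})$ could in principle be weakened by sign cancellation between the quadratic contribution $(1-\cos\theta_{k})\bm{a}^{\top}\bm{u}_{l}^{\star}$ and the linear contribution $\sin\theta_{k}\bm{a}^{\top}\bm{u}_{k}^{\star}$, and one must verify that picking the opposite sign $(\bm{\Sigma}^{(0)},\bm{\Sigma}^{(-\theta_{k})})$ restores the quadratic lower bound via the $\max(|a+b|,|a-b|)=|a|+|b|$ identity. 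In the complementary regime the linear term from $(\bm{\Sigma}^{(+\theta_{k})},\bm{\Sigma}^{(-\theta_{k})})$ already dominates. A secondary technicality is checking throughout that the sample-size condition~(\ref{eq:minimax-sample-size}) keeps every $\theta_{k}$ in the small-angle regime, so that the Frobenius and $\ell_{2}$-eigenvector constraints defining $\mathcal{M}_{1}$ and $\mathcal{M}_{2}$ are respected and the Taylor approximations $(1-\cos\theta_{k})\asymp\theta_{k}^{2}/2$, $\sin\theta_{k}\asymp\theta_{k}$ are valid.
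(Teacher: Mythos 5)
Your hypothesis constructions, KL computations, angle choices, and the membership checks in $\mathcal{M}_{1}$ and $\mathcal{M}_{2}$ coincide with the paper's (the paper's $\widetilde{\bm{u}}_l\propto\bm{u}_l^{\star}+\delta_n\bm{a}_{\perp}$ is exactly your rotation toward $\bm{P}_{\bm{U}^{\star\perp}}\bm{a}$), but there is a genuine gap in how you measure separation between hypotheses. The loss is $\mathsf{dist}(u,v)=\min|u\pm v|$, a metric on $\mathbb{R}/\{\pm1\}$, so a two-point (Le Cam) argument only yields a bound of order $\min\big(|v_{1}-v_{2}|,\,|v_{1}+v_{2}|\big)$, not the plain difference $|v_{1}-v_{2}|$. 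For your symmetric pair $(\bm{\Sigma}^{(+\theta_{k})},\bm{\Sigma}^{(-\theta_{k})})$ one has $v_{+}+v_{-}=2\cos\theta_{k}\,\bm{a}^{\top}\bm{u}_{l}^{\star}$, so when $\bm{a}^{\top}\bm{u}_{l}^{\star}=0$ (or is small compared with $\sin\theta_{k}|\bm{a}^{\top}\bm{u}_{k}^{\star}|$) the two parameter values coincide up to sign, the quotient-metric separation is (nearly) zero, and this pair yields no lower bound at all --- yet this is exactly the regime in which you invoke it to produce the linear term $\theta_{k}|\bm{a}^{\top}\bm{u}_{k}^{\star}|$. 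The same defect breaks your Part~2 argument: with $\bm{a}\in\mathrm{span}(\bm{U}^{\star\perp})$ the pair $(\bm{\Sigma}^{(+\theta)},\bm{\Sigma}^{(-\theta)})$ gives $v_{+}=-v_{-}$, so an estimator reporting $|\sin\theta|\,\|\bm{P}_{\bm{U}^{\star\perp}}\bm{a}\|_{2}$ incurs zero loss under both hypotheses, while the claimed bound $E_{\mathsf{lb}2,l}$ is strictly positive. Your proposed fix for the other pair is also incomplete: switching from $+\theta_{k}$ to $-\theta_{k}$ removes the cancellation in the difference branch $(1-\cos\theta_{k})\bm{a}^{\top}\bm{u}_{l}^{\star}\mp\sin\theta_{k}\,\bm{a}^{\top}\bm{u}_{k}^{\star}$, but it can then create a near-cancellation in the sum branch $(1+\cos\theta_{k})\bm{a}^{\top}\bm{u}_{l}^{\star}\pm\sin\theta_{k}\,\bm{a}^{\top}\bm{u}_{k}^{\star}$, and the quotient metric takes the minimum of the two.

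The paper's proof resolves this with a two-point comparison against the base $\bm{\Sigma}^{\star}$ only: it fixes the sign of $\theta_{n}$ so that the difference branch is additive, writes the sum branch as $\propto\sqrt{(\bm{a}^{\top}\bm{u}_{l}^{\star})^{2}+(\bm{a}^{\top}\bm{u}_{k}^{\star})^{2}}\,|\sin(\theta_{n}/2+\omega_{k})|$ with $\tan\omega_{k}=\bm{a}^{\top}\bm{u}_{l}^{\star}/\bm{a}^{\top}\bm{u}_{k}^{\star}$, and then perturbs the magnitude of $\theta_{n}$ among a few constant multiples ($c_{n}\in\{1/64,1/16,1/4\}$) to guarantee $|\sin(\theta_{n}/2+\omega_{k})|\gtrsim|\theta_{n}|$; an analogous magnitude adjustment handles Part~2. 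If you prefer to keep your three-point family, it can be repaired, but the case split must be different from the one you state: the base-versus-rotated pair delivers the full bound except in the near-cancellation regime $|\bm{a}^{\top}\bm{u}_{l}^{\star}|\asymp\theta_{k}|\bm{a}^{\top}\bm{u}_{k}^{\star}|$, and only in that regime does the symmetric pair (whose separation is $\min\big(2\sin\theta_{k}|\bm{a}^{\top}\bm{u}_{k}^{\star}|,\,2\cos\theta_{k}|\bm{a}^{\top}\bm{u}_{l}^{\star}|\big)$, then of the correct order) take over; as written, your assignment of terms to pairs does not cover the case of small $\bm{a}^{\top}\bm{u}_{l}^{\star}$, which is the case of primary interest.
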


The proof of this theorem can be found in Appendix~\ref{sec:Proof-for-minimax}.
To interpret this lower bound, let us consider, for simplicity, the
scenario where 
\begin{equation}
r,\kappa\asymp1\qquad\text{and}\qquad|\bm{a}^{\top}\bm{u}_{l}^{\star}|\le(1-\epsilon)\|\bm{a}\|_{2}\label{eq:assumption-lower-bound-match}
\end{equation}
for some arbitrarily small constant $\epsilon>0$. In this scenario,
the statistical error bound (\ref{eq:eigenvec-perturbation-bound-pca-debias})
derived in Theorem~\ref{thm:evector-pertur-sym-iid-pca} matches
the preceding minimax lower bounds in the sense that
\[
E_{\mathsf{PCA},l}\asymp E_{\mathsf{lb}1,l}+E_{\mathsf{lb}2,l}.
\]
To verify this relation under the conditions (\ref{eq:assumption-lower-bound-match}),
it is sufficient to see that
\begin{align*}
 & \max_{k:k\neq l}\frac{\sqrt{(\lambda_{k}^{\star}+\sigma^{2})(\lambda_{l}^{\star}+\sigma^{2})}}{|\lambda_{l}^{\star}-\lambda_{k}^{\star}|\sqrt{n}}\big|\bm{a}^{\top}\bm{u}_{k}^{\star}\big|+\sqrt{\frac{(\lambda_{l}^{\star}+\sigma^{2})\sigma^{2}}{\lambda_{l}^{\star2}n}}\|\bm{P}_{\bm{U}^{\star\perp}}\bm{a}\|_{2}\\
 & \qquad\asymp\sum_{k:\,k\neq l}\frac{\sqrt{(\lambda_{k}^{\star}+\sigma^{2})(\lambda_{l}^{\star}+\sigma^{2})}}{|\lambda_{l}^{\star}-\lambda_{k}^{\star}|\sqrt{n}}\big|\bm{a}^{\top}\bm{u}_{k}^{\star}\big|+\sqrt{\frac{(\lambda_{l}^{\star}+\sigma^{2})\sigma^{2}}{\lambda_{l}^{\star2}n}}\|\bm{P}_{\bm{U}^{\star\perp}}\bm{a}\|_{2}\\
 & \qquad\overset{(\mathrm{i})}{\asymp}\sum_{k:\,k\neq l}\frac{\sqrt{(\lambda_{k}^{\star}+\sigma^{2})(\lambda_{l}^{\star}+\sigma^{2})}}{|\lambda_{l}^{\star}-\lambda_{k}^{\star}|\sqrt{n}}\big|\bm{a}^{\top}\bm{u}_{k}^{\star}\big|+\sqrt{\frac{(\lambda_{l}^{\star}+\sigma^{2})\sigma^{2}}{\lambda_{l}^{\star2}n}}\bigg(\sum_{k:\,k\neq l}\big|\bm{a}^{\top}\bm{u}_{k}^{\star}\big|+\|\bm{P}_{\bm{U}^{\star\perp}}\bm{a}\|_{2}\bigg)\\
 & \qquad\overset{(\mathrm{ii})}{\asymp}\sum_{k:\,k\neq l}\frac{\sqrt{(\lambda_{k}^{\star}+\sigma^{2})(\lambda_{l}^{\star}+\sigma^{2})}}{|\lambda_{l}^{\star}-\lambda_{k}^{\star}|\sqrt{n}}\big|\bm{a}^{\top}\bm{u}_{k}^{\star}\big|+\sqrt{\frac{(\lambda_{l}^{\star}+\sigma^{2})\sigma^{2}}{\lambda_{l}^{\star2}n}}\|\bm{a}\|_{2},
\end{align*}
where (i) holds true since $\max_{k:k\neq l}\frac{\sqrt{(\lambda_{k}^{\star}+\sigma^{2})(\lambda_{l}^{\star}+\sigma^{2})}}{|\lambda_{l}^{\star}-\lambda_{k}^{\star}|\sqrt{n}}\gtrsim\sqrt{\frac{(\lambda_{l}^{\star}+\sigma^{2})\sigma^{2}}{\lambda_{l}^{\star2}n}}$,
and (ii) holds true as long as $|\bm{a}^{\top}\bm{u}_{l}^{\star}|\le(1-\epsilon)\|\bm{a}\|_{2}$
for some constant $\epsilon>0$. In conclusion, the above calculation
unveils the statistical optimality of the proposed de-biased estimator
for the scenario specified in (\ref{eq:assumption-lower-bound-match}). 

\paragraph{Comparison with past works.}

Estimation for linear forms of eigenvectors in the context of PCA
has been investigated in several recent works \citep{koltchinskii2016asymptotics,koltchinskii2017normal,koltchinskii2020efficient},
with the bias issue of plug-in estimators first recognized in \citet{koltchinskii2016asymptotics}.
Among these works, the state-of-the-art result was due to \citet{koltchinskii2020efficient},
which proposed an efficient de-biased estimator and established its
asymptotic normality. To better understand our contributions, it is
helpful to compare Theorem~\ref{thm:evector-pertur-sym-iid-pca}
with the theoretical guarantees in \citet{koltchinskii2020efficient}
under the spiked covariance model with $\bm{\Sigma}=\bm{\Sigma}^{\star}+\sigma^{2}\bm{I}_{p}$.
The theoretical guarantees developed in \citet[Theorem 3.3]{koltchinskii2020efficient}
operate under the following conditions (when translated to our setting
using our notation)
\begin{align}
 & \Delta_{l}^{\star}=\Omega(\lambda_{\max}^{\star}+\sigma^{2}),\quad\sigma^{2}=o(\lambda_{\min}^{\star}),\quad r,\kappa\asymp1,\quad\sum_{k:\,k\neq l}\left|\bm{a}^{\top}\bm{u}_{k}^{\star}\right|^{2}+\frac{\sigma^{2}}{\lambda_{l}^{\star}}\|\bm{P}_{\bm{U}^{\star\perp}}\bm{a}\|_{2}^{2}\asymp\|\bm{a}\|_{2}^{2}.\label{eq:condition-koltchinskii-gap}
\end{align}
In comparison, our results make improvements in the following aspects:
\begin{itemize}
\item \emph{Eigen-gap requirement}: our eigen-gap requirement (\ref{eq:eigengap-condition-iid-pca})
is $\widetilde{O}(\sqrt{r/n})$ times less stringent than the one
in (\ref{eq:condition-koltchinskii-gap});
\item \emph{Requirement on noise variance}: our result (i.e., Theorem~\ref{thm:evector-pertur-sym-iid-pca})
allows the noise variance $\sigma^{2}$ to be larger than $\lambda_{\min}^{\star}$;
\item \emph{Requirement on condition number and rank:} our theory permits
both $\kappa$ and $r$ to grow with the dimension. 
\end{itemize}

It is worth noting that \citet{koltchinskii2020efficient} accommodates
a more general class of covariance matrices than the aforementioned
spiked covariance. The main purpose of our discussion above is to
make clear the inadequacy of prior theories when the eigen-gap is
small. 

\section{Related works}

\label{sec:Related-work}

Spectral methods have served as an effective paradigm for a variety
of statistical data science problems, examples including matrix completion
\citep{KesMonSew2010,keshavan2010matrix,sun2016guaranteed,ma2017implicit}, tensor completion
\citep{xia2017statistically,montanari2018spectral,cai2019nonconvex,cai2020uncertainty},
community detection \citep{lei2019unified,abbe2017entrywise}, ranking
from pairwise comparisons \citep{negahban2017rank,chen2015spectral},
and so on. The mainstream analysis framework for spectral methods
is largely built upon classical matrix perturbation theory \citep{stewart1990matrix,chen2020spectral}.
This set of classical theory typically focuses on deriving $\ell_{2}$
eigenspace or singular subspace perturbation bounds (e.g., the Davis-Kahan
theorem \citep{davis1970rotation} and the Wedin theorem \citep{wedin1972perturbation}),
which has been derived for general purposes without incorporating
statistical properties of the specific problems of interest. Several
useful extensions have been developed tailored to high-dimensional
statistical applications, particularly when the perturbation matrix
of interest enjoys certain random structure \citep{vu2011singular,wang2015singular,yu2015useful,xia2019confidence,cai2018rate,o2018random}.
In particular, the $\ell_{2}$ perturbation bounds for the eigenvector
(or eigenspace) of the sample covariance matrix has been extensively
studied in the PCA literature, e.g., \citep{nadler2008finite,johnstone2009consistency,vu2012minimax,lounici2013sparse,lounici2014high,zhang2018heteroskedastic,zhu2019high,xia2021normal}. 
Another line of works \citep{vu2011singular,o2018random} improved Davis-Kahan's and Wedin's theorems in the matrix denoising setting with small eigen-gaps, which, however, is not tight unless the spectral norm $\|\bm{H}\|$ of the noise matrix is extremely small.

Moving beyond $\ell_{2}$ perturbation theory, more fine-grained eigenvector
perturbation bounds --- particularly entrywise eigenvector perturbation
or $\ell_{2,\infty}$ eigenspace perturbation --- has garnered growing
attention over the past few years \citep{abbe2017entrywise,ma2017implicit,cape2019two,cai2019subspace,chen2019noisy,chen2018asymmetry,cai2019nonconvex,lei2019unified,fan2018eigenvector,zhong2017near,abbe2020ell_p}.
Among these $\ell_{\infty}$ or $\ell_{2,\infty}$ theoretical guarantees,
the results in \citet{abbe2017entrywise,ma2017implicit,cai2019nonconvex,cai2019subspace,chen2017spectral,chen2019noisy,chen2019inference,chen2020partial}
were established via a powerful leave-one-out analysis framework,
while the works \citep{pmlr-v83-eldridge18a,chen2018asymmetry} invoked
a Neumann expansion trick paired with proper control of moments. 

In contrast to the rich literature on $\ell_{2}$, $\ell_{\infty}$
and/or $\ell_{2,\infty}$ perturbation theory, estimation theory concerning
linear functionals of eigenvectors (or singular vectors) are rather
scarce and under-explored. While entrywise perturbation can be regarded
as a special type of linear functionals of eigenvectors, the analysis
techniques mentioned above are typically incapable of analyzing an
arbitrary linear form. Only until recently, progress has been made
towards addressing this problem. In the matrix denoising setting,
effective concentration bounds have been established in \citet{koltchinskii2016perturbation}
for estimating linear forms of singular vectors under i.i.d.~Gaussian
noise, while \citet{bao2021singular} established the limiting distributions
of the angle between the singular vectors of the noisy matrix and
the corresponding ground-truth singular vectors. In \citet{koltchinskii2016asymptotics,koltchinskii2017normal,koltchinskii2020efficient},
several bias reduction procedures were developed for the problem of
PCA and covariance estimation, which established the asymptotic normality
and statistical efficiency of the proposed estimator. The eigen-gap
conditions required therein, however, are considerably more stringent
than the ones required in our theory. Another line of recent works
has studied linear form of eigenvectors was \citet{chen2018asymmetry,cheng2020tackling},
which, however, tackled a different setting of the matrix denoising
problem. Specifically, \citet{chen2018asymmetry,cheng2020tackling}
focused on the case where the noise matrix $\bm{H}$ is asymmetric
and contains independent entries (so that $H_{i,j}$ and $H_{j,i}$
are two independent copies of noise); in this case, a carefully de-biased
estimator proposed based on the eigenvector of the asymmetric data
matrix $\bm{M}$ is shown to be minimax-optimal. Additionally, \citet{fan2019asymptotic}
pinned down the asymptotic distribution for bilinear forms of eigenvectors
for large spiked random matrices, while \citet{xia2019statistical}
proposed a de-biasing method to estimate linear forms of the matrix
for noisy matrix completion. These are beyond the reach of the current
paper. 

\section{Analysis}

\label{sec:Analysis}

In this section, we discuss the analysis ideas for establishing Theorem
\ref{thm:evector-pertur-sym-iid} and Theorem \ref{thm:evector-pertur-sym-iid-pca}.
One of the main tools lies in the master theorems stated below, which
characterize the principal angle between the perturbed eigenvector
and an arbitrary subspace of interest. We shall see momentarily the
effectiveness of these master theorems when applied to matrix denoising
and PCA. 

\subsection{Master theorems}

\label{subsec:Master-theorems}

For any matrix $\bm{Q}\in\mathbb{R}^{n\times k}$ obeying $\bm{Q}^{\top}\bm{Q}=\bm{I}_{k}$
($1\leq k\leq n$), let $\bm{Q}^{\perp}\in\mathbb{R}^{n\times(n-k)}$
be an arbitrary matrix whose columns form an orthonormal basis of
the complement to the subspace spanned by the columns of $\bm{Q}$, namely
\begin{equation}
\big[\bm{Q},\bm{Q}^{\perp}\big]^{\top}\big[\bm{Q},\bm{Q}^{\perp}\big]=\bm{I}_{n}.\label{eq:defn-projection-matrices}
\end{equation}
Our results concern the decomposition of an eigenvector $\bm{u}_{l}$ of matrix $\bm{M}$
taking the following form: 
\begin{equation}
\bm{u}_{l}=\bm{u}_{l,\|}\cos\theta+\bm{u}_{l,\perp}\sin\theta.\label{eq:uk-decomposition}
\end{equation}
Here, $\theta$ denotes the principal angle between $\bm{u}_{l}$
and the subspace spanned by $\bm{Q}$, whereas $\bm{u}_{l,\|}$ and
$\bm{u}_{l,\perp}$ are two unit vectors (i.e.~$\|\bm{u}_{l,\|}\|_{2}=\|\bm{u}_{l,\perp}\|_{2}=1$)
such that 
\begin{itemize}
\item $\bm{u}_{l,\|}$ lies in the subspace spanned by $\bm{Q}$; this means
that $\bm{Q}\bm{Q}^{\top}\bm{u}_{l,\|}=\bm{u}_{l,\|}$, where $\bm{Q}\bm{Q}^{\top}$
is the projection matrix onto the subspace spanned by $\bm{Q}$;
\item $\bm{u}_{l,\perp}$ is perpendicular to the subspace spanned by $\bm{Q}$,
so that $\bm{Q}^{\perp}(\bm{Q}^{\perp})^{\top}\bm{u}_{l,\perp}=\bm{u}_{l,\perp}$. 
\end{itemize}

\paragraph{When $\bm{Q}$ is a unit vector.}

We shall begin with the case when $\bm{Q}$ is a unit vector. For
notational simplicity, let us write $\bm{q}$ for $\bm{Q}$ in this
case to emphasize that this is a vector, and let $\bm{q}^{\perp}\in\mathbb{R}^{n\times(n-1)}$
indicate $\bm{Q}^{\perp}$. In this case, we can take $\bm{u}_{l,\|}$
to be equal to $\bm{q}$. Our result is this:

\begin{theorem}\label{thm:master-thm-vector}Consider any vector
$\bm{q}\in\mathbb{R}^{n}$ with $\|\bm{q}\|_{2}=1$. Write 
\begin{equation}
\bm{u}_{l}=\bm{q}\cos\theta+\bm{u}_{l,\perp}\sin\theta\label{eq:ul-decomposition-rank1}
\end{equation}
for some $\theta$ as well as some vector $\bm{u}_{l,\perp}$ obeying
$\|\bm{u}_{l,\perp}\|_{2}=1$ and $\bm{q}^{\top}\bm{u}_{l,\perp}=0$.
Suppose that $\lambda_{l}\bm{I}_{n-1}-(\bm{q}^{\perp})^{\top}\bm{M}\bm{q}^{\perp}$
is invertible. Then one has\begin{subequations}\label{claim:master-thm-vector}
\begin{align}
\cos^{2}\theta & =\frac{1}{1+\big\|\big(\lambda_{l}\bm{I}_{n-1}-(\bm{q}^{\perp})^{\top}\bm{M}\bm{q}^{\perp}\big)^{-1}(\bm{q}^{\perp})^{\top}\bm{M}\bm{q}\big\|_{2}^{2}},\label{eq:cos-theta-rank1}\\
\lambda_{l} & =\bm{q}^{\top}\bm{M}\bm{q}+\bm{q}^{\top}\bm{M}\bm{q}^{\perp}\big(\lambda_{l}\bm{I}_{n-1}-(\bm{q}^{\perp})^{\top}\bm{M}\bm{q}^{\perp}\big)^{-1}(\bm{q}^{\perp})^{\top}\bm{M}\bm{q}.\label{eq:lambda-rank1}
\end{align}
In addition, when $\sin\theta\ne0$, the vector $\bm{u}_{l,\perp}$
satisfies
\begin{equation}
\bm{u}_{l,\perp}=\pm\frac{\bm{q}^{\perp}\big(\lambda_{l}\bm{I}_{n-1}-(\bm{q}^{\perp})^{\top}\bm{M}\bm{q}^{\perp}\big)^{-1}(\bm{q}^{\perp})^{\top}\bm{M}\bm{q}}{\big\|\big(\lambda_{l}\bm{I}_{n-1}-(\bm{q}^{\perp})^{\top}\bm{M}\bm{q}^{\perp}\big)^{-1}(\bm{q}^{\perp})^{\top}\bm{M}\bm{q}\big\|_{2}}.\label{eq:u-perp-rank1-1}
\end{equation}
\end{subequations}\end{theorem}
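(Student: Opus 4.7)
\textbf{Proof plan for Theorem \ref{thm:master-thm-vector}.} The plan is to work in the orthonormal basis $[\bm{q},\bm{q}^{\perp}]$ of $\mathbb{R}^{n}$ and reduce the eigenvalue equation $\bm{M}\bm{u}_{l}=\lambda_{l}\bm{u}_{l}$ to a pair of coupled equations for the two ``coordinates'' of $\bm{u}_{l}$ in this basis. Concretely, since $\bm{q}^{\top}\bm{u}_{l,\perp}=0$ and $\|\bm{u}_{l,\perp}\|_{2}=1$, I would write
\begin{equation}
\bm{u}_{l}=\bm{q}\,\alpha+\bm{q}^{\perp}\bm{\beta},\qquad\alpha\coloneqq\cos\theta\in\mathbb{R},\qquad\bm{\beta}\in\mathbb{R}^{n-1},
\end{equation}
so that $\bm{u}_{l,\perp}\sin\theta=\bm{q}^{\perp}\bm{\beta}$ and hence $\|\bm{\beta}\|_{2}=|\sin\theta|$ together with $\alpha^{2}+\|\bm{\beta}\|_{2}^{2}=1$.

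Next, I would plug the decomposition into $\bm{M}\bm{u}_{l}=\lambda_{l}\bm{u}_{l}$ and multiply both sides by $\bm{q}^{\top}$ and $(\bm{q}^{\perp})^{\top}$ respectively, using the orthonormality relations $\bm{q}^{\top}\bm{q}=1$, $\bm{q}^{\top}\bm{q}^{\perp}=\bm{0}$, and $(\bm{q}^{\perp})^{\top}\bm{q}^{\perp}=\bm{I}_{n-1}$. This produces the two equations
\begin{align}
\bm{q}^{\top}\bm{M}\bm{q}\,\alpha+\bm{q}^{\top}\bm{M}\bm{q}^{\perp}\bm{\beta} & =\lambda_{l}\alpha,\\
(\bm{q}^{\perp})^{\top}\bm{M}\bm{q}\,\alpha+(\bm{q}^{\perp})^{\top}\bm{M}\bm{q}^{\perp}\bm{\beta} & =\lambda_{l}\bm{\beta}.
\end{align}
Invoking the assumed invertibility of $\lambda_{l}\bm{I}_{n-1}-(\bm{q}^{\perp})^{\top}\bm{M}\bm{q}^{\perp}$, the second equation yields the closed-form
\begin{equation}
\bm{\beta}=\alpha\,\bigl(\lambda_{l}\bm{I}_{n-1}-(\bm{q}^{\perp})^{\top}\bm{M}\bm{q}^{\perp}\bigr)^{-1}(\bm{q}^{\perp})^{\top}\bm{M}\bm{q}.
\end{equation}

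With $\bm{\beta}$ expressed in terms of $\alpha$, the three conclusions follow by substitution. Substituting $\bm{\beta}$ into the normalization identity $\alpha^{2}+\|\bm{\beta}\|_{2}^{2}=1$ delivers claim (\ref{eq:cos-theta-rank1}) after dividing both sides by $\alpha^{2}+\|\bm{\beta}\|_{2}^{2}$; substituting $\bm{\beta}$ into the first projected equation and cancelling $\alpha$ (which is nonzero under the invertibility hypothesis, since $\alpha=0$ would force $\bm{\beta}=\bm{0}$ via the second equation and contradict $\|\bm{u}_{l}\|_{2}=1$) delivers claim (\ref{eq:lambda-rank1}); finally, using $\bm{u}_{l,\perp}\sin\theta=\bm{q}^{\perp}\bm{\beta}$ and normalizing by $\|\bm{\beta}\|_{2}=|\sin\theta|\ne 0$ produces claim (\ref{eq:u-perp-rank1-1}) with the $\pm$ sign arising from $\mathrm{sign}(\alpha)$.

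I do not anticipate any genuine obstacle: the argument is a clean block-matrix / Schur-complement style manipulation, and all of the ingredients (projection onto an orthonormal basis, inversion of the $(n-1)\times(n-1)$ block, and the unit-norm constraint) are direct. The only subtlety worth spelling out is the internal consistency check that $\alpha\neq 0$ whenever the stated invertibility condition holds, so that the division leading to (\ref{eq:lambda-rank1}) is legitimate and the sign choice in (\ref{eq:u-perp-rank1-1}) is unambiguous up to the inherent global sign of $\bm{u}_{l}$.
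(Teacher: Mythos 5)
Your proposal is correct and follows essentially the same route as the paper: decompose $\bm{u}_{l}$ in the $[\bm{q},\bm{q}^{\perp}]$ basis, project the eigen-equation $\bm{M}\bm{u}_{l}=\lambda_{l}\bm{u}_{l}$ onto $\bm{q}$ and $\bm{q}^{\perp}$, use the assumed invertibility to solve for the perpendicular component, and observe $\cos\theta\neq0$ before cancelling. The only (cosmetic) difference is that you extract \eqref{eq:cos-theta-rank1} directly from the normalization $\cos^{2}\theta+\|\bm{\beta}\|_{2}^{2}=1$, whereas the paper goes through the ratio $\sin\theta/\cos\theta$; both are equally valid.
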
\begin{proof}See Appendix~\ref{sec:Proof-of-Theorem-master-vector}.\end{proof}

In words, Theorem~\ref{thm:master-thm-vector} derives closed-form
expressions for both $\cos\theta$ and $\bm{u}_{l,\perp}$ (up to
global signs), in terms of simple and direct manipulation of the data
matrix $\bm{M}$ as well as the associated eigenvalue $\lambda_{l}$.
While the identities (\ref{eq:cos-theta-rank1}) and (\ref{eq:u-perp-rank1-1})
might seem somewhat complicated at first glance, they often allow
for convenient decomposition of the noise into independent components,
thus streamlining the analysis. Similarly, while the relation (\ref{eq:lambda-rank1})
takes the form of a nonlinear equation about $\lambda_{l}$, it often
enables convenient decoupling of complicated statistical dependency,
as we shall demonstrate momentarily.

\paragraph{When $\bm{Q}$ is a more general orthonormal matrix. }

The next theorem extends the relation (\ref{eq:lambda-rank1}) to
the case when $\bm{Q}$ is a general orthonormal matrix (beyond the
vector case), which proves useful in eigenvalue analysis for more
general low-rank problems.

\begin{theorem}\label{thm:master-theorem-general}Assume that $k<n$.
Consider the corresponding decomposition \eqref{eq:uk-decomposition}
for any matrix $\bm{Q}\in\mathbb{R}^{n\times k}$ obeying $\bm{Q}^{\top}\bm{Q}=\bm{I}_{k}$.
Suppose that $\lambda_{l}\bm{\bm{I}}_{n-k}-(\bm{Q}^{\perp})^{\top}\bm{M}\bm{Q}^{\perp}$
and $\lambda_{l}\bm{I}_{k}-\bm{{Q}}^{\top}\bm{M}\bm{{Q}}$ are both
invertible. Then one has \begin{subequations}
\begin{align}
\cos^{2}\theta & =\frac{1}{1+\big\|\big(\lambda_{l}\bm{\bm{I}}_{n-k}-(\bm{Q}^{\perp})^{\top}\bm{M}\bm{Q}^{\perp}\big)^{-1}(\bm{Q}^{\perp})^{\top}\bm{M}\bm{u}_{l,\|}\big\|_{2}^{2}},\label{eq:cos-theta-rank-r}\\
\big(\lambda_{l}\bm{I}_{k}-\bm{Q}^{\top}\bm{M}\bm{Q}\big)\bm{Q}^{\top}\bm{u}_{l,\|} & =\bm{Q}^{\top}\bm{M}\bm{Q}^{\perp}\big(\lambda_{l}\bm{\bm{I}}_{n-k}-(\bm{Q}^{\perp})^{\top}\bm{M}\bm{Q}^{\perp}\big)^{-1}(\bm{Q}^{\perp})^{\top}\bm{M}\bm{u}_{l,\|}.\label{eq:lambda-l-long-identity-1}
\end{align}
\end{subequations}\end{theorem}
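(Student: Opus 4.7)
}

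The plan is to mimic the rank-one argument that must have led to Theorem~\ref{thm:master-thm-vector}, but now working with the block decomposition induced by $[\bm{Q},\bm{Q}^{\perp}]$, which forms an orthonormal basis of $\mathbb{R}^{n}$ (cf.~\eqref{eq:defn-projection-matrices}). The key algebraic input is simply the eigen-relation $\bm{M}\bm{u}_{l}=\lambda_{l}\bm{u}_{l}$, projected separately onto $\mathsf{range}(\bm{Q})$ and $\mathsf{range}(\bm{Q}^{\perp})$.

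Concretely, I would first introduce the coordinates
$\bm{\alpha}\coloneqq\bm{Q}^{\top}\bm{u}_{l}\in\mathbb{R}^{k}$ and $\bm{\beta}\coloneqq(\bm{Q}^{\perp})^{\top}\bm{u}_{l}\in\mathbb{R}^{n-k}$, so that $\bm{u}_{l}=\bm{Q}\bm{\alpha}+\bm{Q}^{\perp}\bm{\beta}$ and $\|\bm{\alpha}\|_{2}=\cos\theta$, $\|\bm{\beta}\|_{2}=\sin\theta$. Since $\bm{u}_{l,\|}$ lies in $\mathsf{range}(\bm{Q})$, one may write $\bm{u}_{l,\|}=\bm{Q}\bm{v}$ with $\bm{v}=\bm{Q}^{\top}\bm{u}_{l,\|}$ a unit vector, and then $\bm{\alpha}=(\cos\theta)\bm{v}$. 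Left-multiplying $\bm{M}\bm{u}_{l}=\lambda_{l}\bm{u}_{l}$ by $\bm{Q}^{\top}$ and by $(\bm{Q}^{\perp})^{\top}$ and plugging in $\bm{I}_{n}=\bm{Q}\bm{Q}^{\top}+\bm{Q}^{\perp}(\bm{Q}^{\perp})^{\top}$ yields the coupled system
\begin{align*}
\bm{Q}^{\top}\bm{M}\bm{Q}\,\bm{\alpha}+\bm{Q}^{\top}\bm{M}\bm{Q}^{\perp}\bm{\beta} & =\lambda_{l}\bm{\alpha},\\
(\bm{Q}^{\perp})^{\top}\bm{M}\bm{Q}\,\bm{\alpha}+(\bm{Q}^{\perp})^{\top}\bm{M}\bm{Q}^{\perp}\bm{\beta} & =\lambda_{l}\bm{\beta}.
\end{align*}

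Next, I would invert $\lambda_{l}\bm{I}_{n-k}-(\bm{Q}^{\perp})^{\top}\bm{M}\bm{Q}^{\perp}$ (valid by assumption) to solve the second equation for $\bm{\beta}$ in terms of $\bm{\alpha}$:
\begin{equation*}
\bm{\beta}=\bigl(\lambda_{l}\bm{I}_{n-k}-(\bm{Q}^{\perp})^{\top}\bm{M}\bm{Q}^{\perp}\bigr)^{-1}(\bm{Q}^{\perp})^{\top}\bm{M}\bm{Q}\,\bm{\alpha}.
\end{equation*}
Substituting $\bm{\alpha}=(\cos\theta)\bm{v}$ and using $\bm{Q}\bm{v}=\bm{u}_{l,\|}$ gives $(\bm{Q}^{\perp})^{\top}\bm{M}\bm{Q}\bm{v}=(\bm{Q}^{\perp})^{\top}\bm{M}\bm{u}_{l,\|}$, hence
\begin{equation*}
\|\bm{\beta}\|_{2}^{2}=\cos^{2}\theta\cdot\bigl\|\bigl(\lambda_{l}\bm{I}_{n-k}-(\bm{Q}^{\perp})^{\top}\bm{M}\bm{Q}^{\perp}\bigr)^{-1}(\bm{Q}^{\perp})^{\top}\bm{M}\bm{u}_{l,\|}\bigr\|_{2}^{2}.
\end{equation*}
Combining this with $\|\bm{\beta}\|_{2}^{2}=\sin^{2}\theta=1-\cos^{2}\theta$ and solving for $\cos^{2}\theta$ yields identity~\eqref{eq:cos-theta-rank-r}. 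A small side remark: the invertibility hypothesis on $\lambda_{l}\bm{I}_{n-k}-(\bm{Q}^{\perp})^{\top}\bm{M}\bm{Q}^{\perp}$ rules out $\cos\theta=0$, since otherwise $\bm{\beta}$ would be a nontrivial eigenvector of $(\bm{Q}^{\perp})^{\top}\bm{M}\bm{Q}^{\perp}$ at eigenvalue $\lambda_{l}$; this justifies the division and ensures $\bm{u}_{l,\|}$ is unambiguously defined.

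Finally, substituting the same expression for $\bm{\beta}$ into the first projected equation gives
\begin{equation*}
\bigl(\lambda_{l}\bm{I}_{k}-\bm{Q}^{\top}\bm{M}\bm{Q}\bigr)\bm{\alpha}=\bm{Q}^{\top}\bm{M}\bm{Q}^{\perp}\bigl(\lambda_{l}\bm{I}_{n-k}-(\bm{Q}^{\perp})^{\top}\bm{M}\bm{Q}^{\perp}\bigr)^{-1}(\bm{Q}^{\perp})^{\top}\bm{M}\bm{Q}\,\bm{\alpha}.
\end{equation*}
Dividing both sides by $\cos\theta$ (legitimate by the preceding remark) and using $\bm{Q}\bm{Q}^{\top}\bm{u}_{l,\|}=\bm{u}_{l,\|}$ (so that $(\bm{Q}^{\perp})^{\top}\bm{M}\bm{Q}\bm{Q}^{\top}\bm{u}_{l,\|}=(\bm{Q}^{\perp})^{\top}\bm{M}\bm{u}_{l,\|}$) immediately recovers~\eqref{eq:lambda-l-long-identity-1}. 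The argument is essentially routine linear algebra once one commits to the $[\bm{Q},\bm{Q}^{\perp}]$ block decomposition; the only mild subtlety---and the one I would be most careful about---is the bookkeeping between $\bm{u}_{l,\|}$ (a unit vector in $\mathsf{range}(\bm{Q})$) and its coordinate vector $\bm{v}=\bm{Q}^{\top}\bm{u}_{l,\|}$, together with verifying that the invertibility hypotheses really do preclude the degenerate cases $\cos\theta=0$ or $\sin\theta=0$ that would make the decomposition~\eqref{eq:uk-decomposition} non-unique.
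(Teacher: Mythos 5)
Your proposal is correct and takes essentially the same route as the paper: project the eigen-relation $\bm{M}\bm{u}_{l}=\lambda_{l}\bm{u}_{l}$ onto the subspaces spanned by $\bm{Q}$ and $\bm{Q}^{\perp}$, solve the perpendicular block via the resolvent $\big(\lambda_{l}\bm{I}_{n-k}-(\bm{Q}^{\perp})^{\top}\bm{M}\bm{Q}^{\perp}\big)^{-1}$, substitute back into the parallel block, and rule out $\cos\theta=0$ by the same invertibility contradiction before dividing. The only cosmetic difference is that you work with the coordinate vectors $\bm{Q}^{\top}\bm{u}_{l}$ and $(\bm{Q}^{\perp})^{\top}\bm{u}_{l}$ (and obtain \eqref{eq:cos-theta-rank-r} from $\|(\bm{Q}^{\perp})^{\top}\bm{u}_{l}\|_{2}^{2}=1-\cos^{2}\theta$) instead of manipulating the decomposition $\bm{u}_{l,\|}\cos\theta+\bm{u}_{l,\perp}\sin\theta$ directly, which changes nothing of substance.
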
\begin{proof}See Appendix \ref{sec:Proof-of-Theorem-master-general}.\end{proof}

\subsection{Analysis for matrix denoising}

\label{sec:Analysis-for-matrix-denoising}

Armed with the preceding master theorems, we are now positioned to
develop consequences for matrix denoising. As a crucial first step
of the analysis, we need to establish an eigenvalue perturbation theory
that is tightly connected to the eigenvector perturbation bounds.
Recalling that $\lambda_{l}$ is the $l$-th largest eigenvalue (in
magnitude) of $\bm{M}$, we present a theorem that reveals the proximity
of $\lambda_{l}$ and the ground truth $\lambda_{l}^{\star}$.

\begin{theorem}[Eigenvalue perturbation for matrix denoising]\label{thm:eigval-pertur-sym-iid}Consider
the model in Section~\ref{subsec:Matrix-denoising-model}. Fix any
$1\leq l\leq r$, and instate the assumptions of Theorem~\ref{thm:evector-pertur-sym-iid}.
With probability at least $1-O(n^{-10}),$ one has 
\begin{equation}
|\lambda_{l}-\gamma(\lambda_{l})-\lambda_{l}^{\star}|\leq C_{1}\sigma\sqrt{r}\log n\label{eq:eigenvalue-perturbation-bound-iid}
\end{equation}
for some sufficiently large constant $C_{1}>0$, where $\gamma(\cdot)$
is defined as
\begin{equation}
\gamma(\lambda)\coloneqq\sigma^{2}\mathsf{tr}\Big[\Big(\lambda\bm{I}_{n-r}-(\bm{U}^{\star\perp})^{\top}\bm{H}\bm{U}^{\star\perp}\Big)^{-1}\Big].\label{eq:definition-gamma-lambda-iid}
\end{equation}
\end{theorem}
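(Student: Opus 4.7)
My plan is to apply the master identity~\eqref{eq:lambda-l-long-identity-1} of Theorem~\ref{thm:master-theorem-general} with $\bm{Q}=\bm{U}^\star$ (so $k=r$). Let $\bm{v}\coloneqq \bm{U}^{\star\top}\bm{u}_{l,\|}\in\mathbb{R}^r$; this is a unit vector because $\bm{u}_{l,\|}$ lies in the column span of $\bm{U}^\star$. Decomposing the noise into blocks $\bm{H}_{11}\coloneqq \bm{U}^{\star\top}\bm{H}\bm{U}^\star$, $\bm{H}_{12}\coloneqq \bm{U}^{\star\top}\bm{H}\bm{U}^{\star\perp}$, $\bm{H}_{22}\coloneqq (\bm{U}^{\star\perp})^\top \bm{H}\bm{U}^{\star\perp}$, and using $(\bm{U}^{\star\perp})^\top\bm{M}^\star=\bm{0}$, the master identity collapses to
$$(\lambda_l\bm{I}_r-\bm{\Lambda}^\star-\bm{H}_{11})\bm{v}=\bm{H}_{12}\big(\lambda_l\bm{I}_{n-r}-\bm{H}_{22}\big)^{-1}\bm{H}_{12}^\top\bm{v}.$$
Taking the inner product with $\bm{v}$ and rearranging yields the scalar identity
$$\lambda_l-\lambda_l^\star-\gamma(\lambda_l)=\underbrace{\bm{v}^\top\bm{\Lambda}^\star\bm{v}-\lambda_l^\star}_{T_1}+\underbrace{\bm{v}^\top\bm{H}_{11}\bm{v}}_{T_2}+\underbrace{\bm{v}^\top\bm{H}_{12}(\lambda_l\bm{I}_{n-r}-\bm{H}_{22})^{-1}\bm{H}_{12}^\top\bm{v}-\gamma(\lambda_l)}_{T_3},$$
so the proof reduces to bounding $|T_1|,|T_2|,|T_3|$ by $O(\sigma\sqrt{r}\log n)$ on the stated high-probability event.

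By orthogonal invariance of $\bm{H}$, the blocks $\bm{H}_{11},\bm{H}_{12},\bm{H}_{22}$ are mutually independent with their natural marginal laws; in particular $\bm{H}_{11}$ is a symmetric $r\times r$ Gaussian matrix, so $\|\bm{H}_{11}\|\lesssim\sigma\sqrt{r\log n}$ and hence $|T_2|\lesssim\sigma\sqrt{r\log n}$. For $T_3$, the vector $\bm{g}\coloneqq \bm{H}_{12}^\top\bm{v}\sim\mathcal{N}(\bm{0},\sigma^2\bm{I}_{n-r})$ is independent of $\bm{H}_{22}$, so for a deterministic $\lambda$ near $\lambda_l^\star$ Hanson-Wright conditional on $\bm{H}_{22}$ gives
$$\big|\bm{g}^\top(\lambda\bm{I}-\bm{H}_{22})^{-1}\bm{g}-\gamma(\lambda)\big|\lesssim \sigma^{2}\|(\lambda\bm{I}-\bm{H}_{22})^{-1}\|_{\mathrm F}\sqrt{\log n}+\sigma^{2}\|(\lambda\bm{I}-\bm{H}_{22})^{-1}\|\log n.$$
The signal-to-noise assumption $\sigma\sqrt{n}\le c_0\lambda_{\min}^\star$ forces $\mathrm{dist}(\lambda,\mathrm{spec}(\bm{H}_{22}))\gtrsim\lambda_{\min}^\star$, so the Frobenius and operator norms above are $O(\sqrt{n}/\lambda_{\min}^\star)$ and $O(1/\lambda_{\min}^\star)$ respectively, placing the right-hand side comfortably below $\sigma\sqrt{r}\log n$.

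For $T_1=\sum_{i\ne l}(\lambda_i^\star-\lambda_l^\star)v_i^2$ the task is to show that $\bm{v}$ localizes on its $l$-th coordinate. A matrix-valued Hanson-Wright estimate applied to $\bm{H}_{12}(\lambda_l\bm{I}-\bm{H}_{22})^{-1}\bm{H}_{12}^\top$ shows that this matrix equals $\gamma(\lambda_l)\bm{I}_r+\bm{E}$ with $\|\bm{E}\|\lesssim\sigma\sqrt{r}\log n$, so the master identity reads $[\bm{\Lambda}^\star+\gamma(\lambda_l)\bm{I}_r+\bm{H}_{11}+\bm{E}]\bm{v}=\lambda_l\bm{v}$. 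This exhibits $\bm{v}$ as an eigenvector of a diagonal matrix perturbed by noise of operator norm $\lesssim\sigma\sqrt{r}\log n\ll\Delta_l^\star$. Writing $\mu\coloneqq\lambda_l-\gamma(\lambda_l)$, one reads off $v_i=\big((\bm{H}_{11}+\bm{E})\bm{v}\big)_i/(\mu-\lambda_i^\star)$ for $i\ne l$, and substituting this into $T_1$ gives (after a telescoping step that uses $\|\bm{H}_{11}+\bm{E}\|\lesssim\sigma\sqrt{r}\log n$ and $|\mu-\lambda_i^\star|\gtrsim \Delta_l^\star$ for $i\ne l$) the absorbable estimate $|T_1|\lesssim \sigma\sqrt{r}\log n+\tfrac{1}{2}|T_1|$, from which $|T_1|=O(\sigma\sqrt{r}\log n)$ follows. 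Crucially this uses only the \emph{small} eigen-gap $\Delta_l^\star\gg\sigma\sqrt{r}\log n$, not a $\sigma\sqrt{n}$-level gap.

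The main obstacle is that both the Hanson-Wright step for $T_3$ and the matrix-valued Hanson-Wright for $\bm{E}$ involve the resolvent $(\lambda_l\bm{I}_{n-r}-\bm{H}_{22})^{-1}$ at the \emph{random} point $\lambda_l$, which itself depends on $\bm{g}$ and on all of $\bm{H}_{12}$. I plan to decouple this circular dependence in two stages: a crude Weyl bound gives $|\lambda_l-\lambda_l^\star|\lesssim\|\bm{H}\|\lesssim\sigma\sqrt{n}\ll\lambda_{\min}^\star$, confining $\lambda_l$ to a short interval $\mathcal{I}$ around $\lambda_l^\star$; an $\varepsilon$-net of $\mathcal{I}$ of polynomial cardinality together with a union bound then upgrades the conditional concentration inequalities to hold uniformly in $\lambda\in\mathcal{I}$, and Lipschitz continuity of the resolvent-based quantities on $\mathcal{I}$ (controlled by the lower bound on $\mathrm{dist}(\lambda,\mathrm{spec}(\bm{H}_{22}))$) transfers the uniform bounds to the realized $\lambda=\lambda_l$. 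I expect this decoupling, coupled with the matrix-valued Hanson-Wright bound on $\bm{E}$, to be the main technical burden; all remaining steps are routine once these ingredients are in hand.
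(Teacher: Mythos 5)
Your reduction to $T_1+T_2+T_3$ via the master identity with $\bm{Q}=\bm{U}^{\star}$ is the same starting point as the paper, and the bounds on $T_2$ and on the matrix $\bm{E}$ (the paper's Lemma~\ref{lemma:bound-Glambda-uniform}, combined with an epsilon-net over the interval containing the random $\lambda_l$) are sound. But there is a genuine gap at the heart of the $T_1$ step: you assert $|\mu-\lambda_i^{\star}|\gtrsim\Delta_l^{\star}$ for all $i\neq l$, where $\mu=\lambda_l-\gamma(\lambda_l)$, and this is not something you have established --- it is essentially equivalent to the theorem itself. What your argument actually yields is that $\bm{v}$ is an approximate eigenvector of $\bm{\Lambda}^{\star}+\gamma(\lambda_l)\bm{I}_r+\bm{H}_{11}+\bm{E}$ with approximate eigenvalue $\lambda_l$, hence that $\mu$ lies within $O(\sigma\sqrt{r}\log n)$ of \emph{some} $\lambda_k^{\star}$; nothing in the argument rules out $k\neq l$, in which case $\bm{v}$ localizes on coordinate $k$, $T_1\approx\lambda_k^{\star}-\lambda_l^{\star}$ is of order $\Delta_l^{\star}$ or larger, and the self-improving ``absorbable'' estimate never gets off the ground. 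The crude Weyl bound cannot break this tie, because it only confines $\lambda_l$ to an interval of length $\asymp\sigma\sqrt{n}$, which under the weak gap condition \eqref{eq:eigengap-condition-iid} may contain many of the $\lambda_k^{\star}$'s; your epsilon-net stage resolves the dependence of the concentration bounds on the random $\lambda_l$, not the index-identification problem. This is exactly why the paper does not stop at the analogue of your display: its Lemma~\ref{lemma:lambda-l-mapping} runs a continuity (homotopy) argument along $\bm{M}(t)=\bm{M}^{\star}+t\bm{H}$, $t\in[1/\sqrt{n},1]$, using a uniform-in-$t$ version of the concentration bound (Lemma~\ref{lemma:general-t-lambda-correspondence}), monotonicity/Lipschitz control of $f(\lambda)=\lambda-\gamma(\lambda)$, and the disjointness of the balls $\mathcal{B}_{\mathcal{E}_t}(\lambda_i^{\star})$ under the eigen-gap assumption to show that the tracked eigenvalue cannot jump from the ball around $\lambda_l^{\star}$ to a ball around a different $\lambda_k^{\star}$. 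Some argument of this kind (or another mechanism that uses the ordering of the empirical eigenvalues, e.g.\ a counting/interlacing argument) must be supplied; without it the proof establishes proximity to the spectrum $\{\lambda_i^{\star}\}$ but not to $\lambda_l^{\star}$.

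A secondary, repairable issue: your claim that $\bm{g}=\bm{H}_{12}^{\top}\bm{v}\sim\mathcal{N}(\bm{0},\sigma^{2}\bm{I}_{n-r})$ independently of $\bm{H}_{22}$ is false as stated, because $\bm{v}=\bm{U}^{\star\top}\bm{u}_{l,\|}$ is random and depends on all blocks of $\bm{H}$, including $\bm{H}_{12}$; the scalar Hanson--Wright step for $T_3$ therefore does not apply directly. This does not sink the proof, since the matrix-level bound $\|\bm{E}\|\lesssim\sigma\sqrt{r}\log n$ that you invoke for $T_1$ (proved with $\bm{H}_{12}$ independent of $\bm{H}_{22}$ at fixed $\lambda$, then made uniform over $\lambda$) already gives $|T_3|\le\|\bm{E}\|$ for any unit $\bm{v}$; but the argument should be phrased that way rather than through the incorrect distributional claim.
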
\begin{remark}Here, we recall that the columns of $\bm{U}^{\star\perp}\in\mathbb{R}^{n\times(n-r)}$
form an orthonormal basis of the complement to the subspace spanned
by $\bm{U}^{\star}$.\end{remark}

\begin{remark} The error bound (\ref{eq:eigenvalue-perturbation-bound-iid})
concerning the empirical eigenvalue $\lambda_{l}$ contains a systematic
non-negligible term $\gamma(\lambda_{l})$. This makes clear the presence
of a bias effect, which needs to be properly subtracted if one desires
a near-optimal estimate of $\lambda_{l}^{\star}$. 
	It is also worth noting that the importance of bias correction in eigenvalue estimation has been recognized in prior literature as well (e.g.~\citep{paul2007asymptotics}). 

 \end{remark}

\subsubsection{Proof of eigenvalue perturbation theory (Theorem \ref{thm:eigval-pertur-sym-iid})}

\label{subsec:Proof-of-Theorem-matrix-denoising}

We start by demonstrating how to prove the eigenvalue perturbation
bound in Theorem \ref{thm:eigval-pertur-sym-iid}. Let us fix an arbitrary
$1\leq l\leq r$. The key ingredient of the analysis is to invoke
our master theorem (namely, Theorem~\ref{thm:master-theorem-general}). 

Before proceeding, we first verify a few useful facts. It is well
known that if $\sigma\sqrt{n}\leq c_{0}\lambda_{\min}^{\star}$ for
some sufficiently small constant $c_{0}>0$, then with probability
exceeding $1-O(n^{-20})$ one has (see, e.g., \citet[Theorem 3.1.4]{chen2020spectral})
\begin{equation}
\|\bm{H}\|\leq\lambda_{\min}^{\star}/3.\label{eq:H-norm-1-3}
\end{equation}
Recall that
\[
(\bm{U}^{\star\perp})^{\top}\bm{M}\bm{U}^{\star\perp}=(\bm{U}^{\star\perp})^{\top}\bm{M}^{\star}\bm{U}^{\star\perp}+(\bm{U}^{\star\perp})^{\top}\bm{H}\bm{U}^{\star\perp}=(\bm{U}^{\star\perp})^{\top}\bm{H}\bm{U}^{\star\perp},
\]
which together with (\ref{eq:H-norm-1-3}) implies that 
\begin{equation}
\big\|(\bm{U}^{\star\perp})^{\top}\bm{M}\bm{U}^{\star\perp}\big\|=\big\|(\bm{U}^{\star\perp})^{\top}\bm{H}\bm{U}^{\star\perp}\big\|\leq\|\bm{H}\|\leq\lambda_{\min}^{\star}/3\label{eq:H-norm-bound}
\end{equation}
with probability exceeding $1-O(n^{-20})$. This means that with high
probability: (i) the Weyl inequality yields
\begin{equation}
|\lambda_{l}|\geq|\lambda_{l}^{\star}|-\|\bm{H}\|\geq2\,|\lambda_{l}^{\star}|/3\qquad\text{and}\qquad|\lambda_{l}|\leq|\lambda_{l}^{\star}|+\|\bm{H}\|\leq4\,|\lambda_{l}^{\star}|/3;\label{eq:lambdal-range}
\end{equation}
(2) it holds true that $\lambda_{\min}^{\star}/3\ge\|\bm{H}\|\geq\|(\bm{U}^{\star\perp})^{\top}\bm{M}\bm{U}^{\star\perp}\|$,
and hence
\[
\lambda\bm{I}_{n-r}-(\bm{U}^{\star\perp})^{\top}\bm{M}\bm{U}^{\star\perp}\quad\text{is invertible}
\]
for any $\lambda\in\mathbb{R}$ obeying $|\lambda|\ge2\lambda_{\min}^{\star}/3$. 

With the above two observations in mind, take $\bm{Q}=\bm{U}^{\star}$
in Theorem~\ref{thm:master-theorem-general} to show that 
\begin{align}
\big(\lambda_{l}\bm{I}_{r}-\bm{U}^{\star\top}\bm{M}\bm{U}^{\star}\big)\bm{U}^{\star\top}\bm{u}_{l,\|} & =\bm{G}(\lambda_{l})\bm{U}^{\star\top}\bm{u}_{l,\|}\label{eq:lambda-I-decomposition-123}
\end{align}
with probability exceeding $1-O(n^{-20})$, where for any given $\lambda$
with $2\lambda_{\min}^{\star}/3\leq|\lambda|\leq4\lambda_{\max}^{\star}/3$,
we define 
\begin{equation}
\bm{G}(\lambda)\coloneqq\bm{U}^{\star\top}\bm{M}\bm{U}^{\star\perp}\Big(\lambda\bm{I}_{n-r}-(\bm{U}^{\star\perp})^{\top}\bm{M}\bm{U}^{\star\perp}\Big)^{-1}(\bm{U}^{\star\perp})^{\top}\bm{M}\bm{U}^{\star}.\label{eq:defn-Glambda}
\end{equation}
Note that $\bm{U}^{\star\top}$ and $\bm{u}_{l}^{\star\perp}$ are
not uniquely defined. To avoid ambiguity, here and throughout, we
let $\bm{U}^{\star\perp}\in\mathbb{R}^{n\times(n-r)}$ denote an arbitrary
matrix whose columns form an orthonormal basis of the complement to
the subspace spanned by $\bm{U}^{\star}$, and define 
\begin{equation}
\bm{u}_{l}^{\star\perp}=[\bm{u}_{1}^{\star},\bm{u}_{2}^{\star},\ldots,\bm{u}_{l-1}^{\star},\bm{u}_{l+1}^{\star},\ldots,\bm{u}_{r}^{\star},\bm{U}^{\star\perp}]\in\mathbb{R}^{n\times(n-1)}\label{eq:def:u-l-star-perp}
\end{equation}
for each $1\leq l\leq r$.

Recognizing that $\bm{U}^{\star\top}\bm{M}^{\star}\bm{U}^{\star}=\bm{\Lambda}^{\star}$,
$\bm{M}^{\star}\bm{U}^{\star\perp}=\bm{0}$ and $(\bm{U}^{\star\perp})^{\top}\bm{M}^{\star}=\bm{0}$,
we can rewrite (\ref{eq:lambda-I-decomposition-123}) as\begin{subequations}
\begin{align}
\big(\lambda_{l}\bm{I}_{r}-\bm{\Lambda}^{\star}-\bm{U}^{\star\top}\bm{H}\bm{U}^{\star}\big)\bm{U}^{\star\top}\bm{u}_{l,\|} & =\bm{G}(\lambda_{l})\bm{U}^{\star\top}\bm{u}_{l,\|}\label{eq:lambda-I-decomposition-123-1}
\end{align}
\begin{equation}
\text{with}\quad\bm{G}(\lambda)=\bm{U}^{\star\top}\bm{H}\bm{U}^{\star\perp}\Big(\lambda\bm{I}_{n-r}-(\bm{U}^{\star\perp})^{\top}\bm{H}\bm{U}^{\star\perp}\Big)^{-1}(\bm{U}^{\star\perp})^{\top}\bm{H}\bm{U}^{\star}.\label{eq:defn-Glambda-1}
\end{equation}
\end{subequations}Rearranging terms further gives 
\begin{align}
\big(\lambda_{l}\bm{I}_{r}-\bm{\Lambda}^{\star}-\bm{G}^{\perp}(\lambda_{l})\big)\bm{U}^{\star\top}\bm{u}_{l,\|} & =\bm{U}^{\star\top}\bm{H}\bm{U}^{\star}\bm{U}^{\star\top}\bm{u}_{l,\|}+\big(\bm{G}(\lambda_{l})-\bm{G}^{\perp}(\lambda_{l})\big)\bm{U}^{\star\top}\bm{u}_{l,\|},\label{eq:lambda-I-decomposition}
\end{align}
where we define 
\begin{equation}
\bm{G}^{\perp}(\lambda)\coloneqq\mathbb{E}\left[\bm{G}(\lambda)\mid(\bm{U}^{\star\perp})^{\top}\bm{H}\bm{U}^{\star\perp}\right],\label{eq:defn-Glambda-perp}
\end{equation}
with the (conditional) expectation taken assuming that $\lambda$
is independent of $\bm{H}$. Here, we single out the component $\bm{G}^{\perp}(\lambda)$
since --- as will be seen momentarily --- it often contains some
non-negligible bias term. Combining (\ref{eq:lambda-I-decomposition})
with the triangle inequality and the fact $\|\bm{U}^{\star\top}\bm{u}_{l,\|}\|_{2}=1$
then yields 
\begin{align}
\left\Vert \big(\lambda_{l}\bm{I}_{r}-\bm{\Lambda}^{\star}-\bm{G}^{\perp}(\lambda_{l})\big)\bm{U}^{\star\top}\bm{u}_{l,\|}\right\Vert _{2} & \leq\left\Vert \bm{U}^{\star\top}\bm{H}\bm{U}^{\star}\right\Vert +\left\Vert \bm{G}(\lambda_{l})-\bm{G}^{\perp}(\lambda_{l})\right\Vert \label{eq:lambda-l-long-identity-1-1}\\
 & \leq\big\|\bm{U}^{\star\top}\bm{H}\bm{U}^{\star}\big\|+\sup_{\lambda:\,|\lambda|\in\big[2|\lambda_{l}^{\star}|/3,\,4|\lambda_{l}^{\star}|/3\big]}\big\|\bm{G}(\lambda)-\bm{G}^{\perp}(\lambda)\big\|\label{eq:lambda-l-decomposition-126}
\end{align}
with probability at least $1-O(n^{-20})$, where the last line arises
from (\ref{eq:lambdal-range}).

In order to justify that $\big(\lambda_{l}\bm{I}_{r}-\bm{\Lambda}^{\star}-\bm{G}^{\perp}(\lambda_{l})\big)\bm{U}^{\star\top}\bm{u}_{l,\|}\approx\bm{0}$,
it remains to show that the two terms on the right-hand side of (\ref{eq:lambda-l-decomposition-126})
are both fairly small, which we accomplish through the following lemma.

\begin{lemma}\label{lemma:bound-Glambda-uniform}Assume that $\bm{H}\in\mathbb{R}^{n\times n}$
is a symmetric matrix with $H_{ij}\overset{\mathrm{i.i.d.}}{\sim}\mathcal{N}(0,\sigma^{2}),i\geq j$
and $\sigma\sqrt{n}\leq c_{0}\lambda_{\min}^{\star}$ for some sufficiently
small constant $c_{0}>0$. Then for any $1\leq l\leq r$, with probability
at least $1-O(n^{-11}),$ one has 
\begin{align}
\big\|\bm{U}^{\star\top}\bm{H}\bm{U}^{\star}\big\| & \lesssim\sigma\big(\sqrt{r}+\sqrt{\log n}\big),\nonumber \\
\sup_{\lambda:\,|\lambda|\in\big[2|\lambda_{l}^{\star}|/3,\,4|\lambda_{l}^{\star}|/3\big]}\big\|\bm{G}(\lambda)-\bm{G}^{\perp}(\lambda)\big\| & \lesssim\frac{\sigma^{2}}{\lambda_{\min}^{\star}}\big(\sqrt{rn\log n}+r\log n\big).\label{eq:Glambda-gap}
\end{align}
In addition, one has
\begin{equation}
\bm{G}^{\perp}(\lambda)=\Big\{\sigma^{2}\mathsf{tr}\Big[\big(\lambda\bm{I}_{n-r}-(\bm{U}^{\star\perp})^{\top}\bm{H}\bm{U}^{\star\perp}\big)^{-1}\Big]\Big\}\bm{I}_{r}.\label{eq:expression-Gperp-lambda}
\end{equation}
\end{lemma}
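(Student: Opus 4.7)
}

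The plan is to exploit two key independence structures of the Gaussian noise matrix $\bm{H}$. First, letting $\bm{V}=[\bm{U}^{\star},\bm{U}^{\star\perp}]$, orthogonal invariance of the GOE ensemble implies that $\bm{V}^{\top}\bm{H}\bm{V}$ has the same law as $\bm{H}$; in particular, the three blocks $\bm{U}^{\star\top}\bm{H}\bm{U}^{\star}$, $\bm{U}^{\star\top}\bm{H}\bm{U}^{\star\perp}$, and $(\bm{U}^{\star\perp})^{\top}\bm{H}\bm{U}^{\star\perp}$ are mutually independent, and the off-diagonal block $\bm{A}:=(\bm{U}^{\star\perp})^{\top}\bm{H}\bm{U}^{\star}\in\mathbb{R}^{(n-r)\times r}$ has i.i.d.\ $\mathcal{N}(0,\sigma^{2})$ entries. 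Second, the resolvent $\bm{B}(\lambda):=\bigl(\lambda\bm{I}_{n-r}-(\bm{U}^{\star\perp})^{\top}\bm{H}\bm{U}^{\star\perp}\bigr)^{-1}$ --- well-defined on the high-probability event $\|\bm{H}\|\le\lambda_{\min}^{\star}/3$ thanks to (\ref{eq:H-norm-1-3}), which also yields $\|\bm{B}(\lambda)\|\lesssim 1/\lambda_{\min}^{\star}$ and $\|\bm{B}(\lambda)\|_{\mathrm{F}}\lesssim\sqrt{n}/\lambda_{\min}^{\star}$ uniformly in the range of interest --- depends only on $(\bm{U}^{\star\perp})^{\top}\bm{H}\bm{U}^{\star\perp}$, so $\bm{A}$ and $\bm{B}(\lambda)$ are independent.

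First, I would handle Part 1. Since $\bm{U}^{\star\top}\bm{H}\bm{U}^{\star}\in\mathbb{R}^{r\times r}$ is a symmetric matrix whose off-diagonal entries are i.i.d.~$\mathcal{N}(0,\sigma^{2})$ and whose diagonal entries are i.i.d.~$\mathcal{N}(0,2\sigma^{2})$, standard spectral-norm concentration for $r\times r$ GOE matrices --- equivalently an $\epsilon$-net argument applied to the sub-Gaussian quadratic form $\bm{v}^{\top}\bm{U}^{\star\top}\bm{H}\bm{U}^{\star}\bm{v}$ (which has variance proxy $O(\sigma^{2})$) combined with a union bound over a $1/4$-net on the unit sphere in $\mathbb{R}^{r}$ (of cardinality at most $9^{r}$) --- yields $\|\bm{U}^{\star\top}\bm{H}\bm{U}^{\star}\|\lesssim\sigma(\sqrt{r}+\sqrt{\log n})$ with probability at least $1-O(n^{-11})$.

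For Part 3, the calculation is direct: on the event that $\bm{B}(\lambda)$ is defined one has $\bm{G}(\lambda)=\bm{A}^{\top}\bm{B}(\lambda)\bm{A}$, and by the independence of $\bm{A}$ from $\bm{B}(\lambda)$ together with $\mathbb{E}[A_{ki}A_{k'j}]=\sigma^{2}\delta_{kk'}\delta_{ij}$, the $(i,j)$-entry of the conditional expectation equals $\sum_{k,k'}B_{kk'}(\lambda)\,\mathbb{E}[A_{ki}A_{k'j}]=\sigma^{2}\mathsf{tr}(\bm{B}(\lambda))\,\delta_{ij}$, giving $\bm{G}^{\perp}(\lambda)=\sigma^{2}\mathsf{tr}(\bm{B}(\lambda))\,\bm{I}_{r}$.

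The main obstacle is Part 2, which demands \emph{uniform} concentration in $\lambda$. My plan is a two-stage net argument. For each fixed $\lambda$ in the range and each fixed unit $\bm{v}\in\mathbb{R}^{r}$, independence of $\bm{A}$ and $\bm{B}(\lambda)$ makes $\bm{x}:=\bm{A}\bm{v}\sim\mathcal{N}(\bm{0},\sigma^{2}\bm{I}_{n-r})$ independent of $\bm{B}(\lambda)$, so conditioning on $\bm{B}(\lambda)$ and invoking the Hanson-Wright inequality bounds $\bm{v}^{\top}(\bm{G}(\lambda)-\bm{G}^{\perp}(\lambda))\bm{v}=\bm{x}^{\top}\bm{B}(\lambda)\bm{x}-\sigma^{2}\mathsf{tr}(\bm{B}(\lambda))$ by $\sigma^{2}\bigl(\|\bm{B}(\lambda)\|_{\mathrm{F}}\sqrt{t}+\|\bm{B}(\lambda)\|\,t\bigr)$ with failure probability $e^{-ct}$. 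Plugging in the norm bounds on $\bm{B}(\lambda)$ and taking a union bound over a $1/4$-net on $\mathbb{S}^{r-1}$ with $t\asymp r+\log n$ yields $\|\bm{G}(\lambda)-\bm{G}^{\perp}(\lambda)\|\lesssim(\sigma^{2}/\lambda_{\min}^{\star})\bigl(\sqrt{n(r+\log n)}+(r+\log n)\bigr)\lesssim(\sigma^{2}/\lambda_{\min}^{\star})\bigl(\sqrt{rn\log n}+r\log n\bigr)$ at each fixed $\lambda$. To upgrade to the supremum I would use the Lipschitz bound $\bigl\|\tfrac{d}{d\lambda}\bm{G}(\lambda)\bigr\|=\|\bm{A}^{\top}\bm{B}(\lambda)^{2}\bm{A}\|\lesssim\|\bm{A}\|^{2}/(\lambda_{\min}^{\star})^{2}\lesssim\sigma^{2}n/(\lambda_{\min}^{\star})^{2}$ (since $\|\bm{A}\|\lesssim\sigma\sqrt{n}$ with high probability), so that a net of polynomial cardinality in $n$ on the interval $[2|\lambda_{l}^{\star}|/3,4|\lambda_{l}^{\star}|/3]$ suffices to interpolate, and a final union bound inflates the failure probability by only a polynomial factor. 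The key subtlety is keeping $\bm{A}$ and $\bm{B}(\lambda)$ decoupled throughout the two nets so that Hanson-Wright remains applicable.
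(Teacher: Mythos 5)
Your proposal is correct, and it shares the paper's overall skeleton: rotational invariance to reduce to independent Gaussian blocks (so that $\bm{A}=(\bm{U}^{\star\perp})^{\top}\bm{H}\bm{U}^{\star}$ is independent of the resolvent), localization to the event $\|\bm{H}\|\le\lambda_{\min}^{\star}/3$, a fixed-$\lambda$ concentration bound, and an epsilon-net in $\lambda$ upgraded via a resolvent Lipschitz estimate; your computation of $\bm{G}^{\perp}(\lambda)$ is the same (in fact stated more directly than in the paper). Where you genuinely diverge is the key concentration step at fixed $\lambda$: the paper diagonalizes $\bm{H}_{\mathsf{lr}}$ by rotational invariance, rewrites $\bm{G}(\lambda)$ as the weighted rank-one sum $\sum_i\frac{1}{\lambda-\gamma_i}\bm{h}_i\bm{h}_i^{\top}$, and invokes the truncated matrix Bernstein bound of Lemma~\ref{lemma:covariance-Gaussian-concentration}, whereas you control the symmetric matrix $\bm{G}(\lambda)-\bm{G}^{\perp}(\lambda)$ through its quadratic forms, applying Hanson--Wright to $\bm{x}^{\top}\bm{B}(\lambda)\bm{x}-\sigma^{2}\mathsf{tr}(\bm{B}(\lambda))$ with $\bm{x}=\bm{A}\bm{v}\sim\mathcal{N}(\bm{0},\sigma^{2}\bm{I}_{n-r})$ and a $1/4$-net on $\mathbb{S}^{r-1}$ with $t\asymp r+\log n$; both routes give $\frac{\sigma^{2}}{\lambda_{\min}^{\star}}(\sqrt{n(r+\log n)}+r+\log n)\lesssim\frac{\sigma^{2}}{\lambda_{\min}^{\star}}(\sqrt{rn\log n}+r\log n)$. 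Your route is arguably more elementary (no separate matrix-Bernstein lemma, and the rank-one reduction/diagonalization of $\bm{H}_{\mathsf{lr}}$ is unnecessary), while the paper's rank-one formulation is reused elsewhere (e.g.\ in deriving the explicit form of $\bm{G}^{\perp}$ and in the PCA analysis), which is what it buys. One small bookkeeping refinement: in the window $|\lambda|\in[2|\lambda_{l}^{\star}|/3,\,4|\lambda_{l}^{\star}|/3]$ you should use the localized bound $\|\bm{B}(\lambda)\|\lesssim1/|\lambda_{l}^{\star}|$ (rather than $1/\lambda_{\min}^{\star}$) when sizing the $\lambda$-net; with spacing $c|\lambda_{l}^{\star}|/n$ this gives interpolation error $\lesssim\sigma^{2}/|\lambda_{l}^{\star}|$ and net cardinality $O(n)$ independent of $\kappa$, exactly as in the paper, whereas the cruder $1/\lambda_{\min}^{\star}$ bound would make the net size or interpolation error depend on $\kappa$, which the lemma does not assume to be bounded.
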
\begin{proof}See Appendix \ref{subsec:Proof-of-Lemma-bound-Glambda-uniform}.\end{proof}

With the above lemma in place, by introducing\begin{subequations}\label{eq:defn-Mlambda-xi-lambda}
\begin{align}
\bm{M}_{\lambda} & :=\bm{\Lambda}^{\star}+\bm{G}^{\perp}(\lambda)=\bm{\Lambda}^{\star}+\gamma(\lambda)\bm{I}_{r}\label{eq:defn-Mlambda}\\
\gamma(\lambda) & :=\sigma^{2}\mathsf{tr}\Big[\big(\lambda\bm{I}_{n-r}-(\bm{U}^{\star\perp})^{\top}\bm{H}\bm{U}^{\star\perp}\big)^{-1}\Big]\label{eq:defn-gamma-lambda}
\end{align}
\end{subequations}for any $\lambda$ with $2\lambda_{\min}^{\star}/3\leq|\lambda|\leq4\lambda_{\max}^{\star}/3$,
we can invoke the union bound to show that with probability at least
$1-O(n^{-10})$,
\begin{align}
\big\|(\lambda_{l}\bm{I}_{r}-\bm{M}_{\lambda_{l}})\bm{U}^{\star\top}\bm{u}_{l,\|}\big\|_{2} & \leq\big\|\bm{U}^{\star\top}\bm{H}\bm{U}^{\star}\big\|+\sup_{\lambda:\,|\lambda|\in\big[2|\lambda_{l}^{\star}|/3,\,4|\lambda_{l}^{\star}|/3\big]}\big\|\bm{G}(\lambda)-\bm{G}^{\perp}(\lambda)\big\|\nonumber \\
 & \lesssim\sigma\big(\sqrt{r}+\sqrt{\log n}\big)+\frac{\sigma^{2}}{\lambda_{\min}^{\star}}\big(\sqrt{rn\log n}+r\log n\big)\nonumber \\
 & \leq C_{1}\sigma\sqrt{r}\log n=:\mathcal{E}_{\mathsf{MD}}\label{eq:lambdal-Ml-Epsilon}
\end{align}
holds for all $1\leq l\leq r$, where $C_{1}>0$ is some sufficiently
large constant. Intuitively, this means that $(\lambda_{l}\bm{I}_{r}-\bm{M}_{\lambda_{l}})\bm{U}^{\star\top}\bm{u}_{l,\|}\approx\bm{0}$,
and hence $\lambda_{l}$ is expected to be close to an eigenvalue
of $\bm{M}_{\lambda_{l}}$ --- which is $\lambda_{i}^{\star}+\gamma(\lambda_{l})$
for some $1\leq i\leq r$. 

With the above bound in place, the only possible range of $\lambda_{l}$
is characterized by the following lemma, which in turn establishes
Theorem \ref{thm:eigval-pertur-sym-iid}.

\begin{lemma}\label{lemma:lambda-l-mapping}Under the condition (\ref{eq:lambdal-Ml-Epsilon})
and the eigen-gap assumption (\ref{eq:eigengap-condition-iid}), with
probability at least $1-O(n^{-10})$ one has 
\begin{equation}
\big|\lambda_{l}-\lambda_{l}^{\star}-\gamma(\lambda_{l})\big|\leq\mathcal{E}_{\mathsf{MD}},\qquad1\leq l\leq r.\label{eq:lambda-l-mapping-iid}
\end{equation}
\end{lemma}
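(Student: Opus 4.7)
The goal is to upgrade the vector bound \eqref{eq:lambdal-Ml-Epsilon} to a scalar bound featuring the specific ground-truth eigenvalue $\lambda_l^\star$ rather than merely some eigenvalue of $\bm M^\star$. The strategy is to first extract a scalar localization statement from \eqref{eq:lambdal-Ml-Epsilon} using the diagonal structure of $\bm M_\lambda$, then rule out misidentification of indices via a pigeonhole/ordering argument on the near-identity map $\phi(\lambda):=\lambda-\gamma(\lambda)$. Specifically, recall from \eqref{eq:defn-Mlambda} that $\bm M_{\lambda_l}=\bm\Lambda^\star+\gamma(\lambda_l)\bm I_r$ is diagonal with entries $\{\lambda_i^\star+\gamma(\lambda_l)\}_{i=1}^r$, and since $\bm u_{l,\|}\in\mathrm{range}(\bm U^\star)$ has unit length, $\bm U^{\star\top}\bm u_{l,\|}\in\mathbb R^r$ is a unit vector; the elementary inequality $\|\bm D\bm x\|_2\ge\min_i|D_{ii}|$ for diagonal $\bm D$ and unit $\bm x$ applied to \eqref{eq:lambdal-Ml-Epsilon} then yields $\min_{1\le i\le r}|\lambda_l-\lambda_i^\star-\gamma(\lambda_l)|\le\mathcal E_{\mathsf{MD}}$. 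Let $\pi(l)\in[r]$ denote an index achieving this minimum; it remains to show $\pi(l)=l$.

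I next establish that $\phi(\lambda):=\lambda-\gamma(\lambda)$ behaves almost like the identity on the relevant $\lambda$-range. Using $\|\bm H\|\le\lambda_{\min}^\star/3$ from \eqref{eq:H-norm-1-3} and $|\lambda|\in[2\lambda_{\min}^\star/3,\,4\lambda_{\max}^\star/3]$, a direct trace computation bounds $|\gamma(\lambda)|\lesssim\sigma^2n/\lambda_{\min}^\star\ll\lambda_{\min}^\star$ and $|\gamma'(\lambda)|\lesssim\sigma^2n/(\lambda_{\min}^\star)^2=o(1)$ under the noise condition $\sigma\sqrt n\le c_0\lambda_{\min}^\star$. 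Consequently $\phi$ is strictly monotone with slope arbitrarily close to $1$ on each of the sign regions $\lambda\ge\lambda_{\min}^\star/2$ and $\lambda\le-\lambda_{\min}^\star/2$, preserves sign, and has its positive and negative sign regions $\phi$-separated by at least $\lambda_{\min}^\star$.

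Running the scalar localization step over all $l'\in[r]$ produces a map $\pi:[r]\to[r]$, which I argue is injective: if $\pi(l_1)=\pi(l_2)$ for some $l_1\ne l_2$, then $|\phi(\lambda_{l_1})-\phi(\lambda_{l_2})|\le 2\mathcal E_{\mathsf{MD}}$, so the sign-separation of $\phi$ forces $\lambda_{l_1},\lambda_{l_2}$ to share a sign and the near-unit slope gives $|\lambda_{l_1}-\lambda_{l_2}|\lesssim\mathcal E_{\mathsf{MD}}$; this leaves some $\lambda_k^\star$ orphaned from the range of $\pi$, contradicting the counting fact that the $r$ signal eigenvalues of $\bm M$ must each shadow a distinct $\lambda_i^\star$ through $\phi$ once the eigen-gap condition $\Delta_l^\star>C_0\sigma\sqrt r\log n\gg\mathcal E_{\mathsf{MD}}$ is in force. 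Given that $\pi$ is a bijection of $[r]$, the two sequences $\{\phi(\lambda_{l'})\}_{l'=1}^r$ and $\{\lambda_i^\star\}_{i=1}^r$ are both sorted in magnitude (the former via the near-identity nature of $\phi$ combined with the magnitude-ordering of $\{\lambda_{l'}\}$, the latter by convention) and are paired with pairwise distance $\le\mathcal E_{\mathsf{MD}}$, which is much smaller than any relevant eigen-gap; such a bijection between two same-ordered sequences must be the identity, so $\pi(l)=l$ and \eqref{eq:lambda-l-mapping-iid} follows directly.

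\emph{Main obstacle.} The delicate piece is the injectivity/counting step: coincidental collisions $\pi(l_1)=\pi(l_2)$ cannot be ruled out from \eqref{eq:lambdal-Ml-Epsilon} alone, since that bound was derived individually for each $l'$. Rigorously precluding them requires bringing in additional global information --- either via near-orthogonality of the $r$ parallel components $\bm u_{l',\|}$ inside $\mathrm{range}(\bm U^\star)$, or via an interlacing/dimension count on approximate eigenspaces of $\bm M$ so as to justify that the $r$ signal empirical eigenvalues of $\bm M$ genuinely correspond to $r$ distinct shifted ground-truth eigenvalues. I expect this to be the main technical hurdle, with the remaining monotonicity and ordering arguments being relatively mechanical.
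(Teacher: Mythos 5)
Your first step (extracting $\min_{1\le i\le r}|\lambda_l-\lambda_i^\star-\gamma(\lambda_l)|\le\mathcal{E}_{\mathsf{MD}}$ from \eqref{eq:lambdal-Ml-Epsilon}) coincides with the paper's, but the route you take afterwards has a genuine gap at exactly the point you flag, and that point is the heart of the lemma. Your injectivity argument is circular: to rule out $\pi(l_1)=\pi(l_2)$ you invoke ``the counting fact that the $r$ signal eigenvalues of $\bm{M}$ must each shadow a distinct $\lambda_i^\star$,'' which is precisely the statement that needs proof. The per-index bound \eqref{eq:lambdal-Ml-Epsilon} genuinely does not exclude the scenario in which, say, both $\lambda_1-\gamma(\lambda_1)$ and $\lambda_2-\gamma(\lambda_2)$ sit near the same $\lambda_k^\star$ while another ground-truth eigenvalue is left unshadowed; ``orphaning'' is not by itself a contradiction. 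Weyl's inequality cannot rescue the identification either, because in the small-gap regime of interest $\Delta_l^\star$ may be far below $\|\bm{H}\|\asymp\sigma\sqrt{n}$, so $\lambda_l$ is a priori compatible (after the $\gamma$-correction) with several different $\lambda_k^\star$. Your proposed repairs (near-orthogonality of the vectors $\bm{u}_{l',\|}$, or an unspecified interlacing/dimension count) are not carried out, and the subsequent sorted-bijection step also needs care with signs and with possibly clustered eigenvalues $\lambda_k^\star$ for $k\neq l$, since the assumption \eqref{eq:eigengap-condition-iid} only isolates $\lambda_l^\star$ from the rest of the spectrum, not the other eigenvalues from one another.

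The paper closes this gap with a different, global ingredient that your proposal lacks: a homotopy in the noise amplitude. One sets $\bm{M}(t)=\bm{M}^\star+t\bm{H}$ and proves (Lemma~\ref{lemma:general-t-lambda-correspondence}, via a uniform-in-$t$ epsilon-net strengthening of Lemma~\ref{lemma:bound-Glambda-uniform}) that $\lambda_{l,t}-\gamma(\lambda_{l,t},t)\in\cup_{i}\mathcal{B}_{\mathcal{E}_t}(\lambda_i^\star)$ simultaneously for all $t\in[1/\sqrt{n},1]$; for $t\le 1/\sqrt{n}$ Weyl's inequality pins $\lambda_{l,t}-\gamma(\lambda_{l,t},t)$ inside the ball around $\lambda_l^\star$ itself; and since $t\mapsto\lambda_{l,t}-\gamma(\lambda_{l,t},t)$ is continuous while the eigen-gap assumption keeps $\mathcal{B}_{\mathcal{E}_t}(\lambda_l^\star)$ isolated from the other balls, the trajectory can never jump to a ball around a different $\lambda_k^\star$, which yields \eqref{eq:lambda-l-mapping-iid} at $t=1$. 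Note this argument only needs the single ball around $\lambda_l^\star$ to be isolated, in harmony with the assumption, whereas your matching argument implicitly asks for global control of all indices. To make your write-up a proof you would need either to supply this continuation argument (including the uniform-in-$t$ concentration, which is real additional work) or to find a rigorous substitute for the counting step, e.g.\ a quantitative statement that every interval $\mathcal{B}_{\mathcal{E}_{\mathsf{MD}}}(\lambda_i^\star+\gamma(\cdot))$ contains an eigenvalue of $\bm{M}$; neither is present in the current proposal.
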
\begin{proof}See Appendix \ref{subsec:Proof-of-Claim-lambdal-correspondence-iid}.\end{proof}

\subsubsection{Proof of eigenvector perturbation theory (Theorem \ref{thm:evector-pertur-sym-iid})}

\label{subsec:Proof-of-thm:evector-pertur-sym-iid}

Let us begin by decomposing $\bm{u}_{l}$ along the ground-truth direction
$\bm{u}_{l}^{\star}$ and its complement subspace as follows 
\begin{equation}
\bm{u}_{l}=\bm{u}_{l}^{\star}\cos\theta+\bm{u}_{l,\perp}\sin\theta,\label{eq:decomposition-ul-123}
\end{equation}
where the vector $\bm{u}_{l,\perp}$ obeys $\|\bm{u}_{l,\perp}\|_{2}=1$
and $\bm{u}_{l,\perp}^{\top}\bm{u}_{l}^{\star}=0$. Writing $\bm{a}=\bm{P}_{\bm{U}^{\star}}\bm{a}+\bm{P}_{\bm{U}^{\star\perp}}\bm{a}$
with $\bm{P}_{\bm{U}^{\star}}=\sum_{1\leq k\leq r}\bm{u}_{k}^{\star}\bm{u}_{k}^{\star\top}$
and $\bm{P}_{\bm{U}^{\star\perp}}=\bm{I}-\bm{P}_{\bm{U}^{\star}}$,
we obtain
\begin{align*}
\bm{a}^{\top}\bm{u}_{l}= & (\bm{P}_{\bm{U}^{\star}}\bm{a})^{\top}\bm{u}_{l}+(\bm{P}_{\bm{U}^{\star\perp}}\bm{a})^{\top}\bm{u}_{l}\\
= & (\bm{P}_{\bm{U}^{\star}}\bm{a})^{\top}(\bm{u}_{l}^{\star}\cos\theta+\bm{u}_{l,\perp}\sin\theta)+\langle\bm{P}_{\bm{U}^{\star\perp}}\bm{a},\,\bm{u}_{l}\rangle\\
= & \sum_{k=1}^{r}\bm{a}^{\top}\bm{u}_{k}^{\star}\bm{u}_{k}^{\star\top}(\bm{u}_{l}^{\star}\cos\theta+\bm{u}_{l,\perp}\sin\theta)+\langle\bm{P}_{\bm{U}^{\star\perp}}\bm{a},\,\bm{P}_{\bm{U}^{\star\perp}}\bm{u}_{l}\rangle\\
= & \bm{a}^{\top}\bm{u}_{l}^{\star}\cos\theta+\sum_{k:k\ne l}\bm{a}^{\top}\bm{u}_{k}^{\star}\bm{u}_{k}^{\star\top}\bm{u}_{l,\perp}\sin\theta+\langle\bm{P}_{\bm{U}^{\star\perp}}\bm{a},\,\bm{P}_{\bm{U}^{\star\perp}}\bm{u}_{l}\rangle,
\end{align*}
where the third line relies on the fact $\bm{P}_{\bm{U}^{\star\perp}}\bm{P}_{\bm{U}^{\star\perp}}=\bm{P}_{\bm{U}^{\star\perp}}$.
It then follows that
\[
\bm{a}^{\top}\bm{u}_{l}\pm\bm{a}^{\top}\bm{u}_{l}^{\star}=\bm{a}^{\top}\bm{u}_{l}^{\star}(\cos\theta\pm1)+\sum_{k:k\ne l}\bm{a}^{\top}\bm{u}_{k}^{\star}\bm{u}_{k}^{\star\top}\bm{u}_{l,\perp}\sin\theta+\langle\bm{P}_{\bm{U}^{\star\perp}}\bm{a},\,\bm{P}_{\bm{U}^{\star\perp}}\bm{u}_{l}\rangle,
\]
allowing us to deduce that
\begin{align}
\min\big|\bm{a}^{\top}\bm{u}_{l}\pm\bm{a}^{\top}\bm{u}_{l}^{\star}\big|\leq & \big|\bm{a}^{\top}\bm{u}_{l}^{\star}\big|(1-|\cos\theta|)+\Big|\sum_{k:k\ne l}\bm{a}^{\top}\bm{u}_{k}^{\star}\bm{u}_{k}^{\star\top}\bm{u}_{l,\perp}\sin\theta\Big|+\big|\langle\bm{P}_{\bm{U}^{\star\perp}}\bm{a},\,\bm{P}_{\bm{U}^{\star\perp}}\bm{u}_{l}\rangle\big|\nonumber \\
\leq & \big|\bm{a}^{\top}\bm{u}_{l}^{\star}\big|(1-\cos^{2}\theta)+\Big|\sum_{k:k\ne l}\bm{a}^{\top}\bm{u}_{k}^{\star}\bm{u}_{k}^{\star\top}\bm{u}_{l,\perp}\sin\theta\Big|+\big|\langle\bm{P}_{\bm{U}^{\star\perp}}\bm{a},\,\bm{P}_{\bm{U}^{\star\perp}}\bm{u}_{l}\rangle\big|.\label{eq:au-min-UB-iid-sym-1}
\end{align}
and
\begin{align}
\min\big|\bm{a}^{\top}\bm{u}_{l}\sqrt{1+b_{l}}\pm\bm{a}^{\top}\bm{u}_{l}^{\star}\big| & \leq\big|\bm{a}^{\top}\bm{u}_{l}^{\star}\big|\cdot\big|1-\sqrt{1+b_{l}}|\cos\theta|\big|+\sqrt{1+b_{l}}\big|\langle\bm{P}_{\bm{U}^{\star\perp}}\bm{a},\,\bm{P}_{\bm{U}^{\star\perp}}\bm{u}_{l}\rangle\big|\nonumber \\
 & \quad+\sqrt{1+b_{l}}\Big|\sum_{k:k\ne l}\bm{a}^{\top}\bm{u}_{k}^{\star}\bm{u}_{k}^{\star\top}\bm{u}_{l,\perp}\sin\theta\Big|.\label{eq:au-min-UB-iid-sym-1-bl}
\end{align}
As a result, it boils down to bounding the terms
\[
1-\cos^{2}\theta,\quad1-\sqrt{1+b_{l}}|\cos\theta|,\quad\sqrt{1+b_{l}},\quad\sum_{k:k\ne l}\bm{a}^{\top}\bm{u}_{k}^{\star}\bm{u}_{k}^{\star\top}\bm{u}_{l,\perp}\sin\theta,\quad\text{and}\quad\langle\bm{P}_{\bm{U}^{\star\perp}}\bm{a},\,\bm{P}_{\bm{U}^{\star\perp}}\bm{u}_{l}\rangle.
\]

We claim that $\lambda_{l}\bm{I}_{n-1}-(\bm{u}_{l}^{\star\perp})^{\top}\bm{M}\bm{u}_{l}^{\star\perp}$
is invertible. This can be seen from (\ref{eq:lambdal-tilde-lambdak-LB})
stated in Lemma \ref{lemma:lambda-M-minus-spectrum} directly, whose
validation is independent with this claim. The invertibility taken
together with Theorem \ref{thm:master-thm-vector} reveals that $\cos\theta\neq0$.
If $\sin\theta=0$, then we have $\bm{u}_{l}=\pm\bm{u}_{l}^{\star}$
and the conclusion is obvious since $\min\big|\bm{a}^{\top}\bm{u}_{l}\pm\bm{a}^{\top}\bm{u}_{l}^{\star}\big|=0$.
Therefore, we shall assume $\cos\theta\neq0$ and $\sin\theta\neq0$
in the remainder of the proof. Invoking Theorem \ref{thm:master-thm-vector}
yields\begin{subequations}\label{eq:costheta-au-iid-1} 
\begin{align}
\cos^{2}\theta & =\frac{1}{1+\big\|\big(\lambda_{l}\bm{I}_{n-1}-(\bm{u}_{l}^{\star\perp})^{\top}\bm{M}\bm{u}_{l}^{\star\perp}\big)^{-1}(\bm{u}_{l}^{\star\perp})^{\top}\bm{M}\bm{u}_{l}^{\star}\big\|_{2}^{2}},\\
\bm{u}_{k}^{\star\top}\bm{u}_{l,\perp} & =\frac{\bm{u}_{k}^{\star\top}\bm{u}_{l}^{\star\perp}\big(\lambda_{l}\bm{I}_{n-1}-(\bm{u}_{l}^{\star\perp})^{\top}\bm{M}\bm{u}_{l}^{\star\perp}\big)^{-1}(\bm{u}_{l}^{\star\perp})^{\top}\bm{M}\bm{u}_{l}^{\star}}{\big\|\bm{u}_{l}^{\star\perp}\big(\lambda_{l}\bm{I}_{n-1}-(\bm{u}_{l}^{\star\perp})^{\top}\bm{M}\bm{u}_{l}^{\star\perp}\big)^{-1}(\bm{u}_{l}^{\star\perp})^{\top}\bm{M}\bm{u}_{l}^{\star}\big\|_{2}}.
\end{align}
\end{subequations} Recognizing that $\bm{M}^{\star}\bm{u}_{l}^{\star}=\lambda_{l}^{\star}\bm{u}_{l}^{\star}$,
we can alternatively write (\ref{eq:costheta-au-iid-1}) as follows\begin{subequations}\label{eq:costheta-au-expression}
\begin{align}
\cos^{2}\theta & =\frac{1}{1+\big\|\big(\lambda_{l}\bm{I}_{n-1}-\bm{M}^{(l)}\big)^{-1}(\bm{u}_{l}^{\star\perp})^{\top}\bm{H}\bm{u}_{l}^{\star}\big\|_{2}^{2}},\label{eq:cos-eig-linear-form}\\
\bm{u}_{k}^{\star\top}\bm{u}_{l,\perp} & =\frac{\bm{u}_{k}^{\star\top}\bm{u}_{l}^{\star\perp}\big(\lambda_{l}\bm{I}_{n-1}-\bm{M}^{(l)}\big)^{-1}(\bm{u}_{l}^{\star\perp})^{\top}\bm{H}\bm{u}_{l}^{\star}}{\big\|\bm{u}_{l}^{\star\perp}\big(\lambda_{l}\bm{I}_{n-1}-\bm{M}^{(l)}\big)^{-1}(\bm{u}_{l}^{\star\perp})^{\top}\bm{H}\bm{u}_{l}^{\star}\big\|_{2}}.\label{eq:a-u-eig-linear-form}
\end{align}
\end{subequations}Here, we define 
\begin{align}
\bm{M}^{(l)} & \coloneqq(\bm{u}_{l}^{\star\perp})^{\top}\bm{M}\bm{u}_{l}^{\star\perp}=(\bm{u}_{l}^{\star\perp})^{\top}\bm{M}^{\star}\bm{u}_{l}^{\star\perp}+(\bm{u}_{l}^{\star\perp})^{\top}\bm{H}\bm{u}_{l}^{\star\perp}.\label{eq:definition-M-slash-l}
\end{align}
With the above relations in mind, we can demonstrate that 
\begin{align*}
 & \big|\sum_{k:k\ne l}\bm{a}^{\top}\bm{u}_{k}^{\star}\bm{u}_{k}^{\star\top}\bm{u}_{l,\perp}\sin\theta\big|=\big|\sum_{k:k\ne l}\bm{a}^{\top}\bm{u}_{k}^{\star}\bm{u}_{k}^{\star\top}\bm{u}_{l,\perp}\big|\sqrt{1-\cos^{2}\theta}\\
 & =\frac{\big|\sum_{k:k\ne l}\bm{a}^{\top}\bm{u}_{k}^{\star}\bm{u}_{k}^{\star\top}\bm{u}_{l}^{\star\perp}\big(\lambda_{l}\bm{I}-\bm{M}^{(l)}\big)^{-1}(\bm{u}_{l}^{\star\perp})^{\top}\bm{H}\bm{u}_{l}^{\star}\big|}{\big\|\bm{u}_{l}^{\star\perp}\big(\lambda_{l}\bm{I}_{n-1}-\bm{M}^{(l)}\big)^{-1}(\bm{u}_{l}^{\star\perp})^{\top}\bm{H}\bm{u}_{l}^{\star}\big\|_{2}}\cdot\sqrt{\frac{\big\|\big(\lambda_{l}\bm{I}_{n-1}-\bm{M}^{(l)}\big)^{-1}(\bm{u}_{l}^{\star\perp})^{\top}\bm{H}\bm{u}_{l}^{\star}\big\|_{2}^{2}}{1+\big\|\big(\lambda_{l}\bm{I}_{n-1}-\bm{M}^{(l)}\big)^{-1}(\bm{u}_{l}^{\star\perp})^{\top}\bm{H}\bm{u}_{l}^{\star}\big\|_{2}^{2}}}\\
 & \leq\Big|\sum_{k:k\ne l}\bm{a}^{\top}\bm{u}_{k}^{\star}\bm{u}_{k}^{\star\top}\bm{u}_{l}^{\star\perp}\big(\lambda_{l}\bm{I}_{n-1}-\bm{M}^{(l)}\big)^{-1}(\bm{u}_{l}^{\star\perp})^{\top}\bm{H}\bm{u}_{l}^{\star}\Big|,
\end{align*}
where the last inequality comes from the fact $\big\|\bm{u}_{l}^{\star\perp}\big(\lambda_{l}\bm{I}-\bm{M}^{(l)}\big)^{-1}(\bm{u}_{l}^{\star\perp})^{\top}\bm{H}\bm{u}_{l}^{\star}\big\|_{2}=\big\|\big(\lambda_{l}\bm{I}-\bm{M}^{(l)}\big)^{-1}(\bm{u}_{l}^{\star\perp})^{\top}\bm{H}\bm{u}_{l}^{\star}\big\|_{2}$
(since the columns of $\bm{u}_{l}^{\star\perp}$ are orthonormal).
Substituting this into (\ref{eq:au-min-UB-iid-sym-1}) and (\ref{eq:au-min-UB-iid-sym-1-bl})
yields 
\begin{align}
\min\big|\bm{a}^{\top}\bm{u}_{l}\pm\bm{a}^{\top}\bm{u}_{l}^{\star}\big| & \leq\big|\bm{a}^{\top}\bm{u}_{l}^{\star}\big|\cdot(1-\cos^{2}\theta)+\big|\langle\bm{P}_{\bm{U}^{\star\perp}}\bm{a},\,\bm{P}_{\bm{U}^{\star\perp}}\bm{u}_{l}\rangle\big|\nonumber \\
 & \quad+\Big|\sum_{k:k\ne l}\bm{a}^{\top}\bm{u}_{k}^{\star}\cdot\bm{u}_{k}^{\star\top}\bm{u}_{l}^{\star\perp}\big(\lambda_{l}\bm{I}-\bm{M}^{(l)}\big)^{-1}(\bm{u}_{l}^{\star\perp})^{\top}\bm{H}\bm{u}_{l}^{\star}\Big|;\label{eq:au-min-UB-iid-sym}
\end{align}
and
\begin{align}
\min\big|\sqrt{1+b_{l}}\bm{a}^{\top}\bm{u}_{l}\pm\bm{a}^{\top}\bm{u}_{l}^{\star}\big| & \leq\big|\bm{a}^{\top}\bm{u}_{l}^{\star}\big|\cdot\big|1-\sqrt{1+b_{l}}|\cos\theta|\big|+\sqrt{1+b_{l}}\big|\langle\bm{P}_{\bm{U}^{\star\perp}}\bm{a},\,\bm{P}_{\bm{U}^{\star\perp}}\bm{u}_{l}\rangle\big|\nonumber \\
 & \quad+\sqrt{1+b_{l}}\Big|\sum_{k:k\ne l}\bm{a}^{\top}\bm{u}_{k}^{\star}\cdot\bm{u}_{k}^{\star\top}\bm{u}_{l}^{\star\perp}\big(\lambda_{l}\bm{I}-\bm{M}^{(l)}\big)^{-1}(\bm{u}_{l}^{\star\perp})^{\top}\bm{H}\bm{u}_{l}^{\star}\Big|.\label{eq:au-min-UB-iid-sym-bl}
\end{align}
In what follows, we shall control these quantities separately.

\paragraph{1. Controlling the spectrum of $\bm{M}^{(l)}$.} Before
proceeding, we find it helpful to first study the spectrum of $\bm{M}^{(l)}$.
Let $\{\lambda_{i}^{(l)}\}_{i=1}^{n-1}$ denote the eigenvalues of
$\bm{M}^{(l)}$ with $|\lambda_{1}^{(l)}|\geq|\lambda_{2}^{(l)}|\geq\cdots\geq|\lambda_{n-1}^{(l)}|$
with associate eigenvectors $\{\bm{u}_{i}^{(l)}\}_{i=1}^{n-1}$. In
addition, we define several matrices as follows\begin{subequations}\label{eq:def:U-Lambda-star-l}
\begin{align}
\bm{U}_{\smallsetminus l}^{\star} & \coloneqq[\bm{u}_{1}^{\star},\cdots,\bm{u}_{l-1}^{\star},\bm{u}_{l+1}^{\star},\cdots,\bm{u}_{r}^{\star}]\in\mathbb{R}^{n\times(r-1)},\\
\bm{U}^{\star(l)} & \coloneqq(\bm{u}_{l}^{\star\perp})^{\top}\bm{U}_{\smallsetminus l}^{\star}=\begin{bmatrix}\bm{I}_{r-1}\\
\bm{0}
\end{bmatrix}\in\mathbb{R}^{(n-1)\times(r-1)},\\
\bm{U}^{\star(l)\perp} & \coloneqq(\bm{u}_{l}^{\star\perp})^{\top}\bm{U}^{\star\perp}=\begin{bmatrix}\bm{0}\\
\bm{I}_{n-r}
\end{bmatrix}\in\mathbb{R}^{(n-1)\times(n-r)},\\
\bm{\Lambda}^{\star(l)} & \coloneqq\mathsf{diag}\big(\{\lambda_{i}^{\star}\}_{i\neq l}\big)\in\mathbb{R}^{(r-1)\times(r-1)},
\end{align}
\end{subequations}and define 
\[
\bm{u}_{k,\parallel}^{(l)}\coloneqq\frac{1}{\|\bm{P}_{\bm{U}^{\star(l)}}\bm{u}_{k}^{(l)}\|_{2}}\bm{P}_{\bm{U}^{\star(l)}}\bm{u}_{k}^{(l)}
\]
 for each $k\neq l$.

Armed with this set of notation, we are ready to present Lemma~\ref{lemma:lambda-M-minus-spectrum},
which studies the eigenvalues of $\bm{M}^{(l)}$.

\begin{lemma}\label{lemma:lambda-M-minus-spectrum}Instate the assumptions
of Theorem \ref{thm:evector-pertur-sym-iid}, and recall the definition
of $\mathcal{E}_{\mathsf{MD}}$ in Lemma~\ref{lemma:lambda-l-mapping}.
With probability at least $1-O(n^{-10})$, the following holds: 
\begin{enumerate}
\item For each $1\leq k<r$, one has $\lambda_{k}^{(l)}-\gamma(\lambda_{k}^{(l)})\in\mathcal{B}_{\mathcal{E}_{\mathsf{MD}}}(\lambda_{i}^{\star})$
for some $i\ne l$, and
\[
\big\|\big(\lambda_{k}^{(l)}\bm{I}_{r-1}-\gamma(\lambda_{k}^{(l)})\bm{I}_{r-1}-\bm{\Lambda}^{\star(l)}\big)\bm{U}^{\star(l)\top}\bm{u}_{k,\parallel}^{(l)}\big\|_{2}\lesssim\sigma\sqrt{r}\log n;
\]
\item For each $k\geq r$, one has $|\lambda_{k}^{(l)}|\lesssim\sigma\sqrt{n}$; 
\item Moreover, one has
\[
\big|\lambda-\lambda_{l}\big|\gtrsim\begin{cases}
\Delta_{l}^{\star}, & \text{if}\ \lambda-\gamma(\lambda)\in\mathcal{B}_{\mathcal{E}_{\mathsf{MD}}}(\lambda_{k}^{\star})\,\text{\text{for some }}k\neq l\text{ and }1\leq k\leq r;\\
|\lambda_{l}^{\star}|, & \text{\text{if}}\ |\lambda|\lesssim\sigma\sqrt{n}.
\end{cases}
\]
In particular, we have
\begin{equation}
\big|\lambda_{k}^{(l)}-\lambda_{l}\big|\gtrsim\begin{cases}
\Delta_{l}^{\star}, & 1\leq k<r;\\
|\lambda_{l}^{\star}|, & k\geq r.
\end{cases}\label{eq:lambdal-tilde-lambdak-LB}
\end{equation}
\end{enumerate}
\end{lemma}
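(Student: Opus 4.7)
The plan is to recognize that $\bm{M}^{(l)}$ is itself a matrix denoising instance and reuse the machinery already established. Concretely, by (\ref{eq:definition-M-slash-l}) and the definitions in (\ref{eq:def:U-Lambda-star-l}), one has
\begin{equation*}
\bm{M}^{(l)} = \bm{U}^{\star(l)}\bm{\Lambda}^{\star(l)}\bm{U}^{\star(l)\top} + \bm{H}^{(l)},
\qquad \bm{H}^{(l)} \coloneqq (\bm{u}_{l}^{\star\perp})^{\top}\bm{H}\bm{u}_{l}^{\star\perp},
\end{equation*}
which is a rank-$(r-1)$ signal in ambient dimension $n-1$ corrupted by a GOE-type noise matrix of variance $\sigma^{2}$. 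Moreover, since $\bm{u}_{l}^{\star\perp}\bm{U}^{\star(l)\perp}=\bm{U}^{\star\perp}$ (by \eqref{eq:def:U-Lambda-star-l}), the counterpart of $\gamma(\cdot)$ for the reduced problem satisfies $(\bm{U}^{\star(l)\perp})^{\top}\bm{H}^{(l)}\bm{U}^{\star(l)\perp}=(\bm{U}^{\star\perp})^{\top}\bm{H}\bm{U}^{\star\perp}$, so the bias function for the reduced system coincides exactly with $\gamma(\cdot)$ in (\ref{eq:defn-gamma-lambda}). The eigen-gap condition in (\ref{eq:eigengap-condition-iid}) applied to the reduced ground-truth spectrum $\{\lambda_{i}^{\star}\}_{i\neq l}$ is inherited (the smallest gap among these is still at least $\Delta_{l}^{\star}/2$ as long as eigenvalues are suitably interleaved, and even in general it can only grow).

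For Part 1, I would apply Theorem~\ref{thm:eigval-pertur-sym-iid} directly to $\bm{M}^{(l)}$. This yields $|\lambda_{k}^{(l)}-\gamma(\lambda_{k}^{(l)})-\lambda_{i}^{\star}|\lesssim \sigma\sqrt{r}\log n$ for some $i\neq l$. The companion bound on $\bm{U}^{\star(l)\top}\bm{u}_{k,\parallel}^{(l)}$ comes from the intermediate estimate (\ref{eq:lambdal-Ml-Epsilon}) specialized to the reduced system, once one identifies $\bm{u}_{k,\parallel}^{(l)}$ with the in-subspace component of the $k$-th eigenvector of $\bm{M}^{(l)}$. Part 2 is immediate from Weyl's inequality: since the signal in $\bm{M}^{(l)}$ has rank $r-1$, any $\lambda_{k}^{(l)}$ with $k\geq r$ obeys $|\lambda_{k}^{(l)}|\leq\|\bm{H}^{(l)}\|\leq\|\bm{H}\|\lesssim\sigma\sqrt{n}$ with high probability by standard Gaussian random matrix estimates.

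Part 3 is the step requiring the most care. In the regime $|\lambda|\lesssim\sigma\sqrt{n}$, Theorem~\ref{thm:eigval-pertur-sym-iid} together with the crude bound $|\gamma(\lambda_{l})|\lesssim\sigma^{2}n/|\lambda_{l}^{\star}|$ (obtained directly from its definition and $\|(\bm{U}^{\star\perp})^{\top}\bm{H}\bm{U}^{\star\perp}\|\lesssim\sigma\sqrt{n}\ll|\lambda_{l}^{\star}|$) gives $|\lambda_{l}|\asymp|\lambda_{l}^{\star}|$, and since $\sigma\sqrt{n}\ll|\lambda_{l}^{\star}|$ by (\ref{eq:eigengap-condition-iid}), we get $|\lambda-\lambda_{l}|\gtrsim|\lambda_{l}^{\star}|$. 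The trickier subcase is when $\lambda-\gamma(\lambda)\in\mathcal{B}_{\mathcal{E}_{\mathsf{MD}}}(\lambda_{k}^{\star})$ for some $k\neq l$. Writing $\delta=\lambda-\lambda_{l}$, I would combine this with the conclusion $\lambda_{l}-\gamma(\lambda_{l})\in\mathcal{B}_{\mathcal{E}_{\mathsf{MD}}}(\lambda_{l}^{\star})$ (from Theorem~\ref{thm:eigval-pertur-sym-iid}) to arrive at
\begin{equation*}
\delta \;=\; (\lambda_{k}^{\star}-\lambda_{l}^{\star}) + \bigl[\gamma(\lambda)-\gamma(\lambda_{l})\bigr] + O(\mathcal{E}_{\mathsf{MD}}).
\end{equation*}
To close the argument, I need a Lipschitz-type estimate on $\gamma$ over the relevant interval. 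Differentiating (\ref{eq:defn-gamma-lambda}) gives $\gamma'(\lambda)=-\sigma^{2}\mathsf{tr}[(\lambda\bm{I}_{n-r}-(\bm{U}^{\star\perp})^{\top}\bm{H}\bm{U}^{\star\perp})^{-2}]$, whose magnitude is $\lesssim\sigma^{2}n/\lambda_{l}^{\star 2}\ll 1$ provided $|\lambda|\asymp|\lambda_{l}^{\star}|\gg\sigma\sqrt{n}$, which holds uniformly on the segment joining $\lambda$ and $\lambda_{l}$. Hence $|\gamma(\lambda)-\gamma(\lambda_{l})|\leq\tfrac{1}{2}|\delta|$, yielding $|\delta|\geq\tfrac{2}{3}\bigl(|\lambda_{k}^{\star}-\lambda_{l}^{\star}|-O(\mathcal{E}_{\mathsf{MD}})\bigr)\gtrsim\Delta_{l}^{\star}$ in view of (\ref{eq:eigengap-condition-iid}).

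The main obstacle will therefore be the Lipschitz control on $\gamma$ in Part 3: one must verify that $\gamma$ is sufficiently flat over any relevant line segment so that the separation $\Delta_{l}^{\star}$ is preserved after subtracting the bias at two different arguments. This boils down to a quantitative spectral bound on the resolvent $(\lambda\bm{I}_{n-r}-(\bm{U}^{\star\perp})^{\top}\bm{H}\bm{U}^{\star\perp})^{-1}$, which is available once one exploits $\sigma\sqrt{n}\ll|\lambda_{l}^{\star}|$ guaranteed by (\ref{eq:eigengap-condition-iid}). Everything else reduces to re-invoking Theorem~\ref{thm:eigval-pertur-sym-iid} on the reduced problem together with Weyl's inequality.
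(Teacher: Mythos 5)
Your overall route is the same as the paper's: you view $\bm{M}^{(l)}=(\bm{u}_{l}^{\star\perp})^{\top}\bm{M}\bm{u}_{l}^{\star\perp}$ as a reduced matrix-denoising instance whose bias function coincides with $\gamma(\cdot)$ (because $(\bm{U}^{\star(l)\perp})^{\top}\bm{H}^{(l)}\bm{U}^{\star(l)\perp}=(\bm{U}^{\star\perp})^{\top}\bm{H}\bm{U}^{\star\perp}$), rerun the Theorem~\ref{thm:eigval-pertur-sym-iid} machinery there for Part 1, use Weyl for Part 2, and control the separation in Part 3 through (bi-)Lipschitz behavior of $\lambda\mapsto\lambda-\gamma(\lambda)$; this is exactly the paper's Lemma~\ref{lem:eigenvalues-M-slash-l} plus the $f(\lambda)=\lambda-\gamma(\lambda)$ argument. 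Two remarks on Part 1: your parenthetical that the eigen-gap of the reduced spectrum $\{\lambda_{i}^{\star}\}_{i\neq l}$ is at least $\Delta_{l}^{\star}/2$ ``and in general can only grow'' is not correct (removing $\lambda_{l}^{\star}$ says nothing about how close two of the remaining eigenvalues are to each other, and $\Delta_{l}^{\star}$ is not a lower bound on their mutual gaps), so you cannot invoke the full statement of Theorem~\ref{thm:eigval-pertur-sym-iid} index-by-index; what you actually need, and what does follow without any gap on the reduced spectrum, is only the gap-free localization step inside that proof (the analogue of (\ref{eq:lambda-l-vague-mapping})), which places $\lambda_{k}^{(l)}-\gamma(\lambda_{k}^{(l)})$ near \emph{some} $\lambda_{i}^{\star}$, $i\neq l$, after Weyl rules out the near-zero alternative for $k<r$. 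This is how the paper proceeds, and your write-up should be adjusted to say so.

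The genuine gap is in Part 3. Your bound $|\gamma(\lambda)-\gamma(\lambda_{l})|\le\tfrac12|\lambda-\lambda_{l}|$ is obtained by integrating $\gamma'$ ``along the segment joining $\lambda$ and $\lambda_{l}$,'' claiming $|\mu|\asymp|\lambda_{l}^{\star}|\gg\sigma\sqrt{n}$ uniformly on that segment. This fails in two ways. First, the eigenvalues $\lambda_{i}^{\star}$ may have mixed signs, so $\lambda$ and $\lambda_{l}$ can lie on opposite sides of the bulk of $(\bm{U}^{\star\perp})^{\top}\bm{H}\bm{U}^{\star\perp}$; the segment then passes through (a neighborhood of) that spectrum, where the resolvent blows up and the estimate $|\gamma'(\mu)|\lesssim\sigma^{2}n/\lambda_{\min}^{\star2}$ is simply unavailable, so the Lipschitz step as written breaks down. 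Second, even in the same-sign case the segment is only guaranteed to satisfy $|\mu|\gtrsim\lambda_{\min}^{\star}$ (not $\asymp|\lambda_{l}^{\star}|$, since $\kappa$ may be large), so the correct derivative bound is $|\gamma'(\mu)|\lesssim\sigma^{2}n/\lambda_{\min}^{\star2}\ll1$, which still suffices. The paper avoids the crossing issue by working with $f(\lambda)=\lambda-\gamma(\lambda)$ restricted to $\pm[2\lambda_{\min}^{\star}/3,\,4\lambda_{\max}^{\star}/3]$, establishing $f'\in[1/2,3/2]$ there and bounding $|f^{-1}(a)-f^{-1}(b)|$; when $\lambda$ and $\lambda_{l}$ sit in different components the excluded middle region only adds separation, so the conclusion survives. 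Your argument can be repaired along the same lines (handle the opposite-sign case separately, e.g.\ by noting $|\lambda|\gtrsim|\lambda_{k}^{\star}|$ and $|\lambda_{l}|\gtrsim|\lambda_{l}^{\star}|$ so that $|\lambda-\lambda_{l}|=|\lambda|+|\lambda_{l}|\gtrsim|\lambda_{l}^{\star}-\lambda_{k}^{\star}|\geq\Delta_{l}^{\star}$), but as stated the step does not go through.
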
\begin{proof}See Appendix \ref{subsec:Proof-of-Lemma:lambda-M-minus-spectrum}.\end{proof}In
words, this lemma tells us that: 
\begin{itemize}
\item For any $1\leq k\leq r$, the properly corrected $\lambda_{k}^{(l)}$
(namely, $\lambda_{k}^{(l)}-\gamma(\lambda_{k}^{(l)})$) stays very
close to one of the true non-zero eigenvalues excluding $\lambda_{l}^{\star}$; 
\item For any $k\geq r$, the eigenvalue $\lambda_{k}^{(l)}$ is reasonably
small; 
\item Any eigenvalue of $\bm{M}^{(l)}$ is sufficiently separated from the
$l$-th eigenvalue $\lambda_{l}$ of $\bm{M}$, where the separation
is lower bounded by the order of the associated eigen-gap. 
\end{itemize}
\paragraph{2. Controlling $\cos^{2}\theta$.} We now turn to bounding
$\cos^{2}\theta$. In view of the expression of $\cos^{2}\theta$
in (\ref{eq:cos-eig-linear-form}), it suffices to look at $\big\|\big(\lambda_{l}\bm{I}-\bm{M}^{(l)}\big)^{-1}(\bm{u}_{l}^{\star\perp})^{\top}\bm{H}\bm{u}_{l}^{\star}\big\|_{2}$.
A simple yet crucial observation is that: the matrix $(\bm{u}_{l}^{\star\perp})^{\top}\bm{H}\bm{u}_{l}^{\star\perp}$
is independent of $(\bm{u}_{l}^{\star\perp})^{\top}\bm{H}\bm{u}_{l}^{\star}$
(which follows from the same argument as in the proof of Lemma \ref{lemma:bound-Glambda-uniform}
in Appendix~\ref{subsec:Proof-of-Lemma-bound-Glambda-uniform}).
Consequently, $\bm{M}^{(l)}$ (defined in (\ref{eq:definition-M-slash-l}))
is independent of $(\bm{u}_{l}^{\star\perp})^{\top}\bm{H}\bm{u}_{l}^{\star}\sim\mathcal{N}(\bm{0},\sigma^{2}\bm{I}_{n-1})$,
which is a Gaussian random vector in $\mathbb{R}^{n-1}$. In light
of this observation, we can bound $\big\|\big(\lambda_{l}\bm{I}-\bm{M}^{(l)}\big)^{-1}(\bm{u}_{l}^{\star\perp})^{\top}\bm{H}\bm{u}_{l}^{\star}\big\|_{2}$
as follows. 

\begin{lemma}\label{lemma:lambda-M-l-inv-u-perp-H-u-l2-norm}Instate
the assumptions of Theorem \ref{thm:evector-pertur-sym-iid}. The
following holds with probability at least $1-O(n^{-10})$:
\begin{equation}
\big\|\big(\lambda_{l}\bm{I}_{n-1}-\bm{M}^{(l)}\big)^{-1}(\bm{u}_{l}^{\star\perp})^{\top}\bm{H}\bm{u}_{l}^{\star}\big\|_{2}^{2}=\sum_{k:\,r<k\le n}\frac{\sigma^{2}}{(\lambda_{l}-\lambda_{k})^{2}}+O\bigg(\frac{\sigma^{2}r\log n}{\big(\Delta_{l}^{\star}\big)^{2}}+\frac{\sigma^{2}\sqrt{n\log n}}{\lambda_{l}^{\star2}}\bigg),\label{eq:lambda-M-124}
\end{equation}
which further indicates that
\begin{equation}
\big\|\big(\lambda_{l}\bm{I}_{n-1}-\bm{M}^{(l)}\big)^{-1}(\bm{u}_{l}^{\star\perp})^{\top}\bm{H}\bm{u}_{l}^{\star}\big\|_{2}^{2}\asymp \frac{\sigma^{2}n}{\lambda_{l}^{\star2}}+O\bigg(\frac{\sigma^{2}r\log n}{\big(\Delta_{l}^{\star}\big)^{2}}\bigg)\ll1.\label{eq:lambda-M-123}
\end{equation}
\end{lemma}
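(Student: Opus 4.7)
The plan is to exploit the block-independence structure inherent in the setup. Setting $\bm{z} \coloneqq (\bm{u}_l^{\star\perp})^\top \bm{H} \bm{u}_l^\star$, the argument used in the proof of Lemma~\ref{lemma:bound-Glambda-uniform} shows that $\bm{z} \sim \mathcal{N}(\bm{0}, \sigma^2 \bm{I}_{n-1})$ and that $\bm{z}$ is independent of $\bm{M}^{(l)}$. Diagonalizing $\bm{M}^{(l)} = \sum_{i=1}^{n-1} \lambda_i^{(l)} \bm{u}_i^{(l)} \bm{u}_i^{(l)\top}$ converts the quantity of interest into a weighted chi-squared:
\[
\bigl\|(\lambda \bm{I}_{n-1} - \bm{M}^{(l)})^{-1} \bm{z}\bigr\|_2^2 \;=\; \sum_{i=1}^{n-1} \frac{(\bm{u}_i^{(l)\top} \bm{z})^2}{(\lambda - \lambda_i^{(l)})^2}.
\]
Conditional on $\bm{M}^{(l)}$, the scalars $\{\bm{u}_i^{(l)\top}\bm{z}\}$ are i.i.d.~$\mathcal{N}(0,\sigma^2)$, so the Hanson--Wright inequality yields that this quadratic form equals $\sigma^2\sum_i(\lambda-\lambda_i^{(l)})^{-2}$ up to a concentration error of order $\sigma^2\sqrt{\log n}\,\bigl(\sum_i(\lambda-\lambda_i^{(l)})^{-4}\bigr)^{1/2} + \sigma^2 \log n\cdot \max_i(\lambda-\lambda_i^{(l)})^{-2}$. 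Because $\lambda_l$ itself depends on $\bm{z}$, I would first establish this bound uniformly in $\lambda$ over the deterministic window $|\lambda - \lambda_l^\star|\le C\sigma\sqrt{n}$ (via an $\epsilon$-net plus Lipschitzness of the quadratic form in $\lambda$) and then specialize to $\lambda=\lambda_l$, which lies in the window with high probability by Theorem~\ref{thm:eigval-pertur-sym-iid}.

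The next step is to evaluate the conditional mean $\sigma^2\sum_{i=1}^{n-1}(\lambda_l - \lambda_i^{(l)})^{-2}$ and match it with the target $\sum_{r<k\le n}\sigma^2/(\lambda_l - \lambda_k)^2$. I would split the sum according to Lemma~\ref{lemma:lambda-M-minus-spectrum}. For the ``spike'' indices $i<r$, where $|\lambda_l - \lambda_i^{(l)}| \gtrsim \Delta_l^\star$, the mean contribution is $O(\sigma^2 r/(\Delta_l^\star)^2)$ and the Hanson--Wright error contributes $O(\sigma^2 \log n/(\Delta_l^\star)^2)$, both absorbed into the claimed $\sigma^2 r \log n/(\Delta_l^\star)^2$ budget. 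For the ``bulk'' indices $i\ge r$, where $|\lambda_i^{(l)}|\lesssim \sigma\sqrt{n}\ll|\lambda_l^\star|$ so that $|\lambda_l - \lambda_i^{(l)}|\asymp |\lambda_l^\star|$, the Frobenius-type quantity $\bigl(\sum_{i\ge r}(\lambda_l-\lambda_i^{(l)})^{-4}\bigr)^{1/2} \lesssim \sqrt{n}/\lambda_l^{\star 2}$, producing the concentration error $\sigma^2\sqrt{n\log n}/\lambda_l^{\star 2}$. What remains is to match the bulk mean $\sigma^2\sum_{i\ge r}(\lambda_l - \lambda_i^{(l)})^{-2}$ with $\sum_{r<k\le n}\sigma^2/(\lambda_l - \lambda_k)^2$; since $\bm{M}^{(l)}$ is a principal submatrix of $[\bm{u}_l^\star,\bm{u}_l^{\star\perp}]^\top\bm{M}[\bm{u}_l^\star,\bm{u}_l^{\star\perp}]$, Cauchy interlacing (in the signed sense, applied within the bulk) combined with a Taylor expansion of $x\mapsto (\lambda_l - x)^{-2}$ yields a matching error bounded by $\sigma^2 \sum_i |\lambda_i - \lambda_i^{(l)}| \cdot |\lambda_l^\star|/\lambda_l^{\star 4} \lesssim \sigma^2/\lambda_l^{\star 2}$, which is dominated by the concentration error above.

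The main obstacle will be executing the uniform-in-$\lambda$ concentration cleanly: in particular, controlling the Lipschitz constant of the quadratic form over the window so that the $\epsilon$-net overhead stays logarithmic, and simultaneously tracking how the Hanson--Wright bound depends on $\lambda$ through the spectrum of $(\lambda\bm{I}-\bm{M}^{(l)})^{-2}$ (the spike weights $1/(\Delta_l^\star)^2$ are stable in $\lambda$, while the bulk weights vary mildly). Once (\ref{eq:lambda-M-124}) is in place, the residual bound (\ref{eq:lambda-M-123}) follows by simple accounting: $\sum_{r<k\le n}\sigma^2/(\lambda_l-\lambda_k)^2 \asymp (n-r)\sigma^2/\lambda_l^{\star 2} \asymp \sigma^2 n/\lambda_l^{\star 2}$, and the two error terms $\sigma^2 r\log n/(\Delta_l^\star)^2$ and $\sigma^2\sqrt{n\log n}/\lambda_l^{\star 2}$ are each $\ll 1$ under the noise and eigen-gap assumptions (\ref{eq:eigengap-condition-iid}).
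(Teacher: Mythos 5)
Your overall strategy coincides with the paper's: exploit the independence of $\bm{z}=(\bm{u}_l^{\star\perp})^{\top}\bm{H}\bm{u}_l^{\star}$ from $\bm{M}^{(l)}$, diagonalize to get a weighted chi-squared, split spikes ($i<r$) from bulk ($i\ge r$) via Lemma~\ref{lemma:lambda-M-minus-spectrum}, decouple the dependence on $\lambda_l$ by a uniform-in-$\lambda$ concentration (Hanson--Wright/Bernstein plus a net, which is the paper's Lemma~\ref{lemma:eps-net} route), and finally convert $\sum_{i\ge r}(\lambda_l-\lambda_i^{(l)})^{-2}$ into $\sum_{k>r}(\lambda_l-\lambda_k)^{-2}$ by interlacing (the paper's Lemma~\ref{lemma:lambda-M-r-n}); your interlacing/Taylor matching and the final accounting for \eqref{eq:lambda-M-123} are fine.

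However, there is a genuine flaw in the uniform step as you describe it. You propose to concentrate the \emph{full} quadratic form (spikes included) uniformly over the deterministic window $|\lambda-\lambda_l^{\star}|\le C\sigma\sqrt{n}$, asserting that ``the spike weights $1/(\Delta_l^{\star})^2$ are stable in $\lambda$.'' This fails precisely in the small-eigen-gap regime that the lemma is designed for: when $\Delta_l^{\star}\ll\sigma\sqrt{n}$, the spike eigenvalues of $\bm{M}^{(l)}$ sit near $\lambda_k^{\star}+\gamma(\cdot)$ for $k\neq l$ (Lemma~\ref{lem:eigenvalues-M-slash-l}), with $|\gamma|$ possibly as large as $\sigma^{2}n/\lambda_{\min}^{\star}\asymp\sigma\sqrt{n}\gg\Delta_l^{\star}$, so with high probability some $\lambda_i^{(l)}$ ($i<r$) lies \emph{inside} your window; as $\lambda$ sweeps the window it collides with that eigenvalue, the quadratic form blows up, and no uniform bound of the claimed size can hold. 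Note also that you cannot simply shrink the window: any deterministic interval around $\lambda_l^{\star}$ containing $\lambda_l$ must have radius at least of order $|\gamma(\lambda_l)|$, which can exceed $\Delta_l^{\star}$. The fix is what the paper does: treat the spike terms with no uniformity at all --- their Gaussian factors $\bm{u}_i^{(l)\top}\bm{z}$ do not involve $\lambda$, so bound $\max_{i<r}|\bm{u}_i^{(l)\top}\bm{z}|\lesssim\sigma\sqrt{\log n}$ and plug in $\lambda=\lambda_l$ directly using \eqref{eq:lambdal-tilde-lambdak-LB} --- and run the net argument only on the bulk, over the bias-corrected localization $\{\lambda:\lambda-\gamma(\lambda)\in\mathcal{B}_{\mathcal{E}_{\mathsf{MD}}}(\lambda_l^{\star})\}$ (for the bulk alone your wide window would also work, since the bulk denominators stay $\gtrsim|\lambda_l^{\star}|$ throughout). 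With that repair your argument becomes essentially the paper's proof.
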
\begin{proof}See Appendix \ref{subsec:Proof-of-lemma:lambda-M-l-inv-u-perp-H-u-l2-norm}.\end{proof}

Combining this lemma with (\ref{eq:cos-eig-linear-form}), we reach
\begin{equation}
1-\cos^{2}\theta=1-\frac{1}{1+\big\|\big(\lambda_{l}\bm{I}_{n-1}-\bm{M}^{(l)}\big)^{-1}(\bm{u}_{l}^{\star\perp})^{\top}\bm{H}\bm{u}_{l}^{\star}\big\|_{2}^{2}}\asymp \frac{\sigma^{2}n}{\lambda_{l}^{\star2}}+O\bigg(\frac{\sigma^{2}r\log n}{\big(\Delta_{l}^{\star}\big)^{2}}\bigg)\ll1,\label{eq:cos-bound}
\end{equation}
where the last step arises from the assumption (\ref{eq:eigengap-condition-iid}).
In addition, recalling the de-bias parameter $b_{l}$
\[
b_{l}=\sum_{k:\,r<k\le n}\frac{\sigma^{2}}{(\lambda_{l}-\lambda_{k})^{2}},
\]
one arrives at
\begin{align}
\big|(1+b_{l})\cos^{2}\theta-1\big| & =\Bigg|\frac{1+b_{l}}{1+\big\|\big(\lambda_{l}\bm{I}_{n-1}-\bm{M}^{(l)}\big)^{-1}(\bm{u}_{l}^{\star\perp})^{\top}\bm{H}\bm{u}_{l}^{\star}\big\|_{2}^{2}}-1\Bigg|\nonumber \\
 & =\frac{\Big|b_{l}-\big\|\big(\lambda_{l}\bm{I}_{n-1}-\bm{M}^{(l)}\big)^{-1}(\bm{u}_{l}^{\star\perp})^{\top}\bm{H}\bm{u}_{l}^{\star}\big\|_{2}^{2}\Big|}{1+\big\|\big(\lambda_{l}\bm{I}_{n-1}-\bm{M}^{(l)}\big)^{-1}(\bm{u}_{l}^{\star\perp})^{\top}\bm{H}\bm{u}_{l}^{\star}\big\|_{2}^{2}}\nonumber \\
 & \leq\Big|b_{l}-\big\|\big(\lambda_{l}\bm{I}_{n-1}-\bm{M}^{(l)}\big)^{-1}(\bm{u}_{l}^{\star\perp})^{\top}\bm{H}\bm{u}_{l}^{\star}\big\|_{2}^{2}\Big|\nonumber \\
 & \overset{(\mathrm{i})}{\lesssim}\frac{\sigma^{2}r\log n}{\big(\Delta_{l}^{\star}\big)^{2}}+\frac{\sigma^{2}\sqrt{n\log n}}{\lambda_{l}^{\star2}}\overset{(\mathrm{ii})}{\ll}1,\label{eq:cos-value}
\end{align}
where (i) follows from (\ref{eq:lambda-M-124}) and (ii) is due to
the assumption (\ref{eq:eigengap-condition-iid}). Combined with (\ref{eq:cos-bound}),
this further allows us to obtain $1+b_{l}\lesssim1$ and
\begin{equation}
\big|1-\sqrt{1+b_{l}}|\cos\theta|\big|=\bigg|\frac{1-(1+b_{l})\cos^{2}\theta}{1+\sqrt{1+b_{l}}|\cos\theta|}\bigg|\lesssim\big|1-(1+b_{l})\cos^{2}\theta\big|\lesssim\frac{\sigma^{2}r\log n}{\big(\Delta_{l}^{\star}\big)^{2}}+\frac{\sigma^{2}\sqrt{n\log n}}{\lambda_{l}^{\star2}}.\label{eq:cos-bound-bl}
\end{equation}

\paragraph{3. Controlling $\sum_{k:k\ne l}\bm{a}^{\top}\bm{u}_{k}^{\star}\cdot\bm{u}_{k}^{\star\top}\bm{u}_{l}^{\star\perp}\big(\lambda_{l}\bm{I}-\bm{M}^{(l)}\big)^{-1}(\bm{u}_{l}^{\star\perp})^{\top}\bm{H}\bm{u}_{l}^{\star}$.}
The key observation is that $(\bm{u}_{l}^{\star\perp})^{\top}\bm{H}\bm{u}_{l}^{\star}\sim\mathcal{N}(\bm{0},\bm{I}_{n-1})$
is independent of $\bm{M}^{(l)}$ (but dependent of $\lambda_{l}$).
This term can be bounded via the following lemma, which will be established
in Appendix \ref{subsec:Proof-of-lemma:a-top-P-Uk}.

\begin{lemma}\label{lemma:a-top-P-Uk}Instate the assumptions of
Theorem \ref{thm:evector-pertur-sym-iid}. With probability at least
$1-O(n^{-10})$, one has
\begin{align}
 & \Big|\sum_{k:k\ne l}\bm{a}^{\top}\bm{u}_{k}^{\star}\cdot\bm{u}_{k}^{\star\top}\bm{u}_{l}^{\star\perp}\big(\lambda_{l}\bm{I}_{n-1}-\bm{M}^{(l)}\big)^{-1}(\bm{u}_{l}^{\star\perp})^{\top}\bm{H}\bm{u}_{l}^{\star}\Big|\nonumber \\
 & \qquad\qquad\lesssim\frac{\sigma}{|\lambda_{l}^{\star}|}\sqrt{\log\bigg(\frac{n\kappa\lambda_{\max}}{\Delta_{l}^{\star}}\bigg)}+\sigma\sqrt{r\log\bigg(\frac{n\kappa\lambda_{\max}}{\Delta_{l}^{\star}}\bigg)}\sum_{k:k\neq l}\frac{\big|\bm{a}^{\top}\bm{u}_{k}^{\star}\big|}{|\lambda_{l}^{\star}-\lambda_{k}^{\star}|}.\label{eq:a-top-uk}
\end{align}
\end{lemma}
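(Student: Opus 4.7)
The point of departure is the observation highlighted right before the lemma that $\bm{g}:=(\bm{u}_{l}^{\star\perp})^{\top}\bm{H}\bm{u}_{l}^{\star}\sim\mathcal{N}(\bm{0},\sigma^{2}\bm{I}_{n-1})$ is independent of $\bm{M}^{(l)}=(\bm{u}_{l}^{\star\perp})^{\top}\bm{M}\bm{u}_{l}^{\star\perp}$ (the relevant blocks of the symmetric Gaussian $\bm{H}$ occupy disjoint entries once one rotates into the orthonormal basis $[\bm{u}_{l}^{\star},\,\bm{u}_{l}^{\star\perp}]$). Writing $\bm{v}_{k}:=(\bm{u}_{l}^{\star\perp})^{\top}\bm{u}_{k}^{\star}$ for $k\ne l$ and expanding through the spectral decomposition $\bm{M}^{(l)}=\sum_{i=1}^{n-1}\lambda_{i}^{(l)}\bm{u}_{i}^{(l)}\bm{u}_{i}^{(l)\top}$, the target quantity equals $S(\lambda_{l})$ where
\begin{equation*}
S(\lambda)\;:=\;\sum_{i=1}^{n-1}\frac{c_{i}}{\lambda-\lambda_{i}^{(l)}}\,z_{i},\qquad c_{i}:=\sum_{k:k\ne l}\bigl(\bm{a}^{\top}\bm{u}_{k}^{\star}\bigr)\bm{u}_{i}^{(l)\top}\bm{v}_{k},\qquad z_{i}:=\bm{u}_{i}^{(l)\top}\bm{g}.
\end{equation*}
Conditional on $\bm{M}^{(l)}$, the $\{z_{i}\}$ are i.i.d.\ $\mathcal{N}(0,\sigma^{2})$, so at any deterministic $\lambda$ the scalar $S(\lambda)$ is a Gaussian linear form with a deterministic variance.

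The principal obstacle is that $\lambda_{l}$ itself depends on $\bm{g}$, which forbids applying Gaussian concentration directly at $\lambda=\lambda_{l}$. I plan to decouple via an $\varepsilon$-net: Theorem~\ref{thm:eigval-pertur-sym-iid} paired with Weyl's inequality confines $|\lambda_{l}|$ to $[2|\lambda_{l}^{\star}|/3,\,4|\lambda_{l}^{\star}|/3]$ with the requisite probability, so one can cover this range by a net $\mathcal{N}$ of cardinality $\mathrm{poly}(n\kappa\lambda_{\max}/\Delta_{l}^{\star})$ at spacing $\varepsilon\asymp n^{-C}$. Using the resolvent identity together with the uniform lower bound $\min_{i}|\lambda-\lambda_{i}^{(l)}|\gtrsim\min\{\Delta_{l}^{\star},|\lambda_{l}^{\star}|\}$ supplied by Lemma~\ref{lemma:lambda-M-minus-spectrum}, the map $\lambda\mapsto S(\lambda)$ is polynomially Lipschitz in $\lambda$, so $|S(\lambda_{l})|\le\max_{\lambda\in\mathcal{N}}|S(\lambda)|$ up to negligible additive error. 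The cost of the union bound over $\mathcal{N}$ is exactly the single factor $\log(n\kappa\lambda_{\max}/\Delta_{l}^{\star})$ appearing in the lemma.

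It then remains to bound $|S(\lambda)|$ for a fixed $\lambda$, for which I would split the sum at $i=r$. For the \emph{noise} tail $i\ge r$, Lemma~\ref{lemma:lambda-M-minus-spectrum} gives $|\lambda-\lambda_{i}^{(l)}|\gtrsim|\lambda_{l}^{\star}|$, and the Pythagorean identity $\sum_{i}c_{i}^{2}=\|\bm{U}^{\star(l)}\bm{U}_{\smallsetminus l}^{\star\top}\bm{a}\|_{2}^{2}\leq\|\bm{a}\|_{2}^{2}=1$ bounds the conditional variance of $\sum_{i\ge r}c_{i}z_{i}/(\lambda-\lambda_{i}^{(l)})$ by $\sigma^{2}/\lambda_{l}^{\star2}$; a one-dimensional Gaussian tail bound thus supplies the first term $\sigma|\lambda_{l}^{\star}|^{-1}\sqrt{\log(n\kappa\lambda_{\max}/\Delta_{l}^{\star})}$. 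For the \emph{signal} head $i<r$, I plan to exploit the eigenvalue matching in Lemma~\ref{lemma:lambda-M-minus-spectrum}: there is a bijection $\pi\colon\{1,\ldots,r-1\}\to\{k:k\ne l,\,1\leq k\leq r\}$ with $\lambda_{i}^{(l)}$ close to $\lambda_{\pi(i)}^{\star}$, hence $|\lambda_{l}-\lambda_{i}^{(l)}|\asymp|\lambda_{l}^{\star}-\lambda_{\pi(i)}^{\star}|$, while a Davis--Kahan bound applied to $\bm{M}^{(l)}$ ensures $|c_{i}|\lesssim|\bm{a}^{\top}\bm{u}_{\pi(i)}^{\star}|$ plus $O(\sigma\sqrt{n}/\lambda_{\min}^{\star})$ leakage into the noise subspace. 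Combining Cauchy--Schwarz $|\sum_{i<r}a_{i}z_{i}|\leq\sqrt{\sum_{i<r}z_{i}^{2}}\,\sqrt{\sum_{i<r}a_{i}^{2}}$ with the $\chi^{2}$ tail $\sqrt{\sum_{i<r}z_{i}^{2}}\lesssim\sigma\sqrt{r+\log n}$ and the triangle-inequality bound $\sqrt{\sum_{i<r}c_{i}^{2}/(\lambda_{l}-\lambda_{i}^{(l)})^{2}}\lesssim\sum_{k:k\ne l}|\bm{a}^{\top}\bm{u}_{k}^{\star}|/|\lambda_{l}^{\star}-\lambda_{k}^{\star}|$ then delivers the second term of \eqref{eq:a-top-uk}. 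The main bookkeeping difficulty is showing that the various leakage corrections --- cross-terms between the $i<r$ and $i\ge r$ splits of $c_{i}$, and the discrepancy between $\lambda_{i}^{(l)}$ and $\lambda_{\pi(i)}^{\star}$ --- can be absorbed into these two established terms; the eigen-gap hypothesis $\Delta_{l}^{\star}\gtrsim\sigma\sqrt{r}\log n$ in \eqref{eq:eigengap-condition-iid} is precisely what is needed to guarantee this absorption.
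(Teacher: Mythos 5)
Your overall skeleton does coincide with the paper's: exploit the independence of $(\bm{u}_{l}^{\star\perp})^{\top}\bm{H}\bm{u}_{l}^{\star}$ from $\bm{M}^{(l)}$, condition on $\bm{M}^{(l)}$, remove the dependence on the random $\lambda_{l}$ by a net/Lipschitz argument over $\lambda$ (this is exactly what Lemma~\ref{lemma:eps-net} packages, and it is where the $\log(n\kappa\lambda_{\max}/\Delta_{l}^{\star})$ comes from), split the spectral sum at $i=r$, and bound the tail $i\ge r$ via $|\lambda-\lambda_{i}^{(l)}|\gtrsim|\lambda_{l}^{\star}|$ and $\sum_{i}c_{i}^{2}\le 1$. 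The gap is in the head $i<r$. First, the claimed bijection $\pi$ with per-index alignment $|c_{i}|\lesssim|\bm{a}^{\top}\bm{u}_{\pi(i)}^{\star}|$ via Davis--Kahan is unjustified: the only gap assumption in \eqref{eq:eigengap-condition-iid} separates $\lambda_{l}^{\star}$ from the rest, while the eigenvalues $\{\lambda_{k}^{\star}\}_{k\neq l}$ may cluster or coincide, in which case the individual eigenvectors $\bm{u}_{i}^{(l)}$ mix arbitrarily among the corresponding $\bm{u}_{k}^{\star}$ and no per-index correspondence exists. Second, even granting such a correspondence, a flat leakage of size $O(\sigma\sqrt{n}/\lambda_{\min}^{\star})$ per index is far too crude to be ``absorbed'': after dividing by $|\lambda_{l}-\lambda_{i}^{(l)}|\gtrsim\Delta_{l}^{\star}$, taking the $\ell_{2}$ norm over $i<r$, and multiplying by your $\chi^{2}$ factor $\sigma\sqrt{r+\log n}$, it contributes an additive error of order $\sigma^{2}\sqrt{rn}\sqrt{r+\log n}/(\lambda_{\min}^{\star}\Delta_{l}^{\star})$, which exceeds both terms of \eqref{eq:a-top-uk} whenever $\Delta_{l}^{\star}\ll\sigma\sqrt{n}$ (the very regime the lemma targets) and $\bm{a}$ is nearly orthogonal to $\{\bm{u}_{k}^{\star}\}_{k\neq l}$; the hypothesis $\Delta_{l}^{\star}\gtrsim\sigma\sqrt{r}\log n$ does not repair this.

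There is also an accounting mismatch in your assembly: bounding the head by $\sqrt{\sum_{i<r}z_{i}^{2}}\cdot\big\|\big(c_{i}/(\lambda_{l}-\lambda_{i}^{(l)})\big)_{i<r}\big\|_{2}$ forces you to prove the coefficient bound $\big\|\big(c_{i}/(\lambda_{l}-\lambda_{i}^{(l)})\big)_{i<r}\big\|_{2}\lesssim\sum_{k\neq l}|\bm{a}^{\top}\bm{u}_{k}^{\star}|/|\lambda_{l}^{\star}-\lambda_{k}^{\star}|$ \emph{without} any $\sqrt{r}$ factor, or you overshoot the target by $\sqrt{r}$. The paper only establishes this norm bound with an extra $\sqrt{r}$ (see \eqref{eq:claim:atukukt-lambda-inv-l2-norm}) and compensates by treating head plus tail as a single Gaussian linear form, paying only $\sqrt{\log(n\kappa\lambda_{\max}/\Delta_{l}^{\star})}$ for concentration rather than $\sqrt{r+\log n}$. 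The missing ingredient in your head estimate is how the paper copes with possibly clustered $\{\lambda_{k}^{\star}\}_{k\neq l}$: for each $k\neq l$ it splits the indices $i<r$ according to whether the de-biased eigenvalue $\lambda_{i}^{(l)}-\gamma(\lambda_{i}^{(l)})$ lies within radius $c\,|\lambda_{l}^{\star}-\lambda_{k}^{\star}|$ of $\lambda_{k}^{\star}$; in the near case $|\lambda_{l}-\lambda_{i}^{(l)}|\gtrsim|\lambda_{l}^{\star}-\lambda_{k}^{\star}|$ and orthonormality suffices, while in the far case the approximate eigen-equation of Lemma~\ref{lemma:lambda-M-minus-spectrum} yields $|\bm{u}_{k}^{\star(l)\top}\bm{u}_{i}^{(l)}|\lesssim\sigma\sqrt{r}\log n/|\lambda_{l}^{\star}-\lambda_{k}^{\star}|$ --- a leakage weighted by $1/|\lambda_{l}^{\star}-\lambda_{k}^{\star}|$ rather than a flat $\sigma\sqrt{n}/\lambda_{\min}^{\star}$. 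It is this weighting, combined with $\Delta_{l}^{\star}\gtrsim\sigma\sqrt{r}\log n$, that makes the absorption work and produces the $\sqrt{r}$ inside the second term of \eqref{eq:a-top-uk}; without it (or a substitute of comparable precision), your argument does not go through.
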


\paragraph{4. Controlling $\langle\bm{P}_{\bm{U}^{\star\perp}}\bm{a},\,\bm{P}_{\bm{U}^{\star\perp}}\bm{u}_{l}\rangle$.}
When it comes to the last term $\langle\bm{P}_{\bm{U}^{\star\perp}}\bm{a},\,\bm{P}_{\bm{U}^{\star\perp}}\bm{u}_{l}\rangle$,
one can take advantage of the rotational invariance of $\bm{P}_{\bm{U}^{\star\perp}}\bm{u}_{l}$
in the subspace spanned by $\bm{U}^{\star\perp}$ to upper bound it.
This is formalized in Lemma \ref{lemma:a-top-P-U-perp-u-perp}, with
the proof postponed to Appendix \ref{subsec:Proof-of-lemma:a-top-P-U-perp-u-perp}.

\begin{lemma}\label{lemma:a-top-P-U-perp-u-perp}Instate the assumptions
of Theorem \ref{thm:evector-pertur-sym-iid}. With probability at
least $1-O(n^{-10})$, 
\begin{equation}
\big|\langle\bm{P}_{\bm{U}^{\star\perp}}\bm{a},\,\bm{P}_{\bm{U}^{\star\perp}}\bm{u}_{l}\rangle\big|\lesssim\sqrt{\frac{\log n}{n}}\,\big\|\bm{P}_{\bm{U}^{\star\perp}}\bm{a}\big\|_{2}\big\|\bm{P}_{\bm{U}^{\star\perp}}\bm{u}_{l}\big\|_{2}.\label{eq:a-top-U-perp-u-UB}
\end{equation}
\end{lemma}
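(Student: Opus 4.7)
The plan is to exploit the rotational invariance of the Gaussian noise matrix $\bm{H}$ within the subspace spanned by $\bm{U}^{\star\perp}$, which will imply that the direction of $(\bm{U}^{\star\perp})^{\top}\bm{u}_{l}$ is uniformly distributed on the unit sphere in $\mathbb{R}^{n-r}$ (conditional on its length). A standard spherical concentration argument then produces the advertised $\sqrt{\log n/n}$ factor.

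The first step is to exhibit the invariance. For any orthogonal matrix $\bm{O}\in\mathbb{R}^{(n-r)\times(n-r)}$, define the block orthogonal matrix
\begin{equation*}
\bm{V}_{\bm{O}} \coloneqq \bm{U}^{\star}\bm{U}^{\star\top} + \bm{U}^{\star\perp}\bm{O}(\bm{U}^{\star\perp})^{\top}\in\mathbb{R}^{n\times n},
\end{equation*}
which fixes $\bm{U}^{\star}$ and rotates $\bm{U}^{\star\perp}$ by $\bm{O}$. Because $\bm{H}$ is (up to the symmetry constraint) i.i.d.~Gaussian, $\bm{V}_{\bm{O}}\bm{H}\bm{V}_{\bm{O}}^{\top}\stackrel{\mathrm{d}}{=}\bm{H}$; moreover $\bm{V}_{\bm{O}}\bm{M}^{\star}\bm{V}_{\bm{O}}^{\top}=\bm{M}^{\star}$, so $\bm{V}_{\bm{O}}\bm{M}\bm{V}_{\bm{O}}^{\top}\stackrel{\mathrm{d}}{=}\bm{M}$. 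Consequently the eigenvector satisfies $\bm{V}_{\bm{O}}\bm{u}_{l}\stackrel{\mathrm{d}}{=}\bm{u}_{l}$, and pre-multiplying by $(\bm{U}^{\star\perp})^{\top}$ yields $\bm{O}(\bm{U}^{\star\perp})^{\top}\bm{u}_{l}\stackrel{\mathrm{d}}{=}(\bm{U}^{\star\perp})^{\top}\bm{u}_{l}$ for every orthogonal $\bm{O}$.

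The second step converts this distributional statement into the desired bound. Since the length $\rho \coloneqq \|(\bm{U}^{\star\perp})^{\top}\bm{u}_{l}\|_{2} = \|\bm{P}_{\bm{U}^{\star\perp}}\bm{u}_{l}\|_{2}$ is invariant under the rotation above, the rotational invariance implies that, conditional on $\rho$, the unit vector $(\bm{U}^{\star\perp})^{\top}\bm{u}_{l}/\rho$ is uniformly distributed on the unit sphere in $\mathbb{R}^{n-r}$. Writing $\bm{w}\coloneqq(\bm{U}^{\star\perp})^{\top}\bm{a}$, which is deterministic with $\|\bm{w}\|_{2}=\|\bm{P}_{\bm{U}^{\star\perp}}\bm{a}\|_{2}$, one obtains
\begin{equation*}
\langle \bm{P}_{\bm{U}^{\star\perp}}\bm{a},\bm{P}_{\bm{U}^{\star\perp}}\bm{u}_{l}\rangle
= \langle \bm{w},(\bm{U}^{\star\perp})^{\top}\bm{u}_{l}\rangle
= \rho\,\|\bm{w}\|_{2}\,\Big\langle \tfrac{\bm{w}}{\|\bm{w}\|_{2}},\tfrac{(\bm{U}^{\star\perp})^{\top}\bm{u}_{l}}{\rho}\Big\rangle.
\end{equation*}
A standard concentration inequality for the uniform distribution on the sphere $\mathbb{S}^{n-r-1}$ (e.g.~Levy's lemma) shows that the inner product of a fixed unit vector with a uniform unit vector is bounded in magnitude by $C\sqrt{\log n/(n-r)}$ with probability at least $1-O(n^{-10})$.

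Under the assumption $r\le c_{1} n/\log^{2} n$ of Theorem~\ref{thm:evector-pertur-sym-iid} we have $n-r\asymp n$, so this yields the stated bound. The only place that requires some care is the measurability argument: one must verify that the rotational invariance acts conditionally on the appropriate sigma-algebra (namely the one generated by $\bm{U}^{\star\top}\bm{H}\bm{U}^{\star}$ and $\rho$) so that concentration holds unconditionally after integrating, but this is routine since the uniform spherical concentration bound is pointwise in that conditioning variable.
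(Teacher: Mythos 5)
Your proposal is correct, and it rests on the same two pillars as the paper's argument: rotational invariance within the subspace spanned by $\bm{U}^{\star\perp}$, followed by measure concentration for a uniform direction on a sphere of dimension $n-r\asymp n$ (using $r\lesssim n/\log^{2}n$). The execution differs in a worthwhile way. You establish the invariance globally: conjugating $\bm{M}=\bm{M}^{\star}+\bm{H}$ by $\bm{V}_{\bm{O}}=\bm{U}^{\star}\bm{U}^{\star\top}+\bm{U}^{\star\perp}\bm{O}(\bm{U}^{\star\perp})^{\top}$ fixes $\bm{M}^{\star}$ exactly and preserves the law of $\bm{H}$, so equivariance of the $l$-th eigenvector map gives $\bm{O}\,(\bm{U}^{\star\perp})^{\top}\bm{u}_{l}\overset{\mathrm{d}}{=}(\bm{U}^{\star\perp})^{\top}\bm{u}_{l}$ directly, and conditioning on the norm $\rho$ yields a uniform direction. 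The paper instead works with the explicit resolvent representation $\widetilde{\bm{u}}_{l,\perp}=\bm{u}_{l}^{\star\perp}\big(\lambda_{l}\bm{I}_{n-1}-(\bm{u}_{l}^{\star\perp})^{\top}\bm{M}\bm{u}_{l}^{\star\perp}\big)^{-1}(\bm{u}_{l}^{\star\perp})^{\top}\bm{M}\bm{u}_{l}^{\star}$ from Theorem~\ref{thm:master-thm-vector} and shows invariance of its projection onto $\bm{U}^{\star(l)\perp}$ by rotating the Gaussian blocks $(\bm{u}_{l}^{\star\perp})^{\top}\bm{H}\bm{u}_{l}^{\star}$ and $(\bm{u}_{l}^{\star\perp})^{\top}\bm{H}\bm{u}_{l}^{\star\perp}$ jointly; your route avoids that detour (and the slight awkwardness of the data-dependent $\lambda_{l}$ sitting inside the resolvent), at the cost of needing a measurable selection of $\bm{u}_{l}$ --- which is harmless here since the eigenvalues of $\bm{M}$ are a.s.\ simple and the quantity being bounded is invariant under the sign ambiguity. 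Your closing remark about conditioning is also fine, though conditioning on $\rho$ alone suffices: the spherical concentration bound is uniform in the conditioning variable, so integrating it out gives the unconditional probability bound.
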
Consequently, it remains to upper bound $\big\|\bm{P}_{\bm{U}^{\star\perp}}\bm{u}_{l}\big\|_{2}$.
Recall $\bm{u}_{l,\parallel}\coloneqq\bm{P}_{\bm{U}^{\star}}(\bm{u})/\|\bm{P}_{\bm{U}^{\star}}(\bm{u})\|_{2}$
defined in in Section~\ref{subsec:Master-theorems}. By virtue of
Theorem~\ref{thm:master-theorem-general}, one has
\begin{align}
\big\|\bm{P}_{\bm{U}^{\star\perp}}\bm{u}_{l}\big\|_{2}^{2} & =1-\frac{1}{1+\big\|\big(\lambda_{l}\bm{\bm{I}}_{n-r}-(\bm{U}^{\star\perp})^{\top}\bm{M}\bm{U}^{\star\perp}\big)^{-1}(\bm{U}^{\star\perp})^{\top}\bm{M}\bm{u}_{l,\parallel}\big\|_{2}^{2}}\nonumber \\
 & \le\big\|\big(\lambda_{l}\bm{\bm{I}}_{n-r}-(\bm{U}^{\star\perp})^{\top}\bm{M}\bm{U}^{\star\perp}\big)^{-1}(\bm{U}^{\star\perp})^{\top}\bm{M}\bm{u}_{l,\parallel}\big\|_{2}^{2}\nonumber \\
 & \leq\big\|\big(\lambda_{l}\bm{\bm{I}}_{n-r}-(\bm{U}^{\star\perp})^{\top}\bm{H}\bm{U}^{\star\perp}\big)^{-1}\big\|^{2}\|(\bm{U}^{\star\perp})^{\top}\bm{H}\bm{u}_{l,\parallel}\|_{2}^{2},\label{eq:P-Uu-bound}
\end{align}
where the last inequality makes use of the fact that
\[
(\bm{U}^{\star\perp})^{\top}\bm{M}=(\bm{U}^{\star\perp})^{\top}\bm{M}^{\star}+(\bm{U}^{\star\perp})^{\top}\bm{H}=(\bm{U}^{\star\perp})^{\top}\bm{H}.
\]
Additionally, it is easily seen that
\begin{align*}
\|\lambda_{l}\bm{\bm{I}}_{n-r}-(\bm{U}^{\star\perp})^{\top}\bm{H}\bm{U}^{\star\perp}\| & \ge|\lambda_{l}|-\|(\bm{U}^{\star\perp})^{\top}\bm{H}\bm{U}^{\star\perp}\|\gtrsim|\lambda_{l}^{\star}|\\
\|\bm{U}^{\star\perp\top}\bm{H}\bm{u}_{l,\parallel}\|_{2} & \le\|\bm{H}\|\lesssim\sigma\sqrt{n}
\end{align*}
with high probability. These combined with (\ref{eq:P-Uu-bound})
lead to
\begin{equation}
\big\|\bm{P}_{\bm{U}^{\star\perp}}\bm{u}_{l}\big\|_{2}\lesssim\frac{\sigma\sqrt{n}}{|\lambda_{l}^{\star}|}.
\end{equation}
Substitution into (\ref{eq:a-top-U-perp-u-UB}) reveals that
\begin{equation}
\big|\langle\bm{P}_{\bm{U}^{\star\perp}}\bm{a},\,\bm{P}_{\bm{U}^{\star\perp}}\bm{u}_{l}\rangle\big|\lesssim\sqrt{\frac{\log n}{n}}\,\big\|\bm{P}_{\bm{U}^{\star\perp}}\bm{a}\big\|_{2}\big\|\bm{P}_{\bm{U}^{\star\perp}}\bm{u}_{l}\big\|_{2}\lesssim\frac{\sigma\sqrt{\log n}}{|\lambda_{l}^{\star}|}\big\|\bm{P}_{\bm{U}^{\star\perp}}\bm{a}\big\|_{2}.\label{eq:a-top-U-perp-u}
\end{equation}

\paragraph{5. Combining bounds.}In view of (\ref{eq:au-min-UB-iid-sym}),
the bounds (\ref{eq:cos-bound}), (\ref{eq:a-top-uk}) and (\ref{eq:a-top-U-perp-u})
taken collectively lead to our advertised result
\begin{align*}
\min\big|\bm{a}^{\top}\bm{u}_{l}\pm\bm{a}^{\top}\bm{u}_{l}^{\star}\big| & \lesssim\bigg(\frac{\sigma^{2}n}{\lambda_{l}^{\star2}}+\frac{\sigma^{2}r\log n}{\big(\Delta_{l}^{\star}\big)^{2}}\bigg)\big|\bm{a}^{\top}\bm{u}_{l}^{\star}\big|+ \frac{\sigma\sqrt{\log n}}{|\lambda_{l}^{\star}|}\big\|\bm{P}_{\bm{U}^{\star\perp}}\bm{a}\big\|_{2} \\
 & \quad +\sigma\sqrt{r\log\bigg(\frac{n\kappa\lambda_{\max}}{\Delta_{l}^{\star}}\bigg)}\sum_{k:k\neq l}\frac{\big|\bm{a}^{\top}\bm{u}_{k}^{\star}\big|}{|\lambda_{l}^{\star}-\lambda_{k}^{\star}|} +\frac{\sigma}{|\lambda_{l}^{\star}|}\sqrt{\log\bigg(\frac{n\kappa\lambda_{\max}}{\Delta_{l}^{\star}}\bigg)}.
\end{align*}

Regarding the analysis for the de-biased estimate, one can substitute
(\ref{eq:cos-bound-bl}), (\ref{eq:a-top-uk}) and (\ref{eq:a-top-U-perp-u})
into (\ref{eq:au-min-UB-iid-sym-bl}) to obtain
\begin{align*}
\min\big|\bm{a}^{\top}\bm{u}_{l}\sqrt{1+b_{l}}\pm\bm{a}^{\top}\bm{u}_{l}^{\star}\big| & \lesssim\bigg(\frac{\sigma^{2}\sqrt{n\log n}}{\lambda_{l}^{\star2}}+\frac{\sigma^{2}r\log n}{\big(\Delta_{l}^{\star}\big)^{2}}\bigg)\big|\bm{a}^{\top}\bm{u}_{l}^{\star}\big| +\frac{\sigma\sqrt{\log n}}{|\lambda_{l}^{\star}|}\big\|\bm{P}_{\bm{U}^{\star\perp}}\bm{a}\big\|_{2} \\
 & \quad +\sigma\sqrt{r\log\bigg(\frac{n\kappa\lambda_{\max}}{\Delta_{l}^{\star}}\bigg)}\sum_{k:k\neq l}\frac{\big|\bm{a}^{\top}\bm{u}_{k}^{\star}\big|}{|\lambda_{l}^{\star}-\lambda_{k}^{\star}|} +\frac{\sigma}{|\lambda_{l}^{\star}|}\sqrt{\log\bigg(\frac{n\kappa\lambda_{\max}}{\Delta_{l}^{\star}}\bigg)} \\
 & \lesssim\frac{\sigma^{2}r\log n}{\big(\Delta_{l}^{\star}\big)^{2}}\big|\bm{a}^{\top}\bm{u}_{l}^{\star}\big|+\sigma\sqrt{r\log\bigg(\frac{n\kappa\lambda_{\max}}{\Delta_{l}^{\star}}\bigg)}\sum_{k:k\neq l}\frac{\big|\bm{a}^{\top}\bm{u}_{k}^{\star}\big|}{|\lambda_{l}^{\star}-\lambda_{k}^{\star}|}+\frac{\sigma}{|\lambda_{l}^{\star}|}\sqrt{\log\bigg(\frac{n\kappa\lambda_{\max}}{\Delta_{l}^{\star}}\bigg)}
\end{align*}
where the last step holds since $\sigma\sqrt{n}\lesssim\lambda_{\min}^{\star}$
and $\big|\bm{a}^{\top}\bm{u}_{l}^{\star}\big|\leq\|\bm{a}\|_{2}\|\bm{u}_{l}^{\star}\|_{2}=1.$
This concludes the proof.

\subsection{Analysis for principal component analysis}

Akin to the matrix denoising counterpart, the first step towards establishing
the desired eigenvector perturbation bounds lies in the development
of a fine-grained eigenvalue perturbation theory. Here and throughout,
we let $\bm{U}^{\star\perp}\in\mathbb{R}^{p\times(p-r)}$ represent
a matrix consisting of orthonormal columns perpendicular to the subspace
spanned by $\bm{U}^{\star}$.

\begin{theorem}[Eigenvalue perturbation for PCA]\label{thm:eigval-pertur-sym-iid-pca}Consider
the model in Section~\ref{subsec:Principal-component-analysis-model}.
Fix any $1\leq l\leq r$, and instate the assumptions of Theorem~\ref{thm:evector-pertur-sym-iid-pca}.
Then with probability at least $1-O(n^{-10}),$ one has
\begin{equation}
\bigg|\frac{\lambda_{l}}{1+\beta(\lambda_{l})}-\lambda_{l}^{\star}-\sigma^{2}\bigg|\leq C_{2}(\lambda_{\max}^{\star}+\sigma^{2})\sqrt{\frac{r}{n}}\log n\label{eq:eigenvalue-perturbation-bound-iid-pca}
\end{equation}
for some sufficiently large constant $C_{2}>0$, where we define
\begin{equation}
\beta(\lambda):=\frac{1}{n}\mathsf{tr}\Big[\frac{1}{n}\bm{S}_{\perp}^{\top}\big(\lambda\bm{I}_{p-r}-\frac{1}{n}\bm{S}_{\perp}\bm{S}_{\perp}^{\top}\big)^{-1}\bm{S}_{\perp}\Big]\qquad\text{with }\bm{S}_{\perp}\coloneqq(\bm{U}^{\star\perp})^{\top}\bm{S}.\label{eq:definition-gamma-lambda-iid-pca}
\end{equation}
\end{theorem}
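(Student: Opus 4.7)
\medskip
\noindent\textbf{Proof proposal for Theorem \ref{thm:eigval-pertur-sym-iid-pca}.}
The plan is to mimic the analysis pipeline of Theorem~\ref{thm:eigval-pertur-sym-iid}, with the key structural observation being that $\bm{S}_{\parallel} := \bm{U}^{\star\top}\bm{S}$ and $\bm{S}_{\perp} := (\bm{U}^{\star\perp})^{\top}\bm{S}$ are \emph{independent} Gaussian matrices (since $\bm{\Sigma} = \bm{U}^{\star}\bm{\Lambda}^{\star}\bm{U}^{\star\top} + \sigma^{2}\bm{I}_{p}$ has zero cross-covariance between the two subspaces): the columns of $\bm{S}_{\parallel}$ are i.i.d.~$\mathcal{N}(\bm{0},\bm{\Lambda}^{\star}+\sigma^{2}\bm{I}_{r})$, while those of $\bm{S}_{\perp}$ are i.i.d.~$\mathcal{N}(\bm{0},\sigma^{2}\bm{I}_{p-r})$. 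Set $\bm{M} = \frac{1}{n}\bm{S}\bm{S}^{\top}$. Applying Theorem~\ref{thm:master-theorem-general} with $\bm{Q} = \bm{U}^{\star}$ and rewriting the block pieces via $\bm{U}^{\star\top}\bm{M}\bm{U}^{\star} = \frac{1}{n}\bm{S}_{\parallel}\bm{S}_{\parallel}^{\top}$, $\bm{U}^{\star\top}\bm{M}\bm{U}^{\star\perp} = \frac{1}{n}\bm{S}_{\parallel}\bm{S}_{\perp}^{\top}$, $(\bm{U}^{\star\perp})^{\top}\bm{M}\bm{U}^{\star\perp} = \frac{1}{n}\bm{S}_{\perp}\bm{S}_{\perp}^{\top}$, one obtains the key identity
\begin{equation*}
\bigl(\lambda_{l}\bm{I}_{r} - \tfrac{1}{n}\bm{S}_{\parallel}\bm{S}_{\parallel}^{\top}\bigr)\bm{U}^{\star\top}\bm{u}_{l,\parallel}
= \bm{G}(\lambda_{l})\bm{U}^{\star\top}\bm{u}_{l,\parallel},
\qquad
\bm{G}(\lambda) := \bm{S}_{\parallel}\bm{B}(\lambda)\bm{S}_{\parallel}^{\top},
\end{equation*}
where $\bm{B}(\lambda) := \tfrac{1}{n^{2}}\bm{S}_{\perp}^{\top}\bigl(\lambda\bm{I}_{p-r} - \tfrac{1}{n}\bm{S}_{\perp}\bm{S}_{\perp}^{\top}\bigr)^{-1}\bm{S}_{\perp}$ satisfies $\mathsf{tr}(\bm{B}(\lambda)) = \beta(\lambda)$.

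Next I would perform the analogue of the $\bm{G}^{\perp}$ decomposition from Section~\ref{subsec:Proof-of-Theorem-matrix-denoising}. Conditional on $\bm{S}_{\perp}$ (and hence on $\bm{B}(\lambda)$), the independence above yields $\mathbb{E}[\bm{G}(\lambda)\mid \bm{S}_{\perp}] = \mathsf{tr}(\bm{B}(\lambda))(\bm{\Lambda}^{\star}+\sigma^{2}\bm{I}_{r}) = \beta(\lambda)(\bm{\Lambda}^{\star}+\sigma^{2}\bm{I}_{r})$. Similarly, $\mathbb{E}[\frac{1}{n}\bm{S}_{\parallel}\bm{S}_{\parallel}^{\top}] = \bm{\Lambda}^{\star}+\sigma^{2}\bm{I}_{r}$. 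Substituting these centerings into the identity and rearranging produces
\begin{equation*}
\Bigl(\lambda_{l}\bm{I}_{r} - (1+\beta(\lambda_{l}))(\bm{\Lambda}^{\star}+\sigma^{2}\bm{I}_{r})\Bigr)\bm{U}^{\star\top}\bm{u}_{l,\parallel}
= \bm{R}(\lambda_{l})\,\bm{U}^{\star\top}\bm{u}_{l,\parallel},
\end{equation*}
where the residual $\bm{R}(\lambda) := \bigl(\tfrac{1}{n}\bm{S}_{\parallel}\bm{S}_{\parallel}^{\top}-(\bm{\Lambda}^{\star}+\sigma^{2}\bm{I}_{r})\bigr) + \bigl(\bm{G}(\lambda)-\beta(\lambda)(\bm{\Lambda}^{\star}+\sigma^{2}\bm{I}_{r})\bigr)$ should be controlled.

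The main technical task -- and the part I expect to be the main obstacle -- is to establish a uniform concentration bound of the form
\begin{equation*}
\sup_{\lambda \in \mathcal{I}_{l}}\|\bm{R}(\lambda)\|\;\lesssim\; (\lambda_{\max}^{\star}+\sigma^{2})\sqrt{r/n}\,\log n
\end{equation*}
on an interval $\mathcal{I}_{l}$ known to contain $\lambda_{l}$ with high probability (this containment follows from a preliminary Weyl-type bound using Lemma~\ref{lemma:spectral-M-Sigma}, which also shows $1+\beta(\lambda_{l}) \asymp 1$). The first piece of $\bm{R}(\lambda)$ is purely a Wishart concentration in $r$-by-$r$ and is handled by standard covariance estimation bounds. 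The harder piece is $\bm{G}(\lambda)-\mathbb{E}[\bm{G}(\lambda)\mid\bm{S}_{\perp}]$, which, conditional on $\bm{S}_{\perp}$, is a centered quadratic form in the Gaussian matrix $\bm{S}_{\parallel}$ with kernel $\bm{B}(\lambda)$; Hanson--Wright yields pointwise control in terms of $\|\bm{B}(\lambda)\|_{\mathrm{F}}$ and $\|\bm{B}(\lambda)\|$, and these Schatten norms must in turn be bounded using Marchenko--Pastur-type estimates on the spectrum of $\frac{1}{n}\bm{S}_{\perp}\bm{S}_{\perp}^{\top}$. Upgrading to a uniform-in-$\lambda$ bound over $\mathcal{I}_{l}$ is then handled by a net argument together with Lipschitz control of $\lambda\mapsto\bm{B}(\lambda)$.

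Once the residual bound above is in hand, the proof closes exactly as in Lemma~\ref{lemma:lambda-l-mapping}: the displayed identity forces $\lambda_{l}/(1+\beta(\lambda_{l}))$ to lie within an $O((\lambda_{\max}^{\star}+\sigma^{2})\sqrt{r/n}\,\log n)$ neighborhood of some eigenvalue $\lambda_{i}^{\star}+\sigma^{2}$ of $\bm{\Lambda}^{\star}+\sigma^{2}\bm{I}_{r}$. The eigen-gap hypothesis (\ref{eq:eigengap-condition-iid-pca}) together with the $\ell$-th-largest ordering of $\lambda_{l}$ then pins down $i=l$, yielding (\ref{eq:eigenvalue-perturbation-bound-iid-pca}).
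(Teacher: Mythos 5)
Your proposal is correct and follows essentially the same route as the paper: apply Theorem~\ref{thm:master-theorem-general} with $\bm{Q}=\bm{U}^{\star}$, use the independence of $\bm{S}_{\parallel}$ and $\bm{S}_{\perp}$ to identify the conditional centering $\mathbb{E}[\bm{K}(\lambda)\mid\bm{S}_{\perp}]=\beta(\lambda)(\bm{\Lambda}^{\star}+\sigma^{2}\bm{I}_{r})$, control the residual via Wishart concentration plus a conditional Gaussian quadratic-form bound with a net over $\lambda$, and close with the Lemma~\ref{lemma:lambda-l-mapping}-type identification together with $1+\beta(\lambda_{l})\asymp 1$. The only (inessential) difference is the concentration tool for $\bm{K}(\lambda)-\bm{K}^{\perp}(\lambda)$: you invoke Hanson--Wright with Schatten-norm bounds on $\bm{B}(\lambda)$, whereas the paper diagonalizes $\bm{C}(\lambda)$ by rotational invariance and applies its truncated matrix-Bernstein lemma (Lemma~\ref{lemma:covariance-Gaussian-concentration}); both yield the required $\ll(\lambda_{\max}^{\star}+\sigma^{2})\sqrt{r/n}\,\log n$ bound under condition~(\ref{eq:noise-condition-iid-pca}).
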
\begin{remark}As asserted by Theorem \ref{thm:eigval-pertur-sym-iid-pca},
the empirical eigenvalue $\lambda_{l}$ exhibits a form of ``inflation''
in comparison to the corresponding ground-truth value $\lambda_{l}^{\star}+\sigma^{2}$.
As a result, it is advisable to properly shrink $\lambda_{l}$ when
estimating $\lambda_{l}^{\star}+\sigma^{2}$. \end{remark}

In what follows, we shall first outline the proof for Theorem \ref{thm:eigval-pertur-sym-iid-pca}
(which is very similar to the analysis for Theorem~\ref{thm:eigval-pertur-sym-iid}),
followed by a proof sketch for the eigenvector perturbation theory
in Theorem~\ref{thm:evector-pertur-sym-iid-pca}. 

\subsubsection{Proof of eigenvalue perturbation theory (Theorem \ref{thm:eigval-pertur-sym-iid-pca})}

\label{subsec:Proof-of-eigenvalue-pca}

Before embarking on the proof, we shall define 
\begin{equation}
\bm{S}_{\parallel}:=\bm{U}^{\star\top}\bm{S}\in\mathbb{R}^{r\times n},\quad\bm{S}_{\perp}:=(\bm{U}^{\star\perp})^{\top}\bm{S}\in\mathbb{R}^{(p-r)\times n}\quad\text{and}\quad\bm{\Lambda}:=\bm{U}^{\star\top}\bm{\Sigma}\bm{U}^{\star}=\bm{\Lambda}^{\star}+\sigma^{2}\bm{I}_{r}\label{eq:def-Lambda-pca}
\end{equation}
for notional convenience, allowing one to express \begin{subequations}
\begin{align}
(\bm{U}^{\star\perp})^{\top}\bm{S}\bm{S}^{\top}\bm{U}^{\star\perp} & =\bm{S}_{\perp}\bm{S}_{\perp}^{\top},\\
(\bm{U}^{\star\perp})^{\top}\bm{S}\bm{S}^{\top}\bm{U}^{\star} & =\bm{S}_{\perp}\bm{S}_{\parallel}^{\top},\\
\bm{U}^{\star\top}\bm{S}\bm{S}^{\top}\bm{U}^{\star} & =\bm{S}_{\parallel}\bm{S}_{\parallel}^{\top}.
\end{align}
\end{subequations} As can be straightforwardly verified: 
\begin{itemize}
\item The columns of $\bm{S}_{\parallel}$ are independent zero-mean Gaussian
random vectors with covariance matrix $\bm{\Lambda}$;
\item The columns of $\bm{S}_{\perp}$ are i.i.d.~zero-mean Gaussian random
vectors with covariance matrix $\sigma^{2}\bm{I}_{p-r}$;
\item $\bm{S}_{\parallel}$ is statistically independent of $\bm{S}_{\perp}$
(from standard properties for Gaussian random vectors). 
\end{itemize}
In addition, the following lemma controls the distance between $\frac{1}{n}\bm{S}\bm{S}^{\top}$
and $\bm{\Sigma}$ when measured by the spectral norm.

\begin{lemma}\label{lemma:spectral-M-Sigma}Assume that $n\geq r$.
Then with probability at least $1-O(n^{-10})$, one has
\begin{equation}
\Big\|\frac{1}{n}\bm{S}\bm{S}^{\top}-\bm{\Sigma}\Big\|\lesssim\lambda_{\max}^{\star}\sqrt{\frac{r\log n}{n}}+\sqrt{(\lambda_{\max}^{\star}+\sigma^{2})\sigma^{2}\frac{p}{n}}\,\log n+\sigma^{2}\bigg(\sqrt{\frac{p}{n}}+\frac{p}{n}+\sqrt{\frac{\log n}{n}}\bigg).
\end{equation}
\end{lemma}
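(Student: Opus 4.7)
} The plan is to reduce the problem to three independent blocks by splitting each sample $\bm{s}_i$ into its projection onto $\mathsf{range}(\bm{U}^{\star})$ and its orthogonal complement. Concretely, write $\bm{S}_\parallel = \bm{U}^{\star\top}\bm{S} \in \mathbb{R}^{r\times n}$ and $\bm{S}_\perp = (\bm{U}^{\star\perp})^{\top}\bm{S} \in \mathbb{R}^{(p-r)\times n}$, so that $\bm{S} = \bm{U}^{\star}\bm{S}_\parallel + \bm{U}^{\star\perp}\bm{S}_\perp$. A key structural fact is that the columns of $\bm{S}_\parallel$ are i.i.d.\ $\mathcal{N}(\bm{0},\bm{\Lambda})$ with $\bm{\Lambda} = \bm{\Lambda}^{\star} + \sigma^2\bm{I}_r$, while the columns of $\bm{S}_\perp$ are i.i.d.\ $\mathcal{N}(\bm{0},\sigma^2 \bm{I}_{p-r})$, and crucially $\bm{S}_\parallel$ and $\bm{S}_\perp$ are statistically independent by standard Gaussian conditioning.

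Next I would exploit the orthogonal decomposition
\begin{align*}
\frac{1}{n}\bm{S}\bm{S}^{\top} - \bm{\Sigma}
&= \bm{U}^{\star}\Bigl(\tfrac{1}{n}\bm{S}_\parallel\bm{S}_\parallel^{\top} - \bm{\Lambda}\Bigr)\bm{U}^{\star\top}
+ \bm{U}^{\star\perp}\Bigl(\tfrac{1}{n}\bm{S}_\perp\bm{S}_\perp^{\top} - \sigma^2 \bm{I}_{p-r}\Bigr)(\bm{U}^{\star\perp})^{\top} \\
&\quad + \bm{U}^{\star}\cdot \tfrac{1}{n}\bm{S}_\parallel\bm{S}_\perp^{\top}(\bm{U}^{\star\perp})^{\top}
+ \bm{U}^{\star\perp}\cdot \tfrac{1}{n}\bm{S}_\perp\bm{S}_\parallel^{\top}\bm{U}^{\star\top}.
\end{align*}
Since $[\bm{U}^{\star},\bm{U}^{\star\perp}]$ is orthogonal, the spectral norm of the left-hand side is dominated, up to a factor of two, by the maximum spectral norm of the three distinct block types appearing on the right.

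The three blocks are then bounded by standard Gaussian covariance concentration. For the signal block $\tfrac{1}{n}\bm{S}_\parallel\bm{S}_\parallel^\top - \bm{\Lambda}$, I would apply a Wishart-type bound (e.g.\ \citet{vershynin2018high}) to obtain, with probability at least $1-O(n^{-10})$,
\[
\Bigl\|\tfrac{1}{n}\bm{S}_\parallel\bm{S}_\parallel^\top - \bm{\Lambda}\Bigr\| \;\lesssim\; \|\bm{\Lambda}\|\Bigl(\sqrt{\tfrac{r+\log n}{n}} + \tfrac{r+\log n}{n}\Bigr),
\]
which, since $\|\bm{\Lambda}\|\le \lambda_{\max}^{\star}+\sigma^2$, contributes the $\lambda_{\max}^{\star}\sqrt{r\log n/n}$ term plus lower-order $\sigma^2$ pieces that can be absorbed into the $\sigma^2\sqrt{p/n}$ term. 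For the noise block $\tfrac{1}{n}\bm{S}_\perp\bm{S}_\perp^\top - \sigma^2\bm{I}_{p-r}$, the same Wishart bound in dimension $p-r$ yields the $\sigma^2(\sqrt{p/n}+p/n)$ and $\sigma^2 \sqrt{\log n/n}$ terms. For the cross block $\tfrac{1}{n}\bm{S}_\perp\bm{S}_\parallel^\top$, I would condition on $\bm{S}_\parallel$ and use the fact that $\bm{S}_\perp$ is then a Gaussian matrix independent of $\bm{S}_\parallel$ with i.i.d.\ $\mathcal{N}(0,\sigma^2)$ entries; the operator-norm bound of $\tfrac{\sigma}{n}\sqrt{\|\bm{S}_\parallel\bm{S}_\parallel^{\top}\|}\cdot(\sqrt{p-r}+\sqrt{r}+\sqrt{\log n})$ together with the already-controlled $\|\tfrac{1}{n}\bm{S}_\parallel\bm{S}_\parallel^\top\|\le\|\bm{\Lambda}\|+O(\cdot)\lesssim \lambda_{\max}^{\star}+\sigma^2$ delivers the $\sqrt{(\lambda_{\max}^{\star}+\sigma^2)\sigma^2 p/n}\log n$ term.

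The main obstacle is getting the cross-term bound with the right $(\lambda_{\max}^{\star}+\sigma^2)$ prefactor rather than a looser one, which requires the independence of $\bm{S}_\parallel$ and $\bm{S}_\perp$ combined with a high-probability control on $\|\tfrac{1}{n}\bm{S}_\parallel\bm{S}_\parallel^\top\|$; the $\log n$ factor arises from the tail bound on the Gaussian operator norm at the $n^{-10}$ probability level. A final union bound over the three block events yields the advertised inequality.
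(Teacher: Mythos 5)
Your proposal is correct, and its skeleton coincides with the paper's proof: the same rotation by $[\bm{U}^{\star},\bm{U}^{\star\perp}]$ reducing $\|\frac{1}{n}\bm{S}\bm{S}^{\top}-\bm{\Sigma}\|$ to $\|\frac{1}{n}\bm{S}_{\parallel}\bm{S}_{\parallel}^{\top}-\bm{\Lambda}\|+2\|\frac{1}{n}\bm{S}_{\perp}\bm{S}_{\parallel}^{\top}\|+\|\frac{1}{n}\bm{S}_{\perp}\bm{S}_{\perp}^{\top}-\sigma^{2}\bm{I}_{p-r}\|$, the same distributional facts (independence of $\bm{S}_{\parallel}$ and $\bm{S}_{\perp}$, $\bm{S}_{\parallel}=\bm{\Lambda}^{1/2}\bm{Z}$), and the same Wishart-type concentration for the two diagonal blocks. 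The only place you diverge is the cross block: the paper bounds $\|\bm{S}_{\perp}\bm{Z}^{\top}\|$ via its Lemma~\ref{lemma:prod-Gaussian-concentration} (a truncated matrix Bernstein bound for $\sum_i\bm{h}_i\bm{g}_i^{\top}$) and then multiplies by $\|\bm{\Lambda}^{1/2}\|$, whereas you condition on $\bm{S}_{\parallel}$, use rotational invariance of the Gaussian matrix $\bm{S}_{\perp}$ to reduce to a $(p-r)\times r$ standard Gaussian, and recycle the already-established control of $\|\frac{1}{n}\bm{S}_{\parallel}\bm{S}_{\parallel}^{\top}\|$; this is more elementary (no Bernstein machinery) and in fact gives a marginally sharper cross-term bound of order $\sqrt{(\lambda_{\max}^{\star}+\sigma^{2})\sigma^{2}/n}\,(\sqrt{p}+\sqrt{\log n})$, comfortably inside the stated $\sqrt{(\lambda_{\max}^{\star}+\sigma^{2})\sigma^{2}p/n}\,\log n$. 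One small bookkeeping correction: the residual $\sigma^{2}\sqrt{r\log n/n}$ from the signal block cannot in general be absorbed into $\sigma^{2}\sqrt{p/n}$ (that would need $r\log n\lesssim p$); absorb it instead into the middle term $\sqrt{(\lambda_{\max}^{\star}+\sigma^{2})\sigma^{2}p/n}\,\log n$, which dominates it since $r\leq p$ — the same implicit absorption the paper performs.
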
\begin{proof}See Appendix \ref{subsec:Proof-of-lemma:lemma:spectral-M-Sigma}.\end{proof}\begin{remark}In
particular, under the noise assumption (\ref{eq:noise-condition-iid-pca}),
Lemma~\ref{lemma:spectral-M-Sigma} tells us that $\big\|\frac{1}{n}\bm{S}\bm{S}^{\top}-\bm{\Sigma}\big\|\ll\lambda_{\min}^{\star}$
with probability at least $1-O(n^{-10})$, which together with Weyl's
inequality gives
\begin{equation}
2\lambda_{l}^{\star}/3\leq\lambda_{l}\leq4\lambda_{l}^{\star}/3,\qquad1\leq l\leq r.\label{eq:lambda-l-range-PCA}
\end{equation}
\end{remark}

We now move on to present the proof of Theorem \ref{thm:eigval-pertur-sym-iid-pca}.
The key ingredient underlying the analysis is, once again, to invoke
our master theorem (namely, Theorem~\ref{thm:master-theorem-general}),
by treating $\frac{1}{n}\bm{S}\bm{S}^{\top}$, $\bm{\Sigma}^{\star}$
and $\bm{U}^{\star}$ as $\bm{M}$, $\bm{M}^{\star}$ and $\bm{Q}$,
respectively. Recalling the definition of
\begin{equation}
\bm{u}_{l,\parallel}\coloneqq\frac{1}{\|\bm{P}_{\bm{U}^{\star}}(\bm{u})\|_{2}}\bm{P}_{\bm{U}^{\star}}(\bm{u})
\end{equation}
as in Section~\ref{subsec:Master-theorems} (so that $\bm{U}^{\star}\bm{U}^{\star\top}\bm{u}_{l,\parallel}=\bm{u}_{l,\parallel}$),
one can invoke (\ref{eq:lambda-l-long-identity-1}) in Theorem~\ref{thm:master-theorem-general}
to derive 
\begin{equation}
\Big(\lambda_{l}\bm{I}_{r}-\frac{1}{n}\bm{S}_{\parallel}\bm{S}_{\parallel}^{\top}\Big)\bm{U}^{\star\top}\bm{u}_{l,\parallel}=\bm{K}(\lambda_{l})\,\bm{U}^{\star\top}\bm{u}_{l,\parallel},\label{eq:lambda-S-Ustar-PCA-identity}
\end{equation}
where we recall the definitions of $\bm{S}_{\parallel}$ and $\bm{S}_{\perp}$
in (\ref{eq:def-Lambda-pca}), and $\bm{K}(\lambda)$ is given by
\begin{equation}
\bm{K}(\lambda)\coloneqq\frac{1}{n}\bm{S}_{\parallel}\cdot\underbrace{\frac{1}{n}\bm{S}_{\perp}^{\top}\Big(\lambda\bm{I}_{p-r}-\frac{1}{n}\bm{S}_{\perp}\bm{S}_{\perp}^{\top}\Big)^{-1}\bm{S}_{\perp}}_{=:\,\bm{C}(\lambda)}\cdot\,\bm{S}_{\parallel}^{\top}.\label{eq:def:G-lambda-pca}
\end{equation}
It is also helpful to define
\begin{equation}
\bm{K}^{\perp}(\lambda)\coloneqq\mathbb{E}\big[\bm{K}(\lambda)\mid\bm{C}(\lambda)\big],\label{eq:def:G-lambda-exp-pca}
\end{equation}
with $\lambda$ regarded as a deterministic quantity independent of
the data samples. Then rearranging terms in (\ref{eq:lambda-S-Ustar-PCA-identity})
yields
\[
\big(\lambda_{l}\bm{I}_{r}-\bm{\Lambda}-\bm{K}^{\perp}(\lambda_{l})\big)\bm{U}^{\star\top}\bm{u}_{l,\parallel}=\Big(\frac{1}{n}\bm{S}_{\parallel}\bm{S}_{\parallel}^{\top}-\bm{\Lambda}+\bm{K}(\lambda_{l})-\bm{K}^{\perp}(\lambda_{l})\Big)\bm{U}^{\star\top}\bm{u}_{l,\parallel},
\]
which together with (\ref{eq:lambda-l-range-PCA}) results in the
following bound:
\begin{equation}
\big\|\big(\lambda_{l}\bm{I}_{r}-\bm{\Lambda}-\bm{K}^{\perp}(\lambda_{l})\big)\bm{U}^{\star\top}\bm{u}_{l,\parallel}\big\|_{2}\le\Big\|\frac{1}{n}\bm{S}_{\parallel}\bm{S}_{\parallel}^{\top}-\bm{\Lambda}\Big\|+\sup_{\lambda:\,\lambda\in[2\lambda_{l}^{\star}/3,4\lambda_{l}^{\star}/3]}\big\|\bm{K}(\lambda)-\bm{K}^{\perp}(\lambda)\big\|,\label{eq:pca-eig-val-UB}
\end{equation}
Akin to the proof of Theorem \ref{thm:eigval-pertur-sym-iid} in Section,
our goal is to show $\big(\lambda_{l}\bm{I}_{r}-\bm{\Lambda}-\bm{K}^{\perp}(\lambda_{l})\big)\bm{U}^{\star\top}\bm{u}_{l,\parallel}\approx\bm{0}$,
which would then imply that $\lambda_{l}$ is sufficiently close to
some eigenvalue of $\bm{\Lambda}+\bm{K}^{\perp}(\lambda_{l})$. In
light of this, we intend to upper bound the two terms on the right-hand
side of (\ref{eq:pca-eig-val-UB}) in the sequel.
\begin{itemize}
\item Let us first look at the first term on the right-hand side of (\ref{eq:pca-eig-val-UB}).
Since the columns of $\bm{S}_{\parallel}=\bm{U}^{\star\top}\bm{S}$
are independent~Gaussian random vectors with distribution $\mathcal{N}(\bm{0},\bm{\Lambda})$,
we can rewrite 
\begin{equation}
\bm{S}_{\parallel}=\bm{\Lambda}^{1/2}\bm{Z},\label{eq:S-parallel-Z-connection}
\end{equation}
 where $\bm{Z}=[Z_{i,j}]\in\mathbb{R}^{r\times n}$ is a Gaussian
random matrix with i.i.d.~entries $Z_{i,j}\overset{\mathrm{i.i.d.}}{\sim}\mathcal{N}(0,1)$.
Applying standard Gaussian concentration inequalities reveals that:
with probability at least $1-O(n^{-10})$,
\begin{align}
\Big\|\frac{1}{n}\bm{S}_{\parallel}\bm{S}_{\parallel}^{\top}-\bm{\Lambda}\Big\|\le\|\bm{\Lambda}\|\cdot\Big\|\frac{1}{n}\bm{Z}\bm{Z}^{\top}-\bm{I}_{r}\Big\| & \lesssim(\lambda_{\max}^{\star}+\sigma^{2})\sqrt{\frac{r\log n}{n}}.\label{eq:gauss_spectral}
\end{align}
\item As for the second term on the right-hand side of (\ref{eq:pca-eig-val-UB}),
we claim for the moment that
\begin{align}
\sup_{\lambda:\,\lambda\in[2\lambda_{l}^{\star}/3,4\lambda_{l}^{\star}/3]}\big\|\bm{K}(\lambda)-\bm{K}^{\perp}(\lambda)\big\| & \ll(\lambda_{\max}^{\star}+\sigma^{2})\sqrt{\frac{r}{n}}\log n,\label{eq:pca-eig-val-UB-term2}\\
\sup_{\lambda:\,\lambda\in[2\lambda_{l}^{\star}/3,4\lambda_{l}^{\star}/3]}\|\bm{C}(\lambda)\| & \lesssim\frac{\sigma^{2}}{\lambda_{l}^{\star}}\bigg(1+\frac{p}{n}\bigg),\label{eq:B-sp-norm-UB}
\end{align}
where $\bm{C}(\lambda)$ is defined in (\ref{eq:def:G-lambda-pca}).
The proof of this claim is postponed to the end of the section. 
\end{itemize}
Substituting (\ref{eq:gauss_spectral}) and (\ref{eq:pca-eig-val-UB-term2})
into (\ref{eq:pca-eig-val-UB}) reveals that with probability exceeding
$1-O(n^{-10})$,
\begin{equation}
\big\|\big(\lambda_{l}\bm{I}_{r}-\bm{\Lambda}-\bm{K}^{\perp}(\lambda_{l})\big)\bm{u}_{l,\parallel}\big\|_{2}\lesssim(\lambda_{\max}^{\star}+\sigma^{2})\sqrt{\frac{r}{n}}\log n=:\mathcal{E}_{\mathsf{PCA}}.\label{eq:def-E-PCA}
\end{equation}

With the preceding inequality in place, we are ready to study the
eigenvalues of $\bm{\Lambda}+\bm{K}^{\perp}(\lambda_{l})$. Similar
to the analysis in the proof of Lemma \ref{lemma:bound-Glambda-uniform}
in Appendix~\ref{subsec:Proof-of-Lemma-bound-Glambda-uniform}, it
is straightforward to verify that
\begin{align}
\bm{K}^{\perp}(\lambda) & =\beta(\lambda)\bm{\Lambda},\label{eq:K-perp-form}
\end{align}
where $\beta(\lambda)=\frac{1}{n}\mathsf{tr}\big(\bm{C}(\lambda)\big)$
has been defined in (\ref{eq:definition-gamma-lambda-iid-pca}). This
immediately demonstrates that the $l$-th eigenvalue of $\bm{\Lambda}+\bm{K}^{\perp}(\lambda_{l})$
is equal to 
\[
\big(1+\beta(\lambda_{l})\big)(\lambda_{l}^{\star}+\sigma^{2}).
\]
Moreover, it is readily seen from (\ref{eq:B-sp-norm-UB}) that $\beta(\lambda)$
satisfies
\begin{align}
\sup_{\lambda:\,\lambda\in[2\lambda_{l}^{\star}/3,4\lambda_{l}^{\star}/3]}\beta(\lambda) & \leq\sup_{\lambda:\,\lambda\in[2\lambda_{l}^{\star}/3,4\lambda_{l}^{\star}/3]}\frac{n\wedge p}{n}\|\bm{C}(\lambda)\|\lesssim\frac{n\wedge p}{n}\cdot\frac{\sigma^{2}}{\lambda_{l}^{\star}}\bigg(1+\frac{p}{n}\bigg)\asymp\frac{\sigma^{2}p}{\lambda_{l}^{\star}n}=o(1)\label{eq:beta-UB}
\end{align}
as long as the noise level obeys $\sigma^{2}p/n\ll\lambda_{\min}^{\star}/\log n$.
Finally, combining (\ref{eq:K-perp-form}) with (\ref{eq:def-Lambda-pca})
and (\ref{eq:def-E-PCA}), we can repeat the same argument as in the
proof for Lemma~\ref{lemma:lambda-l-mapping} in Section~\ref{subsec:Proof-of-Claim-lambdal-correspondence-iid}
to reach
\[
\big|\lambda_{l}-(\lambda_{l}^{\star}+\sigma^{2})\big(1+\beta(\lambda_{l})\big)\big|\lesssim(\lambda_{\max}^{\star}+\sigma^{2})\sqrt{\frac{r}{n}}\log n;
\]
for conciseness, we omit the details of proof. This inequality establishes
the proximity of $\lambda_{l}$ and $(\lambda_{l}^{\star}+\sigma^{2})\big(1+\beta(\lambda_{l})\big)$.
Taking this collectively with (\ref{eq:beta-UB}) (i.e., $1+\beta(\lambda_{l})\asymp1$),
we establish the advertised bound (\ref{eq:eigenvalue-perturbation-bound-iid-pca}).

\paragraph{Proof of the inequality (\ref{eq:pca-eig-val-UB-term2})}

Recall the definitions of $\bm{K}(\lambda)$, $\bm{C}(\lambda)$ as
well as $\bm{K}^{\perp}(\lambda)$ in (\ref{eq:def:G-lambda-pca})
and (\ref{eq:def:G-lambda-exp-pca}). Recognizing that one can express
$\bm{S}_{\parallel}=\bm{\Lambda}^{1/2}\bm{Z}$ with $\bm{Z}\in\mathbb{R}^{r\times n}$
being an i.i.d.~standard Gaussian matrix (see (\ref{eq:S-parallel-Z-connection})),
we can define
\begin{align*}
\overline{\bm{K}}(\lambda) & :=\frac{1}{n}\bm{Z}\bm{C}(\lambda)\bm{Z}^{\top}\qquad\text{and}\qquad\overline{\bm{K}}^{\perp}(\lambda):=\mathbb{E}\big[\,\overline{\bm{K}}(\lambda)\mid\bm{C}(\lambda)\big],
\end{align*}
which allow us to express
\begin{align*}
\bm{K}(\lambda) & \coloneqq\frac{1}{n}\bm{S}_{\parallel}\bm{C}(\lambda)\bm{S}_{\parallel}^{\top}=\frac{1}{n}\bm{\Lambda}^{1/2}\bm{Z}\bm{C}(\lambda)\bm{Z}^{\top}\bm{\Lambda}^{1/2}=\bm{\Lambda}^{1/2}\overline{\bm{K}}(\lambda)\bm{\Lambda}^{1/2},\\
\bm{K}^{\perp}(\lambda) & \coloneqq\mathbb{E}\big[\bm{K}(\lambda)\mid\bm{C}(\lambda)\big]=\bm{\Lambda}^{1/2}\mathbb{E}\big[\,\overline{\bm{K}}(\lambda)\mid\bm{C}(\lambda)\big]\bm{\Lambda}^{1/2}=\bm{\Lambda}^{1/2}\overline{\bm{K}}^{\perp}(\lambda)\bm{\Lambda}^{1/2}.
\end{align*}
One can then develop the following upper bound
\begin{align}
\big\|\bm{K}(\lambda)-\bm{K}^{\perp}(\lambda)\big\| & =\big\|\bm{\Lambda}^{1/2}\big(\overline{\bm{K}}(\lambda)-\overline{\bm{K}}^{\perp}(\lambda)\big)\bm{\Lambda}^{1/2}\big\|\leq\|\bm{\Lambda}\|\big\|\overline{\bm{K}}(\lambda)-\overline{\bm{K}}^{\perp}(\lambda)\big\|\nonumber \\
 & =(\lambda_{\max}^{\star}+\sigma^{2})\frac{1}{n}\big\|\bm{Z}\bm{C}(\lambda)\bm{Z}^{\top}-\mathbb{E}[\bm{Z}\bm{C}(\lambda)\bm{Z}^{\top}\mid\bm{C}(\lambda)]\big\|.\label{eq:pca-eig-val-UB-term2-temp}
\end{align}
By construction, $\bm{S}_{\parallel}\coloneqq\bm{U}^{\star\top}\bm{S}$
and $\bm{S}_{\perp}:=(\bm{U}^{\star\perp})^{\top}\bm{S}$ are mutually
statistically independent, thus implying that $\bm{Z}$ is also independent
of $\bm{C}(\lambda)$ with $\lambda$ treated as a deterministic quantity.

The remainder of the proof thus comes down to controlling
\[
\big\|\bm{Z}\bm{C}(\lambda)\bm{Z}^{\top}-\mathbb{E}[\bm{Z}\bm{C}(\lambda)\bm{Z}^{\top}\mid\bm{C}(\lambda)]\big\|.
\]
By virtue of the rotational invariance of Gaussian random matrices,
we can replace $\bm{C}(\lambda)$ in the quantity above by a diagonal
matrix comprised of the eigenvalues of $\bm{C}(\lambda)$. To see
this, we denote by $\bm{V}\bm{D}\bm{V}^{\top}$ the eigen-decomposition
of $\bm{C}(\lambda)$ and find that
\[
\bm{Z}\bm{C}(\lambda)\bm{Z}^{\top}=\bm{Z}\bm{V}\bm{D}\bm{V}^{\top}\bm{Z}^{\top}\overset{\mathrm{d}}{=}\bm{Z}\bm{D}\bm{Z}^{\top},
\]
where the last step arises from the rotational invariance of the Gaussian
random matrix, namely $\bm{Z}\bm{V}\overset{\mathrm{d}}{=}\bm{Z}$.
In view of Lemma~\ref{lemma:covariance-Gaussian-concentration},
it suffices to control the eigenvalues of $\bm{C}(\lambda)$.

As can be straightforwardly verified, the rank of $\bm{C}(\lambda)$
is upper bounded by $(p-r)\wedge n$ and the $i$-th largest eigenvalue
of $\bm{C}(\lambda)$ (cf.~(\ref{eq:def:G-lambda-pca})) satisfies
\begin{align*}
\lambda_{i}\big(\bm{C}(\lambda)\big) & =\lambda_{i}\Big(\frac{1}{n}\bm{S}_{\perp}^{\top}\Big(\lambda\bm{I}_{p-r}-\frac{1}{n}\bm{S}_{\perp}\bm{S}_{\perp}^{\top}\Big)^{-1}\bm{S}_{\perp}\Big)=\frac{\lambda_{i}(\frac{1}{n}\bm{S}_{\perp}\bm{S}_{\perp}^{\top})}{\lambda-\lambda_{i}(\frac{1}{n}\bm{S}_{\perp}\bm{S}_{\perp}^{\top})},\qquad1\leq i\leq(p-r)\wedge n.
\end{align*}
In addition, (\ref{eq:S-perp-op-UB}) demonstrates that with probability
at least $1-O(n^{-10})$,
\[
0\leq\lambda_{i}\Big(\frac{1}{n}\bm{S}_{\perp}\bm{S}_{\perp}^{\top}\Big)\lesssim\sigma^{2}\bigg(1+\sqrt{\frac{p}{n}}+\frac{p}{n}+\sqrt{\frac{\log n}{n}}\bigg)\ll\lambda_{\min}^{\star},\qquad1\leq i\leq(p-r)\wedge n,
\]
where the last step holds due to the noise assumption (\ref{eq:noise-condition-iid-pca}).
Combining these two observations establishes the claim bound (\ref{eq:B-sp-norm-UB}):
\begin{align*}
\sup_{\lambda:\,\lambda\in[2\lambda_{l}^{\star}/3,4\lambda_{l}^{\star}/3]}\|\bm{C}(\lambda)\| & \lesssim\frac{\sigma^{2}}{\lambda_{l}^{\star}}\bigg(1+\sqrt{\frac{p}{n}}+\frac{p}{n}+\sqrt{\frac{\log n}{n}}\bigg)\asymp\frac{\sigma^{2}}{\lambda_{l}^{\star}}\bigg(1+\frac{p}{n}\bigg),
\end{align*}
where the last step arises from the Cauchy-Schwarz inequality. Consequently,
one can invoke Lemma~\ref{lemma:covariance-Gaussian-concentration}
and apply the standard epsilon-net argument (similar to the proof
of Lemma \ref{lemma:bound-Glambda-uniform} in Appendix~\ref{subsec:Proof-of-Lemma-bound-Glambda-uniform}
and hence omitted here) to demonstrate that
\begin{align*}
 & \sup_{\lambda:\,\lambda\in[2\lambda_{l}^{\star}/3,4\lambda_{l}^{\star}/3]}\frac{1}{n}\big\|\bm{Z}\bm{C}(\lambda)\bm{Z}^{\top}-\mathbb{E}[\bm{Z}\bm{C}(\lambda)\bm{Z}^{\top}\mid\bm{C}(\lambda)]\big\|\\
 & \qquad\lesssim\sup_{\lambda:\,\lambda\in[2\lambda_{l}^{\star}/3,4\lambda_{l}^{\star}/3]}\frac{1}{n}\|\bm{C}(\lambda)\|_{\mathrm{F}}\sqrt{r\log n}+\sup_{\lambda:\,\lambda\in[2\lambda_{l}^{\star}/3,4\lambda_{l}^{\star}/3]}\frac{1}{n}\|\bm{C}(\lambda)\|\big(r\log n+\log^{2}n\big)\\
 & \qquad\lesssim\sup_{\lambda:\,\lambda\in[2\lambda_{l}^{\star}/3,4\lambda_{l}^{\star}/3]}\frac{1}{n}\|\bm{C}(\lambda)\|\sqrt{r(n\wedge p)}\,\log^{2}n\\
 & \qquad\lesssim\frac{\sigma^{2}}{\lambda_{l}^{\star}}\bigg(\frac{p}{n}+\sqrt{\frac{p}{n}}\bigg)\sqrt{\frac{r}{n}}\,\log^{2}n\ll\sqrt{\frac{r}{n}}\log n
\end{align*}
with probability at least $1-O(n^{-10})$. Here, the last line follows
from (\ref{eq:B-sp-norm-UB}) and the noise assumption that $\sigma^{2}(p/n+\sqrt{p/n})\ll\lambda_{\min}^{\star}/\log n$.
Combining this with (\ref{eq:pca-eig-val-UB-term2-temp}), we arrive
at
\[
\sup_{\lambda:\,\lambda\in[2\lambda_{l}^{\star}/3,4\lambda_{l}^{\star}/3]}\big\|\bm{K}(\lambda)-\bm{K}^{\perp}(\lambda)\big\|\ll(\lambda_{\max}^{\star}+\sigma^{2})\sqrt{\frac{r}{n}}\log n
\]
as claimed.

\subsubsection{Proof of eigenvector perturbation theory (Theorem \ref{thm:evector-pertur-sym-iid-pca})}

We now turn to our eigenvector perturbation theory. As before, we
find it convenient to decompose the $l$-th eigenvector $\bm{u}_{l}$
of $\frac{1}{n}\bm{S}\bm{S}^{\top}$ as follows
\begin{equation}
\bm{u}_{l}=\bm{u}_{l}^{\star}\cos\theta+\bm{u}_{l,\perp}\sin\theta,\label{eq:ul-expansion-PCA-1}
\end{equation}
where the vector $\bm{u}_{l,\perp}$ obeys $\|\bm{u}_{l,\perp}\|_{2}=1$
and $\bm{u}_{l,\perp}^{\top}\bm{u}_{l}^{\star}=0$. We shall employ
this decomposition to identify several key quantities that we'd like
to control. Specifically, armed with this decomposition, we can derive
\begin{align*}
\bm{a}^{\top}\bm{u}_{l}= & (\bm{P}_{\bm{U}^{\star}}\bm{a})^{\top}\bm{u}_{l}+(\bm{P}_{\bm{U}^{\star\perp}}\bm{a})^{\top}\bm{u}_{l}\\
= & (\bm{P}_{\bm{U}^{\star}}\bm{a})^{\top}(\bm{u}_{l}^{\star}\cos\theta+\bm{u}_{l,\perp}\sin\theta)+(\bm{P}_{\bm{U}^{\star\perp}}\bm{a})^{\top}\bm{u}_{l}\\
= & \sum_{1\leq k\leq r}\bm{a}^{\top}\bm{u}_{k}^{\star}\bm{u}_{k}^{\star\top}(\bm{u}_{l}^{\star}\cos\theta+\bm{u}_{l,\perp}\sin\theta)+(\bm{P}_{\bm{U}^{\star\perp}}\bm{a})^{\top}(\bm{P}_{\bm{U}^{\star\perp}}\bm{u}_{l})\\
= & \bm{a}^{\top}\bm{u}_{l}^{\star}\cos\theta+\sum_{k:k\ne l}\bm{a}^{\top}\bm{u}_{k}^{\star}\bm{u}_{k}^{\star\top}\bm{u}_{l,\perp}\sin\theta+(\bm{P}_{\bm{U}^{\star\perp}}\bm{a})^{\top}(\bm{P}_{\bm{U}^{\star\perp}}\bm{u}_{l}),
\end{align*}
where we use the fact that $\bm{a}=\bm{P}_{\bm{U}^{\star}}\bm{a}+\bm{P}_{\bm{U}^{\star\perp}}\bm{a}$
with
\[
\bm{P}_{\bm{U}^{\star}}=\Sigma_{1\leq k\leq r}\bm{u}_{k}^{\star}\bm{u}_{k}^{\star\top}\qquad\text{and}\qquad\bm{P}_{\bm{U}^{\star\perp}}=\bm{I}-\bm{P}_{\bm{U}^{\star}}.
\]
As a result, we arrive at
\[
\bm{a}^{\top}\bm{u}_{l}\pm\bm{a}^{\top}\bm{u}_{l}^{\star}=\bm{a}^{\top}\bm{u}_{l}^{\star}(\cos\theta\pm1)+\sum_{k:k\ne l}\bm{a}^{\top}\bm{u}_{k}^{\star}\bm{u}_{k}^{\star\top}\bm{u}_{l,\perp}\sin\theta+(\bm{P}_{\bm{U}^{\star\perp}}\bm{a})^{\top}(\bm{P}_{\bm{U}^{\star\perp}}\bm{u}_{l}),
\]
which further implies 
\begin{align}
\min\big|\bm{a}^{\top}\bm{u}_{l}\pm\bm{a}^{\top}\bm{u}_{l}^{\star}\big|\leq & \big|\bm{a}^{\top}\bm{u}_{l}^{\star}\big|(1-|\cos\theta|)+\Big|\sum_{k:k\ne l}\bm{a}^{\top}\bm{u}_{k}^{\star}\bm{u}_{k}^{\star\top}\bm{u}_{l,\perp}\sin\theta\Big|+\big|(\bm{P}_{\bm{U}^{\star\perp}}\bm{a})^{\top}(\bm{P}_{\bm{U}^{\star\perp}}\bm{u}_{l})\big|\nonumber \\
\leq & \big|\bm{a}^{\top}\bm{u}_{l}^{\star}\big|(1-\cos^{2}\theta)+\Big|\sum_{k:k\ne l}\bm{a}^{\top}\bm{u}_{k}^{\star}\bm{u}_{k}^{\star\top}\bm{u}_{l,\perp}\sin\theta\Big|+\big|(\bm{P}_{\bm{U}^{\star\perp}}\bm{a})^{\top}(\bm{P}_{\bm{U}^{\star\perp}}\bm{u}_{l})\big|.\label{eq:au-min-UB-iid-sym-1-pca}
\end{align}
and
\begin{align}
\min\big|\bm{a}^{\top}\bm{u}_{l}\sqrt{1+c_{l}}\pm\bm{a}^{\top}\bm{u}_{l}^{\star}\big| & \leq\big|\bm{a}^{\top}\bm{u}_{l}^{\star}\big|\cdot\big|1-\sqrt{1+c_{l}}|\cos\theta|\big|+\sqrt{1+c_{l}}\big|\langle\bm{P}_{\bm{U}^{\star\perp}}\bm{a},\,\bm{P}_{\bm{U}^{\star\perp}}\bm{u}_{l}\rangle\big|\nonumber \\
 & \quad+\sqrt{1+c_{l}}\Big|\sum_{k:k\ne l}\bm{a}^{\top}\bm{u}_{k}^{\star}\bm{u}_{k}^{\star\top}\bm{u}_{l,\perp}\sin\theta\Big|.\label{eq:au-min-UB-iid-sym-1-pca-cl}
\end{align}
Thus, it comes down to bounding the following terms
\[
1-\cos^{2}\theta,\quad1-\sqrt{1+c_{l}}|\cos\theta|,\quad\sqrt{1+c_{l}},\quad\sum_{k:k\ne l}\bm{a}^{\top}\bm{u}_{k}^{\star}\bm{u}_{k}^{\star\top}\bm{u}_{l,\perp}\sin\theta,\quad\text{and}\quad\langle\bm{P}_{\bm{U}^{\star\perp}}\bm{a},\,\bm{P}_{\bm{U}^{\star\perp}}\bm{u}_{l}\rangle.
\]
separately, which forms the main content of the remainder of the proof. 

We claim that $\lambda_{l}\bm{I}_{p-1}-(\bm{u}_{l}^{\star\perp})^{\top}\frac{1}{n}\bm{S}\bm{S}^{\top}\bm{u}_{l}^{\star\perp}$
is invertible. This will be seen from (\ref{eq:lambdal-tilde-lambdak-pca-LB})
stated in Lemma~ \ref{lemma:lambda-S-minus-spectrum-1-1} directly.
The invertibility taken together with Theorem~\ref{thm:master-thm-vector}
reveals that $\cos\theta\neq0$. If $\sin\theta=0$, then we have
$\bm{u}_{l}=\pm\bm{u}_{l}^{\star}$ and the conclusion is obvious
since $\min\big|\bm{a}^{\top}\bm{u}_{l}\pm\bm{a}^{\top}\bm{u}_{l}^{\star}\big|=0$.
Therefore, it suffices to focus on the case where $\cos\theta\neq0$
and $\sin\theta\neq0$ in the sequel. 

\paragraph{1. Identifying several key quantities.}

Invoke Theorem \ref{thm:master-thm-vector} to show that\begin{subequations}\label{eq:costheta-au-iid-1-pca}
\begin{align}
\cos^{2}\theta & =\frac{1}{1+\big\|\big(\lambda_{l}\bm{I}_{p-1}-(\bm{u}_{l}^{\star\perp})^{\top}\frac{1}{n}\bm{S}\bm{S}^{\top}\bm{u}_{l}^{\star\perp}\big)^{-1}(\bm{u}_{l}^{\star\perp})^{\top}\frac{1}{n}\bm{S}\bm{S}^{\top}\bm{u}_{l}^{\star}\big\|_{2}^{2}},\\
\bm{u}_{k}^{\star\top}\bm{u}_{l,\perp} & =\frac{\bm{u}_{k}^{\star\top}\bm{u}_{l}^{\star\perp}\big(\lambda_{l}\bm{I}_{p-1}-(\bm{u}_{l}^{\star\perp})^{\top}\frac{1}{n}\bm{S}\bm{S}^{\top}\bm{u}_{l}^{\star\perp}\big)^{-1}(\bm{u}_{l}^{\star\perp})^{\top}\frac{1}{n}\bm{S}\bm{S}^{\top}\bm{u}_{l}^{\star}}{\big\|\bm{u}_{l}^{\star\perp}\big(\lambda_{l}\bm{I}_{p-1}-(\bm{u}_{l}^{\star\perp})^{\top}\frac{1}{n}\bm{S}\bm{S}^{\top}\bm{u}_{l}^{\star\perp}\big)^{-1}(\bm{u}_{l}^{\star\perp})^{\top}\frac{1}{n}\bm{S}\bm{S}^{\top}\bm{u}_{l}^{\star}\big\|_{2}}.
\end{align}
\end{subequations}For notational convenience, we shall define 
\begin{align}
\bm{s}_{l,\parallel} & :=\bm{u}_{l}^{\star\top}\bm{S}\in\mathbb{R}^{1\times n}\qquad\text{and}\qquad\bm{S}_{l,\perp}:=(\bm{u}_{l}^{\star\perp})^{\top}\bm{S}\in\mathbb{R}^{(p-1)\times n},\label{eq:def:s-l-para-perp}
\end{align}
allowing us to write (\ref{eq:costheta-au-iid-1-pca}) more succinctly
as follows\begin{subequations}\label{eq:costheta-au-expression-pca}
\begin{align}
\cos^{2}\theta & =\frac{1}{1+\big\|\big(\lambda_{l}\bm{I}_{p-1}-\frac{1}{n}\bm{S}_{l,\perp}\bm{S}_{l,\perp}^{\top}\big)^{-1}\frac{1}{n}\bm{S}_{l,\perp}\bm{s}_{l,\parallel}^{\top}\big\|_{2}^{2}},\label{eq:cos-eig-linear-form-pca}\\
\bm{u}_{k}^{\star\top}\bm{u}_{l,\perp} & =\frac{\bm{u}_{k}^{\star\top}\bm{u}_{l}^{\star\perp}\big(\lambda_{l}\bm{I}_{p-1}-\frac{1}{n}\bm{S}_{l,\perp}\bm{S}_{l,\perp}^{\top}\big)^{-1}\frac{1}{n}\bm{S}_{l,\perp}\bm{s}_{l,\parallel}^{\top}}{\big\|\bm{u}_{l}^{\star\perp}\big(\lambda_{l}\bm{I}_{p-1}-\frac{1}{n}\bm{S}_{l,\perp}\bm{S}_{l,\perp}^{\top}\big)^{-1}\frac{1}{n}\bm{S}_{l,\perp}\bm{s}_{l,\parallel}^{\top}\big\|_{2}}.\label{eq:a-u-eig-linear-form-pca}
\end{align}
\end{subequations} With the above relations in mind, we can demonstrate
that 
\begin{align*}
 & \Big|\sum_{k:k\ne l}\bm{a}^{\top}\bm{u}_{k}^{\star}\bm{u}_{k}^{\star\top}\bm{u}_{l,\perp}\sin\theta\Big|=\Big|\sum_{k:k\ne l}\bm{a}^{\top}\bm{u}_{k}^{\star}\bm{u}_{k}^{\star\top}\bm{u}_{l,\perp}\Big|\sqrt{1-\cos^{2}\theta}\\
 & \quad=\frac{\Big|\sum_{k:k\ne l}\bm{a}^{\top}\bm{u}_{k}^{\star}\bm{u}_{k}^{\star\top}\bm{u}_{l}^{\star\perp}\big(\lambda_{l}\bm{I}_{p-1}-\frac{1}{n}\bm{S}_{l,\perp}\bm{S}_{l,\perp}^{\top}\big)^{-1}\frac{1}{n}\bm{S}_{l,\perp}\bm{s}_{l,\parallel}^{\top}\Big|}{\big\|\bm{u}_{l}^{\star\perp}\big(\lambda_{l}\bm{I}_{p-1}-\frac{1}{n}\bm{S}_{l,\perp}\bm{S}_{l,\perp}^{\top}\big)^{-1}\frac{1}{n}\bm{S}_{l,\perp}\bm{s}_{l,\parallel}^{\top}\big\|_{2}}\cdot\sqrt{\frac{\big\|\big(\lambda_{l}\bm{I}_{p-1}-\frac{1}{n}\bm{S}_{l,\perp}\bm{S}_{l,\perp}^{\top}\big)^{-1}\frac{1}{n}\bm{S}_{l,\perp}\bm{s}_{l,\parallel}^{\top}\big\|_{2}^{2}}{1+\big\|\big(\lambda_{l}\bm{I}_{p-1}-\frac{1}{n}\bm{S}_{l,\perp}\bm{S}_{l,\perp}^{\top}\big)^{-1}\frac{1}{n}\bm{S}_{l,\perp}\bm{s}_{l,\parallel}^{\top}\big\|_{2}^{2}}}\\
 & \quad\leq\left|\sum_{k:k\ne l}\bm{a}^{\top}\bm{u}_{k}^{\star}\bm{u}_{k}^{\star\top}\bm{u}_{l}^{\star\perp}\Big(\lambda_{l}\bm{I}_{p-1}-\frac{1}{n}\bm{S}_{l,\perp}\bm{S}_{l,\perp}^{\top}\Big)^{-1}\frac{1}{n}\bm{S}_{l,\perp}\bm{s}_{l,\parallel}^{\top}\right|,
\end{align*}
where the last step follows since the columns of $\bm{u}_{l}^{\star\perp}$
are orthonormal. Substitution into (\ref{eq:au-min-UB-iid-sym-1-pca})
then yields
\begin{align}
\min\big|\bm{a}^{\top}\bm{u}_{l}\pm\bm{a}^{\top}\bm{u}_{l}^{\star}\big| & \leq\big|\bm{a}^{\top}\bm{u}_{l}^{\star}\big|\cdot(1-\cos^{2}\theta)+\big|(\bm{P}_{\bm{U}^{\star\perp}}\bm{a})^{\top}(\bm{P}_{\bm{U}^{\star\perp}}\bm{u}_{l})\big| \nonumber \\
 & \quad+ \Big|\sum_{k:k\ne l}\bm{a}^{\top}\bm{u}_{k}^{\star}\bm{u}_{k}^{\star\top}\bm{u}_{l}^{\star\perp}\Big(\lambda_{l}\bm{I}_{p-1}-\frac{1}{n}\bm{S}_{l,\perp}\bm{S}_{l,\perp}^{\top}\Big)^{-1}\frac{1}{n}\bm{S}_{l,\perp}\bm{s}_{l,\parallel}^{\top}\Big|;\label{eq:au-min-UB-iid-sym-pca}
 \end{align}
and
\begin{align}
\min\big|\sqrt{1+c_{l}}\bm{a}^{\top}\bm{u}_{l}\pm\bm{a}^{\top}\bm{u}_{l}^{\star}\big| & \leq\big|\bm{a}^{\top}\bm{u}_{l}^{\star}\big|\cdot\big|1-\sqrt{1+c_{l}}|\cos\theta|\big|+\sqrt{1+c_{l}}\big|\langle\bm{P}_{\bm{U}^{\star\perp}}\bm{a},\,\bm{P}_{\bm{U}^{\star\perp}}\bm{u}_{l}\rangle\big|\nonumber \\
 & \quad+\sqrt{1+c_{l}}\Big|\sum_{k:k\ne l}\bm{a}^{\top}\bm{u}_{k}^{\star}\bm{u}_{k}^{\star\top}\bm{u}_{l}^{\star\perp}\Big(\lambda_{l}\bm{I}_{p-1}-\frac{1}{n}\bm{S}_{l,\perp}\bm{S}_{l,\perp}^{\top}\Big)^{-1}\frac{1}{n}\bm{S}_{l,\perp}\bm{s}_{l,\parallel}^{\top}\Big|.\label{eq:au-min-UB-iid-sym-pca-cl}
\end{align}
In what follows, we shall control these quantities separately.

\paragraph{2. Controlling the spectrum of $\frac{1}{n}\bm{S}_{l,\perp}\bm{S}_{l,\perp}^{\top}$.}

Before moving forward to bound the terms mentioned above, we take
a moment to first look at the eigenvalues of $\frac{1}{n}\bm{S}_{l,\perp}\bm{S}_{l,\perp}^{\top}$.
We first introduce some useful notation as follows: 
\begin{itemize}
\item Let $\{\gamma_{i}^{(l)}\}_{i=1}^{p-1}$ denote the eigenvalues of
$\frac{1}{n}\bm{S}_{l,\perp}\bm{S}_{l,\perp}^{\top}$ (see the definition
of $\bm{S}_{l,\perp}$ in (\ref{eq:def:s-l-para-perp})), and we assume
that
\begin{equation}
\gamma_{1}^{(l)}\geq\cdots\geq\gamma_{p-1}^{(l)}.\label{eq:order-gamma-i-p}
\end{equation}
\item Let $\bm{u}_{i}^{(l)}$ be the eigenvector of $\frac{1}{n}\bm{S}_{l,\perp}\bm{S}_{l,\perp}^{\top}$
associated with the eigenvalue $\gamma_{i}^{(l)}$. 
\end{itemize}
Similar to (\ref{eq:def:U-Lambda-star-l}), we find it helpful to
introduce the following matrices\begin{subequations}\label{eq:def:U-Lambda-star-l-1-pca}
\begin{align}
\bm{U}_{\smallsetminus l}^{\star} & :=[\bm{u}_{1}^{\star},\cdots,\bm{u}_{l-1}^{\star},\bm{u}_{l+1}^{\star},\cdots,\bm{u}_{r}^{\star}]\in\mathbb{R}^{p\times(r-1)};\\
\bm{U}^{\star(l)} & :=(\bm{u}_{l}^{\star\perp})^{\top}\bm{U}_{\smallsetminus l}^{\star}=\begin{bmatrix}\bm{I}_{r-1}\\
\bm{0}
\end{bmatrix}\in\mathbb{R}^{(p-1)\times(r-1)};\\
\bm{U}^{\star(l)\perp} & :=(\bm{u}_{l}^{\star\perp})^{\top}\bm{U}^{\star\perp}=\begin{bmatrix}\bm{0}\\
\bm{I}_{p-r}
\end{bmatrix}\in\mathbb{R}^{(p-1)\times(p-r)};\\
\bm{\Lambda}^{\star(l)} & :=\mathsf{diag}\big(\{\lambda_{i}^{\star}\}_{i:i\neq l}\big)\in\mathbb{R}^{(r-1)\times(r-1)}.
\end{align}
In addition, we define 
\begin{equation}
\bm{u}_{i,\parallel}^{(l)}:=\frac{1}{\big\|\bm{P}_{\bm{U}^{\star(l)}}\bm{u}_{i}^{(l)}\big\|_{2}}\bm{P}_{\bm{U}^{\star(l)}}\bm{u}_{i}^{(l)},\qquad i\neq l,
\end{equation}
\end{subequations}where $\bm{P}_{\bm{U}^{\star(l)}}=\bm{U}^{\star(l)}\big(\bm{U}^{\star(l)}\big)^{\top}$.
Equipped with this set of notation, we are ready to present a lemma
that characterizes the eigenvalues of the matrix $\frac{1}{n}\bm{S}_{l,\perp}\bm{S}_{l,\perp}^{\top}$. 

\begin{lemma}\label{lemma:lambda-S-minus-spectrum-1-1}Instate the
assumptions of Theorem \ref{thm:evector-pertur-sym-iid-pca}, and
recall the definition of $\beta(\cdot)$ in (\ref{eq:definition-gamma-lambda-iid-pca}).
With probability at least $1-O(n^{-10})$, the eigenvalues $\{\gamma_{i}^{(l)}\}_{i=1}^{p-1}$
of $\frac{1}{n}\bm{S}_{l,\perp}\bm{S}_{l,\perp}^{\top}$ (see (\ref{eq:order-gamma-i-p}))
satisfy the following properties. 
\begin{enumerate}
\item For each $1\leq i<r$, one has
\[
\frac{\gamma_{i}^{(l)}}{1+\beta(\gamma_{i}^{(l)})}\in\mathcal{B}_{\mathcal{E}_{\mathsf{PCA}}}(\lambda_{k}^{\star}+\sigma^{2})\qquad\text{for some }k\neq l\text{ and }1\leq k\leq r
\]
and
\[
\Big\|\Big(\gamma_{i}^{(l)}\bm{I}_{r-1}-\big(1+\beta(\gamma_{i}^{(l)})\big)\big(\bm{\Lambda}^{\star(l)}+\sigma^{2}\bm{I}_{r-1}\big)\Big)\bm{U}^{\star(l)\top}\bm{u}_{i,\parallel}^{(l)}\Big\|_{2}\lesssim\mathcal{E}_{\mathsf{PCA}},
\]
where $\mathcal{E}_{\mathsf{PCA}}$ is defined in (\ref{eq:def-E-PCA}).
\item For each $r\le i\le n\wedge(p-1)$, one has
\begin{equation}
\left|\gamma_{i}^{(l)}-\sigma^{2}\frac{p\vee n}{n}\right|\lesssim\sigma^{2}\sqrt{\frac{p+\log n}{n}}.\label{eq:lambdal-tilde-lambdak-pca-UB-i>r}
\end{equation}
\item For each $n\wedge(p-1)<i\le p-1$, we have $\gamma_{i}^{(l)}=0$.
\item Furthermore, one has
\begin{equation}
|\lambda-\lambda_{l}|\gtrsim\begin{cases}
\Delta_{l}^{\star}, & \text{if}\ \frac{\lambda}{1+\beta(\lambda)}\in\mathcal{B}_{\mathcal{E}_{\mathsf{PCA}}}(\lambda_{i}^{\star}+\sigma^{2})\quad\text{\text{for some }}i\neq l\text{ and }1\leq i\leq r;\\
\lambda_{l}^{\star}, & \text{if}\ \big|\lambda-\sigma^{2}\frac{p\vee n}{n}\big|\lesssim\sigma^{2}\sqrt{\frac{p+\log n}{n}}.
\end{cases}\label{eq:lambda-lambdal-pca-LB}
\end{equation}
In particular, one has
\begin{equation}
\big|\gamma_{i}^{(l)}-\lambda_{l}\big|\gtrsim\begin{cases}
\Delta_{l}^{\star}, & 1\leq i<r;\\
\lambda_{l}^{\star}, & i\geq r.
\end{cases}\label{eq:lambdal-tilde-lambdak-pca-LB}
\end{equation}
\end{enumerate}
\end{lemma}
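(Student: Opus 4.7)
The plan is to apply the master-theorem argument of Theorem~\ref{thm:eigval-pertur-sym-iid-pca} to a reduced PCA instance, combined with standard Wishart concentration for the ``noise'' eigenvalues. To set up the reduction, observe that $\bm{S}_{l,\perp}=(\bm{u}_l^{\star\perp})^{\top}\bm{S}\in\mathbb{R}^{(p-1)\times n}$ has i.i.d.\ columns distributed as $\mathcal{N}\big(\bm{0},\,\bm{U}^{\star(l)}\bm{\Lambda}^{\star(l)}(\bm{U}^{\star(l)})^{\top}+\sigma^2\bm{I}_{p-1}\big)$, which is exactly the spiked-covariance form from Section~\ref{subsec:Principal-component-analysis-model} but in dimension $p-1$ with the $r-1$ spikes $\{\lambda_k^{\star}\}_{k\neq l}$. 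Using $\bm{u}_l^{\star\perp}(\bm{u}_l^{\star\perp})^{\top}=\bm{I}_p-\bm{u}_l^{\star}(\bm{u}_l^{\star})^{\top}$ and $(\bm{U}^{\star\perp})^{\top}\bm{u}_l^{\star}=\bm{0}$, one checks $(\bm{U}^{\star(l)\perp})^{\top}\bm{S}_{l,\perp}=\bm{S}_{\perp}$, so the natural analogue of $\beta(\cdot)$ for this reduced instance coincides with the $\beta(\cdot)$ in (\ref{eq:definition-gamma-lambda-iid-pca}), and the hypotheses of Theorem~\ref{thm:evector-pertur-sym-iid-pca} carry over verbatim.

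For item 1, I would replay the argument of Section~\ref{subsec:Proof-of-eigenvalue-pca} with $\bm{M}\mapsto\tfrac{1}{n}\bm{S}_{l,\perp}\bm{S}_{l,\perp}^{\top}$, $\bm{U}^{\star}\mapsto\bm{U}^{\star(l)}$ and $\bm{\Sigma}^{\star}\mapsto\bm{U}^{\star(l)}\bm{\Lambda}^{\star(l)}(\bm{U}^{\star(l)})^{\top}$. The same computation as in (\ref{eq:K-perp-form}) gives $\bm{K}^{\perp}(\lambda)=\beta(\lambda)(\bm{\Lambda}^{\star(l)}+\sigma^2\bm{I}_{r-1})$, and the counterpart of (\ref{eq:def-E-PCA}) reads
\begin{equation*}
\Big\|\Big(\gamma_i^{(l)}\bm{I}_{r-1}-\big(1+\beta(\gamma_i^{(l)})\big)\big(\bm{\Lambda}^{\star(l)}+\sigma^2\bm{I}_{r-1}\big)\Big)(\bm{U}^{\star(l)})^{\top}\bm{u}_{i,\parallel}^{(l)}\Big\|_2\lesssim\mathcal{E}_{\mathsf{PCA}}.
\end{equation*}
An eigenvalue mapping identical to Lemma~\ref{lemma:lambda-l-mapping}, now applied to the $r-1$ spikes $\{\lambda_k^{\star}\}_{k\neq l}$, then pins $\gamma_i^{(l)}/(1+\beta(\gamma_i^{(l)}))$ within $\mathcal{E}_{\mathsf{PCA}}$ of some $\lambda_k^{\star}+\sigma^2$ with $k\neq l$.

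Item 3 is immediate from the rank bound $\mathsf{rank}\big(\tfrac{1}{n}\bm{S}_{l,\perp}\bm{S}_{l,\perp}^{\top}\big)\le n\wedge(p-1)$. For item 2, I would decompose $\bm{S}_{l,\perp}=\bm{A}+\sigma\bm{W}$, where $\bm{A}$ has rank at most $r-1$ and $\bm{W}\in\mathbb{R}^{(p-1)\times n}$ is an independent i.i.d.\ standard Gaussian matrix arising from the signal/noise decomposition of $\bm{S}$. Standard non-asymptotic Wishart concentration places every non-zero eigenvalue of $\tfrac{\sigma^2}{n}\bm{W}\bm{W}^{\top}$ within $O(\sigma^2\sqrt{(p+\log n)/n})$ of $\sigma^2(p\vee n)/n$ with probability $1-O(n^{-10})$; combining this with Weyl's inequality for singular values, which gives $\sigma_{i+r-1}(\sigma\bm{W})\le\sigma_i(\bm{S}_{l,\perp})\le\sigma_{i-r+1}(\sigma\bm{W})$ for the relevant range of indices, yields item 2.

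Finally, item 4 combines items 1--3 with Theorem~\ref{thm:eigval-pertur-sym-iid-pca} (which gives $\lambda_l/(1+\beta(\lambda_l))=\lambda_l^{\star}+\sigma^2+\widetilde{O}((\lambda_{\max}^{\star}+\sigma^2)\sqrt{r/n})$) and the fact $1+\beta(\cdot)\asymp 1$ from (\ref{eq:beta-UB}). For $\lambda$ in the first case, the triangle inequality and the eigen-gap condition (\ref{eq:eigengap-condition-iid-pca}) yield $|\lambda-\lambda_l|\gtrsim\Delta_l^{\star}$; for $\lambda$ in the second case, the noise condition (\ref{eq:noise-condition-iid-pca}) ensures $\lambda\lesssim\sigma^2(p\vee n)/n\ll\lambda_l^{\star}\asymp\lambda_l$, so $|\lambda-\lambda_l|\gtrsim\lambda_l^{\star}$. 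The main obstacle I anticipate is making the Weyl-plus-Wishart argument in item 2 hold uniformly down to the smallest non-zero eigenvalue (i.e., for indices $i$ approaching $n\wedge(p-1)$, where the naive sandwich can degenerate to a trivial lower bound); this will require either a sharp non-asymptotic Marchenko--Pastur estimate or a more careful low-rank interlacing, but the remainder of the argument is bookkeeping along the lines already established in Section~\ref{subsec:Proof-of-eigenvalue-pca}.
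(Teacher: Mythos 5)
Your items 1, 3 and 4 follow essentially the paper's route: reduce to the spiked instance $\bm{S}_{l,\perp}$, use $(\bm{U}^{\star(l)\perp})^{\top}\bm{S}_{l,\perp}=\bm{S}_{\perp}$ so that $\bm{K}^{(l)\perp}(\lambda)=\beta(\lambda)(\bm{\Lambda}^{\star(l)}+\sigma^{2}\bm{I}_{r-1})$, replay the master-theorem argument of Theorem~\ref{thm:eigval-pertur-sym-iid-pca}, and get item 4 from Theorem~\ref{thm:eigval-pertur-sym-iid-pca} plus regularity of $\lambda\mapsto\lambda/(1+\beta(\lambda))$ (note the paper runs this through the Lipschitz bound $|f'(\lambda)|\lesssim1$ rather than only $1+\beta\asymp1$; with $1+\beta\asymp1$ alone the first case of (\ref{eq:lambda-lambdal-pca-LB}) does not follow from the triangle inequality, since the variation of $\beta$ times $\lambda_{l}^{\star}$ need not be dominated by $\Delta_{l}^{\star}$).

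The genuine gap is in item 2, and it is exactly the one you flag but do not resolve. With $\bm{S}_{l,\perp}=\bm{A}+\sigma\bm{W}$, $\mathsf{rank}(\bm{A})\le r-1$, the Weyl sandwich gives $\sigma_{i+r-1}(\sigma\bm{W})\le\sigma_{i}(\bm{S}_{l,\perp})\le\sigma_{i-r+1}(\sigma\bm{W})$; since $\sigma\bm{W}$ has only $(p-1)\wedge n$ nonzero singular values, the lower bound is vacuous for the bottom indices $i>(p-1)\wedge n-r+1$, so for $r\ge2$ the claim (\ref{eq:lambdal-tilde-lambdak-pca-UB-i>r}) is not established there, and this is not mere bookkeeping: a generic rank-$(r-1)$ additive perturbation can genuinely crush the $r-1$ smallest singular values (the cross terms $\bm{A}\bm{W}^{\top}+\bm{W}\bm{A}^{\top}$ in the Gram matrix are sign-indefinite), so no refinement of the additive-Weyl argument alone can close this. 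What saves you is the covariance structure, which is how the paper argues: write $\bm{S}_{l,\perp}\overset{\mathrm{d}}{=}\bm{\Sigma}_{l,\perp}^{1/2}\bm{Z}$ with $\bm{Z}$ i.i.d.\ standard Gaussian and $\bm{\Sigma}_{l,\perp}=\bm{U}^{\star(l)}\bm{\Lambda}^{\star(l)}\bm{U}^{\star(l)\top}+\sigma^{2}\bm{I}_{p-1}\succeq\sigma^{2}\bm{I}_{p-1}$; then $\bm{Z}^{\top}(\bm{\Sigma}_{l,\perp}-\sigma^{2}\bm{I})\bm{Z}\succeq\bm{0}$ gives
\begin{equation*}
\gamma_{i}^{(l)}=\lambda_{i}\Big(\tfrac{1}{n}\bm{Z}^{\top}\bm{\Sigma}_{l,\perp}\bm{Z}\Big)\ge\sigma^{2}\lambda_{i}\Big(\tfrac{1}{n}\bm{Z}^{\top}\bm{Z}\Big)
\end{equation*}
for every $1\le i\le(p-1)\wedge n$, and the same Wishart concentration you invoke then yields the lower bound uniformly down to the smallest nonzero eigenvalue. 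For the upper bound the paper only needs Poincar\'e interlacing (Lemma~\ref{lemma:eigval-interlacing}) at a single index, $\gamma_{i}^{(l)}\le\gamma_{r}^{(l)}\le\lambda_{1}\big(\tfrac{1}{n}\bm{S}_{\perp}\bm{S}_{\perp}^{\top}\big)$, which avoids any index-shift degeneracy. Replacing your Weyl lower bound with this PSD-domination (whitening) step, together with making the $|f'|\lesssim1$ point in item 4 explicit, would complete the proof.
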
\begin{proof}See Appendix \ref{subsec:Proof-of-lemma:lambda-S-minus-spectrum-1-1}.\end{proof}

\paragraph{3. Controlling $\cos^{2}\theta$.}

In view of the expression of $\cos^{2}\theta$ in (\ref{eq:cos-eig-linear-form-pca}),
it suffices to control $\|(\lambda_{l}\bm{I}_{p-1}-\frac{1}{n}\bm{S}_{l,\perp}\bm{S}_{l,\perp}^{\top})^{-1}\frac{1}{n}\bm{S}_{l,\perp}\bm{s}_{l,\parallel}^{\top}\|_{2}^{2}$,
which is accomplished in the following lemma. 

\begin{lemma}\label{lemma:lambda-M-l-inv-u-perp-M-u-l2-norm-1-pca}Consider
any $1\leq l\leq r$. Instate the assumptions of Theorem \ref{thm:evector-pertur-sym-iid-pca},
and recall the definition of $c_{l}$ in (\ref{eq:def:bl-pca}). The
following holds with probability at least $1-O(n^{-10})$:
\begin{equation}
\Big\|\Big(\lambda_{l}\bm{I}_{p-1}-\frac{1}{n}\bm{S}_{l,\perp}\bm{S}_{l,\perp}^{\top}\Big)^{-1}\frac{1}{n}\bm{S}_{l,\perp}\bm{s}_{l,\parallel}^{\top}\Big\|_{2}^{2}\lesssim\frac{(\lambda_{\max}^{\star}+\sigma^{2})(\lambda_{l}^{\star}+\sigma^{2})r\log n}{\big(\Delta_{l}^{\star}\big)^{2}n}+\frac{(\lambda_{l}^{\star}+\sigma^{2})\sigma^{2}p\log^{2}n}{\lambda_{l}^{\star2}n}\ll1.\label{eq:claim:lambda-M-l-inv-u-perp-M-u-l2-norm-1-pca-2}
\end{equation}
Moreover, for the case with $n\geq p$, one has
\begin{align}
 & \bigg|\Big\|\Big(\lambda_{l}\bm{I}_{p-1}-\frac{1}{n}\bm{S}_{l,\perp}\bm{S}_{l,\perp}^{\top}\Big)^{-1}\frac{1}{n}\bm{S}_{l,\perp}\bm{s}_{l,\parallel}^{\top}\Big\|_{2}^{2}-c_{l}\bigg|\nonumber \\
 & \qquad\lesssim\frac{(\lambda_{\max}^{\star}+\sigma^{2})(\lambda_{l}^{\star}+\sigma^{2})r\log n}{\big(\Delta_{l}^{\star}\big)^{2}n}+\frac{\sigma^{2}p}{\lambda_{l}^{\star2}n}\bigg((\lambda_{l}^{\star}+\sigma^{2})\sqrt{\frac{\log n}{p}}+(\lambda_{\max}^{\star}+\sigma^{2})\sqrt{\frac{r\log n}{n}}\bigg),\label{eq:claim:lambda-M-l-inv-u-perp-M-u-l2-norm-1-pca-1}
\end{align}
and for the case with $p>n$, we have
\begin{equation}
\bigg|\Big\|\Big(\lambda_{l}\bm{I}_{p-1}-\frac{1}{n}\bm{S}_{l,\perp}\bm{S}_{l,\perp}^{\top}\Big)^{-1}\frac{1}{n}\bm{S}_{l,\perp}\bm{s}_{l,\parallel}^{\top}\Big\|_{2}^{2}-c_{l}\bigg|\lesssim\frac{(\lambda_{\max}^{\star}+\sigma^{2})(\lambda_{l}^{\star}+\sigma^{2})r\log n}{\big(\Delta_{l}^{\star}\big)^{2}n}+\frac{\sigma^{2}\kappa\sqrt{pr\log n}}{\lambda_{l}^{\star}n}.\label{eq:claim:lambda-M-l-inv-u-perp-M-u-l2-norm-1-pca-3}
\end{equation}
\end{lemma}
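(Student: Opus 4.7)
The plan is to combine a spectral decomposition, a crucial independence structure, and uniform Gaussian concentration. Since $\bm{u}_{l}^{\star}$ is an eigenvector of $\bm{\Sigma}=\bm{\Sigma}^{\star}+\sigma^{2}\bm{I}_{p}$, the projections $\bm{u}_{l}^{\star\top}\bm{s}_{i}$ and $(\bm{u}_{l}^{\star\perp})^{\top}\bm{s}_{i}$ are jointly Gaussian and uncorrelated, hence independent; in particular $\bm{s}_{l,\parallel}$ has i.i.d.~$\mathcal{N}(0,\lambda_{l}^{\star}+\sigma^{2})$ entries and is independent of $\bm{S}_{l,\perp}$. Introducing the eigendecomposition $\frac{1}{n}\bm{S}_{l,\perp}\bm{S}_{l,\perp}^{\top}=\sum_{i}\gamma_{i}^{(l)}\bm{u}_{i}^{(l)}(\bm{u}_{i}^{(l)})^{\top}$ and writing $X_{i}\coloneqq\frac{1}{n}(\bm{u}_{i}^{(l)})^{\top}\bm{S}_{l,\perp}\bm{s}_{l,\parallel}^{\top}$, the target quantity becomes
\[
\Big\|\big(\lambda_{l}\bm{I}_{p-1}-\tfrac{1}{n}\bm{S}_{l,\perp}\bm{S}_{l,\perp}^{\top}\big)^{-1}\tfrac{1}{n}\bm{S}_{l,\perp}\bm{s}_{l,\parallel}^{\top}\Big\|_{2}^{2}=\sum_{i}\frac{X_{i}^{2}}{(\lambda_{l}-\gamma_{i}^{(l)})^{2}},
\]
and conditional on $\bm{S}_{l,\perp}$ the $X_{i}$'s are independent with $X_{i}\sim\mathcal{N}\!\bigl(0,(\lambda_{l}^{\star}+\sigma^{2})\gamma_{i}^{(l)}/n\bigr)$, using the identity $\|(\bm{u}_{i}^{(l)})^{\top}\bm{S}_{l,\perp}\|_{2}^{2}=n\gamma_{i}^{(l)}$.

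Next, I will apply a Hanson-Wright-type concentration inequality to the quadratic form $\bm{s}_{l,\parallel}\bm{B}\bm{s}_{l,\parallel}^{\top}$ (with $\bm{B}$ depending on $\bm{S}_{l,\perp}$ and $\lambda$) to establish
\[
\sum_{i}\frac{X_{i}^{2}}{(\lambda-\gamma_{i}^{(l)})^{2}}\;\approx\;\frac{\lambda_{l}^{\star}+\sigma^{2}}{n}\sum_{i}\frac{\gamma_{i}^{(l)}}{(\lambda-\gamma_{i}^{(l)})^{2}}
\]
uniformly over a deterministic-given-$\bm{S}_{l,\perp}$ neighborhood of $\lambda$-values that contains $\lambda_{l}$ with high probability; this uniformity is essential since $\lambda_{l}$ itself depends on $\bm{s}_{l,\parallel}$ and therefore breaks exact conditional independence. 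The feasibility of this uniform bound rests on the separation estimates $|\lambda_{l}-\gamma_{i}^{(l)}|\gtrsim\Delta_{l}^{\star}$ for $i<r$ and $|\lambda_{l}-\gamma_{i}^{(l)}|\gtrsim|\lambda_{l}^{\star}|$ for $i\ge r$ furnished by Lemma~\ref{lemma:lambda-S-minus-spectrum-1-1}, together with the localization of $\lambda_{l}$ in Theorem~\ref{thm:eigval-pertur-sym-iid-pca}. Splitting the sum into the top block ($1\le i<r$, where $\gamma_{i}^{(l)}\lesssim\lambda_{\max}^{\star}+\sigma^{2}$ and the gap is $\Delta_{l}^{\star}$) and the bulk block ($i\ge r$, where $\gamma_{i}^{(l)}\lesssim\sigma^{2}(1\vee p/n)$ and the gap is $|\lambda_{l}^{\star}|$) then yields the advertised upper bound~\eqref{eq:claim:lambda-M-l-inv-u-perp-M-u-l2-norm-1-pca-2}.

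The main obstacle, and the most delicate part of the proof, will be to match the resulting expression to $c_{l}$ in order to establish~\eqref{eq:claim:lambda-M-l-inv-u-perp-M-u-l2-norm-1-pca-1} and~\eqref{eq:claim:lambda-M-l-inv-u-perp-M-u-l2-norm-1-pca-3}, because $c_{l}$ (cf.~\eqref{eq:def:bl-pca}) is expressed using the eigenvalues $\{\lambda_{i}\}$ of the \emph{full} sample covariance $\frac{1}{n}\bm{S}\bm{S}^{\top}$, whereas our analysis naturally produces a sum in the eigenvalues $\{\gamma_{i}^{(l)}\}$ of the leave-out matrix $\frac{1}{n}\bm{S}_{l,\perp}\bm{S}_{l,\perp}^{\top}$. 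I will invoke Cauchy interlacing to pair bulk $\lambda_{i}$ with $\gamma_{i}^{(l)}$, and combine this with the bulk concentration around the Marchenko-Pastur location (provided by Lemma~\ref{lemma:lambda-S-minus-spectrum-1-1} for $\{\gamma_{i}^{(l)}\}$, and an analogous control for $\{\lambda_{i}\}$ that can be derived by the same route), to show $\sum_{i>r}\gamma_{i}^{(l)}/(\lambda_{l}-\gamma_{i}^{(l)})^{2}\approx\sum_{i>r}\lambda_{i}/(\lambda_{l}-\lambda_{i})^{2}$ up to the claimed residual error. The prefactor $(\lambda_{l}^{\star}+\sigma^{2})/n$ will then be reconciled with $\lambda_{l}/[n+\sum_{i>r}\lambda_{i}/(\lambda_{l}-\lambda_{i})]$ by invoking Theorem~\ref{thm:eigval-pertur-sym-iid-pca}, which after identification gives $\lambda_{l}^{\star}+\sigma^{2}\approx\lambda_{l}/(1+\beta(\lambda_{l}))$ together with $\beta(\lambda_{l})\approx\tfrac{1}{n}\sum_{i>r}\lambda_{i}/(\lambda_{l}-\lambda_{i})$. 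The regime $p>n$ will be handled analogously, with the extra $\sigma^{2}p/n$ shift of the bulk absorbed into the additional terms appearing in the second branch of the definition of $c_{l}$.
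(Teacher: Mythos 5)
Your outline for the upper bound \eqref{eq:claim:lambda-M-l-inv-u-perp-M-u-l2-norm-1-pca-2} and for the $n\geq p$ comparison \eqref{eq:claim:lambda-M-l-inv-u-perp-M-u-l2-norm-1-pca-1} tracks the paper's argument closely: the same spectral decomposition of $\frac{1}{n}\bm{S}_{l,\perp}\bm{S}_{l,\perp}^{\top}$ (the paper phrases it via the SVD of $\frac{1}{\sqrt{n}}\bm{S}_{l,\perp}$, which is equivalent to your conditional law for $X_i$), the independence of $\bm{s}_{l,\parallel}$ from $\bm{S}_{l,\perp}$, a sub-exponential concentration bound made uniform in $\lambda$ over the localization region of $\lambda_l$ (to break the dependence of $\lambda_l$ on $\bm{s}_{l,\parallel}$), the top/bulk split using Lemma~\ref{lemma:lambda-S-minus-spectrum-1-1}, Cauchy interlacing to replace $\{\gamma_i^{(l)}\}$ by $\{\lambda_i\}$, and Theorem~\ref{thm:eigval-pertur-sym-iid-pca} to identify $\lambda_l^{\star}+\sigma^{2}$ with $\lambda_l/(1+\frac{1}{n}\sum_{i>r}\frac{\lambda_i}{\lambda_l-\lambda_i})$. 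Up to minor packaging, this is the paper's proof of those two claims.

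There is, however, a genuine gap in your treatment of the case $p>n$, which you dismiss as ``handled analogously.'' The issue is quantitative: in that regime the bulk contribution is $\frac{\sigma^2 p/n}{(\lambda_l-\sigma^2 p/n)^2}\sum_{r\le i\le n}(\bm{v}_i^{(l)\top}\bm{s}_{l,\parallel}^{\top})^2$ (in your notation, $\sum_{i\ge r} X_i^2/(\lambda_l-\gamma_i^{(l)})^2$ with the coefficient frozen at the bulk location), and a direct concentration bound for $\sum_{r\le i\le n}(\bm{v}_i^{(l)\top}\bm{s}_{l,\parallel}^{\top})^2$ only gives fluctuations of order $(\lambda_l^{\star}+\sigma^2)\sqrt{n\log n}$. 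After multiplying by the bulk coefficient $\asymp\frac{\sigma^2 p}{n\lambda_l^{\star 2}}$ and normalizing by $n$, this leaves a residual of order $\frac{\sigma^2 p\sqrt{\log n}}{\lambda_l^{\star}n^{3/2}}$, which exceeds the target $\frac{\sigma^2\kappa\sqrt{pr\log n}}{\lambda_l^{\star}n}$ in \eqref{eq:claim:lambda-M-l-inv-u-perp-M-u-l2-norm-1-pca-3} whenever $p\gg\kappa^2 rn$ --- a regime permitted by \eqref{eq:noise-condition-iid-pca} when $\sigma$ is small. The paper circumvents this by invoking the exact eigenvalue identity from Theorem~\ref{thm:master-thm-vector} (cf.\ \eqref{eq:lambda-rank1}), which yields $n=\sum_{1\le i\le n}\frac{(\bm{v}_i^{(l)\top}\bm{s}_{l,\parallel}^{\top})^2}{\lambda_l-\gamma_i^{(l)}}$; this identity captures exactly the strong correlation between the bulk fluctuation of $\sum_i(\bm{v}_i^{(l)\top}\bm{s}_{l,\parallel}^{\top})^2$ and $\lambda_l$ itself, so the dominant $\sqrt{n}$-size fluctuation cancels, and it is precisely the source of the $\frac{\sigma^2 p/n}{\lambda_l-\sigma^2 p/n}$ term in the second branch of $c_l$ in \eqref{eq:def:bl-pca}. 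Without this cancellation step (or an equivalent mechanism), your plan cannot reach the advertised error bound for $p>n$, so this part needs a real argument rather than an appeal to analogy.
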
\begin{proof}See Appendix \ref{subsec:Proof-of-lemma:lambda-M-l-inv-u-perp-M-u-l2-norm-1-pca-1}.\end{proof}

This lemma taken collectively with (\ref{eq:cos-eig-linear-form-pca})
leads to
\begin{align}
|\cos^{2}\theta-1| & =\Bigg|\frac{1}{1+\big\|(\lambda_{l}\bm{I}_{p-1}-\frac{1}{n}\bm{S}_{l,\perp}\bm{S}_{l,\perp}^{\top})^{-1}\frac{1}{n}\bm{S}_{l,\perp}\bm{s}_{l,\parallel}^{\top}\big\|_{2}^{2}}-1\Bigg|\nonumber \\
 & =\frac{\big\|(\lambda_{l}\bm{I}_{p-1}-\frac{1}{n}\bm{S}_{l,\perp}\bm{S}_{l,\perp}^{\top})^{-1}\frac{1}{n}\bm{S}_{l,\perp}\bm{s}_{l,\parallel}^{\top}\big\|_{2}^{2}}{1+\big\|(\lambda_{l}\bm{I}_{p-1}-\frac{1}{n}\bm{S}_{l,\perp}\bm{S}_{l,\perp}^{\top})^{-1}\frac{1}{n}\bm{S}_{l,\perp}\bm{s}_{l,\parallel}^{\top}\big\|_{2}^{2}}\nonumber \\
 & \lesssim\frac{(\lambda_{\max}^{\star}+\sigma^{2})(\lambda_{l}^{\star}+\sigma^{2})r\log n}{\big(\Delta_{l}^{\star}\big)^{2}n}+\frac{(\lambda_{l}^{\star}+\sigma^{2})\sigma^{2}p\log^{2}n}{\lambda_{l}^{\star2}n}\ll1.\label{eq:cos-bound-pca}
\end{align}
where the last step follows from the assumptions (\ref{eq:noise-condition-iid-pca})
and (\ref{eq:eigengap-condition-iid-pca}). In addition, when $n\geq p$,
one can combine (\ref{eq:cos-eig-linear-form-pca}) and (\ref{eq:claim:lambda-M-l-inv-u-perp-M-u-l2-norm-1-pca-1})
to demonstrate that
\begin{align}
 & \big|(1+c_{l})\cos^{2}\theta-1\big|=\Bigg|\frac{1+c_{l}}{1+\big\|(\lambda_{l}\bm{I}_{p-1}-\frac{1}{n}\bm{S}_{l,\perp}\bm{S}_{l,\perp}^{\top})^{-1}\frac{1}{n}\bm{S}_{l,\perp}\bm{s}_{l,\parallel}^{\top}\big\|_{2}^{2}}-1\Bigg|\nonumber \\
 & \qquad=\frac{\Big|c_{l}-\big\|(\lambda_{l}\bm{I}_{p-1}-\frac{1}{n}\bm{S}_{l,\perp}\bm{S}_{l,\perp}^{\top})^{-1}\frac{1}{n}\bm{S}_{l,\perp}\bm{s}_{l,\parallel}^{\top}\big\|_{2}^{2}\Big|}{1+\big\|(\lambda_{l}\bm{I}_{p-1}-\frac{1}{n}\bm{S}_{l,\perp}\bm{S}_{l,\perp}^{\top})^{-1}\frac{1}{n}\bm{S}_{l,\perp}\bm{s}_{l,\parallel}^{\top}\big\|_{2}^{2}}\nonumber \\
 & \qquad\leq\Big|c_{l}-\Big\|\Big(\lambda_{l}\bm{I}_{p-1}-\frac{1}{n}\bm{S}_{l,\perp}\bm{S}_{l,\perp}^{\top}\Big)^{-1}\frac{1}{n}\bm{S}_{l,\perp}\bm{s}_{l,\parallel}^{\top}\Big\|_{2}^{2}\Big|\nonumber \\
 & \qquad\lesssim\frac{(\lambda_{\max}^{\star}+\sigma^{2})(\lambda_{l}^{\star}+\sigma^{2})r\log n}{\big(\Delta_{l}^{\star}\big)^{2}n}+\frac{\sigma^{2}p}{\lambda_{l}^{\star2}n}\bigg((\lambda_{\max}^{\star}+\sigma^{2})\sqrt{\frac{r\log n}{n}}+(\lambda_{l}^{\star}+\sigma^{2})\frac{\log^{2}n}{\sqrt{p}}\bigg),\label{eq:cos-value-pca}
\end{align}
where the first line comes from the definition of $\cos^{2}\theta$
in (\ref{eq:cos-eig-linear-form-pca}), and the last inequality holds
due to (\ref{eq:claim:lambda-M-l-inv-u-perp-M-u-l2-norm-1-pca-1}).
Moreover, if $p>n$, putting (\ref{eq:cos-eig-linear-form-pca}) and
(\ref{eq:claim:lambda-M-l-inv-u-perp-M-u-l2-norm-1-pca-3}) together
reveals that
\begin{align}
\big|(1+c_{l})\cos^{2}\theta-1\big| & =\Bigg|\frac{1+c_{l}}{1+\big\|(\lambda_{l}\bm{I}_{p-1}-\frac{1}{n}\bm{S}_{l,\perp}\bm{S}_{l,\perp}^{\top})^{-1}\frac{1}{n}\bm{S}_{l,\perp}\bm{s}_{l,\parallel}^{\top}\big\|_{2}^{2}}-1\Bigg|\nonumber \\
 & \leq\Big|c_{l}-\Big\|\Big(\lambda_{l}\bm{I}_{p-1}-\frac{1}{n}\bm{S}_{l,\perp}\bm{S}_{l,\perp}^{\top}\Big)^{-1}\frac{1}{n}\bm{S}_{l,\perp}\bm{s}_{l,\parallel}^{\top}\Big\|_{2}^{2}\Big|\nonumber \\
 & \lesssim\frac{(\lambda_{\max}^{\star}+\sigma^{2})(\lambda_{l}^{\star}+\sigma^{2})r\log n}{\big(\Delta_{l}^{\star}\big)^{2}n}+\frac{\sigma^{2}\kappa\sqrt{pr\log n}}{\lambda_{l}^{\star}n}\ll1,\label{eq:cos-value-pca-p}
\end{align}
where the last inequality holds due to the conditions (\ref{eq:noise-condition-iid-pca})
and (\ref{eq:eigengap-condition-iid-pca}). Taken collectively with
(\ref{eq:cos-bound-pca}), this leads to $1+c_{l}\lesssim1$ and
\begin{align}
\big|1-\sqrt{1+c_{l}}|\cos\theta|\big| & =\bigg|\frac{1-(1+c_{l})\cos^{2}\theta}{1+\sqrt{1+c_{l}}|\cos\theta|}\bigg|\lesssim\big|1-(1+c_{l})\cos^{2}\theta\big| \nonumber \\
& \lesssim\frac{(\lambda_{\max}^{\star}+\sigma^{2})(\lambda_{l}^{\star}+\sigma^{2})r\log n}{\big(\Delta_{l}^{\star}\big)^{2}n}+\frac{\sigma^{2}\kappa\sqrt{pr\log n}}{\lambda_{l}^{\star}n}.\label{eq:cos-bound-cl}
\end{align}

\paragraph{4. Controlling $\sum_{k:k\ne l}\bm{a}^{\top}\bm{u}_{k}^{\star}\bm{u}_{k}^{\star\top}\bm{u}_{l}^{\star\perp}\big(\lambda_{l}\bm{I}_{p-1}-\frac{1}{n}\bm{S}_{l,\perp}\bm{S}_{l,\perp}^{\top}\big)^{-1}\frac{1}{n}\bm{S}_{l,\perp}\bm{s}_{l,\parallel}^{\top}$.}

Recognizing that the vector $\bm{s}_{l,\parallel}$ (see (\ref{eq:def:s-l-para-perp}))
obeys 
\[
\bm{s}_{l,\parallel}\sim\mathcal{N}\big(\bm{0},(\lambda_{l}^{\star}+\sigma^{2})\bm{I}_{n}\big)
\]
and is independent of $\bm{S}_{l,\perp}$ (see (\ref{eq:def:s-l-para-perp})),
we can control this quantity through the lemma below.

\begin{lemma}\label{lemma:a-top-P-Uk-1-pca}Instate the assumptions
of Theorem \ref{thm:evector-pertur-sym-iid-pca}. The following holds
with probability at least $1-O(n^{-10})$:
\begin{align}
 & \bigg|\sum_{k:k\ne l}\bm{a}^{\top}\bm{u}_{k}^{\star}\bm{u}_{k}^{\star\top}\bm{u}_{l}^{\star\perp}\Big(\lambda_{l}\bm{I}_{p-1}-\frac{1}{n}\bm{S}_{l,\perp}\bm{S}_{l,\perp}^{\top}\Big)^{-1}\frac{1}{n}\bm{S}_{l,\perp}\bm{s}_{l,\parallel}^{\top}\bigg|\nonumber \\
 & \qquad\lesssim\sum_{k:k\ne l}\frac{\left|\bm{a}^{\top}\bm{u}_{k}^{\star}\right|}{\left|\lambda_{l}^{\star}-\lambda_{k}^{\star}\right|\sqrt{n}}\sqrt{(\lambda_{l}^{\star}+\sigma^{2})(\lambda_{\max}^{\star}+\sigma^{2})(\kappa^{2}+r)\log\bigg(\frac{n\kappa\lambda_{\max}}{\Delta_{l}^{\star}}\bigg)}\label{eq:a-top-uk-pca}
\end{align}
\end{lemma}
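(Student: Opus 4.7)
The plan is to exploit the key independence between $\bm{s}_{l,\parallel}$ and $\bm{S}_{l,\perp}$ (observed just before the statement of the lemma) by treating the inner quantity, for each $k\ne l$, as a linear Gaussian in $\bm{s}_{l,\parallel}$ conditional on $\bm{S}_{l,\perp}$. The obstacle is that $\lambda_l$ itself depends on $\bm{s}_{l,\parallel}$ through the eigendecomposition of $\frac{1}{n}\bm{S}\bm{S}^\top$, which breaks the desired conditional independence. To decouple, I will introduce, for every scalar $\lambda$,
\[
\bm{q}_k(\lambda)^\top \;\coloneqq\; \bm{u}_k^{\star\top}\bm{u}_l^{\star\perp}\Big(\lambda\bm{I}_{p-1}-\tfrac{1}{n}\bm{S}_{l,\perp}\bm{S}_{l,\perp}^\top\Big)^{-1}\tfrac{1}{n}\bm{S}_{l,\perp} \;\in\; \mathbb{R}^{1\times n},
\]
which is measurable with respect to $\bm{S}_{l,\perp}$ alone. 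The quantity to control is then $\sum_{k\ne l}(\bm{a}^\top\bm{u}_k^\star)\,\bm{q}_k(\lambda_l)^\top\bm{s}_{l,\parallel}^\top$, and I will dispose of the $\lambda_l$ dependence via a standard $\varepsilon$-net over $\lambda \in [2\lambda_l^\star/3,4\lambda_l^\star/3]$, an interval in which $\lambda_l$ lies with high probability by \eqref{eq:lambda-l-range-PCA}.

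\textbf{Gaussian concentration step.} For each fixed $\lambda$ in a net of cardinality $\mathrm{poly}(n\kappa\lambda_{\max}/\Delta_l^\star)$ and each $k\ne l$, conditional on $\bm{S}_{l,\perp}$ the scalar $\bm{q}_k(\lambda)^\top\bm{s}_{l,\parallel}^\top$ is centered Gaussian with variance $(\lambda_l^\star+\sigma^2)\|\bm{q}_k(\lambda)\|_2^2$, and a union bound together with standard sub-Gaussian tail estimates will yield, with probability at least $1-O(n^{-10})$,
\[
\Big|\bm{q}_k(\lambda_l)^\top\bm{s}_{l,\parallel}^\top\Big| \;\lesssim\; \sqrt{(\lambda_l^\star+\sigma^2)\log\!\big(n\kappa\lambda_{\max}/\Delta_l^\star\big)}\,\|\bm{q}_k(\lambda_l)\|_2.
\]
Summing this estimate against $|\bm{a}^\top\bm{u}_k^\star|$ over $k\ne l$ reduces the task to proving
\[
\|\bm{q}_k(\lambda_l)\|_2^2 \;\lesssim\; \frac{(\lambda_{\max}^\star+\sigma^2)(\kappa^2+r)}{(\lambda_l^\star-\lambda_k^\star)^2\,n}.
\]

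\textbf{Bounding $\|\bm{q}_k(\lambda)\|_2$.} Using the spectral decomposition $\tfrac{1}{n}\bm{S}_{l,\perp}\bm{S}_{l,\perp}^\top=\sum_i\gamma_i^{(l)}\bm{u}_i^{(l)}\bm{u}_i^{(l)\top}$ from Lemma~\ref{lemma:lambda-S-minus-spectrum-1-1}, I will rewrite
\[
\|\bm{q}_k(\lambda)\|_2^2 \;=\; \frac{1}{n}\sum_{i}\frac{\gamma_i^{(l)}}{(\lambda-\gamma_i^{(l)})^2}\bigl(\bm{y}_k^\top\bm{u}_i^{(l)}\bigr)^2,\qquad \bm{y}_k\coloneqq\bm{u}_l^{\star\perp\top}\bm{u}_k^\star,
\]
and split the sum into the signal regime $i<r$ and the noise regime $i\ge r$. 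In the noise regime, Lemma~\ref{lemma:lambda-S-minus-spectrum-1-1} gives $\gamma_i^{(l)}\lesssim\sigma^2(1+p/n)$ and $|\lambda-\gamma_i^{(l)}|\gtrsim\lambda_l^\star$; summing $(\bm{y}_k^\top\bm{u}_i^{(l)})^2\le\|\bm{y}_k\|_2^2=1$ and rewriting $1/\lambda_l^{\star 2}\lesssim\kappa^2/(\lambda_l^\star-\lambda_k^\star)^2$ produces the $\kappa^2$-part of the bound. In the signal regime, Lemma~\ref{lemma:lambda-S-minus-spectrum-1-1} yields $\gamma_i^{(l)}\lesssim\lambda_{\max}^\star+\sigma^2$, and a matching argument (pairing each signal eigenvalue $\gamma_i^{(l)}/(1+\beta(\gamma_i^{(l)}))$ with a unique true eigenvalue $\lambda_{\pi(i)}^\star+\sigma^2$, $\pi(i)\ne l$) will upgrade the crude bound $|\lambda-\gamma_i^{(l)}|\gtrsim\Delta_l^\star$ to the tighter $|\lambda-\gamma_i^{(l)}|\gtrsim|\lambda_l^\star-\lambda_{\pi(i)}^\star|$ needed to recover the factor $|\lambda_l^\star-\lambda_k^\star|^{-2}$; bounding $\sum_{i<r}(\bm{y}_k^\top\bm{u}_i^{(l)})^2\le 1$ then yields the $r$-part of the bound.

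\textbf{Main obstacle.} The delicate step is the signal-regime analysis: the empirical eigenvectors $\{\bm{u}_i^{(l)}\}_{i<r}$ need not coincide with the signal directions $\{\bm{u}_k^\star\}_{k\ne l}$, so the weights $(\bm{y}_k^\top\bm{u}_i^{(l)})^2$ may spread across several indices $i$. Controlling this spread requires a careful matching between empirical and true signal eigenpairs, leveraging the second claim of Lemma~\ref{lemma:lambda-S-minus-spectrum-1-1} to ensure that the matched eigenvalues are well separated from $\lambda_l$ by the correct gap $|\lambda_l^\star-\lambda_{\pi(i)}^\star|$ rather than merely $\Delta_l^\star$. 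Once the matching is set up, the remaining estimates amount to routine bookkeeping, and combining the bound on $\|\bm{q}_k(\lambda_l)\|_2$ with the Gaussian concentration above yields the claimed inequality \eqref{eq:a-top-uk-pca}.
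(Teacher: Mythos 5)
Your overall architecture — conditioning on $\bm{S}_{l,\perp}$, Gaussian concentration in $\bm{s}_{l,\parallel}$, an $\varepsilon$-net in $\lambda$ to break the dependence through $\lambda_l$, and a spectral split into the signal ($i<r$) and noise ($i\ge r$) parts of $\frac1n\bm{S}_{l,\perp}\bm{S}_{l,\perp}^{\top}$ — is the same as the paper's, and your noise-regime and concentration steps are sound (per-$k$ concentration plus a union bound and the triangle inequality gives the same result as the paper's single application to the full sum). The genuine gap is in your signal-regime argument. Matching each $\gamma_i^{(l)}$, $i<r$, to a true eigenvalue $\lambda_{\pi(i)}^{\star}$ with $\pi(i)\ne l$ only yields $|\lambda_l-\gamma_i^{(l)}|\gtrsim|\lambda_l^{\star}-\lambda_{\pi(i)}^{\star}|$, whereas your per-$k$ target needs the factor $|\lambda_l^{\star}-\lambda_k^{\star}|^{-2}$; when $\pi(i)\ne k$ the gap $|\lambda_l^{\star}-\lambda_{\pi(i)}^{\star}|$ can be as small as $\Delta_l^{\star}\ll|\lambda_l^{\star}-\lambda_k^{\star}|$, and with only $\sum_{i<r}(\bm{y}_k^{\top}\bm{u}_i^{(l)})^2\le1$ you get a contribution of order $(\lambda_{\max}^{\star}+\sigma^2)/(n\Delta_l^{\star2})$, which does not reduce to $(\lambda_{\max}^{\star}+\sigma^2)r/(n|\lambda_l^{\star}-\lambda_k^{\star}|^2)$. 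The missing ingredient is an eigenvector, not eigenvalue, perturbation input: the paper splits $\{i<r\}$ according to whether $\gamma_i^{(l)}/(1+\beta(\gamma_i^{(l)}))$ lies within $c\,|\lambda_l^{\star}-\lambda_k^{\star}|$ of $\lambda_k^{\star}$; on the near set it uses $|\lambda_l-\gamma_i^{(l)}|\gtrsim|\lambda_l^{\star}-\lambda_k^{\star}|$ with overlap sum $\le1$, and on the far set it invokes the residual bound of Lemma~\ref{lemma:lambda-S-minus-spectrum-1-1}, $\big\|\big(\gamma_i^{(l)}\bm{I}-(1+\beta(\gamma_i^{(l)}))(\bm{\Lambda}^{\star(l)}+\sigma^2\bm{I})\big)\bm{U}^{\star(l)\top}\bm{u}_{i,\parallel}^{(l)}\big\|_2\lesssim\mathcal{E}_{\mathsf{PCA}}$, to conclude $|\bm{u}_k^{\star(l)\top}\bm{u}_i^{(l)}|\lesssim\mathcal{E}_{\mathsf{PCA}}/|\lambda_l^{\star}-\lambda_k^{\star}|$, after which the crude gap $|\lambda_l-\gamma_i^{(l)}|\gtrsim\Delta_l^{\star}\gtrsim\mathcal{E}_{\mathsf{PCA}}$ suffices. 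Note also that the factor $r$ in the bound comes from counting at most $r$ such small-overlap terms, not from the overlap-sum bound as you assert. Without this delocalization step your estimate on $\|\bm{q}_k(\lambda_l)\|_2$ fails whenever some other spike sits much closer to $\lambda_l^{\star}$ than $\lambda_k^{\star}$ does.

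A secondary but real issue: you cannot take the supremum of $\|\bm{q}_k(\lambda)\|_2$ over the whole interval $[2\lambda_l^{\star}/3,\,4\lambda_l^{\star}/3]$, because other empirical signal eigenvalues $\gamma_i^{(l)}$, $i<r$, may lie inside it (e.g.\ if $\lambda_j^{\star}+\sigma^2\approx 0.7\,\lambda_l^{\star}$ for some $j\ne l$), and at $\lambda=\gamma_i^{(l)}$ the resolvent blows up. The net has to be taken over the smaller set $\{\lambda:\lambda/(1+\beta(\lambda))\in\mathcal{B}_{\mathcal{E}_{\mathsf{PCA}}}(\lambda_l^{\star}+\sigma^2)\}$, which contains $\lambda_l$ with high probability and on which the separation bounds of Lemma~\ref{lemma:lambda-S-minus-spectrum-1-1} hold uniformly (incidentally, those separation statements are its first and fourth claims, not the second); you also need to supply the Lipschitz/derivative-in-$\lambda$ bound that makes the net transfer rigorous, which is exactly the role of Lemma~\ref{lemma:eps-net} in the paper.
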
\begin{proof}See Appendix \ref{subsec:Proof-of-lemma:a-top-P-Uk-1-pca}.\end{proof}

\paragraph{5. Controlling $(\bm{P}_{\bm{U}^{\star\perp}}\bm{a})^{\top}(\bm{P}_{\bm{U}^{\star\perp}}\bm{u}_{l})$.}

When it comes to $(\bm{P}_{\bm{U}^{\star\perp}}\bm{a})^{\top}(\bm{P}_{\bm{U}^{\star\perp}}\bm{u}_{l})$,
we attempt to utilize certain rotational invariance property of $\bm{P}_{\bm{U}^{\star\perp}}\bm{u}_{l}$
in the subspace spanned by $\bm{U}^{\star\perp}$ to upper bound this
quantity. This is formalized in Lemma \ref{lemma:a-top-P-U-perp-u-perp-pca}.

\begin{lemma}\label{lemma:a-top-P-U-perp-u-perp-pca}Instate the
assumptions of Instate the assumptions of Theorem \ref{thm:evector-pertur-sym-iid-pca}.
With probability at least $1-O(n^{-10})$, 
\begin{equation}
\big|(\bm{P}_{\bm{U}^{\star\perp}}\bm{a})^{\top}(\bm{P}_{\bm{U}^{\star\perp}}\bm{u}_{l})\big|\lesssim\sqrt{\frac{\log n}{p-r}}\,\big\|\bm{P}_{\bm{U}^{\star\perp}}\bm{a}\big\|_{2}\big\|\bm{P}_{\bm{U}^{\star\perp}}\bm{u}_{l}\big\|_{2}.\label{eq:a-top-U-perp-u-UB-pca}
\end{equation}
\end{lemma}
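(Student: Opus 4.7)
The plan is to establish this PCA counterpart of Lemma~\ref{lemma:a-top-P-U-perp-u-perp} by exploiting a rotational invariance of the problem in the subspace orthogonal to $\bm{U}^{\star}$. Once we show that the direction of $\bm{P}_{\bm{U}^{\star\perp}}\bm{u}_l$ is (marginally) uniform on the unit sphere of $\mathrm{span}(\bm{U}^{\star\perp})$, the bound follows from a standard concentration inequality for the inner product between a fixed vector and a uniform random vector on a high-dimensional sphere.

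First, I would construct an orthogonal transformation on $\mathbb{R}^{p}$ that preserves the population covariance. Namely, for any orthogonal $\bm{R} \in \mathbb{R}^{(p-r)\times(p-r)}$, define
\[
\bm{O} \;:=\; \bm{U}^{\star}\bm{U}^{\star\top} + \bm{U}^{\star\perp}\bm{R}(\bm{U}^{\star\perp})^{\top},
\]
which is orthogonal on $\mathbb{R}^{p}$ by construction. Since $\bm{O}\bm{U}^{\star}=\bm{U}^{\star}$ and $\bm{\Sigma}=\bm{U}^{\star}\bm{\Lambda}^{\star}\bm{U}^{\star\top}+\sigma^{2}\bm{I}_{p}$, a direct computation gives $\bm{O}\bm{\Sigma}\bm{O}^{\top}=\bm{\Sigma}$. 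By the rotational invariance of the Gaussian, we therefore have $\bm{O}\bm{S}\overset{\mathrm{d}}{=}\bm{S}$, and consequently $\tfrac{1}{n}\bm{O}\bm{S}\bm{S}^{\top}\bm{O}^{\top}\overset{\mathrm{d}}{=}\tfrac{1}{n}\bm{S}\bm{S}^{\top}$.

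Next, since the $l$-th eigenvector of $\tfrac{1}{n}\bm{O}\bm{S}\bm{S}^{\top}\bm{O}^{\top}$ equals $\bm{O}\bm{u}_{l}$ (up to a global sign), the above distributional identity implies $\bm{O}\bm{u}_{l}\overset{\mathrm{d}}{=}\pm\bm{u}_{l}$. Projecting onto $\bm{U}^{\star\perp}$ and using $(\bm{U}^{\star\perp})^{\top}\bm{O}=\bm{R}(\bm{U}^{\star\perp})^{\top}$ yields
\[
\bm{R}\,(\bm{U}^{\star\perp})^{\top}\bm{u}_{l}\;\overset{\mathrm{d}}{=}\;\pm(\bm{U}^{\star\perp})^{\top}\bm{u}_{l}.
\]
Since $\bm{R}\in O(p-r)$ is arbitrary, the distribution of $(\bm{U}^{\star\perp})^{\top}\bm{u}_{l}$ is invariant under the entire orthogonal group $O(p-r)$ (after absorbing signs). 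Conditional on its norm, its direction is therefore uniform on $S^{p-r-1}$.

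Finally, set $\widetilde{\bm{a}}:=(\bm{U}^{\star\perp})^{\top}\bm{a}$ and $\widetilde{\bm{u}}:=(\bm{U}^{\star\perp})^{\top}\bm{u}_{l}$, so that $(\bm{P}_{\bm{U}^{\star\perp}}\bm{a})^{\top}(\bm{P}_{\bm{U}^{\star\perp}}\bm{u}_{l})=\widetilde{\bm{a}}^{\top}\widetilde{\bm{u}}$, $\|\widetilde{\bm{a}}\|_{2}=\|\bm{P}_{\bm{U}^{\star\perp}}\bm{a}\|_{2}$ and $\|\widetilde{\bm{u}}\|_{2}=\|\bm{P}_{\bm{U}^{\star\perp}}\bm{u}_{l}\|_{2}$. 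Writing $\widetilde{\bm{u}}=\|\widetilde{\bm{u}}\|_{2}\,\bm{v}$ with $\bm{v}$ uniform on $S^{p-r-1}$, it suffices to invoke the classical fact that for a fixed unit vector $\bm{w}$, $|\bm{w}^{\top}\bm{v}|\lesssim\sqrt{\log n/(p-r)}$ with probability at least $1-O(n^{-10})$ (which follows from the Gaussian representation $\bm{v}\overset{\mathrm{d}}{=}\bm{g}/\|\bm{g}\|_{2}$ with $\bm{g}\sim\mathcal{N}(\bm{0},\bm{I}_{p-r})$ together with standard Gaussian tail bounds and the fact that $\|\bm{g}\|_{2}\gtrsim\sqrt{p-r}$ w.h.p.). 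Substituting back gives the claim. There is no serious obstacle here: the invariance argument is clean and the sign ambiguity of eigenvectors is harmless since we take absolute values throughout; the only point to verify with care is that $\bm{O}$ preserves $\bm{\Sigma}$, which follows from the spiked structure $\bm{\Sigma}=\bm{U}^{\star}\bm{\Lambda}^{\star}\bm{U}^{\star\top}+\sigma^{2}\bm{I}_{p}$.
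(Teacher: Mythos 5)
Your proof is correct, but it takes a different route from the paper's. The paper dispatches this lemma by reference to the matrix-denoising analogue (Lemma~\ref{lemma:a-top-P-U-perp-u-perp}), whose argument goes through the master-theorem resolvent representation of $\bm{u}_{l,\perp}$, i.e.\ $\widetilde{\bm{u}}_{l,\perp}\propto\bm{u}_{l}^{\star\perp}\big(\lambda_{l}\bm{I}-(\bm{u}_{l}^{\star\perp})^{\top}\bm{M}\bm{u}_{l}^{\star\perp}\big)^{-1}(\bm{u}_{l}^{\star\perp})^{\top}\bm{M}\bm{u}_{l}^{\star}$, and then checks by hand that this expression is distributionally invariant under rotations $\bm{R}=\bm{P}_{\bm{U}^{\star(l)}}+\bm{U}^{\star(l)\perp}\bm{Q}(\bm{U}^{\star(l)\perp})^{\top}$ acting on the independent noise blocks, before invoking concentration on the sphere. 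You instead observe that the spiked model itself has an exact symmetry: the orthogonal map $\bm{O}=\bm{U}^{\star}\bm{U}^{\star\top}+\bm{U}^{\star\perp}\bm{R}(\bm{U}^{\star\perp})^{\top}$ fixes $\bm{\Sigma}$, hence $\bm{O}\bm{S}\overset{\mathrm{d}}{=}\bm{S}$, hence the law of the sample covariance and of its $l$-th eigenvector is invariant, so $(\bm{U}^{\star\perp})^{\top}\bm{u}_{l}$ is (up to sign) $O(p-r)$-invariant and its direction is uniform; the same sphere-concentration bound then finishes. Your route is shorter and works directly with $\bm{u}_{l}$ rather than with the resolvent formula for $\bm{u}_{l,\perp}$; what the paper's block-rotation argument buys is that it does not rely on an exact symmetry of the population model and so transfers verbatim between the denoising and PCA settings (and to the quantity $\widetilde{\bm{u}}_{l,\perp}$ used elsewhere in their proofs). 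Two small points to tighten in your write-up: the identity $\bm{O}\bm{u}_{l}\overset{\mathrm{d}}{=}\pm\bm{u}_{l}$ should be phrased through a sign-invariant object (e.g.\ $\bm{u}_{l}\bm{u}_{l}^{\top}$, or directly the ratio $|\widetilde{\bm{a}}^{\top}\widetilde{\bm{u}}|/(\|\widetilde{\bm{a}}\|_{2}\|\widetilde{\bm{u}}\|_{2})$), since the eigenvector itself is only defined up to sign; and you should note that $\lambda_{l}$ is almost surely a simple eigenvalue of $\tfrac{1}{n}\bm{S}\bm{S}^{\top}$ (or restrict to the high-probability event from the eigenvalue perturbation bounds), so that "the $l$-th eigenvector of the rotated matrix equals $\bm{O}\bm{u}_{l}$ up to sign" is well defined.
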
\begin{proof}The proof is almost identical to the proof
of Lemma \ref{lemma:a-top-P-U-perp-u-perp}, and is hence omitted
for conciseness of presentation.\end{proof} 

In view of Lemma \ref{lemma:a-top-P-U-perp-u-perp-pca}, it suffices
to control $\big\|\bm{P}_{\bm{U}^{\star\perp}}\bm{u}_{l}\big\|_{2}$.
To this end, it is seen from Theorem~\ref{thm:master-theorem-general}
that
\begin{align}
\big\|\bm{P}_{\bm{U}^{\star\perp}}\bm{u}_{l}\big\|_{2}^{2} & =1-\frac{1}{1+\big\|\big(\lambda_{l}\bm{I}-\frac{1}{n}\bm{S}_{\perp}\bm{S}_{\perp}^{\top}\big)^{-1}\frac{1}{n}\bm{S}_{\perp}\bm{S}_{\parallel}^{\top}\bm{U}^{\star\top}\bm{u}_{l,\parallel}\big\|_{2}^{2}}\nonumber \\
 & \le\Big\|\Big(\lambda_{l}\bm{I}-\frac{1}{n}\bm{S}_{\perp}\bm{S}_{\perp}^{\top}\Big)^{-1}\frac{1}{n}\bm{S}_{\perp}\bm{S}_{\parallel}^{\top}\bm{U}^{\star\top}\bm{u}_{l,\parallel}\Big\|_{2}^{2}\nonumber \\
 & \leq\Big\|\Big(\lambda_{l}\bm{I}-\frac{1}{n}\bm{S}_{\perp}\bm{S}_{\perp}^{\top}\Big)^{-1}\Big\|^{2}\cdot\Big\|\frac{1}{n}\bm{S}_{\perp}\bm{S}_{\parallel}^{\top}\Big\|^{2}\cdot\big\|\bm{U}^{\star\top}\bm{u}_{l,\parallel}\big\|_{2}^{2}\nonumber \\
 & =\Big\|\Big(\lambda_{l}\bm{I}-\frac{1}{n}\bm{S}_{\perp}\bm{S}_{\perp}^{\top}\Big)^{-1}\Big\|^{2}\cdot\Big\|\frac{1}{n}\bm{S}_{\perp}\bm{S}_{\parallel}^{\top}\Big\|^{2},\label{eq:P-Uperp-ul-norm-temp}
\end{align}
where we recall $\bm{u}_{l,\parallel}$ is defined to be a unit vector
$\bm{u}_{l,\parallel}\coloneqq\bm{P}_{\bm{U}^{\star}}(\bm{u})/\|\bm{P}_{\bm{U}^{\star}}(\bm{u})\|_{2}$
and satisfies $\bm{U}^{\star}\bm{U}^{\star\top}\bm{u}_{l,\parallel}=\bm{u}_{l,\parallel}$.

The preceding inequality then motivates us to control both $\big\|\big(\lambda_{l}\bm{I}-\frac{1}{n}\bm{S}_{\perp}\bm{S}_{\perp}^{\top}\big)^{-1}\big\|$
and $\big\|\frac{1}{n}\bm{S}_{\perp}\bm{S}_{\parallel}^{\top}\big\|$.
As shown in the proof of Lemma~\ref{lemma:spectral-M-Sigma} in Appendix
\ref{subsec:Proof-of-lemma:lemma:spectral-M-Sigma} (cf.~(\ref{eq:S-perp-op-UB}
and (\ref{eq:S-para-perp-op-UB})), we know that
\begin{align}
\bigg|\frac{1}{n}\big\|\bm{S}_{\perp}\bm{S}_{\perp}^{\top}\big\| & -\sigma^{2}\bigg|\lesssim\sigma^{2}\bigg(\sqrt{\frac{p}{n}}+\frac{p}{n}+\sqrt{\frac{\log n}{n}}\bigg)=o(\lambda_{\min}^{\star}),\label{eq:S-perp-op-UB-repeat}\\
\frac{1}{n}\big\|\bm{S}_{\perp}\bm{S}_{\parallel}^{\top}\big\| & \lesssim\sqrt{\frac{(\lambda_{\max}^{\star}+\sigma^{2})\sigma^{2}(p-r)}{n}}\log n,\label{eq:S-para-perp-op-UB-repeat}
\end{align}
where the relation in (\ref{eq:S-perp-op-UB-repeat}) arises from
the noise condition (\ref{eq:noise-condition-iid-pca}). Combining
these with Theorem~\ref{thm:eigval-pertur-sym-iid-pca}, we obtain
\begin{align*}
\lambda_{l}-\frac{1}{n}\big\|\bm{S}_{\perp}\bm{S}_{\perp}^{\top}\big\| & =\lambda_{l}-\sigma^{2}-o(\lambda_{\min}^{\star})\\
 & \overset{(\mathrm{i})}{\geq}\big(1+\beta(\lambda_{l})\big)(\lambda_{l}^{\star}+\sigma^{2})-\big(1+\beta(\lambda_{l})\big)\cdot O\Big((\lambda_{\max}^{\star}+\sigma^{2})\sqrt{\frac{r}{n}}\log n\Big)-\sigma^{2}-o(\lambda_{\min}^{\star})\\
 & =\big(1+\beta(\lambda_{l})\big)\lambda_{l}^{\star}+\beta(\lambda_{l})\sigma^{2}-O\Big(\big(1+\beta(\lambda_{l})\big)(\lambda_{\max}^{\star}+\sigma^{2})\sqrt{\frac{r}{n}}\log n\Big)-o(\lambda_{\min}^{\star})\\
 & \overset{(\mathrm{ii})}{\gtrsim}\lambda_{l}^{\star}+O\bigg(\frac{\sigma^{4}p}{\lambda_{l}^{\star}n}\bigg)\overset{(\mathrm{iii})}{\asymp}\lambda_{l}^{\star},
\end{align*}
where (i) is due to the bound developed for $\lambda_{l}$ in (\ref{eq:eigenvalue-perturbation-bound-iid-pca})
in Theorem~\ref{thm:eigval-pertur-sym-iid-pca}; (ii) arises from
the fact $\beta(\lambda_{l})\lesssim\frac{\sigma^{2}p}{\lambda_{l}^{\star}n}\ll1$
(as shown in (\ref{eq:beta-UB})) and the noise condition (\ref{eq:noise-condition-iid-pca})
that $(\lambda_{\max}^{\star}+\sigma^{2})\sqrt{r/n}\log n\ll\lambda_{\min}^{\star}$;
(iii) is legal as long as $\sigma^{2}\sqrt{p/n}\ll\lambda_{\min}^{\star}$.
As a result, we obtain
\begin{align}
\Big\|\Big(\lambda_{l}\bm{I}-\frac{1}{n}\bm{S}_{\perp}\bm{S}_{\perp}^{\top}\Big)^{-1}\Big\| & \le\frac{1}{\lambda_{l}-\|\frac{1}{n}\bm{S}_{\perp}\bm{S}_{\perp}^{\top}\|}\lesssim\frac{1}{\lambda_{l}^{\star}};\label{eq:lambdal-S-perp-diff}
\end{align}

Plugging (\ref{eq:S-para-perp-op-UB-repeat}) and (\ref{eq:lambdal-S-perp-diff})
into (\ref{eq:P-Uperp-ul-norm-temp}) immediately reveals that
\begin{equation}
\big\|\bm{P}_{\bm{U}^{\star\perp}}\bm{u}_{l}\big\|_{2}\le\Big\|\Big(\lambda_{l}\bm{I}-\frac{1}{n}\bm{S}_{\perp}\bm{S}_{\perp}^{\top}\Big)^{-1}\Big\|\cdot\frac{1}{n}\big\|\bm{S}_{\perp}\bm{S}_{\parallel}^{\top}\big\|\lesssim\frac{\sqrt{(\lambda_{\max}^{\star}+\sigma^{2})\sigma^{2}}}{\lambda_{l}^{\star}}\sqrt{\frac{p-r}{n}}\log n.
\end{equation}
Taken together with Lemma \ref{lemma:a-top-P-U-perp-u-perp-pca},
this leads to the bound
\begin{equation}
\big|(\bm{P}_{\bm{U}^{\star\perp}}\bm{a})^{\top}(\bm{P}_{\bm{U}^{\star\perp}}\bm{u}_{l})\big|\lesssim\sqrt{\frac{(\lambda_{\max}^{\star}+\sigma^{2})\sigma^{2}}{\lambda_{l}^{\star2}n}}\log^{2}n\,\big\|\bm{P}_{\bm{U}^{\star\perp}}\bm{a}\big\|_{2}.\label{eq:a-top-U-perp-u-pca}
\end{equation}

\paragraph{6. Combining bounds.}

Finally, we can combine (\ref{eq:cos-bound-pca}), (\ref{eq:a-top-uk-pca})
and (\ref{eq:a-top-U-perp-u-pca}) to arrive at the error bound for
the plug-in estimator:
\begin{align*}
\min\big|\bm{a}^{\top}\bm{u}_{l}\pm\bm{a}^{\top}\bm{u}_{l}^{\star}\big| & \lesssim\bigg(\frac{\left(\lambda_{\max}^{\star}+\sigma^{2}\right)\left(\lambda_{l}^{\star}+\sigma^{2}\right)r\log n}{\Delta_l^{^\star 2}n}+\frac{(\lambda_{l}^{\star}+\sigma^{2})\sigma^{2}p}{\lambda_{l}^{\star2}n}\bigg)\big|\bm{a}^{\top}\bm{u}_{l}^{\star}\big|\\
 & \quad+\sum_{k:k\ne l}\frac{\left|\bm{a}^{\top}\bm{u}_{k}^{\star}\right|}{\left|\lambda_{l}^{\star}-\lambda_{k}^{\star}\right|\sqrt{n}}\sqrt{(\lambda_{l}^{\star}+\sigma^{2})(\lambda_{\max}^{\star}+\sigma^{2})(\kappa^{2}+r)\log\bigg(\frac{n\kappa\lambda_{\max}}{\Delta_{l}^{\star}}\bigg)}\\
 & \quad+\sqrt{\frac{(\lambda_{\max}^{\star}+\sigma^{2})\sigma^{2}}{\lambda_{l}^{\star2}n}}\log^{2}n\,\big\|\bm{P}_{\bm{U}^{\star\perp}}\bm{a}\big\|_{2}
\end{align*}
as claimed.

\paragraph{7. Analyzing the de-biased estimator. } To finish up,
let us turn to the de-biased estimator. 
\begin{itemize}
\item Consider first the case with $n\geq p$. We can substitute (\ref{eq:cos-value-pca}),
(\ref{eq:cos-bound-cl}), (\ref{eq:a-top-uk-pca}), and (\ref{eq:a-top-U-perp-u-pca})
into (\ref{eq:au-min-UB-iid-sym-pca-cl}) to obtain
\begin{align*}
 & \min\big|\bm{a}^{\top}\bm{u}_{l}\sqrt{1+c_{l}}\pm\bm{a}^{\top}\bm{u}_{l}^{\star}\big|\\
 & \quad\lesssim\bigg(\frac{(\lambda_{\max}^{\star}+\sigma^{2})(\lambda_{l}^{\star}+\sigma^{2})r\log n}{\Delta_l^{^\star 2}n}+\frac{\sigma^{2}p}{\lambda_{l}^{\star2}n}\Big((\lambda_{\max}^{\star}+\sigma^{2})\sqrt{\frac{r\log n}{n}}+(\lambda_{l}^{\star}+\sigma^{2})\frac{\log n}{\sqrt{p}}\Big)\bigg)\big|\bm{a}^{\top}\bm{u}_{l}^{\star}\big|\\
 & \quad\quad+\sum_{k:k\ne l}\frac{\left|\bm{a}^{\top}\bm{u}_{k}^{\star}\right|}{\left|\lambda_{l}^{\star}-\lambda_{k}^{\star}\right|\sqrt{n}}\sqrt{(\lambda_{l}^{\star}+\sigma^{2})(\lambda_{\max}^{\star}+\sigma^{2})(\kappa^{2}+r)\log\bigg(\frac{n\kappa\lambda_{\max}}{\Delta_{l}^{\star}}\bigg)}+\sqrt{\frac{(\lambda_{\max}^{\star}+\sigma^{2})\sigma^{2}}{\lambda_{l}^{\star2}n}}\log^{2}n\,\big\|\bm{P}_{\bm{U}^{\star\perp}}\bm{a}\big\|_{2}\\
 & \quad\lesssim\frac{(\lambda_{\max}^{\star}+\sigma^{2})(\lambda_{l}^{\star}+\sigma^{2})\,r\log n}{\Delta_l^{^\star 2}n}\big|\bm{a}^{\top}\bm{u}_{l}^{\star}\big|+\sum_{k:k\ne l}\frac{\left|\bm{a}^{\top}\bm{u}_{k}^{\star}\right|}{\left|\lambda_{l}^{\star}-\lambda_{k}^{\star}\right|\sqrt{n}}\sqrt{(\lambda_{l}^{\star}+\sigma^{2})(\lambda_{\max}^{\star}+\sigma^{2})(\kappa^{2}+r)\log\bigg(\frac{n\kappa\lambda_{\max}}{\Delta_{l}^{\star}}\bigg)}\\
 & \quad\quad+\sqrt{\frac{(\lambda_{\max}^{\star}+\sigma^{2})\sigma^{2}r}{\lambda_{l}^{\star2}n}}\log^{2}n,
\end{align*}
where the last step holds due to the noise assumption (\ref{eq:noise-condition-iid-pca})
as well as the facts that $|\bm{a}^{\top}\bm{u}_{l}^{\star}|\leq\|\bm{a}\|_{2}\|\bm{u}_{l}^{\star}\|_{2}\leq1$
and $\big\|\bm{P}_{\bm{U}^{\star\perp}}\bm{a}\big\|_{2}\leq\|\bm{a}\|_{2}=1$. 
\item Consider instead the case with $n<p$ (which implies $\sigma^{2}\ll\lambda_{\min}^{\star}$).
Then we can substitute (\ref{eq:cos-value-pca-p}), (\ref{eq:cos-bound-cl}),
(\ref{eq:a-top-uk-pca}) and (\ref{eq:a-top-U-perp-u-pca}) into (\ref{eq:au-min-UB-iid-sym-pca-cl})
to derive
\begin{align*}
 & \min\big|\bm{a}^{\top}\bm{u}_{l}\sqrt{1+c_{l}}\pm\bm{a}^{\top}\bm{u}_{l}^{\star}\big|\\
 & \quad\lesssim\bigg(\frac{(\lambda_{\max}^{\star}+\sigma^{2})(\lambda_{l}^{\star}+\sigma^{2})r\log n}{\Delta_l^{^\star 2}n}+\frac{\sigma^{2}\sqrt{\kappa^{2}pr\log n}}{\lambda_{l}^{\star}n}\bigg)\big|\bm{a}^{\top}\bm{u}_{l}^{\star}\big|\\
 & \quad\quad+\sum_{k:k\ne l}\frac{\left|\bm{a}^{\top}\bm{u}_{k}^{\star}\right|}{\left|\lambda_{l}^{\star}-\lambda_{k}^{\star}\right|\sqrt{n}}\sqrt{(\lambda_{l}^{\star}+\sigma^{2})(\lambda_{\max}^{\star}+\sigma^{2})(\kappa^{2}+r)\log\bigg(\frac{n\kappa\lambda_{\max}}{\Delta_{l}^{\star}}\bigg)}+\sqrt{\frac{(\lambda_{\max}^{\star}+\sigma^{2})\sigma^{2}}{\lambda_{l}^{\star2}n}}\log^{2}n\,\big\|\bm{P}_{\bm{U}^{\star\perp}}\bm{a}\big\|_{2}\\
 & \quad\lesssim\frac{(\lambda_{\max}^{\star}+\sigma^{2})(\lambda_{l}^{\star}+\sigma^{2})r\log n}{\Delta_l^{^\star 2}n}\big|\bm{a}^{\top}\bm{u}_{l}^{\star}\big|+\sum_{k:k\ne l}\frac{\left|\bm{a}^{\top}\bm{u}_{k}^{\star}\right|}{\left|\lambda_{l}^{\star}-\lambda_{k}^{\star}\right|\sqrt{n}}\sqrt{(\lambda_{l}^{\star}+\sigma^{2})(\lambda_{\max}^{\star}+\sigma^{2})(\kappa^{2}+r)\log\bigg(\frac{n\kappa\lambda_{\max}}{\Delta_{l}^{\star}}\bigg)}\\
 & \quad\quad+\sqrt{\frac{(\lambda_{\max}^{\star}+\sigma^{2})\sigma^{2}\kappa^{2}r}{\lambda_{l}^{\star2}n}}\log^{2}n.
\end{align*}
Here, we use the noise assumption (\ref{eq:noise-condition-iid-pca}),
$|\bm{a}^{\top}\bm{u}_{l}^{\star}|\leq\|\bm{a}\|_{2}\|\bm{u}_{l}^{\star}\|_{2}\leq1$
and $\big\|\bm{P}_{\bm{U}^{\star\perp}}\bm{a}\big\|_{2}\leq\|\bm{a}\|_{2}=1$
again in the last step. 
\end{itemize}

\section{Discussion}

\label{sec:Discussion}

This paper has explored estimation of linear functionals of unknown
eigenvectors under i.i.d.~Gaussian noise, covering the contexts of
both matrix denoising and principal component analysis. We have demonstrated
a non-negligible bias issue inherent to the naive plug-in estimator,
and have proposed more effective estimators that allow for bias correction
in a minimax-optimal and data-driven manner. In comparison to prior
works, our theory accommodates the scenario in which the associated
eigen-gap is substantially smaller than the size of the perturbation,
thereby expanding on what generic matrix perturbation theory has to
offer in these statistical applications. 

Moving forward, there are numerous extensions that are worth pursuing.
For example, the present work is likely suboptimal with respect to
the dependence on the rank $r$ and the condition number $\kappa$,
which calls for a more refined analytical framework to achieve optimal
estimation for more general scenarios. In addition, our current theory
focuses on i.i.d.~Gaussian noise, and a natural question arises as
to how to accommodate sub-Gaussian noise and/or heteroscedastic data.
Furthermore, given the minimax estimation guarantees, an interesting
direction lies in developing statistical inference and uncertainty
quantification schemes for linear forms of the eigenvectors. Accomplishing
this task would require developing distributional guarantees for the
proposed de-biased estimators as well as accurate estimation of the
error variance, which we leave to future investigation.

\section*{Acknowledgements}

Y.~Chen is supported in part by the Alfred P.~Sloan Research Fellowship, the grants AFOSR YIP award FA9550-19-1-0030,
ONR N00014-19-1-2120, ARO YIP award W911NF-20-1-0097, ARO W911NF-18-1-0303, 
NSF CCF-1907661, DMS-2014279, IIS-2218713 and IIS-2218773. 
H.~V.~Poor is supported in part by NSF CCF-1908308,
and in part by a Princeton Schmidt Data-X Research Award. 

\appendix

\section{Proofs of master theorems}

\subsection{Proof of Theorem \ref{thm:master-thm-vector}\label{sec:Proof-of-Theorem-master-vector}}

Given that $\bm{u}_{l}$ is an eigenvector of $\bm{M}$, one has $\bm{M}\bm{u}_{l}=\lambda_{l}\bm{u}_{l}$,
which together with the decomposition (\ref{eq:ul-decomposition-rank1})
and the condition $\bm{u}_{l,\perp}=\bm{q}^{\perp}(\bm{q}^{\perp})^{\top}\bm{u}_{l,\perp}$
gives 
\[
\bm{M}(\bm{q}\cos\theta+\bm{u}_{l,\perp}\sin\theta)=\lambda_{l}(\bm{q}\cos\theta+\bm{u}_{l,\perp}\sin\theta)
\]
\begin{equation}
\Longleftrightarrow\qquad\bm{M}\bm{q}\cos\theta+\bm{M}\bm{q}^{\perp}(\bm{q}^{\perp})^{\top}\bm{u}_{l,\perp}\sin\theta=\lambda_{l}\bm{q}\cos\theta+\lambda_{l}\bm{u}_{l,\perp}\sin\theta.\label{eq:Mu-lambdau-decomposition-1}
\end{equation}
Left-multiplying both sides of this equation by $\bm{q}^{\top}$ (resp.~$(\bm{q}^{\perp})^{\top}$)
and using the assumptions of $\bm{u}_{l,\perp}$ (namely, $\bm{q}^{\top}\bm{u}_{l,\perp}=0$
and $\bm{q}^{\perp}(\bm{q}^{\perp})^{\top}\bm{u}_{l,\perp}=\bm{u}_{l,\perp}$)
give\begin{subequations} 
\begin{align}
\bm{q}^{\top}\bm{M}\bm{q}\cos\theta+\bm{q}^{\top}\bm{M}\bm{q}^{\perp}(\bm{q}^{\perp})^{\top}\bm{u}_{l,\perp}\sin\theta & =\lambda_{l}\cos\theta,\label{eq:ulparallel-equation-2}\\
(\bm{q}^{\perp})^{\top}\bm{M}\bm{q}\cos\theta+(\bm{q}^{\perp})^{\top}\bm{M}\bm{q}^{\perp}(\bm{q}^{\perp})^{\top}\bm{u}_{l,\perp}\sin\theta & =\lambda_{l}(\bm{q}^{\perp})^{\top}\bm{u}_{l,\perp}\sin\theta.\label{eq:ulperp-equation-1}
\end{align}
\end{subequations}

Rearrange terms in (\ref{eq:ulperp-equation-1}) to arrive at 
\begin{equation}
\big(\lambda_{l}\bm{I}_{n-1}-(\bm{q}^{\perp})^{\top}\bm{M}\bm{q}^{\perp}\big)(\bm{q}^{\perp})^{\top}\bm{u}_{l,\perp}\sin\theta=(\bm{q}^{\perp})^{\top}\bm{M}\bm{q}\cos\theta.\label{eq:ulperp-expression-0}
\end{equation}
Given the assumption that $\lambda_{l}\bm{I}_{n-1}-(\bm{q}^{\perp})^{\top}\bm{M}\bm{q}^{\perp}$
is invertible and the fact that $\|(\bm{q}^{\perp})^{\top}\bm{u}_{l,\perp}\|_{2}=\|\bm{q}^{\perp}(\bm{q}^{\perp})^{\top}\bm{u}_{l,\perp}\|_{2}=\|\bm{u}_{l,\perp}\|_{2}=1$,
we claim that it is straightforward to verify that $\cos\theta\ne0$.
To see this, suppose instead that $\cos\theta=0$, then the right-hand
side of (\ref{eq:ulperp-expression-0}) equals to $0$, whereas the
left-hand side of (\ref{eq:ulperp-expression-0}) is non-zero because
$\big(\lambda_{l}\bm{I}_{n-1}-(\bm{q}^{\perp})^{\top}\bm{M}\bm{q}^{\perp}\big)(\bm{q}^{\perp})^{\top}\bm{u}_{l,\perp}\neq\bm{0}$
and $\sin\theta=\sqrt{1-\cos^{2}\theta}=1$. This leads to contradiction,
which in turn reveals that $\cos\theta\ne0$. In addition, if $\sin\theta=0$
(or $\cos\theta=1$), then one has $\bm{q}=\bm{u}_{l}$ and $(\bm{q}^{\perp})^{\top}\bm{M}\bm{q}=0$
(see the relation (\ref{eq:ulperp-expression-0})), from which the
claims (\ref{claim:master-thm-vector}) immediately follow. Hence,
we shall focus on the cases where $\cos\theta\neq0$ and $\sin\theta\ne0$
in the sequel.

Notice that (\ref{eq:ulperp-expression-0}) can be rewritten as
\begin{equation}
(\bm{q}^{\perp})^{\top}\bm{u}_{l,\perp}=\frac{\cos\theta}{\sin\theta}\big(\lambda_{l}\bm{I}_{n-1}-(\bm{q}^{\perp})^{\top}\bm{M}\bm{q}^{\perp}\big)^{-1}(\bm{q}^{\perp})^{\top}\bm{M}\bm{q}.\label{eq:ulperp-expression-1}
\end{equation}
This together with the unit norm constraint of $\bm{u}_{l,\perp}$
and $\bm{u}_{l,\perp}=\bm{q}^{\perp}(\bm{q}^{\perp})^{\top}\bm{u}_{l,\perp}$
implies that 
\begin{equation}
\bm{u}_{l,\perp}=\pm\frac{\bm{q}^{\perp}\big(\lambda_{l}\bm{I}_{n-1}-(\bm{q}^{\perp})^{\top}\bm{M}\bm{q}^{\perp}\big)^{-1}(\bm{q}^{\perp})^{\top}\bm{M}\bm{q}}{\big\|\bm{q}^{\perp}\big(\lambda_{l}\bm{I}_{n-1}-(\bm{q}^{\perp})^{\top}\bm{M}\bm{q}^{\perp}\big)^{-1}(\bm{q}^{\perp})^{\top}\bm{M}\bm{q}\big\|_{2}}\label{eq:ul-perp-expression-2}
\end{equation}
as claimed in (\ref{eq:u-perp-rank1-1}). In addition, substitution
of (\ref{eq:ulperp-expression-1}) into (\ref{eq:ulparallel-equation-2})
with a little algebra yields 
\begin{align*}
\big(\lambda_{l}-\bm{q}^{\top}\bm{M}\bm{q}\big)\cos\theta & =\bm{q}^{\top}\bm{M}\bm{q}^{\perp}(\bm{q}^{\perp})^{\top}\bm{u}_{l,\perp}\sin\theta\\
 & =\bm{q}^{\top}\bm{M}\bm{q}^{\perp}\big(\lambda_{l}\bm{I}_{n-1}-(\bm{q}^{\perp})^{\top}\bm{M}\bm{q}^{\perp}\big)^{-1}(\bm{q}^{\perp})^{\top}\bm{M}\bm{q}\cos\theta,
\end{align*}
\begin{equation}
\Longrightarrow\qquad\lambda_{l}-\bm{q}^{\top}\bm{M}\bm{q}=\bm{q}^{\top}\bm{M}\bm{q}^{\perp}\big(\lambda_{l}\bm{I}_{n-1}-(\bm{q}^{\perp})^{\top}\bm{M}\bm{q}^{\perp}\big)^{-1}(\bm{q}^{\perp})^{\top}\bm{M}\bm{q},\label{eq:lambda-l-long-identity-2}
\end{equation}
thus establishing the claim (\ref{eq:lambda-rank1}).

Finally, rearranging terms in (\ref{eq:ulparallel-equation-2}) yields
\begin{align}
\frac{\sin\theta}{\cos\theta} & =\frac{\lambda_{l}-\bm{q}^{\top}\bm{M}\bm{q}}{\bm{q}^{\top}\bm{M}\bm{q}^{\perp}(\bm{q}^{\perp})^{\top}\bm{u}_{l,\perp}}.\label{eq:ulparallel-equation-1-1-1}
\end{align}
This taken collectively with the elementary identity $\cos^{2}\theta+\sin^{2}\theta=1$
immediately leads to 
\begin{align*}
\cos^{2}\theta & =\frac{1}{1+\frac{|\lambda_{l}-\bm{q}^{\top}\bm{M}\bm{q}|^{2}}{|\bm{q}^{\top}\bm{M}\bm{q}^{\perp}(\bm{q}^{\perp})^{\top}\bm{u}_{l,\perp}|^{2}}}\overset{(\mathrm{i})}{=}\frac{1}{1+\frac{|\lambda_{l}-\bm{q}^{\top}\bm{M}\bm{q}|^{2}\cdot\|\bm{q}^{\perp}(\lambda_{l}\bm{I}_{n-1}-(\bm{q}^{\perp})^{\top}\bm{M}\bm{q}^{\perp})^{-1}(\bm{q}^{\perp})^{\top}\bm{M}\bm{q}\|_{2}^{2}}{|\bm{q}^{\top}\bm{M}\bm{q}^{\perp}(\lambda_{l}\bm{I}_{n-1}-(\bm{q}^{\perp})^{\top}\bm{M}\bm{q}^{\perp})^{-1}(\bm{q}^{\perp})^{\top}\bm{M}\bm{q}|^{2}}}\\
 & \overset{(\mathrm{ii})}{=}\frac{1}{1+\|\bm{q}^{\perp}\big(\lambda_{l}\bm{I}_{n-1}-(\bm{q}^{\perp})^{\top}\bm{M}\bm{q}^{\perp}\big)^{-1}(\bm{q}^{\perp})^{\top}\bm{M}\bm{q}\|_{2}^{2}}\\
 & \overset{(\mathrm{iii})}{=}\frac{1}{1+\|\big(\lambda_{l}\bm{I}_{n-1}-(\bm{q}^{\perp})^{\top}\bm{M}\bm{q}^{\perp}\big)^{-1}(\bm{q}^{\perp})^{\top}\bm{M}\bm{q}\|_{2}^{2}},
\end{align*}
where (i) relies on the expression (\ref{eq:ul-perp-expression-2}),
(ii) results from the identity (\ref{eq:lambda-l-long-identity-2}),
and (iii) follows since $(\bm{q}^{\perp})^{\top}\bm{q}^{\perp}=\bm{I}_{n-1}$.
This establishes the claimed relation (\ref{eq:cos-theta-rank1}).

\subsection{Proof of Theorem \ref{thm:master-theorem-general}\label{sec:Proof-of-Theorem-master-general}}

Given that $\bm{M}\bm{u}_{l}=\lambda_{l}\bm{u}_{l}$, one can invoke
the decomposition (\ref{eq:uk-decomposition}) to obtain 
\begin{equation}
\bm{M}\bm{u}_{l,\|}\cos\theta+\bm{M}\bm{u}_{l,\perp}\sin\theta=\lambda_{l}\bm{u}_{l,\|}\cos\theta+\lambda_{l}\bm{u}_{l,\perp}\sin\theta,
\end{equation}
which together with the conditions $\bm{u}_{l,\|}=\bm{Q}\bm{Q}^{\top}\bm{u}_{l,\|}$
and $\bm{u}_{l,\perp}=\bm{Q}^{\perp}(\bm{Q}^{\perp})^{\top}\bm{u}_{l,\perp}$
implies that 
\begin{equation}
\bm{M}\bm{Q}\bm{Q}^{\top}\bm{u}_{l,\|}\cos\theta+\bm{M}\bm{Q}^{\perp}(\bm{Q}^{\perp})^{\top}\bm{u}_{l,\perp}\sin\theta=\lambda_{l}\bm{u}_{l,\|}\cos\theta+\lambda_{l}\bm{u}_{l,\perp}\sin\theta.\label{eq:Mu-lambdau-decomposition}
\end{equation}
Left-multiplying both sides of this relation by $\bm{Q}^{\top}$ (resp.~$(\bm{Q}^{\perp})^{\top}$)
gives\begin{subequations} 
\begin{align}
\bm{Q}^{\top}\bm{M}\bm{Q}\bm{Q}^{\top}\bm{u}_{l,\|}\cos\theta+\bm{Q}^{\top}\bm{M}\bm{Q}^{\perp}(\bm{Q}^{\perp})^{\top}\bm{u}_{l,\perp}\sin\theta & =\lambda_{l}\bm{Q}^{\top}\bm{u}_{l,\|}\cos\theta,\label{eq:ulparallel-equation}\\
(\bm{Q}^{\perp})^{\top}\bm{M}\bm{Q}\bm{Q}^{\top}\bm{u}_{l,\|}\cos\theta+(\bm{Q}^{\perp})^{\top}\bm{M}\bm{Q}^{\perp}(\bm{Q}^{\perp})^{\top}\bm{u}_{l,\perp}\sin\theta & =\lambda_{l}(\bm{Q}^{\perp})^{\top}\bm{u}_{l,\perp}\sin\theta,\label{eq:ulperp-equation}
\end{align}
\end{subequations}thus indicating that 
\begin{align}
\big(\lambda_{l}\bm{I}_{k}-\bm{Q}^{\top}\bm{M}\bm{Q}\big)\bm{Q}^{\top}\bm{u}_{l,\|}\cos\theta & =\bm{Q}^{\perp}\bm{M}\bm{Q}^{\perp}(\bm{Q}^{\perp})^{\top}\bm{u}_{l,\perp}\sin\theta,\nonumber \\
\big(\lambda_{l}\bm{\bm{I}}_{n-k}-(\bm{Q}^{\perp})^{\top}\bm{M}\bm{Q}^{\perp}\big)(\bm{Q}^{\perp})^{\top}\bm{u}_{l,\perp}\sin\theta & =(\bm{Q}^{\perp})^{\top}\bm{M}\bm{Q}\bm{Q}^{\top}\bm{u}_{l,\|}\cos\theta=(\bm{Q}^{\perp})^{\top}\bm{M}\bm{u}_{l,\|}\cos\theta,\label{eq:Qperp-top-M-u-para-relation}
\end{align}
where the last identity follows since $\bm{Q}\bm{Q}^{\top}\bm{u}_{l,\|}=\bm{u}_{l,\|}$.
These two relations taken together demonstrate that 
\begin{align*}
\big(\lambda_{l}\bm{I}_{k}-\bm{Q}^{\top}\bm{M}\bm{Q}\big)\bm{Q}^{\top}\bm{u}_{l,\|}\cos\theta & =\bm{Q}^{\perp}\bm{M}\bm{Q}^{\perp}\big((\bm{Q}^{\perp})^{\top}\bm{u}_{l,\perp}\sin\theta\big)\\
 & =\bm{Q}^{\perp}\bm{M}\bm{Q}^{\perp}\big(\lambda_{l}\bm{\bm{I}}_{n-k}-(\bm{Q}^{\perp})^{\top}\bm{M}\bm{Q}^{\perp}\big)^{-1}(\bm{Q}^{\perp})^{\top}\bm{M}\bm{u}_{l,\|}\cos\theta.
\end{align*}
In addition, in view of the invertibility of $\lambda_{l}\bm{\bm{I}}_{n-k}-(\bm{Q}^{\perp})^{\top}\bm{M}\bm{Q}^{\perp}$
(due to the assumption) and $\|(\bm{Q}^{\perp})^{\top}\bm{u}_{l,\perp}\|_{2}=\|\bm{u}_{l,\perp}\|_{2}=1$,
one can deduce from (\ref{eq:Qperp-top-M-u-para-relation}) that $\cos\theta\neq0$.
To verify this, suppose $\cos\theta=0$ (or $\sin\theta=1$), then
the left-hand side of (\ref{eq:Qperp-top-M-u-para-relation}) is non-zero
while the right-hand side of (\ref{eq:Qperp-top-M-u-para-relation})
is zero. This results in contradiction, thus justifying that $\cos\theta\neq0$.
Consequently, dividing both sides of the above identity by $\cos\theta$
concludes the proof for the claim (\ref{eq:lambda-l-long-identity-1}).

\section{Proofs of auxiliary lemmas for Theorem \ref{thm:eigval-pertur-sym-iid}}

\subsection{Proof of Lemma \ref{lemma:bound-Glambda-uniform}}

\label{subsec:Proof-of-Lemma-bound-Glambda-uniform}

For notational convenience, divide the matrix $\bm{H}$ as follows
\begin{equation}
\bm{H}=\begin{bmatrix}\bm{H}_{\mathsf{ul}} & \bm{H}_{\mathsf{ur}}\\
\bm{H}_{\mathsf{ur}}^{\top} & \bm{H}_{\mathsf{\mathsf{lr}}}
\end{bmatrix},\quad\bm{H}_{\mathsf{ul}}\in\mathbb{R}^{r\times r},\quad\bm{H}_{\mathsf{ur}}\in\mathbb{R}^{r\times(n-r)},\quad\bm{H}_{\mathsf{lr}}\in\mathbb{R}^{(n-r)\times(n-r)}.\label{eq:defn:H-submatrix}
\end{equation}
In view of the rotational invariance of a symmetric Gaussian matrix,
we know that $\bm{R}\bm{H}\bm{R}^{\top}$ has the same distribution
as $\bm{H}$ for any fixed orthonormal matrix $\bm{R}\in\mathbb{R}^{n\times n}$
obeying $\bm{R}\bm{R}^{\top}=\bm{I}_{n}$. As a result, it is easily
seen that the triple
\begin{equation}
\Big(\bm{U}^{\star\top}\bm{H}\bm{U}^{\star},(\bm{U}^{\star\perp})^{\top}\bm{H}\bm{U}^{\star\perp},\bm{U}^{\star\top}\bm{H}\bm{U}^{\star\perp}\Big)\,\overset{\mathrm{d}}{=}\,(\bm{H}_{\mathsf{ul}},\bm{H}_{\mathsf{lr}},\bm{H}_{\mathsf{ur}}),\label{eq:distributional-equiv-U}
\end{equation}
where $\overset{\mathrm{d}}{=}$ denotes equivalence in distribution.
Equipped with this fact, we are ready to derive the advertised concentration
bounds.

\paragraph{Controlling $\big\|\bm{U}^{\star\top}\bm{H}\bm{U}^{\star}\big\|$.}
Apply the standard Gaussian concentration inequalities \citep{Vershynin2012}
and (\ref{eq:distributional-equiv-U}) to conclude that with probability
at least $1-O(n^{-10})$,
\[
\|\bm{U}^{\star\top}\bm{H}\bm{U}^{\star}\|=\|\bm{H}_{\mathsf{ul}}\|\lesssim\sigma(\sqrt{r}+\sqrt{\log n}).
\]

\paragraph{Controlling $\big\|\bm{G}(\lambda)-\bm{G}^{\perp}(\lambda)\big\|$.}
Consider any fixed $\lambda$ obeying $2\,|\lambda_{l}^{\star}|/3\leq|\lambda|\leq4\,|\lambda_{l}^{\star}|/3$.
Recalling the expression of $\bm{G}(\lambda)$ in (\ref{eq:defn-Glambda-1}),
we have
\begin{align}
\big\|\bm{G}(\lambda)\big\| & =\big\|\bm{U}^{\star\top}\bm{H}\bm{U}^{\star\perp}(\bm{U}^{\star\perp})^{\top}\big(\lambda\bm{I}_{n}-\bm{U}^{\star\perp}(\bm{U}^{\star\perp})^{\top}\bm{H}\bm{U}^{\star\perp}(\bm{U}^{\star\perp})^{\top}\big)^{-1}\bm{U}^{\star\perp}(\bm{U}^{\star\perp})^{\top}\bm{H}\bm{U}^{\star}\big\|\nonumber \\
 & =\big\|\bm{U}^{\star\top}\bm{H}\bm{U}^{\star\perp}\big(\lambda\bm{I}_{n-r}-(\bm{U}^{\star\perp})^{\top}\bm{H}\bm{U}^{\star\perp}\big)^{-1}(\bm{U}^{\star\perp})^{\top}\bm{H}\bm{U}^{\star}\big\|.\label{eq:Glambda-decomposition}
\end{align}
Combining this with the fact (\ref{eq:distributional-equiv-U}), we
see that $\|\bm{G}(\lambda)\|$ has the same distribution as $\big\|\bm{H}_{\mathsf{ur}}\big(\lambda\bm{I}_{n-r}-\bm{H}_{\mathsf{lr}}\big)^{-1}\bm{H}_{\mathsf{ur}}^{\top}\big\|$.
Repeating the same argument also indicates that $\big\|\bm{G}(\lambda)-\bm{G}^{\perp}(\lambda)\big\|$
has the same distribution as 
\begin{equation}
\Big\|\bm{H}_{\mathsf{ur}}\big(\lambda\bm{I}_{n-r}-\bm{H}_{\mathsf{lr}}\big)^{-1}\bm{H}_{\mathsf{ur}}^{\top}-\mathbb{E}\big[\bm{H}_{\mathsf{ur}}\big(\lambda\bm{I}_{n-r}-\bm{H}_{\mathsf{lr}}\big)^{-1}\bm{H}_{\mathsf{ur}}^{\top}\mid\bm{H}_{\mathsf{lr}}\big]\Big\|.\label{eq:distributional-equiv-Glambda-U}
\end{equation}
This allows us to turn attention to $\bm{H}_{\mathsf{ur}}(\lambda\bm{I}_{n-r}-\bm{H}_{\mathsf{lr}})^{-1}\bm{H}_{\mathsf{ur}}^{\top}$. 

As a key observation, $\bm{H}_{\mathsf{ur}}$ and $\bm{H}_{\mathsf{lr}}$
are statistically independent, thus enabling convenient decoupling
of the randomness. Let $\gamma_{1}\geq\cdots\geq\gamma_{n-r}$ represent
the eigenvalues of $\bm{H}_{\mathsf{lr}}$. Denote by $\{\bm{h}_{i}\}_{i=1}^{n-r}$
the columns of $\bm{H}_{\mathsf{ur}}$, i.e.~$\bm{H}_{\mathsf{ur}}=[\bm{h}_{1},\cdots,\bm{h}_{n-r}]$,
which are independent of $\bm{H}_{\mathsf{lr}}$ and $\{\gamma_{i}\}$.
Invoking the rotational invariance of Gaussian random matrices once
again, we see that 
\begin{equation}
\bm{H}_{\mathsf{ur}}(\lambda\bm{I}_{n-r}-\bm{H}_{\mathsf{lr}})^{-1}\bm{H}_{\mathsf{ur}}^{\top}\,\overset{\mathrm{d}}{=}\,\sum_{i=1}^{n-r}\frac{1}{\lambda-\gamma_{i}}\bm{h}_{i}\bm{h}_{i}^{\top},\label{eq:distributional-equiv-Glambda-U-1}
\end{equation}
which is a sum of independent random matrices when conditional on
$\bm{H}_{\mathsf{lr}}$. This can be controlled via Lemma \ref{lemma:covariance-Gaussian-concentration}.
Specifically, conditional on $\bm{H}_{\mathsf{lr}}$ and assuming
that $|\gamma_{i}|\leq\lambda_{\min}^{\star}/3$ for all $i$, we
have 
\begin{align*}
 & \Big\|\sum_{i}\bm{H}_{\mathsf{ur}}(\lambda\bm{I}_{n-r}-\bm{H}_{\mathsf{lr}})^{-1}\bm{H}_{\mathsf{ur}}^{\top}-\mathbb{E}\Big[\sum_{i}\bm{H}_{\mathsf{ur}}(\lambda\bm{I}_{n-r}-\bm{H}_{\mathsf{lr}})^{-1}\bm{H}_{\mathsf{ur}}^{\top}\mid\bm{H}_{\mathsf{lr}}\Big]\,\Big\|\\
 & \qquad=\Big\|\sum_{i}\frac{1}{\lambda-\gamma_{i}}\big(\bm{h}_{i}\bm{h}_{i}^{\top}-\mathbb{E}[\bm{h}_{i}\bm{h}_{i}^{\top}]\big)\Big\|\\
 & \qquad\lesssim\frac{\sigma^{2}}{\min_{i}|\lambda-\gamma_{i}|}\big(\sqrt{rn\log n}+r\log n\big)\\
 & \qquad\lesssim\frac{\sigma^{2}}{|\lambda_{l}^{\star}|}\big(\sqrt{rn\log n}+r\log n\big)
\end{align*}
with probability at least $1-O(n^{-20})$, where the penultimate line
relies on Lemma \ref{lemma:covariance-Gaussian-concentration}, and
the last step follows since $|\lambda-\gamma_{i}|\geq|\lambda|-\max_{i}|\gamma_{i}|\geq2\,|\lambda_{l}^{\star}|/3-\|\bm{H}\|\geq\lambda_{\min}^{\star}/3$
(see (\ref{eq:H-norm-1-3})). Consequently, we have established that,
with probability at least $1-O(n^{-11})$,
\begin{equation}
\big\|\bm{G}(\lambda)-\bm{G}^{\perp}(\lambda)\big\|\lesssim\frac{\sigma^{2}}{|\lambda_{l}^{\star}|}\big(\sqrt{rn\log n}+r\log n\big)\leq\frac{\sigma^{2}}{\lambda_{\min}}\big(\sqrt{rn\log n}+r\log n\big)\label{eq:Glambda-deviation-UB}
\end{equation}
for a given $\lambda$. 

Finally, we apply the standard epsilon-net argument to establish a
uniform bound that holds simultaneously over all $\lambda$ obeying
$2\,|\lambda_{l}^{\star}|/3\leq|\lambda|\leq4\,|\lambda_{l}^{\star}|/3$.
Set $\epsilon_{0}=c\,|\lambda_{l}^{\star}|/n$ for some sufficiently
small constant $c>0$, and let $\mathcal{N}_{\epsilon_{0}}$ denote
an $\epsilon_{0}$-net for $[-4|\lambda_{l}^{\star}|/3,\,-2|\lambda_{l}^{\star}|/3]\cup[2|\lambda_{l}^{\star}|/3,\,4|\lambda_{l}^{\star}|/3]$
with cardinality
\begin{equation}
|\mathcal{N}_{\epsilon_{0}}|\lesssim\lambda_{l}^{\star}/\epsilon_{0}\asymp n;\label{eq:lambda-eps-net}
\end{equation}
see \citet{vershynin2016high} for an introduction of the epsilon-net.
This means that for each $\lambda$ obeying $2\,|\lambda_{l}^{\star}|/3\leq|\lambda|\leq4\,|\lambda_{l}^{\star}|/3$,
one can find a point $\widehat{\lambda}\in\mathcal{N}_{\epsilon_{0}}$
such that $|\lambda-\widehat{\lambda}|\leq\epsilon_{0}$.
\begin{itemize}
\item Take the union bound to show that: with probability exceeding $1-O(n^{-11})$,
\begin{equation}
\big\|\bm{G}(\widehat{\lambda})-\bm{G}^{\perp}(\widehat{\lambda})\big\|\lesssim\frac{\sigma^{2}}{\lambda_{\min}^{\star}}\big(\sqrt{rn\log n}+r\log n\big),\qquad\forall\widehat{\lambda}\in\mathcal{N}.\label{eq:Glambda-deviation-UB-1}
\end{equation}
\item For any $\lambda$ of interest, let $\widehat{\lambda}$ be a point
in $\mathcal{N}_{\epsilon_{0}}$ obeying $|\lambda-\widehat{\lambda}|\leq\epsilon_{0}$.
Then conditioned on $\|\bm{H}\|\leq\lambda_{\min}^{\star}/3$, 
\begin{align}
\|\bm{G}(\lambda)-\bm{G}(\widehat{\lambda})\| & \leq\big\|\bm{H}_{\mathsf{ur}}\big(\lambda\bm{I}_{n-r}-\bm{H}_{\mathsf{lr}}\big)^{-1}\bm{H}_{\mathsf{ur}}^{\top}-\bm{H}_{\mathsf{ur}}\big(\widehat{\lambda}\bm{I}_{n-r}-\bm{H}_{\mathsf{lr}}\big)^{-1}\bm{H}_{\mathsf{ur}}^{\top}\big\|\nonumber \\
 & \leq\|\bm{H}_{\mathsf{ur}}\|^{2}\cdot\big\|\big(\lambda\bm{I}_{n-r}-\bm{H}_{\mathsf{lr}}\big)^{-1}-\big(\widehat{\lambda}\bm{I}_{n-r}-\bm{H}_{\mathsf{lr}}\big)^{-1}\big\|\nonumber \\
 & \leq\|\bm{H}_{\mathsf{ur}}\|^{2}\max_{i}\Big|\frac{1}{\lambda-\gamma_{i}}-\frac{1}{\widehat{\lambda}-\gamma_{i}}\Big|=\|\bm{H}_{\mathsf{ur}}\|^{2}\max_{i}\Big|\frac{\lambda-\widehat{\lambda}}{(\lambda-\gamma_{i})(\widehat{\lambda}-\gamma_{i})}\Big|\nonumber \\
 & \lesssim\sigma^{2}n\cdot\max_{i}\frac{|\lambda-\widehat{\lambda}|}{\lambda_{l}^{\star2}}\leq\sigma^{2}n\cdot\frac{\epsilon_{l}}{\lambda_{l}^{\star2}}\nonumber \\
 & \lesssim\frac{\sigma^{2}}{\lambda_{\min}^{\star}}\label{eq:Glambda-net-approx-error}
\end{align}
holds with probability $1-O(n^{-11})$. Here, the penultimate line
has made use of the Gaussian concentration bound $\|\bm{H}_{\mathsf{ur}}\|\lesssim\sigma\sqrt{n}$,
whereas the last inequality results from (\ref{eq:lambda-eps-net}). 
\item Combining the above two facts together, we arrive at
\begin{align}
 & \sup_{\lambda:\,|\lambda|\in[2|\lambda_{l}^{\star}|/3,\,4|\lambda_{l}^{\star}|/3]}\big\|\bm{G}(\lambda)-\bm{G}^{\perp}(\lambda)\big\|\nonumber \\
 & \qquad=\sup_{\lambda:\,|\lambda|\in[2|\lambda_{l}^{\star}|/3,\,4|\lambda_{l}^{\star}|/3]}\big\|\bm{G}(\lambda)-\bm{G}(\widehat{\lambda})+\bm{G}^{\perp}(\widehat{\lambda})-\bm{G}^{\perp}(\lambda)+\bm{G}(\widehat{\lambda})-\bm{G}^{\perp}(\widehat{\lambda})\big\|\nonumber \\
 & \qquad\leq\sup_{\lambda:\,|\lambda|\in[2|\lambda_{l}^{\star}|/3,\,4|\lambda_{l}^{\star}|/3]}\|\bm{G}(\lambda)-\bm{G}(\widehat{\lambda})\|+\sup_{\widehat{\lambda}:\,\widehat{\lambda}\in\mathcal{N}_{\epsilon_{0}}}\big\|\bm{G}(\widehat{\lambda})-\bm{G}^{\perp}(\widehat{\lambda})\big\|\nonumber \\
 & \qquad\quad\qquad+\sup_{\lambda:\,|\lambda|\in[2|\lambda_{l}^{\star}|/3,\,4|\lambda_{l}^{\star}|/3]}\|\bm{G}^{\perp}(\lambda)-\bm{G}^{\perp}(\widehat{\lambda})\|\nonumber \\
 & \qquad\lesssim\frac{\sigma^{2}}{\lambda_{\min}^{\star}}\big(\sqrt{rn\log n}+r\log n\big).\label{eq:sup-diff-Glambda-Gperp}
\end{align}
Here, the last inequality results from (\ref{eq:Glambda-deviation-UB-1}),
(\ref{eq:Glambda-net-approx-error}), and the following consequence
of Jensen's inequality
\[
\|\bm{G}^{\perp}(\lambda)-\bm{G}^{\perp}(\widehat{\lambda})\|=\|\mathbb{E}\big[\bm{G}(\lambda)\mid\bm{H}_{\mathsf{lr}}\big]-\mathbb{E}\big[\bm{G}(\widehat{\lambda})\mid\bm{H}_{\mathsf{lr}}\big]\|\leq\mathbb{E}\Big[\|\bm{G}(\lambda)-\bm{G}(\widehat{\lambda})\|\mid\bm{H}_{\mathsf{lr}}\Big]\lesssim\frac{\sigma^{2}}{\lambda_{\min}^{\star}},
\]
where we have used (\ref{eq:Glambda-net-approx-error}) again in the
last step. 
\end{itemize}
This concludes the proof of (\ref{eq:Glambda-gap}). 

Finally, the above argument also reveals that 
\begin{align*}
\bm{G}^{\perp}(\lambda) & =\mathbb{E}\Big[\underset{\eqqcolon\,\bm{A}}{\underbrace{\bm{U}^{\star\top}\bm{H}\bm{U}^{\star\perp}\big(\lambda\bm{I}_{n-r}-(\bm{U}^{\star\perp})^{\top}\bm{H}\bm{U}^{\star\perp}\big)^{-1}(\bm{U}^{\star\perp})^{\top}\bm{H}\bm{U}^{\star}}}\mid(\bm{U}^{\star\perp})^{\top}\bm{H}\bm{U}^{\star\perp}\Big]\\
 & =\sigma^{2}\mathsf{tr}\big[\big(\lambda\bm{I}_{n-r}-(\bm{U}^{\star\perp})^{\top}\bm{H}\bm{U}^{\star\perp}\big)^{-1}\big]\bm{I}_{r},
\end{align*}
which holds since the matrix $\bm{A}$ obeys $\bm{A}\overset{\mathrm{d}}{=}\bm{H}_{\mathsf{ur}}(\lambda\bm{I}_{n-r}-\bm{H}_{\mathsf{lr}})^{-1}\bm{H}_{\mathsf{ur}}^{\top}$,
which has been analyzed in (\ref{eq:distributional-equiv-Glambda-U-1}).

\subsection{Proof of Lemma \ref{lemma:lambda-l-mapping}}

\label{subsec:Proof-of-Claim-lambdal-correspondence-iid}

We first claim that: with (\ref{eq:lambdal-Ml-Epsilon}) in place,
one necessarily has\begin{subequations}\label{eq:lambda-l-vague-mapping}
\begin{align}
\big|\lambda_{l}-\lambda_{i}^{\star}-\gamma(\lambda_{l})\big| & \leq\mathcal{E}_{\mathsf{MD}},\qquad\text{for some }1\leq i\leq r\label{eq:defn-lambda-E1}\\
\text{or}\qquad\big|\lambda_{l}\big| & \leq\mathcal{E}_{\mathsf{MD}}\label{eq:defn-lambda-E2}
\end{align}
\end{subequations}for any $1\leq l\leq r$. To see this, we recall
that for any symmetric matrix $\bm{A}$, one has
\[
\min_{i}\big|\lambda_{i}(\bm{A})\big|=\sqrt{\lambda_{\min}(\bm{A}^{2})}=\sqrt{\min_{\bm{x}\in\mathbb{S}^{n-1}}\bm{x}^{\top}\bm{A}^{2}\bm{x}}=\min_{\bm{x}\in\mathbb{S}^{n-1}}\|\bm{A}\bm{x}\|_{2},
\]
where $\mathbb{S}^{n-1}:=\{\bm{z}\in\mathbb{R}^{n}\mid\|\bm{z}\|_{2}=1\}$
and $\lambda_{i}(\bm{A})$ denotes the $i$-th largest eigenvalue
of $\bm{A}$. Recall the definition of $\bm{M}_{\lambda}$ in (\ref{eq:defn-Mlambda}).
Given that the eigenvalues of $\lambda_{l}\bm{I}-\bm{M}_{\lambda_{l}}$
are exactly $\lambda_{l}-\lambda_{i}(\bm{M}_{\lambda_{l}})$ $(1\leq i\leq n)$
and that $\bm{u}_{l,\|}$ is a unit vector, we obtain
\begin{align*}
\min_{1\leq i\leq n}\big|\lambda_{l}-\lambda_{i}(\bm{M}_{\lambda_{l}})\big|= & \min_{1\leq i\leq n}\big|\lambda_{i}\big(\lambda_{l}\bm{I}-\bm{M}_{\lambda_{l}}\big)\big|\leq\big\|(\lambda_{l}\bm{I}_{n}-\bm{M}_{\lambda_{l}})\bm{u}_{l,\|}\big\|_{2}\leq\mathcal{E}_{\mathsf{MD}}.
\end{align*}
This immediately establishes (\ref{eq:lambda-l-vague-mapping}), since
the set of eigenvalues of $\bm{M}_{\lambda_{l}}$ is $\{\lambda_{i}^{\star}+\gamma(\lambda_{l})\mid1\leq i\leq r\}\cup\{0\}$
(in view of the definition (\ref{eq:defn-Mlambda})). 

It thus boils down to how to use (\ref{eq:lambda-l-vague-mapping})
to establish the advertised claim (\ref{eq:lambda-l-mapping-iid}).
Towards this, we find it helpful to define
\begin{align}
\bm{M}(t) & :=\bm{M}^{\star}+t\bm{H},\label{eq:defn-Mt}\\
\gamma(\lambda,t) & :=t^{2}\sigma^{2}\mathsf{tr}\Big(\big(\lambda\bm{I}_{n-r}-t(\bm{U}^{\star\perp})^{\top}\bm{H}\bm{U}^{\star\perp}\big)^{-1}\Big).
\end{align}
We denote by $\{\lambda_{i,t}\}_{i=1}^{n}$ the eigenvalues of $\bm{M}(t)$
obeying $|\lambda_{1,t}|\geq\cdots\geq|\lambda_{n,t}|$; in other
words, $\lambda_{1,t},\cdots,\lambda_{r,t}$ correspond to the $r$
eigenvalues of $\bm{M}(t)$ with the largest magnitudes. Armed with
this notation, we clearly have
\[
\lambda_{l,1}=\lambda_{l},\qquad1\leq l\leq r.
\]
The subsequent analysis consists of three steps.
\begin{itemize}
\item First, we establish the correspondence between $\{\lambda_{l,t}\mid1\leq l\leq r\}$
and $\{\lambda_{i}^{\star}\mid1\leq i\leq r\}$ through the following
lemma; the proof is postponed to Appendix \ref{subsec:Proof-of-Lemma-general-t-lambda-correspondence}.
\begin{lemma}\label{lemma:general-t-lambda-correspondence}Instate
the assumptions of Lemma~\ref{lemma:bound-Glambda-uniform}. Then
with probability exceeding $1-O(n^{-10})$, for any $1\leq l\leq r$,
one can find $1\leq i\leq r$ such that
\[
\sup_{t\in[1/\sqrt{n},\,1]}\big|\lambda_{l,t}-\gamma(\lambda_{l,t},t)-\lambda_{i}^{\star}\big|\leq\mathcal{E}_{t},
\]
where $\mathcal{E}_{t}:=C_{1}t\sigma\sqrt{r}\log n$ for some constant
$C_{1}>0$ large enough. \end{lemma}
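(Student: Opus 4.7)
The plan is to run the argument of Theorem \ref{thm:eigval-pertur-sym-iid} for the family $\bm{M}(t)=\bm{M}^\star+t\bm{H}$ and then stitch the resulting eigenvalue branches together via a continuous deformation in $t$.

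\emph{Step 1 (uniform dichotomy).} First I would repeat the derivation that led to \eqref{eq:lambda-l-vague-mapping}, but with $t\bm{H}$ in place of $\bm{H}$. The key observation is that the three building-block bounds used there ($\|\bm{U}^{\star\top}\bm{H}\bm{U}^{\star}\|$, $\sup_\lambda\|\bm{G}(\lambda)-\bm{G}^\perp(\lambda)\|$, and the identity for $\bm{G}^\perp$ in Lemma~\ref{lemma:bound-Glambda-uniform}) scale linearly in $t$ when $\bm{H}$ is replaced by $t\bm{H}$: the Gaussian tail constants are unchanged in form, and the operator $(\lambda\bm{I}-t(\bm{U}^{\star\perp})^\top\bm{H}\bm{U}^{\star\perp})^{-1}$ has norm $\lesssim 1/\lambda_{\min}^\star$ on the relevant range of $\lambda$ because $t\|\bm{H}\|\le\|\bm{H}\|\le\lambda_{\min}^\star/3$. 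The uniformity in $t\in[1/\sqrt n,1]$ can be obtained by the same epsilon-net argument as in \eqref{eq:sup-diff-Glambda-Gperp} applied jointly to $(\lambda,t)$ on a compact set, since $\bm{G}(\lambda,t)$ is Lipschitz in $t$ with the same scaling. This yields, for each $l$ and each $t$, either
\[
\big|\lambda_{l,t}-\lambda_{i}^{\star}-\gamma(\lambda_{l,t},t)\big|\le\mathcal{E}_{t}\quad\text{for some }1\le i\le r,\qquad\text{or}\qquad|\lambda_{l,t}|\le\mathcal{E}_{t}.
\]

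\emph{Step 2 (base case at $t=1/\sqrt n$).} Weyl's inequality gives $|\lambda_{l,t}-\lambda_l^\star|\le t\|\bm{H}\|\lesssim t\sigma\sqrt n$, which at $t=1/\sqrt n$ is $\lesssim\sigma\ll\lambda_{\min}^\star$. Meanwhile $|\gamma(\lambda_{l,t},t)|\lesssim t^2\sigma^2 n/\lambda_{\min}^\star\ll\lambda_{\min}^\star$. Therefore $|\lambda_{l,1/\sqrt n}|\gtrsim\lambda_{\min}^\star\gg\mathcal{E}_{1/\sqrt n}$, ruling out the second alternative, and the eigen-gap condition $\Delta_l^\star>C_0\sigma\sqrt r\log n$ pins $\lambda_{l,1/\sqrt n}-\gamma(\lambda_{l,1/\sqrt n},1/\sqrt n)$ to the $\mathcal{E}_{1/\sqrt n}$-ball around $\lambda_l^\star$ (i.e.\ $i=l$).

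\emph{Step 3 (propagation in $t$).} Since $\bm{M}(t)$ is a continuous (in fact analytic) matrix-valued function of $t$, the top $r$ eigenvalues $\lambda_{l,t}$ (ordered by magnitude) admit continuous branches in $t$, and off the spectrum of $t(\bm{U}^{\star\perp})^\top\bm{H}\bm{U}^{\star\perp}$ the map $t\mapsto\gamma(\lambda_{l,t},t)$ is also continuous. The eigen-gap $\min_{i\ne j}|\lambda_i^\star-\lambda_j^\star|\ge\Delta_l^\star>C_0\sigma\sqrt r\log n\ge 3\mathcal{E}_t$ means the $r+1$ target points $\{\lambda_i^\star\}_{i=1}^r\cup\{0\}$ have pairwise gaps exceeding $3\mathcal{E}_t$ (note $|\lambda_i^\star|\ge\lambda_{\min}^\star\gg\mathcal{E}_t$ as well). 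Consequently the continuous quantity $\lambda_{l,t}-\gamma(\lambda_{l,t},t)$ cannot migrate from one $\mathcal{E}_t$-neighborhood to another, nor into $[-\mathcal{E}_t,\mathcal{E}_t]$, without leaving the union of admissible balls and thereby contradicting Step~1. Hence the branch identified at $t=1/\sqrt n$ persists for all $t\in[1/\sqrt n,1]$.

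\emph{Anticipated obstacle.} The trickiest bookkeeping is guaranteeing that $\lambda_{l,t}$ stays comfortably away from the ``bulk'' spectrum of $t(\bm{U}^{\star\perp})^\top\bm{H}\bm{U}^{\star\perp}$ throughout the deformation, so that $\gamma(\lambda_{l,t},t)$ remains well-defined and Lipschitz uniformly in $t$. This will follow from $|\lambda_{l,t}|\ge 2\lambda_{\min}^\star/3$ (a consequence of Steps~1--2 once the branch is pinned to $\lambda_l^\star$) together with $t\|\bm{H}\|\le\lambda_{\min}^\star/3$, but it must be checked in tandem with the propagation step rather than after it, since the dichotomy itself is what allows us to lower-bound $|\lambda_{l,t}|$.
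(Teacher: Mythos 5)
Your Step 1 is essentially the paper's entire proof of this lemma: the paper likewise reduces the claim to a bound on $\|\bm{G}(\lambda,t)-\bm{G}^{\perp}(\lambda,t)\|$ holding uniformly over $t\in[1/\sqrt{n},1]$ and over $|\lambda|\in[2|\lambda_{l}^{\star}|/3,\,4|\lambda_{l}^{\star}|/3]$, obtained by applying Lemma~\ref{lemma:bound-Glambda-uniform} with noise matrix $t\bm{H}$ on a $c/\sqrt{n}$-net in $t$, together with a Lipschitz-in-$t$ estimate $\|\bm{G}(\lambda,\widehat{t})-\bm{G}(\lambda,t)\|\lesssim t\sigma^{2}\sqrt{n}/\lambda_{\min}^{\star}$, and then rerunning the argument behind (\ref{eq:lambda-l-vague-mapping}) (the zero alternative being excluded via Weyl since $t\|\bm{H}\|\le\lambda_{\min}^{\star}/3$). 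Where you diverge is scope: your Steps 2--3 additionally track the branch and identify a single index $i=l$ valid for all $t$; the paper does not do this inside the present lemma but defers exactly this base-case-plus-continuity/isolation argument to the proof of Lemma~\ref{lemma:lambda-l-mapping}, and in fact only ever uses the weaker per-$t$ statement $\lambda_{l,t}-\gamma(\lambda_{l,t},t)\in\cup_{i=1}^{r}\mathcal{B}_{\mathcal{E}_{t}}(\lambda_{i}^{\star})$. Your reading is closer to the literal wording of the lemma, but two caveats: (i) Steps 2--3 invoke the eigen-gap condition (\ref{eq:eigengap-condition-iid}), which is not among the instated assumptions (the lemma only instates those of Lemma~\ref{lemma:bound-Glambda-uniform}); the single-$i$ claim is indeed dubious without a gap, which is looseness in the paper's phrasing rather than a flaw of yours; (ii) your assertion that $\min_{i\neq j}|\lambda_{i}^{\star}-\lambda_{j}^{\star}|\ge\Delta_{l}^{\star}$ (and that all $r+1$ target points are pairwise separated by $3\mathcal{E}_{t}$) is false in general, since $\Delta_{l}^{\star}$ only separates $\lambda_{l}^{\star}$ from the remaining spectrum and the other true eigenvalues may be arbitrarily close to one another; fortunately only the isolation of $\mathcal{B}_{\mathcal{E}_{t}}(\lambda_{l}^{\star})$ within the union, plus its distance $\gtrsim\lambda_{\min}^{\star}$ from zero, is needed, so your propagation step survives with that correction. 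Finally, the lower bound $|\lambda_{l,t}|\ge 2\lambda_{\min}^{\star}/3$ follows directly from Weyl's inequality for every $t$ (no appeal to the dichotomy is needed), so the circularity you worry about in your closing remark does not actually arise.
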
In other words, this lemma reveals
that for all $1/\sqrt{n}\leq t\leq1$, one has
\[
\lambda_{l,t}-\gamma(\lambda_{l,t},t)\in\cup_{i=1}^{r}\mathcal{B}_{\mathcal{E}_{t}}(\lambda_{i}^{\star}),
\]
where $\mathcal{B}_{\tau}(\lambda):=\left\{ z\mid|z-\lambda|\leq\tau\right\} $
denotes the ball of radius $\tau$ centered at $\lambda$.
\item Secondly, when $0\leq t\leq1/\sqrt{n}$, one has $\|t\bm{H}\|\lesssim c_{0}/\sqrt{n}\cdot(\sigma\sqrt{n})\leq\sigma/2$,
where $c_{0}>0$ is some sufficiently small constant. In this scenario,
Weyl's inequality tells us that $|\lambda_{l,t}-\lambda_{l}^{\star}|\leq\|t\bm{H}\|\leq\sigma/2$.
Further, the definition of $\gamma(\cdot,\cdot)$ indicates that
\[
\big|\gamma(\lambda_{l,t},t)\big|\leq t^{2}\sigma^{2}\frac{n-r}{|\lambda_{l,t}|-\|t\bm{H}\|}\lesssim\frac{1}{n}\cdot\sigma^{2}\frac{n}{\lambda_{\min}^{\star}}=\frac{\sigma^{2}}{\lambda_{\min}^{\star}}\leq\sigma/2,
\]
where the last inequality holds due to the assumption $\sigma\sqrt{n}\lesssim\lambda_{\min}^{\star}$.
As a result, 
\[
|\lambda_{l,t}-\gamma(\lambda_{l,t},t)-\lambda_{l}^{\star}|\leq|\lambda_{l,t}-\lambda_{l}^{\star}|+|\gamma(\lambda_{l,t},t)|\leq\sigma\lesssim\mathcal{E}_{1/\sqrt{n}}
\]
\begin{equation}
\Longrightarrow\qquad\lambda_{l,t}-\gamma(\lambda_{l,t},t)\in\mathcal{B}(\lambda_{l}^{\star},\mathcal{E}_{1/\sqrt{n}}),\qquad0\leq t\leq1/\sqrt{n}.\label{eq:continuous-lambda-l-0}
\end{equation}
\item Recognizing that the set of eigenvalues $\lambda_{l,t}$ ($1\leq l\leq r$)
depends continuously on $t$ \cite[Theorem 6]{embree2001generalizing},
we know that $\lambda_{l,t}-\gamma(\lambda_{l,t},t)$ is also a continuous
function in $t$. In addition, for any $1\leq l\leq r$, if $\min_{k:k\neq l}\big|\lambda_{l}^{\star}-\lambda_{k}^{\star}\big|>2\mathcal{E}_{1}\geq2\mathcal{E}_{t}$
($1/\sqrt{n}\leq t\leq1$), then one necessarily has
\[
\mathcal{B}_{\mathcal{E}_{t}}(\lambda_{l}^{\star})\,\cap\ \left\{ \cup_{k:k>l}\mathcal{B}_{\mathcal{E}_{t}}(\lambda_{k}^{\star})\right\} =\emptyset\qquad\text{and}\qquad\mathcal{B}_{\mathcal{E}_{t}}(\lambda_{l}^{\star})\,\cap\ \left\{ \cup_{k:k<l}\mathcal{B}_{\mathcal{E}_{t}}(\lambda_{k}^{\star})\right\} =\emptyset.
\]
In other words, $\mathcal{B}_{\mathcal{E}_{t}}(\lambda_{l}^{\star})$
remains an isolated region within the set $\cup_{i=1}^{r}\mathcal{B}_{\mathcal{E}_{t}}(\lambda_{i}^{\star})$
when we increase $t$ from $1/\sqrt{n}$ to 1. This together with
the above two facts (namely, the continuity of $\lambda_{l,t}-\gamma(\lambda_{l,t},t)$
in $t$ and (\ref{eq:continuous-lambda-l-0})) requires that 
\[
\lambda_{l,t}-\gamma(\lambda_{l,t},t)\in\mathcal{B}_{\mathcal{E}_{t}}(\lambda_{l}^{\star}),\qquad1/\sqrt{n}\leq t\leq1,
\]
provided that $\min_{k:k\neq l}\big|\lambda_{l}^{\star}-\lambda_{k}^{\star}\big|>2\mathcal{E}_{1}$.
\end{itemize}
Given that our notation satisfies $\lambda_{l,1}=\lambda_{l}$, $\gamma(\lambda_{l,1},1)=\gamma(\lambda_{l})$,
and $\mathcal{E}_{1}=\mathcal{E}_{\mathsf{MD}}$, we conclude that
with probability at least $1-O(n^{-10})$,
\begin{equation}
\big|\lambda_{l}-\gamma(\lambda_{l})-\lambda_{l}^{\star}\big|\leq\mathcal{E}_{\mathsf{MD}},\qquad1\leq l\leq r.\label{eq:lambda-l-bound-correspondence}
\end{equation}

\subsection{Proof of Lemma \ref{lemma:general-t-lambda-correspondence}}

\label{subsec:Proof-of-Lemma-general-t-lambda-correspondence}

Fix an arbitrary $1\leq l\leq r$. We have already shown in (\ref{eq:lambda-l-vague-mapping})
that the claim holds when $t=1$. An inspection of the proof of (\ref{eq:lambda-l-vague-mapping})
reveals that: Lemma \ref{lemma:general-t-lambda-correspondence} can
be established using the same argument, except that we need to generalize
the bound (\ref{eq:Glambda-gap}) into a uniform bound on $\|\bm{G}(\lambda,t)-\bm{G}^{\perp}(\lambda,t)\|$,
namely,
\[
\|\bm{G}(\lambda,t)-\bm{G}^{\perp}(\lambda,t)\|\lesssim\frac{t\sigma^{2}}{\lambda_{\min}^{\star}}\sqrt{rn}\log n
\]
holds simultaneously for all $1/\sqrt{n}\leq t\leq1$ and $\lambda$
with $|\lambda|\in[2|\lambda_{l}^{\star}|/3,\,4|\lambda_{l}^{\star}|/3]$.
Towards this end, we shall resort to the epsilon-net argument once
again. Choose $\epsilon_{1}=c/\sqrt{n}$ for some sufficiently small
constant $c>0$, and let $\mathcal{N}_{\epsilon_{1}}$ be an $\epsilon$-net
for $[1/\sqrt{n},1]$ such that (1) it has cardinality $|\mathcal{N}_{\epsilon_{1}}|\lesssim\sqrt{n}$;
(2) for any $t\in[1/\sqrt{n},1]$, there exists some point $\widehat{t}\in\mathcal{N}_{\epsilon_{1}}$
obeying $|\widehat{t}-t|\leq\epsilon_{1}$. 
\begin{itemize}
\item Applying Lemma \ref{lemma:bound-Glambda-uniform} with the noise matrix
chosen as $t\bm{H}$ and applying the union bound, we see that with
probability exceeding $1-O\left(n^{-11}\right)$, one has
\begin{align*}
\sup_{\lambda:\,|\lambda|\in[2|\lambda_{l}^{\star}|/3,\,4|\lambda_{l}^{\star}|/3]}\|\bm{G}(\lambda,\widehat{t})-\bm{G}^{\perp}(\lambda,\widehat{t})\| & \lesssim\frac{\widehat{t}^{2}\sigma^{2}}{\lambda_{\min}^{\star}}\big(\sqrt{rn\log n}+r\log n\big)\leq\frac{\widehat{t}\sigma^{2}}{\lambda_{\min}^{\star}}\sqrt{rn}\log n
\end{align*}
simultaneously for all $\widehat{t}\in\mathcal{N}_{\epsilon_{1}}$,
where in the second line we have used $\widehat{t}^{2}\leq\widehat{t}$
since $\widehat{t}\in[0,1]$.
\item For any $t\in[1/\sqrt{n},1]$, let $\widehat{t}\in\mathcal{N}_{\epsilon_{1}}$
be a point obeying $|\widehat{t}-t|\leq\epsilon_{1}$. Recognizing
that $\bm{G}(\lambda,\widehat{t})-\bm{G}(\lambda,t)\overset{\mathrm{d}}{=}t^{2}\bm{H}_{\mathsf{ur}}\big(\lambda\bm{I}_{n-r}-t\bm{H}_{\mathsf{lr}}\big)^{-1}\bm{H}_{\mathsf{ur}}^{\top}-\widehat{t}^{2}\bm{H}_{\mathsf{ur}}\big(\lambda\bm{I}_{n-r}-\widehat{t}\bm{H}_{\mathsf{lr}}\big)^{-1}\bm{H}_{\mathsf{ur}}^{\top}$,
one can bound
\begin{align*}
 & \|\bm{G}(\lambda,\widehat{t})-\bm{G}(\lambda,t)\|\leq\big\|\widehat{t}^{2}\bm{H}_{\mathsf{ur}}\big(\lambda\bm{I}_{n-r}-\widehat{t}\bm{H}_{\mathsf{lr}}\big)^{-1}\bm{H}_{\mathsf{ur}}^{\top}-t^{2}\bm{H}_{\mathsf{ur}}\big(\lambda\bm{I}_{n-r}-t\bm{H}_{\mathsf{lr}}\big)^{-1}\bm{H}_{\mathsf{ur}}^{\top}\big\|\\
 & \quad\leq\big\|\widehat{t}^{2}\bm{H}_{\mathsf{ur}}\big(\lambda\bm{I}_{n-r}-\widehat{t}\bm{H}_{\mathsf{lr}}\big)^{-1}\bm{H}_{\mathsf{ur}}^{\top}-t^{2}\bm{H}_{\mathsf{ur}}\big(\lambda\bm{I}_{n-r}-\widehat{t}\bm{H}_{\mathsf{lr}}\big)^{-1}\bm{H}_{\mathsf{ur}}^{\top}\big\|\\
 & \quad\quad\quad\quad+\big\| t^{2}\bm{H}_{\mathsf{ur}}\big(\lambda\bm{I}_{n-r}-\widehat{t}\bm{H}_{\mathsf{lr}}\big)^{-1}\bm{H}_{\mathsf{ur}}^{\top}-t^{2}\bm{H}_{\mathsf{ur}}\big(\lambda\bm{I}_{n-r}-t\bm{H}_{\mathsf{lr}}\big)^{-1}\bm{H}_{\mathsf{ur}}^{\top}\big\|\\
 & \quad\leq|t-\widehat{t}|\cdot|t+\widehat{t}|\cdot\|\bm{H}_{\mathsf{ur}}\|^{2}\big\|\big(\lambda\bm{I}_{n-r}-\widehat{t}\bm{H}_{\mathsf{lr}}\big)^{-1}\big\|+t^{2}\|\bm{H}_{\mathsf{ur}}\|^{2}\big\|\big(\lambda\bm{I}_{n-r}-t\bm{H}_{\mathsf{lr}}\big)^{-1}-\big(\lambda\bm{I}_{n-r}-\widehat{t}\bm{H}_{\mathsf{lr}}\big)^{-1}\big\|.
\end{align*}
Recalling the notation that $\gamma_{1}\geq\cdots\geq\gamma_{n-r}$
represent the eigenvalues of $\bm{H}_{\mathsf{lr}}$, we have
\[
\big\|\big(\lambda\bm{I}_{n-r}-\widehat{t}\bm{H}_{\mathsf{lr}}\big)^{-1}\big\|=\max_{i}\Big|\frac{1}{\lambda-\widehat{t}\gamma_{i}}\Big|\lesssim\frac{1}{\lambda_{\min}^{\star}}\qquad\text{and}
\]
\begin{align*}
\big\|\big(\lambda\bm{I}_{n-r}-t\bm{H}_{\mathsf{lr}}\big)^{-1}-\big(\lambda\bm{I}_{n-r}-\widehat{t}\bm{H}_{\mathsf{lr}}\big)^{-1}\big\| & =\max_{i}\Big|\frac{1}{\lambda-t\gamma_{i}}-\frac{1}{\lambda-\widehat{t}\gamma_{i}}\Big|=\max_{i}\Big|\frac{(t-\widehat{t})\gamma_{i}}{(\lambda-t\gamma_{i})(\lambda-\widehat{t}\gamma_{i})}\Big|\\
 & \lesssim\frac{|t-\widehat{t}|}{\lambda_{\min}^{\star}}\leq\frac{\epsilon_{1}}{\lambda_{\min}^{\star}},
\end{align*}
where we have used the bounds $2\,|\lambda_{l}^{\star}|/3\leq|\lambda|\leq4\,|\lambda_{l}^{\star}|/3$,
$|\gamma_{i}|\leq\lambda_{\min}^{\star}/3$ and $\widehat{t}\leq t+\epsilon_{1}\leq1.1$.
Combining these with the high-probability bound $\|\bm{H}_{\mathsf{ur}}\|\lesssim\sigma\sqrt{n}$,
we arrive at
\[
\|\bm{G}(\lambda,\widehat{t})-\bm{G}(\lambda,t)\|\lesssim\epsilon_{1}\cdot t\cdot\sigma^{2}n\cdot\frac{1}{\lambda_{\min}^{\star}}+t^{2}\cdot\sigma^{2}n\cdot\frac{\epsilon_{1}}{\lambda_{\min}^{\star}}\lesssim\frac{t\sigma^{2}}{\lambda_{\min}^{\star}}\sqrt{n},
\]
where the last step arises since $t^{2}\leq t$ for any $t\in[0,1]$.
Similarly, this bound holds for $\|\bm{G}^{\perp}(\lambda,\widehat{t})-\bm{G}^{\perp}(\lambda,t)\|$
as well.
\end{itemize}
Putting these two upper bounds together, we conclude that with probability
at least $1-O(n^{-11})$,
\begin{align*}
\|\bm{G}(\lambda,t)-\bm{G}^{\perp}(\lambda,t)\| & \leq\|\bm{G}(\lambda,\widehat{t})-\bm{G}^{\perp}(\lambda,\widehat{t})\|+\|\bm{G}(\lambda,\widehat{t})-\bm{G}(\lambda,t)\|+\|\bm{G}^{\perp}(\lambda,\widehat{t})-\bm{G}^{\perp}(\lambda,t)\|\\
 & \lesssim\frac{\widehat{t}\sigma^{2}}{\lambda_{\min}^{\star}}\sqrt{rn}\log n+\frac{t\sigma^{2}}{\lambda_{\min}^{\star}}\sqrt{n}\asymp\frac{t\sigma^{2}}{\lambda_{\min}^{\star}}\sqrt{rn}\log n
\end{align*}
holds simultaneously for all $\lambda$ with $|\lambda|\in[2|\lambda_{l}^{\star}|/3,\,4|\lambda_{l}^{\star}|/3]$
and all $t\in[1/\sqrt{n},1]$. Finally, taking a union bound over
$1\leq l\leq r$ concludes the proof.

\section{Proofs of auxiliary lemmas for Theorem \ref{thm:evector-pertur-sym-iid}}

\subsection{Proof of Lemma \ref{lemma:lambda-M-minus-spectrum}}

\label{subsec:Proof-of-Lemma:lambda-M-minus-spectrum}

To begin with, let us first analyze the eigenvalues of $\bm{M}^{(l)}$,
which is accomplished by the following lemma.

\begin{lemma}\label{lem:eigenvalues-M-slash-l}Instate the assumptions
of Theorem \ref{thm:evector-pertur-sym-iid}. With probability at
least $1-O(n^{-10}),$ one has\begin{subequations}\label{eq:tilde-lambda-UB}
\begin{align}
\big|\lambda_{k}^{(l)}-\gamma(\lambda_{k}^{(l)})-\lambda_{k}^{\star}\big| & \leq\mathcal{E}_{\mathsf{MD}},\qquad1\leq k<l,\label{eq:tilde-lambda-UB1}\\
\big|\lambda_{k}^{(l)}-\gamma(\lambda_{k}^{(l)})-\lambda_{k+1}^{\star}\big| & \leq\mathcal{E}_{\mathsf{MD}},\qquad l\leq k<r,\label{eq:tilde-lambda-UB2}\\
\big|\lambda_{k}^{(l)}\big| & \leq\|\bm{H}\|\lesssim\sigma\sqrt{n},\qquad k\geq r,\label{eq:tilde-lambda-UB3}
\end{align}
\end{subequations}where $\mathcal{E}_{\mathsf{MD}}=C_{1}\sigma\sqrt{r}\log n$
for some sufficiently large constant $C_{1}>0$ and $\gamma(\cdot)$
is defined in (\ref{eq:definition-gamma-lambda-iid-pca}).\end{lemma}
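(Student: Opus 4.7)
The plan is to recognize $\bm{M}^{(l)}=(\bm{u}_l^{\star\perp})^\top\bm{M}^\star\bm{u}_l^{\star\perp}+(\bm{u}_l^{\star\perp})^\top\bm{H}\bm{u}_l^{\star\perp}$ as a matrix-denoising instance in its own right: of ambient dimension $n-1$, with signal of rank $r-1$ having eigenvalues $\{\lambda_i^\star\}_{i\neq l}$ and eigenvectors encoded by the columns of $\bm{U}^{\star(l)}$ (defined in (\ref{eq:def:U-Lambda-star-l})), and with noise matrix $(\bm{u}_l^{\star\perp})^\top\bm{H}\bm{u}_l^{\star\perp}$. By the rotational invariance of the GOE-type $\bm{H}$ already exploited in (\ref{eq:distributional-equiv-U}), this noise is jointly distributed as a symmetric Gaussian matrix with the same entrywise variances. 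Therefore the entire machinery developed for Theorem~\ref{thm:eigval-pertur-sym-iid} transports verbatim once the dimension bookkeeping is tracked.

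Concretely, I would apply Theorem~\ref{thm:master-theorem-general} with $\bm{M}$ replaced by $\bm{M}^{(l)}$ and $\bm{Q}=\bm{U}^{\star(l)}$, then isolate the conditional-expectation piece of the ``bias'' matrix $\bm{G}$ exactly as in the derivation of (\ref{eq:lambdal-Ml-Epsilon}). Invoking the analogues of Lemmas~\ref{lemma:bound-Glambda-uniform} and~\ref{lemma:lambda-l-mapping} yields, for each $1\le k<r$,
\[
\bigl\|\bigl(\lambda_k^{(l)}\bm{I}_{r-1}-\bm{\Lambda}^{\star(l)}-\widetilde{\gamma}(\lambda_k^{(l)})\bm{I}_{r-1}\bigr)\bm{U}^{\star(l)\top}\bm{u}_{k,\|}^{(l)}\bigr\|_2\lesssim \sigma\sqrt{r}\,\log n,
\]
where $\widetilde{\gamma}$ is the trace correction for the reduced problem. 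A key simplification is that the orthogonal complement of the reduced signal subspace inside $\mathrm{range}(\bm{u}_l^{\star\perp})$ is exactly $\bm{U}^{\star(l)\perp}$ with $\bm{u}_l^{\star\perp}\bm{U}^{\star(l)\perp}=\bm{U}^{\star\perp}$, so $\widetilde{\gamma}(\lambda)=\gamma(\lambda)$ from (\ref{eq:definition-gamma-lambda-iid}); no new scalar correction is introduced.

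The tail bound (\ref{eq:tilde-lambda-UB3}) is then immediate from Weyl's inequality: the signal $(\bm{u}_l^{\star\perp})^\top\bm{M}^\star\bm{u}_l^{\star\perp}$ has rank at most $r-1$, and the noise obeys $\|(\bm{u}_l^{\star\perp})^\top\bm{H}\bm{u}_l^{\star\perp}\|\le\|\bm{H}\|\lesssim\sigma\sqrt{n}$ with the requisite probability, so every magnitude-ordered eigenvalue of $\bm{M}^{(l)}$ beyond the $(r{-}1)$-st is at most $\|\bm{H}\|$ in magnitude. For the leading eigenvalues $1\le k<r$, combining the displayed master-theorem identity with the argument in Appendix~\ref{subsec:Proof-of-Claim-lambdal-correspondence-iid} shows that $\lambda_k^{(l)}-\gamma(\lambda_k^{(l)})$ lies in an $\mathcal{E}_{\mathsf{MD}}$-ball around one of the $\{\lambda_i^\star\}_{i\neq l}$, so the only remaining task is to identify the specific target index as $k$ for $k<l$ and $k+1$ for $l\le k<r$.

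The identification is the main obstacle and will be handled by transporting the homotopy argument of Lemma~\ref{lemma:general-t-lambda-correspondence} to the reduced problem: set $\bm{M}^{(l)}(t)=(\bm{u}_l^{\star\perp})^\top\bm{M}^\star\bm{u}_l^{\star\perp}+t(\bm{u}_l^{\star\perp})^\top\bm{H}\bm{u}_l^{\star\perp}$ and follow the $k$-th magnitude-ordered eigenvalue $\lambda_{k,t}^{(l)}$ continuously in $t$. At $t=0$ the ordering of $\{\lambda_i^\star\}_{i\neq l}\cup\{0\}^{n-r}$ by magnitude pins down the correspondence to $\lambda_k^\star$ (resp.~$\lambda_{k+1}^\star$) for $k<l$ (resp.~$l\le k<r$). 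Uniformity of the master-theorem bound in $t\in[1/\sqrt n,1]$, together with the eigen-gap hypothesis (\ref{eq:eigengap-condition-iid}) around $\lambda_l^\star$ and the noise-size assumption $\sigma\sqrt n\ll\lambda_{\min}^\star$, keeps each $\lambda_{k,t}^{(l)}$ safely inside the $\mathcal{E}_t$-neighborhood of its $t=0$ value throughout the interpolation, so no two magnitude-ordered labels can swap. Closing the homotopy at $t=1$ delivers (\ref{eq:tilde-lambda-UB1})--(\ref{eq:tilde-lambda-UB2}); this bookkeeping mirrors Appendix~\ref{subsec:Proof-of-Lemma-general-t-lambda-correspondence} line by line and introduces no essentially new analytical difficulty beyond the reduction already described.
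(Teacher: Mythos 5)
Your proposal is correct and follows essentially the same route as the paper: the paper likewise treats $\bm{M}^{(l)}$ as a reduced matrix-denoising instance, observes via $\bm{u}_{l}^{\star\perp}\bm{U}^{\star(l)\perp}=\bm{U}^{\star\perp}$ that the projected noise blocks coincide with the original ones so that the bias correction is the unchanged $\gamma(\lambda)$ (i.e.\ $\bm{G}^{(l)}(\lambda)=\bm{P}^{(l)\top}\bm{G}(\lambda)\bm{P}^{(l)}$), and then repeats the argument of Theorem~\ref{thm:eigval-pertur-sym-iid} — master theorem plus the uniform bound of Lemma~\ref{lemma:bound-Glambda-uniform} and the homotopy identification of Lemmas~\ref{lemma:lambda-l-mapping}--\ref{lemma:general-t-lambda-correspondence} — with the bulk bound for $k\geq r$ coming from the rank-$(r-1)$ signal and $\|\bm{H}\|\lesssim\sigma\sqrt{n}$, exactly as you describe.
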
\begin{proof}See
Appendix \ref{subsec:Proof-of-Lemma-eigenvalues-M-slash-l}.\end{proof}

Lemma \ref{lem:eigenvalues-M-slash-l} can then be invoked to study
Lemma \ref{lemma:lambda-M-minus-spectrum}. Recalling the fact
\begin{align}
\lambda_{k}-\gamma(\lambda_{k}) & \in\mathcal{B}_{\mathcal{E}_{\mathsf{MD}}}(\lambda_{k}^{\star}),\qquad\qquad1\leq k\leq r\label{eq:lambda-k-LB-123}\\
|\lambda_{k}| & \leq\|\bm{H}\|\lesssim\sigma\sqrt{n},\qquad k>r\nonumber 
\end{align}
as shown in Theorem~\ref{thm:eigval-pertur-sym-iid}, we are positioned
to prove the claim (\ref{eq:lambdal-tilde-lambdak-LB}) as follows.
\begin{itemize}
\item For any $\lambda$ such that $|\lambda|\lesssim\sigma\sqrt{n}$, one
has
\begin{align*}
\big|\lambda_{l}-\lambda\big| & \geq\big|\lambda_{l}-\gamma(\lambda_{l})\big|-\big|\gamma(\lambda_{l})\big|-|\lambda|\overset{(\mathrm{i})}{\geq}\big|\lambda_{l}^{\star}\big|-\mathcal{E}_{\mathsf{MD}}-\big|\gamma(\lambda_{l})\big|-O(\sigma\sqrt{n})\\
 & \overset{(\mathrm{ii})}{\geq}\big|\lambda_{l}^{\star}\big|-O(\sigma\sqrt{r}\log n)-O\Big(\frac{\sigma^{2}n}{\lambda_{\min}^{\star}}\Big)-O(\sigma\sqrt{n})\overset{(\mathrm{iii})}{\gtrsim}\big|\lambda_{l}^{\star}\big|,
\end{align*}
where (i) arises from (\ref{eq:lambda-k-LB-123}) and (\ref{eq:tilde-lambda-UB3}),
(ii) follows since
\begin{equation}
\big|\gamma(\lambda_{l})\big|=\Bigg|\sum_{i}\frac{\sigma^{2}}{\lambda_{l}-\lambda_{i}\big((\bm{U}^{\star\perp})^{\top}\bm{H}\bm{U}^{\star\perp}\big)}\Bigg|\lesssim\frac{\sigma^{2}n}{|\lambda_{l}|-\|\bm{H}\|}\lesssim\frac{\sigma^{2}n}{\lambda_{\min}^{\star}},\label{eq:gamma-lambda-UB}
\end{equation}
and (iii) is valid as long as $\sigma\sqrt{r}\log n\leq c_{0}\lambda_{\min}^{\star}$
and $\sigma\sqrt{n}\leq c_{0}\lambda_{\min}^{\star}$ hold for some
small constant $c_{0}>0$. 
\item For any $\lambda$ satisfying $\lambda-\gamma(\lambda)\in\mathcal{B}_{\mathcal{E}_{\mathsf{MD}}}(\lambda_{k}^{\star})$
for some $1\leq k\leq r$, we define an auxiliary function $f:\pm[2\lambda_{\min}^{\star}/3,\,4\lambda_{\max}^{\star}/3]\rightarrow\mathbb{R}$
by
\begin{equation}
f(\lambda):=\lambda-\gamma(\lambda),\label{eq:def:function-f}
\end{equation}
where we denote $\pm[a,b]=[-b,-a]\cup[a,b]$ for $a<b$ and $\gamma(\cdot)$
is defined in (\ref{eq:definition-gamma-lambda-iid-pca}). To begin
with, for any $\lambda$ with $|\lambda|\in[\lambda_{\min}^{\star}/3,\,2\lambda_{\max}^{\star}]$
one has
\begin{align*}
f'(\lambda) & =1+\sum_{i}\frac{\sigma^{2}}{\big[\lambda-\lambda_{i}\big((\bm{U}^{\star\perp})^{\top}\bm{H}\bm{U}^{\star\perp}\big)\big]^{2}}\geq1-\frac{\sigma^{2}n}{(\tfrac{1}{3}\lambda_{\min}-\|\bm{H}\|)^{2}}\geq\frac{1}{2},\\
f'(\lambda) & \leq1+\frac{\sigma^{2}n}{(\tfrac{1}{3}\lambda_{\min}-\|\bm{H}\|)^{2}}\leq\frac{3}{2},
\end{align*}
with the proviso that $\sigma\sqrt{n}\leq c_{0}\lambda_{\min}^{\star}$
for some constant $c_{0}>0$ small enough. This means that within
the range $|\lambda|\in[2\lambda_{\min}^{\star}/3,\,4\lambda_{\max}^{\star}/3]$,
the function $f(\cdot)$ is monotonically increasing and continuous.
As a result, the inverse of $f(\cdot)$ exists, which is also monotonically
increasing and obeys
\begin{equation}
\frac{2}{3}\leq\frac{\mathrm{d}f^{-1}(\tau)}{\mathrm{d}\tau}\leq2\qquad\forall\tau\text{ with }|\tau|\in[\lambda_{\min}^{\star}/2,3\lambda_{\max}^{\star}/2].\label{eq:f-inverse-derivative-bound}
\end{equation}
 In view of (\ref{eq:lambda-l-mapping-iid}) and the condition that
$\lambda-\gamma(\lambda)\in\mathcal{B}_{\mathcal{E}_{\mathsf{MD}}}(\lambda_{k}^{\star})$
for some $1\leq k\leq r$, one can invoke (\ref{eq:f-inverse-derivative-bound})
to reach
\begin{align*}
\big|\lambda_{l}-\lambda\big| & =\big|f^{-1}(\lambda_{l}^{\star}+O(\mathcal{E}_{\mathsf{MD}}))-f^{-1}(\lambda_{k}^{\star}+O(\mathcal{E}_{\mathsf{MD}}))\big|\\
 & \geq\inf_{\tau:\,|\tau|\in[\lambda_{\min}^{\star}/2,\,3\lambda_{\max}^{\star}/2]}\Big|\frac{\mathrm{d}f^{-1}(\tau)}{\mathrm{d}\tau}\Big|\big|\lambda_{l}^{\star}-\lambda_{k}^{\star}+O(\mathcal{E}_{\mathsf{MD}})\big|\\
 & \geq\frac{2}{3}\,\big|\lambda_{l}^{\star}-\lambda_{k}^{\star}+O(\mathcal{E}_{\mathsf{MD}})\big|\gtrsim\big|\lambda_{l}^{\star}-\lambda_{k}^{\star}\big|,
\end{align*}
where the last inequality holds due to our eigen-gap assumption (\ref{eq:eigengap-condition-iid})
and the fact that $\mathcal{E}_{\mathsf{MD}}\asymp\sigma\sqrt{r}\log n$. 
\item With the analysis above, we note that (\ref{eq:lambdal-tilde-lambdak-LB})
is an immediate consequence of (\ref{eq:tilde-lambda-UB}) in Lemma~\ref{lem:eigenvalues-M-slash-l}.
\end{itemize}

\subsubsection{Proof of Lemma \ref{lem:eigenvalues-M-slash-l}}

\label{subsec:Proof-of-Lemma-eigenvalues-M-slash-l}

The proof of this lemma follows from the same argument employed to
establish Theorem \ref{thm:eigval-pertur-sym-iid}. The idea is to
invoke Theorem~\ref{thm:master-theorem-general} to analyze the spectrum
of $\bm{M}^{(l)}$. Before proceeding, we introduce several notation
tailored to this setting as well as a few simple facts. To begin with,
we define
\begin{align*}
\bm{M}^{\star(l)} & :=(\bm{u}_{l}^{\star\perp})^{\top}\bm{M}^{\star}\bm{u}_{l}^{\star\perp}=(\bm{u}_{l}^{\star\perp})^{\top}\bm{U}^{\star}\bm{\Lambda}^{\star}\bm{U}^{\star\top}\bm{u}_{l}^{\star\perp}=\bm{U}^{\star(l)}\bm{\Lambda}^{\star(l)}\bm{U}^{\star(l)\top},\\
\bm{H}^{(l)} & :=(\bm{u}_{l}^{\star\perp})^{\top}\bm{H}\bm{u}_{l}^{\star\perp},
\end{align*}
where we recall the definitions of $\bm{u}_{l}^{\star\perp}$ (resp.~$\bm{U}^{\star(l)}$
and $\bm{\Lambda}^{\star(l)}$) in (\ref{eq:def:u-l-star-perp}) (resp.~(\ref{eq:def:U-Lambda-star-l})).
In addition, denote
\begin{align*}
\bm{G}^{(l)}(\lambda) & :=\bm{U}^{\star(l)\top}\bm{H}^{(l)}\bm{U}^{\star(l)\perp}\big(\lambda\bm{I}_{n-r}-(\bm{U}^{\star(l)\perp})^{\top}\bm{H}^{(l)}\bm{U}^{\star(l)\perp}\big)^{-1}(\bm{U}^{\star(l)\perp})^{\top}\bm{H}^{(l)}\bm{U}^{\star(l)},\\
\bm{G}^{(l)\perp}(\lambda) & :=\mathbb{E}\big[\bm{G}^{(l)}(\lambda)\mid(\bm{U}^{\star(l)\perp})^{\top}\bm{H}^{(l)}\bm{U}^{\star(l)\perp}\big],
\end{align*}
where $\bm{U}^{\star(l)\perp}$ is defined in (\ref{eq:def:U-Lambda-star-l})
and the expectation is taken assuming that $\lambda$ is independent
of $\bm{H}$. By construction, one has $\bm{u}_{l}^{\star\perp}\bm{U}^{\star(l)\perp}=\bm{U}^{\star\perp}$,
and consequently
\begin{align}
(\bm{U}^{\star(l)\perp})^{\top}\bm{H}^{(l)}\bm{U}^{\star(l)\perp} & =(\bm{U}^{\star(l)\perp})^{\top}(\bm{u}_{l}^{\star\perp})^{\top}\bm{H}\bm{u}_{l}^{\star\perp}\bm{U}^{\star(l)\perp}=(\bm{U}^{\star\perp})^{\top}\bm{H}\bm{U}^{\star\perp},\nonumber \\
(\bm{U}^{\star(l)\perp})^{\top}\bm{H}^{(l)}\bm{U}^{\star(l)} & =(\bm{U}^{\star(l)\perp})^{\top}(\bm{u}_{l}^{\star\perp})^{\top}\bm{H}\bm{u}_{l}^{\star\perp}\bm{U}^{\star(l)}=(\bm{U}^{\star\perp})^{\top}\bm{H}\bm{U}^{\star}\bm{P}^{(l)},\label{eq:def:P-l}
\end{align}
where $\bm{P}^{(l)}$ is obtained by removing the $l$-th column of
$\bm{I}_{r}$, namely, 
\[
\bm{P}^{(l)}:=\begin{bmatrix}\bm{I}_{l-1} & \bm{0}\\
0 & 0\\
\bm{0} & \bm{I}_{r-l}
\end{bmatrix}\in\mathbb{R}^{r\times(r-1)}.
\]
 Therefore, $\bm{G}^{(l)}(\lambda)$ and $\bm{G}^{(l)\perp}(\lambda)$
admit the following simplified expressions
\begin{align*}
\bm{G}^{(l)}(\lambda) & =\bm{U}^{\star\top}\bm{u}_{l}^{\star\perp}(\bm{u}_{l}^{\star\perp})^{\top}\bm{H}\bm{U}^{\star\perp}\big(\lambda\bm{I}_{n-r}-(\bm{U}^{\star\perp})^{\top}\bm{H}\bm{U}^{\star\perp}\big)^{-1}(\bm{U}^{\star\perp})^{\top}\bm{H}\bm{u}_{l}^{\star\perp}(\bm{u}_{l}^{\star\perp})^{\top}\bm{U}^{\star}\\
 & =\bm{P}^{(l)\top}\bm{U}^{\star\top}\bm{H}\bm{U}^{\star\perp}\big(\lambda\bm{I}_{n-r}-(\bm{U}^{\star\perp})^{\top}\bm{H}\bm{U}^{\star\perp}\big)^{-1}(\bm{U}^{\star\perp})^{\top}\bm{H}\bm{U}^{\star}\bm{P}^{(l)}\\
 & =\bm{P}^{(l)\top}\bm{G}(\lambda)\bm{P}^{(l)}\\
\bm{G}^{(l)\perp}(\lambda) & =\mathbb{E}\big[\bm{G}^{(l)}(\lambda)\mid(\bm{U}^{\star\perp})^{\top}\bm{H}\bm{U}^{\star\perp}\big],
\end{align*}
where $\bm{G}(\lambda)$ is defined in (\ref{eq:defn-Glambda-1}).

With the above preparation in place, we can repeat the proof of Theorem
\ref{thm:eigval-pertur-sym-iid} to obtain
\begin{align*}
\big\|\bm{U}^{\star(l)\top}\bm{H}^{(l)}\bm{U}^{\star(l)}\big\| & \lesssim\sigma\big(\sqrt{r}+\sqrt{\log n}\big)\\
\sup_{\lambda:\,|\lambda|\in[2\lambda_{\min}^{\star}/3,\,4\lambda_{\max}^{\star}/3]}\big\|\bm{G}^{(l)}(\lambda)-\bm{G}^{(l)\perp}(\lambda)\big\| & \lesssim\frac{\sigma^{2}}{\lambda_{\min}^{\star}}\big(\sqrt{rn\log n}+r\log n\big)\\
\bm{G}^{(l)\perp}(\lambda) & =\gamma(\lambda)\bm{P}^{(l)\top}\bm{U}^{\star}\bm{U}^{\star\top}\bm{P}^{(l)}
\end{align*}
with probability at least $1-O(n^{-10}),$ where $\gamma(\cdot)$
is defined in (\ref{eq:definition-gamma-lambda-iid-pca}). The above
observations reveal that the $k$-th eigenvalue of $\bm{M}^{\star(l)}+\bm{G}^{(l)\perp}(\lambda)$
is given by
\[
\lambda_{k}\big(\bm{M}^{\star(l)}+\bm{G}^{(l)\perp}(\lambda)\big)=\begin{cases}
\lambda_{k}^{\star}+\gamma(\lambda), & 1\leq k\leq l-1;\\
\lambda_{k+1}^{\star}+\gamma(\lambda), & l\leq k\leq r-1;\\
0, & r\leq k.
\end{cases}
\]
As a result, repeating the same arguments of Theorem \ref{thm:eigval-pertur-sym-iid}
(which we omit for brevity) immediately establishes the claim of this
lemma. 

\subsection{Proof of Lemma \ref{lemma:lambda-M-l-inv-u-perp-H-u-l2-norm}}

\label{subsec:Proof-of-lemma:lambda-M-l-inv-u-perp-H-u-l2-norm}

Let $\bm{U}^{(l)}\bm{\Lambda}^{(l)}\bm{U}^{(l)\top}$ represent the
eigen-decomposition of $\bm{M}^{(l)}$, where $\bm{U}^{(l)}=\big[\bm{u}_{1}^{(l)},\cdots,\bm{u}_{n-1}^{(l)}\big]$.
We can derive
\begin{align}
\big\|\big(\lambda_{l}\bm{I}_{n-1}-\bm{M}^{(l)}\big)^{-1}(\bm{u}_{l}^{\star\perp})^{\top}\bm{H}\bm{u}_{l}^{\star}\big\|_{2}^{2} & =\big\|\bm{U}^{(l)}\big(\lambda_{l}\bm{I}_{n-1}-\bm{\Lambda}^{(l)}\big)^{-1}\bm{U}^{(l)\top}(\bm{u}_{l}^{\star\perp})^{\top}\bm{H}\bm{u}_{l}^{\star}\big\|_{2}^{2}\nonumber \\
 & =\big\|\big(\lambda_{l}\bm{I}_{n-1}-\bm{\Lambda}^{(l)}\big)^{-1}\bm{U}^{(l)\top}(\bm{u}_{l}^{\star\perp})^{\top}\bm{H}\bm{u}_{l}^{\star}\big\|_{2}^{2}\nonumber \\
 & =\sum_{1\le k<n}\bigg(\frac{\bm{u}_{k}^{(l)\top}(\bm{u}_{l}^{\star\perp})^{\top}\bm{H}\bm{u}_{l}^{\star}}{\lambda_{l}-\lambda_{k}^{(l)}}\bigg)^{2}.\label{eq:cos0}
\end{align}
By construction (cf.~(\ref{eq:definition-M-slash-l})), the matrix
$\bm{M}^{(l)}$ (and hence $\bm{\Lambda}^{(l)}$ and $\bm{U}^{(l)}$)
is independent of $(\bm{u}_{l}^{\star\perp})^{\top}\bm{H}\bm{u}_{l}^{\star}$,
thus indicating that 
\[
\bm{U}^{(l)\top}(\bm{u}_{l}^{\star\perp})^{\top}\bm{H}\bm{u}_{l}^{\star}\sim\mathcal{N}(0,\sigma^{2}\bm{I}_{n-1}).
\]
In addition, notice that the distribution of $\bm{U}^{(l)\top}(\bm{u}_{l}^{\star\perp})^{\top}\bm{H}\bm{u}_{l}^{\star}$
is independent with $\bm{U}^{(l)}$ and $\bm{\Lambda}^{(l)}$. In
what follows, we shall look at (\ref{eq:cos0}) by controlling the
sum over $k<r$ and the sum over $k\ge r$ separately.
\begin{itemize}
\item To begin with, let us upper bound $\sum_{1\le k<r}\Big(\frac{\bm{u}_{k}^{(l)\top}(\bm{u}_{l}^{\star\perp})^{\top}\bm{H}\bm{u}_{l}^{\star}}{\lambda_{l}-\lambda_{k}^{(l)}}\Big)^{2}$.
Given a sequence of i.i.d.~standard Gaussian random variables $Z_{i}\overset{\mathrm{i.i.d.}}{\sim}\mathcal{N}(0,\sigma^{2})$,
one knows from the standard Gaussian concentration inequality that
the following holds with probability at least $1-O(n^{-20})$:\begin{subequations}
\begin{align}
\max_{1\le i\leq n}|Z_{i}| & \lesssim\sigma\sqrt{\log n};\label{eq:Gaussian-UB}\\
\max_{1\le i\leq n}|Z_{i}^{2}-\sigma^{2}| & \leq\max_{1\le i\leq n}|Z_{i}-\sigma|\cdot\max_{1\le i\leq n}|Z_{i}+\sigma|\lesssim\sigma^{2}\log n.\label{eq:Gaussian-sq-UB}
\end{align}
\end{subequations}In addition, Lemma \ref{lemma:lambda-M-minus-spectrum}
tells us that $\min_{1\le k<r}\big|\lambda_{l}-\lambda_{k}^{(l)}\big|^{2}\gtrsim\min_{i:i\neq l}|\lambda_{l}^{\star}-\lambda_{i}^{\star}|^{2}$.
These two bounds taken together give
\begin{equation}
\sum_{1\le k<r}\bigg(\frac{\bm{u}_{k}^{(l)\top}(\bm{u}_{l}^{\star\perp})^{\top}\bm{H}\bm{u}_{l}^{\star}}{\lambda_{l}-\lambda_{k}^{(l)}}\bigg)^{2}\lesssim\frac{\sigma^{2}r\log n}{\min_{i:i\neq l}|\lambda_{l}^{\star}-\lambda_{i}^{\star}|^{2}}=\frac{\sigma^{2}r\log n}{\big(\Delta_{l}^{\star}\big)^{2}}.\label{eq:lambda-M-r-bound}
\end{equation}
\item Next, we move on to the remaining term (the sum over $r\leq k<n$).
We claim for the moment that: with probability exceeding $1-O(n^{-10})$,
\begin{align}
\bigg|\sum_{r\le k<n}\bigg(\frac{\bm{u}_{k}^{(l)\top}(\bm{u}_{l}^{\star\perp})^{\top}\bm{H}\bm{u}_{l}^{\star}}{\lambda_{l}-\lambda_{k}^{(l)}}\bigg)^{2}-\sum_{r\le k<n}\frac{\sigma^{2}}{(\lambda_{l}-\lambda_{k}^{(l)})^{2}}\bigg|\lesssim\frac{\sigma^{2}}{\lambda_{l}^{\star2}}\sqrt{n\log n},\label{eq:lambda-M-r-n-value-bound}
\end{align}
The proof of this claim is deferred to the end of this section. It
then suffices to control the term $\sum_{r\leq k<n}\sigma^{2}/(\lambda_{l}-\lambda_{k}^{(l)})^{2}$,
which is established in the lemma below (with the proof postponed
to Appendix \ref{subsec:Proof-of-Lemma-lambda-M-r-n}).

\begin{lemma}\label{lemma:lambda-M-r-n} Instate the assumptions
of Theorem \ref{thm:evector-pertur-sym-iid}. With probability at
least $1-O(n^{-10})$, 
\begin{equation}
\bigg|\sum_{r\le k\le n-1}\frac{1}{(\lambda_{l}-\lambda_{k}^{(l)})^{2}}-\sum_{r+1\le k\le n}\frac{1}{(\lambda_{l}-\lambda_{k})^{2}}\bigg|\lesssim\frac{1}{\lambda_{l}^{\star2}}.\label{eq:lambda-M-r-n-value}
\end{equation}
As a consequence, we have
\begin{equation}
\bigg|\sum_{r\le k\le n-1}\frac{1}{(\lambda_{l}-\lambda_{k}^{(l)})^{2}}\bigg|\vee\bigg|\sum_{r+1\le k\le n}\frac{1}{(\lambda_{l}-\lambda_{k})^{2}}\bigg|\lesssim\frac{n}{\lambda_{l}^{\star2}}.\label{eq:lambda-M-r-n-bound}
\end{equation}
\end{lemma}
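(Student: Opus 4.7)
The second bound \eqref{eq:lambda-M-r-n-bound} is immediate: by Lemma~\ref{lemma:lambda-M-minus-spectrum} (resp.~the spectral bound underlying Theorem~\ref{thm:eigval-pertur-sym-iid}), every noise eigenvalue $\lambda_k^{(l)}$ with $k\ge r$ (resp.~$\lambda_k$ with $k>r$) satisfies $|\lambda_k^{(l)}|\lesssim \sigma\sqrt{n}\ll |\lambda_l^\star|$, and together with $|\lambda_l|\asymp |\lambda_l^\star|$ (see \eqref{eq:lambdal-range}) this forces $|\lambda_l-\lambda_k^{(l)}|\asymp |\lambda_l^\star|$, so each of the $n-r$ summands is $O(1/\lambda_l^{\star 2})$. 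The substantive claim is therefore \eqref{eq:lambda-M-r-n-value}, which I plan to establish via Cauchy's interlacing theorem applied to $\bm{M}$ and $\bm{M}^{(l)}$.

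\smallskip
\noindent Since $\bm{M}^{(l)}=(\bm{u}_l^{\star\perp})^\top \bm{M}\,\bm{u}_l^{\star\perp}$ is the compression of $\bm{M}$ to a codimension-one subspace, an orthogonal change of basis makes $\bm{M}^{(l)}$ a principal $(n-1)\times(n-1)$ submatrix of a matrix similar to $\bm{M}$. Writing the eigenvalues in \emph{value} order as $\mu_1\ge\cdots\ge\mu_n$ for $\bm{M}$ and $\nu_1\ge\cdots\ge\nu_{n-1}$ for $\bm{M}^{(l)}$, Cauchy interlacing gives $\mu_k\ge \nu_k\ge \mu_{k+1}$ for every $k$. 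The plan is to translate the magnitude-ordered sums in \eqref{eq:lambda-M-r-n-value} into value-ordered sums, and then pair each $\nu_k$ in the noise block with $\mu_{k+1}$.

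\smallskip
\noindent Assume first that $\lambda_l>0$, and let $r_1,r_2$ denote the numbers of positive and negative signal eigenvalues, so $r_1+r_2=r$. By the eigen-gap condition \eqref{eq:eigengap-condition-iid} together with Lemmas~\ref{lemma:lambda-M-minus-spectrum} and~\ref{lem:eigenvalues-M-slash-l}, the signal eigenvalues of $\bm{M}$ occupy value-order positions $\{1,\dots,r_1\}\cup\{n-r_2+1,\dots,n\}$ and its noise block is $\{r_1+1,\dots,n-r_2\}$; removing the slot of $\lambda_l$ from $\bm{M}^{(l)}$ shifts the corresponding noise block to $\{r_1,\dots,n-1-r_2\}$. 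Setting $f(x):=1/(\lambda_l-x)^2$, the difference in \eqref{eq:lambda-M-r-n-value} equals $\sum_{k=r_1}^{n-1-r_2}\bigl[f(\nu_k)-f(\mu_{k+1})\bigr]$, where the pairing is legitimized by $\mu_{k+1}\le \nu_k\le \mu_k$.

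\smallskip
\noindent For an interior index $k\in\{r_1+1,\dots,n-2-r_2\}$, all three of $\mu_k,\mu_{k+1},\nu_k$ lie in the noise window $[-O(\sigma\sqrt{n}),O(\sigma\sqrt{n})]$, on which $|\lambda_l-x|\asymp |\lambda_l^\star|$ and hence $|f'(x)|\lesssim 1/|\lambda_l^\star|^3$; thus $|f(\nu_k)-f(\mu_{k+1})|\lesssim (\mu_k-\mu_{k+1})/|\lambda_l^\star|^3$, and summing telescopes to at most $(\mu_{r_1+1}-\mu_{n-1-r_2})/|\lambda_l^\star|^3\lesssim \sigma\sqrt{n}/|\lambda_l^\star|^3$. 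At the two boundary indices $k=r_1$ and $k=n-1-r_2$, the neighbour $\mu_{r_1}$ is a signal eigenvalue, but since $\nu_{r_1}$ itself is certified to be noise ($|\nu_{r_1}|\lesssim \sigma\sqrt{n}$) and $|\mu_{r_1+1}|\lesssim\sigma\sqrt{n}$, we still have $|\nu_{r_1}-\mu_{r_1+1}|\lesssim \sigma\sqrt{n}$, and likewise at the opposite boundary. Each boundary contribution is therefore again $\lesssim \sigma\sqrt{n}/|\lambda_l^\star|^3$, and invoking the noise condition $\sigma\sqrt{n}\lesssim |\lambda_l^\star|$ in \eqref{eq:eigengap-condition-iid} delivers the target bound $\lesssim 1/\lambda_l^{\star 2}$. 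The case $\lambda_l<0$ follows by an entirely symmetric argument (with the roles of the two signal wings swapped). The main obstacle I anticipate is the discrete bookkeeping at the signal/noise interface: establishing rigorously that removing $\bm{u}_l^\star$ shifts the noise index block by exactly one slot on the appropriate side requires carefully combining the eigenvalue correspondences from Lemmas~\ref{lemma:lambda-M-minus-spectrum}--\ref{lem:eigenvalues-M-slash-l} with the eigen-gap assumption, so that interlacing never straddles a signal with a noise eigenvalue; once this matching is locked in, the analytical estimate is routine.
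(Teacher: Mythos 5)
Your proposal is correct and takes essentially the same route as the paper's proof: both convert the magnitude-ordered sums into value-ordered sums over the noise blocks, invoke the Poincar\'e/Cauchy interlacing of $\bm{M}^{(l)}$ inside $\bm{M}$ (Lemma~\ref{lemma:eigval-interlacing}), and settle the signal/noise index bookkeeping via Weyl's inequality together with Lemmas~\ref{lemma:lambda-M-minus-spectrum} and \ref{lem:eigenvalues-M-slash-l}, splitting according to the sign of $\lambda_l^{\star}$ exactly as you anticipate. The only (harmless) difference is in the final estimate: you bound each paired difference through the derivative of $x\mapsto(\lambda_l-x)^{-2}$ on the noise window and telescope the gaps, invoking $\sigma\sqrt{n}\lesssim\lambda_{\min}^{\star}$ at the end, whereas the paper exploits the monotonicity of this map on $(-\infty,\lambda_l]$ to sandwich the two index-shifted sums, so the difference collapses to a single boundary term of order $1/\lambda_l^{\star2}$.
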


Therefore, combining (\ref{eq:lambda-M-r-n-value-bound}) and (\ref{eq:lambda-M-r-n-value})
gives
\begin{align}
 & \bigg|\sum_{r\le k\le n-1}\bigg(\frac{\bm{u}_{k}^{(l)\top}(\bm{u}_{l}^{\star\perp})^{\top}\bm{H}\bm{u}_{l}^{\star}}{\lambda_{l}-\lambda_{k}^{(l)}}\bigg)^{2}-\sum_{r+1\le k\le n}\frac{\sigma^{2}}{(\lambda_{l}-\lambda_{k})^{2}}\bigg|\nonumber \\
 & \qquad\lesssim\bigg|\sum_{r\le k\le n-1}\bigg(\frac{\bm{u}_{k}^{(l)\top}(\bm{u}_{l}^{\star\perp})^{\top}\bm{H}\bm{u}_{l}^{\star}}{\lambda_{l}-\lambda_{k}^{(l)}}\bigg)^{2}-\sum_{r\le k<n}\frac{\sigma^{2}}{(\lambda_{l}-\lambda_{k}^{(l)})^{2}}\bigg|+\bigg|\sum_{r\le k<n}\frac{\sigma^{2}}{(\lambda_{l}-\lambda_{k}^{(l)})^{2}}-\sum_{r+1\le k\le n}\frac{\sigma^{2}}{(\lambda_{l}-\lambda_{k})^{2}}\bigg|\nonumber \\
 & \qquad\lesssim\frac{\sigma^{2}}{\lambda_{l}^{\star2}}\sqrt{n\log n}+\frac{\sigma^{2}}{\lambda_{l}^{\star2}}\asymp\frac{\sigma^{2}}{\lambda_{l}^{\star2}}\sqrt{n\log n}.\label{eq:cos_r_n}
\end{align}

\item Inserting (\ref{eq:lambda-M-r-bound}) and (\ref{eq:cos_r_n}) into
(\ref{eq:cos0}), we arrive at the advertised bound:
\begin{align*}
\big\|\big(\lambda_{l}\bm{I}_{n-1}-\bm{M}^{(l)}\big)^{-1}(\bm{u}_{l}^{\star\perp})^{\top}\bm{H}\bm{u}_{l}^{\star}\big\|_{2}^{2} & =\sum_{r+1\le k\le n}\frac{\sigma^{2}}{(\lambda_{l}-\lambda_{k})^{2}}+O\bigg(\frac{\sigma^{2}r\log n}{\big(\Delta_{l}^{\star}\big)^{2}}+\frac{\sigma^{2}}{\lambda_{l}^{\star2}}\sqrt{n\log n}\bigg)\\
 &\asymp\frac{\sigma^{2}n}{\lambda_{l}^{\star2}}+O\bigg(\frac{\sigma^{2}r\log n}{\big(\Delta_{l}^{\star}\big)^{2}}\bigg)\ll1,
\end{align*}
where the last inequality holds due to our noise assumption (\ref{eq:eigengap-condition-iid}). 
\end{itemize}

\paragraph{Proof of the claim (\ref{eq:lambda-M-r-n-value-bound}). }

Note that $\bm{u}_{k}^{(l)\top}(\bm{u}_{l}^{\star\perp})^{\top}\bm{H}\bm{u}_{l}^{\star}\overset{\mathsf{i.i.d.}}{\sim}\mathcal{N}(0,\sigma^{2})$
is independent of $\bm{\Lambda}^{(l)}$ but depends on $\bm{\Lambda}$.
Therefore, we shall use the epsilon-net argument (i.e.~Lemma \ref{lemma:eps-net})
to bound it. Before proceeding, observe that from (\ref{eq:lambdal-tilde-lambdak-LB}),
(\ref{eq:gamma-lambda-UB}) and the condition $\sigma\sqrt{n}\ll\lambda_{\min}^{\star}$,
the following holds for any $\lambda$ obeying $\lambda-\gamma(\lambda)\in\mathcal{B}_{\mathcal{E}_{\mathsf{MD}}}(\lambda_{l}^{\star})$:
\begin{equation}
|\lambda-\lambda_{k}^{(l)}|\ge|\lambda_{l}-\lambda_{k}^{(l)}|-|\lambda-\lambda_{l}-\gamma(\lambda_{l})|-|\gamma(\lambda_{l})|\gtrsim|\lambda_{l}^{\star}|,\qquad k\geq r.\label{eq:lambda-lambda-kl-LB}
\end{equation}
Now we begin to check the conditions of Lemma \ref{lemma:eps-net}.
Since $|f(x)-f(y)|\leq\sup_{x}|f'(x)||x-y|$, the following holds
with probability at least $1-O(n^{-20})$ for all $\lambda$ with
$\lambda-\gamma(\lambda)\in\mathcal{B}_{\mathcal{E}_{\mathsf{MD}}}(\lambda_{l}^{\star})$:

\begin{align*}
\bigg|\frac{\mathrm{d}}{\mathrm{d}\lambda}\sum_{r\le k<n}\frac{\big(\bm{u}_{k}^{(l)\top}(\bm{u}_{l}^{\star\perp})^{\top}\bm{H}\bm{u}_{l}^{\star}\big)^{2}-\sigma^{2}}{(\lambda-\lambda_{k}^{(l)})^{2}}\bigg| & =\bigg|\sum_{r\le k<n}\frac{\big(\bm{u}_{k}^{(l)\top}(\bm{u}_{l}^{\star\perp})^{\top}\bm{H}\bm{u}_{l}^{\star}\big)^{2}-\sigma^{2}}{(\lambda-\lambda_{k}^{(l)})^{3}}\Big|\\
 & \leq n\cdot\max_{r\le k<n}\frac{1}{\big|\lambda-\lambda_{k}^{(l)}\big|^{3}}\cdot\max_{r\le k<n}\big|\bm{u}_{k}^{(l)\top}(\bm{u}_{l}^{\star\perp})^{\top}\bm{H}\bm{u}_{l}^{\star}\big)^{2}-\sigma^{2}\big|\\
 & \lesssim\frac{\sigma^{2}n\log n}{\lambda_{l}^{\star3}},
\end{align*}
where we use (\ref{eq:Gaussian-sq-UB}) and (\ref{eq:lambda-lambda-kl-LB}).
In addition, for any fixed $\lambda$ obeying $\lambda-\gamma(\lambda)\in\mathcal{B}_{\mathcal{E}_{\mathsf{MD}}}(\lambda_{l}^{\star})$,
one has
\begin{align*}
\max_{r\le k<n}\frac{1}{(\lambda-\lambda_{k}^{(l)})^{2}}\big\|\big(\bm{u}_{k}^{(l)\top}(\bm{u}_{l}^{\star\perp})^{\top}\bm{H}\bm{u}_{l}^{\star}\big)^{2}-\sigma^{2}\big\|_{\psi_{1}} & \lesssim\frac{\sigma^{2}}{\lambda_{l}^{\star2}}=:L;\\
\sum_{r\leq k<n}\frac{1}{(\lambda-\lambda_{k}^{(l)})^{4}}\mathbb{E}\Big[\big(\big(\bm{u}_{k}^{(l)\top}(\bm{u}_{l}^{\star\perp})^{\top}\bm{H}\bm{u}_{l}^{\star}\big)^{2}-\sigma^{2}\big)^{2} & \Big]\lesssim\frac{\sigma^{4}n}{\lambda_{l}^{\star4}}=:V
\end{align*}
where $\|\cdot\|_{\psi_{1}}$ denote the sub-exponential norm. We
can then apply the matrix Bernstein inequality \cite[Corollary 2.1]{Koltchinskii2011oracle}
to find: with probability exceeding $1-O(n^{-20})$, 
\begin{align*}
\bigg|\sum_{r\le k<n}\frac{\big(\bm{u}_{k}^{(l)\top}(\bm{u}_{l}^{\star\perp})^{\top}\bm{H}\bm{u}_{l}^{\star}\big)^{2}-\sigma^{2}}{(\lambda-\lambda_{k}^{(l)})^{2}} & \bigg|\lesssim L\log^{2}n+\sqrt{V\log n}\lesssim\frac{\sigma^{2}\sqrt{n\log n}}{\lambda_{l}^{\star2}}.
\end{align*}
Recognizing the fact that $\{\lambda\colon\lambda-\gamma(\lambda)\in\mathcal{B}_{\mathcal{E}_{\mathsf{MD}}}(\lambda_{l}^{\star})\}\subset\big[\lambda_{l}^{\star}-|\lambda_{l}^{\star}|/3,\,\lambda_{l}^{\star}+|\lambda_{l}^{\star}|/3\big]$,
the claim (\ref{eq:lambda-lambda-kl-LB}) immediately follows from
Lemma \ref{lemma:eps-net}.

\subsubsection{Proof of Lemma \ref{lemma:lambda-M-r-n}}

\label{subsec:Proof-of-Lemma-lambda-M-r-n}

Let us look at (\ref{eq:lambda-M-r-n-bound}) first. According to
Lemma \ref{lemma:lambda-M-minus-spectrum}, we have $|\lambda_{l}-\lambda_{k}^{(l)}|\gtrsim|\lambda_{l}^{\star}|$
for all $k\ge r$, thus leading to
\[
\bigg|\sum_{r\le k\le n-1}\frac{1}{(\lambda_{l}-\lambda_{k}^{(l)})^{2}}\bigg|\lesssim\frac{n}{\lambda_{l}^{\star2}}.
\]
In addition, the upper bound for $\sum_{r\le k\le n-1}1/(\lambda_{l}-\lambda_{k})^{2}$
is an immediate consequence of (\ref{eq:lambda-M-r-n-value}). Therefore,
the remainder of the proof amounts to establishing (\ref{eq:lambda-M-r-n-value}),
which requires us to characterize the relation between the spectrums
of $\bm{M}^{(l)}=(\bm{u}_{l}^{\star\perp})^{\top}\bm{M}\bm{u}_{l}^{\star\perp}$
and $\bm{M}$.

Without loss of generality, assume that $\lambda_{l}^{\star}>0$,
and that there are $m$ (resp.~$r-m$) eigenvalues of $\bm{M}^{\star}$
larger (resp.~smaller) than $0$. By Weyl's inequality (similar to
(\ref{eq:lambdal-range}) in the proof of Theorem \ref{thm:eigval-pertur-sym-iid}),
it is easily seen that there are $m$ eigenvalues of $\bm{M}$ larger
than $c\sigma\sqrt{n}$ and that there are $r-m$ eigenvalues of $\bm{M}$
smaller than $-c\sigma\sqrt{n}$, where $c>0$ is some constant. Recalling
that $\{\lambda_{k}\}_{k=1}^{n}$ are defined as the eigenvalues of
$\bm{M}$ satisfying $|\lambda_{1}|\geq\cdots\geq|\lambda_{n}|$,
we further denote by $\{\phi_{k}\}_{k=1}^{n}$ the eigenvalues of
$\bm{M}$ so that $\phi_{1}\ge\cdots\ge\phi_{n}$. Consider the set
of eigenvalues of $\bm{M}$ with magnitudes upper bounded by $c\sigma\sqrt{n}$.
We have the following relation:
\begin{equation}
\sum_{k:\,r+1\leq k\le n}\frac{1}{(\lambda_{l}-\lambda_{k})^{2}}=\sum_{k:\,m+1\leq k\le n-r+m}\frac{1}{(\lambda_{l}-\phi_{k})^{2}}.
\end{equation}
Similarly, for $\bm{M}^{(l)}$ we can write
\begin{equation}
\sum_{k:\,r\le k\le n-1}\frac{1}{(\lambda_{l}-\lambda_{k}^{(l)})^{2}}=\sum_{k:\,m\le k\le n-r+m-1}\frac{1}{(\lambda_{l}-\phi_{k}^{(l)})^{2}},
\end{equation}
where $\{\phi_{k}^{(l)}\}_{k=1}^{n-1}$ denote the eigenvalues of
$\bm{M}^{(l)}$ in descending order. As a result, in order to establish
(\ref{eq:lambda-M-r-n-value}), it is sufficient to show
\begin{equation}
\Big|\sum_{k:\,m\le k\le n-r+m-1}\frac{1}{(\lambda_{l}-\phi_{k}^{(l)})^{2}}-\sum_{k:\,m+1\le k\le n-r+m}\frac{1}{(\lambda_{l}-\phi_{k})^{2}}\Big|\lesssim\frac{1}{\lambda_{l}^{\star2}}.
\end{equation}

In view of an eigenvalue interlacing result stated in Lemma \ref{lemma:eigval-interlacing},
the definition $\bm{M}^{(l)}:=(\bm{u}_{l}^{\star\perp})^{\top}\bm{M}\bm{u}_{l}^{\star\perp}$
allows us to deduce that
\begin{equation}
\phi_{k+1}\le\phi_{k}^{(l)}\le\phi_{k}\qquad1\leq k<n.\label{eq:interlacing}
\end{equation}
By the assumption $\lambda_{l}^{\star}>0$, one has $\lambda_{l}\geq\lambda_{l}^{\star}-\|\bm{H}\|\gtrsim\lambda_{l}^{\star}$
and thus $\phi_{k}\leq\lambda_{l}$ for all $k\ge m$. Consequently,
we know from (\ref{eq:interlacing}) that for all $k\geq m$, 
\[
\phi_{k+1}\le\phi_{k}^{(l)}\leq\phi_{k}\leq\lambda_{l},
\]
which further implies that
\[
\frac{1}{(\lambda_{l}-\phi_{k+1})^{2}}\le\frac{1}{(\lambda_{l}-\phi_{k}^{(l)})^{2}}\le\frac{1}{(\lambda_{l}-\phi_{k})^{2}},\qquad k\geq m.
\]
This enables us to bound
\begin{align*}
\sum_{k:\,m\le k\le n-r+m-1}\frac{1}{(\lambda_{l}-\phi_{k}^{(l)})^{2}} & \leq\frac{1}{(\lambda_{l}-\phi_{m}^{(l)})^{2}}+\sum_{k:\,m+1\le k\le n-r+m-1}\frac{1}{(\lambda_{l}-\phi_{k})^{2}};\\
\sum_{k:\,m\le k\le n-r+m-1}\frac{1}{(\lambda_{l}-\phi_{k}^{(l)})^{2}} & \geq\sum_{k:\,m\le k\le n-r+m-1}\frac{1}{(\lambda_{l}-\phi_{k+1})^{2}}=\sum_{k:\,m+1\le k\le n-r+m}\frac{1}{(\lambda_{l}-\phi_{k})^{2}}.
\end{align*}
Consequently, we conclude that
\[
\Big|\sum_{k:\,m\le k\le n-r+m-1}\frac{1}{(\lambda_{l}-\phi_{k}^{(l)})^{2}}-\sum_{k:\,m+1\le k\le n-r+m}\frac{1}{(\lambda_{l}-\phi_{k})^{2}}\Big|\le\frac{1}{(\lambda_{l}-\phi_{m}^{(l)})^{2}}\asymp\frac{1}{\lambda_{l}^{\star2}},
\]
where the last relation holds since $|\phi_{m}^{(l)}|\le\|\bm{H}\|$
and hence $\lambda_{l}-\phi_{m}^{(l)}\asymp\lambda_{l}\asymp\lambda_{l}^{\star}$. 

The above analysis can be easily adopted to handle the case where
$\lambda_{l}^{\star}<0$ as well (which we omit here for brevity).
Therefore, we have finished the proof.

\subsection{Proof of Lemma \ref{lemma:a-top-P-Uk}}

\label{subsec:Proof-of-lemma:a-top-P-Uk}

Since $(\bm{u}_{l}^{\star\perp})^{\top}\bm{H}\bm{u}_{l}^{\star}\sim\mathcal{N}(\bm{0},\sigma^{2}\bm{I}_{n-1})$
is a Gaussian random vector independent from $\bm{M}^{(l)}$ but dependent
on $\lambda_{l}$, our proof strategy is to apply Lemma \ref{lemma:eps-net}. 

To this end, recall the definition 
\[
\bm{u}_{k}^{\star(l)}:=(\bm{u}_{l}^{\star\perp})^{\top}\bm{u}_{k}^{\star}
\]
 and the eigen-decomposition of 
\[
\bm{M}^{(l)}=\bm{U}^{(l)}\bm{\Lambda}^{(l)}\bm{U}^{(l)\top}=\sum_{1\leq i<n}\lambda_{i}^{(l)}\bm{u}_{i}^{(l)}\bm{u}_{i}^{(l)\top}.
\]
To begin with, we claim that with probability at least $1-O(n^{-10})$:
\begin{equation}
V:=\sup_{\lambda-\gamma(\lambda)\in\mathcal{B}_{\mathcal{E}_{\mathsf{MD}}}(\lambda_{l}^{\star})}\bigg\|\sum_{k:k\ne l}\bm{a}^{\top}\bm{u}_{k}^{\star}\bm{u}_{k}^{\star(l)\top}\big(\lambda\bm{I}-\bm{M}^{(l)}\big)^{-1}\bigg\|_{2}\lesssim\sum_{k:k\ne l}\frac{|\bm{a}^{\top}\bm{u}_{k}^{\star}|\sqrt{r}}{|\lambda_{l}^{\star}-\lambda_{k}^{\star}|}+\frac{1}{|\lambda_{l}^{\star}|}.\label{eq:claim:atukukt-lambda-inv-l2-norm}
\end{equation}
whose proof is postponed to the end of the section. Consequently,
we can invoke standard Gaussian concentration inequalities to obtain:
with probability at least $1-O\big(\kappa^{-10}(\lambda_{\max}/\Delta_{l}^{\star})^{-20}n^{-20}\big)$,
\begin{align*}
\bigg|(\bm{u}_{l}^{\star\perp})^{\top}\bm{H}\bm{u}_{l}^{\star}\cdot\sum_{k:k\ne l}\bm{a}^{\top}\bm{u}_{k}^{\star}\bm{u}_{k}^{\star\top}\bm{u}_{l}^{\star\perp}\big(\lambda\bm{I}-\bm{M}^{(l)}\big)^{-1}\bigg| & \lesssim\sigma\sqrt{\log\bigg(\frac{n\kappa\lambda_{\max}}{\Delta_{l}^{\star}}\bigg)}\cdot V.
\end{align*}

In addition, we collect a basic fact regarding the derivatives of
matrices: for any invertible matrix $\bm{A}$,
\[
\frac{\mathrm{d}\bm{A}^{-1}}{\mathrm{d}x}=-\bm{A}^{-1}\frac{\mathrm{d}\bm{A}}{\mathrm{d}x}\bm{A}^{-1}.
\]
With this identity in mind, one can derive: with probability at least
$1-O(n^{-10})$, for all $\lambda$ with $\lambda-\gamma(\lambda)\in\mathcal{B}_{\mathcal{E}_{\mathsf{MD}}}(\lambda_{l}^{\star})$,

\begin{align*}
 & \bigg|\frac{\mathrm{d}}{\mathrm{d}\lambda}\sum_{k:k\ne l}\bm{a}^{\top}\bm{u}_{k}^{\star}\bm{u}_{k}^{\star\top}\bm{u}_{l}^{\star\perp}\big(\lambda\bm{I}-\bm{M}^{(l)}\big)^{-1}(\bm{u}_{l}^{\star\perp})^{\top}\bm{H}\bm{u}_{l}^{\star}\bigg|\\
 & \qquad=\bigg|\sum_{k:k\ne l}\bm{a}^{\top}\bm{u}_{k}^{\star}\bm{u}_{k}^{\star\top}\bm{u}_{l}^{\star\perp}\big(\lambda\bm{I}-\bm{M}^{(l)}\big)^{-2}(\bm{u}_{l}^{\star\perp})^{\top}\bm{H}\bm{u}_{l}^{\star}\bigg|\\
 & \qquad\overset{}{\leq}n\cdot\max_{1\leq i<n}\frac{1}{(\lambda-\lambda_{i}^{(l)})^{2}}\cdot\bigg\|\sum_{k:k\ne l}\bm{a}^{\top}\bm{u}_{k}^{\star}\bm{u}_{k}^{\star\top}\bm{u}_{l}^{\star\perp}\bigg\|_{2}\cdot\big\|(\bm{u}_{l}^{\star\perp})^{\top}\bm{H}\bm{u}_{l}^{\star}\big\|_{2}\\
 & \qquad\overset{(\mathrm{i})}{\lesssim}n\cdot\frac{1}{\Delta_{l}^{\star2}\wedge\lambda_{l}^{\star2}}\cdot\sum_{k:k\ne l}|\bm{a}^{\top}\bm{u}_{k}^{\star}|\,\big\|\bm{u}_{k}^{\star\top}\bm{u}_{l}^{\star\perp}\big\|_{2}\cdot\sigma\sqrt{n\log n}\\
 & \qquad\overset{(\mathrm{ii})}{\leq}n^{3/2}\cdot\frac{\max_{i:i\neq l}|\lambda_{l}^{\star}-\lambda_{i}^{\star}|}{\Delta_{l}^{\star2}\wedge\lambda_{l}^{\star2}}\cdot\sigma\sqrt{\log n}\sum_{k:k\neq l}\frac{\big|\bm{a}^{\top}\bm{u}_{k}^{\star}\big|}{|\lambda_{l}^{\star}-\lambda_{k}^{\star}|}\\
 & \qquad\overset{(\mathrm{iii})}{\lesssim}n^{3/2}\cdot\bigg(\frac{\lambda_{\max}^{\star2}}{\Delta_{l}^{\star2}}+\kappa^{2}\bigg)\cdot\frac{1}{|\lambda_{l}^{\star}|}\cdot\sigma\sqrt{\log n}\sum_{k:k\neq l}\frac{\big|\bm{a}^{\top}\bm{u}_{k}^{\star}\big|}{|\lambda_{l}^{\star}-\lambda_{k}^{\star}|}\\
 & \qquad\overset{(\mathrm{iv})}{\lesssim}n^{3/2}\cdot\frac{\kappa^{2}\lambda_{\max}^{\star2}}{\Delta_{l}^{\star2}}\cdot\frac{1}{|\lambda_{l}^{\star}|}\cdot V.
\end{align*}
Here, (i) arises from (\ref{eq:lambdal-tilde-lambdak-LB}) in Lemma~\ref{lemma:lambda-S-minus-spectrum-1-1}
and the standard Gaussian concentration inequality; (ii) holds since
$\big\|\bm{u}_{k}^{\star\top}\bm{u}_{l}^{\star\perp}\big\|_{2}\leq\|\bm{u}_{k}^{\star}\|_{2}\|\bm{u}_{l}^{\star\perp}\|\leq1$;
(iii) is due to $\max_{i:i\neq l}|\lambda_{l}^{\star}-\lambda_{i}^{\star}|\lesssim\lambda_{\max}^{\star}$;
(iv) uses the definition of $V$ in (\ref{eq:claim:atukukt-lambda-inv-l2-norm}).
Moreover, it is easy to see that $\{\lambda\colon\lambda-\gamma(\lambda)\in\mathcal{B}_{\mathcal{E}_{\mathsf{MD}}}(\lambda_{l}^{\star})\}\subset\big[\lambda_{l}^{\star}-|\lambda_{l}^{\star}|/3,\,\lambda_{l}^{\star}+|\lambda_{l}^{\star}|/3\big]$.
As a consequence, we invoke Lemma \ref{lemma:eps-net} and the union
bound to find: with probability at least $1-O(n^{-10})$:
\begin{align*}
 & \Big|\sum_{k:k\ne l}\bm{a}^{\top}\bm{u}_{k}^{\star}\bm{u}_{k}^{\star(l)\top}\big(\lambda_{l}\bm{I}-\bm{M}^{(l)}\big)^{-1}(\bm{u}_{l}^{\star\perp})^{\top}\bm{H}\bm{u}_{l}^{\star}\Big|\lesssim\sigma\sqrt{r\log\bigg(\frac{n\kappa\lambda_{\max}}{\Delta_{l}^{\star}}\bigg)}\cdot V\\
 & \qquad\qquad\qquad\lesssim\sigma\sqrt{r\log\bigg(\frac{n\kappa\lambda_{\max}}{\Delta_{l}^{\star}}\bigg)}\sum_{k:k\neq l}\frac{\big|\bm{a}^{\top}\bm{u}_{k}^{\star}\big|}{|\lambda_{l}^{\star}-\lambda_{k}^{\star}|}+\frac{\sigma}{|\lambda_{l}^{\star}|}\sqrt{\log\bigg(\frac{n\kappa\lambda_{\max}}{\Delta_{l}^{\star}}\bigg)}
\end{align*}
as claimed.

The remainder of this section amounts to establishing (\ref{eq:claim:atukukt-lambda-inv-l2-norm}),
and we shall work under the event where Lemma~\ref{lemma:lambda-S-minus-spectrum-1-1}
holds, which happens with probability at least $1-O(n^{-10})$. Note
that for any $\lambda$ such that $\lambda-\gamma(\lambda)\in\mathcal{B}_{\mathcal{E}_{\mathsf{MD}}}(\lambda_{l}^{\star})$,
one can express
\begin{align}
\bigg\|\sum_{k:k\ne l}\bm{a}^{\top}\bm{u}_{k}^{\star}\bm{u}_{k}^{\star(l)\top}\big(\lambda\bm{I}-\bm{M}^{(l)}\big)^{-1}\bigg\|_{2} & =\bigg\|\sum_{k:k\ne l}\bm{a}^{\top}\bm{u}_{k}^{\star}\bm{u}_{k}^{\star(l)\top}\bm{U}^{(l)}\big(\lambda\bm{I}-\bm{\Lambda}^{(l)}\big)^{-1}\bm{U}^{(l)\top}\bigg\|_{2}\nonumber \\
 & =\bigg\|\sum_{k:k\ne l}\bm{a}^{\top}\bm{u}_{k}^{\star}\bm{u}_{k}^{\star(l)\top}\bm{U}^{(l)}\big(\lambda\bm{I}-\bm{\Lambda}^{(l)}\big)^{-1}\bigg\|_{2}\nonumber \\
 & =\sqrt{\sum_{1\leq i<n}\bigg(\frac{1}{\lambda-\lambda_{i}^{(l)}}\sum_{k:k\ne l}\bm{a}^{\top}\bm{u}_{k}^{\star}\bm{u}_{k}^{\star(l)\top}\bm{u}_{i}^{(l)}\bigg)^{2}}.\label{eq:claim:atukukt-lambda-inv-l2-norm-temp}
\end{align}
In what follows, we shall control the sum over $i<r$ and the sum
over $i\geq r$ separately.
\begin{itemize}
\item Let us consider the sum over $i\geq r$ first. According to Lemma
\ref{lemma:lambda-M-minus-spectrum}, we know that $|\lambda-\lambda_{i}^{(l)}|\gtrsim|\lambda_{l}^{\star}|$
for all $i\geq r$. This in turn yields
\begin{align}
\sqrt{\sum_{r\leq i\leq n-1}\bigg(\frac{1}{\lambda-\lambda_{i}^{(l)}}\sum_{k:k\ne l}\bm{a}^{\top}\bm{u}_{k}^{\star}\bm{u}_{k}^{\star(l)\top}\bm{u}_{i}^{(l)}\bigg)^{2}} & \lesssim\frac{1}{|\lambda_{l}^{\star}|}\sqrt{\sum_{r\leq i\leq n-1}\bigg(\sum_{k:k\ne l}\bm{a}^{\top}\bm{u}_{k}^{\star}\bm{u}_{k}^{\star(l)\top}\bm{u}_{i}^{(l)}\bigg)^{2}}\nonumber \\
 & \le\frac{1}{|\lambda_{l}^{\star}|}\Big\|\sum_{k:k\ne l}\bm{a}^{\top}\bm{u}_{k}^{\star}\bm{u}_{k}^{\star(l)\top}\bm{U}^{(l)}\Big\|_{2}\nonumber \\
 & \leq\frac{1}{|\lambda_{l}^{\star}|}\|\bm{a}\|_{2}\cdot\Big\|\sum_{k:k\ne l}\bm{u}_{k}^{\star}\bm{u}_{k}^{\star(l)\top}\Big\|\cdot\|\bm{U}^{(l)}\|=\frac{1}{|\lambda_{l}^{\star}|}.\label{eq:claim:atukukt-lambda-inv-l2-norm-sum1}
\end{align}
Here, we make use of the fact that $\|\bm{a}\|_{2}=1$, $\|\bm{U}^{(l)}\|=1$
as well as $\big\|\sum_{k:k\ne l}\bm{u}_{k}^{\star}\bm{u}_{k}^{\star(l)\top}\big\|=1$,
since both $\{\bm{u}_{k}^{\star}\}$ and $\{\bm{u}_{k}^{\star(l)}\}_{k:k\neq l}$
form orthonormal bases. 
\item We then move on to the sum over the range $1\leq i<r$. Given that
$\{\bm{u}_{i}^{(l)}\}_{i}$ are orthonormal, it is straightforward
to demonstrate that
\begin{align}
\sqrt{\sum_{1\leq i<r}\bigg(\frac{1}{\lambda-\lambda_{i}^{(l)}}\sum_{k:k\ne l}\bm{a}^{\top}\bm{u}_{k}^{\star}\bm{u}_{k}^{\star(l)\top}\bm{u}_{i}^{(l)}\bigg)^{2}} & =\bigg\|\sum_{1\leq i<r}\bigg(\frac{1}{\lambda-\lambda_{i}^{(l)}}\sum_{k:k\ne l}\bm{a}^{\top}\bm{u}_{k}^{\star}\bm{u}_{k}^{\star(l)\top}\bm{u}_{i}^{(l)}\bigg)\bm{u}_{i}^{(l)}\bigg\|_{2}\nonumber \\
 & =\bigg\|\sum_{k:k\ne l}\bm{a}^{\top}\bm{u}_{k}^{\star}\sum_{1\leq i<r}\frac{\bm{u}_{k}^{\star(l)\top}\bm{u}_{i}^{(l)}}{\lambda-\lambda_{i}^{(l)}}\bm{u}_{i}^{(l)}\bigg\|_{2}\nonumber \\
 & \le\sum_{k:k\ne l}\big|\bm{a}^{\top}\bm{u}_{k}^{\star}\big|\cdot\bigg\|\sum_{1\leq i<r}\frac{\bm{u}_{k}^{\star(l)\top}\bm{u}_{i}^{(l)}}{\lambda-\lambda_{i}^{(l)}}\bm{u}_{i}^{(l)}\bigg\|_{2}\nonumber \\
 & =\sum_{k:k\ne l}\big|\bm{a}^{\top}\bm{u}_{k}^{\star}\big|\sqrt{\sum_{1\leq i<r}\bigg(\frac{\bm{u}_{k}^{\star(l)\top}\bm{u}_{i}^{(l)}}{\lambda-\lambda_{i}^{(l)}}\bigg)^{2}}.\label{eq:claim:atukukt-lambda-inv-l2-norm-temp-1}
\end{align}
The preceding inequality motivates us to control the quantity $\sum_{1\leq i<r}\Big(\frac{\bm{u}_{k}^{\star(l)\top}\bm{u}_{i}^{(l)}}{\lambda-\lambda_{i}^{(l)}}\Big)^{2}$.
Towards this, let us decompose it as follows 
\begin{align*}
\sum_{1\leq i<r}\bigg(\frac{\bm{u}_{k}^{\star(l)\top}\bm{u}_{i}^{(l)}}{\lambda-\lambda_{i}^{(l)}}\bigg)^{2}=\sum_{i\in\mathcal{A}_{1}}\bigg(\frac{\bm{u}_{k}^{\star(l)\top}\bm{u}_{i}^{(l)}}{\lambda-\lambda_{i}^{(l)}}\bigg)^{2}+\sum_{i\in\mathcal{A}_{2}}\bigg(\frac{\bm{u}_{k}^{\star(l)\top}\bm{u}_{i}^{(l)}}{\lambda-\lambda_{i}^{(l)}}\bigg)^{2}.
\end{align*}
Here, the sets $\mathcal{A}_{1}$ and $\mathcal{A}_{2}$ are defined
respectively by
\begin{align*}
\mathcal{A}_{1} & :=\{1\leq i<r\mid\lambda_{i}^{(l)}-\gamma(\lambda_{i}^{(l)})\in\mathcal{B}_{\mathcal{E}_{k}}(\lambda_{k}^{\star})\},\\
\mathcal{A}_{2} & :=\{1\leq i<r\mid\lambda_{i}^{(l)}-\gamma(\lambda_{i}^{(l)})\notin\mathcal{B}_{\mathcal{E}_{k}}(\lambda_{k}^{\star})\},
\end{align*}
where $\mathcal{E}_{k}:=c\,|\lambda_{l}^{\star}-\lambda_{k}^{\star}|$
for some sufficiently small constant $c>0$. In the sequel, we shall
control these two sums separately.
\begin{itemize}
\item For each $i\in\mathcal{A}_{1}$, we claim that
\begin{equation}
|\lambda-\lambda_{i}^{(l)}|\ge\big|\lambda-f^{-1}(\lambda_{k}^{\star})\big|-\big|f^{-1}(\lambda_{k}^{\star})-\lambda_{i}^{(l)}\big|\ge\frac{1}{2}\,\big|\lambda-f^{-1}(\lambda_{k}^{\star})\big|\gtrsim|\lambda_{l}^{\star}-\lambda_{k}^{\star}|.\label{eq:lambda-lambdal-LB}
\end{equation}
To see this, arguing similarly as in the proof of Lemma~\ref{lemma:lambda-M-minus-spectrum},
we can use the Lipschitz property of $f$ (cf.~(\ref{eq:def:function-f}))
to obtain
\begin{align*}
\big|\lambda-f^{-1}(\lambda_{k}^{\star})\big| & \ge\frac{1}{2}\,\big|f(\lambda)-f\big(f^{-1}(\lambda_{k}^{\star})\big)\big|=\frac{1}{2}\,\big|(\lambda-\gamma(\lambda))-\lambda_{k}^{\star}\big|\\
 & \ge\frac{1}{2}\,|\lambda_{l}^{\star}-\lambda_{k}^{\star}|-\frac{1}{2}\,\big|\lambda-\gamma(\lambda)-\lambda_{l}^{\star}\big|\\
 & \ge\frac{1}{2}\,|\lambda_{l}^{\star}-\lambda_{k}^{\star}|-\frac{1}{2}\,\mathcal{E}_{\mathsf{MD}}\\
 & \gtrsim|\lambda_{l}^{\star}-\lambda_{k}^{\star}|.
\end{align*}
In a similar manner, we can also derive
\[
\big|f^{-1}(\lambda_{k}^{\star})-\lambda_{i}^{(l)}\big|\le2\,\big|f\big(f^{-1}(\lambda_{k}^{\star})\big)-f(\lambda_{i}^{(l)})\big|=2\,\big|\lambda_{k}^{\star}-\big(\lambda_{i}^{(l)}-\gamma(\lambda_{i}^{(l)})\big)\big|.
\]
Therefore, for any $\lambda_{i}^{(l)}$ such that $\lambda_{i}^{(l)}-\gamma(\lambda_{i}^{(l)})\in\mathcal{B}_{\mathcal{E}_{k}}(\lambda_{k}^{\star})$,
one has 
\[
\big|f^{-1}(\lambda_{k}^{\star})-\lambda_{i}^{(l)}\big|\le2\,\mathcal{E}_{k}+2\,\mathcal{E}_{\mathsf{MD}}\le2c\,|\lambda_{l}^{\star}-\lambda_{k}^{\star}|\le\frac{1}{2}\,\big|\lambda_{l}-f^{-1}(\lambda_{k}^{\star})\big|.
\]
 Then the claim is an immediate consequence of these two bounds. As
a result, we conclude that
\[
\sum_{i\in\mathcal{A}_{1}}\bigg(\frac{\bm{u}_{k}^{\star(l)\top}\bm{u}_{i}^{(l)}}{\lambda-\lambda_{i}^{(l)}}\bigg)^{2}\lesssim\frac{1}{(\lambda_{l}^{\star}-\lambda_{k}^{\star})^{2}}\sum_{i\in\mathcal{A}_{1}}\big(\bm{u}_{k}^{\star(l)\top}\bm{u}_{i}^{(l)}\big)^{2}\le\frac{1}{(\lambda_{l}^{\star}-\lambda_{k}^{\star})^{2}}.
\]
\item Turning to the set $\mathcal{A}_{2}$, we know from Lemma \ref{lemma:lambda-M-minus-spectrum}
that for any $i\in\mathcal{A}_{2}$, $|\lambda-\lambda_{i}^{(l)}|\gtrsim\min_{i:i\neq l}|\lambda_{l}^{\star}-\lambda_{i}^{\star}|$
and
\[
\Big\|\big(\lambda_{i}^{(l)}\bm{I}_{r-1}-\bm{\Lambda}^{\star(l)}-\gamma(\lambda_{i}^{(l)})\bm{I}_{r-1}\big)\bm{U}^{\star(l)\top}\bm{u}_{i,\parallel}^{(l)}\Big\|_{2}\lesssim\mathcal{E}_{\mathsf{MD}}=\sigma\sqrt{r}\log n.
\]
Meanwhile, since $\bm{u}_{i,\parallel}^{(l)}$ is the projection of
$\bm{u}_{i}^{(l)}$ onto the space of $\bm{U}^{\star(l)}$ followed
by normalization, one has 
\[
\big|\bm{u}_{k}^{\star(l)\top}\bm{u}_{i}^{(l)}\big|\le\big|\bm{u}_{k}^{\star(l)\top}\bm{u}_{i,\parallel}^{(l)}\big|,
\]
and therefore,
\begin{align*}
\Big\|\big(\lambda_{i}^{(l)}\bm{I}_{r-1}-\bm{\Lambda}^{\star(l)}-\gamma(\lambda_{i}^{(l)})\bm{I}_{r-1}\big)\bm{U}^{\star(l)\top}\bm{u}_{i,\parallel}^{(l)}\Big\|_{2} & \ge\big|\lambda_{i}^{(l)}-\lambda_{k}^{\star}-\gamma(\lambda_{i}^{(l)})\big|\cdot\big|\bm{u}_{k}^{\star(l)\top}\bm{u}_{i,\parallel}^{(l)}\big|\\
 & \ge\mathcal{E}_{k}\cdot\big|\bm{u}_{k}^{\star(l)\top}\bm{u}_{i,\parallel}^{(l)}\big|\\
 & \gtrsim\big|\lambda_{l}^{\star}-\lambda_{k}^{\star}\big|\cdot\big|\bm{u}_{k}^{\star(l)\top}\bm{u}_{i,\parallel}^{(l)}\big|.
\end{align*}
This in turn allows us to derive that
\begin{align*}
\bigg(\frac{\bm{u}_{k}^{\star(l)\top}\bm{u}_{i}^{(l)}}{\lambda-\lambda_{i}^{(l)}}\bigg)^{2} & \le\bigg(\frac{\bm{u}_{k}^{\star(l)\top}\bm{u}_{i,\parallel}^{(l)}}{\lambda-\lambda_{i}^{(l)}}\bigg)^{2}\le\bigg(\frac{\bm{u}_{k}^{\star(l)\top}\bm{u}_{i,\parallel}^{(l)}}{\big(\Delta_{l}^{\star}\big)^{2}}\bigg)^{2}\\
 & \lesssim\frac{\sigma^{2}r\log^{2}n}{|\lambda_{l}^{\star}-\lambda_{k}^{\star}|^{2}\big(\Delta_{l}^{\star}\big)^{2}}.
\end{align*}
\end{itemize}
Putting the above two sums together reveals that
\begin{align*}
\sum_{1\leq i<r}\bigg(\frac{\bm{u}_{k}^{\star(l)\top}\bm{u}_{i}^{(l)}}{\lambda-\lambda_{i}^{(l)}}\bigg)^{2} & \lesssim\frac{1}{(\lambda_{l}^{\star}-\lambda_{k}^{\star})^{2}}+\frac{\sigma^{2}r^{2}\log^{2}n}{|\lambda_{l}^{\star}-\lambda_{k}^{\star}|^{2}\big(\Delta_{l}^{\star}\big)^{2}}\lesssim\frac{r}{(\lambda_{l}^{\star}-\lambda_{k}^{\star})^{2}},
\end{align*}
where the last inequality follows since $\big(\Delta_{l}^{\star}\big)^{2}\gtrsim\sigma^{2}r\log^{2}n$.
Combining this inequality with (\ref{eq:claim:atukukt-lambda-inv-l2-norm-temp-1}),
one readily obtains
\begin{align}
\sqrt{\sum_{1\leq i<r}\bigg(\frac{1}{\lambda_{l}-\lambda_{i}^{(l)}}\sum_{k:k\ne l}\bm{a}^{\top}\bm{u}_{k}^{\star}\bm{u}_{k}^{\star(l)\top}\bm{u}_{i}^{(l)}\bigg)^{2}} & \leq\sum_{k:k\ne l}\big|\bm{a}^{\top}\bm{u}_{k}^{\star}\big|\sqrt{\sum_{1\leq i<r}\bigg(\frac{\bm{u}_{k}^{\star(l)\top}\bm{u}_{i}^{(l)}}{\lambda-\lambda_{i}^{(l)}}\bigg)^{2}}\nonumber \\
 & \lesssim\sum_{k:k\ne l}\frac{\big|\bm{a}^{\top}\bm{u}_{k}^{\star}\big|\sqrt{r}}{\left|\lambda_{l}^{\star}-\lambda_{k}^{\star}\right|}.\label{eq:claim:atukukt-lambda-inv-l2-norm-sum212}
\end{align}

\end{itemize}
Substituting the above two partial sums (\ref{eq:claim:atukukt-lambda-inv-l2-norm-sum1})
and (\ref{eq:claim:atukukt-lambda-inv-l2-norm-sum212}) into (\ref{eq:claim:atukukt-lambda-inv-l2-norm-temp}),
we conclude that: with probability exceeding $1-O(n^{-10})$, 
\[
\bigg\|\sum_{k:k\ne l}\bm{a}^{\top}\bm{u}_{k}^{\star}\bm{u}_{k}^{\star(l)\top}\big(\lambda_{l}\bm{I}_{n-1}-\bm{M}^{(l)}\big)^{-1}\bigg\|_{2}\lesssim\sum_{k:k\ne l}\frac{|\bm{a}^{\top}\bm{u}_{k}^{\star}|\sqrt{r}}{|\lambda_{l}^{\star}-\lambda_{k}^{\star}|}+\frac{1}{|\lambda_{l}^{\star}|}
\]
holds for any $\lambda$ such that $\lambda-\gamma(\lambda)\in\mathcal{B}_{\mathcal{E}_{\mathsf{MD}}}(\lambda_{l}^{\star})$,
as claimed in (\ref{eq:claim:atukukt-lambda-inv-l2-norm}). 

\subsection{Proof of Lemma \ref{lemma:a-top-P-U-perp-u-perp}}

\label{subsec:Proof-of-lemma:a-top-P-U-perp-u-perp}

Recall the definitions of $\bm{u}_{l}^{\star\perp}$, $\bm{u}_{l,\perp}$
and $\bm{U}^{\star(l)\perp}$ in (\ref{eq:def:u-l-star-perp}), (\ref{eq:decomposition-ul-123})
and (\ref{eq:def:U-Lambda-star-l}), respectively. Let us rewrite
\begin{align}
\big|\big\langle\bm{P}_{\bm{U}^{\star\perp}}\bm{a},\,\bm{P}_{\bm{U}^{\star\perp}}\bm{u}_{l}\big\rangle\big| & \overset{(\mathrm{i})}{=}\big|\big\langle(\bm{U}^{\star\perp})^{\top}\bm{a},\,(\bm{U}^{\star\perp})^{\top}\bm{u}_{l}\big\rangle\big|\nonumber \\
 & =\big|\big\langle(\bm{U}^{\star\perp})^{\top}\bm{a},\,(\bm{U}^{\star\perp})^{\top}(\bm{u}_{l}^{\star}\bm{u}_{l}^{\star\top}\bm{u}_{l}+\bm{P}_{\bm{u}_{l}^{\star\perp}}\bm{u}_{l})\big\rangle\big|\nonumber \\
 & \overset{(\mathrm{ii})}{=}\big|\big\langle(\bm{U}^{\star\perp})^{\top}\bm{a},\,(\bm{U}^{\star\perp})^{\top}\bm{P}_{\bm{u}_{l}^{\star\perp}}\bm{u}_{l})\big\rangle\big|\nonumber \\
 & \overset{(\mathrm{iii})}{=}\big|\big\langle(\bm{U}^{\star\perp})^{\top}\bm{a},\,(\bm{U}^{\star\perp})^{\top}\bm{u}_{l,\perp}\big\rangle\big|\cdot\big\|\bm{P}_{\bm{u}_{l}^{\star\perp}}\bm{u}_{l}\big\|_{2}\nonumber \\
 & \overset{(\mathrm{iv})}{=}\big|\big\langle(\bm{u}_{l}^{\star\perp}\bm{U}^{\star(l)\perp})^{\top}\bm{a},\,(\bm{u}_{l}^{\star\perp}\bm{U}^{\star(l)\perp})^{\top}\bm{u}_{l,\perp}\big\rangle\big|\cdot\big\|\bm{P}_{\bm{u}_{l}^{\star\perp}}\bm{u}_{l}\big\|_{2}\nonumber \\
 & =\big|\big\langle(\bm{U}^{\star(l)\perp})^{\top}\big((\bm{u}_{l}^{\star\perp})^{\top}\bm{a}\big),\,(\bm{U}^{\star(l)\perp})^{\top}\big((\bm{u}_{l}^{\star\perp})^{\top}\bm{u}_{l,\perp}\big)\big\rangle\big|\cdot\big\|\bm{P}_{\bm{u}_{l}^{\star\perp}}\bm{u}_{l}\big\|_{2}\nonumber \\
 & \overset{(\mathrm{v})}{=}\big|\big\langle\bm{P}_{\bm{U}^{\star(l)\perp}}\big((\bm{u}_{l}^{\star\perp})^{\top}\bm{a}\big),\,\bm{P}_{\bm{U}^{\star(l)\perp}}\big((\bm{u}_{l}^{\star\perp})^{\top}\bm{u}_{l,\perp}\big)\big\rangle\big|\cdot\big\|\bm{P}_{\bm{u}_{l}^{\star\perp}}\bm{u}_{l}\big\|_{2}\nonumber \\
 & \overset{(\mathrm{vi})}{=}\frac{1}{\big\|\widetilde{\bm{u}}_{l,\perp}\big\|_{2}}\big|\big\langle\bm{P}_{\bm{U}^{\star(l)\perp}}\big((\bm{u}_{l}^{\star\perp})^{\top}\bm{a}\big),\,\bm{P}_{\bm{U}^{\star(l)\perp}}\big((\bm{u}_{l}^{\star\perp})^{\top}\widetilde{\bm{u}}_{l,\perp}\big)\big\rangle\big|\cdot\big\|\bm{P}_{\bm{u}_{l}^{\star\perp}}\bm{u}_{l}\big\|_{2}.\label{eq:a-top-P-U-perp-u-perp-temp}
\end{align}
Here, (i) follows since $(\bm{U}^{\star\perp})^{\top}\bm{U}^{\star\perp}=\bm{I}_{n-r}$;
(ii) holds true since $(\bm{U}^{\star\perp})^{\top}\bm{u}_{l}^{\star}=0$;
(iii) holds due to the definition $\bm{u}_{l,\perp}\coloneqq(\bm{P}_{\bm{u}_{l}^{\star\perp}}\bm{u}_{l})/\|\bm{P}_{\bm{u}_{l}^{\star\perp}}\bm{u}_{l}\|_{2}$;
(iv) results from the fact $\bm{u}_{l}^{\star\perp}\bm{U}^{\star(l)\perp}=\bm{U}^{\star\perp}$;
(v) holds true since $(\bm{U}^{\star(l)\perp})^{\top}\bm{U}^{\star(l)\perp}=\bm{I}_{n-r}$;
(vi) arises from (\ref{eq:u-perp-rank1-1}) in Theorem~\ref{thm:master-thm-vector},
where we denote (i.e., $\bm{u}_{l,\perp}$ is the normalized version
of $\widetilde{\bm{u}}_{l,\perp}$)
\begin{equation}
\widetilde{\bm{u}}_{l,\perp}:=\bm{u}_{l}^{\star\perp}\big(\lambda_{l}\bm{I}_{n-1}-(\bm{u}_{l}^{\star\perp})^{\top}\bm{M}\bm{u}_{l}^{\star\perp}\big)^{-1}(\bm{u}_{l}^{\star\perp})^{\top}\bm{M}\bm{u}_{l}^{\star}.\label{eq:def:u-tilde-l-perp-md}
\end{equation}

Our proof strategy is to show that $\frac{\bm{P}_{\bm{U}^{\star(l)\perp}}\big((\bm{u}_{l}^{\star\perp})^{\top}\widetilde{\bm{u}}_{l,\perp}\big)}{\big\|\bm{P}_{\bm{U}^{\star(l)\perp}}\big((\bm{u}_{l}^{\star\perp})^{\top}\widetilde{\bm{u}}_{l,\perp}\big)\big\|_{2}}$
is a random vector uniformly distributed in the unit sphere of the
subspace $\bm{U}^{\star(l)\perp}$. If this claim were true, then
it would follow from standard measure concentration for uniform distributions
results \cite[Theorem 3.4.6]{vershynin2016high} that, with probability
at least $1-O(n^{-10})$,
\begin{align*}
\big|\big\langle\bm{P}_{\bm{U}^{\star(l)\perp}}\big((\bm{u}_{l}^{\star\perp})^{\top}\bm{a}\big),\,\bm{P}_{\bm{U}^{\star(l)\perp}}\big((\bm{u}_{l}^{\star\perp})^{\top}\widetilde{\bm{u}}_{l,\perp}\big)\big\rangle\big| & \lesssim\sqrt{\frac{\log n}{n-r}}\,\big\|\bm{P}_{\bm{U}^{\star(l)\perp}}\big((\bm{u}_{l}^{\star\perp})^{\top}\bm{a}\big)\big\|_{2}\big\|\bm{P}_{\bm{U}^{\star(l)\perp}}\big((\bm{u}_{l}^{\star\perp})^{\top}\widetilde{\bm{u}}_{l,\perp}\big)\big\|_{2}\\
 & \asymp\sqrt{\frac{\log n}{n}}\,\big\|\bm{P}_{\bm{U}^{\star\perp}}\bm{a}\big\|_{2}\big\|\bm{P}_{\bm{U}^{\star\perp}}(\bm{u}_{l}^{\star\perp})^{\top}\widetilde{\bm{u}}_{l,\perp}\big\|_{2},
\end{align*}
where we use $\bm{u}_{l}^{\star\perp}\bm{U}^{\star(l)\perp}=\bm{U}^{\star\perp}$
and the rank assumption $r\ll n/\log^{2}n$ in the last step. Combining
this with (\ref{eq:a-top-P-U-perp-u-perp-temp}), we arrive at the
advertised bound:
\begin{align*}
\big|\big\langle\bm{P}_{\bm{U}^{\star\perp}}\bm{a},\,\bm{P}_{\bm{U}^{\star\perp}}\bm{u}_{l}\big\rangle\big| & \lesssim\sqrt{\frac{\log n}{n}}\,\frac{\big\|\bm{P}_{\bm{U}^{\star\perp}}(\bm{u}_{l}^{\star\perp})^{\top}\widetilde{\bm{u}}_{l,\perp}\big\|_{2}}{\big\|\widetilde{\bm{u}}_{l,\perp}\big\|_{2}}\big\|\bm{P}_{\bm{U}^{\star\perp}}\bm{a}\big\|_{2}\big\|\bm{P}_{\bm{u}_{l}^{\star\perp}}\bm{u}_{l}\big\|_{2}\\
 & \leq\sqrt{\frac{\log n}{n}}\,\big\|\bm{P}_{\bm{U}^{\star\perp}}\bm{a}\big\|_{2}\big\|\bm{P}_{\bm{u}_{l}^{\star\perp}}\bm{u}_{l}\big\|_{2}.
\end{align*}

To justify the distributional property claimed above, we define --- for
an arbitrary rotation matrix $\bm{Q}\in\mathbb{R}^{(n-r)\times(n-r)}$
---  a new rotation matrix 
\[
\bm{R}=\bm{P}_{\bm{U}^{\star(l)}}+\bm{U}^{\star(l)\perp}\bm{Q}\,(\bm{U}^{\star(l)\perp})^{\top}\in\mathbb{R}^{(n-1)\times(n-1)};
\]
the matrix $\bm{R}$ rotates vectors in the subspace spanned by $\bm{U}^{\star(l)\perp}$
according to $\bm{Q}$, while preserving the part in the subspace
spanned by $\bm{U}^{\star(l)}$. We make note of two important ``rotational
invariance'' properties as follows.
\begin{itemize}
\item As shown in the proof of Lemma~\ref{lemma:lambda-M-minus-spectrum},
it is seen that
\begin{align*}
\bm{R}\,(\bm{u}_{l}^{\star\perp})^{\top}\bm{H}\bm{u}_{l}^{\star} & =\bm{U}^{\star(l)}(\bm{U}^{\star(l)})^{\top}(\bm{u}_{l}^{\star\perp})^{\top}\bm{H}\bm{u}_{l}^{\star}+\bm{U}^{\star(l)\perp}\bm{Q}\,(\bm{U}^{\star(l)\perp})^{\top}(\bm{u}_{l}^{\star\perp})^{\top}\bm{H}\bm{u}_{l}^{\star}\\
 & =(\bm{u}_{l}^{\star\perp})^{\top}\bm{U}_{\smallsetminus l}^{\star}\bm{U}_{\smallsetminus l}^{\star\top}\bm{H}\bm{u}_{l}^{\star}+(\bm{u}_{l}^{\star\perp})^{\top}\bm{U}^{\star\perp}\bm{Q}\,(\bm{U}^{\star\perp})^{\top}\bm{H}\bm{u}_{l}^{\star}\\
 & \overset{\mathrm{d}}{=}(\bm{u}_{l}^{\star\perp})^{\top}\bm{U}_{\smallsetminus l}^{\star}\bm{U}_{\smallsetminus l}^{\star\top}\bm{H}\bm{u}_{l}^{\star}+(\bm{u}_{l}^{\star\perp})^{\top}\bm{U}^{\star\perp}(\bm{U}^{\star\perp})^{\top}\bm{H}\bm{u}_{l}^{\star}\\
 & =(\bm{u}_{l}^{\star\perp})^{\top}\big(\bm{I}_{n}-\bm{u}_{l}^{\star}\bm{u}_{l}^{\star\top}\big)\bm{H}\bm{u}_{l}^{\star}\\
 & =(\bm{u}_{l}^{\star\perp})^{\top}\bm{H}\bm{u}_{l}^{\star}.
\end{align*}
Here, the second line arises from the definitions of $\bm{U}^{\star(l)}$
and $\bm{U}^{\star(l)\perp}$ in (\ref{eq:def:U-Lambda-star-l});
the third line follows because $\bm{Q}\,(\bm{U}^{\star\perp})^{\top}\bm{H}\bm{u}_{l}^{\star}\overset{\mathrm{d}}{=}(\bm{U}^{\star\perp})^{\top}\bm{H}\bm{u}_{l}^{\star}$;
the last line holds due to the fact $(\bm{u}_{l}^{\star\perp})^{\top}\bm{u}_{l}^{\star}=\bm{0}$. 
\item In a similar manner, we also know that
\begin{align*}
\bm{R}\,(\bm{u}_{l}^{\star\perp})^{\top}\bm{H}\bm{u}_{l}^{\star\perp}\bm{R}^{\top} & =(\bm{u}_{l}^{\star\perp})^{\top}\big(\bm{U}_{\smallsetminus l}^{\star}\bm{U}_{\smallsetminus l}^{\star\top}+\bm{U}^{\star\perp}\bm{Q}\,(\bm{U}^{\star\perp})^{\top}\big)\bm{H}\big(\bm{U}_{\smallsetminus l}^{\star}\bm{U}_{\smallsetminus l}^{\star\top}+\bm{U}^{\star\perp}\bm{Q}^{\top}\,(\bm{U}^{\star\perp})^{\top}\big)\bm{u}_{l}^{\star\perp}\\
 & =(\bm{u}_{l}^{\star\perp})^{\top}\big(\bm{U}^{\star}\bm{U}^{\star\top}+\bm{U}^{\star\perp}\bm{Q}\,(\bm{U}^{\star\perp})^{\top}\big)\bm{H}\big(\bm{U}^{\star}\bm{U}^{\star\top}+\bm{U}^{\star\perp}\bm{Q}^{\top}\,(\bm{U}^{\star\perp})^{\top}\big)\bm{u}_{l}^{\star\perp}\\
 & \overset{\mathrm{d}}{=}(\bm{u}_{l}^{\star\perp})^{\top}\bm{H}\bm{u}_{l}^{\star\perp},
\end{align*}
where the second line comes from $(\bm{U}^{\star}\bm{U}^{\star\top}-\bm{U}_{\smallsetminus l}^{\star}\bm{U}_{\smallsetminus l}^{\star\top})\bm{u}_{l}^{\star\perp}=\bm{u}_{l}^{\star}\bm{u}_{l}^{\star\top}\bm{u}_{l}^{\star\perp}=0$,
and the last line holds since $\bm{U}^{\star}\bm{U}^{\star\top}+\bm{U}^{\star\perp}\bm{Q}^{\top}\,(\bm{U}^{\star\perp})^{\top}$
is a rotation matrix. 
\end{itemize}
Using the statistical independence between these two parts, we reach
\begin{align*}
\bm{R}\,(\bm{u}_{l}^{\star\perp})^{\top}\widetilde{\bm{u}}_{l,\perp} & =\bm{R}\,\big(\lambda_{l}\bm{I}_{n-1}-(\bm{u}_{l}^{\star\perp})^{\top}\bm{M}\bm{u}_{l}^{\star\perp}\big)^{-1}(\bm{u}_{l}^{\star\perp})^{\top}\bm{M}\bm{u}_{l}^{\star}=\big(\lambda_{l}\bm{I}_{n-1}-\bm{R}\,(\bm{u}_{l}^{\star\perp})^{\top}\bm{M}\bm{u}_{l}^{\star\perp}\bm{R}^{\top}\big)^{-1}\bm{R}\,(\bm{u}_{l}^{\star\perp})^{\top}\bm{M}\bm{u}_{l}^{\star}\\
 & =\big(\lambda_{l}\bm{I}_{n-1}-\bm{R}\,(\bm{u}_{l}^{\star\perp})^{\top}\bm{H}\bm{u}_{l}^{\star\perp}\bm{R}^{\top}\big)^{-1}\bm{R}\,(\bm{u}_{l}^{\star\perp})^{\top}\bm{H}\bm{u}_{l}^{\star}\\
 & \overset{\mathrm{d}}{=}\big(\lambda_{l}\bm{I}_{n-1}-(\bm{u}_{l}^{\star\perp})^{\top}\bm{H}\bm{u}_{l}^{\star\perp}\big)^{-1}(\bm{u}_{l}^{\star\perp})^{\top}\bm{H}\bm{u}_{l}^{\star}\\
 & =\big(\lambda_{l}\bm{I}_{n-1}-(\bm{u}_{l}^{\star\perp})^{\top}\bm{M}\bm{u}_{l}^{\star\perp}\big)^{-1}(\bm{u}_{l}^{\star\perp})^{\top}\bm{M}\bm{u}_{l}^{\star}\\
 & =(\bm{u}_{l}^{\star\perp})^{\top}\widetilde{\bm{u}}_{l,\perp},
\end{align*}
where the last step replies on the definition of $\widetilde{\bm{u}}_{l,\perp}$
in (\ref{eq:def:u-tilde-l-perp-md}) and the fact $(\bm{u}_{l}^{\star\perp})^{\top}\bm{u}_{l}^{\star\perp}=\bm{I}_{n-1}$.
This enables us to conclude that $\frac{\bm{P}_{\bm{U}^{\star(l)\perp}}\big((\bm{u}_{l}^{\star\perp})^{\top}\widetilde{\bm{u}}_{l,\perp}\big)}{\big\|\bm{P}_{\bm{U}^{\star(l)\perp}}\big((\bm{u}_{l}^{\star\perp})^{\top}\widetilde{\bm{u}}_{l,\perp}\big)\big\|_{2}}$
is uniformly distributed in the unit sphere spanned by $\bm{U}^{\star(l)\perp}$.

\section{Proof of auxiliary lemmas in the analysis for Theorem \ref{thm:evector-pertur-sym-iid-pca}}

\label{sec:Proof-of-lemmas-pca}

\subsection{Proof of Lemma \ref{lemma:spectral-M-Sigma}}

\label{subsec:Proof-of-lemma:lemma:spectral-M-Sigma}

For any matrix $\bm{A}\in\mathbb{R}^{n\times n}$, we know from the
orthogonal invariance of the spectral norm that
\begin{align*}
\|\bm{A}\| & =\|[\bm{U}^{\star},\bm{U}^{\star\perp}]^{\top}\bm{A}[\bm{U}^{\star},\bm{U}^{\star\perp}]\|\\
 & =\bigg\|\begin{bmatrix}\bm{U}^{\star\top}\bm{A}\bm{U}^{\star} & \bm{U}^{\star\top}\bm{A}\bm{U}^{\star\perp}\\
(\bm{U}^{\star\perp})^{\top}\bm{A}\bm{U}^{\star} & (\bm{U}^{\star\perp})^{\top}\bm{A}\bm{U}^{\star\perp}
\end{bmatrix}\bigg\|\\
 & \leq\|\bm{U}^{\star\top}\bm{A}\bm{U}^{\star}\|+\|\bm{U}^{\star\top}\bm{A}\bm{U}^{\star\perp}\|+\|(\bm{U}^{\star\perp})^{\top}\bm{A}\bm{U}^{\star}\|+\|(\bm{U}^{\star\perp})^{\top}\bm{A}\bm{U}^{\star\perp}\|,
\end{align*}
where the last step holds due to the triangle inequality. As a result,
one can upper bound
\begin{align}
\Big\|\frac{1}{n}\bm{S}\bm{S}^{\top}-\bm{\Sigma}\Big\| & \le\Big\|\bm{U}^{\star\top}\Big(\frac{1}{n}\bm{S}\bm{S}^{\top}-\bm{\Sigma}\Big)\bm{U}^{\star}\Big\|+\Big\|(\bm{U}^{\star\perp})^{\top}\Big(\frac{1}{n}\bm{S}\bm{S}^{\top}-\bm{\Sigma}\Big)\bm{U}^{\star}\Big\|\nonumber \\
 & \quad+\Big\|\bm{U}^{\star\top}\Big(\frac{1}{n}\bm{S}\bm{S}^{\top}-\bm{\Sigma}\Big)\bm{U}^{\star\perp}\Big\|+\Big\|(\bm{U}^{\star\perp})^{\top}\Big(\frac{1}{n}\bm{S}\bm{S}^{\top}-\bm{\Sigma}\Big)\bm{U}^{\star\perp}\Big\|\nonumber \\
 & =\Big\|\frac{1}{n}\bm{S}_{\parallel}\bm{S}_{\parallel}^{\top}-\bm{\Lambda}\Big\|+2\,\Big\|\frac{1}{n}\bm{S}_{\perp}\bm{S}_{\parallel}^{\top}\Big\|+\Big\|\frac{1}{n}\bm{S}_{\perp}\bm{S}_{\perp}^{\top}-\sigma^{2}\bm{I}_{p-r}\Big\|\label{eq:cov-loss-op-norm-terms}
\end{align}
where we remind the readers of the notation $\bm{S}_{\parallel}:=\bm{U}^{\star\top}\bm{S}$,
$\bm{S}_{\perp}:=(\bm{U}^{\star\perp})^{\top}\bm{S}$ and $\bm{\Lambda}:=\bm{U}^{\star\top}\bm{\Sigma}\bm{U}^{\star}$
introduced in (\ref{eq:def-Lambda-pca}).

Before describing how to control these quantities, we pause to collect
a few results regarding a Gaussian random matrix $\bm{G}\in\mathbb{R}^{p\times n}$
consisting of i.i.d.~$\mathcal{N}(0,1)$ entries \cite[Theorem~4.6.1]{vershynin2016high}:
with probability at least $1-O(n^{-10})$,
\begin{align}
\|\bm{G}\| & \lesssim\sqrt{p}+\sqrt{n},\nonumber \\
\Big\|\frac{1}{n}\bm{G}\bm{G}^{\top}-\bm{I}_{p}\Big\| & \lesssim\sqrt{\frac{p}{n}}+\frac{p}{n}+\sqrt{\frac{\log n}{n}},\label{eq:ZZt-eig-val}\\
\Big\|\frac{1}{n}\bm{G}^{\top}\bm{G}-\frac{p}{n}\bm{I}_{n}\Big\| & \lesssim1+\sqrt{\frac{p}{n}}.\label{eq:ZtZ-eig-val}
\end{align}
With these bounds in place, we can start to bound the spectral norms
of the quantities in (\ref{eq:cov-loss-op-norm-terms}). Note that
the columns of $\bm{S}_{\parallel}$ (resp.~$\bm{S}_{\perp}$) are
i.i.d.~zero-mean Gaussian random vectors with covariance $\bm{\Lambda}$
(resp.~$\sigma^{2}\bm{I}_{p-r}$). Since we can rewrite $\bm{S}_{\parallel}=\bm{\Lambda}^{1/2}\bm{Z}$
with $\bm{Z}\in\mathbb{R}^{r\times n}$ being a Gaussian random matrix
with i.i.d.~$\mathcal{N}(0,1)$ entries, it immediately follows from
(\ref{eq:ZZt-eig-val}) that with probability more than $1-O(n^{-10})$,
\begin{align}
\Big\|\frac{1}{n}\bm{S}_{\perp}\bm{S}_{\perp}^{\top}-\sigma^{2}\bm{I}_{p-r}\Big\| & \lesssim\sigma^{2}\bigg(\sqrt{\frac{p}{n}}+\frac{p}{n}+\sqrt{\frac{\log n}{n}}\bigg)\qquad\text{and}\label{eq:S-perp-op-UB}
\end{align}
\begin{align}
\Big\|\frac{1}{n}\bm{S}_{\parallel}\bm{S}_{\parallel}^{\top}-\bm{\Lambda}\Big\| & =\Big\|\frac{1}{n}\bm{\Lambda}^{1/2}\bm{Z}\bm{Z}^{\top}\bm{\Lambda}^{1/2}-\bm{\Lambda}\Big\|\le\|\bm{\Lambda}\|\Big\|\frac{1}{n}\bm{Z}\bm{Z}^{\top}-\bm{I}_{r}\Big\|\nonumber \\
 & \lesssim(\lambda_{\max}^{\star}+\sigma^{2})\bigg(\sqrt{\frac{r}{n}}+\frac{r}{n}+\sqrt{\frac{\log n}{n}}\bigg)\nonumber \\
 & \lesssim(\lambda_{\max}^{\star}+\sigma^{2})\sqrt{\frac{r\log n}{n}},\label{eq:S-para-op-UB}
\end{align}
where the last step arises from the sample size assumption $n\geq r$.
As for $\bm{S}_{\perp}\bm{S}_{\parallel}^{\top}$, we can invoke Lemma~\ref{lemma:prod-Gaussian-concentration}
to show that: with probability at least $1-O(n^{-10})$,
\begin{align}
\Big\|\frac{1}{n}\bm{S}_{\perp}\bm{S}_{\parallel}^{\top}\Big\| & =\frac{1}{n}\big\|\bm{S}_{\perp}\bm{Z}^{\top}\bm{\Lambda}^{1/2}\big\|\leq\frac{1}{n}\big\|\bm{S}_{\perp}\bm{Z}^{\top}\big\|\,\big\|\bm{\Lambda}^{1/2}\big\|\lesssim\frac{\sigma}{n}\big(\sqrt{pn\log n}+\sqrt{pr}\log n\big)\cdot\sqrt{\lambda_{\max}^{\star}+\sigma^{2}}\nonumber \\
 & \lesssim\sqrt{(\lambda_{\max}^{\star}+\sigma^{2})\sigma^{2}\frac{p}{n}}\log n,\label{eq:S-para-perp-op-UB}
\end{align}
where the last step holds since $n\geq r$. Putting the bounds above
together immediately concludes the proof.

\subsection{Proof of Lemma \ref{lemma:lambda-S-minus-spectrum-1-1}}

\label{subsec:Proof-of-lemma:lambda-S-minus-spectrum-1-1}

The proof of this lemma is similar to that of Lemma \ref{lemma:lambda-M-minus-spectrum}.
By definition, one can compute
\begin{align}
\bm{\Sigma}_{l,\perp} & :=\frac{1}{n}\mathbb{E}\big[\bm{S}_{l,\perp}\bm{S}_{l,\perp}^{\top}\big]\label{eq:defn-Sigma-l-perp-defn}\\
 & =\frac{1}{n}(\bm{u}_{l}^{\star\perp})^{\top}\mathbb{E}\big[\bm{S}\bm{S}^{\top}\big]\bm{u}_{l}^{\star\perp}=(\bm{u}_{l}^{\star\perp})^{\top}\bm{\Sigma}\bm{u}_{l}^{\star\perp}\nonumber \\
 & =(\bm{u}_{l}^{\star\perp})^{\top}\bm{U}^{\star}\bm{\Lambda}^{\star}\bm{U}^{\star\top}\bm{u}_{l}^{\star\perp}+\sigma^{2}\bm{I}_{p-1}\nonumber \\
 & =\bm{U}^{\star(l)}\bm{\Lambda}^{\star(l)}\bm{U}^{\star(l)\top}+\sigma^{2}\bm{I}_{p-1},\nonumber 
\end{align}
where $\bm{U}^{\star(l)}$ and $\bm{\Lambda}^{\star(l)}$ have been
defined in (\ref{eq:def:U-Lambda-star-l-1-pca}). Our proof strategy
is to invoke Theorem~\ref{thm:master-theorem-general} by treating
$\frac{1}{n}\bm{S}_{l,\perp}\bm{S}_{l,\perp}^{\top}$ (resp.~$\bm{U}^{\star(l)}$)
as $\bm{M}$ (resp.~$\bm{Q}$).
\begin{itemize}
\item We shall start with the first claim. Let us define the following matrices
in $\mathbb{R}^{(r-1)\times(r-1)}$:
\begin{align*}
\bm{K}^{(l)}(\lambda) & :=\bm{U}^{\star(l)\top}\frac{1}{n}\bm{S}_{l,\perp}\bm{S}_{l,\perp}^{\top}\bm{U}^{\star(l)\perp}\Big(\lambda\bm{I}_{p-r}-(\bm{U}^{\star(l)\perp})^{\top}\frac{1}{n}\bm{S}_{l,\perp}\bm{S}_{l,\perp}^{\top}\bm{U}^{\star(l)\perp}\Big)^{-1}(\bm{U}^{\star(l)\perp})^{\top}\frac{1}{n}\bm{S}_{l,\perp}\bm{S}_{l,\perp}^{\top}\bm{U}^{\star(l)},\\
\bm{K}^{(l)\perp}(\lambda) & :=\mathbb{E}\Big[\bm{G}^{(l)}(\lambda)\mid(\bm{U}^{\star(l)\perp})^{\top}\bm{S}_{l,\perp}\bm{S}_{l,\perp}^{\top}\bm{U}^{\star(l)\perp}\Big].
\end{align*}
Recall the definitions of $\bm{K}(\lambda)$ (cf.~(\ref{eq:def:G-lambda-exp-pca}))
and $\bm{P}^{(l)}$ (cf.~(\ref{eq:def:P-l})), and notice that
\begin{align*}
\bm{u}_{l}^{\star\perp}\bm{U}^{\star(l)\perp} & =\bm{U}^{\star\perp},\\
(\bm{U}^{\star(l)\perp})^{\top}\bm{S}_{l,\perp} & =(\bm{U}^{\star(l)\perp})^{\top}(\bm{u}_{l}^{\star\perp})^{\top}\bm{S}=(\bm{U}^{\star\perp})^{\top}\bm{S}=\bm{S}_{\perp}.
\end{align*}
Straightforward calculation allows us to simplify the above expressions
as follows
\begin{align*}
\bm{K}^{(l)}(\lambda) & :=\frac{1}{n}\bm{P}^{(l)\top}\bm{S}_{\parallel}\cdot\frac{1}{n}\bm{S}_{\perp}^{\top}\Big(\lambda\bm{I}_{p-r}-\frac{1}{n}\bm{S}_{\perp}\bm{S}_{\perp}^{\top}\Big)^{-1}\bm{S}_{\perp}\cdot\bm{S}_{\parallel}^{\top}\bm{P}^{(l)}=\bm{P}^{(l)\top}\bm{K}(\lambda)\bm{P}^{(l)},\\
\bm{K}^{(l)\perp}(\lambda) & :=\mathbb{E}\big[\bm{G}^{(l)}(\lambda)\mid(\bm{U}^{\star(l)\perp})^{\top}\bm{S}_{l,\perp}\bm{S}_{l,\perp}^{\top}\bm{U}^{\star(l)\perp}\big]=\beta(\lambda)\bm{P}^{(l)\top}(\bm{\Lambda}^{\star}+\sigma^{2}\bm{I}_{r})\bm{P}^{(l)}=\beta(\lambda)(\bm{\Lambda}^{\star(l)}+\sigma^{2}\bm{I}_{r-1}).
\end{align*}
Theorem~\ref{thm:master-theorem-general} then tells us that
\begin{align*}
 & \big(\gamma_{i}^{(l)}\bm{I}_{r-1}-\bm{\Lambda}^{\star(l)}-\sigma^{2}\bm{I}_{r-1}-\bm{K}^{(l)\perp}(\gamma_{i}^{(l)})\big)\bm{U}^{\star(l)\top}\bm{u}_{i,\parallel}^{(l)}\\
 & \qquad=\Big(\frac{1}{n}\bm{P}^{(l)\top}\bm{S}_{\parallel}\bm{S}_{\parallel}^{\top}\bm{P}^{(l)}-\bm{\Lambda}^{\star(l)}-\sigma^{2}\bm{I}_{r-1}+\bm{K}(\gamma_{i}^{(l)})-\bm{K}^{\perp}(\gamma_{i}^{(l)})\Big)\bm{U}^{\star(l)\top}\bm{u}_{i,\parallel}^{(l)}.
\end{align*}
One can then adopt a similar argument as in the proof of Theorem \ref{thm:eigval-pertur-sym-iid-pca}
to demonstrate that: with probability at least $1-O(n^{-10})$,
\begin{align*}
 & \Big\|\Big(\gamma_{i}^{(l)}\bm{I}_{r-1}-\big(1+\beta(\gamma_{i}^{(l)})\big)\big(\bm{\Lambda}^{\star(l)}+\sigma^{2}\bm{I}_{r-1}\big)\Big)\bm{U}^{\star(l)\top}\bm{u}_{i,\parallel}^{(l)}\Big\|_{2}\\
 & \qquad\le\Big\|\frac{1}{n}\bm{P}^{(l)\top}\bm{S}_{\parallel}\bm{S}_{\parallel}^{\top}\bm{P}^{(l)}-\bm{\Lambda}^{\star(l)}-\sigma^{2}\bm{I}_{r-1}\Big\|+\sup_{\lambda:\lambda\in[2\lambda_{l}^{\star}/3,\,4\lambda_{l}^{\star}/3]}\Big\|\big(\bm{K}(\gamma_{i}^{(l)})-\bm{K}^{\perp}(\gamma_{i}^{(l)})\big)\bm{U}^{\star(l)\top}\bm{u}_{i,\parallel}^{(l)}\Big\|_{2}\\
 & \qquad\leq\Big\|\frac{1}{n}\bm{S}_{\parallel}\bm{S}_{\parallel}^{\top}-\bm{\Lambda}^{\star}-\sigma^{2}\bm{I}_{r}\Big\|+\sup_{\lambda:\lambda\in[2\lambda_{l}^{\star}/3,\,4\lambda_{l}^{\star}/3]}\Big\|\bm{K}(\gamma_{i}^{(l)})-\bm{K}^{\perp}(\gamma_{i}^{(l)})\Big\|\lesssim\mathcal{E}_{\mathsf{PCA}}
\end{align*}
and there exists some $k\neq l$ ($1\leq k\leq r$) obeying
\[
\bigg|\frac{\gamma_{i}^{(l)}}{1+\beta(\gamma_{i}^{(l)})}-\lambda_{k}^{\star}-\sigma^{2}\bigg|\lesssim\mathcal{E}_{\mathsf{PCA}}.
\]
\item Next, we turn to the second claim and we shall prove the upper and
lower bounds for $\gamma_{i}^{(l)}$ separately.
\begin{itemize}
\item For the upper bound, it suffices to upper bound $\gamma_{r}^{(l)}$
since $\{\gamma_{i}^{(l)}\}_{i}$ are defined in descending order.
In view of $(\bm{U}^{\star(l)\perp})^{\top}\bm{S}_{l,\perp}=\bm{S}_{\perp}$,
we can invoke Lemma \ref{lemma:eigval-interlacing} to see that
\[
\gamma_{r}^{(l)}\leq\lambda_{1}\Big(\frac{1}{n}\bm{S}_{\perp}\bm{S}_{\perp}^{\top}\Big).
\]
This suggests that we look at the spectrum of $\frac{1}{n}\bm{S}_{\perp}\bm{S}_{\perp}^{\top}$.
From (\ref{eq:ZZt-eig-val}) and (\ref{eq:ZtZ-eig-val}), we can obtain
\[
\Big|\lambda_{i}\Big(\frac{1}{n}\bm{S}_{\perp}\bm{S}_{\perp}^{\top}\Big)-\sigma^{2}\frac{n\vee(p-r)}{n}\Big|\lesssim\sigma^{2}\sqrt{\frac{p+\log n}{n}},\qquad1\leq i\leq(p-r)\wedge n,
\]
where we use the fact that the non-zero eigenvalues of $\frac{1}{n}\bm{S}_{\perp}\bm{S}_{\perp}^{\top}$
and $\frac{1}{n}\bm{S}_{\perp}^{\top}\bm{S}_{\perp}$ are identical.
Therefore, one has
\[
\gamma_{i}^{(l)}\le\gamma_{r}^{(l)}\le\sigma^{2}(n\vee p)/n+O(\sigma^{2}\sqrt{(p+\log n)/n})
\]
 for all $r\leq i\le n\wedge(p-1)$. 
\item Next, we move on to consider the lower bound. Observe that the matrix
$\bm{\Sigma}_{l,\perp}$ defined in (\ref{eq:defn-Sigma-l-perp-defn})
satisfies the following properties: (i) $\bm{\Sigma}_{l,\perp}\succeq\sigma^{2}\bm{I}_{p-1}$;
(ii) $\bm{\Sigma}_{l,\perp}^{-1/2}\bm{S}_{l,\perp}$ is a Gaussian
random matrix composed of i.i.d.~standard Gaussian entries. Then
we can lower bound the eigenvalues $\gamma_{i}^{(l)}$ for any $1\le i\leq(p-1)\wedge n$
as follows
\begin{align*}
\gamma_{i}^{(l)} & =\lambda_{i}\Big(\frac{1}{n}\bm{S}_{l,\perp}\bm{S}_{l,\perp}^{\top}\Big)\overset{(\mathrm{i})}{=}\lambda_{i}\Big(\frac{1}{n}\bm{S}_{l,\perp}^{\top}\bm{S}_{l,\perp}\Big)\\
 & =\lambda_{i}\Big(\frac{1}{n}\big(\bm{\Sigma}_{l,\perp}^{-1/2}\bm{S}_{l,\perp}\big)^{\top}\bm{\Sigma}_{l,\perp}\bm{\Sigma}_{l,\perp}^{-1/2}\bm{S}_{l,\perp}\Big)\\
 & \overset{(\mathrm{ii})}{\geq}\sigma^{2}\lambda_{i}\Big(\frac{1}{n}\big(\bm{\Sigma}_{l,\perp}^{-1/2}\bm{S}_{l,\perp}\big)^{\top}\bm{\Sigma}_{l,\perp}^{-1/2}\bm{S}_{l,\perp}\Big)\\
 & \overset{(\mathrm{iii})}{=}\sigma^{2}\lambda_{i}\Big(\frac{1}{n}\bm{\Sigma}_{l,\perp}^{-1/2}\bm{S}_{l,\perp}\big(\bm{\Sigma}_{l,\perp}^{-1/2}\bm{S}_{l,\perp}\big)^{\top}\Big),
\end{align*}
where (i) and (iii) hold because $i\leq(p-1)\wedge n$; (ii) follows
since for any matrix $\bm{A}$, one has $\bm{A}^{\top}(\bm{\Sigma}_{l,\perp}-\sigma^{2}\bm{I})\bm{A}\succeq0$
and hence $\lambda_{i}(\bm{A}^{\top}\bm{\Sigma}_{l,\perp}\bm{A})\ge\sigma^{2}\lambda_{i}(\bm{A}^{\top}\bm{A})$.
By invoking (\ref{eq:ZZt-eig-val}) and (\ref{eq:ZtZ-eig-val}) once
again, we arrive at
\[
\Big|\lambda_{i}\Big(\frac{1}{n}\bm{\Sigma}_{l,\perp}^{-1/2}\bm{S}_{l,\perp}\big(\bm{\Sigma}_{l,\perp}^{-1/2}\bm{S}_{l,\perp}\big)^{\top}\Big)-\sigma^{2}\frac{n\vee(p-1)}{n}\Big|\lesssim\sigma^{2}\sqrt{\frac{p+\log n}{n}},\qquad1\leq i\leq n\wedge(p-1).
\]
This immediately establishes the claimed lower bound.
\end{itemize}
\item The third claim is an immediate consequence of the fact that 
\[
\mathsf{rank}\Big(\frac{1}{n}\bm{S}_{l,\perp}\bm{S}_{l,\perp}^{\top}\Big)\le n\wedge(p-1).
\]
 
\item Finally, let us consider the last claim. In view of Theorem~\ref{thm:eigval-pertur-sym-iid-pca},
we have
\[
\frac{\lambda_{l}}{1+\beta(\lambda_{l})}\in\mathcal{B}_{\mathcal{E}_{\mathsf{PCA}}}(\lambda_{l}^{\star}+\sigma^{2}).
\]
In addition, the first claim asserts that for each $1\le i<r$, one
has
\[
\frac{\gamma_{i}^{(l)}}{1+\beta(\gamma_{i}^{(l)})}\in\mathcal{B}_{\mathcal{E}_{\mathsf{PCA}}}(\lambda_{k}^{\star}+\sigma^{2})
\]
 for some $k\ne l$. In view of the Lipschitz property of the function
$f(\lambda):=\frac{\lambda}{1+\beta(\lambda)}$ (so that $|f^{\prime}(\lambda)|\lesssim1$),
applying a similar argument as in the proof of Lemma~\ref{lemma:lambda-M-minus-spectrum}
(see Appendix~\ref{subsec:Proof-of-Lemma:lambda-M-minus-spectrum})
immediately allows us to establish the claim.
\end{itemize}

\subsection{Proof of Lemma \ref{lemma:lambda-M-l-inv-u-perp-M-u-l2-norm-1-pca}}

\label{subsec:Proof-of-lemma:lambda-M-l-inv-u-perp-M-u-l2-norm-1-pca-1}

Before continuing, we introduce several useful notation that will
be used throughout. 
\begin{itemize}
\item Let $\bm{U}^{(l)}\sqrt{\bm{\Gamma}^{(l)}}\bm{V}^{(l)\top}$ denote
the SVD of $\frac{1}{\sqrt{n}}\bm{S}_{l,\perp}$, where $\bm{\Gamma}^{(l)}$
is a diagonal matrix consisting of the singular values of interest.
Here, we recall that $\bm{S}_{l,\perp}$ has been defined in (\ref{eq:def:s-l-para-perp}).
\item Let $\bm{u}_{i}^{(l)}$ (resp.~$\bm{v}_{i}^{(l)}$) indicate the
$i$-th column of $\bm{U}^{(l)}$ (resp.~$\bm{V}^{(l)}$), and let
$\gamma_{i}^{(l)}$ represent the $i$-th diagonal entry of $\bm{\Gamma}^{(l)}$. 
\end{itemize}
In addition, we note that the vector $\bm{s}_{l,\parallel}$ (see
(\ref{eq:def:s-l-para-perp})) obeys
\[
\bm{s}_{l,\parallel}^{\top}\sim\mathcal{N}\big(\bm{0},(\bm{u}_{l}^{\star\top}\bm{\Sigma}\bm{u}_{l}^{\star})\bm{I}_{n}\big)=\mathcal{N}\big(\bm{0},(\lambda_{l}^{\star}+\sigma^{2})\bm{I}_{n}\big)
\]
and is independent of $\bm{S}_{l,\perp}$ (and thus $\bm{\Gamma}^{(l)}$
and $\bm{V}^{(l)}$). This implies that condition on $\bm{V}^{(l)}$,
one has
\begin{equation}
\bm{v}_{i}^{(l)\top}\bm{s}_{l,\parallel}^{\top}\overset{\mathrm{i.i.d.}}{\sim}\mathcal{N}\left(0,\lambda_{l}^{\star}+\sigma^{2}\right),\qquad1\leq i<p.\label{eq:independence-vil-sl-par}
\end{equation}
Moreover, by virtue of the rotational invariance of i.i.d.~Gaussian
random matrices, it is readily seen that $\bm{\Gamma}^{(l)}$ is independent
of $\bm{V}^{(l)}$. 

Now, we can begin to present the proof, towards which we start with
the following decomposition
\begin{align}
 & \Big\|\Big(\lambda_{l}\bm{I}_{p-1}-\frac{1}{n}\bm{S}_{l,\perp}\bm{S}_{l,\perp}^{\top}\Big)^{-1}\frac{1}{n}\bm{S}_{l,\perp}\bm{s}_{l,\parallel}^{\top}\Big\|_{2}^{2}\nonumber \\
 & \qquad=\frac{1}{n}\Big\|\big(\lambda_{l}\bm{I}_{p-1}-\bm{U}^{(l)}\bm{\Gamma}^{(l)}\bm{U}^{(l)\top}\big)^{-1}\bm{U}^{(l)}\sqrt{\bm{\Gamma}^{(l)}}\bm{V}^{(l)\top}\bm{s}_{l,\parallel}^{\top}\Big\|_{2}^{2}\nonumber \\
 & \qquad=\frac{1}{n}\Big\|\bm{U}^{(l)}\big(\lambda_{l}\bm{I}_{p-1}-\bm{\Gamma}^{(l)}\big)^{-1}\bm{U}^{(l)\top}\bm{U}^{(l)}\sqrt{\bm{\Gamma}^{(l)}}\bm{V}^{(l)\top}\bm{s}_{l,\parallel}^{\top}\Big\|_{2}^{2}\nonumber \\
 & \qquad=\frac{1}{n}\Big\|\big(\lambda_{l}\bm{I}_{p-1}-\bm{\Gamma}^{(l)}\big)^{-1}\sqrt{\bm{\Gamma}^{(l)}}\bm{V}^{(l)\top}\bm{s}_{l,\parallel}^{\top}\Big\|_{2}^{2}\nonumber \\
 & \qquad=\frac{1}{n}\sum_{1\leq i\le n\wedge(p-1)}\frac{\gamma_{i}^{(l)}}{(\lambda_{l}-\gamma_{i}^{(l)})^{2}}\big(\bm{v}_{i}^{(l)\top}\bm{s}_{l,\parallel}^{\top}\big)^{2}.\label{eq:cos0-pca}
\end{align}
In what follows, we shall control the sum over $i<r$ and the sum
over $i\geq r$ separately.

\paragraph{Controlling the sum over $i<r$.}

According to Lemma \ref{lemma:lambda-S-minus-spectrum-1-1}, one has
\[
\gamma_{i}^{(l)}\lesssim\lambda_{\max}^{\star}+\sigma^{2}\qquad\text{and}\qquad(\gamma_{i}^{(l)}-\lambda_{l})^{2}\gtrsim\min_{i:i\ne l}(\lambda_{l}^{\star}-\lambda_{i}^{\star})^{2}
\]
for all $1\leq i<r$. In addition, recall that $\bm{v}_{i}^{(l)\top}\bm{s}_{l,\parallel}^{\top}\,(1\leq i<p)$
are i.i.d.~zero-mean Gaussian random variables with variance $\lambda_{l}^{\star}+\sigma^{2}$
(see (\ref{eq:independence-vil-sl-par})). Invoking standard Gaussian
inequalities shows that with probability at least $1-O(n^{-10})$,
\begin{equation}
\max_{1\leq i<r}\big(\bm{v}_{i}^{(l)\top}\bm{s}_{l,\parallel}^{\top}\big)^{2}\lesssim(\lambda_{l}^{\star}+\sigma^{2})\log n.\label{eq:vl-slpara-UB}
\end{equation}
As a result, we obtain
\begin{equation}
\sum_{1\leq i<r}\frac{\gamma_{i}^{(l)}\big(\bm{v}_{i}^{(l)\top}\bm{s}_{l,\parallel}^{\top}\big)^{2}}{(\lambda_{l}-\gamma_{i}^{(l)})^{2}}\lesssim\frac{(\lambda_{\max}^{\star}+\sigma^{2})(\lambda_{l}^{\star}+\sigma^{2})\,r\log n}{\big(\Delta_{l}^{\star}\big)^{2}}\label{eq:lambda-M-r-sum1-bound-pca}
\end{equation}
with probability at least $1-O(n^{-10})$. 

\paragraph{Controlling the sum over $i\protect\geq r$.}

Now, let us control the sum over $i\geq r$, and we shall consider
the case with $n\geq p$ and the case with $n<p$ separately.
\begin{itemize}
\item \emph{Case I: $n\geq p$.} Note that $\bm{v}_{i}^{(l)\top}\bm{s}_{l,\parallel}^{\top}\overset{\mathrm{i.i.d.}}{\sim}\mathcal{N}(0,\lambda_{l}^{\star}+\sigma^{2})$.
This suggests that we decompose
\begin{align}
\sum_{r\leq i\le n\wedge(p-1)}\frac{\gamma_{i}^{(l)}(\bm{v}_{i}^{(l)\top}\bm{s}_{l,\parallel}^{\top})^{2}}{(\lambda_{l}-\gamma_{i}^{(l)})^{2}} & =\underbrace{\sum_{r\leq i\le n\wedge(p-1)}\frac{\gamma_{i}^{(l)}(\lambda_{l}^{\star}+\sigma^{2})}{(\lambda_{l}-\gamma_{i}^{(l)})^{2}}}_{=:\,\alpha_{1}}+\underbrace{\sum_{r\leq i\le n\wedge(p-1)}\frac{\gamma_{i}^{(l)}\big((\bm{v}_{i}^{(l)\top}\bm{s}_{l,\parallel}^{\top})^{2}-(\lambda_{l}^{\star}+\sigma^{2})\big)}{(\lambda_{l}-\gamma_{i}^{(l)})^{2}}}_{=:\,\alpha_{2}},\label{eq:lambda-M-r-n-value-bound-pca}
\end{align}
and control $\alpha_{1}$ as well as $\alpha_{2}$ individually.
\begin{itemize}
\item To begin with, let us consider $\alpha_{1}$, which requires estimating
$\lambda_{l}^{\star}+\sigma^{2}$ and $\sum_{r\leq i\le n\wedge(p-1)}\gamma_{i}^{(l)}/(\lambda_{l}-\gamma_{i}^{(l)})^{2}$.
This task is accomplished in Lemma~\ref{lemma:lambda-M-r-n-pca}
and Lemma~\ref{lemma:lambda-star-sigma-est} stated below. 

\begin{lemma}\label{lemma:lambda-M-r-n-pca}Instate the assumptions
of Theorem \ref{thm:evector-pertur-sym-iid-pca}. With probability
at least $1-O(n^{-10})$, we have
\begin{equation}
\bigg|\underbrace{\sum_{i\ge r}\frac{\gamma_{i}^{(l)}}{(\lambda_{l}-\gamma_{i}^{(l)})^{2}}-\sum_{i>r}\frac{\lambda_{i}}{(\lambda_{l}-\lambda_{i})^{2}}}_{=:\,\epsilon_{1}}\bigg|\lesssim\frac{\sigma^{2}}{\lambda_{l}^{\star2}}\Big(1+\frac{p}{n}\Big).\label{eq:lambda-M-r-n-value-pca}
\end{equation}
and
\begin{equation}
\bigg|\sum_{i>r}\frac{\lambda_{i}}{(\lambda_{l}-\lambda_{i})^{2}}\bigg|\vee\bigg|\sum_{i\ge r}\frac{\gamma_{i}^{(l)}}{(\lambda_{l}-\gamma_{i}^{(l)})^{2}}\bigg|\lesssim\frac{\sigma^{2}p}{\lambda_{l}^{\star2}},\label{eq:lambda-M-r-n-bound-pca}
\end{equation}
\end{lemma}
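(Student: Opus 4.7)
The plan is to leverage Cauchy (eigenvalue) interlacing. Since $\frac{1}{n}\bm{S}_{l,\perp}\bm{S}_{l,\perp}^\top = (\bm{u}_l^{\star\perp})^\top \big[\frac{1}{n}\bm{S}\bm{S}^\top\big] \bm{u}_l^{\star\perp}$ is obtained from the full sample covariance $\frac{1}{n}\bm{S}\bm{S}^\top$ by orthogonal compression along a single direction, Lemma~\ref{lemma:eigval-interlacing} yields
\[
\lambda_{i+1} \;\le\; \gamma_i^{(l)} \;\le\; \lambda_i, \qquad 1 \le i \le p-1.
\]
On $[0, \lambda_l)$, the scalar map $f(t) := t/(\lambda_l - t)^2$ is strictly increasing, since $f'(t) = (\lambda_l + t)/(\lambda_l - t)^3 > 0$, and hence $f(\lambda_{i+1}) \le f(\gamma_i^{(l)}) \le f(\lambda_i)$.

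To compare the two sums, I isolate the boundary index $i = r$:
\[
\sum_{i \ge r} f(\gamma_i^{(l)}) - \sum_{i > r} f(\lambda_i) \;=\; f(\gamma_r^{(l)}) + \bigg[\sum_{i \ge r+1} f(\gamma_i^{(l)}) - \sum_{i \ge r+1} f(\lambda_i)\bigg].
\]
Term-by-term, the bracket is non-positive, and telescoping gives the lower bound $\sum_{i \ge r+2} f(\lambda_i) - \sum_{i \ge r+1} f(\lambda_i) = -f(\lambda_{r+1})$. Therefore the absolute difference is bounded by $f(\gamma_r^{(l)}) + f(\lambda_{r+1})$. It remains to bound each of these two boundary terms at the noise scale: Lemma~\ref{lemma:lambda-S-minus-spectrum-1-1}(2) gives $\gamma_r^{(l)} \lesssim \sigma^2(n \vee p)/n \asymp \sigma^2(1 + p/n)$, and applying Cauchy interlacing a second time with the $p \times (p-r)$ compression $\bm{U}^{\star\perp}$ yields $\lambda_{r+1} \le \lambda_{\max}\big(\frac{1}{n}\bm{S}_{\perp}\bm{S}_{\perp}^\top\big) \lesssim \sigma^2(1 + p/n)$ via (\ref{eq:S-perp-op-UB}). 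Under the noise assumption (\ref{eq:noise-condition-iid-pca}), both these noise-level quantities are $\ll \lambda_{\min}^\star \le \lambda_l^\star$, so $\lambda_l - \gamma_r^{(l)} \asymp \lambda_l - \lambda_{r+1} \asymp \lambda_l^\star$, giving $f(\gamma_r^{(l)}) + f(\lambda_{r+1}) \lesssim \sigma^2(1+p/n)/\lambda_l^{\star 2}$. This establishes (\ref{eq:lambda-M-r-n-value-pca}).

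For the second claim (\ref{eq:lambda-M-r-n-bound-pca}), I use three ingredients: (i) $\lambda_i \lesssim \sigma^2(1 + p/n)$ for every $i > r$ by the same argument above; (ii) $\lambda_l - \lambda_i \asymp \lambda_l^\star$ for such $i$; (iii) $\mathsf{rank}(\frac{1}{n}\bm{S}\bm{S}^\top) \le n$ so at most $(p \wedge n) - r$ of the summands are nonzero. Multiplying, $\sum_{i > r} f(\lambda_i) \lesssim (n \wedge p) \cdot \sigma^2(1+p/n)/\lambda_l^{\star 2} = \sigma^2 p/\lambda_l^{\star 2}$; the corresponding bound for $\sum_{i \ge r} f(\gamma_i^{(l)})$ then follows from the first claim plus the triangle inequality.

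The principal subtlety will be the boundary term $i = r$, where the Cauchy window $[\lambda_{r+1}, \lambda_r]$ straddles a signal eigenvalue $\lambda_r \asymp \lambda_r^\star + \sigma^2$ and a noise eigenvalue $\lambda_{r+1}$. The naive bound $f(\gamma_r^{(l)}) \le f(\lambda_r)$ is far too loose (it blows up as the eigen-gap shrinks, and is outright singular if $l = r$); what rescues the argument is the a~priori information that $\gamma_r^{(l)}$ is itself noise-level, which is precisely what Lemma~\ref{lemma:lambda-S-minus-spectrum-1-1}(2) supplies. Once this is recognized, the remainder of the proof is a clean telescoping estimate and routine noise-level bookkeeping.
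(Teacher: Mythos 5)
Your overall route is the same as the paper's: interlacing ($\lambda_{i+1}\le\gamma_i^{(l)}\le\lambda_i$ via Lemma~\ref{lemma:eigval-interlacing}), monotonicity of $t\mapsto t/(\lambda_l-t)^2$ below $\lambda_l$, a telescoping comparison that isolates a single boundary term, the magnitude bound $\gamma_r^{(l)}\lesssim\sigma^2(1+p/n)$ from Lemma~\ref{lemma:lambda-S-minus-spectrum-1-1}, and a count of at most $p\wedge n$ nonzero bulk eigenvalues. The only structural differences (bounding the $\lambda_i$-sum directly and recovering the $\gamma_i^{(l)}$-sum by the triangle inequality, and controlling $\lambda_{r+1}$ by a second application of interlacing with $\bm{U}^{\star\perp}$ together with \eqref{eq:S-perp-op-UB}, rather than simply using $\lambda_{r+1}\le\gamma_r^{(l)}$) are harmless.

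There is, however, one genuine gap: your justification of the denominator bounds. You argue that $\lambda_l-\gamma_r^{(l)}\asymp\lambda_l-\lambda_{r+1}\asymp\lambda_l^{\star}$ because ``under \eqref{eq:noise-condition-iid-pca} these noise-level quantities are $\ll\lambda_{\min}^{\star}$.'' That implication is false in a regime the theorem explicitly allows: condition \eqref{eq:noise-condition-iid-pca} only constrains $\sigma^2(p/n+\sqrt{p/n})$ (plus the cross terms), not $\sigma^2$ itself, and when $n\ge p$ the paper permits $\sigma^2\gtrsim\lambda_{\min}^{\star}$ (indeed this is advertised as an improvement over prior work). In that regime the bulk eigenvalues sit near $\sigma^2(p\vee n)/n\approx\sigma^2$, which is \emph{not} small compared with $\lambda_l^{\star}$; the separation $\lambda_l-\gamma_i^{(l)}\gtrsim\lambda_l^{\star}$ for $i\ge r$ still holds, but only because $\lambda_l\approx(1+\beta(\lambda_l))(\lambda_l^{\star}+\sigma^2)$ is inflated by $\sigma^2$ in lockstep with the bulk, so the $\sigma^2$ contributions cancel. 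This is exactly the content of part 4 of Lemma~\ref{lemma:lambda-S-minus-spectrum-1-1} (see \eqref{eq:lambdal-tilde-lambdak-pca-LB} and \eqref{eq:lambda-lambdal-pca-LB}), which in turn rests on the eigenvalue theory \eqref{eq:eigenvalue-perturbation-bound-iid-pca}. The repair is immediate: replace your ``noise $\ll\lambda_{\min}^{\star}$'' step by a citation of \eqref{eq:lambdal-tilde-lambdak-pca-LB} to get $|\lambda_l-\gamma_i^{(l)}|\gtrsim\lambda_l^{\star}$ for all $i\ge r$, and then obtain $\lambda_l-\lambda_i\ge\lambda_l-\gamma_{i-1}^{(l)}\gtrsim\lambda_l^{\star}$ for $i>r$ from interlacing; with that substitution your argument goes through.
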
 \begin{proof}See Appendix \ref{subsec:Proof-of-lemma:lambda-M-r-n-pca}.
\end{proof}

\begin{lemma}\label{lemma:lambda-star-sigma-est}Instate the assumptions
of Theorem \ref{thm:evector-pertur-sym-iid-pca}. With probability
at least $1-O(n^{-10})$, one has
\begin{align}
\bigg|\underbrace{\frac{\lambda_{l}}{1+\frac{1}{n}\sum_{i>r}\frac{\lambda_{i}}{\lambda_{l}-\lambda_{i}}}-(\lambda_{l}^{\star}+\sigma^{2})}_{=:\,\epsilon_{2}}\bigg| & \lesssim\underbrace{(\lambda_{\max}^{\star}+\sigma^{2})\sqrt{\frac{r\log n}{n}}}_{=\,\mathcal{E}_{\mathsf{PCA}}}.\label{claim:lambda-sigma-est}
\end{align}
and
\begin{equation}
\frac{\lambda_{l}}{1+\frac{1}{n}\sum_{i>r}\frac{\lambda_{i}}{\lambda_{l}-\lambda_{i}}}\asymp\lambda_{l}^{\star}.\label{claim:lambda-sigma-est-bound}
\end{equation}

\end{lemma}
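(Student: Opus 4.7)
The plan is to reduce this lemma to Theorem~\ref{thm:eigval-pertur-sym-iid-pca} by replacing the oracle bias-correction $\beta(\lambda_l)$ (which involves the unobservable eigenvalues of $\frac{1}{n}\bm{S}_{\perp}\bm{S}_{\perp}^{\top}$) with its fully data-driven surrogate $\frac{1}{n}\sum_{i>r}\frac{\lambda_{i}}{\lambda_{l}-\lambda_{i}}$ (involving the eigenvalues of the sample covariance $\frac{1}{n}\bm{S}\bm{S}^{\top}$). First, I would diagonalize $\frac{1}{n}\bm{S}_{\perp}\bm{S}_{\perp}^{\top}$ and expand the trace in \eqref{eq:definition-gamma-lambda-iid-pca} to obtain the spectral representation
\[
\beta(\lambda_{l}) \;=\; \frac{1}{n}\sum_{j}\frac{\mu_{j}}{\lambda_{l}-\mu_{j}},
\]
where $\{\mu_{j}\}$ are the non-zero eigenvalues of $\frac{1}{n}\bm{S}_{\perp}\bm{S}_{\perp}^{\top}$. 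Thus everything reduces to comparing this quantity with $\frac{1}{n}\sum_{i>r}\frac{\lambda_{i}}{\lambda_{l}-\lambda_{i}}$ and then invoking Theorem~\ref{thm:eigval-pertur-sym-iid-pca}.

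Second, for the comparison, I will exploit Cauchy eigenvalue interlacing: since $\frac{1}{n}\bm{S}_{\perp}\bm{S}_{\perp}^{\top} = (\bm{U}^{\star\perp})^{\top}\frac{1}{n}\bm{S}\bm{S}^{\top}\bm{U}^{\star\perp}$ is the principal compression of $\frac{1}{n}\bm{S}\bm{S}^{\top}$ onto a subspace of codimension $r$, we have $\lambda_{j+r}\!\bigl(\tfrac{1}{n}\bm{S}\bm{S}^{\top}\bigr)\le\mu_{j}\le\lambda_{j}\!\bigl(\tfrac{1}{n}\bm{S}\bm{S}^{\top}\bigr)$. By Lemma~\ref{lemma:spectral-M-Sigma} and the noise hypothesis \eqref{eq:noise-condition-iid-pca}, both $\mu_{j}$ and $\lambda_{j+r}$ are upper-bounded by $\|\tfrac{1}{n}\bm{S}\bm{S}^{\top}\|\lesssim\lambda_{\max}^{\star}+\sigma^{2}\ll\lambda_{l}^{\star}\asymp\lambda_{l}$, so each denominator $\lambda_{l}-\mu_{j}$ and $\lambda_{l}-\lambda_{j+r}$ is of order $\lambda_{l}^{\star}$. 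Applying the elementary identity $\frac{x}{\lambda_{l}-x}-\frac{y}{\lambda_{l}-y}=\frac{\lambda_{l}(x-y)}{(\lambda_{l}-x)(\lambda_{l}-y)}$ term by term and summing telescopically via $\sum_{j}(\lambda_{j}-\lambda_{j+r})\le r\lambda_{1}\lesssim r(\lambda_{\max}^{\star}+\sigma^{2})$ yields
\[
\left|\beta(\lambda_{l}) - \frac{1}{n}\sum_{i>r}\frac{\lambda_{i}}{\lambda_{l}-\lambda_{i}}\right| \;\lesssim\; \frac{r(\lambda_{\max}^{\star}+\sigma^{2})}{n\lambda_{l}^{\star}},
\]
which is of the same spirit as Lemma~\ref{lemma:lambda-M-r-n-pca}.

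Third, I would combine the pieces. Theorem~\ref{thm:eigval-pertur-sym-iid-pca} yields $\bigl|\lambda_{l}/(1+\beta(\lambda_{l}))-(\lambda_{l}^{\star}+\sigma^{2})\bigr|\le\mathcal{E}_{\mathsf{PCA}}$, and \eqref{eq:beta-UB} gives $\beta(\lambda_{l})\ll1$. The comparison above then propagates $1+\frac{1}{n}\sum_{i>r}\frac{\lambda_{i}}{\lambda_{l}-\lambda_{i}}\asymp1$, so that
\[
\left|\frac{\lambda_{l}}{1+\beta(\lambda_{l})} - \frac{\lambda_{l}}{1+\frac{1}{n}\sum_{i>r}\frac{\lambda_{i}}{\lambda_{l}-\lambda_{i}}}\right| \;\lesssim\; \lambda_{l}\left|\beta(\lambda_{l})-\frac{1}{n}\sum_{i>r}\frac{\lambda_{i}}{\lambda_{l}-\lambda_{i}}\right| \;\lesssim\; \frac{r(\lambda_{\max}^{\star}+\sigma^{2})}{n} \;\ll\; \mathcal{E}_{\mathsf{PCA}}.
\]
A final triangle inequality produces the bound \eqref{claim:lambda-sigma-est}, and the two-sided relation \eqref{claim:lambda-sigma-est-bound} follows because $\mathcal{E}_{\mathsf{PCA}}\ll\lambda_{\min}^{\star}$ under \eqref{eq:noise-condition-iid-pca}.

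The main obstacle is the interlacing step, which must accommodate both regimes $n\ge p$ and $n<p$: the two sums have different numbers of non-zero terms (differing by at most $r$), and the unpaired tail has to be handled carefully so its contribution stays of order $r(\lambda_{\max}^{\star}+\sigma^{2})/(n\lambda_{l}^{\star})$ rather than leaking an extra factor of $p/n$ or $n/p$. Apart from this regime-dependent bookkeeping, which can be modeled on the proof of Lemma~\ref{lemma:lambda-M-r-n-pca}, the remainder of the argument is a routine combination of interlacing and the first-order difference estimate above.
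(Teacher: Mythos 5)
Your route is essentially the paper's: expand $\beta(\lambda_l)$ spectrally, compare it with the data-driven sum via the Poincar\'e interlacing of Lemma~\ref{lemma:eigval-interlacing}, and then combine with Theorem~\ref{thm:eigval-pertur-sym-iid-pca} and $\beta(\lambda_l)\ll1$ (cf.~\eqref{eq:beta-UB}). The only substantive difference is how the discrepancy is bounded: you pair $\mu_j$ with $\lambda_{j+r}$, use the first-difference identity, and telescope $\sum_j(\lambda_j-\lambda_{j+r})\le r\lambda_1$, getting $r(\lambda_{\max}^\star+\sigma^2)/(n\lambda_l^\star)$; the paper instead notes that, by monotonicity of $x\mapsto x/(\lambda_l-x)$, the difference is sandwiched between $0$ and the $r$ unmatched top terms $\frac{1}{n}\sum_{i\le r}\frac{\lambda_i(\frac1n\bm{S}_{\perp}\bm{S}_{\perp}^{\top})}{\lambda_l-\lambda_i(\frac1n\bm{S}_{\perp}\bm{S}_{\perp}^{\top})}\lesssim\frac{r}{n}\cdot\frac{\sigma^2}{\lambda_l^\star}(1+\frac{p}{n})$, which is slightly sharper. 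Both bounds, after multiplying by $\lambda_l\asymp\lambda_l^\star$, are $\ll\mathcal{E}_{\mathsf{PCA}}$ since $r\ll n$, so your version suffices. Your worry about separate $n\ge p$ and $n<p$ bookkeeping is a non-issue: zero eigenvalues contribute zero to both sums, so the argument is regime-agnostic.

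Two points need repair, though both are local. First, your justification that the denominators are of order $\lambda_l^\star$ is wrong as written: you claim $\mu_j,\lambda_{j+r}\le\|\frac1n\bm{S}\bm{S}^{\top}\|\lesssim\lambda_{\max}^\star+\sigma^2\ll\lambda_l^\star$, but $\lambda_{\max}^\star+\sigma^2\gtrsim\lambda_l^\star$, so this chain is false. What you actually need (and what is true) is that the eigenvalues entering the denominators are small: $\lambda_{j+r}\le\lambda_{r+1}\le\sigma^2+\|\frac1n\bm{S}\bm{S}^{\top}-\bm{\Sigma}\|\ll\lambda_{\min}^\star$ by Weyl and Lemma~\ref{lemma:spectral-M-Sigma} under \eqref{eq:noise-condition-iid-pca}, and $\mu_j\le\|\frac1n\bm{S}_{\perp}\bm{S}_{\perp}^{\top}\|\lesssim\sigma^2(1+p/n)\ll\lambda_{\min}^\star$ by \eqref{eq:S-perp-op-UB}; this also legitimizes the monotonicity/interlacing comparison (all arguments lie below $\lambda_l$). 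Second, for \eqref{claim:lambda-sigma-est-bound}, $\mathcal{E}_{\mathsf{PCA}}\ll\lambda_{\min}^\star$ only yields $\frac{\lambda_l}{1+\frac1n\sum_{i>r}\frac{\lambda_i}{\lambda_l-\lambda_i}}\asymp\lambda_l^\star+\sigma^2$; to conclude $\asymp\lambda_l^\star$ you additionally need $\sigma^2\lesssim\lambda_l^\star$, which \eqref{eq:noise-condition-iid-pca} guarantees when $p\gtrsim n$ (the regime in which this bound is actually invoked, and also how the paper argues it); you should state this explicitly rather than attribute it to $\mathcal{E}_{\mathsf{PCA}}$ alone.
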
 \begin{proof}See Appendix \ref{subsec:Proof-of-lemma:lambda-star-sigma-est}.
\end{proof}

With these two lemmas in place, we are ready to control $\alpha_{1}$.
According to (\ref{eq:lambda-M-r-n-bound-pca}), we have
\begin{equation}
\alpha_{1}\lesssim\frac{(\lambda_{l}^{\star}+\sigma^{2})\sigma^{2}p}{\lambda_{l}^{\star2}}.\label{eq:PCA-alpha1-n-large}
\end{equation}
In addition, recall the definition of $c_{l}$ (cf.~(\ref{lemma:lambda-M-r-n-pca})).
We can upper bound
\begin{align}
|\alpha_{1}-c_{l}\cdot n| & =\bigg|(\lambda_{l}^{\star}+\sigma^{2})\sum_{i\geq r}\frac{\gamma_{i}^{(l)}}{(\lambda_{l}-\gamma_{i}^{(l)})^{2}}-c_{l}\cdot n\bigg|\nonumber \\
 & =\Bigg|(\lambda_{l}^{\star}+\sigma^{2})\bigg(\sum_{i>r}\frac{\lambda_{i}}{(\lambda_{l}-\lambda_{i})^{2}}+\epsilon_{1}\bigg)-(\lambda_{l}^{\star}+\sigma^{2}+\epsilon_{2})\sum_{i>r}\frac{\lambda_{i}}{(\lambda_{l}-\lambda_{i})^{2}}\Bigg|\nonumber \\
 & =\bigg|\epsilon_{1}(\lambda_{l}^{\star}+\sigma^{2})-\epsilon_{2}\sum_{i>r}\frac{\lambda_{i}}{(\lambda_{l}-\lambda_{i})^{2}}\bigg|\nonumber \\
 & \overset{(\mathrm{i})}{\lesssim}(\lambda_{l}^{\star}+\sigma^{2})|\epsilon_{1}|+\frac{\sigma^{2}p}{\lambda_{l}^{\star2}}|\epsilon_{2}|\nonumber \\
 & \overset{(\mathrm{ii})}{\lesssim}(\lambda_{l}^{\star}+\sigma^{2})\cdot\frac{\sigma^{2}}{\lambda_{l}^{\star2}}\Big(1+\frac{p}{n}\Big)+\frac{\sigma^{2}p}{\lambda_{l}^{\star2}}\cdot(\lambda_{\max}^{\star}+\sigma^{2})\sqrt{\frac{r\log n}{n}}\nonumber \\
 & \asymp\frac{(\lambda_{l}^{\star}+\sigma^{2})\sigma^{2}}{\lambda_{l}^{\star2}}+\frac{(\lambda_{\max}^{\star}+\sigma^{2})\sigma^{2}p}{\lambda_{l}^{\star2}}\sqrt{\frac{r\log n}{n}}.\label{eq:PCA-alpha1-bl-n-large}
\end{align}
Here, (i) makes use of (\ref{eq:lambda-M-r-n-bound-pca}) and (\ref{eq:lambda-M-r-n-value-pca});
(ii) relies on (\ref{eq:lambda-M-r-n-value-pca}) and (\ref{claim:lambda-sigma-est}).
\item Next, we move on to look at $\alpha_{2}$. Observe that $\big\{(\bm{v}_{i}^{(l)\top}\bm{s}_{l,\parallel}^{\top})^{2}-(\lambda_{l}^{\star}+\sigma^{2})\big\}_{i\geq r}$
is a sequence of zero-mean sub-exponential random variables, which
is independent of $\bm{\Gamma}^{(l)}$ but depends on $\lambda_{l}$.
Hence, we shall apply the epsilon-net argument (cf.~Lemma~\ref{lemma:eps-net})
to bound $\alpha_{2}$. To do so, let us first verify the conditions
required therein. With probability exceeding $1-O(n^{-20})$, one
has
\begin{align*}
 & \Bigg|\frac{\mathrm{d}}{\mathrm{d}\lambda}\sum_{r\leq i\le n\wedge(p-1)}\frac{\gamma_{i}^{(l)}}{(\lambda-\gamma_{i}^{(l)})^{2}}\big((\bm{v}_{i}^{(l)\top}\bm{s}_{l,\parallel}^{\top})^{2}-(\lambda_{l}^{\star}+\sigma^{2})\big)\Bigg|\\
 & \qquad=\Bigg|\sum_{r\leq i\le n\wedge(p-1)}\frac{\gamma_{i}^{(l)}}{(\lambda-\gamma_{i}^{(l)})^{3}}\big((\bm{v}_{i}^{(l)\top}\bm{s}_{l,\parallel}^{\top})^{2}-(\lambda_{l}^{\star}+\sigma^{2})\big)\Bigg|\\
 & \qquad\leq(p\wedge n)\cdot\max_{r\leq i\leq n\wedge(p-1)}\frac{\gamma_{i}^{(l)}}{(\lambda-\gamma_{i}^{(l)})^{3}}\cdot\max_{r\leq i\leq n\wedge(p-1)}\big|(\bm{v}_{i}^{(l)\top}\bm{s}_{l,\parallel}^{\top})^{2}-(\lambda_{l}^{\star}+\sigma^{2})\big|\\
 & \qquad\lesssim(p\wedge n)\cdot\frac{\sigma^{2}(p\vee n)}{\lambda_{l}^{\star3}n}\cdot(\lambda_{l}^{\star}+\sigma^{2})\log n\\
 & \qquad=\frac{(\lambda_{l}^{\star}+\sigma^{2})\sigma^{2}p\log n}{\lambda_{l}^{\star3}}
\end{align*}
for all $\lambda$ with $\lambda/\big(1+\beta(\lambda)\big)\in\mathcal{B}_{\mathcal{E}_{\mathsf{PCA}}}(\lambda_{l}^{\star}+\sigma^{2})$.
In addition, it is seen that
\begin{align*}
\big\|(\bm{v}_{i}^{(l)\top}\bm{s}_{l,\parallel}^{\top})^{2}-(\lambda_{l}^{\star}+\sigma^{2})\big\|_{\psi_{1}} & \lesssim\lambda_{l}^{\star}+\sigma^{2},\\
\mathbb{E}\Big[\big((\bm{v}_{i}^{(l)\top}\bm{s}_{l,\parallel}^{\top})^{2}-(\lambda_{l}^{\star}+\sigma^{2})\big)^{2}\Big] & \lesssim(\lambda_{l}^{\star}+\sigma^{2})^{2},
\end{align*}
where $\|\cdot\|_{\psi_{1}}$ denotes the sub-exponential norm. Invoke
the matrix Bernstein inequality \cite[Corollary 2.1]{Koltchinskii2011oracle}
to show that: for any fixed $\lambda$ obeying $\lambda/\big(1+\beta(\lambda)\big)\in\mathcal{B}_{\mathcal{E}_{\mathsf{PCA}}}(\lambda_{l}^{\star}+\sigma^{2})$,
one has
\begin{align*}
 & \Bigg|\sum_{r\leq i\text{\ensuremath{\le}}n\wedge(p-1)}\frac{\gamma_{i}^{(l)}}{(\lambda-\gamma_{i}^{(l)})^{2}}\big((\bm{v}_{i}^{(l)\top}\bm{s}_{l,\parallel}^{\top})^{2}-(\lambda_{l}^{\star}+\sigma^{2})\big)\Bigg|\\
 & \qquad\lesssim\max_{r\leq i\text{\ensuremath{\le}}n\wedge(p-1)}\frac{\gamma_{i}^{(l)}}{(\lambda-\gamma_{i}^{(l)})^{2}}\cdot(\lambda_{l}^{\star}+\sigma^{2})(\log n+\sqrt{p\wedge n}\log n)\\
 & \qquad\lesssim\frac{(\lambda_{l}^{\star}+\sigma^{2})\sigma^{2}}{\lambda_{l}^{\star2}}\frac{p\vee n}{n}\sqrt{p\wedge n}\log n\\
 & \qquad\lesssim\frac{(\lambda_{l}^{\star}+\sigma^{2})\sigma^{2}p}{\lambda_{l}^{\star2}}\frac{\log n}{\sqrt{p\wedge n}}
\end{align*}
with probability at least $1-O(n^{-10})$, where the last inequalities
hold since $|\lambda-\gamma_{i}^{(l)}|\gtrsim\lambda_{l}^{\star}$
and $\gamma_{i}^{(l)}\lesssim\sigma^{2}(1+p/n)$. In addition, we
make the observation that
\[
\{\lambda\colon\lambda/\big(1+\beta(\lambda)\big)\in\mathcal{B}_{\mathcal{E}_{\mathsf{PCA}}}(\lambda_{l}^{\star}+\sigma^{2})\}\subseteq[2\lambda_{l}^{\star}/3,\,4\lambda_{l}^{\star}/3].
\]
With these in place, one can readily invoke Lemma~\ref{lemma:eps-net}
to derive
\begin{equation}
\alpha_{2}\lesssim\frac{(\lambda_{l}^{\star}+\sigma^{2})\sigma^{2}p}{\lambda_{l}^{\star2}}\frac{\log n}{\sqrt{p\wedge n}}\label{eq:PCA-alpha2-n-large}
\end{equation}
with probability at least $1-O(n^{-10})$. 
\item Combining (\ref{eq:PCA-alpha1-n-large}) and (\ref{eq:PCA-alpha2-n-large}),
we conclude
\begin{align}
\sum_{r\leq i\le n\wedge(p-1)}\frac{\gamma_{i}^{(l)}(\bm{v}_{i}^{(l)\top}\bm{s}_{l,\parallel}^{\top})^{2}}{n\,(\lambda_{l}-\gamma_{i}^{(l)})^{2}} & \leq\frac{|\alpha_{1}|+|\alpha_{2}|}{n}\lesssim\frac{(\lambda_{l}^{\star}+\sigma^{2})\sigma^{2}p}{\lambda_{l}^{\star2}n}+\frac{(\lambda_{l}^{\star}+\sigma^{2})\sigma^{2}p}{\lambda_{l}^{\star2}n}\frac{\log n}{\sqrt{p\wedge n}}\nonumber \\
 & \lesssim\frac{(\lambda_{l}^{\star}+\sigma^{2})\sigma^{2}p\log n}{\lambda_{l}^{\star2}n}.\label{eq:lambda-M-r-sum2-bound-pca}
\end{align}
\end{itemize}
\begin{quotation}
Moreover, putting (\ref{eq:PCA-alpha1-bl-n-large}) and (\ref{eq:PCA-alpha2-n-large})
together gives
\begin{align}
\Bigg|\sum_{i\geq r}\frac{\gamma_{i}^{(l)}(\bm{v}_{i}^{(l)\top}\bm{s}_{l,\parallel}^{\top})^{2}}{n\,(\lambda_{l}-\gamma_{i}^{(l)})^{2}}-c_{l}\Bigg| & \le\frac{|\alpha_{1}-c_{l}\cdot n|+|\alpha_{2}|}{n}\nonumber \\
 & \lesssim\frac{(\lambda_{l}^{\star}+\sigma^{2})\sigma^{2}}{\lambda_{l}^{\star2}n}+\frac{(\lambda_{\max}^{\star}+\sigma^{2})\sigma^{2}p}{\lambda_{l}^{\star2}n}\sqrt{\frac{r\log n}{n}}+\frac{(\lambda_{l}^{\star}+\sigma^{2})\sigma^{2}p}{\lambda_{l}^{\star2}n}\frac{\log n}{\sqrt{p\wedge n}}\nonumber \\
 & \asymp\frac{\sigma^{2}p}{\lambda_{l}^{\star2}n}\bigg((\lambda_{\max}^{\star}+\sigma^{2})\sqrt{\frac{r\log n}{n}}+(\lambda_{l}^{\star}+\sigma^{2})\frac{\log n}{\sqrt{p\wedge n}}\bigg),\label{eq:cos_r_n-pca}
\end{align}
where $c_{l}$ is defined in (\ref{eq:def:bl-pca}).
\end{quotation}
\item \emph{Case II: $n<p$.} As it turns out, the above analysis for (\ref{eq:cos_r_n-pca})
is not tight when it comes to the case $n<p$. To remedy the issue,
we provide a more precise estimate for terms (\ref{eq:lambda-M-r-n-value-bound-pca})
in the following lemma.

\begin{lemma}\label{lemma:lambda-M-r-n-pca-precise}Instate the assumptions
of Theorem \ref{thm:evector-pertur-sym-iid-pca}. Suppose that $p>n$,
then the following holds with probability at least $1-O(n^{-10})$:
\begin{align}
\sum_{i\geq r}\frac{\gamma_{i}^{(l)}(\bm{v}_{i}^{(l)\top}\bm{s}_{l,\parallel}^{\top})^{2}}{(\lambda_{l}-\gamma_{i}^{(l)})^{2}} & =\underbrace{\frac{\sigma^{2}p}{\lambda_{l}-\sigma^{2}p/n}+\frac{\lambda_{l}}{\lambda_{l}-\sigma^{2}p/n}\ \frac{\lambda_{l}}{1+\frac{1}{n}\sum_{i>r}\frac{\lambda_{i}}{\lambda_{l}-\lambda_{i}}}\sum_{r<i\le n}\frac{\lambda_{i}-\sigma^{2}p/n}{(\lambda_{l}-\lambda_{i})^{2}}}_{=c_{l}\cdot n}\nonumber \\
 & \quad+O\Big(\frac{\sigma^{2}pr\log n}{\min_{i:i\ne l}\left|\lambda_{l}^{\star}-\lambda_{i}^{\star}\right|n}+\frac{\sigma^{2}\kappa\sqrt{pr\log n}}{\lambda_{l}^{\star}}\Big).\label{eq:lambda-M-r-n-value-bound-pca-p}
\end{align}
\end{lemma}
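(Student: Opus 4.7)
The plan is to adapt the two-step decomposition used in the proof of Lemma~\ref{lemma:lambda-M-r-n-pca} (the coarser version), while exploiting the special spectral structure of the over-parameterized regime $p>n$. First I would write the left-hand side as
\[
\sum_{i\geq r}\frac{\gamma_{i}^{(l)}(\bm{v}_{i}^{(l)\top}\bm{s}_{l,\parallel}^{\top})^{2}}{(\lambda_{l}-\gamma_{i}^{(l)})^{2}} \;=\; \underbrace{(\lambda_{l}^{\star}+\sigma^{2})\sum_{r\le i\le n}\frac{\gamma_{i}^{(l)}}{(\lambda_{l}-\gamma_{i}^{(l)})^{2}}}_{=:\alpha_{1}} \;+\; \underbrace{\sum_{r\le i\le n}\frac{\gamma_{i}^{(l)}\big((\bm{v}_{i}^{(l)\top}\bm{s}_{l,\parallel}^{\top})^{2}-(\lambda_{l}^{\star}+\sigma^{2})\big)}{(\lambda_{l}-\gamma_{i}^{(l)})^{2}}}_{=:\alpha_{2}},
\]
where the terms with $i>n$ drop because $\gamma_{i}^{(l)}=0$ there (Lemma~\ref{lemma:lambda-S-minus-spectrum-1-1}, part 3). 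Then, conditional on $\bm{V}^{(l)}$ and $\bm{\Gamma}^{(l)}$, the variables $\bm{v}_i^{(l)\top}\bm{s}_{l,\parallel}^\top$ are i.i.d.~Gaussian with variance $\lambda_l^\star+\sigma^2$ and the independence asserted in~\eqref{eq:independence-vil-sl-par} enables a sub-exponential Bernstein bound combined with an epsilon-net argument over $\lambda\in[2\lambda_l^\star/3,4\lambda_l^\star/3]$, exactly as was carried out for $\alpha_2$ in the $n\geq p$ case. Using $\gamma_i^{(l)}\lesssim\sigma^2 p/n$ and $|\lambda_l-\gamma_i^{(l)}|\gtrsim\lambda_l^\star$ (both from Lemma~\ref{lemma:lambda-S-minus-spectrum-1-1}) plus the fact that at most $n$ bulk terms contribute yields $|\alpha_2|\lesssim \sigma^2\kappa\sqrt{pr\log n}/\lambda_l^\star\cdot n$ after dividing by $n$, which produces the second error term in the statement.

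The central difficulty is a precise evaluation of $\alpha_1$; the coarser bound recorded in Lemma~\ref{lemma:lambda-M-r-n-pca} is no longer sharp here because, for $p>n$, the bulk eigenvalues $\gamma_i^{(l)}$ ($r\le i\le n$) concentrate around $\sigma^{2}p/n$ rather than around $0$, so they contribute a non-negligible leading-order term. I would start from the algebraic identity
\[
\frac{\gamma_{i}^{(l)}}{(\lambda_{l}-\gamma_{i}^{(l)})^{2}} \;=\; \frac{\lambda_{l}}{(\lambda_{l}-\gamma_{i}^{(l)})^{2}}-\frac{1}{\lambda_{l}-\gamma_{i}^{(l)}},
\]
and observe that the terms for $n<i\le p-1$ (where $\gamma_i^{(l)}=0$) contribute identically zero on both sides, so the two sums can equivalently be taken over all $r\le i\le p-1$. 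Then I would apply the Cauchy eigenvalue interlacing (Lemma~\ref{lemma:eigval-interlacing}) between the spectra $\{\gamma_i^{(l)}\}$ and $\{\lambda_i\}$ (the latter being the eigenvalues of $\tfrac{1}{n}\bm{S}\bm{S}^\top$) to compare the sums $\sum_{r\le i\le p-1}(\lambda_l-\gamma_i^{(l)})^{-k}$ to $\sum_{r+1\le i\le p}(\lambda_l-\lambda_i)^{-k}$ ($k=1,2$) up to a single explicit boundary term, in the same spirit as the proof of Lemma~\ref{lemma:lambda-M-r-n} in the matrix denoising case.

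The crucial new ingredient is the scale-matching identity
\[
\lambda_{l}^{\star}+\sigma^{2} \;\approx\; \frac{\lambda_{l}}{1+\beta(\lambda_{l})} \;\approx\; \lambda_{l}-\sigma^{2}p/n,
\]
which follows from Theorem~\ref{thm:eigval-pertur-sym-iid-pca} together with the observation that when $p>n$ the trace defining $\beta(\lambda_l)$ is well approximated by $\frac{\sigma^{2}p/n}{\lambda_{l}-\sigma^{2}p/n}$ (since all $n$ non-zero eigenvalues of $\tfrac{1}{n}\bm{S}_\perp\bm{S}_\perp^\top$ concentrate around $\sigma^{2}p/n$ by the Marchenko-Pastur law / Lemma~\ref{lemma:lambda-S-minus-spectrum-1-1}). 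Combining this identity with the interlacing comparison converts the bulk contribution $(\lambda_l^\star+\sigma^2)\sum\tfrac{\sigma^2p/n}{(\lambda_l-\gamma_i^{(l)})^2}$ into the announced first term $\sigma^{2}p/(\lambda_{l}-\sigma^{2}p/n)$, while the residuals reassemble into the $\frac{\lambda_l}{\lambda_l-\sigma^2 p/n}$-prefactored sum over $(\lambda_i-\sigma^2 p/n)/(\lambda_l-\lambda_i)^2$ after invoking Lemma~\ref{lemma:lambda-star-sigma-est} to replace $\lambda_l^\star+\sigma^2$ by its data-driven surrogate. Collecting all of these approximations produces the claimed form of $c_l\cdot n$ plus the first (bulk-induced) error term $\sigma^{2}pr\log n/(\min_{i\ne l}|\lambda_l^\star-\lambda_i^\star|\,n)$, whereas the Bernstein bound on $\alpha_2$ supplies the second error term.

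The hardest step will be bookkeeping in the interlacing comparison: because $\sigma^{2}p/n$ is now comparable to $\lambda_l^\star$ rather than negligible, several boundary/edge terms that were absorbed trivially in the proof of Lemma~\ref{lemma:lambda-M-r-n-pca} must be tracked carefully, and their control relies essentially on the non-trivial Marchenko-Pastur-type concentration $|\gamma_i^{(l)}-\sigma^{2}p/n|\lesssim \sigma^{2}\sqrt{p/n}$ furnished by Lemma~\ref{lemma:lambda-S-minus-spectrum-1-1}. Once those are shown to contribute at most $O(\sigma^{2}\kappa\sqrt{pr\log n}/\lambda_l^\star)$, the lemma follows.
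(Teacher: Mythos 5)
Your plan reuses the Case-I ($n\ge p$) decomposition $\alpha_{1}=(\lambda_{l}^{\star}+\sigma^{2})\sum_{r\le i\le n}\gamma_{i}^{(l)}/(\lambda_{l}-\gamma_{i}^{(l)})^{2}$ plus the fluctuation $\alpha_{2}$ with the raw weights, and this is exactly the decomposition the paper flags as not tight when $n<p$; the error accounting does not close in the regime $p\gg n$. Concretely, Bernstein applied to your $\alpha_{2}$ with weights $\gamma_{i}^{(l)}/(\lambda_{l}-\gamma_{i}^{(l)})^{2}\lesssim(\sigma^{2}p/n)/\lambda_{l}^{\star2}$ over $\asymp n$ bulk indices only gives $|\alpha_{2}|\lesssim\frac{\sigma^{2}p}{\lambda_{l}^{\star}}\sqrt{\frac{\log n}{n}}$, which exceeds the claimed $\frac{\sigma^{2}\kappa\sqrt{pr\log n}}{\lambda_{l}^{\star}}$ by a factor of order $\sqrt{p/n}/(\kappa\sqrt{r})$, and the regime $p\gg\kappa^{2}rn$ is permitted by the assumptions of Theorem~\ref{thm:evector-pertur-sym-iid-pca} (only $\sigma^{2}p/n$ is constrained, not $p/n$ itself); note also that a Bernstein bound on this term cannot produce the $\kappa$ and $r$ appearing in the stated error. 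Likewise, in your $\alpha_{1}$ the replacement of $\lambda_{l}^{\star}+\sigma^{2}$ by its data-driven surrogate via Lemma~\ref{lemma:lambda-star-sigma-est} is multiplied by the full bulk sum $\sum_{r\le i\le n}\gamma_{i}^{(l)}/(\lambda_{l}-\gamma_{i}^{(l)})^{2}\asymp\sigma^{2}p/\lambda_{l}^{\star2}$, incurring an error $\asymp\mathcal{E}_{\mathsf{PCA}}\cdot\sigma^{2}p/\lambda_{l}^{\star2}\asymp\frac{\kappa\sigma^{2}p}{\lambda_{l}^{\star}}\sqrt{\frac{r}{n}}\log n$, which again can dominate both error terms in the statement when $p\gg n$.

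The paper's proof avoids both issues through two devices your proposal lacks. First, it centers everything at the Marchenko--Pastur location $\sigma^{2}p/n$ \emph{before} any probabilistic or substitution step: since $|\gamma_{i}^{(l)}-\sigma^{2}p/n|\lesssim\sigma^{2}\sqrt{p/n}$ (Lemma~\ref{lemma:lambda-S-minus-spectrum-1-1}), the relevant weight deviations are $\lesssim\frac{\sigma^{2}}{\lambda_{l}^{\star2}}\sqrt{p/n}$ rather than $\frac{\sigma^{2}p/n}{\lambda_{l}^{\star2}}$, so the Bernstein fluctuations and the Lemma~\ref{lemma:lambda-star-sigma-est} substitution error land at $\frac{\sigma^{2}\sqrt{p\log n}}{\lambda_{l}^{\star}}$ and $\frac{\kappa\sigma^{2}\sqrt{pr\log n}}{\lambda_{l}^{\star}}$ respectively. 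Second, the leading bulk contribution is evaluated through the exact identity $n=\sum_{1\le i\le n}(\bm{v}_{i}^{(l)\top}\bm{s}_{l,\parallel}^{\top})^{2}/(\lambda_{l}-\gamma_{i}^{(l)})$, obtained from Theorem~\ref{thm:master-thm-vector} with $\bm{q}=\bm{u}_{l}^{\star}$; this simultaneously removes the $\lambda_{l}^{\star}\sqrt{n\log n}$-sized Gaussian fluctuation of $\sum_{i}(\bm{v}_{i}^{(l)\top}\bm{s}_{l,\parallel}^{\top})^{2}$ that would otherwise enter at leading order, and yields the closed-form piece $\sigma^{2}p/(\lambda_{l}-\sigma^{2}p/n)$ of $c_{l}\cdot n$ without invoking Lemma~\ref{lemma:lambda-star-sigma-est} at leading order. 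Your interlacing transfer from $\{\gamma_{i}^{(l)}\}$ to $\{\lambda_{i}\}$ and the scale-matching heuristic are legitimate supporting steps (the paper uses both), but without the centering and the exact eigenvalue identity the stated error bounds cannot be reached.
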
\begin{proof}See Appendix \ref{subsec:Proof-of-lemma:lambda-M-r-n-pca-precise}.
\end{proof}

Here, the quantity $c_{l}$ introduced above is precisely the one
defined in (\ref{eq:def:bl-pca}). Consequently, we arrive at
\begin{align}
\Bigg|\sum_{i\geq r}\frac{\gamma_{i}^{(l)}(\bm{v}_{i}^{(l)\top}\bm{s}_{l,\parallel}^{\top})^{2}}{n\,(\lambda_{l}-\gamma_{i}^{(l)})^{2}}-c_{l}\Bigg| & \lesssim\frac{\sigma^{2}pr\log n}{\min_{i:i\ne l}\left|\lambda_{l}^{\star}-\lambda_{i}^{\star}\right|n}+\frac{\sigma^{2}\kappa\sqrt{pr\log n}}{\lambda_{l}^{\star}n}.\label{eq:cos_r_n-pca-p}
\end{align}

\end{itemize}

\paragraph{Combining two sums. }

Substituting (\ref{eq:lambda-M-r-sum1-bound-pca}) and (\ref{eq:lambda-M-r-sum2-bound-pca})
(which holds universally for any $n$) into (\ref{eq:cos0-pca}),
we reach the first claim (\ref{eq:claim:lambda-M-l-inv-u-perp-M-u-l2-norm-1-pca-2}):
\[
\Big\|\Big(\lambda_{l}\bm{I}_{p-1}-\frac{1}{n}\bm{S}_{l,\perp}\bm{S}_{l,\perp}^{\top}\Big)^{-1}\frac{1}{n}\bm{S}_{l,\perp}\bm{s}_{l,\parallel}^{\top}\Big\|_{2}^{2}\lesssim\frac{(\lambda_{\max}^{\star}+\sigma^{2})(\lambda_{l}^{\star}+\sigma^{2})\,r\log n}{\min_{i:i\neq l}\left|\lambda_{l}^{\star}-\lambda_{i}^{\star}\right|^{2}n}+\frac{(\lambda_{l}^{\star}+\sigma^{2})\sigma^{2}p\log^{2}n}{\lambda_{l}^{\star2}n}\ll1,
\]
where the last step holds due to the assumptions (\ref{eq:noise-condition-iid-pca})
and (\ref{eq:eigengap-condition-iid-pca}).

We now turn attention to the estimation error. Regarding the case
with $n\geq p$, we can combine (\ref{eq:lambda-M-r-sum1-bound-pca})
and (\ref{eq:cos_r_n-pca}) to conclude that
\begin{align*}
 & \bigg|\Big\|\Big(\lambda_{l}\bm{I}_{p-1}-\frac{1}{n}\bm{S}_{l,\perp}\bm{S}_{l,\perp}^{\top}\Big)^{-1}\frac{1}{n}\bm{S}_{l,\perp}\bm{s}_{l,\parallel}^{\top}\Big\|_{2}^{2}-c_{l}\bigg|\\
 & \qquad\lesssim\frac{(\lambda_{\max}^{\star}+\sigma^{2})(\lambda_{l}^{\star}+\sigma^{2})\,r\log n}{\min_{i:i\neq l}|\lambda_{l}^{\star}-\lambda_{i}^{\star}|^{2}n}+\frac{\sigma^{2}p}{\lambda_{l}^{\star2}n}\bigg((\lambda_{l}^{\star}+\sigma^{2})\frac{\log^{2}n}{\sqrt{p\wedge n}}+(\lambda_{\max}^{\star}+\sigma^{2})\sqrt{\frac{r\log n}{n}}\bigg).
\end{align*}
As for the case with $n<p$, substituting (\ref{eq:lambda-M-r-sum1-bound-pca})
and (\ref{eq:cos_r_n-pca-p}) into (\ref{eq:cos0-pca}) yields
\begin{align*}
\bigg|\Big\|\Big(\lambda_{l}\bm{I}_{p-1}-\frac{1}{n}\bm{S}_{l,\perp}\bm{S}_{l,\perp}^{\top}\Big)^{-1}\frac{1}{n}\bm{S}_{l,\perp}\bm{s}_{l,\parallel}^{\top}\Big\|_{2}^{2}-c_{l}\bigg| & \lesssim\frac{(\lambda_{\max}^{\star}+\sigma^{2})(\lambda_{l}^{\star}+\sigma^{2})r\log n}{\min_{i:i\ne l}\left|\lambda_{l}^{\star}-\lambda_{i}^{\star}\right|^{2}n}+\frac{\sigma^{2}\kappa\sqrt{pr\log n}}{\lambda_{l}^{\star}n}+\frac{\sigma^{2}pr\log n}{\min_{i:i\ne l}\left|\lambda_{l}^{\star}-\lambda_{i}^{\star}\right|n^{2}}\\
 & \asymp\frac{(\lambda_{\max}^{\star}+\sigma^{2})(\lambda_{l}^{\star}+\sigma^{2})r\log n}{\min_{i:i\ne l}\left|\lambda_{l}^{\star}-\lambda_{i}^{\star}\right|^{2}n}+\frac{\sigma^{2}\kappa\sqrt{pr\log n}}{\lambda_{l}^{\star}n},
\end{align*}
where in the last line we use the conditions $\min_{i:i\ne l}\left|\lambda_{l}^{\star}-\lambda_{i}^{\star}\right|\lesssim\lambda_{\max}^{\star}$
and $\sigma^{2}p/n\ll\lambda_{l}^{\star}$ (according to the noise
assumption (\ref{eq:noise-condition-iid-pca})).

\subsubsection{Proof of Lemma \ref{lemma:lambda-M-r-n-pca}}

\label{subsec:Proof-of-lemma:lambda-M-r-n-pca}

To begin with, let us consider (\ref{eq:lambda-M-r-n-value-pca}).
By construction, we have $\bm{S}_{l,\perp}\bm{S}_{l,\perp}^{\top}=(\bm{u}_{l}^{\star\perp})^{\top}\bm{S}\bm{S}^{\top}\bm{u}_{l}^{\star\perp}$,
and it follows from Lemma \ref{lemma:eigval-interlacing} that $\lambda_{i+1}\le\gamma_{i}^{(l)}\le\lambda_{i}$
for all $1\leq i<p$. Simple calculation yields
\[
\frac{\gamma_{i}^{(l)}}{(\lambda_{l}-\gamma_{i}^{(l)})^{2}}-\frac{\lambda_{i}}{(\lambda_{l}-\lambda_{i})^{2}}=\frac{(\gamma_{i}^{(l)}-\lambda_{i})(\lambda_{l}^{2}-\lambda_{i}\gamma_{i}^{(l)})}{(\lambda_{l}-\gamma_{i}^{(l)})^{2}(\lambda_{l}-\lambda_{i})^{2}},
\]
and consequently
\[
\frac{\lambda_{i+1}}{(\lambda_{l}-\lambda_{i+1})^{2}}\leq\frac{\gamma_{i}^{(l)}}{(\lambda_{l}-\gamma_{i}^{(l)})^{2}}\leq\frac{\lambda_{i}}{(\lambda_{l}-\lambda_{i})^{2}},\qquad i\geq r.
\]
We can then invoke a similar argument used in the proof of Lemma~\ref{lemma:lambda-M-r-n}
to bound
\[
\sum_{r<i\le n}\frac{\lambda_{i}}{(\lambda_{l}-\lambda_{i})^{2}}\le\sum_{r\le i\le n}\frac{\gamma_{i}^{(l)}}{(\lambda_{l}-\gamma_{i}^{(l)})^{2}}\le\sum_{r<i\le n}\frac{\lambda_{i}}{(\lambda_{l}-\lambda_{i})^{2}}+\frac{\gamma_{r}^{(l)}}{(\lambda_{l}-\gamma_{r}^{(l)})^{2}}.
\]
Hence, the conclusion immediately follows since $\gamma_{r}^{(l)}/(\lambda_{l}-\gamma_{r}^{(l)})^{2}\lesssim\sigma^{2}(1+p/n)/\lambda_{l}^{\star2}$
by Lemma \ref{lemma:lambda-S-minus-spectrum-1-1}.

We proceed to consider (\ref{eq:lambda-M-r-n-bound-pca}). According
to Lemma \ref{lemma:lambda-S-minus-spectrum-1-1}, the following holds
for eigenvalues $\{\gamma_{i}^{(l)}\}_{i\geq r}$: (i) $|\lambda_{l}-\gamma_{i}^{(l)}|\gtrsim\lambda_{l}^{\star}$
for all $i\ge r$; (ii) $|\gamma_{i}^{(l)}|\lesssim\sigma^{2}(p\vee n)/n$
for all $r\le i\le n\wedge(p-1)$; (iii) $\gamma_{i}^{(l)}=0$ for
$n\wedge(p-1)<i<p$. Therefore, we can upper bound
\[
\bigg|\sum_{i:\,i\geq r}\frac{\gamma_{i}^{(l)}}{(\lambda_{l}-\gamma_{i}^{(l)})^{2}}\bigg|\lesssim(p\wedge n)\frac{\sigma^{2}(p\vee n)}{\lambda_{l}^{\star2}n}=\frac{\sigma^{2}p}{\lambda_{l}^{\star2}},
\]
and the upper bound for $\sum_{i>r}\lambda_{i}/(\lambda_{l}-\lambda_{i})^{2}$
immediately follows from the triangle inequality.

\subsubsection{Proof of Lemma \ref{lemma:lambda-star-sigma-est}}

\label{subsec:Proof-of-lemma:lambda-star-sigma-est}

By the definition of $\beta(\cdot)$ in (\ref{eq:definition-gamma-lambda-iid-pca}),
we can express
\begin{align*}
\beta(\lambda_{l}) & =\frac{1}{n}\sum_{1\le i\le p-r}\frac{\lambda_{i}(\frac{1}{n}\bm{S}_{\perp}\bm{S}_{\perp}^{\top})}{\lambda_{l}-\lambda_{i}(\frac{1}{n}\bm{S}_{\perp}\bm{S}_{\perp}^{\top})}.
\end{align*}
From Lemma \ref{lemma:eigval-interlacing}, we know that $\lambda_{i+r}\le\lambda_{i}(\frac{1}{n}\bm{S}_{\perp}\bm{S}_{\perp}^{\top})\le\lambda_{i}$
for each $1\leq i\leq p-r$, and thus
\[
\frac{\lambda_{i+r}}{\lambda_{l}-\lambda_{i+r}}\le\frac{\lambda_{i}(\frac{1}{n}\bm{S}_{\perp}\bm{S}_{\perp}^{\top})}{\lambda_{l}-\lambda_{i}(\frac{1}{n}\bm{S}_{\perp}\bm{S}_{\perp}^{\top})}\le\frac{\lambda_{i}}{\lambda_{l}-\lambda_{i}},\qquad i>r.
\]
Hence, we have
\begin{equation}
0\le\beta(\lambda_{l})-\frac{1}{n}\sum_{r<i\leq p}\frac{\lambda_{i}}{\lambda_{l}-\lambda_{i}}\le\frac{1}{n}\sum_{1\le i\le r}\frac{\lambda_{i}(\frac{1}{n}\bm{S}_{\perp}\bm{S}_{\perp}^{\top})}{\lambda_{l}-\lambda_{i}(\frac{1}{n}\bm{S}_{\perp}\bm{S}_{\perp}^{\top})}.\label{eq:beta-lambda-p>n-temp}
\end{equation}

As shown in ((\ref{eq:eigenvalue-perturbation-bound-iid-pca})), the
eigenvalue $\lambda_{l}$ satisfies $\lambda_{l}/\big(1+\beta(\lambda_{l})\big)=\lambda_{l}^{\star}+\sigma^{2}+O(\mathcal{E}_{\mathsf{PCA}})$
where $\mathcal{E}_{\mathsf{PCA}}$ is defined in (\ref{eq:def-E-PCA}).
In particular, we note that for the case with $n<p$, the assumption
(\ref{eq:noise-condition-iid-pca}) guarantees that
\begin{equation}
\sigma^{2}=o(\lambda_{\min}^{\star})\qquad\text{and}\qquad\mathcal{E}_{\mathsf{PCA}}\coloneqq(\lambda_{\max}^{\star}+\sigma^{2})\sqrt{\frac{r}{n}}\log n=o(\lambda_{\min}^{\star}).\label{eq:sigma-Epca-UB-p>n}
\end{equation}
Combined with (\ref{eq:beta-UB}), this also implies that
\begin{equation}
\lambda_{l}\asymp\lambda_{l}^{\star}.\label{eq:lambda-lambda-star-p>n}
\end{equation}

With these estimates in place, one can use (\ref{eq:beta-lambda-p>n-temp})
and the high-probability bound $\|\frac{1}{n}\bm{S}_{\perp}\bm{S}_{\perp}^{\top}\|\lesssim\sigma^{2}(1+p/n)\ll\lambda_{l}^{\star}$
in (\ref{eq:S-perp-op-UB}) to derive
\begin{equation}
\bigg|\frac{1}{n}\sum_{r<i\leq p}\frac{\lambda_{i}}{\lambda_{l}-\lambda_{i}}-\beta(\lambda_{l})\bigg|\leq\frac{r}{n}\cdot\max_{1\leq i\leq r}\bigg|\frac{\lambda_{i}(\frac{1}{n}\bm{S}_{\perp}\bm{S}_{\perp}^{\top})}{\lambda_{l}-\lambda_{i}(\frac{1}{n}\bm{S}_{\perp}\bm{S}_{\perp}^{\top})}\bigg|\lesssim\frac{r}{n}\cdot\frac{\sigma^{2}}{\lambda_{l}^{\star}}\bigg(1+\frac{p}{n}\bigg)=o\Big(\frac{r}{n}\Big)\label{eq:beta-lambda-p>n}
\end{equation}
where the last step holds due the noise condition (\ref{eq:noise-condition-iid-pca}).
Meanwhile, we can combine (\ref{eq:beta-UB}) with (\ref{eq:beta-lambda-p>n})
to find
\begin{equation}
\bigg|\frac{1}{n}\sum_{r<i\leq p}\frac{\lambda_{i}}{\lambda_{l}-\lambda_{i}}\bigg|\ll1\label{eq:de-biased-para-UB-p>n}
\end{equation}
as long as $n\gg r$. Plugging this into ((\ref{eq:eigenvalue-perturbation-bound-iid-pca}))
reveals that
\begin{align}
\lambda_{l}^{\star}+\sigma^{2} & =\frac{\lambda_{l}}{1+\beta(\lambda_{l})}+O(\mathcal{E}_{\mathsf{PCA}})\nonumber \\
 & \overset{(\mathrm{i})}{=}\frac{\lambda_{l}}{1+\frac{1}{n}\sum_{r<i\leq p}\frac{\lambda_{i}}{\lambda_{l}-\lambda_{i}}+o(\frac{r}{n})}+O(\mathcal{E}_{\mathsf{PCA}})\nonumber \\
 & =\frac{\lambda_{l}}{1+\frac{1}{n}\sum_{r<i\leq p}\frac{\lambda_{i}}{\lambda_{l}-\lambda_{i}}}+o\Big(\frac{\lambda_{l}^{\star}r}{n}\Big)+O(\mathcal{E}_{\mathsf{PCA}})\nonumber \\
 & =\frac{\lambda_{l}}{1+\frac{1}{n}\sum_{r<i\leq p}\frac{\lambda_{i}}{\lambda_{l}-\lambda_{i}}}+O(\mathcal{E}_{\mathsf{PCA}}),\label{eq:lambda-sigma-est}
\end{align}
where (i) holds due to (\ref{eq:beta-lambda-p>n}); (ii) follows from
(\ref{eq:lambda-lambda-star-p>n}) and (\ref{eq:de-biased-para-UB-p>n});
(iii) holds as long as $r\ll n$. This completes the proof for (\ref{claim:lambda-sigma-est}). 

In addition, the claim (\ref{claim:lambda-sigma-est-bound}) is an
immediate consequence of (\ref{eq:sigma-Epca-UB-p>n}) and (\ref{eq:de-biased-para-UB-p>n}).

\subsubsection{Proof of Lemma \ref{lemma:lambda-M-r-n-pca-precise}}

\label{subsec:Proof-of-lemma:lambda-M-r-n-pca-precise}

In view of (\ref{eq:lambdal-tilde-lambdak-pca-UB-i>r}) and the fact
that $\bm{v}_{i}^{(l)\top}\bm{s}_{l,\parallel}^{\top}\overset{\mathrm{i.i.d.}}{\sim}\mathcal{N}(0,\lambda_{l}^{\star}+\sigma^{2})$
(see (\ref{eq:independence-vil-sl-par})), we are motivated to first
decompose
\begin{align}
\sum_{r\le i\le n}\frac{\gamma_{i}^{(l)}(\bm{v}_{i}^{(l)\top}\bm{s}_{l,\parallel}^{\top})^{2}}{(\lambda_{l}-\gamma_{i}^{(l)})^{2}} & =\underbrace{\sum_{r\le i\le n}\frac{\sigma^{2}p/n}{(\lambda_{l}-\sigma^{2}p/n)^{2}}(\bm{v}_{i}^{(l)\top}\bm{s}_{l,\parallel}^{\top})^{2}}_{=:\,\alpha_{1}}\nonumber \\
 & \quad+\underbrace{\sum_{r\le i\le n}\Big(\frac{\gamma_{i}^{(l)}}{(\lambda_{l}-\gamma_{i}^{(l)})^{2}}-\frac{\sigma^{2}p/n}{(\lambda_{l}-\sigma^{2}p/n)^{2}}\Big)(\lambda_{l}^{\star}+\sigma^{2})}_{=:\,\alpha_{2}}\nonumber \\
 & \quad+\underbrace{\sum_{r\le i\le n}\Big(\frac{\gamma_{i}^{(l)}}{(\lambda_{l}-\gamma_{i}^{(l)})^{2}}-\frac{\sigma^{2}p/n}{(\lambda_{l}-\sigma^{2}p/n)^{2}}\Big)\big\{(\bm{v}_{i}^{(l)\top}\bm{s}_{l,\parallel}^{\top})^{2}-(\lambda_{l}^{\star}+\sigma^{2})\big\}}_{=:\,\alpha_{3}}.\label{eq:lambda-M-r-n-value-pca-p-decomp}
\end{align}
In what follows, we shall control $\alpha_{1}$, $\alpha_{2}$ and
$\alpha_{3}$ separately in a reverse order.

\paragraph{Controlling $\alpha_{3}$. }

We intend to apply Lemma~\ref{lemma:eps-net} in Section~\ref{sec:Auxiliary-lemma}
to control $\alpha_{3}$. Before proceeding, we pause to make a few
observations. It is straightforward to compute that $f^{\prime}(x)=\frac{k}{(\lambda-x)^{k+1}}$
for the function $f(x):=\frac{1}{(\lambda-x)^{k}}$ and $g^{\prime}(x)=\frac{\lambda+(k-1)x}{(\lambda-x)^{k+1}}$
for the function $g(x):=\frac{x}{(\lambda-x)^{k}}$. Since $|f(x)-f(y)|\leq\left\{ \sup_{z}|f'(z)|\right\} |x-y|$
for any function $f(\cdot)$, one can demonstrate that: for all $\lambda$
satisfying $\lambda/\big(1+\beta(\lambda)\big)\in\mathcal{B}_{\mathcal{E}_{\mathsf{PCA}}}(\lambda_{l}^{\star}+\sigma^{2})$,
we claim that the following holds \begin{subequations}
\begin{align}
\max_{1\leq i\leq n}\bigg|\frac{1}{\lambda-\gamma_{i}^{(l)}}-\frac{1}{\lambda-\sigma^{2}p/n}\bigg| & \lesssim\left\{ \max_{\gamma:\,|\gamma-\sigma^{2}p/n|\lesssim\sigma^{2}\sqrt{p/n}}\frac{1}{(\lambda-\gamma)^{2}}\right\} \cdot\max_{1\leq i\leq n}|\gamma_{i}^{(l)}-\sigma^{2}p/n|\nonumber \\
 & \lesssim\frac{\sigma^{2}}{\lambda_{l}^{\star2}}\sqrt{\frac{p}{n}},\label{eq:gamma-lambda-diff-UB}
\end{align}
\begin{align}
\max_{1\leq i\leq n}\bigg|\frac{1}{(\lambda-\gamma_{i}^{(l)})^{2}}-\frac{1}{(\lambda-\sigma^{2}p/n)^{2}}\bigg| & \lesssim\left\{ \max_{\gamma:\,|\gamma-\sigma^{2}p/n|\lesssim\sigma^{2}\sqrt{p/n}}\frac{1}{(\lambda-\gamma)^{3}}\right\} \cdot\max_{1\leq i\leq n}|\gamma_{i}^{(l)}-\sigma^{2}p/n|\nonumber \\
 & \lesssim\frac{\sigma^{2}}{\lambda_{l}^{\star3}}\sqrt{\frac{p}{n}},\label{eq:gamma-lambda-diff-two-UB}
\end{align}
\begin{align}
\max_{1\leq i\leq n}\bigg|\frac{\gamma_{i}^{(l)}}{(\lambda-\gamma_{i}^{(l)})^{2}}-\frac{\sigma^{2}p/n}{(\lambda-\sigma^{2}p/n)^{2}}\bigg| & \lesssim\left\{ \max_{\gamma:\,|\gamma-\sigma^{2}p/n|\lesssim\sigma^{2}\sqrt{p/n}}\frac{|\lambda+\gamma|}{|\lambda-\gamma|^{3}}\right\} \cdot\max_{1\leq i\leq n}\big|\gamma_{i}^{(l)}-\sigma^{2}p/n\big|\nonumber \\
 & \lesssim\frac{\lambda_{l}^{\star}}{\lambda_{l}^{\star3}}\cdot\sigma^{2}\sqrt{\frac{p}{n}}=\frac{\sigma^{2}}{\lambda_{l}^{\star2}}\sqrt{\frac{p}{n}},\label{eq:gamma-lambda-diff-sq-UB}
\end{align}
and
\begin{align}
\max_{1\leq i\leq n}\bigg|\frac{\gamma_{i}^{(l)}}{(\lambda-\gamma_{i}^{(l)})^{3}}-\frac{\sigma^{2}p/n}{(\lambda-\sigma^{2}p/n)^{3}}\bigg| & \lesssim\left\{ \max_{\gamma:\,|\gamma-\sigma^{2}p/n|\lesssim\sigma^{2}\sqrt{p/n}}\frac{|\lambda+2\gamma|}{(\lambda-\gamma)^{4}}\right\} \cdot\max_{1\leq i\leq n}\big|\gamma_{i}^{(l)}-\sigma^{2}p/n\big|\nonumber \\
 & \lesssim\frac{\lambda_{l}^{\star}}{\lambda_{l}^{\star4}}\cdot\sigma^{2}\sqrt{\frac{p}{n}}=\frac{\sigma^{2}}{\lambda_{l}^{\star3}}\sqrt{\frac{p}{n}}.\label{eq:gamma-lambda-diff-cub-UB}
\end{align}
\end{subequations}To justify the inequalities above, we have taken
advantage of the following conditions:
\begin{itemize}
\item It is seen from (\ref{eq:lambdal-tilde-lambdak-pca-UB-i>r}) in Lemma
\ref{lemma:lambda-S-minus-spectrum-1-1} that $|\gamma_{i}^{(l)}-\sigma^{2}p/n|\lesssim\sigma^{2}\sqrt{p/n}$.
\item We have used the condition that for any $\lambda,\gamma$ satisfying
$\lambda/\big(1+\beta(\lambda)\big)\in\mathcal{B}_{\mathcal{E}_{\mathsf{PCA}}}(\lambda_{l}^{\star}+\sigma^{2})$
and $|\gamma-\sigma^{2}p/n|\lesssim\sigma^{2}\sqrt{p/n}$, one has
\[
|\lambda-\gamma|\gtrsim\lambda_{l}^{\star};
\]
this can be established via almost the same argument for justifying
(\ref{eq:lambda-lambdal-pca-LB}) in Lemma~\ref{lemma:lambda-S-minus-spectrum-1-1}
(which we omit here for brevity).
\end{itemize}
With the preceding upper bounds in place, one can begin to verify
the conditions required to invoke Lemma~\ref{lemma:eps-net}. First,
one can derive: with probability at least $1-O(n^{-20})$, for all
$\lambda$ satisfying $\lambda/\big(1+\beta(\lambda)\big)\in\mathcal{B}_{\mathcal{E}_{\mathsf{PCA}}}(\lambda_{l}^{\star}+\sigma^{2})$,
\begin{align*}
 & \Bigg|\frac{\mathrm{d}}{\mathrm{d}\lambda}\sum_{r\le i\le n}\Big(\frac{\gamma_{i}^{(l)}}{(\lambda-\gamma_{i}^{(l)})^{2}}-\frac{\sigma^{2}p/n}{(\lambda-\sigma^{2}p/n)^{2}}\Big)\big((\bm{v}_{i}^{(l)\top}\bm{s}_{l,\parallel}^{\top})^{2}-(\lambda_{l}^{\star}+\sigma^{2})\big)\Bigg|\\
 & \qquad=\Bigg|\sum_{r\le i\le n}\Big(\frac{\gamma_{i}^{(l)}}{(\lambda-\gamma_{i}^{(l)})^{3}}-\frac{\sigma^{2}p/n}{(\lambda-\sigma^{2}p/n)^{3}}\Big)\big((\bm{v}_{i}^{(l)\top}\bm{s}_{l,\parallel}^{\top})^{2}-(\lambda_{l}^{\star}+\sigma^{2})\big)\Bigg|\\
 & \qquad\leq n\cdot\max_{1\leq i\leq n}\bigg|\frac{\gamma_{i}^{(l)}}{(\lambda-\gamma_{i}^{(l)})^{3}}-\frac{\sigma^{2}p/n}{(\lambda-\sigma^{2}p/n)^{3}}\bigg|\cdot\max_{1\leq i\leq n}\big|(\bm{v}_{i}^{(l)\top}\bm{s}_{l,\parallel}^{\top})^{2}-(\lambda_{l}^{\star}+\sigma^{2})\big|\\
 & \qquad\overset{}{\lesssim}n\cdot\frac{\sigma^{2}}{\lambda_{l}^{\star3}}\sqrt{\frac{p}{n}}\cdot(\lambda_{l}^{\star}+\sigma^{2})^{2}\log n\asymp\frac{\sigma^{2}\sqrt{pn}\log n}{\lambda_{l}^{\star}},
\end{align*}
where the last line holds due to the upper bound (\ref{eq:gamma-lambda-diff-sq-UB}),
the fact $\sigma^{2}\leq\sigma^{2}p/n\ll\lambda_{l}^{\star}$ (from
the noise assumption (\ref{eq:noise-condition-iid-pca})), as well
as the the standard Gaussian concentration inequality. In addition,
one can then apply the matrix Bernstein inequality \cite[Corollary 2.1]{Koltchinskii2011oracle}
to conclude: for any fixed $\lambda$ satisfying $\lambda/\big(1+\beta(\lambda)\big)\in\mathcal{B}_{\mathcal{E}_{\mathsf{PCA}}}(\lambda_{l}^{\star}+\sigma^{2})$,
with probability at least $1-O(n^{-10})$,
\begin{align*}
 & \Bigg|\sum_{r\le i\le n}\Big(\frac{\gamma_{i}^{(l)}}{(\lambda-\gamma_{i}^{(l)})^{2}}-\frac{\sigma^{2}p/n}{(\lambda-\sigma^{2}p/n)^{2}}\Big)\big((\bm{v}_{i}^{(l)\top}\bm{s}_{l,\parallel}^{\top})^{2}-(\lambda_{l}^{\star}+\sigma^{2})\big)\Bigg|\\
 & \qquad\lesssim\max_{1\leq i\leq n}\bigg|\frac{\gamma_{i}^{(l)}}{(\lambda-\gamma_{i}^{(l)})^{2}}-\frac{\sigma^{2}p/n}{(\lambda-\sigma^{2}p/n)^{2}}\bigg|\cdot(\lambda_{l}^{\star}+\sigma^{2})\cdot(\log^{2}n+\sqrt{n\log n})\\
 & \qquad\lesssim\frac{\sigma^{2}}{\lambda_{l}^{\star2}}\sqrt{\frac{p}{n}}\cdot\lambda_{l}^{\star}(\log^{2}n+\sqrt{n\log n})\asymp\frac{\sigma^{2}\sqrt{p\log n}}{\lambda_{l}^{\star}},
\end{align*}
where the second line arises from the the matrix Bernstein inequality,
and the last line follows from (\ref{eq:gamma-lambda-diff-cub-UB})
and the facts $\lambda_{l}^{\star}+\sigma^{2}\asymp\lambda_{l}^{\star}$
(given the assumption that $\sigma^{2}\leq\sigma^{2}p/n\ll\lambda_{l}^{\star}$).
Taking this together with the fact 
\[
\big\{\lambda\colon\lambda/\big(1+\beta(\lambda)\big)\in\mathcal{B}_{\mathcal{E}_{\mathsf{PCA}}}(\lambda_{l}^{\star}+\sigma^{2})\big\}\subseteq[2\lambda_{l}^{\star}/3,\,4\lambda_{l}^{\star}/3],
\]
we can apply Lemma~\ref{lemma:eps-net} to show that
\begin{equation}
|\alpha_{3}|\lesssim\frac{\sigma^{2}}{\lambda_{l}^{\star}}\sqrt{p\log n}\label{eq:PCA-alpha3}
\end{equation}
 with probability exceeding $1-O(n^{-10})$. 

\paragraph{Controlling $\alpha_{2}$. }

With regards to $\alpha_{2}$, we claim that the following upper bound
holds, whose proof is deferred to the end of this section.
\begin{equation}
\left|\sum_{r<i\le n}\Big(\frac{\lambda_{i}}{(\lambda_{l}-\lambda_{i})^{2}}-\frac{\sigma^{2}p/n}{(\lambda_{l}-\sigma^{2}p/n)^{2}}\Big)-\sum_{r\le i\le n}\Big(\frac{\gamma_{i}^{(l)}}{(\lambda_{l}-\gamma_{i}^{(l)})^{2}}-\frac{\sigma^{2}p/n}{(\lambda_{l}-\sigma^{2}p/n)^{2}}\Big)\right|\lesssim\frac{\sigma^{2}}{\lambda_{l}^{\star2}}\sqrt{\frac{p}{n}}.\label{eq:lambda-M-r-n-value-pca-p}
\end{equation}
Combing this with $\mathcal{E}_{\mathsf{PCA}}$ defined in (\ref{eq:def-E-PCA}),
we arrive at
\begin{align}
\alpha_{2} & =\sum_{r\le i\le n}\Big(\frac{\gamma_{i}^{(l)}}{(\lambda_{l}-\gamma_{i}^{(l)})^{2}}-\frac{\sigma^{2}p/n}{(\lambda_{l}-\sigma^{2}p/n)^{2}}\Big)(\lambda_{l}^{\star}+\sigma^{2})\nonumber \\
 & \overset{(\mathrm{i})}{=}\sum_{r\le i\le n}\Big(\frac{\gamma_{i}^{(l)}}{(\lambda_{l}-\gamma_{i}^{(l)})^{2}}-\frac{\sigma^{2}p/n}{(\lambda_{l}-\sigma^{2}p/n)^{2}}\Big)\bigg(\frac{\lambda_{l}}{1+\frac{1}{n}\sum_{r<i\leq p}\frac{\lambda_{i}}{\lambda_{l}-\lambda_{i}}}+O(\mathcal{E}_{\mathsf{PCA}})\bigg)\nonumber \\
 & \leq\sum_{r\le i\le n}\Big(\frac{\gamma_{i}^{(l)}}{(\lambda_{l}-\gamma_{i}^{(l)})^{2}}-\frac{\sigma^{2}p/n}{(\lambda_{l}-\sigma^{2}p/n)^{2}}\Big)\frac{\lambda_{l}}{1+\frac{1}{n}\sum_{r<i\leq p}\frac{\lambda_{i}}{\lambda_{l}-\lambda_{i}}}\nonumber \\
 & \quad+n\cdot\max_{1\leq i\leq n}\bigg|\frac{\gamma_{i}^{(l)}}{(\lambda-\gamma_{i}^{(l)})^{2}}-\frac{\sigma^{2}p/n}{(\lambda-\sigma^{2}p/n)^{2}}\bigg|\cdot O(\mathcal{E}_{\mathsf{PCA}})\nonumber \\
 & \overset{(\mathrm{ii})}{=}\Bigg(\sum_{r<i\le n}\Big(\frac{\lambda_{i}}{(\lambda_{l}-\lambda_{i})^{2}}-\frac{\sigma^{2}p/n}{(\lambda_{l}-\sigma^{2}p/n)^{2}}\Big)+O\bigg(\frac{\sigma^{2}}{\lambda_{l}^{\star2}}\sqrt{\frac{p}{n}}\bigg)\Bigg)\frac{\lambda_{l}}{1+\frac{1}{n}\sum_{r<i\leq p}\frac{\lambda_{i}}{\lambda_{l}-\lambda_{i}}}\nonumber \\
 & \quad+n\cdot\max_{1\leq i\leq n}\bigg|\frac{\gamma_{i}^{(l)}}{(\lambda-\gamma_{i}^{(l)})^{2}}-\frac{\sigma^{2}p/n}{(\lambda-\sigma^{2}p/n)^{2}}\bigg|\cdot O(\mathcal{E}_{\mathsf{PCA}})\nonumber \\
 & =\sum_{r\le i\le n}\Big(\frac{\lambda_{i}}{(\lambda_{l}-\lambda_{i})^{2}}-\frac{\sigma^{2}p/n}{(\lambda_{l}-\sigma^{2}p/n)^{2}}\Big)\frac{\lambda_{l}}{1+\frac{1}{n}\sum_{r<i\leq p}\frac{\lambda_{i}}{\lambda_{l}-\lambda_{i}}}\nonumber \\
 & \quad+O\bigg(\frac{\sigma^{2}}{\lambda_{l}^{\star2}}\sqrt{\frac{p}{n}}\bigg)\cdot\frac{\lambda_{l}}{1+\frac{1}{n}\sum_{r<i\leq p}\frac{\lambda_{i}}{\lambda_{l}-\lambda_{i}}}+n\cdot\max_{1\leq i\leq n}\bigg|\frac{\gamma_{i}^{(l)}}{(\lambda-\gamma_{i}^{(l)})^{2}}-\frac{\sigma^{2}p/n}{(\lambda-\sigma^{2}p/n)^{2}}\bigg|\cdot O(\mathcal{E}_{\mathsf{PCA}})\nonumber \\
 & \overset{(\mathrm{iii})}{=}\sum_{r\le i\le n}\Big(\frac{\lambda_{i}}{(\lambda_{l}-\lambda_{i})^{2}}-\frac{\sigma^{2}p/n}{(\lambda_{l}-\sigma^{2}p/n)^{2}}\Big)\frac{\lambda_{l}}{1+\frac{1}{n}\sum_{r<i\leq p}\frac{\lambda_{i}}{\lambda_{l}-\lambda_{i}}}\nonumber \\
 & \quad+O\bigg(\frac{\sigma^{2}}{\lambda_{l}^{\star2}}\sqrt{\frac{p}{n}}\cdot\lambda_{l}^{\star}\bigg)+O\bigg(\frac{\sigma^{2}}{\lambda_{l}^{\star2}}\sqrt{\frac{p}{n}}\cdot n\cdot(\lambda_{\max}^{\star}+\sigma^{2})\sqrt{\frac{r\log n}{n}}\bigg)\nonumber \\
 & \overset{(\mathrm{iv})}{=}\sum_{r\le i\le n}\Big(\frac{\lambda_{i}}{(\lambda_{l}-\lambda_{i})^{2}}-\frac{\sigma^{2}p/n}{(\lambda_{l}-\sigma^{2}p/n)^{2}}\Big)\frac{\lambda_{l}}{1+\frac{1}{n}\sum_{r<i\leq p}\frac{\lambda_{i}}{\lambda_{l}-\lambda_{i}}}+O\Big(\frac{\sigma^{2}}{\lambda_{l}^{\star}}\kappa\sqrt{pr\log n}\Big).\label{eq:PCA-alpha2}
\end{align}
Here, (i) arises from (\ref{claim:lambda-sigma-est}); (ii) is due
to the claim (\ref{eq:lambda-M-r-n-value-pca-p}); (iii) follows from
(\ref{claim:lambda-sigma-est-bound}), (\ref{eq:gamma-lambda-diff-sq-UB})
and the definition of $\mathcal{E}_{\mathsf{PCA}}$; (iv) holds true
under the condition $\sigma^{2}\ll\lambda_{\max}^{\star}$ (see (\ref{eq:sigma-Epca-UB-p>n})).

\paragraph{Controlling $\alpha_{1}$. }

Regarding $\alpha_{1}$, the key step lies in controlling $\sum_{r\le i\le n}(\bm{v}_{i}^{(l)\top}\bm{s}_{l,\parallel}^{\top})^{2}$.
Recall that $\bm{U}^{(l)}\sqrt{\bm{\Gamma}^{(l)}}\bm{V}^{(l)\top}$
is the SVD of $\frac{1}{\sqrt{n}}\bm{S}_{l,\perp}$ with $\bm{V}^{(l)}\coloneqq[\bm{v}_{1}^{(l)},\cdots,\bm{v}_{n}^{(l)}]\in\mathbb{R}^{n\times n}$.
Towards this, we invoke Theorem \ref{thm:master-thm-vector} to derive
the following identity:
\begin{align*}
\lambda_{l} & \overset{(\mathrm{i})}{=}\frac{1}{n}\bm{u}_{l}^{\star\top}\bm{S}\bm{S}^{\top}\bm{u}_{l}^{\star}+\frac{1}{n}\bm{u}_{l}^{\star\top}\bm{S}\bm{S}^{\top}\bm{u}_{l}^{\star\perp}\Big(\lambda_{l}\bm{I}_{p-1}-\frac{1}{n}(\bm{u}_{l}^{\star\perp})^{\top}\bm{S}\bm{S}^{\top}\bm{u}_{l}^{\star\perp}\Big)^{-1}\frac{1}{n}(\bm{u}_{l}^{\star\perp})^{\top}\bm{S}\bm{S}^{\top}\bm{u}_{l}^{\star\top}\\
 & \overset{(\mathrm{ii})}{=}\frac{1}{n}\|\bm{s}_{l,\parallel}^{\top}\|_{2}^{2}+\frac{1}{n}\bm{s}_{l,\parallel}^{\top}\bm{S}_{l,\perp}^{\top}\Big(\lambda_{l}\bm{I}_{p-1}-\frac{1}{n}\bm{S}_{l,\perp}\bm{S}_{l,\perp}^{\top}\Big)^{-1}\frac{1}{n}\bm{S}_{l,\perp}\bm{s}_{l,\parallel}^{\top}\\
 & \overset{(\mathrm{iii})}{=}\frac{1}{n}\sum_{1\leq i\le n}(\bm{v}_{i}^{(l)\top}\bm{s}_{l,\parallel}^{\top})^{2}+\frac{1}{n}\sum_{1\leq i\le n}\frac{\gamma_{i}^{(l)}}{\lambda_{l}-\gamma_{i}^{(l)}}(\bm{v}_{i}^{(l)\top}\bm{s}_{l,\parallel}^{\top})^{2}\\
 & =\frac{1}{n}\sum_{1\leq i\le n}(\bm{v}_{i}^{(l)\top}\bm{s}_{l,\parallel}^{\top})^{2}\bigg(1+\frac{\gamma_{i}^{(l)}}{\lambda_{l}-\gamma_{i}^{(l)}}\bigg)\\
 & =\frac{\lambda_{l}}{n}\sum_{1\leq i\le n}\frac{(\bm{v}_{i}^{(l)\top}\bm{s}_{l,\parallel}^{\top})^{2}}{\lambda_{l}-\gamma_{i}^{(l)}},
\end{align*}
where (i) arises from (\ref{eq:lambda-rank1}); (ii) relies on the
definitions of $\bm{s}_{l,\parallel}$ and $\bm{S}_{l,\perp}$ in
(\ref{claim:lambda-sigma-est}); (iii) follows since $\{\bm{v}_{i}^{(l)}\}_{1\le i\le n}$
forms a set of orthonormal bases in $\mathbb{R}^{n}$. Rearranging
terms, we are left with
\begin{align*}
n & =\sum_{1\leq i\leq n}\frac{(\bm{v}_{i}^{(l)\top}\bm{s}_{l,\parallel}^{\top})^{2}}{\lambda_{l}-\gamma_{i}^{(l)}}=\sum_{1\leq i<r}\frac{(\bm{v}_{i}^{(l)\top}\bm{s}_{l,\parallel}^{\top})^{2}}{\lambda_{l}-\gamma_{i}^{(l)}}+\sum_{r\leq i\le n}\frac{(\bm{v}_{i}^{(l)\top}\bm{s}_{l,\parallel}^{\top})^{2}}{\lambda_{l}-\gamma_{i}^{(l)}}\\
 & =\sum_{1\leq i<r}\frac{(\bm{v}_{i}^{(l)\top}\bm{s}_{l,\parallel}^{\top})^{2}}{\lambda_{l}-\gamma_{i}^{(l)}}+\sum_{r\le i\le n}\frac{(\bm{v}_{i}^{(l)\top}\bm{s}_{l,\parallel}^{\top})^{2}}{\lambda_{l}-\sigma^{2}p/n}+\sum_{r\le i\le n}\Big(\frac{1}{\lambda_{l}-\gamma_{i}^{(l)}}-\frac{1}{\lambda_{l}-\sigma^{2}p/n}\Big)(\bm{v}_{i}^{(l)\top}\bm{s}_{l,\parallel}^{\top})^{2}.
\end{align*}
As a result, we obtain the following decomposition:
\begin{align}
\sum_{r\le i\le n}\frac{(\bm{v}_{i}^{(l)\top}\bm{s}_{l,\parallel}^{\top})^{2}}{\lambda_{l}-\sigma^{2}p/n} & =n-\underbrace{\sum_{1\leq i<r}\frac{(\bm{v}_{i}^{(l)\top}\bm{s}_{l,\parallel}^{\top})^{2}}{\lambda_{l}-\gamma_{i}^{(l)}}}_{=:\,\varphi_{1}}-\underbrace{(\lambda_{l}^{\star}+\sigma^{2})\sum_{r\le i\le n}\Big(\frac{1}{\lambda_{l}-\gamma_{i}^{(l)}}-\frac{1}{\lambda_{l}-\sigma^{2}p/n}\Big)}_{=:\,\varphi_{2}}\nonumber \\
 & \quad-\underbrace{\sum_{r\le i\le n}\Big(\frac{1}{\lambda_{l}-\gamma_{i}^{(l)}}-\frac{1}{\lambda_{l}-\sigma^{2}p/n}\Big)\big((\bm{v}_{i}^{(l)\top}\bm{s}_{l,\parallel}^{\top})^{2}-(\lambda_{l}^{\star}+\sigma^{2})\big)}_{=:\,\varphi_{3}}.\label{eq:PCA-alpha1-decomp}
\end{align}
In what follows, we shall control $\varphi_{1}$, $\varphi_{2}$ and
$\varphi_{3}$ separately.
\begin{itemize}
\item We start with $\varphi_{1}$. Given that $\bm{v}_{i}^{(l)\top}\bm{s}_{l,\parallel}^{\top}\overset{\mathrm{i.i.d.}}{\sim}\mathcal{N}(0,\lambda_{l}^{\star}+\sigma^{2})$,
we can develop an upper bound as follows: with probability at least
$1-O(n^{-10})$,
\begin{equation}
|\varphi_{1}|\overset{(\mathrm{i})}{\leq}\frac{\sum_{1\leq i<r}(\bm{v}_{i}^{(l)\top}\bm{s}_{l,\parallel}^{\top})^{2}}{\min_{i:i\ne l}|\lambda_{l}^{\star}-\lambda_{i}^{\star}|}\overset{(\mathrm{ii})}{\lesssim}\frac{(\lambda_{l}^{\star}+\sigma^{2})r\log n}{\min_{i:i\ne l}|\lambda_{l}^{\star}-\lambda_{i}^{\star}|}\overset{(\mathrm{iii})}{\asymp}\frac{\lambda_{l}^{\star}r\log n}{\min_{i:i\ne l}|\lambda_{l}^{\star}-\lambda_{i}^{\star}|}.\label{eq:PCA-phi1}
\end{equation}
Here, (i) utilizes the condition $|\lambda_{l}-\gamma_{i}^{(l)}|\gtrsim\min_{i:i\ne l}\left|\lambda_{l}^{\star}-\lambda_{i}^{\star}\right|$
(in view of Lemma \ref{lemma:lambda-S-minus-spectrum-1-1}), (ii)
holds due to (\ref{eq:vl-slpara-UB}), whereas (iii) arises from the
noise assumption $\sigma^{2}\lesssim\lambda_{l}^{\star}$.
\item As for $\varphi_{2}$, recall from Lemma \ref{lemma:eigval-interlacing}
that $\lambda_{i+1}\le\gamma_{i}^{(l)}\le\lambda_{i}$ for all $1\leq i<p$.
This in turn leads to
\begin{align*}
\sum_{r\le i\le n}\Big(\frac{1}{\lambda_{l}-\gamma_{i}^{(l)}}-\frac{1}{\lambda_{l}-\sigma^{2}p/n}\Big)-\sum_{r<i\le n}\Big(\frac{1}{\lambda_{l}-\lambda_{i}}-\frac{1}{\lambda_{l}-\sigma^{2}p/n}\Big) & \geq\frac{1}{\lambda_{l}-\gamma_{n}^{(l)}}-\frac{1}{\lambda_{l}-\sigma^{2}p/n};\\
\sum_{r\le i\le n}\Big(\frac{1}{\lambda_{l}-\gamma_{i}^{(l)}}-\frac{1}{\lambda_{l}-\sigma^{2}p/n}\Big)-\sum_{r<i\le n}\Big(\frac{1}{\lambda_{l}-\lambda_{i}}-\frac{1}{\lambda_{l}-\sigma^{2}p/n}\Big) & \leq\frac{1}{\lambda_{l}-\gamma_{r}^{(l)}}-\frac{1}{\lambda_{l}-\sigma^{2}p/n}.
\end{align*}
This taken collectively with (\ref{eq:gamma-lambda-diff-UB}) yields
\begin{equation}
\Bigg|\sum_{r\le i\le n}\Big(\frac{1}{\lambda_{l}-\gamma_{i}^{(l)}}-\frac{1}{\lambda_{l}-\sigma^{2}p/n}\Big)-\sum_{r<i\le n}\Big(\frac{1}{\lambda_{l}-\lambda_{i}}-\frac{1}{\lambda_{l}-\sigma^{2}p/n}\Big)\Bigg|\lesssim\frac{\sigma^{2}}{\lambda_{l}^{\star2}}\sqrt{\frac{p}{n}}.\label{eq:PCA-phi2-ineq1}
\end{equation}
In addition, it is also seen from (\ref{eq:gamma-lambda-diff-UB})
that 
\begin{align}
 & \Bigg|\sum_{r\le i\le n}\Big(\frac{1}{\lambda_{l}-\gamma_{i}^{(l)}}-\frac{1}{\lambda_{l}-\sigma^{2}p/n}\Big)\Bigg|\vee\Bigg|\sum_{r<i\le n}\Big(\frac{1}{\lambda_{l}-\lambda_{i}}-\frac{1}{\lambda_{l}-\sigma^{2}p/n}\Big)\Bigg|\lesssim\frac{\sigma^{2}}{\lambda_{l}^{\star2}}\sqrt{pn}.\label{eq:PCA-phi2-ineq2}
\end{align}
Therefore, we can obtain
\begin{align*}
 & \Bigg|(\lambda_{l}^{\star}+\sigma^{2})\sum_{r\le i\le n}\Big(\frac{1}{\lambda_{l}-\gamma_{i}^{(l)}}-\frac{1}{\lambda_{l}-\sigma^{2}p/n}\Big)-\frac{\lambda_{l}}{1+\frac{1}{n}\sum_{r<i\leq p}\frac{\lambda_{i}}{\lambda_{l}-\lambda_{i}}}\sum_{r<i\le n}\Big(\frac{1}{\lambda_{l}-\lambda_{i}}-\frac{1}{\lambda_{l}-\sigma^{2}p/n}\Big)\Bigg|\\
 & \qquad\leq\Bigg|(\lambda_{l}^{\star}+\sigma^{2})-\frac{\lambda_{l}}{1+\frac{1}{n}\sum_{r<i\leq p}\frac{\lambda_{i}}{\lambda_{l}-\lambda_{i}}}\Bigg|\cdot\Bigg|\sum_{r\le i\le n}\Big(\frac{1}{\lambda_{l}-\gamma_{i}^{(l)}}-\frac{1}{\lambda_{l}-\sigma^{2}p/n}\Big)\Bigg|\\
 & \qquad\quad+\frac{\lambda_{l}}{1+\frac{1}{n}\sum_{r<i\leq p}\frac{\lambda_{i}}{\lambda_{l}-\lambda_{i}}}\cdot\Bigg|\sum_{r\le i\le n}\Big(\frac{1}{\lambda_{l}-\gamma_{i}^{(l)}}-\frac{1}{\lambda_{l}-\sigma^{2}p/n}\Big)-\sum_{r<i\le n}\Big(\frac{1}{\lambda_{l}-\lambda_{i}}-\frac{1}{\lambda_{l}-\sigma^{2}p/n}\Big)\Bigg|\\
 & \qquad\lesssim\lambda_{\max}^{\star}\sqrt{\frac{r\log n}{n}}\cdot\frac{\sigma^{2}}{\lambda_{l}^{\star2}}\sqrt{pn}+\lambda_{l}^{\star}\cdot\frac{\sigma^{2}}{\lambda_{l}^{\star2}}\sqrt{\frac{p}{n}}\asymp\frac{\sigma^{2}}{\lambda_{l}^{\star}}\kappa\sqrt{pr\log n},
\end{align*}
where the last step uses (\ref{claim:lambda-sigma-est}), (\ref{claim:lambda-sigma-est-bound}),
(\ref{eq:PCA-phi2-ineq1}) and (\ref{eq:PCA-phi2-ineq2}). This reveals
that
\begin{equation}
\varphi_{2}=\frac{\lambda_{l}}{1+\frac{1}{n}\sum_{r<i\leq p}\frac{\lambda_{i}}{\lambda_{l}-\lambda_{i}}}\sum_{r<i\le n}\Big(\frac{1}{\lambda_{l}-\lambda_{i}}-\frac{1}{\lambda_{l}-\sigma^{2}p/n}\Big)+O\Big(\frac{\sigma^{2}}{\lambda_{l}^{\star}}\kappa\sqrt{pr\log n}\Big).\label{eq:PCA-phi2}
\end{equation}
\item Turning to $\varphi_{3}$, we shall apply Lemma~\ref{lemma:eps-net}
to bound it. Similar to the analysis above for bounding $\alpha_{3}$,
one can check that the following holds with probability at least $1-O(n^{-10})$:
for all $\lambda$ satisfying $\lambda/\big(1+\beta(\lambda)\big)\in\mathcal{B}_{\mathcal{E}_{\mathsf{PCA}}}(\lambda_{l}^{\star}+\sigma^{2})$,
\begin{align*}
 & \Bigg|\frac{\mathrm{d}}{\mathrm{d}\lambda}\sum_{r\le i\le n}\Big(\frac{1}{\lambda-\gamma_{i}^{(l)}}-\frac{1}{\lambda-\sigma^{2}p/n}\Big)\big((\bm{v}_{i}^{(l)\top}\bm{s}_{l,\parallel}^{\top})^{2}-(\lambda_{l}^{\star}+\sigma^{2})\big)\Bigg|\\
 & \qquad=\Bigg|\sum_{r\le i\le n}\Big(\frac{1}{(\lambda-\gamma_{i}^{(l)})^{2}}-\frac{1}{(\lambda-\sigma^{2}p/n)^{2}}\Big)\big((\bm{v}_{i}^{(l)\top}\bm{s}_{l,\parallel}^{\top})^{2}-(\lambda_{l}^{\star}+\sigma^{2})\big)\Bigg|\\
 & \qquad\lesssim n\cdot\max_{r\le i\le n}\Bigg|\frac{1}{(\lambda-\gamma_{i}^{(l)})^{2}}-\frac{1}{(\lambda-\sigma^{2}p/n)^{2}}\Bigg|\cdot\max_{r\leq i\leq n}\big|(\bm{v}_{i}^{(l)\top}\bm{s}_{l,\parallel}^{\top})^{2}-(\lambda_{l}^{\star}+\sigma^{2})\big|\\
 & \qquad\lesssim n\cdot\frac{\sigma^{2}}{\lambda_{l}^{\star3}}\sqrt{\frac{p}{n}}\cdot(\lambda_{l}^{\star}+\sigma^{2})\log n\asymp\frac{\sigma^{2}}{\lambda_{l}^{\star2}}\sqrt{pn}\log n,
\end{align*}
where the last line comes from (\ref{eq:gamma-lambda-diff-two-UB}).
In addition, for any fixed $\lambda$ such that $\lambda/\big(1+\beta(\lambda)\big)\in\mathcal{B}_{\mathcal{E}_{\mathsf{PCA}}}(\lambda_{l}^{\star}+\sigma^{2})$,
we can use the matrix Bernstein inequality \cite[Corollary 2.1]{Koltchinskii2011oracle}
to demonstrate that: with probability at least $1-O(n^{-10})$,
\begin{align*}
 & \Bigg|\sum_{r\leq i\le n}\Big(\frac{1}{\lambda-\gamma_{i}^{(l)}}-\frac{1}{\lambda-\sigma^{2}p/n}\Big)\big((\bm{v}_{i}^{(l)\top}\bm{s}_{l,\parallel}^{\top})^{2}-(\lambda_{l}^{\star}+\sigma^{2})\big)\Bigg|\\
 & \qquad\lesssim\max_{r\le i\le n}\Big|\frac{1}{\lambda-\gamma_{i}^{(l)}}-\frac{1}{\lambda-\sigma^{2}p/n}\Big|\cdot(\lambda_{l}^{\star}+\sigma^{2})(\log^{2}n+\sqrt{n\log n})\\
 & \qquad\lesssim\frac{\sigma^{2}}{\lambda_{l}^{\star}}\sqrt{p\log n},
\end{align*}
where the last line comes from (\ref{eq:gamma-lambda-diff-UB}). With
these in place, we invoke Lemma~\ref{lemma:eps-net} to conclude
that
\begin{equation}
|\varphi_{3}|\lesssim\frac{\sigma^{2}}{\lambda_{l}^{\star}}\sqrt{p\log n}\label{eq:PCA-phi3}
\end{equation}
with probability at least $1-O(n^{-10})$. 
\item Substituting (\ref{eq:PCA-phi1}), (\ref{eq:PCA-phi2}) and (\ref{eq:PCA-phi3})
into (\ref{eq:PCA-alpha1-decomp}) reveals that: with probability
exceeding $1-O(n^{-10})$,
\begin{align}
\sum_{r\le i\le n}\frac{(\bm{v}_{i}^{(l)\top}\bm{s}_{l,\parallel}^{\top})^{2}}{\lambda_{l}-\sigma^{2}p/n} & =n-\frac{\lambda_{l}}{1+\frac{1}{n}\sum_{r<i\leq n}\frac{\lambda_{i}}{\lambda_{l}-\lambda_{i}}}\sum_{r<i\le n}\Big(\frac{1}{\lambda_{l}-\lambda_{i}}-\frac{1}{\lambda_{l}-\sigma^{2}p/n}\Big)\nonumber \\
 & \qquad+O\Big(\frac{\sigma^{2}\kappa\sqrt{pr\log n}}{\lambda_{l}^{\star}}+\frac{\lambda_{l}^{\star}r\log n}{\min_{i:i\ne l}\left|\lambda_{l}^{\star}-\lambda_{i}^{\star}\right|}\Big).
\end{align}
As a consequence, we arrive at
\begin{align}
\alpha_{1} & =\frac{\sigma^{2}p/n}{\lambda_{l}-\sigma^{2}p/n}\sum_{r\le i\le n}\frac{(\bm{v}_{i}^{(l)\top}\bm{s}_{l,\parallel}^{\top})^{2}}{\lambda_{l}-\sigma^{2}p/n}\nonumber \\
 & =\frac{\sigma^{2}p/n}{\lambda_{l}-\sigma^{2}p/n}\bigg(n-\frac{\lambda_{l}}{1+\frac{1}{n}\sum_{r<i\leq n}\frac{\lambda_{i}}{\lambda_{l}-\lambda_{i}}}\sum_{r<i\le n}\Big(\frac{1}{\lambda_{l}-\lambda_{i}}-\frac{1}{\lambda_{l}-\sigma^{2}p/n}\Big)\bigg)\nonumber \\
 & \qquad+o\Big(\frac{\sigma^{2}}{\lambda_{l}^{\star}}\kappa\sqrt{pr\log n}\Big)+O\Big(\frac{\sigma^{2}pr\log n}{\min_{i:i\ne l}\left|\lambda_{l}^{\star}-\lambda_{i}^{\star}\right|n}\Big),\label{eq:PCA-alpha1}
\end{align}
where we have made use of the bound $\lambda_{l}-\sigma^{2}p/n\gtrsim\lambda_{l}^{\star}$
and $\sigma^{2}p/n=o(\lambda_{l}^{\star}).$ 
\end{itemize}

\paragraph{Combining the bounds on $\alpha_{1}$, $\alpha_{2}$ and $\alpha_{3}$.}

Putting (\ref{eq:lambda-M-r-n-value-pca-p-decomp}), (\ref{eq:PCA-alpha3}),
(\ref{eq:PCA-alpha2}) and (\ref{eq:PCA-alpha1}) together, we conclude
\begin{align*}
\sum_{r\le i\le n}\frac{\gamma_{i}^{(l)}(\bm{v}_{i}^{(l)\top}\bm{s}_{l,\parallel}^{\top})^{2}}{(\lambda_{l}-\gamma_{i}^{(l)})^{2}} & =\alpha_{1}+\alpha_{2}+\alpha_{3}\\
 & =\frac{\sigma^{2}p/n}{\lambda_{l}-\sigma^{2}p/n}\bigg(n-\frac{\lambda_{l}}{1+\frac{1}{n}\sum_{r<i\leq n}\frac{\lambda_{i}}{\lambda_{l}-\lambda_{i}}}\sum_{r<i\le n}\Big(\frac{1}{\lambda_{l}-\lambda_{i}}-\frac{1}{\lambda_{l}-\sigma^{2}p/n}\Big)\bigg)\\
 & \quad\quad+\frac{\lambda_{l}}{1+\frac{1}{n}\sum_{r<i\leq n}\frac{\lambda_{i}}{\lambda_{l}-\lambda_{i}}}\sum_{r\le i\le n}\Big(\frac{\lambda_{i}}{(\lambda_{l}-\lambda_{i})^{2}}-\frac{\sigma^{2}p/n}{(\lambda_{l}-\sigma^{2}p/n)^{2}}\Big)\\
 & \quad\quad+O\Big(\frac{\sigma^{2}}{\lambda_{l}^{\star}}\kappa\sqrt{pr\log n}+\frac{\sigma^{2}pr\log n}{\min_{i:i\ne l}\left|\lambda_{l}^{\star}-\lambda_{i}^{\star}\right|n}\Big)\\
 & =\frac{\sigma^{2}p}{\lambda_{l}-\sigma^{2}p/n}+\frac{\lambda_{l}}{1+\frac{1}{n}\sum_{r<i\leq n}\frac{\lambda_{i}}{\lambda_{l}-\lambda_{i}}}\sum_{r\le i\le n}\Big(\frac{\lambda_{i}}{(\lambda_{l}-\lambda_{i})^{2}}-\frac{\sigma^{2}p/n}{(\lambda_{l}-\lambda_{i})(\lambda_{l}-\sigma^{2}p/n)}\Big)\\
 & \quad+O\Big(\frac{\sigma^{2}}{\lambda_{l}^{\star}}\kappa\sqrt{pr\log n}+\frac{\sigma^{2}pr\log n}{\min_{i:i\ne l}\left|\lambda_{l}^{\star}-\lambda_{i}^{\star}\right|n}\Big)
\end{align*}
as claimed.

\paragraph{Proof of the inequality (\ref{eq:lambda-M-r-n-value-pca-p}).}

Given that $\lambda_{i+1}\le\gamma_{i}^{(l)}\le\lambda_{i}$ for all
$1\leq i<p$, we can bound
\[
\sum_{r<i\le n}\frac{\lambda_{i}}{(\lambda_{l}-\lambda_{i})^{2}}+\frac{\gamma_{n}^{(l)}}{(\lambda_{l}-\gamma_{n}^{(l)})^{2}}\le\sum_{r\le i\le n}\frac{\gamma_{i}^{(l)}}{(\lambda_{l}-\gamma_{i}^{(l)})^{2}}\le\sum_{r<i\le n}\frac{\lambda_{i}}{(\lambda_{l}-\lambda_{i})^{2}}+\frac{\gamma_{r}^{(l)}}{(\lambda_{l}-\gamma_{r}^{(l)})^{2}}.
\]
By subtracting $\sum_{r\le i\le n}\frac{\sigma^{2}p/n}{(\lambda_{l}-\sigma^{2}p/n)^{2}}$
from both sides and rearranging terms, we have
\begin{align*}
 & \Bigg|\sum_{r\le i\le n}\Big(\frac{\gamma_{i}^{(l)}}{(\lambda_{l}-\gamma_{i}^{(l)})^{2}}-\frac{\sigma^{2}p/n}{(\lambda_{l}-\sigma^{2}p/n)^{2}}\Big)-\sum_{r<i\le n}\Big(\frac{\lambda_{i}}{(\lambda_{l}-\lambda_{i})^{2}}-\frac{\sigma^{2}p/n}{(\lambda_{l}-\sigma^{2}p/n)^{2}}\Big)\Bigg|\\
 & \qquad\le\Bigg|\frac{\gamma_{n}^{(l)}}{(\lambda_{l}-\gamma_{n}^{(l)})^{2}}-\frac{\sigma^{2}p/n}{(\lambda_{l}-\sigma^{2}p/n)^{2}}\Bigg|\vee\Bigg|\frac{\gamma_{r}^{(l)}}{(\lambda_{l}-\gamma_{r}^{(l)})^{2}}-\frac{\sigma^{2}p/n}{(\lambda_{l}-\sigma^{2}p/n)^{2}}\Bigg|.
\end{align*}
In view of the basic property $|f(x)-f(y)|\leq\left\{ \sup_{z}|f'(z)|\right\} |x-y|$,
we can upper bound
\begin{align*}
\Bigg|\frac{\gamma_{i}^{(l)}}{(\lambda_{l}-\gamma_{i}^{(l)})^{2}}-\frac{\sigma^{2}p/n}{(\lambda_{l}-\sigma^{2}p/n)^{2}}\Bigg| & \le\max_{\gamma:\,|\gamma-\sigma^{2}p/n|\lesssim\sigma^{2}\sqrt{p/n}}\bigg|\frac{\lambda_{l}+\gamma}{(\lambda_{l}-\gamma)^{3}}\bigg|\cdot\big|\gamma_{i}^{(l)}-\sigma^{2}p/n\big|\\
 & \lesssim\frac{\lambda_{l}^{\star}}{\lambda_{l}^{\star3}}\sigma^{2}\sqrt{\frac{p}{n}}=\frac{\sigma^{2}}{\lambda_{l}^{\star2}}\sqrt{\frac{p}{n}}
\end{align*}
for any $r\le i\le n$. Here, the last line holds because (i) $|\lambda_{l}-\lambda_{l}^{\star}|\vee\sigma^{2}(p/n+\sqrt{p/n})\ll\lambda_{l}^{\star}$
holds due to the assumption (\ref{eq:noise-condition-iid-pca}), and
hence $|\lambda_{l}-\gamma|\gtrsim\lambda_{l}^{\star}$ and $\lambda_{l}+\gamma\lesssim\lambda_{l}^{\star}$;
(ii) $|\gamma_{i}^{(l)}-\sigma^{2}p/n|\lesssim\sigma^{2}\sqrt{p/n}$
holds according to Lemma \ref{lemma:lambda-S-minus-spectrum-1-1}.
This finishes the proof for the inequality (\ref{eq:lambda-M-r-n-value-pca-p}).

\subsection{Proof of Lemma \ref{lemma:a-top-P-Uk-1-pca}}

\label{subsec:Proof-of-lemma:a-top-P-Uk-1-pca}

Our proof strategy is to utilize the Gaussian concentration inequality
and the epsilon-net argument. 

To apply Lemma~\ref{lemma:eps-net}, we shall first check its conditions.
To begin with, we claim that the following holds with probability
at least $1-O(n^{-20})$:
\begin{align}
V & :=\sup_{\lambda:\frac{\lambda}{1+\beta(\lambda)}\in\mathcal{B}_{\mathcal{E}_{\mathsf{PCA}}}(\lambda_{l}^{\star}+\sigma^{2})}\bigg\|\sum_{k:k\ne l}\bm{a}^{\top}\bm{u}_{k}^{\star}\bm{u}_{k}^{\star(l)\top}\Big(\lambda\bm{I}_{p-1}-\frac{1}{n}\bm{S}_{l,\perp}\bm{S}_{l,\perp}^{\top}\Big)^{-1}\frac{1}{n}\bm{S}_{l,\perp}\bigg\|_{2}\nonumber \\
 & \,\lesssim\sum_{k:k\ne l}\frac{\left|\bm{a}^{\top}\bm{u}_{k}^{\star}\right|}{\left|\lambda_{l}^{\star}-\lambda_{k}^{\star}\right|}\sqrt{\frac{\left(\lambda_{\max}^{\star}+\sigma^{2}\right)(\kappa^{2}+r)}{n}}.\label{eq:a-top-P-Uk-1-pca-UB}
\end{align}
Consequently, we can apply the Gaussian concentration inequality to
show that: for any fixed $\lambda$ such that $\lambda/(1+\beta(\lambda))\in\mathcal{B}_{\mathcal{E}_{\mathsf{PCA}}}(\lambda_{l}^{\star}+\sigma^{2})$,
one has
\begin{align*}
 & \Big|\sum_{k:k\ne l}\bm{a}^{\top}\bm{u}_{k}^{\star}\bm{u}_{k}^{\star(l)\top}\Big(\lambda\bm{I}_{p-1}-\frac{1}{n}\bm{S}_{l,\perp}\bm{S}_{l,\perp}^{\top}\Big)^{-1}\frac{1}{n}\bm{S}_{l,\perp}\cdot\bm{s}_{l,\parallel}^{\top}\Big|\lesssim\sqrt{(\lambda_{l}^{\star}+\sigma^{2})\log\bigg(\frac{n\kappa\lambda_{\max}}{\Delta_{l}^{\star}}\bigg)}\cdot V
\end{align*}
with probability exceeding $1-O\big(\kappa^{-10}(\lambda_{\max}/\Delta_{l}^{\star})^{-20}n^{-20}\big)$. 

In addition, one can derive: for all $\lambda$ such that $\lambda/(1+\beta(\lambda))\in\mathcal{B}_{\mathcal{E}_{\mathsf{PCA}}}(\lambda_{l}^{\star}+\sigma^{2})$, 

\begin{align*}
 & \bigg|\frac{\mathrm{d}}{\mathrm{d}\lambda}\sum_{k:k\ne l}\bm{a}^{\top}\bm{u}_{k}^{\star}\cdot\bm{u}_{k}^{\star\top}\bm{u}_{l}^{\star\perp}\Big(\lambda\bm{I}_{p-1}-\frac{1}{n}\bm{S}_{l,\perp}\bm{S}_{l,\perp}^{\top}\Big)^{-1}\frac{1}{n}\bm{S}_{l,\perp}\cdot\bm{s}_{l,\parallel}^{\top}\bigg|\\
 & \qquad=\bigg|\sum_{k:k\ne l}\bm{a}^{\top}\bm{u}_{k}^{\star}\cdot\bm{u}_{k}^{\star\top}\bm{u}_{l}^{\star\perp}\Big(\lambda\bm{I}_{p-1}-\frac{1}{n}\bm{S}_{l,\perp}\bm{S}_{l,\perp}^{\top}\Big)^{-2}\frac{1}{n}\bm{S}_{l,\perp}\cdot\bm{s}_{l,\parallel}^{\top}\bigg|\\
 & \qquad\overset{}{\leq}n\cdot\max_{1\leq i<n}\frac{1}{(\lambda-\gamma_{i}^{(l)})^{2}}\cdot\bigg\|\sum_{k:k\ne l}\bm{a}^{\top}\bm{u}_{k}^{\star}\bm{u}_{k}^{\star\top}\bm{u}_{l}^{\star\perp}\bigg\|_{2}\cdot\big\|\bm{s}_{l,\parallel}\big\|_{2}\\
 & \qquad\overset{(\mathrm{i})}{\leq}n\cdot\frac{1}{\min_{i:i\neq l}\left|\lambda_{l}^{\star}-\lambda_{i}^{\star}\right|^{2}\wedge\lambda_{l}^{\star2}}\cdot\sum_{k:k\ne l}|\bm{a}^{\top}\bm{u}_{k}^{\star}|\big\|\bm{u}_{k}^{\star\top}\bm{u}_{l}^{\star\perp}\big\|_{2}\cdot(\lambda_{l}^{\star}+\sigma^{2})\sqrt{n\log n}\\
 & \qquad\overset{(\mathrm{ii})}{\lesssim}n^{3/2}\cdot\frac{\max_{i:i\neq l}\left|\lambda_{l}^{\star}-\lambda_{i}^{\star}\right|}{\min_{i:i\neq l}\left|\lambda_{l}^{\star}-\lambda_{i}^{\star}\right|^{2}\wedge\lambda_{l}^{\star2}}\cdot(\lambda_{l}^{\star}+\sigma^{2})\sqrt{\log n}\cdot\sum_{k:k\ne l}\frac{|\bm{a}^{\top}\bm{u}_{k}^{\star}|}{\left|\lambda_{l}^{\star}-\lambda_{k}^{\star}\right|}\\
 & \qquad\overset{(\mathrm{iii})}{\lesssim}n^{3/2}\cdot\frac{\kappa^{2}\lambda_{\max}^{\star2}}{\Delta_{l}^{\star2}}\cdot\frac{1}{\lambda_{l}^{\star}}\cdot V
\end{align*}
holds with probability at least $1-O(n^{-20})$. Here, (i) uses Lemma
\ref{lemma:lambda-S-minus-spectrum-1-1} and the high-probability
fact that $\|\bm{s}_{l,\parallel}\|_{2}\lesssim(\lambda_{l}^{\star}+\sigma^{2})\sqrt{n\log n}$,
(ii) holds since $\big\|\bm{u}_{k}^{\star\top}\bm{u}_{l}^{\star\perp}\big\|_{2}\leq\|\bm{u}_{k}^{\star}\|_{2}\|\bm{u}_{l}^{\star\perp}\|\leq1$,
whereas (iii) arises from the definition of $V$ in (\ref{eq:a-top-P-Uk-1-pca-UB}).

Combining the above two bounds, we are ready to invoke Lemma~ \ref{lemma:eps-net}
and the union bound to arrive at the advertised bound
\begin{align*}
 & \bigg\|\sum_{k:k\ne l}\bm{a}^{\top}\bm{u}_{k}^{\star}\bm{u}_{k}^{\star(l)\top}\Big(\lambda\bm{I}_{p-1}-\frac{1}{n}\bm{S}_{l,\perp}\bm{S}_{l,\perp}^{\top}\Big)^{-1}\frac{1}{n}\bm{S}_{l,\perp}\bm{s}_{l,\parallel}^{\top}\bigg\|_{2}\lesssim\sqrt{\left(\lambda_{l}^{\star}+\sigma^{2}\right)\log\bigg(\frac{n\kappa\lambda_{\max}}{\Delta_{l}^{\star}}\bigg)}\cdot V\\
 & \qquad\qquad\lesssim\sum_{k:k\ne l}\frac{\left|\bm{a}^{\top}\bm{u}_{k}^{\star}\right|}{\left|\lambda_{l}^{\star}-\lambda_{k}^{\star}\right|\sqrt{n}}\sqrt{(\lambda_{l}^{\star}+\sigma^{2})(\lambda_{\max}^{\star}+\sigma^{2})(\kappa^{2}+r)\log\bigg(\frac{n\kappa\lambda_{\max}}{\Delta_{l}^{\star}}\bigg)}
\end{align*}
with probability at least $1-O(n^{-10})$. 

Therefore, the remainder of the proof amounts to establishing (\ref{eq:a-top-P-Uk-1-pca-UB}).
Let us work under the event where the claims in Lemma \ref{lemma:lambda-S-minus-spectrum-1-1}
holds, which holds with probability exceeding $1-O(n^{-10})$. Recall
the SVD of $\frac{1}{\sqrt{n}}\bm{S}_{l,\perp}=\bm{U}^{(l)}\sqrt{\bm{\Gamma}^{(l)}}\bm{V}^{(l)\top}$.
Similar to (\ref{eq:claim:atukukt-lambda-inv-l2-norm-temp}), any
$\lambda$ such that $\lambda/(1+\beta(\lambda))\in\mathcal{B}_{\mathcal{E}_{\mathsf{PCA}}}(\lambda_{l}^{\star}+\sigma^{2})$,
one can rewrite
\begin{align*}
 & \bigg\|\sum_{k:k\ne l}\bm{a}^{\top}\bm{u}_{k}^{\star}\bm{u}_{k}^{\star(l)\top}\Big(\lambda\bm{I}_{p-1}-\frac{1}{n}\bm{S}_{l,\perp}\bm{S}_{l,\perp}^{\top}\Big)^{-1}\frac{1}{\sqrt{n}}\bm{S}_{l,\perp}\bigg\|_{2}\\
 & \qquad=\bigg\|\sum_{k:k\ne l}\bm{a}^{\top}\bm{u}_{k}^{\star}\bm{u}_{k}^{\star(l)\top}\bm{U}^{(l)}\big(\lambda\bm{I}_{p-1}-\bm{\Gamma}^{(l)}\big)^{-1}\bm{U}^{(l)\top}\bm{U}^{(l)}\sqrt{\bm{\Gamma}^{(l)}}\bm{V}^{(l)\top}\bigg\|_{2}\\
 & \qquad=\bigg\|\sum_{k:k\ne l}\bm{a}^{\top}\bm{u}_{k}^{\star}\bm{u}_{k}^{\star(l)\top}\bm{U}^{(l)}\big(\lambda\bm{I}_{p-1}-\bm{\Gamma}^{(l)}\big)^{-1}\sqrt{\bm{\Gamma}^{(l)}}\bigg\|_{2}\\
 & \qquad=\sqrt{\sum_{1\leq i\le n\wedge(p-1)}\frac{\gamma_{i}^{(l)}}{(\lambda-\gamma_{i}^{(l)})^{2}}\bigg(\sum_{k:k\ne l}\bm{a}^{\top}\bm{u}_{k}^{\star}\bm{u}_{k}^{\star(l)\top}\bm{u}_{i}^{(l)}\bigg)^{2}}.
\end{align*}

\begin{itemize}
\item With regards to the sum over the range $i\geq r$, it is seen from
Lemma \ref{lemma:lambda-S-minus-spectrum-1-1} and the assumption
(\ref{eq:noise-condition-iid-pca}) that for all $i\geq r$, $\gamma_{i}^{(l)}\lesssim\sigma^{2}(1+p/n)\lesssim\lambda_{\max}^{\star}+\sigma^{2}$
and $|\lambda_{l}-\gamma_{i}^{(l)}|\gtrsim\lambda_{l}^{\star}\geq\left|\lambda_{l}^{\star}-\lambda_{k}^{\star}\right|/\kappa$
for any $k\neq i$. This enables us to derive
\begin{align*}
 & \sqrt{\sum_{r\leq i\le n\wedge(p-1)}\frac{\gamma_{i}^{(l)}}{(\lambda_{l}-\gamma_{i}^{(l)})^{2}}\Big(\sum_{k:k\ne l}\bm{a}^{\top}\bm{u}_{k}^{\star}\bm{u}_{k}^{\star(l)\top}\bm{u}_{i}^{(l)}\Big)^{2}}\\
 & \qquad\leq\sqrt{(\lambda_{\max}^{\star}+\sigma^{2})\sum_{r\leq i\le n\wedge(p-1)}\Big(\sum_{k:k\ne l}\frac{\kappa}{\left|\lambda_{l}^{\star}-\lambda_{k}^{\star}\right|}\bm{a}^{\top}\bm{u}_{k}^{\star}\bm{u}_{k}^{\star(l)\top}\bm{u}_{i}^{(l)}\Big)^{2}}\\
 & \qquad\leq\sqrt{\lambda_{\max}^{\star}+\sigma^{2}}\,\bigg\|\sum_{k:k\ne l}\frac{\kappa}{\left|\lambda_{l}^{\star}-\lambda_{k}^{\star}\right|}\bm{a}^{\top}\bm{u}_{k}^{\star}\bm{u}_{k}^{\star(l)\top}\bm{U}^{(l)}\bigg\|\\
 & \qquad\leq\sqrt{(\lambda_{\max}^{\star}+\sigma^{2})\kappa^{2}}\,\sum_{k:k\ne l}\frac{\left|\bm{a}^{\top}\bm{u}_{k}^{\star}\right|}{\left|\lambda_{l}^{\star}-\lambda_{k}^{\star}\right|}\big\|\bm{u}_{k}^{\star(l)\top}\bm{U}^{(l)}\big\|_{2}\\
 & \qquad\leq\sqrt{(\lambda_{\max}^{\star}+\sigma^{2})\kappa^{2}}\:\sum_{k:k\ne l}\frac{\left|\bm{a}^{\top}\bm{u}_{k}^{\star}\right|}{\left|\lambda_{l}^{\star}-\lambda_{k}^{\star}\right|},
\end{align*}
where the last line holds since $\big\|\bm{u}_{k}^{\star(l)\top}\bm{U}^{(l)}\big\|_{2}\leq\big\|\bm{u}_{k}^{\star(l)}\big\|_{2}\big\|\bm{U}^{(l)}\big\|\leq1$.
\item Turning to the sum over the range $i<r$, we can control
\begin{align*}
\sqrt{\sum_{1\leq i<r}\frac{\gamma_{i}^{(l)}}{(\lambda_{l}-\gamma_{i}^{(l)})^{2}}\Big(\sum_{k:k\ne l}\bm{a}^{\top}\bm{u}_{k}^{\star}\bm{u}_{k}^{\star(l)\top}\bm{u}_{i}^{(l)}\Big)^{2}} & \text{\ensuremath{=\Bigg\|\sum_{1\leq i<r}\bigg(\frac{(\gamma_{i}^{(l)})^{1/2}}{\lambda_{l}-\gamma_{i}^{(l)}}\sum_{k:k\ne l}\bm{a}^{\top}\bm{u}_{k}^{\star}\bm{u}_{k}^{\star(l)\top}\bm{u}_{i}^{(l)}\bigg)\bm{u}_{i}^{(l)}}\ensuremath{\Bigg\|_{2}}}\\
 & =\Bigg\|\sum_{k:k\ne l}\bm{a}^{\top}\bm{u}_{k}^{\star}\sum_{1\leq i<r}\frac{(\gamma_{i}^{(l)})^{1/2}\bm{u}_{k}^{\star(l)\top}\bm{u}_{i}^{(l)}}{\lambda_{l}-\gamma_{i}^{(l)}}\bm{u}_{i}^{(l)}\Bigg\|_{2}\\
 & \le\sum_{k:k\ne l}|\bm{a}^{\top}\bm{u}_{k}^{\star}|\bigg\|\sum_{1\leq i<r}\frac{(\gamma_{i}^{(l)})^{1/2}\bm{u}_{k}^{\star(l)\top}\bm{u}_{i}^{(l)}}{\lambda_{l}-\gamma_{i}^{(l)}}\bm{u}_{i}^{(l)}\bigg\|_{2}\\
 & =\sum_{k:k\ne l}|\bm{a}^{\top}\bm{u}_{k}^{\star}|\sqrt{\sum_{1\leq i<r}\frac{\gamma_{i}^{(l)}(\bm{u}_{k}^{\star(l)\top}\bm{u}_{i}^{(l)})^{2}}{(\lambda_{l}-\gamma_{i}^{(l)})^{2}}}.
\end{align*}
This leads us to control $\sum_{1\leq i\leq r-1}\gamma_{i}^{(l)}(\bm{u}_{k}^{\star(l)\top}\bm{u}_{i}^{(l)})^{2}/(\lambda_{l}-\gamma_{i}^{(l)})^{2}$
for each $k\neq l$, which can be decomposed as follows
\begin{align*}
\sum_{1\leq i<r}\frac{\gamma_{i}^{(l)}(\bm{u}_{k}^{\star(l)\top}\bm{u}_{i}^{(l)})^{2}}{(\lambda_{l}-\gamma_{i}^{(l)})^{2}}=\sum_{i\in\mathcal{C}_{1}}\frac{\gamma_{i}^{(l)}(\bm{u}_{k}^{\star(l)\top}\bm{u}_{i}^{(l)})^{2}}{(\lambda_{l}-\gamma_{i}^{(l)})^{2}}+\sum_{i\in\mathcal{C}_{2}}\frac{\gamma_{i}^{(l)}(\bm{u}_{k}^{\star(l)\top}\bm{u}_{i}^{(l)})^{2}}{(\lambda_{l}-\gamma_{i}^{(l)})^{2}}.
\end{align*}
Here, the sets $\mathcal{C}_{1}$ and $\mathcal{C}_{2}$ are defined
respectively as follows 
\begin{align*}
\mathcal{C}_{1} & :=\{1\leq i<r\mid\gamma_{i}^{(l)}/(1+\gamma(\gamma_{i}^{(l)}))\in\mathcal{B}_{\mathcal{E}_{k}}(\lambda_{k}^{\star})\},\\
\mathcal{C}_{2} & :=\{1\leq i<r\mid\gamma_{i}^{(l)}/(1+\gamma(\gamma_{i}^{(l)}))\notin\mathcal{B}_{\mathcal{E}_{k}}(\lambda_{k}^{\star})\},
\end{align*}
where we take $\mathcal{E}_{k}:=c\left|\lambda_{l}^{\star}-\lambda_{k}^{\star}\right|$
for some sufficiently small constant $c>0$. In the sequel, we shall
control the above two sums separately.
\begin{itemize}
\item With respect to the sum over $\mathcal{C}_{1}$, one can apply a similar
argument for (\ref{eq:lambda-lambdal-LB}) to show $|\lambda_{l}-\gamma_{i}^{(l)}|\gtrsim\left|\lambda_{l}^{\star}-\lambda_{k}^{\star}\right|$
for $i\in\mathcal{C}_{1}$. This enables us to bound
\[
\sum_{i\in\mathcal{C}_{1}}\frac{\gamma_{i}^{(l)}\big(\bm{u}_{k}^{\star(l)\top}\bm{u}_{i}^{(l)}\big)^{2}}{(\lambda_{l}-\gamma_{i}^{(l)})^{2}}\lesssim\frac{\lambda_{\max}^{\star}+\sigma^{2}}{\left|\lambda_{l}^{\star}-\lambda_{k}^{\star}\right|^{2}}\sum_{i\in\mathcal{C}_{1}}\big(\bm{u}_{k}^{\star(l)\top}\bm{u}_{i}^{(l)}\big)^{2}\leq\frac{\lambda_{\max}^{\star}+\sigma^{2}}{\left|\lambda_{l}^{\star}-\lambda_{k}^{\star}\right|^{2}}\big\|\bm{u}_{k}^{\star(l)}\big\|_{2}^{2}\big\|\bm{U}^{(l)}\big\|^{2}\leq\frac{\lambda_{\max}^{\star}+\sigma^{2}}{\left|\lambda_{l}^{\star}-\lambda_{k}^{\star}\right|^{2}}.
\]
\item Next, we move on to look at the sum over $\mathcal{C}_{2}$. According
to Lemma \ref{lemma:lambda-S-minus-spectrum-1-1}, we have
\begin{align*}
\mathcal{E}_{\mathsf{PCA}} & \gtrsim\Big\|\big(\gamma_{i}^{(l)}\bm{I}_{r-1}-(1+\beta(\gamma_{i}^{(l)}))\bm{\Lambda}^{(l)}\big)\bm{U}^{\star(l)\top}\bm{u}_{i,\parallel}^{(l)}\Big\|_{2}\\
 & \geq\big|\gamma_{i}^{(l)}-(1+\beta(\gamma_{i}^{(l)}))\lambda_{k}^{(l)}\big|\cdot\big|\bm{u}_{k}^{\star(l)\top}\bm{u}_{i,\parallel}^{(l)}\big|\\
 & \gtrsim\mathcal{E}_{k}\cdot\big|\bm{u}_{k}^{\star(l)\top}\bm{u}_{i,\parallel}^{(l)}\big|\\
 & \gtrsim|\lambda_{l}^{\star}-\lambda_{k}^{\star}|\cdot\big|\bm{u}_{k}^{\star(l)\top}\bm{u}_{i}^{(l)}\big|,
\end{align*}
where we use the fact that $\big|\bm{u}_{k}^{\star(l)\top}\bm{u}_{i}^{(l)}\big|\le\big|\bm{u}_{k}^{\star(l)\top}\bm{u}_{i,\parallel}^{(l)}\big|$
and $\big|\gamma_{i}^{(l)}-(1+\beta(\gamma_{i}^{(l)}))\lambda_{k}^{(l)}\big|\gtrsim\mathcal{E}_{k}$
for all $i\in\mathcal{C}_{2}$. Therefore, we arrive at the upper
bound
\[
\frac{\gamma_{i}^{(l)}(\bm{u}_{k}^{\star(l)\top}\bm{u}_{i}^{(l)})^{2}}{(\lambda_{l}-\gamma_{i}^{(l)})^{2}}\lesssim\frac{(\lambda_{\max}^{\star}+\sigma^{2})\mathcal{E}_{\mathsf{PCA}}^{2}}{\left|\lambda_{l}^{\star}-\lambda_{k}^{\star}\right|^{2}\min_{i:i\neq l}|\lambda_{l}^{\star}-\lambda_{i}^{\star}|^{2}}\lesssim\frac{\lambda_{\max}^{\star}+\sigma^{2}}{\left|\lambda_{l}^{\star}-\lambda_{k}^{\star}\right|^{2}},\qquad i\in\mathcal{C}_{2},
\]
where we invoke the condition $\min_{i:i\neq l}|\lambda_{l}^{\star}-\lambda_{i}^{\star}|\gtrsim\mathcal{E}_{\mathsf{PCA}}$.
Taking these two bounds collectively, we reach
\begin{align*}
\sum_{1\leq i<r}\frac{\gamma_{i}^{(l)}\big(\bm{u}_{k}^{\star(l)\top}\bm{u}_{i}^{(l)}\big)^{2}}{(\lambda_{l}-\gamma_{i}^{(l)})^{2}}\lesssim\frac{\lambda_{\max}^{\star}+\sigma^{2}}{\left|\lambda_{l}^{\star}-\lambda_{k}^{\star}\right|^{2}}+\frac{\left(\lambda_{\max}^{\star}+\sigma^{2}\right)r}{\left|\lambda_{l}^{\star}-\lambda_{k}^{\star}\right|^{2}}\asymp\frac{\left(\lambda_{\max}^{\star}+\sigma^{2}\right)r}{\left|\lambda_{l}^{\star}-\lambda_{k}^{\star}\right|^{2}},
\end{align*}
and hence
\[
\sqrt{\sum_{1\leq i<r}\frac{\gamma_{i}^{(l)}}{(\lambda_{l}-\gamma_{i}^{(l)})^{2}}\Big(\sum_{k:k\ne l}\bm{a}^{\top}\bm{u}_{k}^{\star}\bm{u}_{k}^{\star(l)\top}\bm{u}_{i}^{(l)}\Big)^{2}}\lesssim\sqrt{\left(\lambda_{\max}^{\star}+\sigma^{2}\right)r}\sum_{k:k\ne l}\frac{|\bm{a}^{\top}\bm{u}_{k}^{\star}|}{\left|\lambda_{l}^{\star}-\lambda_{k}^{\star}\right|}.
\]
\end{itemize}
\item Combining the preceding two bounds, we finish the proof for (\ref{eq:a-top-P-Uk-1-pca-UB}).
\end{itemize}

\section{Proof for minimax lower bounds (Theorem \ref{thm:minimax-evector-pertur-sym-iid-pca})}

\label{sec:Proof-for-minimax}

Fix an arbitrary $1\leq l\leq r$ and an arbitrary $k\neq l$ and
$1\leq k\leq r$. In what follows, we intend to prove the following
two claims:
\begin{align}
\inf_{u_{\bm{a},l}}\sup_{\bm{\Sigma}\in\mathcal{M}_{1}(\bm{\Sigma}^{\star})}\mathbb{E}\Big[\min\big|u_{\bm{a},l}\pm\bm{a}^{\top}\bm{u}_{l}(\bm{\Sigma})\big|\Big] & \gtrsim\frac{(\lambda_{k}^{\star}+\sigma^{2})(\lambda_{l}^{\star}+\sigma^{2})}{|\lambda_{l}^{\star}-\lambda_{k}^{\star}|^{2}\,n}\big|\bm{a}^{\top}\bm{u}_{l}^{\star}\big|+\frac{\sqrt{(\lambda_{k}^{\star}+\sigma^{2})(\lambda_{l}^{\star}+\sigma^{2})}}{|\lambda_{l}^{\star}-\lambda_{k}^{\star}|\sqrt{n}}\big|\bm{a}^{\top}\bm{u}_{k}^{\star}\big|,\label{eq:lower-bound-claim1}\\
\inf_{u_{\bm{a},l}}\sup_{\bm{\Sigma}\in\mathcal{M}_{2}(\bm{\Sigma}^{\star})}\mathbb{E}\Big[\min\big|u_{\bm{a},l}\pm\bm{a}^{\top}\bm{u}_{l}(\bm{\Sigma})\big|\Big] & \gtrsim\sqrt{\frac{(\lambda_{l}^{\star}+\sigma^{2})\sigma^{2}}{\lambda_{l}^{\star2}n}}\|\bm{P}_{\bm{U}^{\star\perp}}\bm{a}\|_{2},\label{eq:lower-bound-claim2}
\end{align}
where the infimum is over all estimators, and $\mathcal{M}_{1}(\bm{\Sigma}^{\star})$
and $\mathcal{M}_{2}(\bm{\Sigma}^{\star})$ are defined right before
the statement of Theorem \ref{thm:minimax-evector-pertur-sym-iid-pca}.
It is self-evident that Theorem \ref{thm:minimax-evector-pertur-sym-iid-pca}
follows from these two claims by taking the maximum over all $k\neq l$.

\subsection{Proof of the lower bound (\ref{eq:lower-bound-claim1})}

\paragraph{Step 1: constructing a collection of hypotheses.} Let
us consider the following two hypotheses:
\begin{align*}
\mathcal{H}_{0} & \,:\,\bm{s}_{i}\overset{\mathrm{i.i.d.}}{\sim}\mathcal{N}(\bm{0},\bm{\Sigma}^{\star}+\sigma^{2}\bm{I}_{p}),\quad1\leq i\leq n;\\
\mathcal{H}_{k} & \,:\,\bm{s}_{i}\overset{\mathrm{i.i.d.}}{\sim}\mathcal{N}(\bm{0},\bm{\Sigma}_{k}+\sigma^{2}\bm{I}_{p}),\quad1\leq i\leq n.
\end{align*}
Here, the covariance matrix $\bm{\Sigma}_{k}$ is defined as follows:
\[
\bm{\Sigma}_{k}\coloneqq\lambda_{l}^{\star}\bm{u}_{l}\bm{u}_{l}^{\top}+\lambda_{k}^{\star}\bm{u}_{k}\bm{u}_{k}^{\top}+\sum_{i:\,i\ne k,l,\,1\leq i\leq r}\lambda_{i}^{\star}\bm{u}_{i}^{\star}\bm{u}_{i}^{\star\top}.
\]
where $\bm{u}_{l}$ and $\bm{u}_{k}$ are defined as
\begin{equation}
[\bm{u}_{l},\,\bm{u}_{k}]\coloneqq[\bm{u}_{l}^{\star},\,\bm{u}_{k}^{\star}]\begin{bmatrix}\cos\theta_{n} & -\sin\theta_{n}\\
\sin\theta_{n} & \cos\theta_{n}
\end{bmatrix}\label{eq:def:ul-uk}
\end{equation}
for some $\theta_{n}\in[-\pi/2,\pi/2]$ to be specified later. Straightforward
calculation yields
\[
\bm{u}_{l}\bm{u}_{l}^{\top}+\bm{u}_{k}\bm{u}_{k}^{\top}=\bm{u}_{l}^{\star}\bm{u}_{l}^{\star\top}+\bm{u}_{k}^{\star}\bm{u}_{k}^{\star\top}.
\]
This identity further leads to
\begin{align*}
\bm{\Sigma}_{k}-\bm{\Sigma}^{\star} & =\lambda_{l}^{\star}\bm{u}_{l}\bm{u}_{l}^{\top}+\lambda_{k}^{\star}\bm{u}_{k}\bm{u}_{k}^{\top}-(\lambda_{l}^{\star}\bm{u}_{l}^{\star}\bm{u}_{l}^{\star\top}+\lambda_{k}^{\star}\bm{u}_{k}^{\star}\bm{u}_{k}^{\star\top})\\
 & =\lambda_{l}^{\star}(\bm{u}_{l}\bm{u}_{l}^{\top}+\bm{u}_{k}\bm{u}_{k}^{\top})+(\lambda_{k}^{\star}-\lambda_{l}^{\star})\bm{u}_{k}\bm{u}_{k}^{\top}-\big(\lambda_{l}^{\star}(\bm{u}_{l}^{\star}\bm{u}_{l}^{\star\top}+\bm{u}_{k}^{\star}\bm{u}_{k}^{\star\top})+(\lambda_{k}^{\star}-\lambda_{l}^{\star})\bm{u}_{k}^{\star}\bm{u}_{k}^{\star\top}\big)\\
 & =(\lambda_{k}^{\star}-\lambda_{l}^{\star})(\bm{u}_{k}\bm{u}_{k}^{\top}-\bm{u}_{k}^{\star}\bm{u}_{k}^{\star\top}).
\end{align*}
In addition, it is also seen that
\begin{align}
\|\bm{\Sigma}_{k}-\bm{\Sigma}^{\star}\|_{\mathrm{F}} & =|\lambda_{k}^{\star}-\lambda_{l}^{\star}|\cdot\|\bm{u}_{k}\bm{u}_{k}^{\top}-\bm{u}_{k}^{\star}\bm{u}_{k}^{\star\top}\|_{\mathrm{F}}\leq|\lambda_{k}^{\star}-\lambda_{l}^{\star}|\cdot\big(\|\bm{u}_{k}(\bm{u}_{k}-\bm{u}_{k}^{\star})^{\top}\|_{\mathrm{F}}+\|(\bm{u}_{k}-\bm{u}_{k}^{\star})\bm{u}_{k}^{\star\top}\|_{\mathrm{F}}\big)\nonumber \\
 & =|\lambda_{k}^{\star}-\lambda_{l}^{\star}|\cdot\big(\|\bm{u}_{k}\|_{2}\|\bm{u}_{k}-\bm{u}_{k}^{\star}\|_{2}+\|\bm{u}_{k}-\bm{u}_{k}^{\star}\|_{2}\|\bm{u}_{k}^{\star}\|_{2}\big)\nonumber \\
 & =2\,|\lambda_{k}^{\star}-\lambda_{l}^{\star}|\cdot\|\bm{u}_{k}-\bm{u}_{k}^{\star}\|_{2}\nonumber \\
 & \overset{(\mathrm{i})}{=}2\,|\lambda_{k}^{\star}-\lambda_{l}^{\star}|\cdot\|-\bm{u}_{l}^{\star}\sin\theta_{n}+\bm{u}_{k}^{\star}\cos\theta_{n}-\bm{u}_{k}^{\star}\|_{2}\nonumber \\
 & \leq2\,|\lambda_{k}^{\star}-\lambda_{l}^{\star}|\cdot\big(\sin\theta_{n}+2\sin^{2}(\theta_{n}/2)\big)\nonumber \\
 & \overset{(\mathrm{ii})}{\leq}4\,|\lambda_{k}^{\star}-\lambda_{l}^{\star}|\cdot|\theta_{n}|,\label{eq:Sigma-k-Sigma-star-dist}
\end{align}
where (i) arises from the definition of $\bm{u}_{l}$ in (\ref{eq:def:ul-uk});
(ii) holds since $\sin\theta\leq|\theta|$. 

In what follows, we denote by $\mathbb{P}^{0}$ and $\mathbb{P}^{k}$
the distribution of $\bm{S}$ under the hypothesis $\mathcal{H}_{0}$
and $\mathcal{H}_{k}$, respectively, and let $\mathbb{P}_{i}^{0}$
and $\mathbb{P}_{i}^{k}$ denote the distribution of $\bm{s}_{i}$
($i$-th column of $\bm{S}$) under $\mathcal{H}_{0}$ and $\mathcal{H}_{k}$,
respectively. 

\paragraph{Step 2: bounding the KL divergence between hypotheses.}
Recall the elementary fact that the KL divergence of multivariate
Gaussians is given by \citep{kullback1952application}
\[
\mathsf{KL}\big(\mathcal{N}(\bm{0},\bm{\Sigma}_{1})\parallel\mathcal{N}(\bm{0},\bm{\Sigma}_{0})\big)=\frac{1}{2}\bigg(\mathsf{tr}\big(\bm{\Sigma}_{0}^{-1}\bm{\Sigma}_{1}\big)-p+\log\frac{|\bm{\Sigma}_{0}|}{|\bm{\Sigma}_{1}|}\bigg).
\]
Since the KL divergence is additive over independent distributions
\citep{tsybakov2009introduction}, one has
\begin{align}
\mathsf{KL}\big(\mathbb{P}^{k}\parallel\mathbb{P}^{0}\big) & =\sum_{i=1}^{n}\mathsf{KL}\big(\mathbb{P}_{i}^{k}\parallel\mathbb{P}_{i}^{0}\big)=\frac{1}{2}\sum_{i=1}^{n}\big(\mathsf{tr}\big((\bm{\Sigma}^{\star}+\sigma^{2}\bm{I}_{p})^{-1}(\bm{\Sigma}_{k}+\sigma^{2}\bm{I}_{p})\big)-p\big).\label{eq:KL-sum-equation}
\end{align}
This suggests that we need to compute $\mathsf{tr}\big((\bm{\Sigma}^{\star}+\sigma^{2}\bm{I}_{p})^{-1}(\bm{\Sigma}_{k}+\sigma^{2}\bm{I}_{p})\big)$.
By construction in (\ref{eq:def:ul-uk}), we know that $\bm{u}_{l}$
and $\bm{u}_{k}$ span the same subspace as $\bm{u}_{l}^{\star}$
and $\bm{u}_{k}^{\star}$, and are orthogonal to $\{\bm{u}_{i}^{\star}\}_{i:i\neq k,l}$.
Denote by $\bm{U}^{\star\perp}\in\mathbb{R}^{p\times(p-r)}$ the matrix
whose columns form an orthonormal basis of the complement to the subspace
spanned by $\bm{U}^{\star}$. One can then derive
\begin{align*}
(\bm{\Sigma}^{\star}+\sigma^{2}\bm{I}_{p})^{-1}(\bm{\Sigma}_{k}+\sigma^{2}\bm{I}_{p}) & =\bigg(\sum_{1\leq i\leq r}\frac{1}{\lambda_{i}^{\star}+\sigma^{2}}\bm{u}_{i}^{\star}\bm{u}_{i}^{\star\top}+\frac{1}{\sigma^{2}}\bm{U}^{\star\perp}(\bm{U}^{\star\perp})^{\top}\bigg)\\
 & \qquad\cdot\bigg((\lambda_{l}^{\star}+\sigma^{2})\bm{u}_{l}\bm{u}_{l}^{\top}+(\lambda_{k}^{\star}+\sigma^{2})\bm{u}_{k}\bm{u}_{k}^{\top}+\sum_{i:\,i\ne k,l,\,1\leq i\leq r}(\lambda_{i}^{\star}+\sigma^{2})\bm{u}_{i}^{\star}\bm{u}_{i}^{\star\top}+\sigma^{2}\bm{U}^{\star\perp}(\bm{U}^{\star\perp})^{\top}\bigg)\\
 & =\Big(\frac{1}{\lambda_{l}^{\star}+\sigma^{2}}\bm{u}_{l}^{\star}\bm{u}_{l}^{\star\top}+\frac{1}{\lambda_{k}^{\star}+\sigma^{2}}\bm{u}_{k}^{\star}\bm{u}_{k}^{\star\top}\Big)\Big((\lambda_{l}^{\star}+\sigma^{2})\bm{u}_{l}\bm{u}_{l}^{\top}+(\lambda_{k}^{\star}+\sigma^{2})\bm{u}_{k}\bm{u}_{k}^{\top}\Big)\\
 & \quad+\sum_{i:\,i\ne k,l,\,1\leq i\leq r}\bm{u}_{i}^{\star}\bm{u}_{i}^{\star\top}+\bm{U}^{\star\perp}(\bm{U}^{\star\perp})^{\top}.
\end{align*}
As a result, we find
\begin{align*}
 & \mathsf{tr}\big((\bm{\Sigma}^{\star}+\sigma^{2}\bm{I}_{p})^{-1}(\bm{\Sigma}_{k}+\sigma^{2}\bm{I}_{p})\big)\\
 & \qquad\overset{(\mathrm{i})}{=}\mathsf{tr}\bigg(\Big(\frac{1}{\lambda_{l}^{\star}+\sigma^{2}}\bm{u}_{l}^{\star}\bm{u}_{l}^{\star\top}+\frac{1}{\lambda_{k}^{\star}+\sigma^{2}}\bm{u}_{k}^{\star}\bm{u}_{k}^{\star\top}\Big)\Big((\lambda_{l}^{\star}+\sigma^{2})\bm{u}_{l}\bm{u}_{l}^{\top}+(\lambda_{k}^{\star}+\sigma^{2})\bm{u}_{k}\bm{u}_{k}^{\top}\Big)\bigg)+p-2\\
 & \qquad=|\bm{u}_{l}^{\star\top}\bm{u}_{l}|^{2}+\frac{\lambda_{k}^{\star}+\sigma^{2}}{\lambda_{l}^{\star}+\sigma^{2}}|\bm{u}_{l}^{\star\top}\bm{u}_{k}|^{2}+\frac{\lambda_{l}^{\star}+\sigma^{2}}{\lambda_{k}^{\star}+\sigma^{2}}|\bm{u}_{k}^{\star\top}\bm{u}_{l}|^{2}+|\bm{u}_{k}^{\star\top}\bm{u}_{k}|^{2}+p-2\\
 & \qquad\overset{(\mathrm{ii})}{=}\cos^{2}\theta_{n}+\frac{\lambda_{k}^{\star}+\sigma^{2}}{\lambda_{l}^{\star}+\sigma^{2}}\sin^{2}\theta_{n}+\frac{\lambda_{l}^{\star}+\sigma^{2}}{\lambda_{k}^{\star}+\sigma^{2}}\sin^{2}\theta_{n}+\cos^{2}\theta_{n}+p-2\\
 & \qquad=\frac{(\lambda_{l}^{\star}+\sigma^{2})^{2}+(\lambda_{k}^{\star}+\sigma^{2})^{2}}{(\lambda_{l}^{\star}+\sigma^{2})(\lambda_{k}^{\star}+\sigma^{2})}\sin^{2}\theta_{n}-2\sin^{2}\theta_{n}+p\\
 & \qquad=\frac{(\lambda_{l}^{\star}-\lambda_{k}^{\star})^{2}}{(\lambda_{l}^{\star}+\sigma^{2})(\lambda_{k}^{\star}+\sigma^{2})}\sin^{2}\theta_{n}+p.
\end{align*}
Here, (i) holds since $\mathsf{tr}(\bm{u}_{i}^{\star}\bm{u}_{i}^{\star\top})=1$
and $\mathsf{tr}\big(\bm{U}^{\star\perp}(\bm{U}^{\star\perp})^{\top}\big)=\mathsf{tr}\big((\bm{U}^{\star\perp})^{\top}\bm{U}^{\star\perp}\big)=\mathsf{tr}(\bm{I}_{p-r})=p-r$;
(ii) follows from the following observations:
\begin{align*}
\bm{u}_{l}^{\star\top}\bm{u}_{l} & =\bm{u}_{l}^{\star\top}\bm{u}_{l}^{\star}\cos\theta_{n}+\bm{u}_{l}^{\star\top}\bm{u}_{k}^{\star}\sin\theta_{n}=\cos\theta_{n};\\
\bm{u}_{k}^{\star\top}\bm{u}_{l} & =\bm{u}_{k}^{\star\top}\bm{u}_{l}^{\star}\cos\theta_{n}+\bm{u}_{k}^{\star\top}\bm{u}_{k}^{\star}\sin\theta_{n}=\sin\theta_{n};\\
\bm{u}_{l}^{\star\top}\bm{u}_{k} & =-\bm{u}_{l}^{\star\top}\bm{u}_{l}^{\star}\sin\theta_{n}+\bm{u}_{l}^{\star\top}\bm{u}_{k}^{\star}\cos\theta_{n}=-\sin\theta_{n};\\
\bm{u}_{k}^{\star\top}\bm{u}_{k} & =-\bm{u}_{k}^{\star\top}\bm{u}_{l}^{\star}\sin\theta_{n}+\bm{u}_{k}^{\star\top}\bm{u}_{k}^{\star}\cos\theta_{n}=\cos\theta_{n};
\end{align*}
where we have used the construction (\ref{eq:def:ul-uk}) and the
fact that $\bm{u}_{l}^{\star\top}\bm{u}_{k}^{\star}=0$. Therefore,
combining the above identities allows us to conclude that
\begin{equation}
\mathsf{KL}\big(\mathbb{P}^{k}\parallel\mathbb{P}^{0}\big)=\frac{n(\lambda_{l}^{\star}-\lambda_{k}^{\star})^{2}}{2(\lambda_{l}^{\star}+\sigma^{2})(\lambda_{k}^{\star}+\sigma^{2})}\sin^{2}\theta_{n}.\label{eq:kl-expression-1}
\end{equation}

\paragraph{Step 3: invoking Fano's inequality.} Suppose that we
choose $\theta_{n}$
\begin{equation}
|\theta_{n}|=c_{n}\sqrt{\frac{(\lambda_{l}^{\star}+\sigma^{2})(\lambda_{k}^{\star}+\sigma^{2})}{(\lambda_{l}^{\star}-\lambda_{k}^{\star})^{2}n}}\label{eq:lower-bound-theta-value}
\end{equation}
where $c_{n}\asymp1$ is a sequence that depends on $n$ and obeys
$c_{n}\in\{1/64,\,1/16,\,1/4\}$ (which we shall discuss momentarily).
Then we can see from (\ref{eq:Sigma-k-Sigma-star-dist}) that
\[
\|\bm{\Sigma}_{k}-\bm{\Sigma}^{\star}\|_{\mathrm{F}}\leq\sqrt{\frac{(\lambda_{l}^{\star}+\sigma^{2})(\lambda_{k}^{\star}+\sigma^{2})}{n}}.
\]
In other words, $\bm{\Sigma}_{k}\in\mathcal{M}_{1}(\bm{\Sigma}^{\star})$.
Moreover, plugging the value (\ref{eq:lower-bound-theta-value}) of
$\theta_{n}$ into (\ref{eq:kl-expression-1}) and using the facts
$|\sin\theta|\leq|\theta|$ as well as $\max_{n}c_{n}=1/4$ yields
\[
\mathrm{\mathsf{KL}}(\mathbb{P}^{k}\parallel\mathbb{P}^{0})\le1/16.
\]
It then follows from Fano's inequality \cite[Theorem 2]{tsybakov2009introduction}
that
\[
p_{e,k}:=\inf_{\psi}\max\Big\{\mathbb{P}\{\psi\;{\rm rejects}\;\mathcal{H}_{0}\mid\mathcal{H}_{0}\},\,\mathbb{P}\{\psi\;{\rm rejects}\;\mathcal{H}_{k}\mid\mathcal{H}_{k}\}\Big\}\ge1/5,
\]
where the infimum is taken over all tests. One can then apply the
standard reduction scheme in \cite[Chapter 2.2]{tsybakov2009introduction}
to show that

\[
\inf_{u_{\bm{a},l}}\sup_{\bm{\Sigma}\in\mathcal{M}_{1}(\bm{\Sigma}^{\star})}\mathbb{E}\Big[\min\big|u_{\bm{a},l}\pm\bm{a}^{\top}\bm{u}_{l}(\bm{\Sigma})\big|\Big]\gtrsim p_{e,k}\min\big|\bm{a}^{\top}\bm{u}_{l}\pm\bm{a}^{\top}\bm{u}_{l}^{\star}\big|\gtrsim\min\big|\bm{a}^{\top}\bm{u}_{l}\pm\bm{a}^{\top}\bm{u}_{l}^{\star}\big|.
\]
Observe that once we prove
\begin{equation}
\min\big|\bm{a}^{\top}\bm{u}_{l}\pm\bm{a}^{\top}\bm{u}_{l}^{\star}\big|\geq\frac{1}{8\pi^{2}}\big(\theta_{n}^{2}\cdot\big|\bm{a}^{\top}\bm{u}_{l}^{\star}\big|+|\theta_{n}|\cdot\big|\bm{a}^{\top}\bm{u}_{k}^{\star}\big|\big),\label{claim:min-atu}
\end{equation}
then (\ref{eq:lower-bound-theta-value}) would immediately lead to
the advertised bound
\begin{align*}
\inf_{u_{\bm{a},l}}\sup_{\bm{\Sigma}\in\mathcal{M}_{1}(\bm{\Sigma}^{\star})}\mathbb{E}\Big[\min\big|u_{\bm{a},l}\pm\bm{a}^{\top}\bm{u}_{l}(\bm{\Sigma})\big|\Big] & \gtrsim c_{n}\frac{(\lambda_{k}^{\star}+\sigma^{2})(\lambda_{l}^{\star}+\sigma^{2})\big|\bm{a}^{\top}\bm{u}_{l}^{\star}\big|}{(\lambda_{l}^{\star}-\lambda_{k}^{\star})^{2}n}+c_{n}\frac{\sqrt{(\lambda_{k}^{\star}+\sigma^{2})(\lambda_{l}^{\star}+\sigma^{2})}\big|\bm{a}^{\top}\bm{u}_{k}^{\star}\big|}{|\lambda_{l}^{\star}-\lambda_{k}^{\star}|\sqrt{n}}\\
 & \gtrsim\frac{(\lambda_{k}^{\star}+\sigma^{2})(\lambda_{l}^{\star}+\sigma^{2})\big|\bm{a}^{\top}\bm{u}_{l}^{\star}\big|}{(\lambda_{l}^{\star}-\lambda_{k}^{\star})^{2}n}+\frac{\sqrt{(\lambda_{k}^{\star}+\sigma^{2})(\lambda_{l}^{\star}+\sigma^{2})}\big|\bm{a}^{\top}\bm{u}_{k}^{\star}\big|}{|\lambda_{l}^{\star}-\lambda_{k}^{\star}|\sqrt{n}}
\end{align*}
where the last step holds since $\min_{n}c_{n}=1/64$. As a consequence,
the remainder of the proof amounts to establishing the claim (\ref{claim:min-atu}).
In view of (\ref{eq:def:ul-uk}), we know that
\begin{align}
\big|\bm{a}^{\top}\bm{u}_{l}^{\star}-\bm{a}^{\top}\bm{u}_{l}\big| & =\big|\bm{a}^{\top}\bm{u}_{l}^{\star}-\bm{a}^{\top}(\bm{u}_{l}^{\star}\cos\theta_{n}+\bm{u}_{k}^{\star}\sin\theta_{n})\big|\nonumber \\
 & =\big|\bm{a}^{\top}\bm{u}_{l}^{\star}(1-\cos\theta_{n})-\bm{a}^{\top}\bm{u}_{k}^{\star}\sin\theta_{n}\big|\nonumber \\
 & =\big|2\bm{a}^{\top}\bm{u}_{l}^{\star}\sin^{2}(\theta_{n}/2)-\bm{a}^{\top}\bm{u}_{k}^{\star}\sin\theta_{n}\big|\nonumber \\
 & \overset{(\mathrm{i})}{=}\big|2\bm{a}^{\top}\bm{u}_{l}^{\star}\sin^{2}(\theta_{n}/2)\big|+\big|\bm{a}^{\top}\bm{u}_{k}^{\star}\sin\theta_{n}\big|\nonumber \\
 & \overset{(\mathrm{ii})}{\geq}\frac{2}{\pi^{2}}\big(\theta_{n}^{2}\cdot|\bm{a}^{\top}\bm{u}_{l}^{\star}|+|\theta_{n}|\cdot\big|\bm{a}^{\top}\bm{u}_{k}^{\star}\big|\big)\label{eq:atustar-atu-temp}
\end{align}
where (i) holds true as long as we choose $\mathsf{sign}(\theta_{n})=-\mathsf{sign}(\bm{a}^{\top}\bm{u}_{l}^{\star}/\bm{a}^{\top}\bm{u}_{k}^{\star})$;
(ii) relies on the fact $|\sin\theta|\geq\frac{2}{\pi}|\theta|$ for
$\theta\in[-\frac{\pi}{2},\frac{\pi}{2}]$. In addition, we can derive
\begin{align}
\big|\bm{a}^{\top}\bm{u}_{l}^{\star}+\bm{a}^{\top}\bm{u}_{l}\big| & =\big|\bm{a}^{\top}\bm{u}_{l}^{\star}+\bm{a}^{\top}(\bm{u}_{l}^{\star}\cos\theta_{n}+\bm{u}_{k}^{\star}\sin\theta_{n})\big|\nonumber \\
 & =\big|\bm{a}^{\top}\bm{u}_{l}^{\star}(1+\cos\theta_{n})+\bm{a}^{\top}\bm{u}_{k}^{\star}\sin\theta_{n}\big|\nonumber \\
 & =\big|2\bm{a}^{\top}\bm{u}_{l}^{\star}\cos^{2}(\theta_{n}/2)+\bm{a}^{\top}\bm{u}_{k}^{\star}\sin(\theta_{n}/2)\cos(\theta_{n}/2)\big|\nonumber \\
 & =\cos(\theta_{n}/2)\big|2\bm{a}^{\top}\bm{u}_{l}^{\star}\cos(\theta_{n}/2)+\bm{a}^{\top}\bm{u}_{k}^{\star}\sin(\theta_{n}/2)\big|\nonumber \\
 & \overset{(\mathrm{i})}{\geq}\frac{1}{2}\,\big|\bm{a}^{\top}\bm{u}_{l}^{\star}\cos(\theta_{n}/2)+\bm{a}^{\top}\bm{u}_{k}^{\star}\sin(\theta_{n}/2)\big|\nonumber \\
 & \overset{(\mathrm{ii})}{=}\frac{1}{2}\,\Big|\sqrt{(\bm{a}^{\top}\bm{u}_{l}^{\star})^{2}+(\bm{a}^{\top}\bm{u}_{k}^{\star})^{2}}\sin(\theta_{n}/2+\omega_{k})\Big|\nonumber \\
 & \geq\frac{1}{4}\,\big(|\bm{a}^{\top}\bm{u}_{l}^{\star}|\sin|\theta_{n}/2+\omega_{k}|+|\bm{a}^{\top}\bm{u}_{k}^{\star}|\sin|\theta_{n}/2+\omega_{k}|\big)\label{eq:atustar+atu-temp}
\end{align}
where (i) holds due to $|\theta_{n}|\leq1/4$ by the choice of $c_{n}$
in (\ref{eq:lower-bound-theta-value}) and the sample size condition
(\ref{eq:minimax-sample-size}); $\omega_{k}\in[-\frac{\pi}{2},\frac{\pi}{2}]$
in (ii) is defined such that $\tan\omega_{k}=\bm{a}^{\top}\bm{u}_{l}^{\star}/\bm{a}^{\top}\bm{u}_{k}^{\star}$.
In particular, recall that the sign of $\theta_{n}$ is chosen such
that $\mathsf{sign}(\theta_{n})=-\mathsf{sign}(\bm{a}^{\top}\bm{u}_{l}^{\star}/\bm{a}^{\top}\bm{u}_{k}^{\star})$,
one has $\mathsf{sign}(\theta_{n})=-\mathsf{sign}(\omega_{k})$. Next,
our goal is to show if we choose $c_{n}\in\{1/64,\,1/16,\,1/4\}$
of $\theta_{n}$ in (\ref{eq:lower-bound-theta-value}) suitably,
one has
\begin{equation}
\sin|\theta_{n}/2+\omega_{k}|\geq\frac{1}{2\pi}|\theta_{n}|.\label{eq:minimax-lb-sin-claim}
\end{equation}
To this end, for each $n$, we choose $c_{n}$ of $\theta_{n}$ in
(\ref{eq:lower-bound-theta-value}) to be $c_{n}=1/16$ temporarily,
and consider the following three scenarios:
\begin{itemize}
\item If $|\theta_{n}|/2\geq2\,|\omega_{k}|$, then one has $\pi/2\geq|\theta_{n}/2+\omega_{k}|\geq|\theta_{n}|/2-|\omega_{k}|\geq|\theta_{n}/4|$
where the first inequality holds since the signs of $\theta_{n}$
and $\omega_{k}$ are different. Combined with the inequality $|\sin\theta|\geq\frac{2}{\pi}|\theta|$
for $\theta\in[-\frac{\pi}{2},\frac{\pi}{2}]$, this leads to $\sin|\theta_{n}/2+\omega_{k}|\geq\sin|\theta_{n}/4|\geq\frac{1}{2\pi}|\theta_{n}|;$
\item If $|\theta_{n}|/2|\leq|\omega_{k}|/2$, then we know $\pi/2\geq|\theta_{n}/2+\omega_{k}|\geq|\omega_{k}|-|\theta_{n}|/2\geq|\omega_{k}/2|\geq|\theta_{n}/2|$.
This implies that $\sin|\theta_{n}/2+\omega_{k}|\geq\sin|\theta_{n}/2|\geq\frac{1}{\pi}|\theta_{n}|$.
\item Otherwise, (i.e.~$|\omega_{k}|/2<|\theta_{n}|/2<2\,|\omega_{k}|$),
one can adjust $c_{n}$ to be either $1/4$ or $1/64$ (namely, increasing
it or decreasing it by $4$ times). After doing so, it is easily seen
that $\theta_{n}$ must satisfy one of the two conditions above, thereby
guaranteeing that $\sin|\theta_{n}/2+\omega_{k}|\geq\frac{1}{2\pi}|\theta_{n}|$.
\end{itemize}
This completes the proof for the claim (\ref{eq:minimax-lb-sin-claim}).
Combining (\ref{eq:minimax-lb-sin-claim}) with (\ref{eq:atustar+atu-temp}),
we arrive at
\[
\big|\bm{a}^{\top}\bm{u}_{l}^{\star}+\bm{a}^{\top}\bm{u}_{l}\big|\geq\frac{1}{8\pi}\big(|\theta_{n}|\cdot|\bm{a}^{\top}\bm{u}_{l}^{\star}|+|\theta_{n}|\cdot|\bm{a}^{\top}\bm{u}_{k}^{\star}|\big)\geq\frac{1}{8\pi^{2}}\big(\theta_{n}^{2}\cdot|\bm{a}^{\top}\bm{u}_{l}^{\star}|+|\theta_{n}|\cdot|\bm{a}^{\top}\bm{u}_{k}^{\star}|\big),
\]
where the last step holds since $|\theta_{n}|\leq1$. Combining this
with (\ref{eq:atustar-atu-temp}) finishes the proof of the claim
(\ref{claim:min-atu}).

\subsection{Proof of the lower bound (\ref{eq:lower-bound-claim2})}

\paragraph{Step 1: constructing a collection of hypotheses.} Consider
the following hypotheses regarding the eigen-decomposition of the
covariance matrix:
\begin{align*}
\mathcal{H}_{0} & \,:\,\bm{s}_{i}\overset{\mathrm{i.i.d.}}{\sim}\mathcal{N}(\bm{0},\bm{\Sigma}^{\star}+\sigma^{2}\bm{I}_{p}),\quad1\leq i\leq n;\\
\mathcal{H}_{1} & \,:\,\bm{s}_{i}\overset{\mathrm{i.i.d.}}{\sim}\mathcal{N}(\bm{0},\widetilde{\bm{\Sigma}}+\sigma^{2}\bm{I}_{p}),\quad1\leq i\leq n.
\end{align*}
Here, the covariance matrix $\widetilde{\bm{\Sigma}}$ is defined
to be
\[
\widetilde{\bm{\Sigma}}\coloneqq\lambda_{l}^{\star}\widetilde{\bm{u}}_{l}\widetilde{\bm{u}}_{l}^{\top}+\sum_{i:i\neq l}\lambda_{i}^{\star}\bm{u}_{i}^{\star}\bm{u}_{i}^{\star\top},
\]
where $\widetilde{\bm{u}}_{l}$ is defined as
\begin{align*}
\widetilde{\bm{u}}_{l} & :=\frac{\bm{u}_{l}^{\star}+\delta_{n}\bm{a}_{\perp}}{\|\bm{u}_{l}^{\star}+\delta_{n}\bm{a}_{\perp}\|_{2}}=\frac{\bm{u}_{l}^{\star}+\delta_{n}\bm{a}_{\perp}}{\sqrt{1+\delta_{n}^{2}}}\qquad\text{with}\quad\bm{a}_{\perp}:=\frac{\bm{P}_{\bm{U}^{\star\perp}}\bm{a}}{\|\bm{P}_{\bm{U}^{\star\perp}}\bm{a}\|_{2}}
\end{align*}
for some $0<\delta_{n}<1$ to be specified later. We note that $\bm{u}_{i}^{\star\top}\bm{a}_{\perp}=0$
for all $1\leq i\leq r$ and $\bm{u}_{i}^{\star\top}\widetilde{\bm{u}}_{l}=0$
for all $i\neq l$. As can be straightforwardly verified, one has
\begin{equation}
\|\widetilde{\bm{u}}-\bm{u}_{l}^{\star}\|_{2}\leq\Big(1-\frac{1}{\sqrt{1+\delta_{n}^{2}}}\Big)\|\bm{u}_{l}^{\star}\|_{2}+\frac{\delta_{n}}{\sqrt{1+\delta_{n}^{2}}}\|\bm{a}_{\perp}\|_{2}=\frac{\sqrt{1+\delta_{n}^{2}}-1+\delta_{n}}{\sqrt{1+\delta_{n}^{2}}}\leq2\delta_{n},\label{eq:minimax-u-dist}
\end{equation}
where the last step holds since $\sqrt{1+\delta_{n}^{2}}\leq1+\delta_{n}$
for $\delta_{n}>0$.

In the sequel, we denote by $\mathbb{P}^{0}$ and $\mathbb{P}^{1}$
the distribution of $\bm{S}$ under the hypothesis $\mathcal{H}_{0}$
and $\mathcal{H}_{1}$, respectively. We also let $\mathbb{P}_{i}^{0}$
and $\mathbb{P}_{i}^{1}$ denote the distribution of $\bm{s}_{i}$
($i$-th column of $\bm{S}$) under $\mathcal{H}_{0}$ and $\mathcal{H}_{1}$,
respectively.

\paragraph{Step 2: bounding the KL divergence between hypotheses.}Let
us define vector $\widehat{\bm{u}}_{l}$ as
\[
\widehat{\bm{u}}_{l}:=\frac{\bm{u}_{l}^{\star}-\frac{1}{\delta_{n}}\bm{a}_{\perp}}{\|\bm{u}_{l}^{\star}-\frac{1}{\delta_{n}}\bm{a}_{\perp}\|_{2}}=\frac{\bm{u}_{l}^{\star}-\frac{1}{\delta_{n}}\bm{a}_{\perp}}{\sqrt{1+\frac{1}{\delta_{n}^{2}}}}.
\]
where the last step holds since $\bm{u}_{l}^{\star}$ is orthogonal
to $\bm{a}_{\perp}$. Note that $\widehat{\bm{u}}_{l}$ is a unit
vector orthogonal to the subspace spanned by $\widetilde{\bm{u}}_{l}$
and $\{\bm{u}_{i}^{\star}\}_{i\neq l}$, namely, $\widehat{\bm{u}}_{l}^{\top}\widetilde{\bm{u}}_{l}=0$
and $\bm{u}_{i}^{\star\top}\widehat{\bm{u}}_{l}=0$ for all $i\neq l$.
Similar to the proof for the claim (\ref{eq:lower-bound-claim1}),
one can derive
\begin{align*}
\mathsf{tr}\big((\bm{\Sigma}^{\star}+\sigma^{2}\bm{I}_{p})^{-1}(\widetilde{\bm{\Sigma}}+\sigma^{2}\bm{I}_{p})\big) & =\mathsf{tr}\bigg(\Big(\frac{1}{\lambda_{l}^{\star}+\sigma^{2}}\bm{u}_{l}^{\star}\bm{u}_{l}^{\star\top}+\frac{1}{\sigma^{2}}\bm{a}_{\perp}\bm{a}_{\perp}^{\top}\Big)\Big((\lambda_{l}^{\star}+\sigma^{2})\widetilde{\bm{u}}_{l}\widetilde{\bm{u}}_{l}^{\top}+\sigma^{2}\widehat{\bm{u}}_{l}\widehat{\bm{u}}_{l}^{\top}\Big)\bigg)+p-2\\
 & =(\widetilde{\bm{u}}_{l}^{\top}\bm{u}_{l}^{\star})^{2}+\frac{\sigma^{2}}{\lambda_{l}^{\star}+\sigma^{2}}(\widehat{\bm{u}}_{l}^{\top}\bm{u}_{l}^{\star})^{2}+\frac{\lambda_{l}^{\star}+\sigma^{2}}{\sigma^{2}}(\widetilde{\bm{u}}_{l}^{\top}\bm{a}_{\perp})^{2}+(\widehat{\bm{u}}_{l}^{\top}\bm{a}_{\perp})^{2}+p-2\\
 & =\frac{1}{1+\delta_{n}^{2}}\bigg(1+\frac{\lambda_{l}^{\star}+\sigma^{2}}{\sigma^{2}}\delta_{n}^{2}\bigg)+\frac{1}{1+\delta_{n}^{2}}\bigg(\frac{\sigma^{2}}{\lambda_{l}^{\star}+\sigma^{2}}\delta_{n}^{2}+1\bigg)+p-2\\
 & =\frac{\lambda_{l}^{\star2}}{(\lambda_{l}^{\star}+\sigma^{2})\sigma^{2}}\frac{\delta_{n}^{2}}{1+\delta_{n}^{2}}+p,
\end{align*}
where the second step is due to $\bm{u}_{i}^{\star\top}\bm{a}_{\perp}=0$
and the third line follows from the following facts:
\begin{align*}
\widetilde{\bm{u}}_{l}^{\top}\bm{u}_{l}^{\star} & =\frac{\bm{u}_{l}^{\star\top}\bm{u}_{l}^{\star}+\delta_{n}\bm{a}_{\perp}^{\top}\bm{u}_{l}^{\star}}{\|\bm{u}_{l}^{\star}+\delta_{n}\bm{a}_{\perp}\|_{2}}=\frac{1}{\sqrt{1+\delta_{n}^{2}}};\\
\widetilde{\bm{u}}_{l}^{\top}\bm{a}_{\perp} & =\frac{\bm{u}_{l}^{\star\top}\bm{a}_{\perp}+\delta_{n}\bm{a}_{\perp}^{\top}\bm{a}_{\perp}}{\|\bm{u}_{l}^{\star}+\delta_{n}\bm{a}_{\perp}\|_{2}}=\frac{\delta_{n}}{\sqrt{1+\delta_{n}^{2}}};\\
\widehat{\bm{u}}_{l}^{\top}\bm{u}_{l}^{\star} & =\frac{\bm{u}_{l}^{\star\top}\bm{u}_{l}^{\star}-\frac{1}{\delta_{n}}\bm{a}_{\perp}^{\top}\bm{u}_{l}^{\star}}{\|\bm{u}_{l}^{\star}-\frac{1}{\delta_{n}}\bm{a}_{\perp}\|_{2}}=\frac{\delta_{n}}{\sqrt{1+\delta_{n}^{2}}};\\
\widehat{\bm{u}}_{l}^{\top}\bm{a}_{\perp} & =\frac{\bm{u}_{l}^{\star\top}\bm{a}_{\perp}-\frac{1}{\delta_{n}}\bm{a}_{\perp}^{\top}\bm{a}_{\perp}}{\|\bm{u}_{l}^{\star}-\frac{1}{\delta_{n}}\bm{a}_{\perp}\|_{2}}=-\frac{1}{\sqrt{1+\delta_{n}^{2}}}.
\end{align*}
As a consequence, we can upper bound the KL divergence as follows
\begin{align*}
\mathsf{KL}\big(\mathbb{P}^{1}\parallel\mathbb{P}^{0}\big) & =\sum_{i=1}^{n}\mathsf{KL}\big(\mathbb{P}_{i}^{1}\parallel\mathbb{P}_{i}^{0}\big)=\frac{1}{2}\sum_{i=1}^{n}\big(\mathsf{tr}\big((\bm{\Sigma}^{\star}+\sigma^{2}\bm{I}_{p})^{-1}(\widetilde{\bm{\Sigma}}+\sigma^{2}\bm{I}_{p})\big)-p\big)\\
 & =\frac{n\lambda_{l}^{\star2}}{2(\lambda_{l}^{\star}+\sigma^{2})\sigma^{2}}\frac{\delta_{n}^{2}}{1+\delta_{n}^{2}}\leq\frac{\delta_{n}^{2}n\lambda_{l}^{\star2}}{2(\lambda_{l}^{\star}+\sigma^{2})\sigma^{2}}.
\end{align*}

\paragraph{Step 3: invoking Fano's inequality.} 

From the preceding upper bound on the KL divergence, it is easy to
see that $\mathrm{\mathsf{KL}}(\mathbb{P}^{1}\parallel\mathbb{P}^{0})\le1/16$
if we choose
\begin{equation}
\delta_{n}=c_{n}\sqrt{\frac{(\lambda_{l}^{\star}+\sigma^{2})\sigma^{2}}{\lambda_{l}^{\star2}n}}\leq1,\label{eq:minimax-lb2-delta-value}
\end{equation}
where $c_{n}\asymp1$ obeys $c_{n}\in\{1/64,\,1/16,\,1/4\}$ and the
last step holds due to the assumption (\ref{eq:minimax-sample-size}).
It follows from Fano's inequality \cite[Theorem 2]{tsybakov2009introduction}
that
\[
p_{e}:=\inf_{\psi}\max\big\{\mathbb{P}\{\psi\;{\rm rejects}\;\mathcal{H}_{0}\mid\mathcal{H}_{0}\},\,\mathbb{P}\{\psi\;{\rm rejects}\;\mathcal{H}_{1}\mid\mathcal{H}_{1}\}\big\}\ge1/5,
\]
where the infimum is taken over all tests. Further, we know from (\ref{eq:minimax-u-dist}),
(\ref{eq:minimax-lb2-delta-value}) and $c_{n}\leq1/4$ that
\[
\|\widetilde{\bm{u}}-\bm{u}_{l}^{\star}\|_{2}\leq\sqrt{\frac{(\lambda_{l}^{\star}+\sigma^{2})\sigma^{2}}{\lambda_{l}^{\star2}n}},
\]
namely, $\bm{\Sigma}_{1}\in\mathcal{M}_{2}(\bm{\Sigma}^{\star})$.

Next, let us continue to control $\min\big|\bm{a}^{\top}\widetilde{\bm{u}}_{l}\pm\bm{a}^{\top}\bm{u}_{l}^{\star}\big|$.
Our goal is to show
\[
\min\big|\bm{a}^{\top}\widetilde{\bm{u}}_{l}\pm\bm{a}^{\top}\bm{u}_{l}^{\star}\big|\gtrsim\delta_{n}\|\bm{P}_{\bm{U}^{\star\perp}}\bm{a}\|_{2},
\]
and we shall use the same argument as for (\ref{claim:min-atu}) to
prove it. Towards this, let us first consider $\big|\bm{a}^{\top}\widetilde{\bm{u}}_{l}-\bm{a}^{\top}\widetilde{\bm{u}}_{l}\big|$.
By construction, one can derive
\begin{align}
\bm{a}^{\top}\widetilde{\bm{u}}_{l}-\bm{a}^{\top}\bm{u}_{l}^{\star} & =\frac{\bm{a}^{\top}\bm{u}_{l}^{\star}+\delta_{n}\bm{a}^{\top}\bm{a}_{\perp}}{\sqrt{1+\delta_{n}^{2}}}-\bm{a}^{\top}\bm{u}_{l}^{\star}=\underbrace{\frac{\delta_{n}}{\sqrt{1+\delta_{n}^{2}}}\|\bm{P}_{\bm{U}^{\star\perp}}\bm{a}\|_{2}}_{=:\,\eta_{1}}-\underbrace{\bigg(1-\frac{1}{\sqrt{1+\delta_{n}^{2}}}\bigg)\bm{a}^{\top}\bm{u}_{l}^{\star}}_{=:\,\eta_{2}}\label{eq:at-utilde-at-ustar-dist}
\end{align}
where the last step holds because
\begin{align*}
\bm{a}^{\top}\bm{a}_{\perp} & =\bm{a}^{\top}\bm{P}_{\bm{U}^{\star\perp}}\bm{a}/\|\bm{P}_{\bm{U}^{\star\perp}}\bm{a}\|_{2}=(\bm{P}_{\bm{U}^{\star\perp}}\bm{a})^{\top}\bm{P}_{\bm{U}^{\star\perp}}\bm{a}/\|\bm{P}_{\bm{U}^{\star\perp}}\bm{a}\|_{2}=\|\bm{P}_{\bm{U}^{\star\perp}}\bm{a}\|_{2}.
\end{align*}
Moreover, it is straightforward to verify that
\begin{equation}
\frac{1}{4}\delta_{n}^{2}\leq1-\frac{1}{\sqrt{1+\delta_{n}^{2}}}\leq\sqrt{1+\delta_{n}^{2}}-1\leq\frac{1}{2}\delta_{n}^{2}\label{eq:eta_2-bound}
\end{equation}
for $0<\delta_{n}<1$. With these basic facts in place, let us first
choose the pre-factor $c_{n}\asymp1$ in \ref{eq:minimax-lb2-delta-value}
to be $c_{n}=1/16$ for the moment, and compare the two terms on the
right-hand side of (\ref{eq:at-utilde-at-ustar-dist}).
\begin{itemize}
\item If $|\eta_{1}|\geq2\,|\eta_{2}|$, then one has 
\[
\big|\bm{a}^{\top}\widetilde{\bm{u}}_{l}-\bm{a}^{\top}\bm{u}_{l}^{\star}\big|\geq|\eta_{1}|-|\eta_{2}|\geq\frac{|\eta_{1}|}{2}\geq\frac{\delta_{n}}{4}\|\bm{P}_{\bm{U}^{\star\perp}}\bm{a}\|_{2},
\]
where we have used the fact that $\delta_{n}\leq1$.
\item If $|\eta_{1}|\leq\,|\eta_{2}|/2$, then we know that
\[
\big|\bm{a}^{\top}\widetilde{\bm{u}}_{l}-\bm{a}^{\top}\bm{u}_{l}^{\star}\big|\geq|\eta_{2}|-|\eta_{1}|\geq|\eta_{1}|\geq\frac{\delta_{n}}{2}\|\bm{P}_{\bm{U}^{\star\perp}}\bm{a}\|_{2}
\]
as long as $\delta_{n}\leq1$.
\item Otherwise, consider the case where $|\eta_{2}|/2<|\eta_{1}|<2\,|\eta_{2}|$.
In this case, we can adjust the pre-factor $c_{n}$ to be $1/4$.
By doing so, $|\eta_{1}|$ increases by at most $4$ times, while
$|\eta_{2}|$ increases by at least $8$ times (according to (\ref{eq:eta_2-bound})).
As a result, the new values of $\eta_{1}$ and $\eta_{2}$ satisfy
$|\eta_{1}|\leq\,|\eta_{2}|/2$, thus belonging to the second case
discussed above and hence $\big|\bm{a}^{\top}\widetilde{\bm{u}}_{l}-\bm{a}^{\top}\bm{u}_{l}^{\star}\big|\geq\frac{\delta_{n}}{4}\,\|\bm{P}_{\bm{U}^{\star\perp}}\bm{a}\|_{2}.$
Clearly, we can also adjust $c_{n}$ to be $1/64$ so as to meet the
condition of the first case discussed above.
\end{itemize}
To sum up, the above analysis reveals that: by properly choosing the
constants $\{c_{n}\}$ in (\ref{eq:minimax-lb2-delta-value}), one
can guarantee that
\[
\big|\bm{a}^{\top}\widetilde{\bm{u}}_{l}-\bm{a}^{\top}\bm{u}_{l}^{\star}\big|\geq\frac{\delta_{n}}{4}\|\bm{P}_{\bm{U}^{\star\perp}}\bm{a}\|_{2}.
\]
Similarly, we can also derive
\begin{align*}
\big|\bm{a}^{\top}\widetilde{\bm{u}}_{l}+\bm{a}^{\top}\bm{u}_{l}^{\star}\big| & =\bigg|\frac{\bm{a}^{\top}\bm{u}_{l}^{\star}+\delta_{n}\bm{a}^{\top}\bm{a}_{\perp}}{\sqrt{1+\delta_{n}^{2}}}+\bm{a}^{\top}\bm{u}_{l}^{\star}\bigg|=\bigg|\frac{\delta_{n}}{\sqrt{1+\delta_{n}^{2}}}\|\bm{P}_{\bm{U}^{\star\perp}}\bm{a}\|_{2}+\bigg(1+\frac{1}{\sqrt{1+\delta_{n}^{2}}}\bigg)\bm{a}^{\top}\bm{u}_{l}^{\star}\bigg|\\
 & \geq\bigg|\frac{\delta_{n}}{\sqrt{1+\delta_{n}^{2}}}\|\bm{P}_{\bm{U}^{\star\perp}}\bm{a}\|_{2}-\bigg(1+\frac{1}{\sqrt{1+\delta_{n}^{2}}}\Big)\big|\bm{a}^{\top}\bm{u}_{l}^{\star}\big|\bigg|\\
 & \gtrsim\frac{\delta_{n}}{\sqrt{1+\delta_{n}^{2}}}\,\|\bm{P}_{\bm{U}^{\star\perp}}\bm{a}\|_{2}
\end{align*}
Taking these two relations collectively yields the advertised bound:
\[
\min\big|\bm{a}^{\top}\bm{u}_{l}\pm\bm{a}^{\top}\bm{u}_{l}^{\star}\big|\gtrsim\delta_{n}\,\|\bm{P}_{\bm{U}^{\star\perp}}\bm{a}\|_{2}.
\]

As a consequence, one can readily apply the standard reduction scheme
in \cite[Chapter 2.2]{tsybakov2009introduction} again to arrive that

\begin{align*}
\inf_{u_{\bm{a},l}}\sup_{\bm{\Sigma}\in\mathcal{M}_{2}(\bm{\Sigma}^{\star})}\mathbb{E}\Big[\min\big|u_{\bm{a},l}\pm\bm{a}^{\top}\bm{u}_{l}(\bm{\Sigma})\big|\Big] & \gtrsim p_{e}\min\big|\bm{a}^{\top}\widetilde{\bm{u}}_{l}\pm\bm{a}^{\top}\bm{u}_{l}^{\star}\big|\gtrsim\min\big|\bm{a}^{\top}\widetilde{\bm{u}}_{l}\pm\bm{a}^{\top}\bm{u}_{l}^{\star}\big|\\
 & \gtrsim c_{n}\sqrt{\frac{(\lambda_{l}^{\star}+\sigma^{2})\sigma^{2}}{\lambda_{l}^{\star2}n}}\|\bm{P}_{\bm{U}^{\star\perp}}\bm{a}\|_{2}\gtrsim\sqrt{\frac{(\lambda_{l}^{\star}+\sigma^{2})\sigma^{2}}{\lambda_{l}^{\star2}n}}\|\bm{P}_{\bm{U}^{\star\perp}}\bm{a}\|_{2}
\end{align*}
where the last step holds since $\min_{n}c_{n}=1/64$.

\section{Proof for the lower bound of the plug-in estimator (Theorem \ref{thm:lower-bound-plugin})}

\label{sec:Proof-for-lower-bound-plugin}

Evidently, it is sufficient to establish the lower bound for the rank-$1$ case (i.e.~$r=1$), 
which forms the content of this section.
To begin with, let us decompose the leading eigenvector $\bm{u}$ of $\bm{M}$ as follows:
\begin{align*}
\bm{u} = \bm{u}^{\star}\cos\theta + \bm{u}_{\perp}\sin\theta,
	\qquad \text{with }\theta \in \big[0, \pi/2 \big],
\end{align*}
where as before, $\bm{u}_{\perp}$ denotes some unit vector perpendicular to $\bm{u}^{\star}$.
Denote by $s \coloneqq \mathrm{sign} (\bm{a}^{\top}\bm{u}_{\perp})$ the sign of $\bm{a}^{\top}\bm{u}_{\perp}$. 
Armed with these, one can express the plug-in estimator $u_{\bm{a}}^{\mathsf{plugin}}$ as
\begin{align*}
u_{\bm{a}}^{\mathsf{plugin}} = \bm{a}^{\top}\bm{u} = \bm{a}^{\top}\bm{u}^{\star}\cos\theta + \bm{a}^{\top}\bm{u}_{\perp}\sin\theta = \bm{a}^{\top}\bm{u}^{\star}\cos\theta + s \, |\bm{a}^{\top}\bm{u}_{\perp}|\sin\theta, 
\end{align*}
which in turn leads to
\begin{align*}
\mathsf{dist}\left(u_{\bm{a}}^{\mathsf{plugin}},\bm{a}^{\top}\bm{u}^{\star}\right) &= \min\left|\bm{a}^{\top}\bm{u} \pm \bm{a}^{\top}\bm{u}^{\star}\right| 
= \left| |\bm{a}^{\top}\bm{u}^{\star}| - |\bm{a}^{\top}\bm{u} |\right| \\
&= \Big| | \bm{a}^{\top}\bm{u}^{\star} | - \big|\bm{a}^{\top}\bm{u}^{\star}\cos\theta + s\,|\bm{a}^{\top}\bm{u}_{\perp}|\sin\theta  \big| \Big|.
\end{align*}

In view of the analysis in Appendix~\ref{subsec:Proof-of-lemma:a-top-P-U-perp-u-perp}, one can see that: conditioned on $\theta$, $\bm{u}_{\perp}$ is uniformly distributed over the unit sphere when restricted to the subspace spanned by the columns of $\bm{u}^{\star \perp}$.
Consequently, we have
\begin{align*}
\mathbb{P}\big\{ s = 1 \,\mid\, |\bm{a}^{\top}\bm{u}_{\perp}|, \theta\big\} = \mathbb{P}\big \{s = -1 \,\mid\, |\bm{a}^{\top}\bm{u}_{\perp}|, \theta\big\} = \frac{1}{2}.
\end{align*}
This implies the independence between $s$ and $|\bm{a}^{\top}\bm{u}_{\perp}|$ as well as $\theta$, which further reveals that
\begin{align*}
s = \argmax_{s' = \pm 1} \Big| | \bm{a}^{\top}\bm{u}^{\star}| - \big| \bm{a}^{\top}\bm{u}^{\star}\cos\theta + s' \, |\bm{a}^{\top}\bm{u}_{\perp}|\sin\theta \big| \Big| 
\end{align*}
with probability $1/2$. 
Therefore, it is seen that
\begin{align*}
\mathsf{dist}\left(u_{\bm{a}}^{\mathsf{plugin}},\bm{a}^{\top}\bm{u}^{\star}\right) = \max_{s' = \pm 1}\Big| | \bm{a}^{\top}\bm{u}^{\star} | - \big|\bm{a}^{\top}\bm{u}^{\star}\cos\theta + s' \, |\bm{a}^{\top}\bm{u}_{\perp}|\sin\theta \big| \Big| 
\end{align*}
with probability $1/2$. 
In the following, we seek to lower bound $\max_{s' = \pm 1}\big| | \bm{a}^{\top}\bm{u}^{\star} | - |\bm{a}^{\top}\bm{u}^{\star}\cos\theta + s' \, |\bm{a}^{\top}\bm{u}_{\perp}|\sin\theta | \big|$, 
dividing into two cases based on the relative size of  $\bm{a}^{\top}\bm{u}^{\star}\cos\theta $ compared to $ |\bm{a}^{\top}\bm{u}_{\perp}|\sin\theta$. Without loss of generality, let us assume that $\bm{a}^{\top}\bm{u}^{\star} \geq 0$ in the sequel, and recall that $\theta \in [0,\pi/2]$.

\begin{itemize}
	\item
In the case where $\bm{a}^{\top}\bm{u}^{\star}\cos\theta \geq |\bm{a}^{\top}\bm{u}_{\perp}|\sin\theta$, one can demonstrate that
\begin{align*}
\max_{s' = \pm 1} \Big| \bm{a}^{\top}\bm{u}^{\star} - \big|\bm{a}^{\top}\bm{u}^{\star}\cos\theta + s'\,|\bm{a}^{\top}\bm{u}_{\perp}|\sin\theta \big| \Big| &=  \bm{a}^{\top}\bm{u}^{\star}(1-\cos\theta)  + |\bm{a}^{\top}\bm{u}_{\perp}|\sin\theta  \\
	&\geq (1-\cos\theta)|\bm{a}^{\top}\bm{u}^{\star}| \geq \frac{1-\cos^2\theta }{2}  |\bm{a}^{\top}\bm{u}^{\star}| \\
& \gtrsim \frac{\sigma^{2}n}{\lambda_{1}^{\star2}} \left|\bm{a}^{\top}\bm{u}^{\star}\right| 
\end{align*}
with high probability, where the last step results from \eqref{eq:cos-bound}.

\item
On the other hand, if instead $\bm{a}^{\top}\bm{u}^{\star}\cos\theta < |\bm{a}^{\top}\bm{u}_{\perp}|\sin\theta$, then one can deduce that
\begin{align*}
\max_{s' = \pm 1} \Big| \bm{a}^{\top}\bm{u}^{\star} - \big|\bm{a}^{\top}\bm{u}^{\star}\cos\theta + s' \, \big|\bm{a}^{\top}\bm{u}_{\perp}|\sin\theta \big| \Big| 
	&= \max_{s' = \pm 1} \Big| \bm{a}^{\top}\bm{u}^{\star} - \big| s'\bm{a}^{\top}\bm{u}^{\star}\cos\theta + |\bm{a}^{\top}\bm{u}_{\perp}|\sin\theta \big | \Big| \\
&\overset{\mathrm{(i)}}{=} \max_{s' = \pm 1} \Big| \bm{a}^{\top}\bm{u}^{\star}  - |\bm{a}^{\top}\bm{u}_{\perp}|\sin\theta - s'\bm{a}^{\top}\bm{u}^{\star}\cos\theta \Big| \\
&\overset{\mathrm{(ii)}}{\geq} \left|\bm{a}^{\top}\bm{u}^{\star} \right| \cos\theta  \overset{\mathrm{(iii)}}{\asymp} \left|\bm{a}^{\top}\bm{u}^{\star}\right| \\
&  \overset{\mathrm{(iv)}}{\gtrsim}  \frac{\sigma^{2}n}{\lambda_{1}^{\star2}} \left|\bm{a}^{\top}\bm{u}^{\star}\right|.
\end{align*}
Here, (i) is valid since $\bm{a}^{\top}\bm{u}^{\star}\cos\theta < |\bm{a}^{\top}\bm{u}_{\perp}|\sin\theta$; (ii) holds given that $\max |a\pm b|\geq |b|$ for any $a,b$; (iii) follows since, according to \eqref{eq:cos-bound}, $\cos \theta \asymp 1$ holds with high probability; 
	and	(iv) arises from the assumption (\ref{eq:eigengap-condition-iid}).

\end{itemize}
Combining the preceding two cases, we can readily conclude that
\begin{align*}
\mathsf{dist}\left(u_{\bm{a}}^{\mathsf{plugin}},\bm{a}^{\top}\bm{u}^{\star}\right) \gtrsim \frac{\sigma^{2}n}{\lambda_{1}^{\star2}} \left|\bm{a}^{\top}\bm{u}^{\star}\right|
\end{align*}
with probability at least $1/3$.

\section{Technical lemmas\label{sec:Auxiliary-lemma}}

This section collects a few technical lemmas that prove useful in
the analysis of our main results. In what follows, we shall start
by stating the precise statements of these lemmas, followed by the
proofs for each of them. 

\begin{lemma}\label{lemma:covariance-Gaussian-concentration}Let
$\{\bm{h}_{i}\}_{i=1}^{n}$ be a sequence of independent zero-mean
Gaussian random vectors in $\mathbb{R}^{r}$ with covariance matrix
$\sigma^{2}\bm{I}_{r}$, and let $\bm{a}=[a_{i}]_{1\leq i\leq n}\in\mathbb{R}^{n}$
be a fixed vector. Then with probability at least $1-O\left(n^{-10}\right)$,
one has
\begin{align}
\Big\|\sum_{1\leq i\leq n}a_{i}\big(\bm{h}_{i}\bm{h}_{i}^{\top}-\sigma^{2}\bm{I}_{r}\big)\Big\| & \leq C_{1}\sigma^{2}\big(\|\bm{a}\|_{2}\sqrt{r\log n}+\|\bm{a}\|_{\infty}(r\log n+\log^{2}n)\big)\nonumber \\
 & \leq C_{2}\sigma^{2}\|\bm{a}\|_{\infty}\big(\sqrt{rn\log n}+r\log n\big)\label{eq:lem-covariance-Gaussian}
\end{align}
for some sufficiently large constants $C_{1},C_{2}>0$. Here, $\|\bm{a}\|_{\infty}:=\max_{1\leq i\leq n}|a_{i}|$.\end{lemma}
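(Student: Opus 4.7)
}

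The plan is to apply a matrix Bernstein-type inequality for sums of independent zero-mean sub-exponential random matrices to the decomposition $\bm{S} := \sum_{i=1}^n a_i (\bm{h}_i\bm{h}_i^\top - \sigma^2 \bm{I}_r)$. First, I would introduce the summands $\bm{X}_i := a_i(\bm{h}_i\bm{h}_i^\top - \sigma^2 \bm{I}_r)$, which are independent, zero-mean symmetric random matrices in $\mathbb{R}^{r \times r}$. Since $\bm{h}_i \sim \mathcal{N}(\bm{0},\sigma^2 \bm{I}_r)$, the quadratic form $\bm{h}_i\bm{h}_i^\top$ has a Wishart-type tail and one can show that its $\psi_1$ Orlicz norm satisfies $\|\,\|\bm{h}_i\bm{h}_i^\top - \sigma^2 \bm{I}_r\|\,\|_{\psi_1} \lesssim \sigma^2$, yielding the $\psi_1$ bound $U := \max_i \|\,\|\bm{X}_i\|\,\|_{\psi_1} \lesssim \sigma^2 \|\bm{a}\|_\infty$.

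Next, I would compute (or upper bound) the matrix variance statistic
\[
\sigma_{\mathsf{var}}^2 := \Big\| \sum_{i=1}^n \mathbb{E}[\bm{X}_i^2] \Big\|.
\]
A direct calculation using the moments of the Gaussian quadratic form gives $\mathbb{E}[(\bm{h}_i\bm{h}_i^\top - \sigma^2\bm{I}_r)^2] = \sigma^4(r+1)\bm{I}_r$, so $\sigma_{\mathsf{var}}^2 = \sigma^4(r+1)\|\bm{a}\|_2^2 \lesssim \sigma^4 r \|\bm{a}\|_2^2$. With $U$ and $\sigma_{\mathsf{var}}^2$ in hand, I would invoke the sub-exponential matrix Bernstein inequality (e.g., \citet[Corollary~2.1]{Koltchinskii2011oracle}) to obtain, with probability at least $1 - O(n^{-10})$,
\[
\|\bm{S}\| \;\lesssim\; \sigma_{\mathsf{var}} \sqrt{\log(2r) + \log n} \,+\, U\bigl(\log(2r) + \log n\bigr) \log n \;\lesssim\; \sigma^2\|\bm{a}\|_2 \sqrt{r\log n} \,+\, \sigma^2\|\bm{a}\|_\infty\bigl(r \log n + \log^2 n\bigr),
\]
using $\log r \leq \log n$ (since $r \leq n$) to fold the $\log r$ factor into $\log n$. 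This establishes the first bound in~\eqref{eq:lem-covariance-Gaussian}.

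The second bound follows by the crude inequality $\|\bm{a}\|_2 \leq \sqrt{n}\,\|\bm{a}\|_\infty$ and noting that $\log^2 n \leq \sqrt{rn\log n}$ whenever $n \gtrsim \log^3 n$ (which is all we need; otherwise $n$ is constant up to logs and the claim is trivial). The main technical obstacle, and the one requiring care, is the $\psi_1$ estimate $\|\,\|\bm{h}_i\bm{h}_i^\top - \sigma^2 \bm{I}_r\|\,\|_{\psi_1} \lesssim \sigma^2$: this is standard but needs a short argument, for example by writing $\|\bm{h}_i\bm{h}_i^\top - \sigma^2\bm{I}_r\| \leq \max\{|\|\bm{h}_i\|_2^2 - \sigma^2|, \sigma^2\}$ after noting that the eigenvalues of $\bm{h}_i\bm{h}_i^\top - \sigma^2 \bm{I}_r$ are $\|\bm{h}_i\|_2^2 - \sigma^2$ (with multiplicity one) and $-\sigma^2$ (with multiplicity $r-1$), and then invoking the Hanson--Wright / Bernstein tail bound on the sub-exponential variable $\|\bm{h}_i\|_2^2 - \sigma^2 r$ combined with a triangle inequality. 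Everything else is routine moment computation and a direct appeal to matrix Bernstein.
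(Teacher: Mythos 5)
Your plan (matrix Bernstein applied to $\bm{X}_i=a_i(\bm{h}_i\bm{h}_i^\top-\sigma^2\bm{I}_r)$, with the variance computation $\mathbb{E}[(\bm{h}_i\bm{h}_i^\top-\sigma^2\bm{I}_r)^2]=(r+1)\sigma^4\bm{I}_r$) is the right skeleton and matches the paper's, but the step you yourself flag as the crux is wrong. The matrix $\bm{h}_i\bm{h}_i^\top-\sigma^2\bm{I}_r$ is zero-mean, but its spectral norm is not small: its eigenvalues are $\|\bm{h}_i\|_2^2-\sigma^2$ (once) and $-\sigma^2$ ($r-1$ times), and $\|\bm{h}_i\|_2^2-\sigma^2$ has mean $\sigma^2(r-1)$. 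Hence
\begin{equation*}
\big\|\,\|\bm{h}_i\bm{h}_i^\top-\sigma^2\bm{I}_r\|\,\big\|_{\psi_1}\;\asymp\;\sigma^2 r,
\end{equation*}
not $\lesssim\sigma^2$; your own sketch (bounding by $\max\{|\|\bm{h}_i\|_2^2-\sigma^2|,\sigma^2\}$ and using Bernstein on $\|\bm{h}_i\|_2^2-\sigma^2 r$ plus a triangle inequality) already produces the deterministic shift $\sigma^2(r-1)$, which forces $U\asymp\sigma^2 r\|\bm{a}\|_\infty$. With this corrected $U$, the inequality you invoke, as you wrote it, gives a second term of order $U(\log(2r)+\log n)\log n\asymp\sigma^2 r\|\bm{a}\|_\infty\log^2 n$, which overshoots the claimed $\sigma^2\|\bm{a}\|_\infty(r\log n+\log^2 n)$ by a $\log n$ factor in the $r$-dependent part; the target only appeared to come out because of the erroneous $U\lesssim\sigma^2\|\bm{a}\|_\infty$. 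More generally, $\psi_1$-type matrix Bernstein bounds (including the Koltchinskii result you cite) typically carry an extra logarithmic factor of the form $\log(U\sqrt{n}/\sigma_{\mathsf{var}})$ on the $U$ term, so this route does not directly deliver the stated $r\log n+\log^2 n$.

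The paper avoids this by using a \emph{truncated} matrix Bernstein inequality: it takes the truncation level $L\asymp\sigma^2(r+\log n)$, which is a high-probability bound on $\|\bm{h}_i\bm{h}_i^\top-\sigma^2\bm{I}_r\|$ coming from $\chi^2$ concentration of $\|\bm{h}_i\|_2^2$, shows the truncated mean contributes only $nR\lesssim L/n$, and then the deviation term is $\|\bm{a}\|_\infty L\log n\asymp\sigma^2\|\bm{a}\|_\infty(r\log n+\log^2 n)$, exactly matching the claim. To repair your proof, replace the $\psi_1$ bookkeeping with such a truncation (or with a version of sub-exponential matrix Bernstein whose linear term is exactly $L(\log r+\log n)$ with $L\asymp\sigma^2(r+\log n)$, verified carefully); your variance computation and the reduction of the second inequality to $\|\bm{a}\|_2\le\sqrt{n}\,\|\bm{a}\|_\infty$ can be kept as is.
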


\begin{lemma}\label{lemma:prod-Gaussian-concentration}Let $\{\bm{h}_{i}\}_{i=1}^{n}$
and $\{\bm{g}_{i}\}_{i=1}^{n}$ be two independent sequences of standard
Gaussian random vectors in $\mathbb{R}^{r}$ and $\mathbb{R}^{p}$,
respectively. Then with probability at least $1-O\left(n^{-10}\right)$,
the following holds:
\begin{align*}
\Big\|\sum_{1\leq i\leq n}\bm{h}_{i}\bm{g}_{i}^{\top}\Big\| & \leq C_{3}\big(\sqrt{pn\log n}+\sqrt{pr}\log n\big)
\end{align*}
where $C_{3}>0$ is some sufficiently large constant. \end{lemma}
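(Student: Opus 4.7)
The plan is to bound the random $r \times p$ matrix $\bm{W} := \sum_{i=1}^{n} \bm{h}_i \bm{g}_i^\top = \bm{H}\bm{G}^\top$, where $\bm{H} := [\bm{h}_1, \ldots, \bm{h}_n] \in \mathbb{R}^{r \times n}$ and $\bm{G} := [\bm{g}_1, \ldots, \bm{g}_n] \in \mathbb{R}^{p \times n}$ are two independent standard Gaussian matrices, via an $\epsilon$-net argument that conditions on $\bm{G}$ in order to reduce $\|\bm{u}^\top \bm{W}\|_2^2$ to a weighted chi-squared. Concretely, I would first take a $1/4$-net $\mathcal{N}$ of $\mathbb{S}^{r-1}$ with $|\mathcal{N}| \leq 9^r$, so that $\|\bm{W}\| \leq 2 \sup_{\bm{u} \in \mathcal{N}} \|\bm{u}^\top \bm{W}\|_2$; it then suffices to control $\|\bm{u}^\top \bm{W}\|_2$ uniformly over $\bm{u} \in \mathcal{N}$.

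For any fixed $\bm{u} \in \mathbb{S}^{r-1}$, rotational invariance of the Gaussian ensemble implies that $\bm{u}^\top \bm{H} \sim \mathcal{N}(\bm{0}, \bm{I}_n)$ is independent of $\bm{G}$. Hence, conditional on $\bm{G}$, the row vector $\bm{u}^\top \bm{W} = (\bm{u}^\top \bm{H})\,\bm{G}^\top$ is Gaussian with covariance $\bm{G}\bm{G}^\top$, and $\|\bm{u}^\top \bm{W}\|_2^2$ is distributed as a weighted chi-squared $\sum_{i=1}^{p} \sigma_i^2(\bm{G})\,\xi_i^2$ with i.i.d.~$\xi_i \sim \mathcal{N}(0,1)$. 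The standard Laurent--Massart chi-squared tail bound then gives
\[
\|\bm{u}^\top \bm{W}\|_2 \;\leq\; \|\bm{G}\|_{\mathrm{F}} + \|\bm{G}\|\sqrt{2t}
\]
with conditional probability at least $1 - e^{-t}$. Setting $t = C(r + \log n)$ for a sufficiently large constant $C$ and union-bounding over the net yields, with probability $\geq 1 - O(n^{-10})$, the estimate $\|\bm{W}\| \lesssim \|\bm{G}\|_{\mathrm{F}} + \|\bm{G}\|\sqrt{r + \log n}$. Finally, I would substitute the standard Gaussian matrix concentration bounds $\|\bm{G}\|_{\mathrm{F}} \lesssim \sqrt{np}$ and $\|\bm{G}\| \lesssim \sqrt{n}+\sqrt{p}$ (each holding with probability $\geq 1 - O(n^{-10})$) and simplify, using $r \leq p$ (as is natural in the applications of this lemma), to arrive at the advertised bound $\|\bm{W}\| \lesssim \sqrt{pn\log n} + \sqrt{pr}\log n$.

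The main subtlety I anticipate is ensuring that the target bound is actually achieved rather than merely the naive $(\sqrt{n}+\sqrt{r})(\sqrt{n}+\sqrt{p})$: the trivial product estimate $\|\bm{H}\bm{G}^\top\| \leq \|\bm{H}\|\cdot\|\bm{G}\|$ carries an unwanted linear-in-$n$ term that exceeds $\sqrt{pn\log n}$ whenever $n \gg p$. The conditional chi-squared step is precisely what swaps the operator norm $\|\bm{G}\|^2 \asymp n+p$ for the Frobenius norm $\|\bm{G}\|_{\mathrm{F}} \asymp \sqrt{np}$ in the leading term; verifying that the tail bound can be expressed purely in terms of $\|\bm{G}\|_{\mathrm{F}}$ and $\|\bm{G}\|$---so that the residual $\bm{G}$-randomness can be controlled separately without sacrificing the optimal $n$-dependence---is the crux of the argument.
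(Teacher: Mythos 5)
Your proposal is correct, but it proves the lemma by a genuinely different route than the paper. The paper treats $\sum_i \bm{h}_i\bm{g}_i^{\top}$ directly as a sum of independent rank-one matrices and applies a truncated matrix Bernstein inequality: it computes the matrix variances $\|\bm{\Sigma}_1\|=np$ and $\|\bm{\Sigma}_2\|=nr$, sets a truncation level $L\asymp \sqrt{pr}+\sqrt{p\log n}+\log n$, shows the truncated mean is $O(L/n^2)$, and reads off $\sqrt{pn\log n}+\sqrt{pr}\log n$. You instead condition on $\bm{G}$, reduce $\|\bm{u}^{\top}\bm{H}\bm{G}^{\top}\|_2^2$ to a weighted chi-square via rotational invariance, apply Laurent--Massart, and union bound over a net of $\mathbb{S}^{r-1}$, arriving at $\|\bm{H}\bm{G}^{\top}\|\lesssim \|\bm{G}\|_{\mathrm{F}}+\|\bm{G}\|\sqrt{r+\log n}\lesssim \sqrt{np}+(\sqrt{n}+\sqrt{p})\sqrt{r+\log n}$, which under $r\lesssim p$ implies (and is in fact slightly sharper than) the stated bound, since your leading term $\sqrt{np}$ carries no logarithmic factor. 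Your explicit use of $r\lesssim p$ is not a defect relative to the paper: the paper's own proof silently uses the same condition when it absorbs $\sqrt{\|\bm{\Sigma}_2\|\log n}=\sqrt{nr\log n}$ into $\sqrt{np\log n}$ (and indeed the stated bound can fail when $r\gg p$, e.g.\ $p=1$, $r\gg n\log^2 n$), and the condition holds in the lemma's applications. The trade-off: your argument leans on exact Gaussianity (conditional distribution and rotational invariance) and the asymmetry $r\le p$ of the two dimensions, whereas the paper's truncated-Bernstein computation is more mechanical and would extend more readily to non-Gaussian or merely sub-exponential summands.
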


\begin{lemma}\label{lemma:eps-net}Let $\{X_{i}\}_{i=1}^{n}$ be
a sequence of independent random variables in $\mathbb{R}$, and let
$\mathcal{I}$ be an interval in $\mathbb{R}$. Consider a collection
of functions $\{f_{i}\}_{i=1}^{n}$ from $\mathbb{R}\times\mathcal{I}$
to $\mathbb{R}$, and we suppose that 
\begin{enumerate}
\item for any fixed $\lambda\in\mathcal{I}$, with probability at least
$1-\delta_{1}$,
\begin{align*}
\bigg|\sum_{1\leq i\leq n}f_{i}(X_{i},\lambda)\bigg| & \le\frac{\varepsilon}{2};
\end{align*}
\item with probability at least $1-\delta_{2}$, 
\[
\sup_{\lambda\in\mathcal{I}}\bigg|\frac{\mathrm{d}}{\mathrm{d}\lambda}\sum_{1\leq i\leq n}f_{i}(X_{i},\lambda)\bigg|\le L.
\]
\end{enumerate}
Then with probability exceeding $1-\frac{8L|\mathcal{I}|}{\varepsilon}\delta_{1}-\delta_{2}$,
one has
\[
\sup_{\lambda\in\mathcal{I}}\bigg|\sum_{1\leq i\leq n}f_{i}(X_{i},\lambda)\bigg|\le\varepsilon.
\]
\end{lemma}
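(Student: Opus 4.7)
The plan is to establish Lemma~\ref{lemma:eps-net} via a standard epsilon-net discretization of the interval $\mathcal{I}$, combining a pointwise tail bound on the net with a Lipschitz-type control of $\sum_i f_i(X_i,\lambda)$ between net points. Write $F(\lambda) \coloneqq \sum_{1 \le i \le n} f_i(X_i,\lambda)$ throughout. The key observation is that the derivative hypothesis renders $F$ uniformly Lipschitz in $\lambda$ on $\mathcal{I}$ (on an event of probability $\ge 1-\delta_2$), so controlling $F$ at sufficiently many points forces control everywhere.

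First, I would fix the mesh size $\epsilon_0 \coloneqq \varepsilon/(4L)$ and construct an $\epsilon_0$-net $\mathcal{N}_{\epsilon_0} \subseteq \mathcal{I}$ with cardinality obeying $|\mathcal{N}_{\epsilon_0}| \le \lceil |\mathcal{I}|/\epsilon_0 \rceil + 1 \le 8L|\mathcal{I}|/\varepsilon$ (absorbing the ``$+1$'' into the constant, which is where the factor $8$ in the statement comes from). By definition, for every $\lambda \in \mathcal{I}$ one can find some $\widehat{\lambda} \in \mathcal{N}_{\epsilon_0}$ with $|\lambda - \widehat{\lambda}| \le \epsilon_0$. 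Next, applying hypothesis (1) to each individual $\widehat{\lambda} \in \mathcal{N}_{\epsilon_0}$ and taking a union bound over the net yields
\[
\mathbb{P}\Big\{ |F(\widehat{\lambda})| \le \varepsilon/2 \quad \text{for all } \widehat{\lambda} \in \mathcal{N}_{\epsilon_0} \Big\} \ge 1 - |\mathcal{N}_{\epsilon_0}|\,\delta_1 \ge 1 - \frac{8L|\mathcal{I}|}{\varepsilon}\delta_1.
\]

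Intersecting this event with the derivative event from hypothesis (2), which occurs with probability at least $1-\delta_2$, a further union bound shows that both events hold simultaneously with probability at least $1 - \frac{8L|\mathcal{I}|}{\varepsilon}\delta_1 - \delta_2$. On this combined event, for an arbitrary $\lambda \in \mathcal{I}$ and the associated net point $\widehat{\lambda}$, the mean value theorem gives
\[
|F(\lambda)| \le |F(\widehat{\lambda})| + \Big|\frac{\mathrm{d}}{\mathrm{d}\lambda'}F(\lambda')\Big|_{\lambda' = \lambda^\sharp} \cdot |\lambda - \widehat{\lambda}|
\le \tfrac{\varepsilon}{2} + L \cdot \tfrac{\varepsilon}{4L} = \tfrac{3\varepsilon}{4} \le \varepsilon,
\]
where $\lambda^\sharp$ is some intermediate point in $\mathcal{I}$ and we use hypothesis (2) on the derivative together with $|\lambda - \widehat{\lambda}| \le \epsilon_0 = \varepsilon/(4L)$. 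Taking the supremum over $\lambda \in \mathcal{I}$ yields the claimed uniform bound.

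There is no substantive obstacle here: this is a textbook discretization argument, and the only design choice is the mesh size, which must be small enough that the Lipschitz slack $L\,\epsilon_0$ is dominated by the pointwise bound $\varepsilon/2$, yet large enough that the net cardinality $|\mathcal{I}|/\epsilon_0$ contributes an acceptable factor to the failure probability after the union bound. The particular choice $\epsilon_0 = \varepsilon/(4L)$ balances these to produce exactly the factor $8L|\mathcal{I}|/\varepsilon$ stated in the lemma. (If one worries about the case $L|\mathcal{I}|/\varepsilon < 1$, the statement is trivially implied by hypothesis (1) alone applied at a single point, since the net of one point suffices.)
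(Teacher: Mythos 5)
Your proposal is correct and follows essentially the same route as the paper's own proof: a union bound over an epsilon-net of $\mathcal{I}$ combined with the Lipschitz control furnished by hypothesis (2), with only an immaterial difference in the mesh size (you take $\varepsilon/(4L)$ where the paper takes $\varepsilon/(2L)$), both of which fit within the stated failure probability $\frac{8L|\mathcal{I}|}{\varepsilon}\delta_{1}+\delta_{2}$.
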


Next, we record an eigenvalue interlacing lemma, which has been documented
in \citet[Corollary 4.3.37]{horn2012matrix}.

\begin{lemma}[Poincar\'e separation theorem]\label{lemma:eigval-interlacing}Let
$\bm{M}$ be a symmetric matrix in $\mathbb{R}^{n\times n}$ and $\bm{U}$
be an orthonormal matrix in $\mathbb{R}^{n\times r}$ satisfying $\bm{U}^{\top}\bm{U}=\bm{I}_{r}$.
Then one has
\[
\lambda_{n-r+i}(\bm{M})\leq\lambda_{i}(\bm{U}^{\top}\bm{M}\bm{U})\leq\lambda_{i}(\bm{M}),\qquad1\leq i\leq r,
\]
where $\lambda_{i}(\bm{A})$ denote the $i$-th largest eigenvalue
of matrix $\bm{A}$.\end{lemma}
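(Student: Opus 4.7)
The plan is to prove the Poincar\'e separation theorem directly from the Courant--Fischer min--max characterization of eigenvalues, which is the standard route. The key observation is that since $\bm{U} \in \mathbb{R}^{n \times r}$ has orthonormal columns, the map $\bm{y} \mapsto \bm{U}\bm{y}$ is an isometric embedding of $\mathbb{R}^r$ into $\mathbb{R}^n$ whose image is the $r$-dimensional column space $\mathrm{col}(\bm{U})$; moreover $\bm{y}^\top (\bm{U}^\top \bm{M}\bm{U}) \bm{y} = (\bm{U}\bm{y})^\top \bm{M} (\bm{U}\bm{y})$. Thus, every $i$-dimensional subspace of $\mathbb{R}^r$ corresponds bijectively (via $\bm{U}$) to an $i$-dimensional subspace of $\mathrm{col}(\bm{U}) \subseteq \mathbb{R}^n$, preserving the Rayleigh quotient.

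For the upper bound, I would invoke the max--min form of Courant--Fischer:
\begin{equation*}
\lambda_i\bigl(\bm{U}^\top \bm{M} \bm{U}\bigr) \;=\; \max_{\substack{S \subseteq \mathbb{R}^r \\ \dim S = i}} \min_{\substack{\bm{y} \in S \\ \|\bm{y}\|_2 = 1}} \bm{y}^\top \bm{U}^\top \bm{M} \bm{U} \bm{y} \;=\; \max_{\substack{T \subseteq \mathrm{col}(\bm{U}) \\ \dim T = i}} \min_{\substack{\bm{x} \in T \\ \|\bm{x}\|_2 = 1}} \bm{x}^\top \bm{M} \bm{x}.
\end{equation*}
Since the outer maximum is restricted to subspaces lying in $\mathrm{col}(\bm{U})$ rather than all of $\mathbb{R}^n$, it can only be smaller than the unrestricted maximum, which equals $\lambda_i(\bm{M})$ by Courant--Fischer applied to $\bm{M}$. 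This gives $\lambda_i(\bm{U}^\top \bm{M} \bm{U}) \le \lambda_i(\bm{M})$.

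For the lower bound, I would use the dual min--max form:
\begin{equation*}
\lambda_i\bigl(\bm{U}^\top \bm{M} \bm{U}\bigr) \;=\; \min_{\substack{S \subseteq \mathbb{R}^r \\ \dim S = r-i+1}} \max_{\substack{\bm{y} \in S \\ \|\bm{y}\|_2 = 1}} \bm{y}^\top \bm{U}^\top \bm{M} \bm{U} \bm{y} \;=\; \min_{\substack{T \subseteq \mathrm{col}(\bm{U}) \\ \dim T = r-i+1}} \max_{\substack{\bm{x} \in T \\ \|\bm{x}\|_2 = 1}} \bm{x}^\top \bm{M} \bm{x}.
\end{equation*}
Restricting the outer minimum to subspaces of $\mathrm{col}(\bm{U})$ (as opposed to all $(r-i+1)$-dimensional subspaces of $\mathbb{R}^n$) can only make the minimum larger, so this quantity is at least $\min_{\dim T = r-i+1, T \subseteq \mathbb{R}^n} \max_{\bm{x} \in T, \|\bm{x}\|_2=1} \bm{x}^\top \bm{M} \bm{x}$, which by Courant--Fischer equals $\lambda_{n-(r-i+1)+1}(\bm{M}) = \lambda_{n-r+i}(\bm{M})$.

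There is no real obstacle in this proof: the argument is two direct applications of min--max once one recognizes the isometry $\bm{y} \mapsto \bm{U}\bm{y}$. The only point requiring a moment of care is tracking the index convention (min--max over $(r-i+1)$-dimensional subspaces yields the $i$-th eigenvalue when the ambient dimension is $r$, whereas the same dimension $(r-i+1)$ yields the $(n-r+i)$-th eigenvalue in $\mathbb{R}^n$), which is precisely what produces the shift from $i$ to $n-r+i$ on the lower side of the interlacing.
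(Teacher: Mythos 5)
Your proof is correct: both applications of Courant--Fischer are applied properly, and the index bookkeeping ($\dim T = r-i+1$ in ambient dimension $n$ giving $\lambda_{n-r+i}(\bm{M})$) is right. The paper does not prove this lemma at all --- it simply cites \citet[Corollary 4.3.37]{horn2012matrix} --- and your argument is essentially the standard min--max proof found there, so there is nothing further to reconcile.
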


\subsection{Proof of Lemma \ref{lemma:covariance-Gaussian-concentration}}

As can be easily seen, the second inequality in (\ref{eq:lem-covariance-Gaussian})
follows immediately from the elementary bound $\|\bm{a}\|_{2}\leq\|\bm{a}\|_{\infty}\sqrt{n}$.
Hence, the proof boils down to justifying the first inequality in
(\ref{eq:lem-covariance-Gaussian}).

We shall invoke the truncated matrix Bernstein inequality \citep[Proposition A.7]{hopkins2016fast}
to control the spectral norm of $\sum_{i}a_{i}\big(\bm{h}_{i}\bm{h}_{i}^{\top}-\sigma^{2}\bm{I}_{r}\big)$,
which is a sum of independent zero-mean random matrices. To do so,
we need to bound three quantities: (1) the covariance of the sum $\sum_{i}a_{i}\big(\bm{h}_{i}\bm{h}_{i}^{\top}-\sigma^{2}\bm{I}_{r}\big)$,
(2) a high-probability upper bound $L$ on $\max_{i}\big\| a_{i}\big(\bm{h}_{i}\bm{h}_{i}^{\top}-\sigma^{2}\bm{I}_{r}\big)\big\|$,
(3) the expectation of the truncated summand $\max_{i}\mathbb{E}\big[\|a_{i}\big(\bm{h}_{i}\bm{h}_{i}^{\top}-\sigma^{2}\bm{I}_{r}\big)\|\ind\{\|a_{i}\big(\bm{h}_{i}\bm{h}_{i}^{\top}-\sigma^{2}\bm{I}_{r}\big)\|\geq L\}\big]$.
We shall look at each of them separately. 
\begin{enumerate}
\item Straightforward computation gives
\begin{equation}
\bm{\Sigma}:=\sum_{i=1}^{n}a_{i}^{2}\mathbb{E}\big[(\bm{h}_{i}\bm{h}_{i}^{\top}-\sigma^{2}\bm{I}_{r})^{2}\big]=(r+1)\sigma^{4}\sum_{i=1}^{n}a_{i}^{2}\bm{I}_{r}=(r+1)\sigma^{4}\|\bm{a}\|_{2}^{2}\bm{I}_{r}.\label{eq:Gaussian-covariance-concentration-cov}
\end{equation}
\item We now turn to bounding the spectral norm of each summand $a_{i}\big(\bm{h}_{i}\bm{h}_{i}^{\top}-\sigma^{2}\bm{I}_{r}\big)$,
which clearly satisfies
\[
\|a_{i}(\bm{h}_{i}\bm{h}_{i}^{\top}-\sigma^{2}\bm{I}_{r})\|\leq|a_{i}|\cdot(\|\bm{h}_{i}\|_{2}^{2}+\sigma^{2}).
\]
By virtue of the Gaussian concentration inequality \citep[Proposition 1.1]{hsu2012tail},
we obtain
\begin{equation}
\mathbb{P}\big\{\|\bm{h}_{i}\|_{2}^{2}-\sigma^{2}r\geq t\big\}\leq\exp\left(-\frac{1}{16}\min\Big\{\frac{t^{2}}{r\sigma^{4}},\,\frac{t}{\sigma^{2}}\Big\}\right).\label{eq:gaussian-vector-ineq}
\end{equation}
In particular, this implies that with probability at least $1-O\left(n^{-20}\right)$,
one has
\begin{equation}
\|\bm{h}_{i}\|_{2}^{2}\lesssim\sigma^{2}\big(r+\log n\big).\label{eq:gaussian-vector-norm}
\end{equation}
In what follows, we shall set
\begin{equation}
L:=C\sigma^{2}\big(r+\log n\big)\label{eq:Gaussian-covariance-concentration-L}
\end{equation}
 for some sufficiently large constant $C>0$. 
\item We then look the truncated mean. To this end, we observe that
\begin{align*}
\mathbb{E}\big[\|\bm{h}_{i}\|_{2}^{2}\ind\{\|\bm{h}_{i}\|_{2}^{2}\geq L\}\big] & \leq L\mathbb{P}\big\{\|\bm{h}_{i}\|_{2}^{2}\geq L\big\}+\int_{L}^{\infty}\mathbb{P}\big\{\|\bm{h}_{i}\|_{2}^{2}\geq t\big\}\,\mathrm{d}t\\
 & \leq O\left(n^{-20}\right)L+\int_{L}^{\infty}\mathbb{P}\big\{\|\bm{h}_{i}\|_{2}^{2}\geq t\big\}\,\mathrm{d}t.
\end{align*}
For any $t\geq L/2$, it is seen that $\min\left\{ t^{2}/(r\sigma^{4}),\,t/\sigma^{2}\right\} \geq t/\sigma^{2}$,
and hence
\begin{align*}
\int_{L}^{\infty}\mathbb{P}\big\{\|\bm{h}_{i}\|_{2}^{2}\geq t\big\}\,\mathrm{d}t & \leq\int_{L/2}^{\infty}\mathbb{P}\big\{\|\bm{h}_{i}\|_{2}^{2}-\sigma^{2}r\geq t\big\}\,\mathrm{d}t\leq\int_{L/2}^{\infty}\exp\left(-\frac{t}{16\sigma^{2}}\right)\,\mathrm{d}t\\
 & \lesssim\sigma^{2}\exp\left(-\frac{C(r+\log n)}{32}\right)\lesssim\frac{L}{n^{2}},
\end{align*}
provided that $C>0$ is sufficiently large. As a result, taking this
together with $\|\bm{h}_{i}\bm{h}_{i}^{\top}-\sigma^{2}\bm{I}_{r}\|\leq\|\bm{h}_{i}\|_{2}^{2}+\sigma^{2}$,
we arrive at
\begin{align}
R & \coloneqq\mathbb{E}\big[\|\bm{h}_{i}\bm{h}_{i}^{\top}-\sigma^{2}\bm{I}_{r}\|\ind\{\|\bm{h}_{i}\bm{h}_{i}^{\top}-\sigma^{2}\bm{I}_{r}\|\geq L\}\big]\leq\mathbb{E}\big[\big(\|\bm{h}_{i}\|_{2}^{2}+\sigma^{2}\big)\ind\{\|\bm{h}_{i}\|_{2}^{2}+\sigma^{2}\geq L\}\big]\lesssim\frac{L}{n^{2}}.\label{eq:Gaussian-covariance-concentration-R}
\end{align}
\end{enumerate}
With the preceding bounds in place, we can invoke the truncated matrix
Bernstein inequality \citep[Proposition A.7]{hopkins2016fast} to obtain
that: with probability at least $1-O\left(n^{-11}\right)$,
\begin{align*}
\Big\|\sum_{i}a_{i}\big(\bm{h}_{i}\bm{h}_{i}^{\top}-\sigma^{2}\bm{I}_{r}\big)\Big\| & \lesssim\sqrt{\|\bm{\Sigma}\|\log n}+\|\bm{a}\|_{\infty}(nR+L\log n)\\
 & \overset{(\mathrm{i})}{\asymp}\sqrt{\|\bm{\Sigma}\|\log n}+\|\bm{a}\|_{\infty}L\log n\\
 & \overset{(\mathrm{ii})}{\lesssim}\sqrt{\sum_{i=1}^{n}a_{i}^{2}(r+1)\sigma^{4}\log n}+\|\bm{a}\|_{\infty}\sigma^{2}(r\log n+\log^{2}n)\\
 & \overset{}{\lesssim}\sigma^{2}\big(\|\bm{a}\|_{2}\sqrt{r\log n}+\|\bm{a}\|_{\infty}(r\log n+\log^{2}n)\big),
\end{align*}
where (i) arises from (\ref{eq:Gaussian-covariance-concentration-R}),
and (ii) relies on (\ref{eq:Gaussian-covariance-concentration-cov})
and (\ref{eq:Gaussian-covariance-concentration-L}). This completes
the proof.

\subsection{Proof of Lemma \ref{lemma:prod-Gaussian-concentration}}

The proof strategy here is almost identical to that for Lemma \ref{lemma:covariance-Gaussian-concentration}
--- we shall apply the truncated matrix Bernstein inequality \citep[Proposition A.7]{hopkins2016fast}
to upper bound the spectral norm of $\sum_{1\leq i\leq n}\bm{h}_{i}\bm{g}_{i}^{\top}$,
which is a sum of independent zero-mean random matrices. Towards this,
we start by estimating several key quantities.
\begin{itemize}
\item In view of the independence between $\{\bm{h}_{i}\}_{i}$ and $\{\bm{g}_{i}\}_{i}$,
the covariance matrices can be computed as
\begin{align}
\bm{\Sigma}_{1} & :=\sum_{i=1}^{n}\mathbb{E}\big[\bm{h}_{i}\bm{g}_{i}^{\top}\bm{g}_{i}\bm{h}_{i}^{\top}\big]=\sum_{i=1}^{n}\mathbb{E}\big[\|\bm{g}_{i}\|_{2}^{2}\big]\mathbb{E}\big[\bm{h}_{i}\bm{h}_{i}^{\top}\big]=np\bm{I}_{r};\label{eq:prod-Gaussian-concentration-cov1}\\
\bm{\Sigma}_{2} & :=\sum_{i=1}^{n}\mathbb{E}\big[\bm{g}_{i}\bm{h}_{i}^{\top}\bm{h}_{i}\bm{g}_{i}^{\top}\big]=\sum_{i=1}^{n}\mathbb{E}\big[\|\bm{h}_{i}\|_{2}^{2}\big]\mathbb{E}\big[\bm{g}_{i}\bm{g}_{i}^{\top}\big]=nr\bm{I}_{p}.\label{eq:prod-Gaussian-concentration-cov2}
\end{align}
\item As for the spectral norm of each summand $\bm{h}_{i}\bm{g}_{i}^{\top}$,
we know from (\ref{eq:gaussian-vector-norm}) that with probability
at least $1-O\left(n^{-20}\right)$,
\[
\|\bm{h}_{i}\bm{g}_{i}^{\top}\|=\|\bm{h}_{i}\|_{2}\|\bm{g}_{i}\|_{2}\lesssim\sqrt{(r+\log n)(p+\log n)}\asymp\sqrt{pr}+\sqrt{p\log n}+\log n.
\]
Therefore, this suggests that we define
\begin{equation}
L:=C\big(\sqrt{pr}+\sqrt{p\log n}+\log n\big)\label{eq:prod-Gaussian-concentration-L}
\end{equation}
 for some sufficiently large constant $C>0$.
\item Next, we turn to the truncated mean. Observe that
\begin{align*}
\mathbb{E}\big[\|\bm{h}_{i}\bm{g}_{i}^{\top}\|\ind\{\|\bm{h}_{i}\bm{g}_{i}^{\top}\|\geq L\}\big] & \leq L\mathbb{P}\big\{\|\bm{h}_{i}\bm{g}_{i}^{\top}\|\geq L\big\}+\int_{L}^{\infty}\mathbb{P}\big\{\|\bm{h}_{i}\bm{g}_{i}^{\top}\|\geq t\big\}\,\mathrm{d}t\\
 & \leq O\left(n^{-20}\right)L+\int_{L}^{\infty}\mathbb{P}\big\{\|\bm{h}_{i}\bm{g}_{i}^{\top}\|\geq t\big\}\,\mathrm{d}t\\
 & \leq O\left(n^{-20}\right)L+\int_{L}^{\infty}\mathbb{P}\big\{\|\bm{h}_{i}\|_{2}^{2}\geq t\big\}\,\mathrm{d}t+\int_{L}^{\infty}\mathbb{P}\big\{\|\bm{g}_{i}\|_{2}^{2}\geq t\big\}\,\mathrm{d}t,
\end{align*}
where the last holds arises from the following bound due to the union
bound:
\[
\mathbb{P}\big\{\|\bm{h}_{i}\bm{g}_{i}^{\top}\|\geq t\big\}=\mathbb{P}\big\{\|\bm{h}_{i}\|_{2}^{2}\|\bm{g}_{i}\|_{2}^{2}\geq t^{2}\big\}\leq\mathbb{P}\big\{\|\bm{h}_{i}\|_{2}^{2}\geq t\big\}+\mathbb{P}\big\{\|\bm{g}_{i}\|_{2}^{2}\geq t\big\}.
\]
In addition, since $\min\left\{ t^{2}/r,\,t\right\} \geq t$ for all
$t\geq L/2\geq2r$, we can use (\ref{eq:gaussian-vector-ineq}) to
bound
\begin{align*}
\int_{L}^{\infty}\mathbb{P}\big\{\|\bm{h}_{i}\|_{2}^{2}\geq t\big\}\,\mathrm{d}t & \leq\int_{L}^{\infty}\mathbb{P}\Big\{\|\bm{h}_{i}\|_{2}^{2}-r\geq\frac{t}{2}\Big\}\,\mathrm{d}t\lesssim\int_{L/2}^{\infty}\mathbb{P}\big\{\|\bm{h}_{i}\|_{2}^{2}-r\geq t\big\}\,\mathrm{d}t\\
 & \leq\int_{L/2}^{\infty}\exp\left(-\frac{1}{16}\min\Big\{\frac{t^{2}}{r},\,t\Big\}\right)\,\mathrm{d}t\leq\int_{L/2}^{\infty}\exp\left(-\frac{t}{16}\right)\,\mathrm{d}t\\
 & \lesssim\exp\left(-\frac{C}{32}\sqrt{pr+p\log n+\log^{2}n}\right)\lesssim\frac{L}{n^{2}}.
\end{align*}
Clearly, the same bound also holds for $\int_{L}^{\infty}\mathbb{P}\big\{\|\bm{g}_{i}\|_{2}^{2}\geq t\big\}\,\mathrm{d}t$.
Therefore, combining these estimates yields
\begin{equation}
R:=\mathbb{E}\big[\|\bm{h}_{i}\bm{g}_{i}^{\top}\|\ind\{\|\bm{h}_{i}\bm{g}_{i}^{\top}\|\geq L\}\big]\lesssim\frac{L}{n^{2}}.\label{eq:prod-Gaussian-concentration-R}
\end{equation}
\end{itemize}
With these parameters in place, one can apply the truncated matrix
Bernstein inequality \citep[Proposition A.7]{hopkins2016fast} to demonstrate
that: with probability at least $1-O\left(n^{-10}\right)$,
\begin{align*}
\Big\|\sum_{1\leq i\leq n}\bm{h}_{i}\bm{g}_{i}^{\top}\Big\| & \lesssim\sqrt{(\|\bm{\Sigma}_{1}\|+\|\bm{\Sigma}_{2}\|)\log n}+nR+L\log n\\
 & \overset{(\mathrm{i})}{\asymp}\sqrt{\|\bm{\Sigma}_{1}\|\log n}+L\log n\\
 & \overset{(\mathrm{ii})}{\lesssim}\sqrt{pn\log n}+\sqrt{pr}\log n+\sqrt{p\log^{3}n}+\log^{2}n\\
 & \asymp\sqrt{pn\log n}+\sqrt{pr}\log n.
\end{align*}
Here, (i) uses (\ref{eq:prod-Gaussian-concentration-cov1}), (\ref{eq:prod-Gaussian-concentration-cov1})
and (\ref{eq:prod-Gaussian-concentration-R}); (ii) arises from (\ref{eq:prod-Gaussian-concentration-L}).
The proof is thus complete.

\subsection{Proof of Lemma \ref{lemma:eps-net}}

Let $\mathcal{N}$ be a $\frac{\varepsilon}{2L}$-covering of $\mathcal{I}$
with cardinality $|\mathcal{N}|\le\frac{4L|\mathcal{I}|}{\varepsilon}$,
and let $\mathcal{E}$ denote an event such that
\begin{align*}
\sup_{\lambda\in\mathcal{N}}\bigg|\sum_{1\leq i\leq n}f_{i}(X_{i},\lambda)\bigg| & \le\frac{\varepsilon}{2},\\
\sup_{\lambda\in\mathcal{I}}\bigg|\frac{\mathrm{d}}{\mathrm{d}\lambda}\sum_{1\leq i\leq n}f_{i}(X_{i},\lambda)\bigg| & \le L,
\end{align*}
which holds with probability at least $1-\frac{8L|\mathcal{I}|}{\varepsilon}\delta_{1}-\delta_{2}$
(according to the assumptions and the union bound). 

For any $\lambda\in\mathcal{I}$, let $\hat{\lambda}\in\mathcal{N}$
such that $|\lambda-\hat{\lambda}|\le\frac{\varepsilon}{2L}$. One
can easily check that on the event $\mathcal{E}$, one has
\begin{align*}
\sup_{\lambda\in\mathcal{I}}\bigg|\sum_{1\leq i\leq n}f_{i}(X_{i},\lambda)\bigg| & =\sup_{\lambda\in\mathcal{I}}\bigg|\sum_{1\leq i\leq n}\big(f_{i}(X_{i},\lambda)-f_{i}(X_{i},\hat{\lambda})+f_{i}(X_{i},\hat{\lambda})\big)\bigg|\\
 & \le\sup_{\lambda\in\mathcal{I}}\bigg|\frac{\mathrm{d}}{\mathrm{d}\lambda}\sum_{1\leq i\leq n}f_{i}(X_{i},\lambda)\bigg|\cdot|\lambda-\hat{\lambda}|+\sup_{\hat{\lambda}\in\mathcal{N}}\bigg|\sum_{1\leq i\leq n}f_{i}(X_{i},\hat{\lambda})\bigg|\\
 & \leq L\cdot\frac{\varepsilon}{2L}+\sup_{\hat{\lambda}\in\mathcal{N}}\bigg|\sum_{1\leq i\leq n}f_{i}(X_{i},\hat{\lambda})\bigg|\\
 & \le\frac{\varepsilon}{2}+\frac{\varepsilon}{2}=\varepsilon,
\end{align*}
thus concluding the proof.

\bibliographystyle{abbrvnat}
\bibliography{bibfile_eiglinearform}

\end{document}